\setlist[itemize]{leftmargin=5mm,labelindent=-20mm,itemsep=6pt}
\newlist{enuma}{enumerate}{3}
\setlist[enuma]{label=\rm(\alph*),ref=\alph*,itemsep=6pt,leftmargin=*}
\newlist{enumr}{enumerate}{3}
\setlist[enumr]{label=\rm(\roman*),ref=\roman*,align=left,itemsep=6pt,leftmargin=*}
\newlist{enumA}{enumerate}{3}
\setlist[enumA]{label=\rm(\Alph*),ref=\Alph*,itemsep=6pt,leftmargin=*}
\newlist{enumI}{enumerate}{3}
\setlist[enumI]{label=\rm(\Roman*),ref=\Roman*,align=left,itemsep=6pt,leftmargin=*}
\newlist{enum1}{enumerate}{3}
\setlist[enum1]{label=\rm(\arabic*),ref=\arabic*,itemsep=6pt,leftmargin=8mm}
\newcommand{\dsg}[2][]{[\![#2]\!]_{#1}}
\definecolor{darkgreen}{rgb}{0,0.7,0}
\definecolor{newcolor}{rgb}{0.9,0,0}
\definecolor{newercolor}{rgb}{0.2,0,1}
\definecolor{orange}{rgb}{1,0.7,0}
\newcommand{\newsect}[1]{\bigskip\section{#1}\setcounter{table}{0}}
\newcommand{\newsubb}[2]{\mbfx{\smallskip\subsection{#1}\label{#2}%
\leavevmode\noindent}}
\newcommand{\boldd}[1]{{\mathversion{bold}\textbf{#1}}}
\newcommand{\mbfx}[1]{{\boldmath #1\unboldmath}}
\newcommand{\bmid}{\mathrel{\big|}}
\newlength{\short}
\newcommand{\dbl}[2]{\renewcommand{\arraystretch}{1.0}%
\begin{array}{c}\rule{0pt}{12pt}#1\\#2\end{array}}
\newcommand{\halfup}[1]{\raisebox{2.2ex}[0pt]{$#1$}}
\newcommand{\0}[1]{\overset{\circ}{#1}}
\renewcommand{\2}[2]{\underset{#1}{#2}}
\newcommand{\3}[1]{\hat{#1}}
\newcommand{\4}[1]{\widebar{#1}}
\newcommand{\5}[1]{\widehat{#1}}
\newcommand{\7}{_c}
\newcommand{\9}[1]{{}^{#1}\!}
\newcommand{\Par}[1]{\mathfrak{P}_{#1}}
\newcommand{\bb}{\mathfrak{b}}
\newcommand{\pp}{\mathfrak{p}}
\def\pair[#1,#2]{[\hskip-1.5pt[#1,#2]\hskip-1.5pt]}
\def\trp[#1,#2,#3]{[\hskip-1.5pt[#1,#2,#3]\hskip-1.5pt]}
\let\oldcirc=\circ
\renewcommand{\circ}{\mathchoice
    {\mathbin{\scriptstyle\oldcirc}}{\mathbin{\scriptstyle\oldcirc}}
    {\mathbin{\scriptscriptstyle\oldcirc}}
    {\mathbin{\scriptscriptstyle\oldcirc}}}
\def\beq#1\eeq{\begin{equation*}#1\end{equation*}}
\def\beqq#1\eeqq{\begin{equation}#1\end{equation}}
\numberwithin{equation}{section}
\def\theequation{\arabic{equation}}
\newtheorem{Thm}{Theorem}[section]
\newtheorem{Prop}[Thm]{Proposition}
\newtheorem{Cor}[Thm]{Corollary}
\newtheorem{Lem}[Thm]{Lemma}
\newtheorem{Defi}[Thm]{Definition}
\newtheorem{Hyp}[Thm]{Hypotheses}
\newtheorem{Not}[Thm]{Notation}
\newtheorem{Ex}[Thm]{Example}
\newtheorem{Th}{Theorem}
\newcommand{\widebar}[1]
      {\overset{{\mskip3mu\leaders\hrule height0.4pt\hfill\mskip3mu}}{#1}
      \vphantom{#1}}
\newcounter{let} \setcounter{let}{0}
\loop\stepcounter{let}
\edef\csname cal\alph{let}\endcsname%
\newcommand{\tdef}[2][]{\expandafter\newcommand\csname#2\endcsname%
{#1\textup{#2}}}
\newcommand{\htt}{\textup{ht}}
\newcommand{\scal}{_{\textup{sc}}}
\newcommand{\fdef}[1]{\expandafter\newcommand\csname#1\endcsname%
{\mathfrak{#1}}}
\newcommand{\bbdef}[1]{\expandafter\newcommand%
\csname#1\endcsname{\mathbb{#1}}}
\newcommand{\itdef}[1]{\expandafter\newcommand\csname#1\endcsname%
{\textit{#1}}}
\newcommand{\GO}{O}
\newcommand{\fpbar}{\widebar{\F}_p}
\newcommand{\fqobar}{\widebar{\F}_{q_0}}
\newcommand{\gee}{\varepsilon}
\newcommand{\sminus}{\smallsetminus}
\newcommand{\lie}[3]{\def\test{#2}\def\tst{G}\ifx\test\tst{{}^{#1}#2_{#3}}
\else{{}^{#1}\!#2_{#3}}\fi}
\renewcommand{\*}{\,\lower6pt\hbox{\Large{\textup{*}}}\,}
\newcommand{\syl}[2]{\textup{Syl}_{#1}(#2)}
\newcommand{\sylp}[1]{\syl{p}{#1}}
\newcommand{\ordp}{\ord_p}
\renewcommand{\Im}{\textup{Im}}
\newcommand{\autf}{\Aut_{\calf}}
\newcommand{\outf}{\Out_{\calf}}
\newcommand{\homf}{\Hom_{\calf}}
\newcommand{\defeq}{\overset{\textup{def}}{=}}
\newcommand{\mxtwo}[4]{\bigl(\begin{smallmatrix}#1&#2\\#3&#4\end{smallmatrix}%
\bigr)}
\newcommand{\mxthree}[9]{\left(\begin{smallmatrix}#1&#2&#3\\#4&#5&#6\\
#7&#8&#9\end{smallmatrix}\right)}
\newcommand{\mxfoura}[8]{\left(\begin{smallmatrix}#1&#2&#3&#4\\#5&#6&#7&#8}
\newcommand{\mxfourb}[8]{\\#1&#2&#3&#4\\#5&#6&#7&#8\end{smallmatrix}\right)}
\let\emptyset=\varnothing
\renewcommand{\:}{\colon}
\newcommand{\pcom}{{}^\wedge_p}
\newcommand{\ploc}{{}_{(p)}}
\newcommand{\nsg}{\trianglelefteq}
\newcommand{\sd}[1]{\overset{{#1}}{\rtimes}}
\newcommand{\til}[1]{\widetilde{#1}}
\renewcommand{\gg}{\mathbb{G}}
\newcommand{\hh}{\mathbb{H}}
\newcommand{\gen}[1]{{\langle}#1{\rangle}}
\newcommand{\Gen}[1]{{\bigl\langle}#1{\bigr\rangle}}
\newcommand{\norm}[1]{{\Vert}#1{\Vert}}
\newcommand{\dg}{_{\textup{diag}}}
\newcommand{\longleft}[1]{\;{\leftarrow%
\count255=0 \loop \mathrel{\mkern-6mu}%
    \relbar\advance\count255 by1\ifnum\count255<#1\repeat}\;}
\newcommand{\longright}[1]{\;{\count255=0 \loop \relbar\mathrel{\mkern-6mu}%
    \advance\count255 by1\ifnum\count255<#1\repeat\rightarrow}\;}
\newcommand{\Right}[2]{\overset{#2}{\longright#1}}
\newcommand{\RIGHT}[3]{\mathrel{\mathop{\kern0pt\longright#1}
        \limits^{#2}_{#3}}}
\newcommand{\Left}[2]{{\buildrel #2 \over {\longleft#1}}}
\newcommand{\LEFT}[3]{\mathrel{\mathop{\kern0pt\longleft#1}\limits^{#2}_{#3}}
}
\newcommand{\dRIGHT}[3]{\mathrel{%
   \mathop{\vcenter{\baselineskip=0pt\hbox{$\kern0pt\longright#1$}%
   \hbox{$\kern0pt\longright#1$}}}\limits^{#2}_{#3}}}
\newcommand{\LRIGHT}[3]{\mathrel{%
   \mathop{\vcenter{\baselineskip=0pt\hbox{$\kern0pt\longleft#1$}%
   \hbox{$\kern0pt\longright#1$}}}\limits^{#2}_{#3}}}
\newcommand{\RLEFT}[3]{\mathrel{%
   \mathop{\vcenter{\baselineskip=0pt\hbox{$\kern0pt\longright#1$}%
   \hbox{$\kern0pt\longleft#1$}}}\limits^{#2}_{#3}}}
\newcommand{\onto}[1]{\;{\count255=0 \loop \relbar\mathrel{\mkern-6mu}%
    \advance\count255 by1
    \ifnum\count255<#1 \repeat \twoheadrightarrow}\;}
\newcommand{\Onto}[2]{\overset{#2}{\onto#1}}
\title{Automorphisms of fusion systems of finite simple groups of Lie type}
\author{Carles Broto}
\address{Departament de Matem\`atiques, Universitat Aut\`onoma de 
Barcelona, E--08193 Bellaterra, Spain}
\email{broto@mat.uab.es}
\thanks{C. Broto is partially supported by MICINN grant MTM2010-20692 and 
MINECO grant MTM2013-42293-P}
\author{Jesper M. M\o{}ller}
\address{Matematisk Institut, Universitetsparken 5, DK--2100 K\o{}benhavn, 
Denmark}
\email{moller@math.ku.dk}
\thanks{J. M\o{}ller is partially supported by the Danish National 
Research Foundation through the Centre for Symmetry and Deformation 
(DNRF92) and by Villum Fonden through the project Experimental Mathematics 
in Number Theory, Operator Algebras, and Topology.}
\author{Bob Oliver}
\address{Universit\'e Paris 13, Sorbonne Paris Cit\'e, LAGA, UMR 7539 du CNRS, 
99, Av. J.-B. Cl\'ement, 93430 Villetaneuse, France.}
\email{bobol@math.univ-paris13.fr}
\thanks{B. Oliver is partially supported by UMR 7539 of the CNRS, and by 
project ANR BLAN08-2\_338236, HGRT}
\thanks{All three authors wish to thank K\o benhavns Universitet, the 
Universitat Aut\`onoma de Barcelona, and especially the Centre for Symmetry 
and Deformation in Copenhagen, for their hospitality while much of this 
work was carried out.}
\subjclass[2000]{Primary 20D06. Secondary 20D20, 20D45, 20E42, 55R35}
\keywords{groups of Lie type, fusion systems, automorphisms, classifying spaces}
\begin{document}

\begin{abstract} For a finite group $G$ of Lie type and a prime $p$, we 
compare the automorphism groups of the fusion and linking systems of $G$ at 
$p$ with the automorphism group of $G$ itself. When $p$ is the defining 
characteristic of $G$, they are all isomorphic, with a very short list of 
exceptions. When $p$ is different from the defining characteristic, the 
situation is much more complex, but can always be reduced to a case where 
the natural map from $\Out(G)$ to outer automorphisms of the fusion or 
linking system is split surjective. This work is motivated in part by 
questions involving extending the local structure of a group by a group of 
automorphisms, and in part by wanting to describe self homotopy 
equivalences of $BG\pcom$ in terms of $\Out(G)$.
\end{abstract}

\maketitle

When $p$ is a prime, $G$ is a finite group, and $S\in\sylp{G}$, the 
\emph{fusion system} of $G$ at $S$ is the category $\calf_S(G)$ whose 
objects are the subgroups of $S$, and whose morphisms are those 
homomorphisms between subgroups induced by conjugation in $G$.  In this 
paper, we are interested in comparing automorphisms of $G$, when $G$ is a 
simple group of Lie type, with those of the fusion system of $G$ at a Sylow 
$p$-subgroup of $G$ (for different primes $p$).

Rather than work with automorphisms of $\calf_S(G)$ itself, it turns out to 
be more natural in many situations to study the group 
$\Out\typ(\call_S^c(G))$ of outer automorphisms of the \emph{centric 
linking system} of $G$. We refer to Section \ref{s:tame} for the 
definition of $\call_S^c(G)$, and to Definition \ref{d:aut(L)} for precise 
definitions of $\Out(S,\calf_S(G))$ and $\Out\typ(\call_S^c(G))$.  These 
are defined in such a way that there are natural homomorphisms
	\[ \Out(G) \Right4{\kappa_G} \Out\typ(\call_S^c(G)) 
	\Right4{\mu_G} \Out(S,\calf_S(G))
	\qquad\textup{and}\qquad \4\kappa_G=\mu_G\circ\kappa_G \,. \]
For example, if $S$ controls fusion in $G$ (i.e., if $S$ has a normal 
complement), then $\Out(S,\calf_S(G))=\Out(S)$, and $\4\kappa_G$ is induced 
by projection to $S$. The fusion system $\calf_S(G)$ is 
\emph{tamely realized by $G$} if $\kappa_G$ is split surjective, and is 
\emph{tame} if it is tamely realized by some finite group $G^*$ where 
$S\in\sylp{G^*}$ and $\calf_S(G)=\calf_S(G^*)$. Tameness plays an important 
role in Aschbacher's program for shortening parts of the proof of the 
classification of finite simple groups by classifying simple fusion systems 
over finite $2$-groups. We say more about this later in the introduction, 
just before the statement of Theorem \ref{ThT}.

By \cite[Theorem B]{BLO1}, $\Out\typ(\call_S^c(G))\cong\Out(BG\pcom)$:  the 
group of homotopy classes of self homotopy equivalences of the 
$p$-completed classifying space of $G$.  Thus one of the motivations for 
this paper is to compute $\Out(BG\pcom)$ when $G$ is a finite simple group 
of Lie type (in characteristic $p$ or in characteristic different from 
$p$), and compare it with $\Out(G)$.  

Following the notation used in \cite{GLS3}, for each prime $p$, we let 
$\Lie(p)$ denote the class of finite groups of Lie type in characteristic 
$p$, and let $\Lie$ denote the union of the classes $\Lie(p)$ for all 
primes $p$. (See Definition \ref{d:Lie} for the precise definition.)  We 
say that $G\in\Lie(p)$ is of \emph{adjoint type} if $Z(G)=1$, and is of 
\emph{universal type} if it has no nontrivial central extensions which are 
in $\Lie(p)$. For example, for $n\ge2$ and $q$ a power of $p$, $\PSL_n(q)$ 
is of adjoint type and $\SL_n(q)$ of universal type.

Our results can be most simply stated in the ``equi-characteristic case'':  
when working with $p$-fusion of $G\in\Lie(p)$.

\begin{Th} \label{ThE}
Let $p$ be a prime. Assume that $G\in\Lie(p)$ and is of 
universal or adjoint type, and also that 
$(G,p)\not\cong(\Sz(2),2)$.  Fix $S\in\sylp{G}$.  Then the composite 
homomorphism 
	\[ \4\kappa_G\: \Out(G) \Right5{\kappa_G} \Out\typ(\call_S^c(G)) 
	\Right5{\mu_G} \Out(S,\calf_S(G)) \]
is an isomorphism, and $\kappa_G$ and $\mu_G$ are isomorphisms except when 
$G\cong\PSL_3(2)$. 
\end{Th}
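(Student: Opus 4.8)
\smallskip

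The plan is to pass through the three maps in turn --- first $\4\kappa_G$, then $\mu_G$, and finally $\kappa_G$ via the relation $\4\kappa_G=\mu_G\circ\kappa_G$ --- treating the general case uniformly by means of the $(B,N)$-pair structure of $G$, and dealing with $(G,p)\cong(\PSL_3(2),2)$ separately at the end. The first step is to reduce to $G$ of adjoint type: since $p$ is the defining characteristic the centre of the simply connected version is a $p'$-group, so the isogeny to the adjoint version restricts to an isomorphism on a Sylow $p$-subgroup and induces an identification of fusion systems over $S$, while $\Out$ of either version is a split extension $\Outdiag(G)\rtimes\Phi_G$, where $\Phi_G$ denotes the group of field and graph automorphisms (Steinberg). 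The exclusion of $(\Sz(2),2)$ is unavoidable: there $\calf_S(G)$ is the trivial fusion system on $S\cong C_4$, so that $\Out(S,\calf_S(G))\cong\Aut(C_4)\cong C_2$ although $\Out(\Sz(2))=1$.

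The heart of the matter is to describe $\calf=\calf_S(G)$. Take $S=U$ to be a maximal unipotent subgroup, so that $N_G(U)=B$ is a Borel subgroup and $\autf(U)=B/Z(U)$, realized by the conjugation action of a maximal torus on $U$; in particular $C_G(U)=Z(U)$, so $U$ is self-centralizing. By the Borel--Tits theorem the $\calf$-centric $\calf$-radical subgroups of $\calf$ are precisely the unipotent radicals $O_p(P)$ of the parabolics $P\supseteq B$, and the essential ones are those with $P/O_p(P)$ of Lie rank one over $\F_q$, so that $\outf(O_p(P))$ involves one of $\SL_2(q)$, $\PSL_2(q)$, $\SU_3(q)$, $\Sz(q)$, ${}^2G_2(q)$ and admits a strongly $p$-embedded subgroup. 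The key claim is then that every $\alpha\in\Aut(S,\calf)$ is ``algebraic'': after composition with an inner automorphism and an element of $\autf(U)$ it may be assumed to preserve a fixed frame of root subgroups, whereupon compatibility with $\autf(O_p(P_i))$ over the minimal parabolics $P_i$ determines its effect one root subgroup at a time, forcing $\alpha$ to be induced by $\Inn(U)$ together with the adjoint torus and the field and graph automorphisms. This is the Curtis--Tits/Steinberg rigidity argument carried over to the fusion system, and it yields $\Out(S,\calf)\cong\Outdiag(G)\rtimes\Phi_G$.

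Granting this, $\4\kappa_G$ is surjective --- each generating class of $\Out(S,\calf)$ lifts to an automorphism of $G$ --- and since $\Out(G)\cong\Outdiag(G)\rtimes\Phi_G$ has the same finite order as the target, $\4\kappa_G$ is an isomorphism. (Alternatively: any $\gamma\in\Aut(G)$ with $[\gamma]\in\Ker\4\kappa_G$ may be adjusted by an inner automorphism to fix $U$ pointwise, and then, by Steinberg's description of $\Aut(G)$, an automorphism fixing $U$ and a maximal torus pointwise must be trivial.) For the linking system, $\Ker\mu_G\cong\lim^1(\mathcal O^c(\calf);\mathcal Z)$ for the centre functor $\mathcal Z$, while $\Im\mu_G\supseteq\Im\4\kappa_G$; so once $\4\kappa_G$ is onto, $\mu_G$ is onto, and $\mu_G$ --- hence also $\kappa_G$ --- is an isomorphism exactly when this first higher limit vanishes. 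For $p$ odd it does, by the general vanishing theorem for the centre functor of a group fusion system. For $p=2$ one computes $\lim^1$ over the poset of the $O_p(P)$, which carries the Tits building of $G$ and is Cohen--Macaulay of dimension one less than the Lie rank: the degree-one limit vanishes as soon as the Lie rank is at least $3$, and in Lie rank $1$ the group $\autf(U)$ is a $2'$-group acting on the $2$-group $Z(U)$, which again forces vanishing; a finite check over the remaining rank-two groups in characteristic $2$ isolates $\PSL_3(2)$, where $S\cong D_8$ and $\lim^1(\mathcal O^c(\calf);\mathcal Z)\cong C_2$. Thus for $G\not\cong\PSL_3(2)$ both $\kappa_G$ and $\mu_G$ are isomorphisms, while for $\PSL_3(2)$ the explicit $D_8$-fusion system shows that $\4\kappa_G\colon C_2\to C_2$ remains an isomorphism, $\kappa_G$ is injective with image of index $2$, and $\mu_G$ is onto with kernel $C_2$.

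The main obstacle is the second paragraph: proving that $\calf_S(G)$ admits no ``exotic'' fusion-preserving automorphism of $S$ beyond the algebraic ones. This requires an essentially uniform argument across all Lie types --- notably the twisted ones, where the Borel subgroup, the root subgroups and the minimal parabolics are harder to write down --- together with attention to the small-field coincidences. The $p=2$ computation of $\lim^1$ is a secondary but genuine point, requiring the rank-two, small-$q$ case analysis that picks out $\PSL_3(2)$.
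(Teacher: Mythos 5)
The architecture you propose (prove $\4\kappa_G$ is an isomorphism at the fusion-system level, deduce surjectivity of $\mu_G$, and get injectivity of $\mu_G$ from the vanishing of $\lim^1$ of the center functor, citing Oliver's computations) is a legitimate rearrangement, and your treatment of the reduction to adjoint type, of injectivity via Steinberg's theorem, and of the exceptional cases $\Sz(2)$ and $\PSL_3(2)$ matches the paper. But the proof has a genuine gap exactly where you flag "the main obstacle": the claim that every $\alpha\in\Aut(S,\calf_S(G))$ is algebraic, i.e.\ that $\Out(S,\calf_S(G))\cong\Outdiag(G)\rtimes\Phi_G\Gamma_G$, is not proved --- it is asserted via a sketched "Curtis--Tits/Steinberg rigidity argument carried over to the fusion system," and this claim is essentially a restatement of the surjectivity half of the theorem. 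The sketch also hides a real pitfall: from the fusion system alone, compatibility with $\autf(O_p(P))$ for the parabolics $P\supseteq B$ only gives you control of the quotients $P/Z(O_p(P))$, not of the parabolics themselves, so one does not directly obtain an automorphism of the amalgam of parabolics (let alone of $G$). This is precisely why the paper works with $\kappa_G$ and the linking system, where $\Aut_{\call}(O_p(P))=P$ on the nose, so that Tits's amalgam theorem applies in Lie rank $\ge 3$; and even at the amalgam level the naive local-to-global argument fails: the paper exhibits an automorphism of the $\SL_3(2)$ amalgam of parabolics that does \emph{not} extend to the group, and in rank $2$ it must verify the building-theoretic condition \dagg\ on apartments (Delgado--Stellmacher style), with separate case analyses for $\Sp_4(2)$, $\PSp_4(3)$, $\PSU_4(2)$, $\PSU_5(2)$, $G_2(2)$, $\lie3D4(2)$, and $\lie2F4(2)$, and in rank $1$ explicit module-theoretic arguments (with $\PSU_3(2)$, $\Sz(q)$, $\lie2G2(3)$ treated by hand). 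None of this case-by-case content, which is the bulk of the actual proof, is replaced by your sketch; "compatibility over minimal parabolics determines $\alpha$ root subgroup by root subgroup" is exactly the statement that needs proving, and the $\SL_3(2)$ example shows such arguments cannot be waved through.

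A secondary weakness: for $p=2$ the injectivity of $\mu_G$ (vanishing of $\lim^1$ of the center functor, with the single exception $\PSL_3(2)$) is not a "finite check over the remaining rank-two groups in characteristic $2$" --- there are infinitely many such groups ($\SL_3(2^a)$, $\Sp_4(2^a)$, $G_2(2^a)$, $\SU_4(2^a)$, $\SU_5(2^a)$, $\lie3D4(2^a)$, $\lie2F4(2^a)$), and handling them uniformly in $q$ is the content of the cited Memoir computations; the paper simply quotes those results (as you in effect do for odd $p$), which is fine, but your text presents this as an easy computation you would carry out. With the rigidity claim supplied, your route would work; as written, the surjectivity of $\4\kappa_G$ --- the heart of the theorem --- remains unproved.
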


\begin{proof} Assume $G$ is of adjoint type.  When $G\not\cong\GL_3(2)$, 
$\mu_G$ is an isomorphism by \cite[Proposition 4.3]{limz-odd}\footnotemark\ 
or \cite[Theorems C \& 6.2]{limz}.  The injectivity of 
$\4\kappa_G=\mu_G\circ\kappa_G$ (in all cases) is shown in Lemma 
\ref{kappa_inj}. The surjectivity of $\kappa_G$ is shown in 
Proposition \ref{rk>2-tame} when $G$ has Lie rank at least three, and in 
Proposition \ref{rk1equi} when $G$ has Lie rank $1$ and $G\not\cong\Sz(2)$. 
When $G$ has Lie rank $2$, $\kappa_G$ is onto (when $G\not\cong\SL_3(2)$) 
by Proposition \ref{ThA:most}, \ref{ThA:n=4}, \ref{G2(2)-tame}, 
\ref{3D4(2)-tame}, or \ref{Tits-tame}. (See Notation 
\ref{G-setup-Q}\eqref{not8=} for the definition of Lie rank used here.)

If $G$ is of universal type, then 
by Proposition \ref{p:Gu->Ga}, $G/Z(G)\in\Lie(p)$ is of adjoint type 
where $Z(G)$ has order prime to $p$. 
Also, $\Out(G)\cong\Out(G/Z(G))$ by \cite[Theorem 2.5.14(d)]{GLS3}. 
Hence $\calf_S(G)\cong\calf_S(G/Z(G))$ and 
$\call_S^c(G)\cong\call_S^c(G/Z(G))$; and $\kappa_G$ and/or $\4\kappa_G$ 
is an isomorphism if $\kappa_{G/Z(G)}$ and/or $\4\kappa_{G/Z(G)}$, 
respectively, is an isomorphism. 
\end{proof}

\footnotetext{Steve Smith recently pointed out to the third author an error 
in the proof of this proposition. One can get around this problem either 
via a more direct case-by-case argument (see the remark in the middle of 
page 345 in \cite{limz-odd}), or by applying \cite[Theorem C]{O-Ch}. The 
proof of the latter result uses the classification of finite simple groups, 
but as described by Glauberman and Lynd \cite[\S\,3]{GLynd}, the proof in 
\cite{O-Ch} (for odd $p$) can be modified to use an earlier result of 
Glauberman \cite[Theorem A1.4]{Glauberman2}, and through that avoiding the 
classification.}

When $G=\PSL_3(2)$ and $p=2$, $\Out(G)\cong\Out(S,\calf_S(G))\cong C_2$, 
while $\Out\typ(\call_S^c(G))\cong C_2^2$.  When $G=\Sz(2)\cong 
C_5\sd{}C_4$ and $p=2$, $\Out(G)=1$, while 
$\Out\typ(\call_S^c(G))\cong\Aut(C_4)\cong C_2$.  Thus these groups are 
exceptions to Theorem \ref{ThE}.

To simplify the statement of the next theorem, for finite groups $G$ and 
$H$, we write $G\sim_pH$ to mean that there are Sylow subgroups 
$S\in\sylp{G}$ and $T\in\sylp{H}$, together with an isomorphism 
$\varphi\:S\Right2{\cong}T$ which induces an isomorphism of categories 
$\calf_S(G)\cong\calf_T(H)$ (i.e., $\varphi$ is fusion preserving in the 
sense of Definition \ref{d:aut(L)}). 

\begin{Th} \label{ThX}
Fix a pair of distinct primes $p$ and $q_0$, and a group $G\in\Lie(q_0)$ of 
universal or adjoint type. Assume that the Sylow $p$-subgroups of $G$ are 
nonabelian. Then there is a prime $q_0^*\ne{}p$, and a group 
$G^*\in\Lie(q_0^*)$ of universal or adjoint type, respectively, such that 
$G^*\sim_pG$ and $\kappa_{G^*}$ is split surjective.  
If, furthermore, $p$ is odd or $G^*$ has universal type, then $\mu_{G^*}$ is an 
isomorphism, and hence $\4\kappa_{G^*}$ is also split surjective.
\end{Th}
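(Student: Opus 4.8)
The plan is to prove the theorem by a \emph{change of defining characteristic}. Write $G=G(q)$ with $q=q_0^a$, and recall that, by \cite{BLO1} together with the identification of $BG(q)\pcom$ with the classifying space of the group of Lie type $G$ in cross characteristic (set up in the earlier sections), the pair $\bigl(\calf_S(G),\call_S^c(G)\bigr)$, up to isomorphism, depends only on the root datum with twisting of $G$ and on the closed subgroup $\overline{\langle q\rangle}\le\Z_p^\times$ generated by the image of $q$; moreover the standing hypothesis that the Sylow $p$-subgroups be nonabelian keeps us in the ``generic'' range where the later structural results apply cleanly. So the first step is number-theoretic: for $p$ odd, use Dirichlet's theorem to pick a prime $q_0^*\ne p$ whose image topologically generates $\Z_p^\times$ (for instance any prime congruent modulo $p^2$ to a primitive root mod $p^2$), and then an integer $a^*\ge1$ with $\overline{\langle q^*\rangle}=\overline{\langle q\rangle}$ for $q^*:=(q_0^*)^{a^*}$ --- possible because $\overline{\langle q\rangle}$ is a finite-index closed subgroup of the procyclic group $\Z_p^\times$, hence equals $\overline{\langle\gamma^{a^*}\rangle}$ for a topological generator $\gamma$ and a suitable $a^*$, and $q_0^*$ is itself a topological generator. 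Letting $G^*$ be of the same Lie and isogeny type as $G$ but defined over $\F_{q^*}$, we obtain $G^*\sim_pG$, and $G^*\in\Lie(q_0^*)$ has universal or adjoint type according as $G$ does. For $p=2$ one adapts this: $\Z_2^\times$ is not procyclic, so one can only arrange that $\overline{\langle q_0^*\rangle}$ be a maximal procyclic subgroup, and one must additionally check that the remaining ``complex conjugation'' Adams operation $\psi^{-1}$ is realized inside $\Out(G^*)$ by a graph or diagonal automorphism (or acts trivially on $\calf$) for the type at hand.

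Next I would show $\kappa_{G^*}$ is split surjective by feeding $G^*$ into the cross-characteristic computation of $\Out\typ(\call_{S^*}^c(G^*))$ (with $S^*\in\sylp{G^*}$), the analogue for $q_0^*\ne p$ of the per-rank propositions cited in the proof of Theorem \ref{ThE}. In the generic situation produced above that group is built from the outer graph automorphisms of $G^*$, the group $\Outdiag(G^*)$ of outer diagonal automorphisms, and a ``field'' part realized by unstable Adams operations and identified with a finite quotient of $\Z_p^\times$ by $\overline{\langle q^*\rangle}$. The point of the choice of $q_0^*$ is that the cyclic group of field automorphisms of $G^*$, of order $a^*$ and with image in this field part equal to the image of $\overline{\langle q_0^*\rangle}$, already realizes the \emph{entire} Adams-operation part (since $q_0^*$ topologically generates $\Z_p^\times$ when $p$ is odd, the case $p=2$ being covered by the $\psi^{-1}$-correction above); graph automorphisms realize the graph part and diagonal automorphisms realize $\Outdiag(G^*)$. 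As field and graph automorphisms form a complement in $\Out(G^*)$ to the normal subgroup of diagonal classes, and the parallel splitting holds on the linking-system side, $\kappa_{G^*}$ admits a homomorphic section.

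Finally, given that $\kappa_{G^*}$ is split surjective, the last clause reduces to knowing that $\mu_{G^*}$ is an isomorphism; this holds for all odd $p$ by the cross-characteristic counterpart of \cite[Proposition 4.3]{limz-odd} (equivalently \cite[Theorem C]{O-Ch}), and for $p=2$ when $G^*$ has universal type, because then the $\Outdiag$-related degeneracies responsible for the failure of injectivity of $\mu$ in examples such as $(\PSL_3(2),2)$ do not arise and the Lynd-type argument goes through; then $\4\kappa_{G^*}=\mu_{G^*}\circ\kappa_{G^*}$ is split surjective as well. I expect the principal difficulty to lie in the second step: determining $\Out\typ(\call_{S^*}^c(G^*))$ exactly in cross characteristic (so as neither to over- nor to under-count it), verifying that in the generic case each of its classes is genuinely realized by an automorphism of the finite group $G^*$ --- the matching of field automorphisms with Adams operations being the delicate point --- and in particular handling $p=2$, where $\Z_2^\times$ fails to be procyclic and diagonal automorphisms are less well-behaved, which is precisely why the universal-type restriction is imposed for the statement about $\mu_{G^*}$.
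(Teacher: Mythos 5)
There is a genuine gap, and it is precisely at the point you flag as "delicate": the claimed section of $\kappa_{G^*}$ does not exist for your choice of $G^*$. Your construction keeps the same Lie and isogeny type and the same closed subgroup $\4{\gen{q}}\le\Z_p^\times$, so in particular $\ordp(q^*)=\ordp(q)$. Now take $G=\Sp_{2k}(3^4)$ and $p=5$: your recipe forces $q^*\equiv1\pmod5$ and $G^*=\Sp_{2k}(q^*)$. Here $-\Id$ lies in the Weyl group, and the field automorphism $\psi_{\sqrt{q^*}}$ acts on the maximal torus, hence on the $p$-fusion system, exactly like the (inner) element inverting the torus; so the restriction of $\4\kappa_{G^*}$ to the cyclic group $\Phi_{G^*}$ of field automorphisms has kernel of order $2$ generated by $[\psi_{\sqrt{q^*}}]$, and since $4\mid|\Phi_{G^*}|$ (because $p\equiv1\pmod4$ forces $4\mid b^*$), the surjection $\Out(G^*)\to\Out(S^*,\calf_{S^*}(G^*))$ does \emph{not} split. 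This is Proposition \ref{Ker(kappa)}(b) and Example \ref{ex:nonsplit} in the paper: surjectivity of $\4\kappa_{G^*}$ (your "realizing the whole Adams-operation part") is indeed what Proposition \ref{kappa_onto} gives, but the failure mode is not a missing image class — it is that field automorphisms are not detected faithfully by the fusion system, and the resulting kernel need not be a direct factor of $\Phi_{G^*}$. Since your reduction pins down $\4{\gen{q^*}}$ and the group type, the obstruction cannot be removed by a cleverer choice of $q_0^*$.

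For this reason the paper's proof is structured quite differently: it changes the \emph{group}, not just the characteristic, using the stronger equivalences $\4{\gen{-1,q}}$ of Theorem \ref{OldThA}(c,d) and the cross-type equivalences of Proposition \ref{OldPrA3} (e.g.\ $\Sp_{2n}(q)\sim_p\SL_{2n}(q)$ when $\ordp(q)$ is even, $\Spin_{2n}^\gee(q)\sim_p\Spin_{2n-1}(q)$), organized by Propositions \ref{list-simp} and \ref{G-cases-odd}, precisely so as to land in a case where Proposition \ref{Ker(kappa)} guarantees splitting ($\ordp(q^*)$ even, or $-\Id\notin W$ with $G^*$ Chevalley, or $p\equiv3\pmod4$); the twisted $D_{2n}$ case needs the further switch to $\Spin_{4k-1}$ (Example \ref{ex:nonsplit2}), showing that "same type over a different field" is sometimes impossible. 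Two further points your sketch leaves unaddressed but the theorem requires: the adjoint case is not formal — the paper descends split surjectivity from the universal group via the linking-system central-extension argument of Proposition \ref{univ2adj} — and injectivity of $\mu_{G^*}$ at $p=2$ for universal groups is not a soft "Lynd-type" statement but is proved case by case in the appendix (Propositions \ref{lim1-classical} and \ref{lim1-exceptional}).
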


\begin{proof} 
\noindent\textbf{Case 1: } Assume $p$ is odd and $G$ is of universal type.  
Since $\mu_G$ is an isomorphism by \cite[Theorem C]{limz-odd}, $\kappa_G$ 
or $\kappa_{G^*}$ is (split) surjective if and only if $\4\kappa_G$ or 
$\4\kappa_{G^*}$ is.

By Proposition \ref{G-cases-odd}, we can choose a prime $q_0^*$ and a group 
$G^*\in\Lie(q_0^*)$ such that either 
\begin{enumerate}[label*=(1.\alph*) ,leftmargin=12mm]

\item $G^*\cong\gg(q^*)$ or ${}^2\gg(q^*)$, for some $\gg$ with Weyl group 
$W$ and $q^*$ a power of $q_0^*$, and has a $\sigma$-setup which satisfies the 
conditions in Hypotheses \ref{G-hypoth-X} and \ref{G-hypoth-X2}, and 
\medskip
\begin{enumerate}[label=(1.a.\arabic*) ,leftmargin=6mm,itemsep=4pt]
\item $-\Id\notin W$ and $G^*$ is a Chevalley group, or 
\item $-\Id\in W$ and $q^*$ has even order in $\F_p^\times$, or
\item $p\equiv3$ (mod $p$) and $p|(q^*-1)$; or
\end{enumerate}

\smallskip

\item $p=3$, $q_0^*=2$, $G\cong\lie3D4(q)$ or $\lie2F4(q)$ for $q$ some 
power of $q_0$, and $G^*\cong\lie3D4(q^*)$ or $\lie2F4(q^*)$ for $q^*$ some 
power of $2$.


\end{enumerate}
Also (by the same proposition), if $p=3$ and $G^*=F_4(q^*)$, then we can 
assume $q_0^*=2$. 

In case (1.b), $\4\kappa_{G^*}$ is split surjective by Proposition 
\ref{p:lowrank}. In case (1.a), it is surjective by Proposition 
\ref{kappa_onto}. In case (1.a.1), $\4\kappa_{G^*}$ is split by Proposition 
\ref{Ker(kappa)}(b,c). In case (1.a.3), $\4\kappa_{G^*}$ is split by Proposition 
\ref{Ker(kappa)}(b). In case (1.a.2), if $G^*$ is a Chevalley group, then 
$\4\kappa_{G^*}$ is split by Proposition \ref{Ker(kappa)}(c).

This leaves only case (1.a.2) when $G^*$ is a twisted group. The only 
irreducible root systems which have nontrivial graph automorphisms and for 
which $-\Id\in W$ are those of type $D_n$ for even $n$. Hence 
$G^*=\Spin_{2n}^-(q^*)$ for some even $n\ge4$. 
By the last statement in Proposition \ref{G-cases-odd}, $G^*$ is one of the 
groups listed in Proposition \ref{list-simp}, and so $q^n\equiv-1$ (mod $p$). 
Hence $\4\kappa_{G^*}$ is split surjective by Example \ref{ex:nonsplit2}(a), 
and we are done also in this case.

\smallskip

\noindent\textbf{Case 2: }  Now assume $p=2$ and $G$ is of universal type. 
By Proposition \ref{G-cases-2}, there is an odd prime $q_0^*$, a group 
$G^*\in\Lie(q_0^*)$, and $S^*\in\sylp{G^*}$, such that 
$\calf_S(G)\cong\calf_{S^*}(G^*)$ and $G^*$ has a $\sigma$-setup which 
satisfies Hypotheses \ref{G-hypoth-X} and \ref{G-hypoth-X2}. By the same 
proposition, if $G^*\cong G_2(q^*)$, then we can arrange that $q^*=5$ or 
$q_0^*=3$. If $G^*\cong G_2(5)$, then by Propositions \ref{p:G2(5)} and 
\ref{lim1-exceptional}, $G^*\sim_2 G_2(3)$, $\4\kappa_{G_2(3)}$ is split 
surjective, and $\mu_{G_2(3)}$ is injective.

In all remaining cases (i.e., $G^*\not\cong G_2(q^*)$ or $q_0^*=3$), 
$\4\kappa_{G^*}$ is split surjective by Proposition 
\ref{Ker(kappa)}(a). By Proposition \ref{lim1-classical} or 
\ref{lim1-exceptional}, $\mu_{G^*}$ is injective, and hence $\kappa_{G^*}$ 
is also split surjective.

\smallskip

\noindent\textbf{Case 3: } Now assume $G$ is of adjoint type. Then $G\cong 
G_u/Z$ for some $G_u\in\Lie(q_0)$ of universal type and $Z\le Z(G_u)$. By 
Proposition \ref{p:Gu->Ga}, $Z=Z(G_u)$ and has order prime to $q_0$. 

By Case 1 or 2, there is a prime $q_0^*\ne{}p$ and a group 
$G_u^*\in\Lie(q_0^*)$ of universal type such that 
$G_u^*\sim_pG_u$ and $\kappa_{G_u^*}$ is split surjective. Also, $G_u^*$ is 
$p$-perfect by definition of $\Lie(q_0^*)$ (and since $q_0^*\ne{}p$), and 
$H^2(G_u^*;\Z/p)=0$ by Proposition \ref{p:Gu->Ga}. Set 
$G^*=G_u^*/Z(G_u^*)$. By Proposition \ref{univ2adj}, with 
$G_u^*/O_{p'}(G_u^*)$ in the role of $G$, $\kappa_{G^*}$ is also split 
surjective.

It remains to check that $G\sim_pG^*$. Assume first that $G_u$ and $G_u^*$ 
have $\sigma$-setups which satisfy Hypotheses \ref{G-hypoth-X}. Fix 
$S\in\sylp{G_u}$ and $S^*\in\sylp{G_u^*}$, and a fusion preserving 
isomorphism $\varphi\:S\Right2{}S^*$ (Definition \ref{d:aut(L)}(a)). By 
Corollary \ref{c:Z*}, $Z(\calf_S(G_u))=O_p(Z(G_u))$ and 
$Z(\calf_{S^*}(G_u^*))=O_p(Z(G_u^*))$. Since $\varphi$ is fusion 
preserving, it sends $Z(\calf_S(G_u))$ onto $Z(\calf_{S^*}(G_u^*))$, and 
thus sends $O_p(Z(G_u))$ onto $O_p(Z(G_u^*))$. Hence $\varphi$ induces a 
fusion preserving isomorphism between Sylow subgroups of $G=G_u/Z(G_u)$ and 
$G^*=G_u^*/Z(G_u^*)$.

The only cases we considered where $G$ or $G^*$ does not satisfy Hypotheses 
\ref{G-hypoth-X} were those in case (1.b) above. In those cases, 
$G\cong\lie2F4(q)$ or $\lie3D4(q)$ and $G^*\cong\lie2F4(q^*)$ or 
$\lie3D4(q^*)$ for some $q$ and $q^*$, hence $G$ and $G^*$ are also of 
universal type ($d=1$ in the notation of \cite[Lemma 14.1.2(iii)]{Carter}), 
and so there is nothing more to prove. 
\end{proof}

The last statement in Theorem \ref{ThX} is \emph{not} true in general when 
$G^*$ is of adjoint type. For example, if $G^*\cong\PSL_2(9)$, $p=2$, and 
$S^*\in\syl2{G^*}$, then $\Out(G^*)\cong\Out\typ(\call^c_{S^*}(G^*))\cong 
C_2^2$, while $\Out(S^*,\calf_{S^*}(G^*))\cong C_2$. By comparison, if 
$\til{G}^*\cong\SL_2(9)$ is the universal group, then 
$\Out(\til{S}^*,\calf_{\til{S}^*}(\til{G}^*))\cong C_2^2$, and 
$\kappa_{\til{G}^*}$ and $\mu_{\til{G}^*}$ are isomorphisms.

As noted briefly above, a fusion system $\calf_S(G)$ is called \emph{tame} 
if there is a finite group $G^*$ such that $G^*\sim_pG$ and $\kappa_{G^*}$ 
is split surjective. In this situation, we say that $G^*$ \emph{tamely 
realizes} the fusion system $\calf_S(G)$. By \cite[Theorem B]{AOV1}, if 
$\calf_S(G)$ is not tame, then some extension of it is an ``exotic'' fusion 
system; i.e., an abstract fusion system not induced by any finite group.  
(See Section \ref{s:tame} for more details.) The original goal of this 
paper was to determine whether all fusion systems of simple groups of Lie 
type (at all primes) are tame, and this follows as an immediate consequence 
of Theorems \ref{ThE} and \ref{ThX}.  Hence this approach cannot be used to 
construct new, exotic fusion systems.

Determining which simple fusion systems over finite $2$-groups are tame, 
and tamely realizable by finite simple groups, plays an important role in 
Aschbacher's program for classifying simple fusion systems over $2$-groups 
(see \cite[Part II]{AKO} or \cite{A-gfit}). Given such a fusion system 
$\calf$ over a $2$-group $S$, and an involution $x\in S$, assume that the 
centralizer fusion system $C_\calf(x)$ contains a normal quasisimple 
subsystem $\cale\nsg C_\calf(x)$. If $\cale$ is tamely realized by a finite 
simple group $K$, then under certain additional assumptions, one can show 
that the entire centralizer $C_\calf(x)$ is the fusion system of some 
finite extension of $K$. This is part of our motivation for looking at this 
question, and is also part of the reason why we try to give as much 
information as possible as to which groups tamely realize which fusion 
systems.

\begin{Th} \label{ThT}
For any prime $p$ and any $G\in\Lie$ of universal or adjoint type, the 
$p$-fusion system of $G$ is tame. If the Sylow $p$-subgroups of $G$ are 
nonabelian, or if $p$ is the defining characteristic and 
$G\not\cong\Sz(2)$, then its fusion system is tamely realized by some 
other group in $\Lie$. 
\end{Th}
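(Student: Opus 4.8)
The plan is to deduce Theorem \ref{ThT} as a formal consequence of Theorems \ref{ThE} and \ref{ThX}, splitting into the defining-characteristic case and the cross-characteristic case. First I would dispose of the defining-characteristic case: if $p$ is the defining characteristic of $G\in\Lie(p)$ and $G$ is of universal or adjoint type, then Theorem \ref{ThE} tells us that $\4\kappa_G$ is an isomorphism whenever $(G,p)\not\cong(\Sz(2),2)$, and in particular $\kappa_G$ is split surjective (being an isomorphism) except when $G\cong\PSL_3(2)$, where $\kappa_G$ is still surjective and, since $\Out(G)\cong C_2$ and $\Out\typ(\call_S^c(G))\cong C_2^2$, we need a splitting of $\kappa_G$; this splitting exists because any surjection onto a group with a complement splits, but here one must check that $\kappa_{\PSL_3(2)}$ is actually split — the image $\Out(\PSL_3(2))\cong C_2$ sits inside $C_2^2$ as a direct factor, so a retraction exists. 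Thus in the equi-characteristic case $\calf_S(G)$ is tame, realized by $G$ itself, except possibly $(G,p)=(\Sz(2),2)$. For $\Sz(2)$ at $p=2$: here $S\in\syl2{\Sz(2)}$ is the quaternion-free... actually $S\cong C_4$ is cyclic of order $4$, so $S$ is abelian and the group $\Sz(2)\cong C_5\rtimes C_4$ has $S$ controlling its own fusion (Burnside), giving $\calf_S(\Sz(2))=\calf_S(S)$, and this fusion system is also realized by $C_4$ itself with $\kappa_{C_4}$ an isomorphism. So even $\Sz(2)$ is tame, just not tamely realized by another group in $\Lie$ — which is exactly the exception recorded in the theorem statement.

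Next I would handle the cross-characteristic case, i.e. $p\ne q_0$ where $G\in\Lie(q_0)$. If the Sylow $p$-subgroups of $G$ are nonabelian, Theorem \ref{ThX} applies directly: it produces a prime $q_0^*\ne p$ and a group $G^*\in\Lie(q_0^*)$ of the same (universal or adjoint) type with $G^*\sim_pG$ and $\kappa_{G^*}$ split surjective. That is precisely the assertion that $\calf_S(G)$ is tamely realized by the group $G^*\in\Lie$. This covers all the nonabelian-Sylow cross-characteristic cases at once, and gives the stronger conclusion (realization by another group in $\Lie$) asserted in the second sentence of the theorem.

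The remaining case is $p\ne q_0$ with $S\in\sylp{G}$ abelian. Here the argument is different: when $S$ is abelian, $\calf_S(G)$ is controlled by $N_G(S)/C_G(S)$ acting on $S$ (by Burnside's fusion theorem, since an abelian Sylow subgroup controls its own fusion up to the normalizer action), so $\calf_S(G)$ is the fusion system of the semidirect product $S\rtimes W_0$ where $W_0 = N_G(S)/C_G(S)$. I would then invoke a tameness criterion for fusion systems with abelian Sylow subgroup. The cleanest route is: for $\calf$ over an abelian $S$, $\Aut(S,\calf) = \Out(S,\calf)$ and one checks that $\call_S^c(\calf)$ has $\Out\typ(\call_S^c(\calf))\cong \Out(W_0)\times \text{(something from }H^*)$; the relevant fact — stated or citable from the framework of the paper (it is essentially a consequence of the Broto–Levi–Oliver obstruction theory and the vanishing of the relevant higher limits for abelian $S$) — is that $\kappa_{S\rtimes W_0}$ is split surjective. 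Since $S\rtimes W_0$ is generally not a group in $\Lie$, this explains why the second sentence of the theorem excludes the abelian-Sylow cross-characteristic case from the "realized by another group in $\Lie$" conclusion, while the first sentence (mere tameness) still holds. Concretely, I would cite the result that constrained fusion systems — and any fusion system over an abelian $p$-group is constrained, with model $S\rtimes W_0$ — are tamely realized by their models; this is a known consequence of \cite[Theorem B]{AOV1} or can be extracted from the obstruction-theoretic machinery in Section \ref{s:tame}.

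The main obstacle is the careful bookkeeping around the two genuine exceptions and the abelian case. Specifically: (i) verifying that $\kappa_{\PSL_3(2)}$ at $p=2$ is split (not merely surjective) — this needs the explicit structure of $\Out\typ(\call_S^c(\PSL_3(2)))\cong C_2^2$ and the identification of the image of $\kappa$ as a direct factor, which one reads off from the computations underlying Theorem \ref{ThE}; and (ii) for the abelian-Sylow cross-characteristic case, pinning down precisely which tameness result for constrained (equivalently, abelian-type) fusion systems to invoke and confirming its hypotheses are met — here the only subtlety is that $S\rtimes W_0$ must be checked to be a finite group realizing $\calf_S(G)$ with $S$ Sylow, which is immediate, and that $\kappa$ for such a group splits, which follows because the extension of fusion data by automorphisms is unobstructed when $S$ is abelian. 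Everything else is a direct quotation of Theorems \ref{ThE} and \ref{ThX}. I would organize the written proof as: first the defining-characteristic case citing Theorem \ref{ThE} and disposing of $\PSL_3(2)$ and $\Sz(2)$; then the nonabelian cross-characteristic case citing Theorem \ref{ThX}; then the abelian cross-characteristic case via the model $S\rtimes(N_G(S)/C_G(S))$ and the tameness of constrained fusion systems.
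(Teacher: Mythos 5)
Your overall strategy is the paper's: quote Theorem \ref{ThE} in defining characteristic, Theorem \ref{ThX} for nonabelian Sylow subgroups in cross characteristic, and handle abelian Sylow subgroups via control of fusion by $N_G(S)$ and tameness of constrained fusion systems (this last step is exactly Proposition \ref{p:constrained}(b), which the paper proves and cites here; your model $S\rtimes W_0$ agrees with the paper's $N_G(S)/O_{p'}(C_G(S))$ by Schur--Zassenhaus). Your treatment of $\Sz(2)$ is also fine.

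The genuine gap is your handling of the exceptional case $G\cong\PSL_3(2)=\SL_3(2)$ at $p=2$. You assert that $\kappa_G$ is ``still surjective'' and then try to split it; but $\Out(G)\cong C_2$ while $\Out\typ(\call_S^c(G))\cong C_2^2$, so $\kappa_G$ cannot possibly be surjective (indeed the paper, following \cite[Proposition 7.9]{BLO1}, records precisely that $\kappa_{\PSL_3(2)}$ fails to be onto; this is why $\PSL_3(2)$ is an exception in Theorem \ref{ThE} at all). Your remark that ``the image $C_2$ sits inside $C_2^2$ as a direct factor, so a retraction exists'' confuses splitting the inclusion of the image with split surjectivity of $\kappa_G$, which is what tame realization by $G$ itself would require. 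So $\PSL_3(2)$ does \emph{not} tamely realize its own $2$-fusion system, and your argument leaves this case unproved. The correct fix, and what the paper does, is to realize this fusion system by a different group: the $2$-fusion system of $\SL_3(2)$ (Sylow $2$-subgroups dihedral of order $8$) is isomorphic to that of $\PSL_2(9)\cong A_6$, and $\kappa_{\PSL_2(9)}$ is an isomorphism, so $\PSL_2(9)\in\Lie$ tamely realizes it. With that substitution your proof matches the paper's; everything else is a direct application of Theorems \ref{ThE}, \ref{ThX}, and Proposition \ref{p:constrained}.
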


\begin{proof} If $S\in\sylp{G}$ is abelian, then the $p$-fusion in $G$ is 
controlled by $N_G(S)$, and $\calf_S(G)$ is tame by Proposition 
\ref{p:constrained}.  If $p=2$ and $G\cong\SL_3(2)$, then the fusion system 
of $G$ is tamely realized by $\PSL_2(9)$. In all other cases, the claims 
follow from Theorems \ref{ThE} and \ref{ThX}. 
\end{proof}

We have stated the above three theorems only for groups of Lie type, but in 
fact, we proved at the same time the corresponding results for the Tits 
group:

\begin{Th} \label{ThTits}
Set $G=\lie2F4(2)'$ (the Tits group). Then for each prime $p$, the 
$p$-fusion system of $G$ is tame. If $p=2$ or $p=3$, then $\kappa_G$ is an 
isomorphism.
\end{Th}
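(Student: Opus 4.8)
The plan is to treat the Tits group $G=\lie2F4(2)'$ exactly as a small-rank group of Lie type, handling the three ``interesting'' primes $p\in\{2,3\}$ first and then the remaining primes by a general constrained-fusion argument. For $p\geq5$, note that $|G|=2^{11}\cdot3^3\cdot5^2\cdot13$, so a Sylow $p$-subgroup of $G$ is cyclic (in fact of order $25$ or $13$) and in particular abelian; then $p$-fusion is controlled by $N_G(S)$, and $\calf_S(G)$ is tame by Proposition \ref{p:constrained} (the same reference used in the proof of Theorem \ref{ThT} for the abelian case). So the only real content is $p=2$ and $p=3$.

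For $p=3$: a Sylow $3$-subgroup of $G$ has order $27$. I would first identify its isomorphism type and the fusion system; since $G\in\Lie(2)$ and $3\neq2$, the results of Sections handling odd primes apply, and $G=\lie2F4(2)'$ appears among the low-rank exceptional groups. Concretely, I expect $\calf_S(G)$ to coincide with that of $\lie2F4(2)$ restricted appropriately, or with a known $3$-fusion system, and I would invoke Proposition \ref{Ker(kappa)} together with the injectivity statement from Proposition \ref{lim1-exceptional} (analogous to the $G_2$ case in Case 2 of the proof of Theorem \ref{ThX}) to conclude that $\4\kappa_G=\mu_G\circ\kappa_G$ is split surjective, hence (using injectivity of $\mu_G$) that $\kappa_G$ is split surjective; combined with injectivity of $\kappa_G$ from Lemma \ref{kappa_inj} this gives $\kappa_G$ an isomorphism. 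The $3$-local structure of $\lie2F4(2)'$ is genuinely exceptional, so the work here is checking that the hypotheses of Propositions \ref{Ker(kappa)} and \ref{lim1-exceptional} — or whichever proposition the excerpt reserves for the Tits group (see the reference to ``Tits-tame'' in the proof of Theorem \ref{ThE}) — do in fact apply, i.e.\ that $G$ admits a $\sigma$-setup of the required form.

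For $p=2$ (the defining characteristic): here $\lie2F4(2)'$ is a group of Lie type in its defining characteristic, so Theorem \ref{ThE}-type reasoning is the right framework, but $\lie2F4(2)'$ is \emph{not} literally $\gg(q)$ and must be handled as one of the explicit small exceptions. The proof of Theorem \ref{ThE} already cites Proposition \ref{Tits-tame} for exactly this group in the Lie-rank-two equi-characteristic case, establishing that $\kappa_G$ is onto; injectivity of $\4\kappa_G$ (and hence of $\kappa_G$) comes from Lemma \ref{kappa_inj}, and the needed statement that $\mu_G$ is an isomorphism for this group should come from the same source (\cite{limz} or the case-by-case verification in \cite{limz-odd}) or be part of Proposition \ref{Tits-tame} itself. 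Putting these together yields that $\kappa_G$ is an isomorphism for $p=2$, and in particular the $2$-fusion system is tame.

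Finally I would assemble the statement: tameness for all $p$ follows since in every case we have exhibited $\kappa_G$ (for $p\in\{2,3\}$) or $\kappa_{G^*}$ for a controlled-fusion model (for $p\geq5$) split surjective; and for $p\in\{2,3\}$ we have in fact shown $\kappa_G$ is an isomorphism, which is the second assertion. The main obstacle I anticipate is the $p=3$ case: unlike the defining-characteristic case, there is no off-the-shelf citation, so one must pin down the $3$-fusion system of $\lie2F4(2)'$ precisely and verify by hand that it fits the $\sigma$-setup hypotheses (Hypotheses \ref{G-hypoth-X}, \ref{G-hypoth-X2}) used throughout the odd-prime analysis, or else that it matches the $3$-fusion system of some better-behaved group of Lie type to which those propositions already apply — this is the one place a genuinely ad hoc computation with the exceptional group seems unavoidable.
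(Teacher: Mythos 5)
Your overall decomposition by primes is the paper's: for $p>3$ the Sylow $p$-subgroups are abelian and tameness follows from Proposition \ref{p:constrained}(b), and for $p=2$ the citation of Proposition \ref{Tits-tame} (which states outright that $\kappa_G$ is an isomorphism for $\lie2F4(2)'$) is exactly what the paper does. The gap is your $p=3$ case. The route you sketch — verify Hypotheses \ref{G-hypoth-X}/\ref{G-hypoth-X2} for a $\sigma$-setup of $G$ and then invoke Propositions \ref{Ker(kappa)} and \ref{lim1-exceptional} — cannot work: Hypotheses \ref{G-hypoth-X} explicitly exclude the Suzuki and Ree groups (the endomorphism $\sigma=\psi_q\circ\gamma$ is required to have $\gamma$ an algebraic automorphism, which rules out the $\lie2F4$ setup), and Proposition \ref{lim1-exceptional} concerns only $G_2(q),F_4(q),E_6(q),E_7(q),E_8(q)$ in odd characteristic, so neither applies to $\lie2F4(2)'$. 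Nor can you dodge this by replacing $G$ with a $3$-locally equivalent ``better-behaved'' group: the second assertion of the theorem is that $\kappa_G$ itself is an isomorphism at $p=3$, so one must compute with $G$ directly. You correctly sense that an ad hoc computation is unavoidable, but your proposal stops there, and that computation is precisely the content of the statement at $p=3$.

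The paper supplies it in Proposition \ref{p:lowrank}: a Sylow $3$-subgroup $S$ of the Tits group is extraspecial of order $3^3$ and exponent $3$, so $\Out(S)\cong\GL_2(3)$; from the structure $\SU_3(2){:}2\cong3^{1+2}_+{:}\SD_{16}$ of the normalizer of a $3$-element in $\lie2F4(2)$ (Shinoda, Malle) one gets $\Out_G(S)\cong D_8$ and $\Out_{\Aut(G)}(S)\cong\SD_{16}$, whence $\Out(S,\calf)\le N_{\Out(S)}(\Out_G(S))/\Out_G(S)$ has order at most $2$ and $\4\kappa_G$ carries $\Out(G)\cong C_2$ isomorphically onto $\Out(S,\calf)$; injectivity of $\mu_G$ at odd primes is then quoted from \cite[Theorem C]{limz-odd} (not from Proposition \ref{lim1-exceptional}), giving that $\kappa_G$ is an isomorphism. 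Your proposal would be complete once the $p=3$ step is replaced by such an argument; as written, the key step is missing and the cited machinery would fail on this group.
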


\begin{proof} The second statement is shown in Proposition \ref{Tits-tame} 
when $p=2$, and in Proposition \ref{p:lowrank} when $p=3$. When $p>3$, the 
Sylow $p$-subgroups of $G$ are abelian 
($|G|=2^{11}\cdot3^3\cdot5^2\cdot13$), so $G$ is tame by Proposition 
\ref{p:constrained}(b). 
\end{proof}

As one example, if $p=2$ and $G=\PSL_2(17)$, then $\kappa_G$ is not 
surjective, but $G^*=\PSL_2(81)$ (of adjoint type) has the same $2$-fusion 
system and $\kappa_{G^*}$ is an isomorphism \cite[Proposition 7.9]{BLO1}.  
Also, $\4\kappa_{G^*}$ is non-split surjective with kernel generated by the 
field automorphism of order two by \cite[Lemma 7.8]{BLO1}.  However, if we 
consider the universal group $\til{G}^*=\SL_2(81)$, then 
$\4\kappa_{\til{G}^*}$ and $\kappa_{\til{G}^*}$ are both isomorphisms by 
\cite[Proposition 5.5]{BL} (note that $\Out(S,\calf)=\Out(S)$ in this 
situation). 

As another, more complicated example, consider the case where $p=41$ and 
$G=\Spin_{4k}^-(9)$. By \cite[(3.2)--(3.6)]{Steinberg-aut}, 
$\Outdiag(G)\cong C_2$, and $\Out(G)\cong C_2\times C_4$ is generated by a 
diagonal element of order 2 and a field automorphism of order $4$ (whose 
square is a graph automorphism of order $2$). Also, $\mu_G$ is an 
isomorphism by Proposition \ref{lim1-classical}, so $\kappa_G$ is 
surjective, or split surjective, if and only if $\4\kappa_G$ is. We refer 
to the proof of Lemma \ref{classical(III.3)}, and to Table \ref{tb:III.2a} 
in that proof, for details of a $\sigma$-setup for $G$ in which the 
normalizer of a maximal torus contains a Sylow $p$-subgroup $S$. In 
particular, $S$ is nonabelian if $k\ge41$. By Proposition 
\ref{Ker(kappa)}(d) and Example \ref{ex:nonsplit2}(a,b), when $k\ge41$, 
$\4\kappa_G$ is surjective, $\4\kappa_G$ is split (with 
$\Ker(\4\kappa_G)=\Outdiag(G)$) when $k$ is odd, and $\4\kappa_G$ is not 
split ($\Ker(\4\kappa_G)\cong C_2\times C_2$) when $k$ is even. By 
Proposition \ref{OldPrA3}(c), when $k$ is even, $G\sim_{41}G^*$ for 
$G^*=\Spin_{4k-1}(9)$, and $\kappa_{G^*}$ is split surjective (with 
$\Ker(\kappa_{G^*})=\Outdiag(G^*)$) by Proposition \ref{Ker(kappa)}(c). 
Thus $\calf_S(G)$ is tame in all cases: tamely realized by $G$ itself when 
$k$ is odd and by $\Spin_{4k-1}(9)$ when $k$ is even. Note that when $k$ is 
odd, since the graph automorphism does not act trivially on any Sylow 
$p$-subgroup, the $p$-fusion system of $G$ (equivalently, of 
$\SO_{4k}^-(9)$) is not isomorphic to that of the full orthogonal group 
$\GO_{4k}^-(9)$, so by \cite[Proposition A.3(b)]{BMO1}, 
it is not isomorphic to that of $\Spin_{4k+1}(9)$ either 
(nor to that of $\Spin_{4k-1}(9)$ since its Sylow $p$-subgroups are 
smaller).

Other examples are given in Examples \ref{ex:nonsplit} and 
\ref{ex:nonsplit2}. For more details, in the situation of Theorem 
\ref{ThX}, about for which groups $G$ the homomorphism $\4\kappa_G$ is 
surjective or split surjective, see Propositions \ref{kappa_onto} 
and \ref{Ker(kappa)}. 

The following theorem was shown while proving Theorem \ref{ThX}, and 
could be of independent interest. The case where $p$ is odd was handled by 
Gorenstein and Lyons \cite[10-2(1,2)]{GL}. 

\begin{Th} \label{ThJ}
Assume $G\in\Lie(q_0)$ is of universal type for some odd prime $q_0$. Fix 
$S\in\syl2{G}$. Then $S$ contains a unique abelian subgroup of maximal 
order, \emph{except} when $G\cong\Sp_{2n}(q)$ for some $n\ge1$ and some 
$q\equiv\pm3$ (mod $8$). 
\end{Th}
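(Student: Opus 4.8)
The plan is to reduce to the case where $G$ is a quasisimple classical or exceptional group with a known $\sigma$-setup of the kind in Hypotheses~\ref{G-hypoth-X}, and then compute the Sylow $2$-subgroup $S$ explicitly inside the normalizer of a maximal torus whenever $2$ divides $q-\epsilon$ for the relevant sign $\epsilon$, and inside a single ``Lie-type factor'' otherwise. Recall that for $G\in\Lie(q_0)$ with $q_0$ odd, the order of $q_0$ controls whether a Sylow $2$-subgroup sits inside the normalizer of a maximal torus (when the $2$-part of $q-1$ or $q+1$ is large relative to $|W|_2$) or is itself of Lie type in the ``exceptional'' small cases $\SL_2(q)$, $\lie2B2$, $G_2$, etc. The strategy is: (i) handle the rank-$1$ groups and the short list of small exceptional cases directly from their known Sylow structure; (ii) for the generic classical groups, realize $S$ as (a Sylow $2$-subgroup of) the normalizer of a homocyclic maximal torus $T$, so that $S = T_0 \rtimes W_0$ where $T_0 = O_2(T)$ is homocyclic and $W_0 \in \syl2W$ acts as signed permutations; (iii) within such an $S$, identify the abelian subgroups of maximal order with $T_0$ (or a canonical modification of it) and check uniqueness by a Thompson-style argument.

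**First I would** dispose of the genuinely exceptional family in the statement. For $\Sp_{2n}(q)$ with $q\equiv\pm3\pmod 8$, the $2$-part of $q^2-1$ is exactly $2^3\cdot(\text{odd})$ split as $|q-1|_2\cdot|q+1|_2 = 2\cdot 4$ or $4\cdot 2$, so the torus normalizer contributes a Sylow $2$-subgroup built from $\Sp_2(q)\cong\SL_2(q)$ blocks wreathed by the hyperoctahedral Weyl group; because $\SL_2(q)$ itself has generalized quaternion Sylow $2$-subgroups when $q\equiv\pm3\pmod 8$ (no large abelian piece), the competition between the ``torus'' abelian subgroup and the abelian subgroups coming from products of $\gen{-I}$-type central involutions in the $\SL_2$ blocks produces \emph{two} abelian subgroups of the same maximal order. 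I would exhibit these two subgroups explicitly (one is $\prod Z(\SL_2(q))$ together with a maximal abelian piece of the wreathing $2$-group, the other swaps an $\SL_2$ block for a Klein four subgroup), which is exactly the obstruction to uniqueness; this also shows the exception is necessary.

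**Next I would** treat the generic case. For $G$ classical of type $A_n$, $B_n$, $C_n$, $D_n$, $\lie2A_n$, $\lie2D_n$ (excluding symplectic with $q\equiv\pm3\pmod 8$), pick the $\sigma$-setup so that a maximal torus $T$ with $O_2(T)$ homocyclic of rank equal to the Lie rank is contained in $S$, and write $S = O_2(T)\rtimes W_0$ with $W_0\in\syl2W$. Since $|q\mp1|_2 \ge 4$ in the relevant cases, each cyclic factor $C$ of $O_2(T)$ has $|C|\ge 4 > 2 = |\gen{\text{the reflection inverting } C}|$, so any abelian subgroup $A\le S$ must project trivially to each ``inverting'' generator of $W_0$ and hence, by an inductive argument on the Weyl group (the $2$-group $W_0$ acting on $O_2(T)$ has no nontrivial fixed-point-free-on-a-large-cyclic-factor elements that can be balanced), $A\cap O_2(T)$ has index at most... — here one shows $|A| \le |O_2(T)|$ with equality forcing $A = O_2(T)$. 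The key inequality is that replacing a cyclic factor of order $2^a$ ($a\ge 2$) by its involutions plus a transposition in $W_0$ strictly decreases order, whereas in $\Sp_{2n}(q)$ with $a=1$ it does not. For the exceptional groups $G_2, F_4, E_6, {}^2E_6, E_7, E_8, {}^3D_4$, I would again use a homocyclic torus (when the $2$-part of the relevant $q^k\mp1$ is at least $4$, which holds outside small cases already covered) and the same argument, or cite the explicit Sylow descriptions.

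**The hard part will be** the uniqueness half of the generic argument — i.e., ruling out a ``sporadic'' abelian subgroup of maximal order not equal to $O_2(T)$, when $O_2(T)$ is homocyclic with cyclic factors of order $\ge 4$. The danger is a subgroup that trades part of the torus for elements of $W_0$ in a way that keeps the order maximal; the clean way to handle this is a counting/commutator argument: if $A$ is abelian of maximal order and $A\not\le O_2(T)$, let $\bar A$ be its image in $W_0$ and note each nontrivial $\bar w\in\bar A$ acts on $O_2(T)$ with $|C_{O_2(T)}(\bar w)|\le |O_2(T)|/2$ (strict, using that some cyclic factor of order $\ge 4$ is moved nontrivially since $q-\epsilon$ has $2$-part $\ge 4$), so $|A| \le |\bar A|\cdot|C_{O_2(T)}(\bar A)| \le |\bar A|\cdot |O_2(T)|/|\bar A| = |O_2(T)|$ with equality only if $\bar A = 1$. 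This is precisely where the hypothesis $q\not\equiv\pm3\pmod 8$ for $\Sp_{2n}$ enters: there the moved cyclic factor has order $2$, so $|C_{O_2(T)}(\bar w)|$ can equal $|O_2(T)|$ on a complementary piece and the inequality is no longer strict, permitting the second maximal abelian subgroup. I would organize the proof so that this dichotomy (cyclic factor order $=2$ versus $\ge 4$) is the single pivot, making both the generic uniqueness and the symplectic exception fall out of the same computation.
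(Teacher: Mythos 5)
Your overall framework—placing $S$ in the normalizer of a maximal torus $T$ and showing that $A=O_2(T)$ is the unique abelian subgroup of maximal order—is the same as the paper's (Proposition \ref{p:AcharS}(a), applied after a reduction to $q\equiv1\pmod4$), but your uniqueness step contains a genuine gap. From the elementwise bound ``$|C_{O_2(T)}(\bar w)|\le|O_2(T)|/2$ for each nontrivial $\bar w$ in the image $B$ of a putative second maximal abelian subgroup'' you pass to $|C_{O_2(T)}(B)|\le|O_2(T)|/|B|$. That implication is not valid: fixed-point subgroups of the individual elements of $B$ need not be ``independent,'' so elementwise index bounds do not multiply. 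The inequality $|B|\cdot|C_{O_2(T)}(B)|<|O_2(T)|$ for every nontrivial abelian $B\le W_0$ is exactly the statement that the $W_0$-action on $A$ has no nontrivial offender, and proving it is the entire technical content of the theorem; it cannot be extracted from the one-line chain you wrote. (Your parenthetical ``strict'' claim is likewise unproven, and elementwise strictness is not what separates the symplectic case: the relevant dichotomy lives at the level of subgroups, via quadratic action on $A$, not at the level of single involutions.) In the paper this is done by showing that a second maximal abelian subgroup yields a nontrivial best offender $B\le W_0$ on $A$, invoking Timmesfeld's replacement theorem to replace it by a \emph{quadratic} best offender, and then a lattice argument: a quadratic offender forces a splitting $\Z\Sigma^\vee=\Lambda_+\times\Lambda_-$ under an involution of $W$, which by Lemma \ref{l:Lambda} happens only for $\gg=C_n$, and even there $v_2(q-1)\ge3$ contradicts quadraticity—leaving precisely $\Sp_{2n}(q)$ with $v_2(q^2-1)=3$. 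A separate branch is needed when $B$ does not act faithfully on $\Omega_1(A)$. Some argument of this strength must replace your counting step.

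There are also two structural points you would have to supply. First, the uniform description $S=O_2(T)\rtimes W_0$ with $W_0\in\syl2{W}$ acting by signed permutations is not available as stated: for $q\equiv3\pmod4$ the split torus has too small a $2$-part (so one must either pass to a twisted torus, for which $N_G(T)/T$ is no longer $W$, or, as the paper does, replace $G$ by a $2$-fusion-equivalent group ${}^t\gg(q^*)$ with $q^*\equiv1\pmod4$ via Theorem \ref{OldThA}, after disposing of $\lie2G2(q)$, whose Sylow $2$-subgroups are abelian, and $\lie3D4(q)$, whose Sylow $2$-subgroups agree with those of $G_2(q)$); and the Steinberg groups $\SU_n(q)$, $\Spin_{2n}^-(q)$, $\lie2E6(q)$ need a separate treatment (as in Case 3 of the proof of Proposition \ref{p:AcharS}, where for $\SU_{2n}(q)$ the semidihedral blocks and the determinant condition change the picture). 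Second, exhibiting two abelian subgroups of maximal order in $\Sp_{2n}(q)$ for $q\equiv\pm3\pmod8$ is not required by the statement, which only asserts uniqueness outside that family, so that part of your plan is harmless but extra.
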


\begin{proof} Assume $S$ is nonabelian; otherwise there is nothing to 
prove. Since $q_0$ is odd, and since the Sylow $2$-subgroups of 
$\lie2G2(3^{2k+1})$ are abelian for all $k\ge1$ \cite[Theorem 8.5]{Ree-G2}, 
$G$ must be a Chevalley or Steinberg group. If $G\cong\lie3D4(q)$, then 
(up to isomorphism) $S\in\syl2{G_2(q)}$ by \cite[Example 4.5]{BMO1}. 
So we can assume that $G\cong{}^r\gg(q)$ for some odd prime power $q$, 
some $\gg$, and $r=1$ or $2$. 

If $q\equiv3$ (mod $4$), then choose another prime power $q^*\equiv1$ (mod 
$4$) such that $v_2(q^*-1)=v_2(q+1)$ (where $v_2(m)=k$ if $2^k|n$ and 
$2^{k+1}\nmid n$). Then $\4{\gen{q^*}}=\4{\gen{-q}}$ and 
$\4{\gen{-q^*}}=\4{\gen{q}}$ as closed subgroups of $(\Z_2)^\times$. By 
\cite[Theorem A]{BMO1} (see also Theorem \ref{OldThA}), there is a group 
$G^*\cong{}^t\gg(q^*)$ (where $t\le2$) whose 2-fusion system is equivalent 
to that of $G$. We can thus assume that $q\equiv1$ (mod $4$). So by Lemma 
\ref{case(III.1)}, $G$ has a $\sigma$-setup which satisfies Hypotheses 
\ref{G-setup-X}. By Proposition \ref{p:AcharS}(a), $S$ contains a unique 
abelian subgroup of maximal order, unless $q\equiv5$ (mod $8$) and 
$G\cong\Sp_{2n}(q)$ for some $n\ge1$. 
\end{proof}

In fact, when $G\cong\Sp_{2n}(q)$ for $q\equiv\pm3$ (mod $8$), then 
$S\in\syl2{G}$ is isomorphic to $(Q_8)^n\rtimes P$ for 
$P\in\syl2{\Sigma_n}$, $S$ contains $3^n$ abelian subgroups of maximal 
order $2^{2n}$, and all of them are conjugate to each other in $N_G(S)$.


The main definitions and results about tame and reduced fusion systems are 
given in Section \ref{s:tame}.  We then set up our general notation for 
finite groups of Lie type in Sections \ref{s:not} and \ref{s:aut}, deal 
with the equicharacteristic case in Section \ref{s:=}, and with the cross 
characteristic case in Sections \ref{s:X1} and \ref{s:X2}. The kernel of 
$\mu_G$, and thus the relation between automorphism groups of the fusion 
and linking systems, is handled in an appendix. 

\smallskip

\textbf{Notation: } In general, when $\calc$ is a category and 
$x\in\Ob(\calc)$, we let $\Aut_\calc(x)$ denote the group of automorphisms 
of $x$ in $\calc$.  When $\calf$ is a fusion system and $P\in\Ob(\calf)$, 
we set $\outf(P)=\autf(P)/\Inn(P)$.  

For any group $G$ and $g\in{}G$, $c_g\in\Aut(G)$ denotes the automorphism 
$c_g(h)=ghg^{-1}$.  Thus for $H\le G$, $\9gH=c_g(H)$ and $H^g=c_g^{-1}(H)$.  
When $G,H,K$ are all subgroups of a group $\Gamma$, we define 
	\begin{align*} 
	T_G(H,K) &= \{g\in{}G\,|\,\9gH\le K\} \\
	\Hom_G(H,K) &= \{c_g\in\Hom(H,K) \,|\, g\in{}T_G(H,K)\}\,. 
	\end{align*}
We let $\Aut_G(H)$ be the group $\Aut_G(H)=\Hom_G(H,H)$.  When $H\le 
G$ (so $\Aut_G(H)\ge\Inn(H)$), we also write $\Out_G(H)=\Aut_G(H)/\Inn(H)$.  


\bigskip
\newpage

\newsect{Tame and reduced fusion systems}
\label{s:tame}

Throughout this section, $p$ always denotes a fixed prime. Before defining 
tameness of fusion systems more precisely, we first recall the definitions 
of fusion and linking systems of finite groups, and of automorphism groups 
of fusion and linking systems. 

\begin{Defi} \label{d:F&L}
Fix a finite group $G$ and a Sylow $p$-subgroup $S\le G$. 
\begin{enuma}

\item The \emph{fusion system} of $G$ is the category $\calf_S(G)$ whose 
objects are the subgroups of $S$, and where 
$\Mor_{\calf_S(G)}(P,Q)=\Hom_G(P,Q)$ for each $P,Q\le S$. 

\item A subgroup $P\le S$ is \emph{$p$-centric in $G$} if 
$Z(P)\in\sylp{C_G(P)}$; equivalently, if $C_G(P)=Z(P)\times C'_G(P)$ for a 
(unique) subgroup $C'_G(P)$ of order prime to $p$.

\item The \emph{centric linking system} of $G$ is the category 
$\call_S^c(G)$ whose objects are the $p$-centric subgroups of $G$, and 
where $\Mor_{\call_S^c(G)}(P,Q)=T_G(P,Q)/C'_G(P)$ for each pair of objects 
$P,Q$. Let $\pi\:\call_S^c(G)\Right2{}\calf_S(G)$ denote the natural 
functor: $\pi$ is the inclusion on objects, and sends the class of $g\in 
T_G(P,Q)$ to $c_g\in\Mor_{\calf_S(G)}(P,Q)$. 

\item For $P,Q\le S$ $p$-centric in $G$ and $g\in T_G(P,Q)$, we let 
$\dsg[P,Q]g\in\Mor_{\call_S^c(G)}(P,Q)$ denote the class of $g$, and set 
$\dsg[P]g=\dsg[P,P]g$ if $g\in N_G(P)$. For each subgroup $H\le N_G(P)$, 
$\dsg[P]H$ denotes the image of $H$ in $\Aut_\call(P)=N_G(P)/C'_G(P)$. 

\end{enuma}
\end{Defi}

The following definitions of automorphism groups are taken from 
\cite[Definition 1.13 \& Lemma 1.14]{AOV1}, where they are formulated more 
generally for abstract fusion and linking systems.

\begin{Defi} \label{d:aut(L)}
Let $G$ be a finite group with $S\in\sylp{G}$, and set $\calf=\calf_S(G)$ 
and $\call=\call_S^c(G)$.  
\begin{enuma} 
\item If $H$ is another finite group with $T\in\sylp{H}$, then an isomorphism 
$\varphi\:S\Right2{\cong}T$ is called \emph{fusion preserving} (with 
respect to $G$ and $H$) if for each $P,Q\le{}S$, 
	\[ \Hom_H(\varphi(P),\varphi(Q)) = 
	\varphi\circ\Hom_G(P,Q)\circ\varphi^{-1}\,. \]
(Composition is from right to left.)  Equivalently, $\varphi$ is 
fusion preserving if it induces an isomorphism of categories 
$\calf_S(G)\Right2{\cong}\calf_T(H)$.

\item Let $\Aut(S,\calf)\le\Aut(S)$ be the group of fusion preserving 
automorphisms of $S$.  Set $\Out(S,\calf)=\Aut(S,\calf)/\autf(S)$.

\item For each pair of objects $P\le Q$ in $\call$, set 
$\iota_{P,Q}=\dsg[P,Q]1\in\Mor_\call(P,Q)$, which we call the 
\emph{inclusion} in $\call$ of $P$ in $Q$. For each $P$, we call 
$\dsg{P}=\dsg[P]P\le\Aut_\call(P)$ the \emph{distinguished subgroup} 
of $\Aut_\call(P)$.

\item Let $\Aut\typ^I(\call)$ be the group of automorphisms $\alpha$ of the 
category $\call$ such that $\alpha$ sends inclusions to inclusions and 
distinguished subgroups to distinguished subgroups.
For $\gamma\in\Aut_\call(S)$, let $c_\gamma\in\Aut\typ^I(\call)$ be the 
automorphism which sends an object $P$ to $\pi(\gamma)(P)$, and sends 
$\psi\in\Mor_\call(P,Q)$ to $\gamma'\psi(\gamma'')^{-1}$ where $\gamma'$ 
and $\gamma''$ are appropriate restrictions of $\gamma$. Set 
	\[ \Out\typ(\call)
	=\Aut\typ^I(\call)\big/\{c_\gamma\,|\,\gamma\in\Aut_\call(S)\}\,. \]

\item Let $\kappa_G\:\Out(G)\Right3{}\Out\typ(\call)$ be the homomorphism 
which sends the class $[\alpha]$, for $\alpha\in\Aut(G)$ such that 
$\alpha(S)=S$, to the class of the induced automorphism of 
$\call=\call_S^c(G)$. 

\item Define $\mu_G\: \Out\typ(\call) \Right3{} \Out(S,\calf)$ by setting 
$\mu_G([\beta])=[\beta_S|_S]$ for $\beta\in\Aut\typ^I(\call_S^c(G))$, where 
$\beta_S$ is the induced automorphism of $\Aut_\call(S)$, and 
$\beta_S|_S\in\Aut(S)$ is its restriction to $S$ when we identify $S$ with 
its image in $\Aut_\call(S)=N_G(S)/C'_G(S)$.

\item Set $\4\kappa_G=\mu_G\circ\kappa_G\:\Out(G)\Right2{}\Out(S,\calf)$: 
the homomorphism which sends the class of $\alpha\in N_{\Aut(G)}(S)$ to 
the class of $\alpha|_S$.

\end{enuma}
\end{Defi}

By \cite[Lemma 1.14]{AOV1}, the above definition of $\Out\typ(\call)$ is 
equivalent to that in \cite{BLO2}, and by \cite[Lemma 8.2]{BLO2}, both are 
equivalent to that in \cite{BLO1}.  So by \cite[Theorem 4.5(a)]{BLO1}, 
$\Out\typ(\call_S^c(G))\cong\Out(BG\pcom)$: the group of homotopy classes 
of self homotopy equivalences of the space $BG\pcom$.

We refer to \cite[\S\,2.2]{AOV1} and \cite[\S\,1.3]{AOV1} for more details 
about the definitions of $\kappa_G$ and $\mu_G$ and the proofs that they 
are well defined. Note that $\mu$ is defined there for an arbitrary linking 
system, not necessarily one realized by a group.

We are now ready to define tameness. Again, we restrict attention to fusion 
systems of finite groups, and refer to \cite[\S\,2.2]{AOV1} for the 
definition in the more abstract setting.

\begin{Defi} \label{d:tame}
For a finite group $G$ and $S\in\sylp{G}$, the fusion system $\calf_S(G)$ 
is \emph{tame} if there is a finite group $G^*$ which satisfies:
\begin{itemize}  
\item there is a fusion preserving isomorphism $S\Right2{\cong}S^*$ for 
some $S^*\in\sylp{G^*}$; and 
\item the homomorphism 
$\kappa_{G^*}\:\Out(G^*)\longrightarrow\Out_{\textup{typ}}(\call_S^c(G^*))
\cong\Out(BG^*\pcom)$ is split surjective.
\end{itemize}
In this situation, we say that $G^*$ \emph{tamely realizes} the fusion system 
$\calf_S(G)$.
\end{Defi}


The above definition is complicated by the fact that two finite 
groups can have isomorphic fusion systems but different outer automorphism 
groups.  For example, set $G=PSL_2(9)\cong A_6$ and $H=PSL_2(7)\cong 
GL_3(2)$.  The Sylow subgroups of both groups are dihedral of order $8$, 
and it is not hard to see that any isomorphism between Sylow subgroups is 
fusion preserving.  But $\Out(G)\cong C_2^2$ while $\Out(H)\cong C_2$ (see 
Theorem \ref{St-aut} below).  Also, $\kappa_G$ is an isomorphism, while 
$\kappa_H$ fails to be onto (see \cite[Proposition 7.9]{BLO1}).  In 
conclusion, the $2$-fusion system of both groups is tame, even though 
$\kappa_H$ is not split surjective.

This definition of tameness was motivated in part in \cite{AOV1} by an 
attempt to construct new, ``exotic'' fusion systems (abstract fusion 
systems not realized by any finite group) as extensions of a known fusion 
system by an automorphism. Very roughly, if 
$\alpha\in\Aut\typ^I(\call_S^c(G))$ is not in the image of $\kappa_G$, and 
not in the image of $\kappa_{G^*}$ for any other finite group $G^*$ which has the 
same fusion and linking systems, then one can construct and extension of 
$\calf_S(G)$ by $\alpha$ which is not isomorphic to the fusion system of 
any finite group. This shows why we are interested in the surjectivity of 
$\kappa_G$; to see the importance of its being split, we refer to the proof 
of \cite[Theorem B]{AOV1}. 

It is usually simpler to work with automorphisms of a $p$-group 
which preserve fusion than with automorphisms of a linking system.  So in 
most cases, we prove tameness for the fusion system of a group $G$ 
by first showing that $\4\kappa_G=\mu_G\circ\kappa_G$ 
is split surjective, and then showing that $\mu_G$ is injective.  The 
following elementary lemma will be useful.

\begin{Lem} \label{kappa-lift}
Fix a finite group $G$ and $S\in\sylp{G}$, and set $\calf=\calf_S(G)$.  
Then 
\begin{enuma} 
\item $\4\kappa_G$ is surjective if and only if each 
$\varphi\in\Aut(S,\calf)$ extends to some $\4\varphi\in\Aut(G)$, and
\item $\Ker(\4\kappa_G)\cong C_{\Aut(G)}(S)/\Aut_{C_G(S)}(G)$.
\end{enuma}
\end{Lem}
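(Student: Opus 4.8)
The statement to prove is Lemma \ref{kappa-lift}: the map $\4\kappa_G$ sends the class of $\alpha\in N_{\Aut(G)}(S)$ to $[\alpha|_S]\in\Out(S,\calf)$, so (a) is essentially a chase through the definitions together with the Frattini argument, and (b) identifies the kernel. I would begin by recalling from Definition \ref{d:aut(L)}(h) the explicit description of $\4\kappa_G$: every outer automorphism class of $G$ has a representative $\alpha\in\Aut(G)$ with $\alpha(S)=S$ (by the Frattini argument applied to $\mathrm{Inn}(G)\nsg\Aut(G)$ and $S\in\sylp{G}$, since $\mathrm{Inn}(G)S/\mathrm{Inn}(G)$ contains a Sylow $p$-subgroup of $\Aut(G)/\mathrm{Inn}(G)$ — more precisely, any $\beta\in\Aut(G)$ can be composed with an inner automorphism so that it normalizes $S$), and $\4\kappa_G([\alpha])=[\alpha|_S]$. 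One must also note that $\alpha|_S$ is automatically fusion preserving: if $c_g\in\Hom_G(P,Q)$ then $\alpha|_S\circ c_g\circ(\alpha|_S)^{-1}=c_{\alpha(g)}\in\Hom_G(\alpha(P),\alpha(Q))$, so $\alpha|_S\in\Aut(S,\calf)$, and the class in $\Out(S,\calf)=\Aut(S,\calf)/\autf(S)$ is well defined because two representatives of $[\alpha]$ normalizing $S$ differ by an inner automorphism $c_g$ with $g\in N_G(S)$, whose restriction lies in $\auts(S)\le\autf(S)$.

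For part (a), surjectivity of $\4\kappa_G$ means exactly that every class in $\Out(S,\calf)$ is hit; since $\autf(S)$ is always in the image (it consists of restrictions of $c_g$, $g\in N_G(S)$), surjectivity is equivalent to: every $\varphi\in\Aut(S,\calf)$ has its class $[\varphi]$ in the image, i.e. there is $\4\varphi\in\Aut(G)$ with $\4\varphi(S)=S$ and $\4\varphi|_S$ equal to $\varphi$ modulo $\autf(S)$ — but composing $\4\varphi$ with a suitable $c_g$, $g\in N_G(S)$, lets us arrange $\4\varphi|_S=\varphi$ on the nose. So the two conditions are equivalent; this is a short unwinding once the Frattini-argument normalization is in place.

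For part (b), I would compute the kernel directly. An element of $\Out(G)$ lies in $\Ker(\4\kappa_G)$ iff it has a representative $\alpha\in\Aut(G)$ with $\alpha(S)=S$ and $\alpha|_S\in\autf(S)=\Aut_{N_G(S)}(S)$; composing with an inner automorphism $c_g$ for suitable $g\in N_G(S)$, we may take $\alpha|_S=\mathrm{id}_S$, i.e. $\alpha\in C_{\Aut(G)}(S)$ (meaning $\alpha$ fixes $S$ pointwise). Thus the preimage of $\Ker(\4\kappa_G)$ in $\Aut(G)$ is $C_{\Aut(G)}(S)\cdot\mathrm{Inn}(G)$, and $\Ker(\4\kappa_G)\cong C_{\Aut(G)}(S)\,\mathrm{Inn}(G)/\mathrm{Inn}(G)\cong C_{\Aut(G)}(S)/\bigl(C_{\Aut(G)}(S)\cap\mathrm{Inn}(G)\bigr)$. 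Finally one identifies $C_{\Aut(G)}(S)\cap\mathrm{Inn}(G)$: an inner automorphism $c_g$ fixes $S$ pointwise iff $g\in C_G(S)$, so this intersection is the image of $C_G(S)$ in $\mathrm{Inn}(G)$, which is $\Aut_{C_G(S)}(G)$ in the paper's notation (the group of conjugations by elements of $C_G(S)$). Hence $\Ker(\4\kappa_G)\cong C_{\Aut(G)}(S)/\Aut_{C_G(S)}(G)$.

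The only genuine care-point — and the one I'd call the "main obstacle," though it is mild — is the correct handling of representatives: making sure that whenever we normalize $\alpha$ to fix $S$ (setwise in (a), pointwise in (b)) the remaining freedom is exactly by elements $c_g$ with $g\in N_G(S)$ (resp. $g\in C_G(S)$), so that nothing is lost or double-counted when passing to $\Out$. This is where the Frattini argument and the identification $\autf(S)=\Aut_{N_G(S)}(S)$ do the real work; once those are stated cleanly, both parts are formal.
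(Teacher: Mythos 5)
Your argument is correct and is essentially the paper's proof in unpacked form: the paper simply records the same content as a commutative diagram with exact rows $0\to\Aut_{N_G(S)}(G)\to N_{\Aut(G)}(S)\to\Out(G)\to0$ and $0\to\Aut_{N_G(S)}(S)\to\Aut(S,\calf)\to\Out(S,\calf)\to0$, with restriction as the vertical maps, and your Frattini-argument normalization and kernel computation are exactly the diagram chase that makes that diagram work.
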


\begin{proof} 
This follows from the following diagram 
	\beq 
	\xymatrix@C=30pt{
	0 \ar[r] & \Aut_{N_G(S)}(G) \ar[r] \ar@{->>}[d] & N_{\Aut(G)}(S) \ar[r] 
	\ar[d]^{\textup{restr}} & \Out(G) \ar[r] \ar[d]^{\4\kappa_G} & 0 \\
	0 \ar[r] & \Aut_{N_G(S)}(S) \ar[r] & \Aut(S,\calf) \ar[r] & 
	\Out(S,\calf) \ar[r] & 0 
	} \eeq
with exact rows. 
\end{proof}

The next lemma can be useful when $\kappa_G$ or $\4\kappa_G$ is surjective 
but not split.

\begin{Lem} \label{onto->split}
Fix a prime $p$, a finite group $G$, and $S\in\sylp{G}$. 
\begin{enuma} 
\item Assume $\5G\ge G$ is such that $G\nsg\5G$, $p\nmid|\5G/G|$, and 
$\Out_{\5G}(G)\le\Ker(\4\kappa_G)$. Then $\calf_S(\5G)=\calf_S(G)$ and 
$\call_S^c(\5G)\cong\call_S^c(G)$. 

\item If $\kappa_G$ is surjective and $\Ker(\kappa_G)$ has order prime to 
$p$, then there is $\5G\ge G$ as in (a) such that $\kappa_{\5G}$ is split 
surjective. In particular, $\calf_S(G)$ is tame, and is tamely realized by 
$\5G$.
\end{enuma}
\end{Lem}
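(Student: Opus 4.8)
\textbf{Part (a).} The plan is to unwind the definitions of $\calf_S(\5G)$ and $\call_S^c(\5G)$ using the hypotheses $G\nsg\5G$, $p\nmid|\5G/G|$, and $\Out_{\5G}(G)\le\Ker(\4\kappa_G)$. First I would note that $S\in\sylp{\5G}$ as well, since $p\nmid|\5G/G|$, so it makes sense to compare fusion and linking systems over the same $S$. The key point is that for $P,Q\le S$, a morphism $c_g\in\Hom_{\5G}(P,Q)$ with $g\in\5G$ can, after multiplying $g$ by a suitable element of $S\le G$ (using a Sylow argument inside $\5G$ applied to the coset), be assumed to normalize $S$; then the class of $g$ in $\5G/G$ gives an element of $\Out_{\5G}(G)$, which by hypothesis lies in $\Ker(\4\kappa_G)$, i.e.\ acts as an inner automorphism of $S$ when restricted via $\4\kappa_G$. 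Feeding this back, I would deduce that $c_g|_P$ already agrees with conjugation by an element of $G$, so $\Hom_{\5G}(P,Q)=\Hom_G(P,Q)$ and hence $\calf_S(\5G)=\calf_S(G)$. For the linking systems, I would first check that a subgroup $P\le S$ is $p$-centric in $G$ iff it is $p$-centric in $\5G$ (here one uses that $C_{\5G}(P)\ge C_G(P)$ with index prime to $p$, so $Z(P)\in\sylp{C_G(P)}$ iff $Z(P)\in\sylp{C_{\5G}(P)}$), so the two linking systems have the same objects; then $\Mor_{\call_S^c(\5G)}(P,Q)=T_{\5G}(P,Q)/C'_{\5G}(P)$, and the identity $\calf_S(\5G)=\calf_S(G)$ together with $C'_{\5G}(P)\ge C'_G(P)$ of $p$-power-free index should let me build a natural isomorphism $\call_S^c(G)\cong\call_S^c(\5G)$ commuting with the projections to the common fusion system and preserving inclusions and distinguished subgroups.

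\textbf{Part (b).} The approach is to take $\5G$ to be (an isomorphic copy of) the group produced by a standard ``absorbing the kernel'' construction. Since $\kappa_G$ is surjective with $\Ker(\kappa_G)$ of order prime to $p$, I would pull back along $\kappa_G$: choose a set-theoretic section and let $\5G$ be generated inside $\Aut(G)\ltimes G$ (or more precisely inside the holomorph, identifying $G$ with $\Inn(G)$) by $G$ together with a finite set of automorphisms of $G$ whose images generate $\Out\typ(\call_S^c(G))$, chosen so that they normalize $S$. Concretely, since every element of $\Out\typ(\call)$ lifts to $\Out(G)$ and then to $N_{\Aut(G)}(S)$, I would set $\5G=\Gen{G,N_{\Aut(G)}(S)}\le\Aut(G)$, so that $G\nsg\5G$ and $\5G/G\cong N_{\Aut(G)}(S)/\Inn_S(G)$ maps onto $\Out\typ(\call)$ with kernel a quotient of $\Ker(\kappa_G)$, hence of order prime to $p$; since also $N_{\Aut(G)}(S)/\Inn(G)\twoheadrightarrow\Out\typ(\call)$ need not be all of $\Out(G)$, a little care is needed, but in any case $p\nmid|\5G/G|$. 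Then $\Out_{\5G}(G)$ is precisely $\Ker(\kappa_G)$ up to inner automorphisms, which lies in $\Ker(\kappa_G)\le\Ker(\4\kappa_G)$, so part~(a) applies and gives $\calf_S(\5G)=\calf_S(G)$, $\call_S^c(\5G)\cong\call_S^c(G)$. Finally, I would compute $\kappa_{\5G}$: by construction $N_{\Aut(\5G)}(S)$ contains the image of $N_{\Aut(G)}(S)$, and the composite $N_{\Aut(G)}(S)\to\Out(\5G)\to\Out\typ(\call_S^c(\5G))=\Out\typ(\call_S^c(G))$ is, by naturality of $\kappa$, the original surjection, exhibiting a splitting of $\kappa_{\5G}$; thus $\calf_S(G)$ is tame and tamely realized by $\5G$.

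\textbf{Main obstacle.} The delicate point is part~(b): making the construction of $\5G$ precise so that it simultaneously realizes $\Out\typ(\call)$ as (a section of) $\Out_{\5G}(G)$, has index prime to $p$ over $G$, and satisfies the hypothesis $\Out_{\5G}(G)\le\Ker(\4\kappa_G)$ — the last condition forces us to quotient out only the $p'$-kernel $\Ker(\kappa_G)$ and not accidentally enlarge $\5G$ by automorphisms that act nontrivially on $S$. Keeping track of the difference between $\Out(G)$, the image of $\kappa_G$, and $\Out\typ(\call)$, and verifying that a section of $\kappa_G$ can be chosen compatibly with the choice of $S$, is where the real work lies; once $\5G$ is correctly built, part~(a) does the rest mechanically.
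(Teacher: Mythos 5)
Your part (a) is essentially the paper's argument (each coset of $G$ in $\5G$ contains an element centralizing $S$, whence the two fusion and linking systems agree), and is fine up to small imprecisions. Part (b), however, goes wrong at the choice of $\5G$. The group you propose, $\5G=\Gen{G,N_{\Aut(G)}(S)}$ inside the holomorph, adjoins automorphisms realizing all of $\Out\typ(\call_S^c(G))$ --- in fact, since $\Aut(G)=\Inn(G)N_{\Aut(G)}(S)$ by a Frattini argument, $\Out_{\5G}(G)$ is all of $\Out(G)$, not $\Ker(\kappa_G)$ as you assert. This is exactly what one must not do: the lemma requires $\5G$ to be ``as in (a)'', i.e. $p\nmid|\5G/G|$ and $\Out_{\5G}(G)\le\Ker(\4\kappa_G)$, and only under those hypotheses does $\5G$ have the same $p$-fusion system as $G$ and hence tamely realize $\calf_S(G)$. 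For your $\5G$ the index $|\5G/G|$ is in general divisible by $p$ (e.g. when $\Out\typ(\call)$ contains a field automorphism class of order $p$), and adjoining any class outside $\Ker(\4\kappa_G)$ changes $\calf_S(-)$. Your closing step also does not produce a splitting: a surjection $N_{\Aut(G)}(S)\to\Out\typ(\call_S^c(\5G))$ is not a section of $\kappa_{\5G}$; a splitting is a homomorphism from $\Out\typ(\call)$ back into $\Out(\5G)$.

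The point of enlarging $G$ is to absorb the kernel, not to realize the image. Set $K=\Ker(\kappa_G)$, first replace $G$ by $G/O_{p'}(Z(G))$ so that $Z(G)$ is a $p$-group, and then --- since $|K|$ is prime to $p$, so $H^i(K;Z(G))=0$ for $i=2,3$ --- use the obstruction theory for group extensions to produce an extension of $G$ by $K$ with $\Out_{\5G}(G)=K$ exactly. (Your ``inside the holomorph'' shortcut would require lifting $K\le\Out(G)$ to a subgroup of $\Aut(G)$, i.e. splitting $\Aut(G)\to\Out(G)$ over $K$, which need not be possible; the cohomological construction replaces this.) Now (a) applies because $K\le\Ker(\4\kappa_G)$. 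For the splitting one further needs that every automorphism of $G$ extends to $\5G$, uniquely up to inner automorphisms --- this uses $K\nsg\Out(G)$ and $H^i(K;Z(G))=0$ for $i=1,2$, as in \cite[Lemma 1.2]{OV2} --- which yields a subgroup of $\Out(\5G)$ isomorphic to $\Out(G)/K$ that $\kappa_{\5G}$ carries isomorphically onto $\Out\typ(\call_S^c(\5G))\cong\Out\typ(\call_S^c(G))$; inverting that isomorphism gives the section. None of these ingredients (reduction to $Z(G)$ a $p$-group, existence of an extension realizing exactly $K$, extension of automorphisms of $G$ to $\5G$) appear in your proposal, and the $\5G$ you do construct fails the hypotheses of part (a).
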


\begin{proof} \textbf{(a) } Since $\Out_{\5G}(G)\le\Ker(\4\kappa_G)$, each 
coset of $G$ in $\5G$ contains an element which centralizes $S$. (Recall 
that $\4\kappa_G$ is induced by the restriction homomorphism from 
$N_{\Aut(G)}(S)$ to $\Aut(S,\calf)$.) Thus 
$\calf_S(\5G)=\calf_S(G)$ and $\call_S^c(\5G)=\call_S^c(G)$. 

\smallskip

\textbf{(b) } 
Since $G$ and $G/O_{p'}(Z(G))$ have isomorphic fusion systems 
at $p$, we can assume that $Z(G)$ is a $p$-group. Set 
$K=\Ker(\kappa_G)\le\Out(G)$. Since $H^i(K;Z(G))=0$ for $i=2,3$, by the 
obstruction theory for group extensions \cite[Theorems IV.8.7--8]{MacLane}, 
there is an extension $\5G$ of $G$ by $K$ such that $G\nsg\5G$, $\5G/G\cong 
K$, and $\Out_{\5G}(G)=K$. Since $K=\Ker(\kappa_G)\le\Ker(\4\kappa_G)$, 
$\calf_S(\5G)=\calf_S(G)$, and $\call_S^c(\5G)=\call_S^c(G)$ by (a). 

By \cite[Lemma 1.2]{OV2}, and since $K\nsg\Out(G)$ and $H^i(K;Z(G))=0$ for 
$i=1,2$, each automorphism of $G$ extends to an automorphism of $\5G$ which 
is unique modulo inner automorphisms. Thus $\Out(\5G)$ contains a subgroup 
isomorphic to $\Out(G)/K$, and $\kappa_{\5G}$ sends this subgroup 
isomorphically onto $\Out\typ(\call_S^c(\5G))$. So $\kappa_{\5G}$ is split 
surjective, and $\calf_S(G)$ is tame. 
\end{proof}

The next proposition is really a result about constrained fusion systems 
(cf. \cite[Definition I.4.8]{AKO}): it says that every constrained 
fusion system is tame. Since we are dealing here only with fusion systems 
of finite groups, we state it instead in terms of $p$-constrained groups.

\begin{Prop} \label{p:constrained}
Fix a finite group $G$ and a Sylow subgroup $S\in\sylp{G}$.
\begin{enuma} 
\item If $C_G(O_p(G))\le O_p(G)$, then $\kappa_G$ and $\mu_G$ are both 
isomorphisms:
	\[ \Out(G) \RIGHT4{\kappa_G}{\cong} \Out\typ(\call_S^c(G)) 
	\RIGHT4{\mu_G}{\cong} \Out(S,\calf_S(G))\,. \]
\item If $S$ is abelian, or more generally if $N_G(S)$ controls $p$-fusion in 
$G$, then $\calf_S(G)$ is tame, and is tamely realized by 
$N_G(S)/O_{p'}(C_G(S))$.
\end{enuma}
\end{Prop}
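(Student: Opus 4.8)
The plan is to deduce both parts from known facts about constrained (equivalently, $p$-constrained) groups together with the machinery already set up. For part (a), the hypothesis $C_G(O_p(G))\le O_p(G)$ says that $G$ is $p$-constrained with $O_{p'}(G)=1$. First I would recall that in this situation the fusion system $\calf_S(G)$ is \emph{constrained} in the sense of \cite[Definition I.4.8]{AKO}: it has a normal centric subgroup, namely $O_p(G)=O_p(\calf_S(G))$. The key input is the theorem of Broto--Castellana--Grodal--Levi--Oliver (see \cite[Theorem III.5.10]{AKO}, or \cite{BCGLO1}) that a constrained fusion system has a \emph{unique} associated centric linking system, and that this linking system is the linking system of the (unique up to isomorphism) \emph{model} for $\calf$ --- the finite group $G$ with $O_{p'}(G)=1$, $C_G(O_p(G))\le O_p(G)$, and $\calf_S(G)=\calf$. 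Granting this, I would argue as follows. The map $\kappa_G$ is injective by the same argument as in Lemma \ref{kappa_inj}: an automorphism of $G$ inducing the identity on $\call_S^c(G)$ restricts to an inner automorphism on the centric subgroup $O_p(G)$, hence (after adjusting by an inner automorphism of $G$) fixes $O_p(G)$ pointwise; since $C_G(O_p(G))\le O_p(G)$, such an automorphism is trivial on $C_G(O_p(G))$ and is determined by its effect on $G/C_G(O_p(G))\hookrightarrow\Aut(O_p(G))$, and one checks it must be inner. For surjectivity of $\kappa_G$: given $[\beta]\in\Out\typ(\call_S^c(G))$, uniqueness of the model shows $\beta$ is realized by an automorphism of $G$ --- more precisely, transporting the canonical structure of $\call_S^c(G)$ through $\beta$ gives another linking system with the same underlying fusion system, hence another model, hence a group isomorphic to $G$ compatibly, which produces the required element of $\Out(G)$ mapping to $[\beta]$. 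Thus $\kappa_G$ is an isomorphism. That $\mu_G$ is an isomorphism then follows because $\4\kappa_G=\mu_G\circ\kappa_G$, and $\4\kappa_G$ is an isomorphism by the model description (an element of $\Aut(S,\calf)$ normalizes $O_p(G)$ hence extends to the model $G$, and the extension is unique up to $\Inn(G)$ by the rigidity of models), so $\mu_G=\4\kappa_G\circ\kappa_G^{-1}$ is an isomorphism.

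For part (b), suppose $N_G(S)$ controls $p$-fusion in $G$; set $N=N_G(S)$ and $\4N=N/O_{p'}(C_G(S))$. Since $N$ controls fusion, $\calf_S(G)=\calf_S(N)$, and passing to the quotient by the normal $p'$-subgroup $O_{p'}(C_G(S))\nsg N$ does not change the $p$-fusion system, so $\calf_S(G)\cong\calf_{\4S}(\4N)$ for the image $\4S$ of $S$. It remains to see that $\4N$ is $p$-constrained in the sense of part (a), i.e.\ $C_{\4N}(O_p(\4N))\le O_p(\4N)$, after which (a) gives that $\kappa_{\4N}$ is an isomorphism, in particular split surjective, so $\4N$ tamely realizes $\calf_S(G)$. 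To verify the constraint, note that $S\nsg N$ forces $S\le O_p(N)$, and one checks $O_{p'}(\4N)=1$ (the preimage in $N$ of $O_{p'}(\4N)$ is a normal subgroup which is an extension of a $p'$-group by $O_{p'}(C_G(S))$, hence has a normal $p$-complement which is then characteristic in $N$ and a $p'$-group, hence trivial); then $C_{\4N}(O_p(\4N))\le C_{\4N}(\4S)$, and since $S$ is abelian modulo $O_{p'}(C_G(S))$... here one uses that $C_N(S)=Z(S)\times O_{p'}(C_G(S))$ because $S$ controls fusion implies $S$ is in particular $p$-centric, so $C_{\4N}(\4S)=Z(\4S)\le\4S\le O_p(\4N)$, giving the constraint.

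The main obstacle I anticipate is organizing the $p$-constrained bookkeeping in part (b) --- in particular confirming cleanly that $\4N=N_G(S)/O_{p'}(C_G(S))$ is genuinely $p$-constrained with trivial $p'$-core, so that part (a) applies verbatim. This is entirely elementary finite group theory, but it requires being careful that $S$ is $p$-centric in $G$ (which follows from $N_G(S)$ controlling fusion, since then $C_G(S)$ has a normal $p$-complement) and that quotienting by $O_{p'}(C_G(S))$ removes exactly the obstruction to constraint without disturbing the fusion system. The substantive mathematical content of part (a) --- existence and uniqueness of models for constrained fusion systems, and uniqueness of their linking systems --- is a citation to \cite{BCGLO1}/\cite[\S III.5]{AKO} rather than something to reprove here, so once that is invoked the argument is short.
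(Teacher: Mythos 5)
Your part (b) is essentially the paper's argument (the paper states it in two lines; your verification that $G^*=N_G(S)/O_{p'}(C_G(S))$ satisfies the hypotheses of (a) is correct, noting only that $C_G(S)=Z(S)\times O_{p'}(C_G(S))$ holds for any Sylow subgroup, not because of control of fusion). The problem is in part (a), in the surjectivity of $\kappa_G$. The existence and uniqueness (rigidity) of models is a statement about groups and \emph{fusion} systems: from it you can extract that every $\varphi\in\Aut(S,\calf)$ extends to an automorphism of $G$, unique modulo $\Aut_{Z(S)}(G)$ --- i.e.\ that $\4\kappa_G$ is an isomorphism --- but it says nothing directly about automorphisms of the \emph{linking} system. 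Given $[\beta]\in\Out\typ(\call_S^c(G))$, your ``transport of structure'' produces (at best) an $\alpha\in\Aut(G)$ whose restriction to $S$ agrees with $\beta_S|_S$; to conclude $\kappa_G([\alpha])=[\beta]$ you would need to know that an element of $\Aut\typ^I(\call_S^c(G))$ inducing the identity on $S$ (modulo $\autf(S)$) is of the form $c_\gamma$, i.e.\ that $\mu_G$ is injective. That is exactly the delicate point (it is false for $\PSL_3(2)$, and the whole appendix of the paper is devoted to controlling $\Ker(\mu_G)$), and your subsequent deduction $\mu_G=\4\kappa_G\circ\kappa_G^{-1}$ presupposes $\kappa_G$ is already known to be bijective, so the argument is circular at this step. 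Your injectivity sketch for $\kappa_G$ has a smaller gap of the same kind: an $\alpha\in\Aut(G)$ with $\alpha|_{O_p(G)}=\Id$ only satisfies $g^{-1}\alpha(g)\in Z(O_p(G))$ for all $g$, which does not force $\alpha\in\Inn(G)$ without using how $\alpha$ acts on the category.

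The missing idea, which is how the paper proves (a), is to use the object $Q=O_p(G)$ of $\call_S^c(G)$ itself: since $C_G(Q)\le Q$ one has $C'_G(Q)=1$ and $\Aut_{\call_S^c(G)}(Q)=N_G(Q)/C'_G(Q)=G$, so evaluation at $Q$, $\alpha\mapsto\alpha_Q$, gives a homomorphism $\Phi\colon\Aut\typ^I(\call_S^c(G))\to\Aut(G,S)$; one checks (using that automorphisms send inclusions to inclusions) that $\Phi$ is injective, that $\Phi(\alpha)$ is inner exactly when $\alpha=c_\gamma$ for some $\gamma\in\Aut_{\call_S^c(G)}(S)$, and that the induced map $\Out\typ(\call_S^c(G))\to\Out(G)$ is inverse to $\kappa_G$. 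This handles $\kappa_G$ directly, without any appeal to uniqueness of linking systems; the model rigidity theorem is then used only where it genuinely applies, to show $\4\kappa_G$ is an isomorphism, and $\mu_G$ is an isomorphism because both $\kappa_G$ and $\4\kappa_G=\mu_G\circ\kappa_G$ are. If you replace your surjectivity step by this evaluation-at-$Q$ argument, the rest of your outline goes through.
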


\begin{proof} \textbf{(a) } Set $Q=O_p(G)$, $\calf=\calf_S(G)$, and 
$\call=\call_S^c(G)$.  Then $\Aut_\call(Q)=G$, so ($\alpha\mapsto\alpha_Q$) 
defines a homomorphism $\Phi\:\Aut\typ^I(\call)\Right2{}\Aut(G,S)$.  For 
each $\alpha\in\Ker(\Phi)$, $\alpha_Q=\Id_G$ and hence $\alpha=\Id_\call$.  
(Here, it is important that $\alpha$ sends inclusions to inclusions.)  Thus 
$\Phi$ is an isomorphism.  Also, $\alpha=c_\gamma$ for some 
$\gamma\in\Aut_\call(S)$ if and only if $\alpha_Q=c_g$ for some 
$g\in{}N_G(S)$, so $\Phi$ factors through an isomorphism from 
$\Out\typ(\call)$ to $\Aut(G,S)/\Aut_G(S)\cong\Out(G)$, and this is an 
inverse to $\kappa_G$.  Thus $\kappa_G$ is an isomorphism.

In the terminology in \cite[\S\,I.4]{AKO}, $G$ is a model for 
$\calf=\calf_S(G)$.  By the uniqueness of models (cf. \cite[Theorem 
III.5.10(c)]{AKO}), each $\beta\in\Aut(S,\calf)$ extends to some 
$\chi\in\Aut(G)$, and $\chi$ is unique modulo $\Aut_{Z(S)}(G)$.  Hence 
$\4\kappa_G$ is an isomorphism, and so is $\mu_G$. 

\smallskip

\noindent\textbf{(b) } If $N_G(S)$ controls $p$-fusion in $G$, then 
$N_G(S)\sim_pG$. Also, $N_G(S)\sim_pG^*$ where $G^*=N_G(S)/O_{p'}(C_G(S))$, 
$G^*$ satisfies the hypotheses of (a), and hence tamely realizes 
$\calf_S(G)$. In particular, this holds whenever $S$ is abelian by 
Burnside's theorem. 
\end{proof}

When working with groups of Lie type when $p$ is not the defining 
characteristic, it is easier to work with the universal groups 
rather than those in adjoint form ($\mu_G$ is better behaved in such 
cases). The next proposition is needed to show that tameness for fusion 
systems of groups of universal type implies the corresponding result for 
groups of adjoint type.

\begin{Prop} \label{univ2adj}
Let $G$ be a finite $p$-perfect group such that $O_{p'}(G)=1$ and 
$H_2(G;\Z/p)=0$ (i.e., such that each central extension of $G$ by a 
finite $p$-group splits). Choose $S\in\sylp{G}$, 
and set $Z=Z(G)\le S$. If $\calf_S(G)$ is tamely realized by $G$, then 
$\calf_{S/Z}(G/Z)$ is tamely realized by $G/Z$. 
\end{Prop}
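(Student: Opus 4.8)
The plan is to pass from $G$ to $\widebar G = G/Z$ by comparing their fusion systems, linking systems, and outer automorphism groups, exploiting the two cohomological hypotheses $O_{p'}(G)=1$ and $H_2(G;\Z/p)=0$. First I would note that since $Z=Z(G)$ is a central $p$-subgroup with $Z\le S$, we have $\calf_{S/Z}(\widebar G)$ equal to the image of $\calf_S(G)$ under the projection $S\to S/Z$ (a standard fact: fusion in a central quotient is induced by fusion upstairs), and similarly $\call^c_{S/Z}(\widebar G)$ is the quotient of $\call^c_S(G)$ by the central subgroup $Z$ — here one uses that $H_2(G;\Z/p)=0$ to control centric subgroups and the behavior of $C'_G(P)$, i.e.\ that centrality of $Z$ makes every $p$-centric subgroup of $G$ contain $Z$ and project to a $p$-centric subgroup of $\widebar G$. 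This gives canonical surjections relating the two fusion/linking systems.

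Second, I would compare $\Out(G)$ with $\Out(\widebar G)$. The hypothesis $H_2(G;\Z/p)=0$ says the central extension $1\to Z\to G\to \widebar G\to 1$ has the property that every central extension of $\widebar G$ by a $p$-group is ``seen'' by $G$; combined with $O_{p'}(G)=1$ this should let me invoke (or re-prove in the style of) \cite[Lemma 1.2]{OV2} or the obstruction theory for extensions \cite[Theorems IV.8.7--8]{MacLane} to show that every automorphism of $\widebar G$ lifts to an automorphism of $G$, uniquely modulo $\Aut_Z(G)=\Inn_Z(G)$, since $H^i(\widebar G; Z)$ vanishes in the relevant degrees (degree $1$ because $\widebar G$ is $p$-perfect and $Z$ is a $p$-group, degree $2$ essentially by the $H_2$ hypothesis). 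Hence there is a natural surjection (indeed isomorphism onto a complemented or full image) $\rho\:\Out(G)\to\Out(\widebar G)$. The compatibility $\kappa_{\widebar G}\circ\rho = (\text{functorial map on }\Out\typ)\circ\kappa_G$ is then a diagram chase using that all these maps are induced by the same underlying automorphisms of $S$ and of $G$.

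Third, with these pieces in place the tameness transfer is formal: a splitting $s\:\Out\typ(\call^c_S(G))\to\Out(G)$ of $\kappa_G$ together with the functorial map $q\:\Out\typ(\call^c_S(G))\to\Out\typ(\call^c_{S/Z}(\widebar G))$ induced by the quotient $\call^c_S(G)\to\call^c_{S/Z}(\widebar G)$ — which I must check is surjective, again using the lifting of self-equivalences across the central quotient, i.e.\ that every element of $\Aut\typ^I(\call^c_{S/Z}(\widebar G))$ lifts to $\Aut\typ^I(\call^c_S(G))$ because obstructions live in $H^*$ of $\widebar G$ with $Z$-coefficients which vanish — combine to give $s' = \rho\circ s\circ(\text{a section of }q)$, a splitting of $\kappa_{\widebar G}$. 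Equivalently, and perhaps more cleanly, I would show $\rho$ and $q$ intertwine the two $\kappa$'s and that $\rho,q$ are surjective, whence split surjectivity of $\kappa_G$ forces split surjectivity of $\kappa_{\widebar G}$.

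The main obstacle I expect is the careful verification that the quotient functor $\call^c_S(G)\to\call^c_{S/Z}(\widebar G)$ induces a \emph{surjection} on $\Out\typ$, i.e.\ that every type-preserving self-equivalence of the linking system of $\widebar G$ lifts to one of $G$. This is where the full strength of $H_2(G;\Z/p)=0$ (not merely $H^2$ of $\widebar G$ with constant coefficients) is needed, and where one has to be careful about the distinction between automorphisms of the linking system and the associated classifying-space statement $\Out\typ(\call^c)\cong\Out(B(-)\pcom)$; the topological formulation, via the fibration $BZ\to BG\pcom\to B\widebar G\pcom$, may in fact be the most transparent route, with the vanishing of $H_2(G;\Z/p)$ guaranteeing that this fibration is (up to $p$-completion) principal and that obstructions to lifting self-equivalences vanish. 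The remaining bookkeeping — that the lifted splitting is a genuine group homomorphism and a section — is routine once the surjectivity of $\rho$ and $q$ and the commutativity of the relevant square are established.
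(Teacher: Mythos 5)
Your overall architecture (compare the two outer automorphism groups, compare the two linking systems, transfer the splitting through a commuting square) is the right one, but the proposal leaves the actual mathematical content of the proposition unproved, and the justification offered for it is incorrect. The heart of the matter is precisely your ``main obstacle'': showing that every $\alpha\in\Aut\typ^I(\call^c_{S/Z}(G/Z))$ lifts to $\call^c_S(G)$, and (equally important, see below) that the lift is unique. You assert that the relevant obstructions ``live in $H^*$ of $\4G$ with $Z$-coefficients, which vanish'' --- but they do not vanish: $H^2(G/Z;Z)$ contains the class of the central extension $1\to Z\to G\to G/Z\to 1$, which is nonzero whenever $Z\ne1$ (a splitting would exhibit $Z$ as a direct abelian $p$-factor of the $p$-perfect group $G$). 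The groups that actually control the lifting are $H^2$ and $H^1$ of $G$ itself (equivalently of the nerve $|\call|$, via \cite[Proposition 1.1]{BLO1} and \cite[Theorem B]{BCGLO1}) with coefficients in a $p$-group, and these vanish exactly because $G$ is $p$-perfect and $H_2(G;\Z/p)=0$; the paper carries this out using the theory of central extensions of linking systems from \cite{BCGLO2} (pull back the extension $\call\to\4\call$ along $\alpha\circ\pr$, kill its class in $H^2(|\call|;Z)=0$ to get existence, and use $\Hom(\pi_1(|\call|),Z)=0$ for uniqueness). The same misidentification of coefficients infects your group-level step: $H^2(\4G;Z)\ne0$, so your cited vanishing cannot be the reason automorphisms of $\4G$ lift to $G$; the correct (and true) argument lifts the composite $G\to\4G\xrightarrow{\alpha}\4G$ along $G\to\4G$ using $H^2(G;Z)=0$ and $\Hom(\4G,Z)=0$ --- but note the paper avoids this step entirely, since it only needs the easy downward map $\Out(G)\to\Out(\4G)$ together with an \emph{upward} injection $\nu\colon\Out\typ(\4\call)\to\Out\typ(\call)$ built from the unique lifts; surjectivity and splitness of $\kappa_{G/Z}$ then follow formally from those of $\kappa_G$.

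Two further points would break the proposal as written even granting the liftings. First, your claim that every $p$-centric subgroup of $G$ projects to a $p$-centric subgroup of $G/Z$ is false in general (only the converse holds: preimages of centrics are centric), so the quotient functor you want on all of $\call^c_S(G)$ does not exist; one must first replace $\call^c_S(G)$ by the full subcategory $\call^\calh_S(G)$ on those $P\ge Z$ with $P/Z$ centric in $G/Z$, and invoke \cite[Lemmas 1.17, 2.17]{AOV1} to see this does not change $\Out\typ$ --- this is independent of the $H_2$ hypothesis. Second, your splitting formula $s'=\rho\circ s\circ(\textup{section of }q)$ requires the section of $q$ to be a group homomorphism, which mere surjectivity of $q$ does not give; what makes the transfer work is the \emph{uniqueness} of liftings, which shows the comparison map between the two groups $\Out\typ$ is a well-defined injective homomorphism (the paper's $\nu$), forced to be an isomorphism once $\kappa_G$ is onto. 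So the missing ingredients are exactly the existence-and-uniqueness of liftings of linking-system automorphisms with the obstruction theory set up over $G$ (not $\4G$), plus the $\calh$-subcategory reduction; as it stands the proposal assumes the proposition's essential content.
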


\begin{proof} Let $\calh$ be the set of all $P\le S$ such that $P\ge Z$ and 
$P/Z$ is $p$-centric in $G/Z$, and let 
$\call_S^\calh(G)\subseteq\call_S^c(G)$ be the full subcategory with object 
set $\calh$. By \cite[Lemma 2.17]{AOV1}, $\call_S^\calh(G)$ 
is a linking system associated to $\calf_S(G)$ in the sense of 
\cite[Definition 1.9]{AOV1}. Hence the homomorphism 
	\[ R\: \Out\typ(\call_S^c(G)) \Right5{\cong} \Out\typ(\call_S^\calh(G)) \]
induced by restriction is an isomorphism by \cite[Lemma 1.17]{AOV1}.

Set $\calf=\calf_S(G)$, $\call=\call_S^\calh(G)$, 
$\4G=G/Z$, $\4S=S/Z$, $\4\calf=\calf_{\4S}(\4G)$, and 
$\4\call=\call_{\4S}^c(\4G)$ for short. Consider the following square:
	\beqq 
	\vcenter{\xymatrix@C=35pt{
	\Out(G) \ar[r]^-{\kappa_G} \ar[d]^{\mu} & \Out\typ(\call) 
	\rlap{\,$\cong\Out\typ(\call_S^c(G))$} \\
	\Out(\4G) \ar[r]^-{\kappa_{\4G}} & 
	\Out\typ(\4\call) \ar[u]_{\nu}^{1-1} \rlap{\,.}
	}}\hphantom{XXXXXXXXXX} \label{e:Out-sq} \eeqq
Here, $\mu$ sends the class of an automorphism of $G$ to the class of the 
induced automorphism of $\4G=G/Z(G)$.

Assume that $\nu$ has been defined so that \eqref{e:Out-sq} commutes and 
$\nu$ is injective. If $\kappa_G$ is onto, then $\nu$ is onto and hence an 
isomorphism, so $\kappa_{\4G}$ is also onto. Similarly, if 
$\kappa_G$ is split surjective, then $\kappa_{\4G}$ is also split 
surjective. Thus $\4\calf$ is tamely realized by $\4G$ if $\calf$ is tamely 
realized by $G$, which is what we needed to show.

It thus remains to construct the monomorphism $\nu$, by sending the class 
of $\alpha\in\Aut\typ^I(\4\call)$ to the class of a lifting of $\alpha$ to 
$\call$. So in the rest of the proof, we show the existence and uniqueness 
of such a lifting.

Let $\pr\:\call\Right2{}\4\call$ denote the projection. Let 
$\End\typ^I(\call)$ be the monoid of functors from $\call$ to itself which 
send inclusions to inclusions and distinguished subgroups into distinguished 
subgroups. (Thus $\Aut\typ^I(\call)$ is the group of elements of 
$\End\typ^I(\call)$ which are invertible.) We will prove the following two 
statements:
\begin{enumerate}[label=\rm(\arabic*),ref=\arabic*,itemsep=6pt,leftmargin=8mm] 
\setcounter{enumi}{\theequation}

\item \label{is_lift} For each $\alpha\in\Aut\typ^I(\4\call)$, there is a 
functor $\til\alpha\in\End\typ^I(\call)$ such that 
$\pr\circ\til\alpha=\alpha\circ\pr$.

\item \label{uniq_lift} If $\beta\in\End\typ^I(\call)$ is such that 
$\pr\circ\beta=\pr$, then $\beta=\Id_\call$.

\end{enumerate} \setcounter{equation}{\theenumi}

Assume that \eqref{is_lift} and \eqref{uniq_lift} hold; we call 
$\til\alpha$ a ``lifting'' of $\alpha$ in the situation of \eqref{is_lift}. 
For each $\alpha\in\Aut\typ^I(\4\call)$, there are liftings 
$\til\alpha$ of $\alpha$ and $\til\alpha^*$ of $\alpha^{-1}$ in 
$\End\typ^I(\call)$, and these are inverses to each other by 
\eqref{uniq_lift}. Hence $\til\alpha\in\Aut\typ^I(\call)$, and is the 
unique such lifting of $\alpha$ by \eqref{uniq_lift} again. 

Define $\nu\:\Out\typ(\4\call)\Right2{}\Out\typ(\call)$ by setting 
$\nu([\alpha])=[\til\alpha]$ when $\til\alpha$ is the unique lifting of 
$\alpha$. This is well defined as a homomorphism on $\Aut\typ^I(\4\call)$ 
by the existence and uniqueness of the lifting; and it factors through 
$\Out\typ(\4\call)$ since conjugation by $\4\gamma\in\Aut_{\4\call}(\4S)$ 
lifts to conjugation by $\gamma\in\Aut_\call(S)$ for any 
$\gamma\in\pr_S^{-1}(\4\gamma)$. 

Thus $\nu$ is a well defined homomorphism, and is clearly injective. The 
square \eqref{e:Out-sq} commutes since for each $\beta\in\Aut(G)$ such that 
$\beta(S)=S$, $\kappa_G([\beta])$ and $\nu\kappa_{\4G}\mu([\beta])$ are 
the classes of liftings of the same automorphism of $\4\call$.

It remains to prove \eqref{is_lift} and \eqref{uniq_lift}.

\smallskip

\noindent\textbf{Proof of (\ref{is_lift}): } 
For each $\alpha\in\Aut\typ^I(\4\call)$, consider the pullback diagram
	\beqq 
	\vcenter{\xymatrix@C=40pt{
	\til\call \ar[rr]^-{\rho_1} \ar[d]^{\rho_2} && 
	\call \ar[d]^{\pr} \\
	\call \ar[r]^(0.65){\pr} \ar@{-->}[rru]^{\til\alpha} & \4\call 
	\ar[r]^-{\alpha}_-{\cong} & \4\call 
	\rlap{\,.} 
	}} \label{e:L-pb} \eeqq
Each functor in \eqref{e:L-pb} is bijective on objects, and the diagram 
restricts to a pullback square of morphism sets for each pair of 
objects in $\4\call$ (and their inverse images in $\call$ and 
$\til\call$). 

Since the natural projection 
$G\Right2{}\4G$ is a central extension with kernel $Z$, the projection 
functor $\pr\:\call\Right2{}\4\call$ is also a central extension of 
linking systems in the sense of \cite[Definition 6.9]{BCGLO2} with kernel $Z$. 
Since $\rho_2$ is the pullback of a central extension, 
it is also a central extension of linking systems by \cite[Proposition 
6.10]{BCGLO2}, applied with $\omega=\pr^*\alpha^*(\omega_0)\in Z^2(\call;Z)$, 
where $\omega_0$ is a 2-cocycle on $\4\call$ which determines the extension 
$\pr$. 
By \cite[Proposition 1.1]{BLO1}, $H^2(|\call|;\F_p)\cong H^2(G;\F_p)$, 
where the last group is zero by assumption. Hence $H^2(|\call|;Z)=0$, 
so $\omega$ is a coboundary, and $\rho_2$ is the product extension 
by \cite[Theorem 6.13]{BCGLO2}. In other words, 
$\til\call\cong\call_Z^c(Z)\times\call$, where $\call_Z^c(Z)$ has one 
object and automorphism group $Z$, and there is a subcategory 
$\call_0\subseteq\til\call$ (with the same objects) which is sent 
isomorphically to $\call$ by $\rho_2$. Set 
$\til\alpha=\rho_1\circ(\rho_2|_{\call_0})^{-1}$.

We first check that $\til\alpha$ sends distinguished subgroups to 
distinguished subgroups. Let $\pr_S\:S\Right2{}\4S=S/Z$ be the projection. 
Fix an object $P$ in $\call$, and set $Q=\til\alpha(P)$. Then 
$Q/Z=\alpha(P/Z)$, and $\alpha_{P/Z}(\dsg{P/Z})=\dsg{Q/Z}$, so 
$\til\alpha_P(\dsg{P})\le\pr_S^{-1}(\dsg{Q/Z})=\dsg{Q}$.

For each subgroup $P\in\Ob(\call)$, there is a unique element $z_P\in{}Z$ 
such that $\til\alpha(\iota_{P,S})= 
\iota_{\til\alpha(P),S}\circ\dsg[\til\alpha(P)]{z_P}$. Note that $z_S=1$. 
Define a new functor $\beta \colon \call\Right1{}\call$ by setting 
$\beta(P)=\til\alpha(P)$ on objects and for each 
$\varphi\in\Mor_\call(P,Q)$, $\beta(\varphi) = 
\dsg[\til\alpha(Q)]{z_Q}\circ \til\alpha(\varphi) 
\circ\dsg[\til\alpha(P)]{z_P}^{-1}$. Then $\beta$ is still a lifting of 
$\alpha$, and for each $P$:
	\[ \beta(\iota_{P,S}) = \dsg[S]{z_S}\circ \til\alpha(\iota_{P,S}) 
	\circ\dsg[\til\alpha(P)]{z_P}^{-1} 
	= \iota_{\til\alpha(P),S}\circ\dsg[\til\alpha(P)]{z_P} 
	\circ\dsg[\til\alpha(P)]{z_P}^{-1} 
	= \iota_{\til\alpha(P),S}\,. \]
For arbitrary $P\le Q$, since $\iota_{\til\alpha(P),\til\alpha(Q)}$ is the 
unique morphism whose composite with $\iota_{\til\alpha(Q),S}$ is 
$\iota_{\til\alpha(P),S}$ (see \cite[Lemma 1.10(a)]{BLO2}), $\beta$ sends 
$\iota_{P,Q}$ to $\iota_{\til\alpha(P),\til\alpha(Q)}$. 

Thus, upon replacing $\til\alpha$ by $\beta$, we can assume that 
$\til\alpha$ sends inclusions to inclusions. This finishes the proof of 
\eqref{is_lift}. 
                     
\smallskip

\noindent\textbf{Proof of (\ref{uniq_lift}): } Assume that 
$\beta\in\End\typ^I(\call)$ is a lift of the identity on $\4\call$. Let 
$\calb(Z)$ be the category with one object $\*$ and with morphism group 
$Z$. Define a functor $\chi\:\call\Right2{}\calb(Z)$ by sending all objects 
in $\call$ to $\*$, and by sending a morphism $\dsg{g}\in\Mor_\call(P,Q)$ 
to the unique element $z\in{}Z$ such that 
$\beta_{P,Q}(\dsg{g})=\dsg{gz}=\dsg{zg}$. (Recall that $Z\le Z(G)$.) 

Now, 
	\[ H^1(|\call|;\F_p)\cong H^1(|\call_S^c(G)|;\F_p)\cong 
	H^1(BG;\F_p)\cong H^1(G;\F_p)=0 \,, \]
where the first isomorphism holds by \cite[Theorem B]{BCGLO1} and the 
second by \cite[Proposition 1.1]{BLO1}. Hence 
$\Hom(\pi_1(|\call|),\F_p)\cong\Hom(H_1(|\call|),\F_p)\cong 
H^1(|\call|;\F_p)=0$, where the second isomorphism holds by the universal 
coefficient theorem (cf. \cite[Theorem III.4.1]{MacLane}), and so 
$\Hom(\pi_1(|\call|),Z)=0$. In particular, the homomorphism 
$\5\chi\:\pi_1(|\call|)\Right2{}\pi_1(|\calb(Z)|)\cong Z$ induced by $\chi$ 
is trivial. 

Thus for each $\psi\in\Mor_\call(P,Q)$, the loop in $|\call|$ formed by 
$\psi$ and the inclusions $\iota_{P,S}$ and $\iota_{Q,S}$ is sent to 
$1\in{}Z$. Since $\beta$ sends inclusions to inclusions, this proves that 
$\chi_{P,Q}(\psi)=1$, and hence that $\beta_{P,Q}(\psi)=\psi$. Thus 
$\beta=\Id_\call$. 
\end{proof}

By Proposition \ref{univ2adj}, when proving tameness for fusion systems of 
simple groups of Lie type, it suffices to look at the universal groups 
(such as $\SL_n(q)$, $\SU_n(q)$) rather than the simple groups 
($\PSL_n(q)$, $\PSU_n(q)$).  However, it is important to note that the 
proposition is false if we replace automorphisms of the linking systems by 
those of the fusion system.  For example, set $G=SL_2(3^4)$ and 
$\4G=PSL_2(3^4)$.  Then $S\cong Q_{32}$ and $\4S\cong D_{16}$, 
$\Out(S,\calf_S(G))=\Out(S)\cong\Out(G)\cong C_4\times C_2$ (and 
$\4\kappa_G$ is an isomorphism), while $\Out(\4G)\cong C_4\times C_2$ and 
$\Out(\4S,\calf_{\4S}(\4G))=\Out(\4S)\cong C_2\times C_2$.  

We already gave one example of two groups which have the same fusion system 
but different outer automorphism groups.  That is a special case of the 
main theorem in our earlier paper, where we construct many examples of 
different groups of Lie type with isomorphic fusion systems.  Since this 
plays a crucial role in Section \ref{s:X2}, where we handle the cross 
characteristic case, we restate the theorem here.  

As in the introduction, we write $G\sim_pH$ to mean that there is a 
fusion preserving isomorphism from a Sylow $p$-subgroup of $G$ to one of 
$H$.

\begin{Thm}[{\cite[Theorem A]{BMO1}}] \label{OldThA}
Fix a prime $p$, a connected reductive group scheme $\gg$ over $\Z$, and a 
pair of prime powers $q$ and $q'$ both prime to $p$.  Then the following 
hold.
\begin{enuma}
\item $\gg(q)\sim_p\gg(q')$ if 
$\widebar{\gen{q}}=\widebar{\gen{q'}}$ as subgroups of $\Z_p^\times$. 
\item If $\gg$ is of type $A_n$, $D_n$, or $E_6$, and $\tau$ is a graph 
automorphism of $\gg$, then 
$\9\tau\gg(q)\sim_p\9\tau\gg(q')$ if 
$\widebar{\gen{q}}=\widebar{\gen{q'}}$ as subgroups of $\Z_p^\times$. 
\item If the Weyl group of $\gg$ contains an element which acts on the 
maximal torus by inverting all elements, then 
$\gg(q)\sim_p\gg(q')$ (or 
$\9\tau\gg(q)\sim_p\9\tau\gg(q')$ for $\tau$ as in (b)) if 
$\widebar{\gen{-1,q}}=\widebar{\gen{-1,q'}}$ as subgroups of $\Z_p^\times$. 
\item If $\gg$ is of type $A_n$, $D_n$ for $n$ odd, or $E_6$, and $\tau$ is 
a graph automorphism of $\gg$ of order two, then 
$\9\tau\gg(q)\sim_p\gg(q')$ if 
$\widebar{\gen{-q}}=\widebar{\gen{q'}}$ as subgroups of $\Z_p^\times$. 
\end{enuma}
\end{Thm}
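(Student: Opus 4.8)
The plan is to prove all four parts together, by describing the $p$-fusion system of a group of Lie type in terms of the $p$-local structure carried by the normalizer of a suitably chosen maximal torus, together with an induction on the rank, and then checking that this structure depends on the prime power only through its closure in $\Z_p^\times$.

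First I would fix, for $G=\gg(q)$ (or $G=\9\tau\gg(q)$), a $\sigma$-setup: write $G=\gg(\fqobar)^\sigma$ for a Steinberg endomorphism $\sigma$ with $q_0\mid q$, fix a maximally split $\sigma$-stable maximal torus with Weyl group $W$, and note that $\sigma$ acts on the character lattice as $q$ times a finite-order automorphism $\rho$, with $\rho=1$ in parts (a),(c) and $\rho$ of order $2$ realizing the diagram automorphism $\tau$ in parts (b),(d). Let $d$ be the multiplicative order of $q$ in $(\Z/p)^\times$. I would then choose $w\in W$ so that the associated twisted maximal torus $\mathbf{T}=\mathbf{T}_w$ — a Sylow $\Phi_d$-torus of $G$ — has $N=N_G(\mathbf{T})$ containing a Sylow $p$-subgroup $S$ of $G$. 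The fusion system $\calf_S(G)$ is generated by $\Aut_G(S)$ together with the automizers of the $\calf$-essential subgroups; one locates these essentials and computes their automizers inside $N$ and inside centralizers $C_G(P)$ of $p$-subgroups $P\le\mathbf{T}^\sigma$, and since $C_\gg(P)$ is connected reductive containing $\mathbf{T}_w$, each such $C_G(P)$ is again a finite group of Lie type of strictly smaller rank. This sets up an induction on the rank of $\gg$, with base case $S$ abelian (where $N_G(S)$ controls fusion by Burnside). I expect the main obstacle to be carrying out this local analysis uniformly — pinning down the essential subgroups and their automizers inside $N$ and the centralizers — together with a direct treatment of the short list of small pairs $(\gg,p)$ where the general pattern breaks down.

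With this structure in place, part (a) reduces to checking that all the local data matches for $q$ and $q'$ when $\widebar{\gen{q}}=\widebar{\gen{q'}}$. The finite $p$-group $\mathbf{T}_w^\sigma[p^\infty]$, with its action of $N/\mathbf{T}^\sigma$, is the $p$-part of the cokernel of $q\rho w-1$ acting on the $p$-adic character lattice with its $W$-action, and a direct computation shows that this depends only on the closed subgroup $\widebar{\gen{q}}\le\Z_p^\times$; the essential subgroups and their automizers, being assembled from this torus and from centralizers handled by the induction, then match as well, producing a fusion-preserving isomorphism $S\Right2{\cong}S'$ and hence $\gg(q)\sim_p\gg(q')$. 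Part (b) is the same argument once the $\sigma$-setup is allowed to have $\rho\ne1$, with $W$ replaced by the relevant coset of the fixed subgroup of $\rho$.

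For part (c): since $-\Id\in W$ is central, $w\mapsto-w$ is an $N/\mathbf{T}^\sigma$-equivariant bijection of $W$, and the $p$-part of the twisted torus $\mathbf{T}_{-w}^\sigma$ in $\gg(q)$ agrees, as a module over its normalizer quotient, with what one gets from $\mathbf{T}_w$ in the untwisted group at $-q$; hence the entire $p$-local datum of $\gg(q)$ is symmetric under $q\leftrightarrow-q$, so by part (a) it depends only on $\widebar{\gen{-1,q}}$, giving $\gg(q)\sim_p\gg(q')$ when $\widebar{\gen{-1,q}}=\widebar{\gen{-1,q'}}$. For part (d): the types $A_n$, $D_n$ with $n$ odd, and $E_6$ are precisely those in which the order-two diagram automorphism $\gamma$ satisfies $-\gamma\in W$ (indeed $\gamma=-w_0$), so folding $\gamma$ into $\sigma$ converts the action $q\gamma w$ on the character lattice into an action $(-q)(w_0w)$ of an untwisted form at $-q$; tracking the residual Weyl action one gets that $\9\tau\gg(q)$ has the same $p$-local structure as $\gg(-q)$ (the familiar heuristic that $\GU_n(q)$ behaves $p$-locally like $\GL_n(-q)$), and combining with part (a) yields $\9\tau\gg(q)\sim_p\gg(q')$ whenever $\widebar{\gen{-q}}=\widebar{\gen{q'}}$. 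Throughout, $p=2$ needs separate bookkeeping, since $\Z_2^\times$ forces $d\in\{1,2\}$ and the role of $\{\pm1\}$ is more delicate, and I would expect a few small groups to require direct verification.
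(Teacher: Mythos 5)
You cannot really be compared with ``the paper's own proof'' here, because the paper gives none: Theorem \ref{OldThA} is quoted verbatim from \cite{BMO1}, and the proof there is homotopy-theoretic, not local. In \cite{BMO1}, $BG(q)\pcom$ is identified (via Friedlander's comparison theorems) with the homotopy fixed point space of an unstable Adams operation $\psi^q$ acting on the $p$-completed classifying space of the corresponding complex reductive group; one then shows these homotopy fixed point spaces depend only on the closed subgroup $\overline{\langle q\rangle}\le\Z_p^\times$ (resp.\ $\overline{\langle -1,q\rangle}$, $\overline{\langle -q\rangle}$ in parts (c),(d)), and finally the Martino--Priddy/Broto--Levi--Oliver correspondence converts a homotopy equivalence $BG(q)\pcom\simeq BG(q')\pcom$ into a fusion-preserving isomorphism of Sylow subgroups. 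Your plan --- explicit identification of the fusion system inside the normalizer of a Sylow $\Phi_d$-torus, with induction on rank through centralizers --- is a genuinely different, purely algebraic route, closer in spirit to what the present paper does in Sections \ref{s:X1}--\ref{s:X2} under the restrictive Hypotheses \ref{G-hypoth-X}; but as a proof of the full theorem it has real gaps.

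Two of them are essential. First, your starting point --- that a Sylow $p$-subgroup lies in $N_G(\mathbf{T}_w)$ for a suitable twisted torus --- is false in general: for $p=2$ and $q\equiv\pm3\pmod 8$ the Sylow $2$-subgroups of $\Sp_{2n}(q)$ are $(Q_8)^n\rtimes P$ and lie in no maximal-torus normalizer (compare the remarks after Theorem \ref{ThJ}), and similar failures occur at torsion primes. The standard way around this is to replace $q$ by a prime power $q'$ with $q'\equiv 1$ (mod $4$, or mod $p$) generating the same closed subgroup --- but that replacement \emph{is} parts (a)--(d) of the theorem (it is exactly how the present paper uses Theorem \ref{OldThA}, e.g.\ in Theorem \ref{ThJ} and Proposition \ref{G-cases-2}), so invoking it would be circular. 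Second, even where $S\le N_G(T)$ holds, the passage from ``the torus, its automizer, and the essential subgroups with their automizers all match for $q$ and $q'$'' to ``there exists a single isomorphism $S\to S'$ that is fusion preserving'' is the actual content of the theorem and is missing: one needs one isomorphism simultaneously compatible with $\Aut_G(S)$ and with the automizers of all essential subgroups, together with control (via Alperin's fusion theorem) of how the centralizer subsystems glue, and the inductive step through $C_G(P)$ must also track component groups and centers of these reductive centralizers. Even the preliminary claim that the $p$-torsion of $\mathbf{T}_w^\sigma$ with its $N$-action depends only on $\overline{\langle q\rangle}$ (a statement about cokernels of $q\rho w-1$ versus $q'\rho w-1$ on the cocharacter lattice) needs an argument, though that piece is fixable. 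These are precisely the difficulties the homotopy-theoretic argument of \cite{BMO1} was designed to bypass, so as written your proposal is a program rather than a proof.
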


The next proposition is of similar type, but much more elementary.

\begin{Prop} \label{OldPrA3}
Fix an odd prime $p$, a prime power $q$ prime to $p$, 
$n\ge2$, and $\gee\in\{\pm1\}$. Then
\begin{enuma}


\item $\Sp_{2n}(q)\sim_p\SL_{2n}(q)$ if $\ord_p(q)$ is even;

\item $\Sp_{2n}(q)\sim_p\Spin_{2n+1}(q)$; and 

\item $\Spin_{2n}^\gee(q) \sim_p \Spin_{2n-1}(q)$ if $q$ is odd and 
$q^n\not\equiv\gee$ (mod $p$).

\end{enuma}
\end{Prop}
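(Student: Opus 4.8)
The plan is to treat all three parts by a single mechanism. Since $p$ is odd and prime to $q$, set $d=\ordp(q)$; then $d\mid p-1$, so $p\nmid d$ and $p$ divides the order of none of the relative Weyl groups that occur below. Let $e=d$ if $d$ is odd and $e=d/2$ if $d$ is even. For each of the relevant groups ($\Sp_{2n}$, $\SL_{2n}$, $\Spin_{2n+1}$, $\Spin_{2n}^\gee$, and the orthogonal groups $\SO$, $\Omega$) I would fix a $\sigma$-setup satisfying Hypotheses \ref{G-hypoth-X} and \ref{G-hypoth-X2}, with $\gg$ of the appropriate type ($C_n$, $A_{2n-1}$, $B_n$, $D_n^\gee$), and take $T$ to be the centralizer of a Sylow $\Phi_d$-torus. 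The one nontrivial input, quoted from the setup behind Theorem \ref{OldThA} and the Hypotheses, is that $N_\gg(T)^\sigma$ then contains a Sylow $p$-subgroup $S$ of $G$ and that $\calf_S(G)=\calf_S(N_\gg(T)^\sigma)$ --- i.e.\ the normalizer of a Sylow $\Phi_d$-torus controls $p$-fusion (and when the Sylow $p$-subgroups are abelian this is Proposition \ref{p:constrained}). Granting this, each part reduces to matching the two torus normalizers. Part (b) is the cleanest: $\Sp_{2n}$ and $\Spin_{2n+1}$ are the simply connected groups of types $C_n$ and $B_n$, hence share a Weyl group $W$, and for each $w\in W$ the two corresponding maximal tori have orders differing only by powers of $2$ (the cocharacter lattices being $\Z^n$ and its index-$2$ sublattice); since $p$ is odd, the $p$-part of $T^\sigma$ and the $W$-action on it agree, so the $\Phi_d$-torus normalizers are isomorphic compatibly with the two copies of $S$, giving $\Sp_{2n}(q)\sim_p\Spin_{2n+1}(q)$.

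\smallskip
For (a), $d=2e$ is even, so $p\mid q^e+1$ and $p\nmid q^e-1$; write $n=ea+r$ with $0\le r<e$. A lifting-the-exponent computation gives $v_p(q^{2em}-1)=v_p(q^e+1)+v_p(m)$ for all $m\ge1$, so $|\Sp_{2n}(q)|_p=\prod_{m=1}^{a}(q^{2em}-1)_p=p^{ac+v_p(a!)}$ with $c=v_p(q^e+1)$, and the same formula computes $|\GL_{2n}(q)|_p$ (the contributing factors being $(q^i-1)_p$ with $2e\mid i$); since $p\nmid q-1$ this also equals $|\SL_{2n}(q)|_p$. Thus, identifying $S$ with its image under $\Sp_{2n}(q)\le\GL_{2n}(q)$, a Sylow $p$-subgroup of $\Sp_{2n}(q)$ is Sylow in $\GL_{2n}(q)$. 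The torus $T$ is a product of $a$ Coxeter tori of $\Sp_{2e}$-Levis in the first group and of $\GL_{2e}$-Levis in the second (orders $(q^e+1)^a$ and $(q^{2e}-1)^a$, with the same $p$-part $p^{ac}$), and in both cases the resulting fusion on $S$ is that of $(C_{p^c})^a\rtimes(C_{2e}\wr\Sigma_a)$: the $i$-th factor $C_{2e}$ acts on the $i$-th $C_{p^c}$ through $\langle q\rangle\le\Aut(C_{p^c})$ --- the relative Weyl group of each Coxeter block is cyclic of order $2e$, acting by $x\mapsto x^{\pm q}$, and $\langle q\rangle=\langle-q\rangle$ modulo $p^c$ since $q^e\equiv-1$ --- while $\Sigma_a$ permutes the blocks. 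These fusion systems coincide, and $\calf_S(\SL_{2n}(q))=\calf_S(\GL_{2n}(q))$ because $p\nmid[\GL_{2n}(q):\SL_{2n}(q)]$; hence $\Sp_{2n}(q)\sim_p\SL_{2n}(q)$.

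\smallskip
For (c), since $p$ is odd the kernels of $\Spin\to\SO$ and of the natural surjection onto $\Omega$ are $2$-groups, so $\Spin_{2n}^\gee(q)\sim_p\SO_{2n}^\gee(q)$ and $\Spin_{2n-1}(q)\sim_p\SO_{2n-1}(q)$, and it suffices to treat the orthogonal groups. For $q$ odd, $\SO_{2n-1}(q)$ is the stabilizer in $\SO_{2n}^\gee(q)$ of a suitable non-degenerate $1$-space, with $[\SO_{2n}^\gee(q):\SO_{2n-1}(q)]=q^{n-1}(q^n-\gee)$, whose $p$-part is $(q^n-\gee)_p$; the hypothesis $q^n\not\equiv\gee\pmod p$ says exactly that this index is prime to $p$, so $\SO_{2n-1}(q)$ contains a Sylow $p$-subgroup $S$ of $\SO_{2n}^\gee(q)$. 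Under the same hypothesis a Sylow $\Phi_d$-torus $T$ of $\SO_{2n-1}(q)$ is also one of $\SO_{2n}^\gee(q)$ --- the ``extra'' part of $\SO_{2n}^\gee$ relative to $\SO_{2n-1}$ contributes nothing at $p$ --- and the two relative Weyl groups $N(T)/C(T)$ agree; so the torus normalizers induce the same fusion on $S$, and $\Spin_{2n}^\gee(q)\sim_p\Spin_{2n-1}(q)$.

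\smallskip
The main obstacle is the single quoted input: that in each of these $\sigma$-setups the normalizer of the chosen Sylow $\Phi_d$-torus controls $p$-fusion, i.e.\ $\calf_S(G)=\calf_S(N_\gg(T)^\sigma)$. This is precisely the technical core behind Theorem \ref{OldThA} and the reason Hypotheses \ref{G-hypoth-X}--\ref{G-hypoth-X2} are imposed, so in a self-contained account one invokes that machinery rather than re-deriving it. Everything else is bookkeeping --- pinning down $e$, the block count, and the relative Weyl group in each family, the lifting-the-exponent identity matching the two orders in (a), the index computation in (c), and the routine passages $\GL\leftrightarrow\SL$ and $\Spin\leftrightarrow\SO\leftrightarrow\Omega$ --- each step simply re-expressing the quantitative content of the stated hypotheses.
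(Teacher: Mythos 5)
There is a genuine gap, and it sits exactly where you placed your ``single quoted input.'' You assert that in each of these $\sigma$-setups the normalizer of the Sylow $\Phi_d$-torus controls $p$-fusion, i.e.\ $\calf_S(G)=\calf_S(N_G(T))$, and you attribute this to the machinery behind Theorem \ref{OldThA} and Hypotheses \ref{G-hypoth-X}--\ref{G-hypoth-X2}. But nothing of the sort is proved (or claimed) in this paper or in \cite{BMO1}: Hypotheses \ref{G-hypoth-X}(II) only require that $N_G(T)$ \emph{contain} a Sylow $p$-subgroup, which is much weaker than controlling fusion, and Theorem \ref{OldThA} is proved in \cite{BMO1} by comparing the groups themselves (via homotopy fixed points of Frobenius actions), not by reducing to torus normalizers. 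Indeed, when $S$ is nonabelian the statement is false in general: the fusion system of $G$ typically has essential subgroups invisible in $N_G(T)$ (this is precisely why Section \ref{s:X1} has to work with the root subgroups $\4K_{\5\alpha}\cong\SL_2$ and focal-subgroup computations, rather than just with $N_G(T)$). Consequently, matching the two torus normalizers in (a), (b), (c) — even if done correctly — does not yield an isomorphism of fusion systems; your descriptions such as ``the resulting fusion on $S$ is that of $(C_{p^c})^a\rtimes(C_{2e}\wr\Sigma_a)$'' identify $\calf_S(N_G(T))$, not $\calf_S(G)$, and the step from one to the other is exactly what is missing. (The order and index computations you do make — $|\Sp_{2n}(q)|_p=|\SL_{2n}(q)|_p$ when $\ord_p(q)$ is even, and $v_p\bigl([\SO_{2n}^\gee(q):\SO_{2n-1}(q)]\bigr)=0$ when $q^n\not\equiv\gee$ — are fine, but they only show that a Sylow $p$-subgroup of one group embeds in the other.)

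For comparison, the paper's proof does not attempt a self-contained argument: it quotes \cite[Proposition A.3]{BMO1}, which establishes all three equivalences with $\SO_m^\pm(q)$ in place of $\Spin_m^\pm(q)$, and then only has to pass from $\SO$ to $\Spin$. That passage uses two small observations: $\Spin_m^\gee(q)\sim_p\Omega_m^\gee(q)$ because the kernel has order $2$ and $p$ is odd; and $\Omega_m^\gee(q)\sim_p\SO_m^\gee(q)$ by Lemma \ref{onto->split}(a), since the index-$2$ extension is by a diagonal automorphism which, by Lemma \ref{Aut-diag} together with Proposition \ref{p:Autdiag(G)}(c) and the fact that a Sylow $p$-subgroup lies in $N_{\4G}(\4T)$, can be chosen to centralize a Sylow $p$-subgroup. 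If you want a proof along your lines, you would have to either prove control of fusion by $N_G(T)$ in each of these specific situations (which is a substantial theorem, not bookkeeping), or replace that input by the actual comparison results of \cite{BMO1}.
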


\begin{proof} If we replace $\Spin_m^\pm(q)$ by $\SO_m^\pm(q)$ in (b) and 
(c), then these three points are shown in \cite[Proposition A.3]{BMO1} as 
points (d), (a), and (c), respectively. When $q$ is a power of $2$, (b) 
holds because the groups are isomorphic (see \cite[Theorem 11.9]{Taylor}). 
So it remains to show that 
	\[ \Spin_m^\gee(q) \sim_p \Omega_m^\gee(q) \sim_p \SO_m^\gee(q) \]
for all $m\ge3$ (even or odd) and $q$ odd. The first equivalence holds 
since $p$ is odd and $\Omega_m^\gee(q)\cong\Spin_m^\gee(q)/K$ where $|K|=2$. 
The second holds by Lemma \ref{onto->split}(a), and since 
$\Out_{\SO_{m}^\gee(q)}(\Omega_{m}^\gee(q))$ is generated by the class of a diagonal 
automorphism of order $2$ (see, e.g., \cite[\S\,2.7]{GLS3}) and hence can 
be chosen to commute with a Sylow $p$-subgroup. This last statement is 
shown in Lemma \ref{Aut-diag} below, and holds since for appropriate 
choices of algebraic group $\4G$ containing the given group $G$, and of 
maximal torus $\4T\le\4G$, a Sylow $p$-subgroup of $G$ is contained in 
$N_{\4G}(\4T)$ (see \cite[Theorem 4.10.2]{GLS3}) and the diagonal 
automorphisms of $G$ are induced by conjugation by elements in $N_{\4T}(G)$ 
(see Proposition \ref{p:Autdiag(G)}(c)). 
\end{proof}

Theorem \ref{OldThA} and Proposition \ref{OldPrA3}, together with some 
other, similar relations in \cite{BMO1}, lead to the following 
proposition, which when $p$ is odd provides a relatively short list of 
``$p$-local equivalence class representatives'' for groups of Lie type in 
characteristic different from $p$.

\begin{Prop} \label{list-simp}
Fix an odd prime $p$, and assume $G\in\Lie(q_0)$ is of universal type for 
some prime $q_0\ne{}p$. Assume also that the Sylow $p$-subgroups of $G$ are 
nonabelian. Then there is a group $G^*\in\Lie(q_0')$ of universal type for 
some $q_0'\ne p$, such that $G^*\sim_pG$ and $G^*$ is one of the groups in 
the following list: 
\begin{enuma}  
\item $\SL_n(q')$ for some $n\ge p$; or
\item $\Spin_{2n}^\gee(q')$, where $n\ge p$, $\gee=\pm1$, 
$(q')^n\equiv\gee$ (mod $p$), and $\gee=+1$ if $n$ is odd; or
\item $\lie3D4(q')$ or $\lie2F4(q')$, where $p=3$ and $q'$ is a power of $2$; or 
\item $\gg(q')$, where $\gg=G_2$, $F_4$, $E_6$, $E_7$, or $E_8$, 
$p\,\big|\,|W(\gg)|$, and $q'\equiv1$ (mod $p$); or 
\item $E_8(q')$, where $p=5$ and $q'\equiv\pm2$ (mod $5$).
\end{enuma}
Furthermore, in all cases except (c), we can take $q_0'$ to 
be any given prime whose class generates $(\Z/p^2)^\times$, and choose $G^*$ 
so that $q'=(q_0')^b$ where $b|(p-1)p^k$ for some $k$.
\end{Prop}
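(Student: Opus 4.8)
The plan is to run through the finite list of isogeny types of groups in $\Lie(q_0)$ of universal type, discard the ones whose Sylow $p$-subgroups are abelian, and move each of the survivors into one of the normal forms (a)--(e) by repeatedly applying Theorem \ref{OldThA} and Proposition \ref{OldPrA3}, together with the further $p$-local equivalences recorded in \cite{BMO1}. I would first isolate the arithmetic behind the ``Furthermore'' clause. Since $p$ is odd, $\Z_p^\times\cong C_{p-1}\times\Z_p$ is procyclic, and a prime $q_0'$ topologically generates it precisely when its class generates $(\Z/p^2)^\times$; such primes exist by Dirichlet's theorem. If $\widebar{\gen{q_0'}}=\Z_p^\times$, then every closed subgroup of $\Z_p^\times$ that has finite index in its $\Z_p$-factor --- and $\widebar{\gen q}$ always is of this kind --- equals $\widebar{\gen{(q_0')^b}}$ with $b\mid(p-1)p^k$ for some $k$. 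Since each clause of Theorem \ref{OldThA} constrains the field only through $\widebar{\gen q}$, $\widebar{\gen{-q}}$, or $\widebar{\gen{-1,q}}$, every field change made below can be arranged to output $q'=(q_0')^b$ with $b\mid(p-1)p^k$, and one checks the congruence conditions in (b), (d), (e) are inherited under a change $q\mapsto q'=q^c$ with $\widebar{\gen q}=\widebar{\gen{q'}}$: this is immediate for $q'\equiv1$ and for $q'\equiv\pm2\pmod5$, and for $(q')^n\equiv\gee$ it uses that $c$ is forced odd once $-1\in\widebar{\gen q}$. Case (c) is the one exception, where the characteristic $q_0'=2$ is forced and no freedom is asserted.

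\noindent\textbf{Very twisted groups and untwisting.}
The Suzuki groups $\lie2B2(q)$ and the Ree groups $\lie2G2(q)$, $\lie2F4(q)$ have abelian Sylow $p$-subgroups for every relevant odd prime, with the single exception $p=3$, $G=\lie2F4(q)$ (where $q$ is automatically a power of $2$), which is case (c). Among the Steinberg groups, $\SU_n(q)={}^2\!A_{n-1}(q)$ and $\lie2E6(q)$ are replaced by $\SL_n(q')$ and $E_6(q')$ by Theorem \ref{OldThA}(d); the twisted type $\Spin_{2n}^-(q)={}^2\!D_n(q)$ with $n$ odd is replaced by $\Spin_{2n}^+(q')$ by the same result (this accounts for the restriction ``$\gee=+1$ if $n$ odd'' in (b)); and $\lie3D4(q)$ has abelian Sylow $p$-subgroups for $p\ne3$ while, for $p=3$, it becomes $\lie3D4(2^b)$ by Theorem \ref{OldThA}(b), hence case (c). It then remains to treat the untwisted classical groups, the untwisted exceptional groups $G_2,F_4,E_6,E_7,E_8$, and the still-twisted type $\Spin_{2n}^-$ with $n$ even.

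\noindent\textbf{Classical and exceptional cases.}
For classical $G$ I would chain Proposition \ref{OldPrA3}: $\Spin_{2n}^\gee(q)$ with $q^n\not\equiv\gee\pmod p$ passes to $\Spin_{2n-1}(q)$ by (c); $\Spin_{2n+1}(q)$ passes to $\Sp_{2n}(q)$ by (b); and $\Sp_{2n}(q)$ passes to $\SL_{2n}(q)$ by (a) whenever $\ord_p(q)$ is even. When $\ord_p(q)$ is odd, I would first use Theorem \ref{OldThA}(c) --- legitimate because $-\Id$ lies in the Weyl group of type $C_n$ --- to replace $q$ by a field $q'$ with $\widebar{\gen{-1,q'}}=\widebar{\gen{-1,q}}$ but $\ord_p(q')$ even, after which (a) applies. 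Thus every classical $G$ with nonabelian Sylow $p$ lands in (a), or (when $q^n\equiv\gee$ for the appropriate $\gee$) in (b), and the inequality $n\ge p$ in (a) and (b) is exactly the condition that the relevant Sylow $p$-subgroup be nonabelian. For exceptional $\gg\in\{G_2,F_4,E_6,E_7,E_8\}$, a Sylow $p$-subgroup is toral --- hence abelian --- unless $p$ divides the order of the relative Weyl group of a Sylow $\Phi_{\ord_p(q)}$-torus; in particular one needs $p\mid|W(\gg)|$. When it is nonabelian, Theorem \ref{OldThA}(c) (applicable since $-\Id\in W(\gg)$ for $\gg\ne E_6$) moves $q$ to $q'\equiv1\pmod p$, placing $G$ in (d); the one case not covered this way is $\gg=E_8$, $p=5$, $\ord_5(q)=4$, which is already case (e); and when $\gg=E_6$, $p=3$, $q\equiv-1\pmod3$, one passes first to $\lie2E6(q')$ with $q'\equiv1\pmod3$ by Theorem \ref{OldThA}(d) and then to $F_4(q')$ by the ${}^2\!E_6$--$F_4$ equivalence from \cite{BMO1}, landing again in (d).

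\noindent\textbf{Main obstacle.}
The hard part will be this last step: determining, isogeny type by isogeny type, exactly which residues of $q$ modulo $p$ (equivalently, which values of $\ord_p(q)$) make a Sylow $p$-subgroup of an exceptional group nonabelian, and then checking in each such case that one of the equivalences above carries $G$ into the listed normal form. The combinatorics of the relative Weyl groups $W_d$, together with the failure of $-\Id$ to lie in $W(E_6)$ (so that Theorem \ref{OldThA}(c) is unavailable there and one must detour through ${}^2\!E_6$ and $F_4$), is what makes this delicate; everything else reduces to mechanically composing the cited equivalences and invoking the elementary facts about closed subgroups of $\Z_p^\times$ from the first paragraph.
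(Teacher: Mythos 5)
Your proposal follows essentially the same route as the paper's proof: untwist the Steinberg groups via Theorem \ref{OldThA}(d), run the classical groups through the chain in Proposition \ref{OldPrA3} (with a preliminary field change by Theorem \ref{OldThA}(c) when $\ord_p(q)$ is odd), handle the exceptional Chevalley groups by Theorem \ref{OldThA}(c) together with the detour $E_6(q)\sim_p\lie2E6(q^2)\sim_pF_4(q^2)$, and extract the ``Furthermore'' clause from the structure of closed subgroups of $\Z_p^\times$ exactly as in Lemma \ref{cond(i-iii)}(a).

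Two remarks. First, the step you defer as the ``main obstacle'' --- deciding for which values of $\ord_p(q)$ an exceptional group has nonabelian Sylow $p$-subgroups, and in particular eliminating every case with $\ord_p(q)>2$ other than $E_8$ with $p=5$ --- is exactly what the paper settles by citing Gorenstein--Lyons \cite[10-1(2,3)]{GL}: nonabelian Sylow $p$-subgroups force $p\mid|W(\gg)|$, and when $\ord_p(q)>2$ one needs $p\cdot\ord_p(q)$ to divide some $n$ for which $q^n-1$ occurs in the order formula for $\gg(q)$, which together with $\ord_p(q)\mid(p-1)$ leaves only $(E_8,p=5,\ord_5(q)=4)$. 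So no new relative-Weyl-group analysis is required; the same reference also underlies your (unproved) claim that the Suzuki, Ree, and triality groups have abelian Sylow $p$-subgroups except for $p=3$ in types $\lie2F4$ and $\lie3D4$. Second, Proposition \ref{OldPrA3}(c) is stated only for odd $q$, so in your classical chain the step $\Spin_{2n}^\gee(q)\sim_p\Spin_{2n-1}(q)$ is not available when $q_0=2$; as in the paper, one must first pass to an odd prime power via Theorem \ref{OldThA}(a,b) before applying it. With these two points supplied, your outline matches the paper's argument.
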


\begin{proof} Let $q$ be such that $G\cong\9\tau\gg(q)$ for some $\tau$ and 
some $\gg$. Thus $q$ is a power of $q_0$.  Fix a prime $q_0'$ as specified 
above. By Lemma \ref{cond(i-iii)}(a), there are positive integers $b,c$, 
and powers $q'=(q_0')^b$ and $q^\vee=(q_0')^c$ such that 
$\4{\gen{q}}=\4{\gen{q'}}$, $\4{\gen{-q}}=\4{\gen{q^\vee}}$, and 
$b,c|(p-1)p^\ell$ for some $\ell\ge0$. 

\begin{enumr}

\item Assume $G\cong\Sz(q)$, $\lie2G2(q)$, $\lie2F4(q)$, or 
$G\cong\lie3D4(q)$.  Since $p\ne{}q_0$, and since $S\in\sylp{G}$ is 
nonabelian, $p$ divides the order of the Weyl group $W$ of $\gg$ by 
\cite[10-1]{GL}. The Weyl group of $B_2$ is a $2$-group, and $2$ and $3$ 
are the only primes which divide the orders of the Weyl groups of $G_2$, 
$F_4$, and $D_4$.  Hence $p=3$, $G\not\cong\lie2G2(q)$ since that is 
defined only in characteristic $3$, and so $G\cong\lie2F4(q)$ or 
$\lie3D4(q)$. Set $G^*=\lie2F4(q')$ or $\lie3D4(q')$, respectively, where 
$q_0'=2$. Then $G^*\sim_pG$, and we are in case (c). 

\item If $G=\SU_n(q)$ or $\lie2E6(q)$, then by Theorem \ref{OldThA}(d), 
$G\sim_pG^*$ where $G^*\cong\SL_n(q^\vee)$ or $E_6(q^\vee)$, respectively.  
So we can replace $G$ by a Chevalley group in these cases. 

\item Assume $G=\Sp_{2n}(q)$ for some $n$ and $q$. If $\ord_p(q)$ is even, 
then by Proposition \ref{OldPrA3}(a), $G\sim_p\SL_{2n}(q)$. If 
$\ord_p(q)$ is odd, then $\ord_p(q^\vee)$ is even (recall that 
$q^\vee\equiv-q$ (mod $p$)), and $G\sim_p\Sp_{2n}(q^\vee)$ by Theorem 
\ref{OldThA}(c). So $G$ is always $p$-locally equivalent to a linear group in 
this case.

\item Assume $G=\Spin_{2n+1}(q)$ for some $n$ and $q$. Then 
$G\sim_p\Sp_{2n}(q)$ by Proposition \ref{OldPrA3}(b). So $G$ is $p$-locally 
equivalent to a linear group by (iii).

\item If $G=\SL_n(q)$, set $G^*=\SL_n(q')$. Then $G^*\sim_pG$ by Theorem 
\ref{OldThA}(a), $n\ge p$ since the Sylow $p$-subgroups of $G$ are 
nonabelian, and we are in the situation of (a).

\item Assume $G=\Spin_{2n}^\gee(q)$ for some $n$ and $q$, and $\gee=\pm1$. 
If $q$ is a power of $2$, then by using point (a) or (b) of Theorem 
\ref{OldThA}, we can arrange that $q$ be odd. 
If $q^n\not\equiv\gee$ (mod $p$), then $G\sim_p\Spin_{2n-1}(q)$ by 
Proposition \ref{OldPrA3}(c), and this is $p$-equivalent to a linear 
group by (iv). So we are left with the case where $q^n\equiv\gee$ (mod $p$). 
If $n$ is odd and $\gee=-1$, set $G^*=\Spin_{2n}^+(q^\vee)\sim_pG$ (Theorem 
\ref{OldThA}(d)). 
Otherwise, set $G^*=\Spin_{2n}^\gee(q')\sim_pG$ (Theorem 
\ref{OldThA}(a,b)). In either case, we are in the situation of (b).

\end{enumr}

We are left with the cases where $G=\gg(q)$ for some exceptional Lie group 
$\gg$. By \cite[10-1(2)]{GL} and since the Sylow $p$-subgroups of $G$ are 
nonabelian, $p\bmid|W(\gg)|$. If $\ord_p(q)=1$, then $G^*=\gg(q')\sim_pG$ 
by Theorem \ref{OldThA}(a). If $\ord_p(q)=2$ and $\gg\ne E_6$, then 
$G^*=\gg(q^\vee)\sim_pG$ by Theorem \ref{OldThA}(c), where $q^\vee\equiv1$ 
(mod $p$). In either case, we are in the situation of (d). 

If $\ord_p(q)=2$ and $G=E_6(q)$, then $\4{\gen{q}}=\4{\gen{-q^2}}$ as 
closed subgroups of $\Z_p^\times$ (note that $v_p(q^2-1)=v_p((-q^2)^2-1)$). 
So by Theorem \ref{OldThA}(d) and Example 4.4 in \cite{BMO1}, 
$G=E_6(q)\sim_p\lie2E6(q^2)\sim_pF_4(q^2)$. So we can choose $G^*$ 
satisfying (d) as in the last paragraph.

Assume $\ord_p(q)>2$. By \cite[10-1(3)]{GL}, for $S\in\sylp{G}$ to be 
nonabelian, there must be some $n\ge1$ such that $p\cdot\ord_p(q)\bmid n$, and 
such that $q^n-1$ appears as a factor in the formula for $|\gg(q)|$ (see, 
e.g., \cite[Table 4-2]{GL} or \cite[Theorem 9.4.10 \& Proposition 
10.2.5]{Carter}). Since $\ord_p(q)|(p-1)$, this shows that the case 
$\ord_p(q)>2$ appears only for the group $E_8(q)$, and only when $p=5$ and 
$m=4$. In particular, $q,q'\equiv\pm2$ (mod $5$). Set $G^*=E_8(q')$; then 
$G^*\sim_pG$ by Theorem \ref{OldThA}(a), and we are in the situation of (e). 
\end{proof}

The following lemma was needed in the proof of Proposition \ref{list-simp} 
to reduce still further the prime powers under consideration.


\begin{Lem} \label{cond(i-iii)}
Fix a prime $p$, and an integer $q>1$ prime to $p$. 
\begin{enuma} 
\item If $p$ is odd, then for any prime $r_0$ whose class
generates $(\Z/p^2)^\times$, there is $b\ge1$ such 
that $\4{\gen{q}}=\4{\gen{(r_0)^b}}$, and $b|(p-1)p^\ell$ for some $\ell$.

\item If $p=2$, then either $\4{\gen{q}}=\4{\gen{3}}$, or 
$\4{\gen{q}}=\4{\gen{5}}$, or there are $\gee=\pm1$ and $k\ge1$ such 
that $\gee\equiv{}q$ (mod $8$) and $\4{\gen{q}}=\4{\gen{\gee\cdot3^{2^k}}}$.
\end{enuma}
\end{Lem}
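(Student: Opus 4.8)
The plan is to argue entirely inside the group of $p$-adic units $\Z_p^\times$ (where $\4{\gen{q}}$ denotes the closure of the cyclic subgroup generated by the image of $q$), using the standard structure of $\Z_p^\times$ together with the classification of its closed subgroups.

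\textbf{Part (a).} For odd $p$ one has $\Z_p^\times\cong C_{p-1}\times\Z_p$ as topological groups, a procyclic group whose two factors have coprime pro-orders; consequently every closed subgroup has the form $D\times p^m\Z_p$ with $D\le C_{p-1}$ and $0\le m\le\infty$. I would first note that $u\in\Z_p^\times$ is a topological generator precisely when its class generates the cyclic quotient $\Z_p^\times/(1+p^2\Z_p)\cong(\Z/p^2)^\times$ of order $p(p-1)$; in particular $\4{\gen{r_0}}=\Z_p^\times$. Writing $\zeta_0\in C_{p-1}$ and $a_0\in\Z_p$ for the (necessarily generating) components of $r_0$, one then reads off from the product structure that $\4{\gen{r_0^b}}=\gen{\zeta_0^b}\times p^{v_p(b)}\Z_p$, a subgroup of the shape $D\times p^m\Z_p$ with $|D|=(p-1)/\gcd(b,p-1)$ and $m=v_p(b)$. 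Now $\4{\gen{q}}=D\times p^m\Z_p$ with $|D|=\ord_p(q)=:d$, and $m<\infty$ since $q>1$ is an integer, so $q^{p-1}\ne 1$ in $\Z_p$ and the $\Z_p$-component of $q$ is nontrivial. Taking $b=\frac{p-1}{d}\,p^{m}$ gives $\gcd(b,p-1)=\frac{p-1}{d}$ and $v_p(b)=m$, hence $\4{\gen{r_0^b}}=\4{\gen{q}}$ and $b\mid(p-1)p^{m}$, as required.

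\textbf{Part (b).} Here I would use $\Z_2^\times=\{\pm1\}\times(1+4\Z_2)$, where $1+4\Z_2\cong\Z_2$ is torsion-free procyclic with closed subgroups $1+2^j\Z_2$ ($j\ge2$), and where the $2$-adic identity $v_2(x^n-1)=v_2(x-1)+v_2(n)$ (valid for $x\equiv1\pmod 4$, $n\ge1$) gives $\4{\gen{x}}=1+2^{v_2(x-1)}\Z_2$ for every $x\in(1+4\Z_2)\setminus\{1\}$. Write $q=\omega y$ with $\omega=\pm1$, $\omega\equiv q\pmod4$, $y=\omega q\in1+4\Z_2$ (and $y\ne1$ as $q>1$). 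If $\omega=1$, i.e.\ $q\equiv1\pmod4$, then $\4{\gen{q}}=1+2^{v_2(q-1)}\Z_2$: this is $\4{\gen{5}}=1+4\Z_2$ when $q\equiv5\pmod8$, and is $1+2^{k+2}\Z_2=\4{\gen{3^{2^k}}}$ with $k=v_2(q-1)-2\ge1$ when $q\equiv1\pmod8$ (using $v_2(3^{2^k}-1)=v_2(9-1)+v_2(2^{k-1})=k+2$; note $\varepsilon=1\equiv q\pmod8$). If $\omega=-1$, i.e.\ $q\equiv3\pmod4$, I would show $\4{\gen{q}}=\4{\gen{-y}}$ depends only on $j:=v_2(y-1)=v_2(q+1)\ge2$: it contains $\4{\gen{(-y)^2}}=\4{\gen{y^2}}=1+2^{j+1}\Z_2$ with index $2$, surjects onto $\{\pm1\}$, lies inside $\{\pm1\}\times(1+2^j\Z_2)$, and omits $-1$ (because $v_2(y-1)=j$, not $\ge j+1$); reducing mod $1+2^{j+1}\Z_2$, the quotient $\bigl(\{\pm1\}\times(1+2^j\Z_2)\bigr)/(1+2^{j+1}\Z_2)\cong C_2\times C_2$ has a unique index-$2$ subgroup with these properties, which determines $\4{\gen{-y}}$ in terms of $j$. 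Comparing with the model subgroups $\4{\gen{3}}$ (here $\omega=-1$, $y=-3$, $j=v_2(-4)=2$) and $\4{\gen{(-1)\cdot3^{2^k}}}$ for $k\ge1$ (here $\omega=-1$, $y=3^{2^k}$, $j=v_2(3^{2^k}-1)=k+2$), this gives $\4{\gen{q}}=\4{\gen{3}}$ when $q\equiv3\pmod8$, and $\4{\gen{q}}=\4{\gen{(-1)\cdot3^{2^k}}}$ with $k=v_2(q+1)-2\ge1$ and $\varepsilon=-1\equiv q\pmod8$ when $q\equiv7\pmod8$. The four residues of $q$ mod $8$ thus yield exactly the three listed alternatives.

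\textbf{Expected main obstacle.} Part (a) is essentially bookkeeping once the structure theorem is recalled, and the $\omega=1$ half of (b) is just as routine. The genuinely delicate spot is the case $\omega=-1$ of (b): there $\4{\gen{q}}$ is a ``diagonal'' closed subgroup of $\{\pm1\}\times(1+4\Z_2)$ — not a product of subgroups of the two factors, whose pro-orders are no longer coprime — so it cannot simply be recovered from its projections. The crux is therefore the uniqueness assertion above (that a closed subgroup of $\{\pm1\}\times(1+2^j\Z_2)$ surjecting onto $\{\pm1\}$, containing $1+2^{j+1}\Z_2$ with index $2$, and omitting $-1$ is determined by $j$), together with the matching-of-valuations check that $-q$ (for $q\equiv7\pmod8$) and $-3^{2^k}$ have the same $j$, while $q\equiv3\pmod8$ forces $\4{\gen{q}}=\4{\gen{3}}$ rather than $\4{\gen{-3^{2^k}}}$ for any $k\ge1$. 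Everything else reduces to the elementary $2$-adic valuation computations indicated.
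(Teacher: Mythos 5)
Your proposal is correct and follows essentially the same route as the paper: both arguments work inside $\Z_p^\times$ using its decomposition as (torsion prime to $p$)$\times$(pro-$p$ part) for odd $p$, and $\{\pm1\}\times$(a copy of $\Z_2$) for $p=2$, together with the classification of closed (procyclic) subgroups and the $2$-adic valuation identity $v_2(x^n-1)=v_2(x-1)+v_2(n)$. The only difference is one of detail: you compute $b=\frac{p-1}{d}p^m$ explicitly rather than taking it as an index, and in part (b) you spell out the ``diagonal'' case $q\equiv3\pmod4$ and the congruence $\gee\equiv q\pmod8$, which the paper's six-line proof leaves implicit.
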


\begin{proof} Since $q\in\Z$ and $q>1$, $\4{\gen{q}}$ is infinite. 

\smallskip

\textbf{(a) } If $p$ is odd, then for each $n\ge1$, 
$(\Z/p^n)^\times\cong(\Z/p)^\times\times(\Z/p^{n-1})$ is cyclic and 
generated by the class of $r_0$. Hence 
$\Z_p^\times\cong(\Z/p)^\times\times(\Z_p,+)$, and 
$\4{\gen{r_0}}=\Z_p^\times$.  Also, $\4{\gen{q}}\ge 1+p^\ell\Z_p$ for some 
$\ell\ge1$, since each infinite, closed subgroup of $(\Z_p,+)$ contains 
$p^k\Z_p$ for some $k$. 

Set $ b = [\Z_p^\times:\4{\gen{q}}] = 
[(\Z/p^\ell)^\times:\gen{q{+}p^\ell\Z}] \big| (p-1)p^{\ell-1}$. Then 
$\4{\gen{q}}=\4{\gen{(r_0)^b}}$.

\smallskip

\noindent\textbf{(b) } If $p=2$, then 
$\Z_2^\times=\{\pm1\}\times\4{\gen{3}}$, where $\4{\gen{3}}\cong(\Z_2,+)$. 
Hence the only infinite closed subgroups of $\4{\gen{3}}$ are those of the 
form $\4{\gen{3^{2^k}}}$ for some $k\ge0$. So 
$\4{\gen{q}}=\4{\gen{\gee\cdot3^{2^k}}}$ for some $k\ge0$ and some 
$\gee=\pm1$, and the result follows since $\4{\gen{5}}=\4{\gen{-3}}$. 
\end{proof}

We also note, for use in Section \ref{s:=}, the following more technical 
result.

\begin{Lem} \label{extend_phi}
Let $G$ be a finite group, fix $S\in\sylp{G}$, and set $\calf=\calf_S(G)$.  
Let $P\le S$ be such that $C_G(P)\le P$ and $N_S(P)\in\sylp{N_G(P)}$. Then 
for each $\varphi\in\Aut(S,\calf)$ such that $\varphi(P)=P$, 
$\varphi|_{N_S(P)}$ extends to an automorphism $\4\varphi$ of $N_G(P)$.
\end{Lem}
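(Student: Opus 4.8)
The plan is to reduce the statement to an application of Lemma \ref{kappa-lift}(a) for the group $N_G(P)$, using the hypotheses to identify its fusion system at the prime $p$. First I would set $N=N_G(P)$ and $T=N_S(P)$. The hypothesis $N_S(P)\in\sylp{N_G(P)}$ says exactly that $T\in\sylp{N}$, so the fusion system $\calf_T(N)$ is defined. Next I would observe that since $\varphi\in\Aut(S,\calf)$ and $\varphi(P)=P$, the restriction $\psi\defeq\varphi|_T$ lies in $\Aut(T)$ and is fusion preserving with respect to $\calf_T(N)$: for any $A,B\le T$, the morphisms in $\Hom_N(A,B)$ are precisely those $c_g\in\Hom_G(A,B)$ with $g\in N_G(P)$, hence those $c_g\in\Hom_G(A,B)$ which additionally fix $P$, i.e., those $\calf$-morphisms $A\to B$ (restricted along inclusions into $S$) that extend to a morphism in $\autf(P)$; and $\varphi$, being an automorphism of the category $\calf$ with $\varphi(P)=P$, carries this set of morphisms bijectively onto the corresponding set for $(\varphi(A),\varphi(B))$. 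Thus $\psi\in\Aut(T,\calf_T(N))$.

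Then I would invoke Lemma \ref{kappa-lift}(a) applied to the group $N$ with $T\in\sylp{N}$: that lemma says $\4\kappa_N$ is surjective if and only if every element of $\Aut(T,\calf_T(N))$ extends to an automorphism of $N$. So it suffices to know that $\4\kappa_N$ is surjective, and for this I would use Proposition \ref{p:constrained}(a): its hypothesis is $C_N(O_p(N))\le O_p(N)$, which holds here because $P\nsg N$ forces $P\le O_p(N)$, and then $C_N(O_p(N))\le C_N(P)=C_G(P)\cap N\le P\le O_p(N)$ using the hypothesis $C_G(P)\le P$. Hence $N$ is $p$-constrained (indeed $N$ is a model for its own fusion system), $\kappa_N$ and $\mu_N$ are isomorphisms, so $\4\kappa_N$ is an isomorphism, in particular surjective. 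Therefore $\psi=\varphi|_{N_S(P)}$ extends to some $\4\varphi\in\Aut(N)=\Aut(N_G(P))$, which is the claim.

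The only genuinely delicate point is the verification that $\psi=\varphi|_T$ is fusion preserving for $\calf_T(N)$ — i.e., that the conjugation maps in $N_G(P)$ between subgroups of $T$ are matched up correctly under $\varphi$. This requires being a little careful that an $\calf$-morphism $A\to B$ (with $A,B\le T\le S$) is realized by conjugation by an element of $N_G(P)$ precisely when it is the restriction of some automorphism in $\autf(P)$, which uses $C_G(P)\le P$ (so that $\Aut_N(P)=N_G(P)/C_G(P)\cdot(\text{something})$ behaves well) together with the fact that $\varphi$, as a fusion-preserving automorphism of $S$ fixing $P$, induces an automorphism of the restricted category. Everything else is a direct citation of Lemma \ref{kappa-lift} and Proposition \ref{p:constrained}.
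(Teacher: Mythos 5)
Your proof is correct and follows essentially the same route as the paper: there, the hypotheses $C_G(P)\le P$ and $N_S(P)\in\sylp{N_G(P)}$ are used to say that $N_G(P)$ is a model for $\calf_{N_S(P)}(N_G(P))$ and the extension follows from the uniqueness property of models, which is precisely the mechanism behind the combination of Proposition \ref{p:constrained}(a) and Lemma \ref{kappa-lift}(a) that you invoke. Your explicit check that $\varphi|_{N_S(P)}$ is fusion preserving for $\calf_{N_S(P)}(N_G(P))$ (characterizing $\Hom_{N_G(P)}(A,B)$ via $\calf$-morphisms extending to $PA\to PB$ and sending $P$ to $P$) is the step the paper leaves implicit, and it is correct.
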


\begin{proof} Since $C_G(P)\le P$ and $N_S(P)\in\sylp{N_G(P)}$, $N_G(P)$ is 
a model for the fusion system $\cale=\calf_{N_S(P)}(N_G(P))$ in the sense 
of \cite[Definition I.4.8]{AKO}.  By the strong uniqueness property for 
models \cite[Theorem I.4.9(b)]{AKO}, and since $\varphi|_{N_S(P)}$ 
preserves fusion in $\cale$, $\varphi|_{N_S(P)}$ extends to an automorphism 
of the model. 
\end{proof}

The following elementary lemma will be useful in Sections \ref{s:X1} and 
\ref{s:X2}; for example, when computing orders of Sylow subgroups of groups 
of Lie type.

\begin{Lem} \label{v(q^i-1)}
Fix a prime $p$.  Assume $q\equiv1$ (mod $p)$, and $q\equiv1$ (mod $4$) if 
$p=2$.  Then for each $n\ge1$, $v_p(q^n-1)=v_p(q-1)+v_p(n)$.
\end{Lem}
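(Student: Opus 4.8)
The plan is to prove the "lifting the exponent" identity $v_p(q^n-1)=v_p(q-1)+v_p(n)$ by a standard two-step argument, splitting $n$ into its $p$-part and its prime-to-$p$ part.

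\medskip

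\textbf{Step 1: the case $p\nmid n$.} Write $q^n-1=(q-1)(q^{n-1}+q^{n-2}+\cdots+q+1)$. Since $q\equiv1\pmod p$, each term $q^i\equiv1\pmod p$, so the second factor is congruent to $n\pmod p$; as $p\nmid n$, this factor is a unit mod $p$, hence $v_p(q^n-1)=v_p(q-1)$, which agrees with the claimed formula since $v_p(n)=0$. (When $p=2$ this is the step that forces no hypothesis beyond $q$ odd, since $n$ odd already suffices.)

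\medskip

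\textbf{Step 2: the case $n=p$.} I would show $v_p(q^p-1)=v_p(q-1)+1$. Write $q=1+p^a u$ with $a=v_p(q-1)\ge1$ (and $a\ge2$ if $p=2$, using the hypothesis $q\equiv1\pmod4$) and $p\nmid u$. Then by the binomial theorem
\[
q^p-1=(1+p^a u)^p-1=\sum_{k=1}^{p}\binom{p}{k}p^{ak}u^k
= p^{a+1}u+\binom{p}{2}p^{2a}u^2+\cdots+p^{ap}u^p.
\]
The first term has $p$-valuation exactly $a+1$. For $2\le k\le p-1$ the term $\binom pk p^{ak}u^k$ has valuation $\ge 1+ak\ge 1+2a>a+1$. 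The last term $p^{ap}u^p$ has valuation $ap\ge a+2>a+1$ when $p$ is odd; when $p=2$ it is $2^{2a}u^2$ with valuation $2a\ge a+2>a+1$ since $a\ge2$. Hence the sum has $p$-valuation exactly $a+1$, as claimed. The case $p=2$, $a\ge2$ is exactly where the extra hypothesis $q\equiv1\pmod4$ is used; without it (e.g.\ $q=3$) the identity fails.

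\medskip

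\textbf{Step 3: induction.} Combining, for a prime power $p^j$ I induct on $j$: $q^{p^j}-1=(q^{p^{j-1}})^p-1$, and $q^{p^{j-1}}\equiv1\pmod{p^{a+j-1}}$ with $a+j-1\ge1$ (and $\ge2$ if $p=2$) by the inductive hypothesis, so Step 2 applied to $q^{p^{j-1}}$ in place of $q$ gives $v_p(q^{p^j}-1)=v_p(q^{p^{j-1}}-1)+1=a+j$. Finally, for general $n=p^j m$ with $p\nmid m$, apply Step 1 to $q^{p^j}$ in place of $q$ (legitimate since $q^{p^j}\equiv1\pmod p$): $v_p(q^n-1)=v_p((q^{p^j})^m-1)=v_p(q^{p^j}-1)=a+j=v_p(q-1)+v_p(n)$.

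\medskip

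I do not expect any real obstacle here; this is the classical lifting-the-exponent lemma, and the only point requiring care is bookkeeping the $p=2$ case, where the hypothesis $q\equiv1\pmod4$ is precisely what is needed to make Step 2 work and to keep the induction hypothesis ($v_2$-value $\ge2$) self-sustaining.
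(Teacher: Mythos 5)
Your proof is correct; the hypotheses are used exactly where they must be, and the $p=2$ bookkeeping (needing $a\ge2$, sustained through the induction) is handled properly. The paper's own proof is shorter and organized differently: it makes no decomposition of $n$ at all, but simply writes $q=1+p^rk$ with $r=v_p(q-1)$ and expands $q^n=(1+p^rk)^n$ by the binomial theorem in one shot, observing that the linear term $np^rk$ has valuation exactly $v_p(n)+r$ while every higher-order term $\binom{n}{j}p^{rj}k^j$ ($j\ge2$) has strictly larger valuation. Your route is the classical lifting-the-exponent induction (prime-to-$p$ part via the geometric-sum factorization, the case $n=p$ via the binomial theorem, then induction on the $p$-part); it is longer but has the virtue of making explicit the estimates that the paper's one-line argument leaves to the reader — in particular, the check that $r(j-1)>v_p(j)$ for $j\ge2$, which is precisely where the hypothesis $q\equiv1\pmod 4$ enters when $p=2$, is implicit in the paper and fully visible in your Step 2.
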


\begin{proof} Set $r=v_p(q-1)$, and let $k$ be such that $q=1+p^rk$.  Then 
$q^n=1+np^rk+\xi$, where $v_p(np^rk)=v_p(n)+r$, and where each term in 
$\xi$ has strictly larger valuation. 
\end{proof}

\newpage

\newsect{Background on finite groups of Lie type} 
\label{s:not}

In this section and the next, we fix the notation to be used for finite 
groups of Lie type, and list some of the (mostly standard) results which 
will be needed later. We begin by recalling the following concepts used in 
\cite{GLS3}. We do not repeat the definitions of maximal tori and Borel 
subgroups in algebraic groups, but refer instead to 
\cite[\S\S\,1.4--1.6]{GLS3}.

\begin{Defi}[{\cite[Definitions 1.7.1, 1.15.1, 2.2.1]{GLS3}}] \label{d:Lie}
Fix a prime $q_0$. 
\begin{enuma} 

\item A connected algebraic group $\4G$ over $\fqobar$ is \emph{simple} if 
$[\4G,\4G]\ne1$, and all proper closed normal subgroups of $\4G$ are finite 
and central. If $\4G$ is simple, then it is of \emph{universal type} 
if it is simply connected, and of \emph{adjoint type} if $Z(\4G)=1$. 

\item A \emph{Steinberg endomorphism} of a connected simple algebraic 
group $\4G$ is a surjective algebraic endomorphism $\sigma\in\End(\4G)$ 
whose fixed subgroup is finite.

\item A \emph{$\sigma$-setup} for a finite group $G$ is a pair 
$(\4G,\sigma)$, where $\4G$ is a simple algebraic group over $\fqobar$, 
and where $\sigma$ is a Steinberg endomorphism of $\4G$ such that 
$G=O^{{q_0}'}(C_{\4G}(\sigma))$.

\item Let $\Lie(q_0)$ denote the class of finite groups with $\sigma$-setup 
$(\4G,\sigma)$ where $\4G$ is simple and is defined in characteristic 
$q_0$, and let $\Lie$ be the union of the classes $\Lie(q_0)$ for all 
primes $q_0$.  We say that $G$ is of universal (adjoint) type if $\4G$ is 
of universal (adjoint) type.

\end{enuma}
\end{Defi}

If $\4G$ is universal, then $C_{\4G}(\sigma)$ is generated by elements of 
$q_0$-power order (see \cite[Theorem 12.4]{Steinberg-end}), and hence 
$G=C_{\4G}(\sigma)$ in (c) above. In general, $C_{\4G}(\sigma) = G\cdot 
C_{\4T}(\sigma)$ (cf. \cite[Theorem 2.2.6]{GLS3}).

A \emph{root group} in a connected algebraic group $\4G$ over $\fqobar$ 
with a given maximal torus $\4T$ is a one-parameter closed subgroup (thus 
isomorphic to $\fqobar$) which is normalized by $\4T$.  The \emph{roots} of 
$\4G$ are the characters for the $\4T$-actions on the root groups, and lie 
in the $\Z$-lattice $X(\4T)=\Hom(\4T,\fqobar^\times)$ of characters of 
$\4T$. (Note that this is the group of \emph{algebraic} homomorphisms, and 
that $\Hom(\fqobar^\times,\fqobar^\times)\cong\Z$.) The roots are regarded 
as lying in the $\R$-vector space $V=\R\otimes_{\Z}\4T^*$. 
We refer to \cite[\S\,1.9]{GLS3} for details 
about roots and root subgroups of algebraic groups, and to \cite[Chapitre 
VI]{Bourb4-6} for a detailed survey of root systems.  

The following notation and hypotheses will be used throughout this 
paper, when working with a finite group of Lie type defined via a 
$\sigma$-setup.

\begin{Not} \label{G-setup}
Let $(\4G,\sigma)$ be a $\sigma$-setup for the finite group $G$, where 
$\4G$ is a connected, simple algebraic group over $\4\F_{q_0}$ for a prime 
$q_0$.  When convenient, we also write $\4G=\gg(\4\F_{q_0})$, 
where $\gg$ is a group scheme over $\Z$.
\begin{enumA}  

\item\label{not1} \boldd{The maximal torus and Weyl group of $\4G$.} 
Fix a maximal torus $\4T$ in $\4G$ such that $\sigma(\4T)=\4T$.  
Let $W=N_{\4G}(\4T)/\4T$ be the Weyl group of $\4G$ (and of $\gg$).

\item\label{not2} \boldd{The root system of $\4G$.} Let $\Sigma$ be the set 
of all roots of $\4G$ with respect to $\4T$, and let $\4X_\alpha<\4G$ 
denote the root group for the root $\alpha\in\Sigma$.  Thus 
$\4X_\alpha=\{x_\alpha(u)\,|\,u\in\fqobar\}$ with respect to some fixed 
Chevalley parametrization of $\4G$.  Set $V=\R\otimes_{\Z}\4T^*$: a real 
vector space with inner product $(-,-)$ upon which the Weyl group $W$ acts 
orthogonally.  Let $\Pi\subseteq\Sigma$ be a fundamental system of roots, 
and let $\Sigma_+\subseteq\Sigma$ be the set of positive roots with respect 
to $\Pi$.  For each $\alpha\in\Sigma_+$, let $\htt(\alpha)$ denote the 
\emph{height} of $\alpha$: the number of summands in the decomposition of 
$\alpha$ as a sum of fundamental roots. 

\smallskip

For each $\alpha\in\Sigma$, let $w_\alpha\in{}W$ be the reflection in the 
hyperplane $\alpha^\perp\subseteq V$.  

\smallskip

For $\alpha\in\Sigma$ and $\lambda\in\fqobar^\times$, let 
$n_\alpha(\lambda)\in\gen{\4X_\alpha,\4X_{-\alpha}}$ and 
$h_\alpha(\lambda)\in\4T\cap\gen{\4X_\alpha,\4X_{-\alpha}}$ be as defined 
in \cite[\S\,6.4]{Carter} or \cite[Theorem 1.12.1]{GLS3}:  the images of 
$\mxtwo0\lambda{-\lambda^{-1}}0$ and $\mxtwo{\lambda}00{{\lambda}^{-1}}$, 
respectively, under the homomorphism $\SL_2(\fqobar)\Right2{}\4G$ which 
sends $\mxtwo1{u}01$ to $x_\alpha(u)$ and $\mxtwo10{v}1$ to 
$x_{-\alpha}(v)$.  Equivalently, $n_\alpha({\lambda})= 
x_\alpha({\lambda})x_{-\alpha}(-{\lambda}^{-1}) x_\alpha({\lambda})$ and 
$h_\alpha({\lambda})=n_\alpha({\lambda})n_\alpha(1)^{-1}$. 

\item\label{not3} {}\boldd{The maximal torus, root system and Weyl group of $G$.} 
Set $T=\4T\cap{}G$.

\noindent Let $\tau\in\Aut(V)$ and $\rho\in\Aut(\Sigma)$ be the orthogonal 
automorphism and permutation, respectively, such that for each 
$\alpha\in\Sigma$, $\sigma(\4X_\alpha)=\4X_{\rho(\alpha)}$ and 
$\rho(\alpha)$ is a positive multiple of $\tau(\alpha)$. Set 
$W_0=C_W(\tau)$. 

\smallskip

\noindent If $\rho(\Pi)=\Pi$, then set $V_0=C_V(\tau)$, and let 
$\pr_{V_0}^\perp$ be the orthogonal projection of $V$ onto $V_0$.  
Let $\5\Sigma$ be the set of 
equivalence classes in $\Sigma$ determined by $\tau$, where 
$\alpha,\beta\in\Sigma$ are equivalent if $\pr_{V_0}^\perp(\alpha)$ is a 
positive scalar multiple of $\pr_{V_0}^\perp(\beta)$ (see \cite[Definition 
2.3.1]{GLS3} or \cite[\S\,13.2]{Carter}).  Let $\5\Pi\subseteq\5\Sigma_+$ 
denote the images in $\5\Sigma$ of $\Pi\subseteq\Sigma_+$.  \smallskip

\noindent For each $\5\alpha\in\5\Sigma$, set 
$\4X_{\5\alpha}=\gen{\4X_\alpha\,|\,\alpha\in\5\alpha}$ and 
$X_{\5\alpha}=C_{\4X_{\5\alpha}}(\sigma)$. When $\alpha\in\Sigma$ 
is of minimal height in its class $\5\alpha\in\5\Sigma$, and 
$q'=|X_{\5\alpha}\ab|$, then for $u\in\F_{q'}$, let 
$\5x_\alpha(u)\in{}X_{\5\alpha}$ be an element whose image under projection 
to $X_\alpha$ is $x_\alpha(u)$ (uniquely determined modulo 
$[X_{\5\alpha},X_{\5\alpha}]$).  
\smallskip

\noindent For $\alpha\in\Pi$ and 
$\lambda\in\4\F_{q_0}^\times$, let $\5h_{\alpha}(\lambda)\in T$ be 
an element in $G\cap\gen{h_\beta(\4\F_{q_0}^\times)\,|\,\beta\in\5\alpha}$ 
whose component in $h_\alpha(\fqobar^\times)$ is $h_\alpha(\lambda)$ (if 
there is such an element). 

\end{enumA}
\end{Not}

To see that $\tau$ and $\rho$ exist as defined in point \eqref{not3}, 
recall that the root groups $\4X_\alpha$ for $\alpha\in\Sigma$ are the 
unique closed subgroups of $\4G$ which are isomorphic to $(\fqobar,+)$ and 
normalized by $\4T$ (see, e.g., \cite[Theorem 1.9.5(a,b)]{GLS3}). Since 
$\sigma$ is algebraic (hence continuous) and bijective, $\sigma^{-1}$ sends 
root subgroups to root subgroups, and $\sigma$ permutes the root 
subgroups (hence the roots) since there are only finitely many of them. 
Using Chevalley's commutator formula, one sees that this permutation $\rho$ 
of $\Sigma$ preserves angles between roots, and hence (up to positive 
scalar multiple) extends to an orthogonal automorphism of $V$. 

These definitions of $\5x_\alpha(u)\in{}X_{\5\alpha}$ and 
$\5h_\alpha(\lambda)\in{}T$ are slightly different from the definitions in 
\cite[\S\,2.4]{GLS3} of elements $x_{\5\alpha}(u)$ and 
$h_{\5\alpha}(\lambda)$. We choose this notation to emphasize that these 
elements depend on the choice of $\alpha\in\Sigma$, not only on its class 
$\5\alpha\in\5\Sigma$. This will be important in some of the relations we 
need to use in Section \ref{s:=}.

\begin{Lem} \label{W0onT}
Under the assumptions of Notation \ref{G-setup}, 
the action of $W$ on $\4T$ restricts to an action of $W_0$ on $T$, and 
the natural isomorphism 
$N_{\4G}(\4T)/\4T\cong W$ restricts to an isomorphism
	\[ \bigl( N_G(T) \cap N_{\4G}(\4T) \bigr) \big/ T \cong 
	C_W(\tau)=W_0\,. \]
\end{Lem}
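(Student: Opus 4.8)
The plan is to unwind both claims from the basic structure theory of $\sigma$-stable maximal tori and from the fact that $\sigma$ induces on $W=N_{\4G}(\4T)/\4T$ precisely the automorphism $\tau$ (via the action on characters, this is the permutation $\rho$ restricted to the Weyl group). First I would verify the action statement: since $\sigma(\4T)=\4T$, $\sigma$ normalizes $N_{\4G}(\4T)$ and hence acts on $W$; under the identification of $W$ with a subgroup of $\Aut(\4T)$ (equivalently $\Aut(X(\4T))$), the $\sigma$-action corresponds to conjugation by $\tau$ on $V=\R\otimes X(\4T)$, so that $C_W(\sigma)=C_W(\tau)=W_0$. Then $W_0$ consists exactly of the classes $w\4T$ with $w\in N_{\4G}(\4T)$ for which $\sigma(w)\equiv w \pmod{\4T}$; I would choose, for each such $w$, a representative and use a Lang–Steinberg argument (applied to the connected group $\4T$) to adjust it by an element of $\4T$ to one fixed by $\sigma$, i.e.\ lying in $G$. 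This shows every element of $W_0$ is realized by an element of $N_G(T)\cap N_{\4G}(\4T)$, and in particular that $W_0$ acts on $T=\4T\cap G$ by restricting the $W$-action on $\4T$ (one checks the restricted action lands in $T$ precisely because these representatives commute with $\sigma$).

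Next I would set up the homomorphism explicitly. The inclusion $N_G(T)\cap N_{\4G}(\4T)\hookrightarrow N_{\4G}(\4T)$ composed with the quotient $N_{\4G}(\4T)\onto2 W$ has image contained in $W_0$ (since any $n\in N_G(T)\cap N_{\4G}(\4T)$ satisfies $\sigma(n)=n$, so its class is $\sigma$-fixed, hence $\tau$-fixed), and kernel $\bigl(N_G(T)\cap N_{\4G}(\4T)\bigr)\cap\4T = T$. So it induces an injection $\bigl(N_G(T)\cap N_{\4G}(\4T)\bigr)/T \hookrightarrow W_0$. Surjectivity is exactly the Lang–Steinberg lifting described above: given $w\in W_0$, pick $n\in N_{\4G}(\4T)$ with class $w$; then $\sigma(n)n^{-1}\in\4T$ (as $\sigma$ fixes the class), and by Lang's theorem applied to the connected group $\4T$ there is $t\in\4T$ with $\sigma(t)t^{-1}=\sigma(n)n^{-1}$, whence $t^{-1}n$ is $\sigma$-fixed, normalizes $\4T$, normalizes $T$, and has class $w$. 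This gives the claimed isomorphism.

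The main obstacle — really the only subtle point — is checking that the $\sigma$-action on $W$ agrees with the $\tau$-action, i.e.\ that $C_W(\sigma)=W_0=C_W(\tau)$ as stated in Notation \ref{G-setup}\eqref{not3}. This is where one must be careful: $\sigma$ may differ from a ``standard'' Frobenius by an inner (graph) automorphism, and $\tau$ is defined via the action on the root groups $\sigma(\4X_\alpha)=\4X_{\rho(\alpha)}$ with $\rho(\alpha)$ a positive multiple of $\tau(\alpha)$. I would argue that $\sigma$ acting on $W$ via conjugation (in $\Aut(\4T)$) permutes the reflections $w_\alpha$ according to $w_\alpha\mapsto w_{\rho(\alpha)}=w_{\tau(\alpha)}$, since $\sigma$ conjugates $n_\alpha(1)\in\gen{\4X_\alpha,\4X_{-\alpha}}$ into $\gen{\4X_{\rho(\alpha)},\4X_{-\rho(\alpha)}}$; as $W$ is generated by the $w_\alpha$ for $\alpha\in\Pi$ and $\rho(\Pi)$ (when $\rho(\Pi)=\Pi$) permutes $\Pi$, this pins down the $\sigma$-action on $W$ as the automorphism induced by $\tau$. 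Then $C_W(\sigma)=C_W(\tau)=W_0$ by definition. Everything else is the standard Lang–Steinberg bookkeeping, so I would keep that part brief.
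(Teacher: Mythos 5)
Your route is essentially the paper's: you show that $\sigma$ and $\tau$ induce the same automorphism of $W$ by tracking the reflections $w_\alpha$ (via the elements $n_\alpha(1)$ and the root groups), deduce from this the injection $\bigl(N_G(T)\cap N_{\4G}(\4T)\bigr)/T\hookrightarrow C_W(\tau)=W_0$ with kernel computation as you state, and then apply the Lang--Steinberg theorem to the connected group $\4T$ to adjust a representative $n$ of $w\in W_0$ to a $\sigma$-fixed element $x=t^{-1}n$. All of that is correct and is exactly how the paper argues.

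The gap is in the very last step of surjectivity. Being $\sigma$-fixed only puts $x$ in $C_{\4G}(\sigma)$, whereas the lemma is stated under Notation \ref{G-setup}, where $G=O^{q_0'}(C_{\4G}(\sigma))$ and in general $G$ is a \emph{proper} subgroup of $C_{\4G}(\sigma)$ (they coincide only when $\4G$ is universal; the lemma must also cover, e.g., adjoint $\4G$). So your element $x$ normalizes $\4T$, normalizes $G$ (since $G$ is characteristic in $C_{\4G}(\sigma)$) and hence normalizes $T=G\cap\4T$, but it need not lie in $G$, and therefore you have not yet exhibited an element of $N_G(T)\cap N_{\4G}(\4T)$ mapping to $w$. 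The missing ingredient is the identity $C_{\4G}(\sigma)=G\cdot C_{\4T}(\sigma)$ (see \cite[Theorem 2.2.6(g)]{GLS3} or \cite[Corollary 12.3(a)]{Steinberg-end}): writing $x=gs$ with $g\in G$ and $s\in C_{\4T}(\sigma)\le\4T$, the element $g=xs^{-1}$ lies in $G$, lies in the same coset $x\4T$ (so still represents $w$), normalizes $\4T$, and hence normalizes $T=G\cap\4T$. The same caveat touches your claim that the $W_0$-action preserves $T$: commuting with $\sigma$ only shows $xtx^{-1}\in C_{\4T}(\sigma)$, and you must additionally use that $x$ normalizes $G$ (true, as $G$ is characteristic in $C_{\4G}(\sigma)$), or simply deduce the action statement from the repaired surjectivity, since representatives in $N_G(T)$ visibly send $T$ to itself.
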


\begin{proof} For each $\alpha\in\Sigma$, 
$n_\alpha(1)=x_\alpha(1)x_{-\alpha}(-1)x_\alpha(1)$ represents the 
reflection $w_\alpha\in{}W$, and hence 
$\sigma(n_\alpha)\in\gen{X_{\rho(\alpha)},X_{-\rho(\alpha)}}\cap 
N_{\4G}(\4T)$ represents the reflection 
$w_{\rho(\alpha)}=\9\tau(w_\alpha)$. Since $W$ is generated by the 
$w_\alpha$ for $\alpha\in\Sigma$, we conclude that $\sigma$ and $\tau$ have 
the same action on $W$.

Thus the identification $N_{\4G}(\4T)/\4T\cong W$ restricts to the 
following inclusions: 
	\[ \bigl( N_G(T) \cap N_{\4G}(\4T) \bigr) \big/ T  \le 
	C_{N_{\4G}(\4T)}(\sigma)/C_{\4T}(\sigma) \le
	C_{N_{\4G}(\4T)/\4T}(\sigma) \cong C_W(\tau) = W_0\,. \]
If $w\in W_0$ represents the coset $x\4T\subseteq N_{\4G}(\4T)$, then 
$x^{-1}\sigma(x)\in\4T$. By the Lang-Steinberg theorem, each element of 
$\4T$ has the form $t^{-1}\sigma(t)$ for some $t\in\4T$, and hence we can 
choose $x$ such that $\sigma(x)=x$. Then $x\in C_{\4G}(\sigma)$, and hence 
$x$ normalizes $G=O^{q_0'}(C_{\4G}(\sigma))$ and $T=G\cap\4T$. 
Since $C_{\4G}(\sigma)=GC_{\4T}(\sigma)$ (see \cite[Theorem 2.2.6(g)]{GLS3} 
or \cite[Corollary 12.3(a)]{Steinberg-end}), some element of $x\4T$ 
lies in $N_G(T)$. So the above inclusions are equalities.
\end{proof}

The roots in $\4G$ are defined formally as characters of its maximal torus 
$\4T$. But it will be useful to distinguish the (abstract) root 
$\alpha\in\Sigma$ from the character 
$\theta_\alpha\in\Hom(\4T,\fqobar^\times)\subseteq V$.

For each root $\alpha\in\Sigma\subseteq V$, let $\alpha^\vee\in V^*$ be the 
corresponding co-root (dual root): the unique element such that 
$(\alpha^\vee,\alpha)=2$ and $w_\alpha$ is reflection in the hyperplane 
$\Ker(\alpha^\vee)$. Since we identify $V=V^*$ via a $W$-invariant inner 
product, $\alpha^\vee=2\alpha/(\alpha,\alpha)$. Point \eqref{^txb(u)} of 
the next lemma says that $\alpha^\vee=h_\alpha$, when we regard  
$h_\alpha\in\Hom(\fqobar^\times,\4T)$ as an element in $V^*$.

\begin{Lem} \label{theta-r}
Assume we are in the situation of \eqref{not1} and \eqref{not2} in 
Notation \ref{G-setup}. 
\begin{enuma} 

\item \label{CG(T)=T} We have $C_{\4G}(\4T)=\4T$. In particular, 
$Z(\4G)\le\4T$, and is finite of order prime to the defining characteristic 
$q_0$.

\item \label{T=prod} The maximal torus $\4T$ in $\4G$ is 
generated by the elements $h_\alpha(\lambda)$ for $\alpha\in\Pi$ and 
$\lambda\in\4\F_{q_0}^\times$. If $\4G$ is universal, 
and $\lambda_\alpha\in\4\F_{q_0}$ are such that 
$\prod_{\alpha\in\Pi}h_\alpha(\lambda_\alpha)=1$, then $\lambda_\alpha=1$ 
for each $\alpha\in\Pi$.  Thus 
	\[ \4T = \prod_{\alpha\in\Pi} h_\alpha(\4\F_{q_0}^\times), \] 
and $h_\alpha$ is injective for each $\alpha$. 

\item \label{^txb(u)} For each $\beta\in\Sigma$, let $\theta_\beta\in 
X(\4T)=\Hom(\4T,\4\F_{q_0}^\times)$ be the character such that
	\[ \9tx_\beta(u)=x_\beta(\theta_\beta(t){\cdot}u) \]
for $t\in\4T$ and $u\in\4\F_{q_0}$. Then 
	\[ \theta_\beta(h_\alpha(\lambda)) 
	= \lambda^{(\alpha^\vee,\beta)} \qquad
	\textup{for $\beta,\alpha\in\Sigma$, $\lambda\in\fqobar^\times$.} \]
The product homomorphism $\theta_{\Pi}=\prod\theta_\beta\: 
\4T\Right3{}\prod_{\beta\in\Pi}\fqobar^\times$ is surjective, and 
$\Ker(\theta_\Pi)=Z(\4G)$.

\item \label{hb.hc} If $\alpha,\beta_1,\ldots,\beta_k\in\Sigma$ and 
$n_1,\ldots,n_k\in\Z$ are such that $\alpha^\vee = n_1\beta_1^\vee+ \ldots 
+n_k\beta_k^\vee$, 
then for each $\lambda\in\4\F_{q_0}^\times$, 
$h_\alpha(\lambda) = h_{\beta_1}(\lambda^{n_1}) \cdots 
h_{\beta_k}(\lambda^{n_k})$. 

\item \label{wa(hb)} For each $w\in{}W$, $\alpha\in\Sigma$, and 
$\lambda\in\fqobar^\times$, and each $n\in N_{\4G}(\4T)$ 
such that $n\4T=w\in{}N_{\4G}(\4T)/\4T\cong W$,
$\9n(\4X_\alpha)=\4X_{w(\alpha)}$ and 
$\9n(h_\alpha(\lambda))=h_{w(\alpha)}(\lambda)$.  
For each $\alpha,\beta\in\Sigma$ and each $\lambda\in\fqobar^\times$,
	\[ w_\alpha(h_\beta(\lambda)) = h_{w_\alpha(\beta)}(\lambda)
	= h_\beta(\lambda) 
	h_\alpha(\lambda^{-(\beta^\vee,\alpha)}) \,. \]
Hence $w_\alpha(t)=t\cdot h_\alpha(\theta_\alpha(t))^{-1}$ for each 
$t\in\4T$.

\end{enuma}
\end{Lem}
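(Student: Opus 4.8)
The plan is to obtain all five parts from the standard structure theory of connected semisimple algebraic groups, together with the Chevalley torus relations recorded in \cite[\S\,6.4]{Carter} and \cite[Theorem 1.12.1]{GLS3}; the main organizational point is that~(a) should incorporate the description of $Z(\4G)$ as an intersection of root kernels, while~(d) and~(e) both rest on the observation that $\lambda\mapsto h_\alpha(\lambda)$ is the cocharacter $\alpha^\vee$ in $X_*(\4T)=\Hom(\fqobar^\times,\4T)$.

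\emph{Part (a).} A maximal torus of a connected reductive group is self-centralizing, so $C_{\4G}(\4T)=\4T$, and hence $Z(\4G)\le C_{\4G}(\4T)=\4T$. An element $t\in\4T$ lies in $Z(\4G)$ if and only if it centralizes every root group $\4X_\alpha$ (since $\4G=\gen{\4T,\4X_\alpha\mid\alpha\in\Sigma}$ and $t$ already centralizes $\4T$), i.e.\ if and only if $\theta_\alpha(t)=1$ for all $\alpha\in\Sigma$; thus $Z(\4G)=\Hom(X(\4T)/\Z\Sigma,\fqobar^\times)$. Since $\Sigma$ spans $X(\4T)\otimes_\Z\Q$ by semisimplicity, $X(\4T)/\Z\Sigma$ is a finite abelian group, and $\Hom(A,\fqobar^\times)$ has order prime to $q_0$ for any such $A$; so $Z(\4G)$ is finite of order prime to $q_0$.

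\emph{Part (b).} Since $\4G$ is semisimple of rank $|\Pi|$, the simple coroots span $X_*(\4T)\otimes_\Z\Q$, so the subtorus $\prod_{\alpha\in\Pi}h_\alpha(\mathbb{G}_m)$ has the same dimension as $\4T$ and hence equals it; as $\fqobar^\times$ is divisible this passes to $\fqobar$-points, and $\4T=\gen{h_\alpha(\fqobar^\times)\mid\alpha\in\Pi}$. If $\4G$ is of universal type, then $X_*(\4T)$ is the coroot lattice, of which the simple coroots form a $\Z$-basis, so the product map $\prod_{\alpha\in\Pi}h_\alpha\colon(\mathbb{G}_m)^{|\Pi|}\to\4T$ is an isomorphism of tori (cf.\ \cite[Theorem 1.12.1]{GLS3}), hence injective on $\fqobar$-points; this gives the asserted uniqueness of the $\lambda_\alpha$ as well as the injectivity of each $h_\alpha$ (immediate for $\alpha\in\Pi$, and for general $\alpha\in\Sigma$ because every coroot is primitive in the coroot lattice). \emph{Part (c).} The relation $\theta_\beta(h_\alpha(\lambda))=\lambda^{(\alpha^\vee,\beta)}$, where $(\alpha^\vee,\beta)=\gen{\beta,\alpha^\vee}$ is the Cartan integer, is one of the standard Chevalley torus relations (\cite[\S\,6.4]{Carter}, \cite[Theorem 1.12.1]{GLS3}). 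Since every root is a $\Z$-combination of simple roots, $\Ker(\theta_\Pi)=\bigcap_{\beta\in\Pi}\Ker\theta_\beta=\bigcap_{\alpha\in\Sigma}\Ker\theta_\alpha=Z(\4G)$ by~(a), which is finite; hence $\theta_\Pi(\4T)$ is a connected $|\Pi|$-dimensional subgroup of $(\fqobar^\times)^{|\Pi|}$, i.e.\ all of it, so $\theta_\Pi$ is surjective.

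\emph{Parts (d) and (e).} That $\lambda\mapsto h_\alpha(\lambda)$ is the cocharacter $\alpha^\vee$ is immediate from the $\SL_2$-parametrization defining $h_\alpha$, and is in any case pinned down by~(c) since characters separate cocharacters; so, as $\alpha^\vee=\sum_i n_i\beta_i^\vee$ holds already in the coroot lattice $\subseteq X_*(\4T)$, evaluating at $\lambda$ and using $(\mu+\nu)(\lambda)=\mu(\lambda)\nu(\lambda)$ and $(n\mu)(\lambda)=\mu(\lambda^n)$ gives~(d). For~(e): if $n\in N_{\4G}(\4T)$ represents $w$, then $\9n\4X_\alpha$ is a one-parameter subgroup normalized by $\9n\4T=\4T$, hence a root group, and its root is $w(\alpha)$, so $\9n\4X_\alpha=\4X_{w(\alpha)}$; similarly $\9n(h_\alpha(\lambda))=(w\alpha^\vee)(\lambda)=w(\alpha)^\vee(\lambda)=h_{w(\alpha)}(\lambda)$, where agreement with the fixed parametrization reduces, on writing $w$ as a product of reflections $w_{\gamma_j}$ and $n=t\prod_j n_{\gamma_j}(1)$ with $t\in\4T$, to the relation $\9{n_\gamma(1)}h_\beta(\lambda)=h_{w_\gamma(\beta)}(\lambda)$ of \cite[\S\,6.4]{Carter}. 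Specializing to $w=w_\alpha$ and using $w_\alpha(\beta^\vee)=\beta^\vee-(\beta^\vee,\alpha)\alpha^\vee$ yields $w_\alpha(h_\beta(\lambda))=h_{w_\alpha(\beta)}(\lambda)=h_\beta(\lambda)h_\alpha(\lambda^{-(\beta^\vee,\alpha)})$; finally, writing an arbitrary $t=\prod_{\beta\in\Pi}h_\beta(\lambda_\beta)$ by~(b), applying this to each factor, and collecting the $h_\alpha$-terms as $h_\alpha\bigl(\prod_\beta\lambda_\beta^{-(\beta^\vee,\alpha)}\bigr)=h_\alpha(\theta_\alpha(t))^{-1}$ via~(c), one obtains $w_\alpha(t)=t\cdot h_\alpha(\theta_\alpha(t))^{-1}$. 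None of this is deep; the one point requiring genuine care — and the reason I would route every torus identity through the cocharacter description of $h_\alpha$ and the explicit generator relations rather than through bare dimension counts — is ensuring that elements such as $\9n(h_\alpha(\lambda))$ coincide with the specific elements $h_{w(\alpha)}(\lambda)$ of the fixed Chevalley parametrization exactly, not merely up to a central factor or a sign.
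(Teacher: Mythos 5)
Your proof is correct, and for parts (a), (c), (d), (e) it is in substance the paper's argument: identify $\lambda\mapsto h_\alpha(\lambda)$ with the coroot $\alpha^\vee$ in $\Hom(\fqobar^\times,\4T)$, and read off (d) and (e) from that identification together with the Chevalley relation $\theta_\beta(h_\alpha(\lambda))=\lambda^{(\alpha^\vee,\beta)}$; your handling of $\Ker(\theta_\Pi)=Z(\4G)$ is the same, and surjectivity of $\theta_\Pi$ differs only cosmetically (the paper uses the nonsingular Cartan matrix plus divisibility of $\fqobar^\times$, you a dimension count on the closed connected image — both valid). The genuine divergence is in (b): you invoke, as a known fact, that for a group of universal type the cocharacter lattice is the coroot lattice with the simple coroots as a $\Z$-basis, so that the product homomorphism is an isomorphism of tori; the paper, precisely because \cite[Theorem 1.12.5(b)]{GLS3} states this decomposition without proof, derives it from the classification of reductive groups by root data (\cite[Proposition 10.1.3]{Springer}), constructing a central cover realizing the coroot-lattice root datum and using universality to conclude that the cover, and hence the product map, is an isomorphism. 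Your route buys brevity at the price of resting on a cited structural fact that is essentially equivalent to the assertion being proved; the paper's route buys self-containedness. A second, smaller difference: the paper proves the relations in (c)--(e) in the adjoint quotient so as to quote Carter (whose Chevalley groups are adjoint), and then carefully lifts them back to the universal group using divisibility of $\4T$ and the fact that $|Z(\4G)|$ is prime to $q_0$; you sidestep this by citing the universal-group relations of \cite[Theorem 1.12.1]{GLS3}, which is legitimate but conceals that lifting step. Your closing concern about parametrizations is well placed but, as you in effect observe, harmless here: the only element-level claims involve $h$-elements of the torus, where the cocharacter identification settles everything, while the root-group claim in (e) is only an equality of subgroups.
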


\begin{proof} 
\noindent\textbf{(\ref{CG(T)=T})} By \cite[Proposition 24.1.A]{Humphreys}, 
the maximal torus $\4T$ is regular (i.e., contained in only finitely many 
Borel subgroups). So $C_{\4G}(\4T)=\4T$ by \cite[Corollary 
26.2.A]{Humphreys}. Hence $Z(\4G)\le\4T$, it is finite since $\4G$ 
is assumed simple, and so it has order prime to the defining characteristic 
$q_0$.

\smallskip


We claim that it suffices to prove the relations in 
\eqref{^txb(u)}--\eqref{wa(hb)} in the adjoint group $\4G/Z(\4G)$, and 
hence that we can use the results in \cite[\S\S\,7.1--2]{Carter}. For 
relations in $\4T$, this holds since $\4T$ is infinitely divisible and 
$Z(\4G)$ is finite (thus each homomorphism to $\4T/Z(\4G)$ has at most one 
lifting to $\4T$). For relations in a root group $\4X_\alpha$, this holds 
since each element of $\4X_\alpha{}Z(\4G)$ of order $q_0$ lies in 
$\4X_\alpha$, since $|Z(\4G)|$ is prime to $q_0$ by \eqref{CG(T)=T}.

\smallskip

\noindent\textbf{(\ref{T=prod}) } This is stated without proof in 
\cite[Theorem 1.12.5(b)]{GLS3}, and with a brief sketch of a proof in 
\cite[p. 122]{Steinberg-grr}. We show here how it follows from the 
classification of reductive algebraic groups in terms of root data 
(see, e.g., \cite[\S\,10]{Springer}). 

Consider the homomorphism
	\[ h_\Pi\:\til{T} \defeq \prod_{\alpha\in\Pi}\fqobar^\times 
	\Right6{}\4T \] 
which sends $(\lambda_\alpha)_{\alpha\in\Pi}$ to 
$\prod_{\alpha}h_\alpha(\lambda_\alpha)$. Then $h_\Pi$ is surjective with 
finite kernel (see \cite[\S\,7.1]{Carter}). It remains to show that it is 
an isomorphism when $G$ is of universal type.

We recall some of the notation used in \cite[\S\,7]{Springer}. To $\4G$ is 
associated the root datum 
$\bigl(X(\4T),\Sigma,X^\vee(\4T),\Sigma^\vee\bigr)$, where 
	\[ X(\4T) = \Hom(\4T,\fqobar^\times), \quad X^\vee(\4T) = 
	\Hom(\fqobar^\times,\4T),\quad 
	\Sigma^\vee=\{\alpha^\vee=h_\alpha\,|\,\alpha\in\Sigma\} 
	\subseteq X^\vee(\4T)\,. \]
As noted before, $X(\4T)$ and $X^\vee(\4T)$ are groups of algebraic 
homomorphisms, and are free abelian groups of finite rank dual to each 
other. Recall that $\Sigma\subseteq X(\4T)$, since we identify a root 
$\alpha$ with the character $\theta_\alpha$.


Set $Y^\vee=\Z\Sigma^\vee\subseteq X^\vee(\4T)$, and let $Y\supseteq 
X(\4T)$ be its dual. Then $(Y,\Sigma,Y^\vee,\Sigma^\vee)$ is still a root 
datum as defined in \cite[\S\,7.4]{Springer}. By \cite[Proposition 
10.1.3]{Springer} and its proof, it is realized by a connected algebraic 
group $\til{G}$ with maximal torus $\til{T}$, which lies in a central 
extension $f\:\til{G}\Right2{}\4G$ which extends $h_\Pi$. Since $\4G$ is 
of universal type, $f$ and hence $h_\Pi$ are isomorphisms.

\smallskip

\noindent\textbf{(\ref{^txb(u)}) } Let $\Z\Sigma\le{}V$ be the additive 
subgroup generated by $\Sigma$. In the notation of \cite[pp. 
97--98]{Carter}, for each $\alpha\in\Sigma$ and 
$\lambda\in\4\F_{q_0}^\times$, $h_\alpha(\lambda)=h(\chi_{\alpha,\lambda})$ 
where 
	\[ \chi_{\alpha,\lambda}\in\Hom(\Z\Sigma,\4\F_{q_0}^\times)
	\qquad\textup{is defined by}\qquad
	\chi_{\alpha,\lambda}(v)=\lambda^{2(\alpha,v)/(\alpha,\alpha)}
	= \lambda^{(\alpha^\vee,v)}. \]
Also, by \cite[p. 100]{Carter}, for each 
$\chi\in\Hom(\Z\Sigma,\4\F_{q_0}^\times)$, $\beta\in\Sigma$, and 
$u\in\4\F_{q_0}$, 
$\9{h(\chi)}x_\beta(u)=x_\beta(\chi(\beta){\cdot}u)$.  Thus there 
are homomorphisms $\theta_\beta\in\Hom(\4T,\4\F_{q_0}^\times)$, for each 
$\beta\in\Sigma$, such that 
$\9tx_\beta(u)=x_\beta(\theta_\beta(t){\cdot}u)$, and 
$\theta_\beta(h(\chi))=\chi(\beta)$ for each $\chi$.  
For each $\alpha\in\Sigma$ and $\lambda\in\4\F_{q_0}^\times$,
	\beqq \theta_\beta(h_\alpha(\lambda)) 
	= \theta_\beta(h(\chi_{\alpha,\lambda})) = \chi_{\alpha,\lambda}(\beta) 
	= \lambda^{(\alpha^\vee,\beta)}~. \label{e:thb(h)} \eeqq

Assume $t\in\Ker(\theta_\Pi)$. Thus $t\in\Ker(\theta_\alpha)$ for all 
$\alpha\in\Pi$, and hence for all $\alpha\in\Sigma\subseteq\Z\Pi$. So 
$[t,X_\alpha]=1$ for all $\alpha\in\Sigma$, these root subgroups generate 
$\4G$ (see \cite[Corollary 8.2.10]{Springer}), and this proves that 
$t\in{}Z(\4G)$. The converse is clear: $t\in Z(\4G)$ implies $t\in\4T$ by 
\eqref{CG(T)=T}, and hence $\theta_\beta(t)=1$ for all $\beta\in\Pi$ by 
definition of $\theta_\beta$.

It remains to show that $\theta_{\Pi}$ sends $\4T$ 
onto $\prod_{\beta\in\Pi}\fqobar$. Consider the homomorphisms
	\beqq \til{T} \defeq \prod_{\alpha\in\Pi} \fqobar^\times 
	\Right6{h_\Pi} \4T \Right6{\theta_\Pi} \prod_{\beta\in\Pi} 
	\fqobar^\times \,, \label{e:theta.h} \eeqq
where $h_\Pi$ was defined in the proof of \eqref{T=prod}. 
We just saw that $\theta_\Pi\circ h_\Pi$ has 
matrix $\bigl((\alpha^\vee,\beta)\bigr)_{\alpha,\beta\in\Pi}$, which has 
nonzero determinant since $\Pi\subseteq V$ and $\Pi^\vee\subseteq V^*$ are 
bases. Since $\fqobar^\times$ is divisible and its finite subgroups are 
cyclic, this implies that $\theta_\Pi\circ h_\Pi$ is onto, and hence 
$\theta_\Pi$ is onto.

\smallskip

\noindent\textbf{(\ref{hb.hc}) } This follows immediately from 
\eqref{^txb(u)}, where we showed, for $\alpha\in\Sigma$, that $\alpha^\vee$
can be identified with $h_\alpha$ in $\Hom(\fqobar^\times,\4T)\subseteq 
V^*$.

\smallskip

\noindent\textbf{(\ref{wa(hb)}) } The first statement 
($\9n(\4X_\alpha)=\4X_{w(\alpha)}$ and 
$\9n(h_\alpha(\lambda))=h_{w(\alpha)}(\lambda)$) is shown in \cite[Lemma 
7.2.1(ii) \& Theorem 7.2.2]{Carter}.  By the usual formula for an 
orthogonal reflection, $w_\alpha(\beta)= 
\beta-\frac{2(\alpha,\beta)}{(\alpha,\alpha)}\alpha = 
\beta-(\alpha^\vee,\beta)\alpha$. Here, we regard 
$w_\alpha$ as an automorphism of $V$ (not of $\4T$).  Since 
$w_\alpha(\beta)$ and $\beta$ have the same norm,
	\[ w_\alpha(\beta)^\vee = 
	\frac{2w_\alpha(\beta)}{(\beta,\beta)} = 
	\frac{2\beta}{(\beta,\beta)} - \frac{2(\alpha,\beta)}{(\beta,\beta)} 
	\cdot \frac{2\alpha}{(\alpha,\alpha)}
	= \beta^\vee - (\beta^\vee,\alpha) \cdot \alpha^\vee \,, \]
and by \eqref{hb.hc}, 
	\[ w_\alpha(h_\beta(\lambda)) = h_{w_\alpha(\beta)}(\lambda) 
	= h_\beta(\lambda) 
	h_\alpha(\lambda^{-(\beta^\vee,\alpha)}) 
	= h_\beta(\lambda) h_\alpha(\theta_\alpha(h_\beta(\lambda))^{-1}) \]
where the last equality follows from \eqref{^txb(u)}. Since $\4T$ is 
generated by the $h_\beta(\lambda)$ by \eqref{T=prod}, this implies that 
$w_\alpha(t)=t\cdot h_\alpha(\theta_\alpha(t))^{-1}$ for all $t\in\4T$. 
\end{proof}

For any algebraic group $H$, $H^0$ denotes its identity connected 
component.  The following proposition holds for any connected, reductive 
group, but we state it only in the context of Notation \ref{G-setup}.  
Recall the homomorphisms $\theta_\beta\in\Hom(\4T,\4\F_{q_0}^\times)$, 
defined for $\beta\in\Sigma$ in Lemma \ref{theta-r}\eqref{^txb(u)}.

\begin{Prop} \label{p:CG(T)}
Assume Notation \ref{G-setup}.  
For any subgroup $H\le\4T$, 
$C_{\4G}(H)$ is an algebraic group, $C_{\4G}(H)^0$ is reductive, and
	\beqq \begin{split} 
	C_{\4G}(H)^0 &= \gen{\4T,X_\alpha\,|\, \alpha\in\Sigma,~ 
	H\le\Ker(\theta_\alpha)} \\
	C_{\4G}(H) &= C_{\4G}(H)^0\cdot\{g\in{}N_{\4G}(\4T)\,|\,
	[g,H]=1\}~. \label{e:5.4a}
	\end{split} \eeqq
If, furthermore, $\4G$ is of universal type, then $Z(\4G)=C_{\4T}(W)$.  
\end{Prop}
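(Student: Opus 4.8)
The plan is to prove the four assertions in sequence, working directly with $H$ itself (no need to pass to its Zariski closure), using only the Chevalley relations collected in Lemma~\ref{theta-r} together with standard algebraic‑group structure theory. \textbf{Closedness} is immediate: $C_{\4G}(H)=\bigcap_{h\in H}\{g\in\4G\mid ghg^{-1}=h\}$ is an intersection of closed subgroups (each is a fibre of the morphism $g\mapsto ghg^{-1}$), hence a closed subgroup, i.e.\ an algebraic group. Since $H\le\4T$ and $\4T$ is abelian we have $\4T\le C_{\4G}(H)$, so $M:=C_{\4G}(H)^0$ is a connected closed subgroup of $\4G$ containing the maximal torus $\4T$. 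By the standard structure theory of connected subgroups of a connected reductive group that contain a maximal torus (see e.g.\ \cite{Carter} or \cite{Springer}), $M=\gen{\4T,\4X_\alpha\mid\alpha\in\Psi}$ where $\Psi=\{\alpha\in\Sigma\mid\4X_\alpha\le M\}$ is a closed subset of $\Sigma$. Now for any $\alpha\in\Sigma$ the torus $\4T$ normalizes $\4X_\alpha$ with $\9tx_\alpha(u)=x_\alpha(\theta_\alpha(t)u)$ by Lemma~\ref{theta-r}\eqref{^txb(u)}, so the $H$-fixed subgroup of $\4X_\alpha$ equals $\4X_\alpha$ when $H\le\Ker(\theta_\alpha)$ and is trivial otherwise; since $\4X_\alpha$ is connected, $\4X_\alpha\le M$ if and only if $\4X_\alpha\le C_{\4G}(H)$, if and only if $H\le\Ker(\theta_\alpha)$. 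Hence $\Psi=\{\alpha\in\Sigma\mid H\le\Ker(\theta_\alpha)\}$, which is the first displayed formula.

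For \textbf{reductivity} I would argue as follows. From $\theta_{-\alpha}=\theta_\alpha^{-1}$ (clear from Lemma~\ref{theta-r}\eqref{^txb(u)}) the set $\Psi$ is symmetric. Suppose $R_u(M)\ne1$. Then $R_u(M)$ is a nontrivial connected unipotent subgroup of $\4G$ normalized by the maximal torus $\4T$, hence (by the structure theory of unipotent subgroups normalized by a torus, applied inside the reductive group $\4G$) it contains some root group $\4X_\alpha$, necessarily with $\alpha\in\Psi$ since $\4X_\alpha\le R_u(M)\le M$. But then $\4X_{-\alpha}\le M$ as well, and $L:=\gen{\4X_\alpha,\4X_{-\alpha}}$ is the image of the homomorphism $\SL_2(\fqobar)\to\4G$ of Notation~\ref{G-setup}\eqref{not2}, hence isomorphic to $\SL_2(\fqobar)$ or $\PSL_2(\fqobar)$. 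The subgroup $R_u(M)\cap L$ is normal in $L$, is proper (it is unipotent while $L$ is not), yet contains the infinite group $\4X_\alpha$ — contradicting the fact that the only proper normal subgroups of $\SL_2(\fqobar)$ and $\PSL_2(\fqobar)$ are finite and central. Thus $R_u(M)=1$ and $M=C_{\4G}(H)^0$ is reductive. (This also follows from the general fact that the identity component of the centralizer of a subgroup of semisimple elements of a connected reductive group is reductive.)

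For the \textbf{full centralizer}, the inclusion $\supseteq$ in the second displayed formula is clear. For $\subseteq$, fix $g\in C_{\4G}(H)$; then $\9gH=H$, so $\4T$ and $\9g\4T$ are both maximal tori of $\4G$ contained in $C_{\4G}(H)$ and, being tori, lie in $M=C_{\4G}(H)^0$. By conjugacy of maximal tori in the connected group $M$, there is $c\in M$ with $\9c(\9g\4T)=\4T$, i.e.\ $cg\in N_{\4G}(\4T)$; moreover $cg=c\cdot g\in C_{\4G}(H)$, so $[cg,H]=1$, and $g=c^{-1}(cg)$ lies in $C_{\4G}(H)^0\cdot\{n\in N_{\4G}(\4T)\mid[n,H]=1\}$. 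This gives the second displayed formula.

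Finally, for the \textbf{universal case}: by Lemma~\ref{theta-r}\eqref{CG(T)=T}, $Z(\4G)\le\4T$, and since $Z(\4G)$ is fixed by conjugation by $N_{\4G}(\4T)$ we get $Z(\4G)\le C_{\4T}(W)$. Conversely, let $t\in C_{\4T}(W)$. For every $\alpha\in\Sigma$ we have $w_\alpha(t)=t$, while Lemma~\ref{theta-r}\eqref{wa(hb)} gives $w_\alpha(t)=t\cdot h_\alpha(\theta_\alpha(t))^{-1}$; hence $h_\alpha(\theta_\alpha(t))=1$, and since $h_\alpha$ is injective for $\4G$ universal (Lemma~\ref{theta-r}\eqref{T=prod}) we conclude $\theta_\alpha(t)=1$. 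In particular $\theta_\alpha(t)=1$ for all $\alpha\in\Pi$, so $t\in\bigcap_{\alpha\in\Pi}\Ker(\theta_\alpha)=\Ker(\theta_\Pi)=Z(\4G)$ by Lemma~\ref{theta-r}\eqref{^txb(u)}. Thus $Z(\4G)=C_{\4T}(W)$. The only nonformal input here is the structure theorem quoted in the first paragraph — that a connected subgroup of $\4G$ containing a maximal torus is generated by that torus and the root groups it contains — and I expect this (and the closely related input used for reductivity) to be the main obstacle; everything else is bookkeeping with Lemma~\ref{theta-r} and the conjugacy of maximal tori.
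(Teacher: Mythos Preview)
Your proof is correct and follows essentially the same line as the paper's. The paper simply cites \cite[Theorem 3.5.3]{Carter2} for the description of $C_{\4G}(H)^0$ (and its reductivity), whereas you unpack this via the standard structure theorem for connected closed subgroups containing a maximal torus together with the explicit action of $\4T$ on root groups; your argument for the full centralizer via conjugacy of maximal tori in $C_{\4G}(H)^0$ is exactly the paper's. For $Z(\4G)=C_{\4T}(W)$ both proofs use Lemma~\ref{theta-r} and universality: the paper computes $\9t n_\alpha(1)=n_\alpha(\theta_\alpha(t))$ and uses $\gen{\4X_\alpha,\4X_{-\alpha}}\cong\SL_2(\fqobar)$, while you use the equivalent formula $w_\alpha(t)=t\,h_\alpha(\theta_\alpha(t))^{-1}$ together with injectivity of $h_\alpha$ for $\alpha\in\Pi$ --- the same idea packaged slightly differently.
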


\begin{proof}  The description of $C_{\4G}(H)^0$ is shown in 
\cite[Theorem 3.5.3]{Carter2} when $H$ is finite and cyclic, and the proof 
given there also applies in the more general case.  For each 
$g\in{}C_{\4G}(H)$, $c_g(\4T)$ is another maximal torus in $C_{\4G}(H)^0$, 
so $gh\in{}C_{N_{\4G}(\4T)}(H)$ for some $h\in{}C_{\4G}(T)^0$, and thus 
$C_{\4G}(H)=C_{\4G}(H)^0\cdot{}C_{N_{\4G}(\4T)}(H)$.  

Assume $\4G$ is of universal type. Since $Z(\4G)\le\4T$ by Lemma 
\ref{theta-r}\eqref{CG(T)=T}, we have $Z(\4G)\le{}C_{\4T}(W)$. Conversely, 
by Lemma \ref{theta-r}\eqref{T=prod}, for each $t\in \4T$ and each 
$\alpha\in\Sigma$, $\9t(x_\alpha(u))=x_\alpha(\theta_\alpha(t)u)$, and 
$\theta_{-\alpha}(t)=\theta_\alpha(t)^{-1}$.  Hence also 
$\9t(n_\alpha(1))=n_\alpha(\theta_\alpha(t))$ (see the formula for 
$n_\alpha(\lambda)$ in Notation \ref{G-setup}\eqref{not2}). If 
$t\in{}C_{\4T}(W)$, then $[t,n_\alpha(1)]=1$ for each $\alpha$, and since 
$\4G$ is of universal type, 
$\gen{\4X_{\alpha},\4X_{-\alpha}}\cong\SL_2(\fqobar)$.  Thus 
$\theta_\alpha(t)=1$ for all $\alpha\in\Sigma$, $t$ acts trivially on all 
root subgroups, and so $t\in Z(\4G)$. 
\end{proof}

We now look more closely at the lattice $\Z\Sigma^\vee$ generated by the 
dual roots. 

\begin{Lem} \label{l:CW(t)}
Assume Notation \ref{G-setup}(\ref{not1},\ref{not2}), and also that $\gg$ 
(and hence $\4G$) is of universal type. 
\begin{enuma} 

\item There is an isomorphism 
	\[ \Phi\:\Z\Sigma^\vee\otimes_{\Z}\fqobar^\times \Right5{}\4T \]
with the property that $\Phi(\alpha^\vee\otimes\lambda)=h_\alpha(\lambda)$ for each 
$\alpha\in\Sigma$ and each $\lambda\in\fqobar^\times$.
\end{enuma}
Fix some $\lambda\in\fqobar^\times$, and set $m=|\lambda|$. Set 
$\Phi_\lambda=\Phi(-,\lambda)\:\Z\Sigma^\vee\Right2{}\4T$.
\begin{enuma}[resume]
\item The map $\Phi_\lambda$ is $\Z[W]$-linear, 
$\Ker(\Phi_\lambda)=m\Z\Sigma^\vee$, and 
$\Im(\Phi_\lambda)=\{t\in\4T\,|\,t^{m}=1\}$.

\item Fix $t\in\4T$ and $x\in\Z\Sigma^\vee$ such that $\Phi_\lambda(x)=t$, 
and also such that 
	\[ \norm{x} < \tfrac12m\cdot\min\bigl\{\norm{\alpha^\vee} 
	\,\big|\, \alpha\in\Pi \bigr\}. \]
Then $C_W(t)=C_W(x)$.

\item If $m=|\lambda|\ge4$, then for each $\alpha\in\Sigma$, 
$C_W(h_\alpha(\lambda))=C_W(\alpha)$. 

\end{enuma}
\end{Lem}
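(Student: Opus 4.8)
The plan is to deduce this last statement directly from part (c), applied with $x=\alpha^\vee$ and $t=h_\alpha(\lambda)$. By part (a), $\Phi_\lambda(\alpha^\vee)=\Phi(\alpha^\vee\otimes\lambda)=h_\alpha(\lambda)$, so $\alpha^\vee\in\Z\Sigma^\vee$ is a legitimate preimage of $t=h_\alpha(\lambda)$ under $\Phi_\lambda$. It then remains only to verify the norm hypothesis of (c) for this choice, namely
	\[ \norm{\alpha^\vee} < \tfrac12 m\cdot\min\bigl\{\norm{\beta^\vee}\,\big|\,\beta\in\Pi\bigr\}, \]
after which (c) yields $C_W(h_\alpha(\lambda))=C_W(\alpha^\vee)$, and one finishes by observing $C_W(\alpha^\vee)=C_W(\alpha)$.

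For the norm estimate, recall that $\lambda\in\fqobar^\times$ automatically has finite order $m=|\lambda|$, and that since $\4G$ is simple the root system $\Sigma$ is irreducible. Hence the squared lengths of the co-roots take at most two values, with ratio $1$, $2$, or $3$ (the dual root system $\Sigma^\vee$ is again irreducible). Moreover every connected Dynkin diagram with two root lengths has a simple root of each length, so $\Pi$ contains a root of maximal length, equivalently a co-root of minimal length; thus $\min\{\norm{\beta^\vee}\mid\beta\in\Pi\}=\min\{\norm{\beta^\vee}\mid\beta\in\Sigma\}$. Combining these, $\norm{\alpha^\vee}^2\le 3\cdot\min\{\norm{\beta^\vee}^2\mid\beta\in\Pi\}$, so
	\[ \norm{\alpha^\vee}\le\sqrt3\cdot\min\{\norm{\beta^\vee}\mid\beta\in\Pi\}<2\cdot\min\{\norm{\beta^\vee}\mid\beta\in\Pi\}\le\tfrac12 m\cdot\min\{\norm{\beta^\vee}\mid\beta\in\Pi\} \]
as soon as $m\ge4$ (note $2\sqrt3\approx3.46$, so the bound $m\ge4$ is exactly what is needed, and is sharp in type $G_2$).

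Finally, $C_W(\alpha^\vee)=C_W(\alpha)$ because $\alpha^\vee=\tfrac2{(\alpha,\alpha)}\alpha$ is a \emph{positive} scalar multiple of $\alpha$ and $W$ acts linearly on $V$, so $w(\alpha)=\alpha$ if and only if $w(\alpha^\vee)=\alpha^\vee$.

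I do not expect a genuine obstacle here; this is a short deduction from the preceding parts of the lemma. The only points requiring a little care are the bookkeeping on co-root lengths — in particular the claim that $\Pi$ already realizes the minimal co-root length, so that the constant $\sqrt3$ governs the ratio $\norm{\alpha^\vee}/\min_{\beta\in\Pi}\norm{\beta^\vee}$ — and the numerical observation that $\sqrt3<m/2$ precisely when $m\ge4$. One could sidestep the structure theory of root lengths by a case-by-case check of $\min_{\beta\in\Pi}\norm{\beta^\vee}$ against $\norm{\alpha^\vee}$, but the uniform estimate above is cleaner.
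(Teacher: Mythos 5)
Your argument covers only part (d) of the lemma; parts (a), (b), and (c) are exactly what you invoke, and they carry essentially all of the content, so as a proof of the stated lemma this is a genuine gap rather than a complete argument. The paper proves (a,b) by identifying $\Z\Sigma^\vee$ with a subgroup of $\Hom(\fqobar^\times,\4T)$ and taking $\Phi$ to be the bilinear evaluation pairing; the key point is that $\{\alpha^\vee\,|\,\alpha\in\Pi\}$ is a $\Z$-basis of $\Z\Sigma^\vee$ and that, since $\4G$ is of universal type, the map $(\lambda_\alpha)\mapsto\prod_\alpha h_\alpha(\lambda_\alpha)$ is an isomorphism (Lemma \ref{theta-r}\eqref{T=prod}), whence $\Phi$ is an isomorphism and $\Phi_\lambda$ identifies $\Z\Sigma^\vee/m\Z\Sigma^\vee$ with the $m$-torsion of $\4T$. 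Part (c) is the real work: given $w\in C_W(t)$, part (b) gives $w(x)\equiv x \pmod{m\Z\Sigma^\vee}$, and one needs the lattice estimate that every nonzero element of $\Z\Sigma^\vee$ has norm at least $r=\min\{\norm{\alpha^\vee}\,|\,\alpha\in\Pi\}$ (via $(\alpha^\vee,\beta^\vee)\in\tfrac12 r^2\Z$), so that $\norm{w(x)-x}<mr$ forces $w(x)=x$. None of this appears in your write-up, so the lemma itself remains unproved.

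Within its limited scope, your deduction of (d) from (c) is correct and is the same reduction the paper makes (the paper simply declares (d) the special case $x=\alpha^\vee$, $t=h_\alpha(\lambda)$). Your explicit verification of the norm hypothesis — that $\norm{\alpha^\vee}\le\sqrt3\cdot\min\{\norm{\beta^\vee}\,|\,\beta\in\Pi\}$ because co-root lengths take at most two values with squared ratio $\le3$ and the minimum is attained on $\Pi$, and that $\sqrt3<m/2$ once $m\ge4$ — is a legitimate filling-in of a step the paper leaves implicit; indeed the same facts ($\norm{\alpha^\vee}=\sqrt{k}\,r$ with $k\in\{1,2,3\}$) are established inside the paper's proof of (c). So the part you did write is sound, but you should supply proofs of (a)--(c), in particular the isomorphism statement in (a,b) (which genuinely uses universality of $\4G$) and the norm argument in (c).
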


\begin{proof} \textbf{(a,b) } Identify $\Z\Sigma^\vee$ as a subgroup of 
$\Hom(\fqobar^\times,\4T)$, and let 
	\[ \4\Phi\: \Z\Sigma^\vee \times \fqobar^\times \Right5{} \4T \]
be the evaluation pairing. This is bilinear, hence induces a homomorphism 
on the tensor product, and $\4\Phi(\alpha^\vee,\lambda)=h_\alpha(\lambda)$ 
by Lemma \ref{theta-r}\eqref{^txb(u)}. Since 
$\{\alpha^\vee\,|\,\alpha\in\Pi\}$ is a $\Z$-basis for $\Z\Sigma^\vee$ 
(since $\Sigma^\vee$ is a root system by \cite[\S\,VI.1, Proposition 
2]{Bourb4-6}), and since $\4G$ is of universal type, $\Phi$ is an isomorphism 
by Lemma \ref{theta-r}\eqref{T=prod}.

In particular, for fixed $\lambda\in\fqobar^\times$ of order $m$, 
$\Phi(-,\lambda)$ induces an isomorphism from the quotient group
$\Z\Sigma^\vee/m\Z\Sigma^\vee$ onto the $m$-torsion subgroup of $\4T$.

\smallskip

\noindent\textbf{(c) } Clearly, $C_W(x)\le 
C_W(t)$; it remains to prove the opposite inclusion. Fix $w\in C_W(t)$. By 
(a), $w(x)\equiv x$ (mod $m\Z\Sigma^\vee$).

Set $r=\min\bigl\{\norm{\alpha^\vee}\,\big|\,\alpha\in\Pi\bigr\}$.  For 
each $\alpha\in\Sigma$, $\norm{\alpha^\vee}=\sqrt{k}\cdot r$ for some
$k=1,2,3$, and hence $(\alpha^\vee,\alpha^\vee)\in r^2\Z$. For each 
$\alpha,\beta\in\Sigma$, 
$2(\alpha^\vee,\beta^\vee)\big/(\alpha^\vee,\alpha^\vee)\in\Z$ (cf. 
\cite[Definition 2.1.1]{Carter}), and hence 
$(\alpha^\vee,\beta^\vee)\in\frac12r^2\Z$. Thus $(x,x)\in r^2\Z$ 
for each $x\in\Z\Sigma^\vee$, and in particular,
$\min\bigl\{\norm{x}\,\big|\,0\ne{}x\in\Z\Sigma^\vee\bigr\}=r$.

By assumption, $\norm{w(x)}=\norm{x}<mr/2$, so $\norm{w(x)-x}<mr$. Since 
each nonzero element in $m\Z\Sigma^\vee$ has norm at least $mr$, this 
proves that $w(x)-x=0$, and hence that $w\in C_W(x)$.

\smallskip

\noindent\textbf{(d) } This is the special case of (b), where 
$x=\alpha^\vee$ and $t=h_\alpha(\lambda)$. 
\end{proof}

\begin{Lem} \label{scal-mod-m}
Assume Notation \ref{G-setup}, and assume also that $\gg$ is of universal 
type. Let $\Gamma<\Aut(V)$ be any finite group of isometries of 
$(V,\Sigma)$. Then there is an action of $\Gamma$ on $\4T$, where 
$g(h_\alpha(u))=h_{g(\alpha)}(u)$ for each $g\in\Gamma$, $\alpha\in\Sigma$, 
and $u\in\fqobar^\times$. 
Fix $m\ge3$ such that $q_0\nmid m$, and set $T_m=\{t\in\4T\,|\,t^m=1\}$. 
Then $\Gamma$ acts faithfully on $T_m$. If $1\ne{}g\in\Gamma$ and 
$\ell\in\Z$ are such that $g(t)=t^\ell$ for each $t\in{}T_m$, then 
$\ell\equiv-1$ (mod $m$). 
\end{Lem}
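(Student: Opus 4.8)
The plan is to build the $\Gamma$-action on $\4T$ via the isomorphism $\Phi$ of Lemma \ref{l:CW(t)}(a). Since $\gg$ is of universal type, $\Phi\colon\Z\Sigma^\vee\otimes_\Z\fqobar^\times\Right2{}\4T$ is an isomorphism carrying $\alpha^\vee\otimes\lambda$ to $h_\alpha(\lambda)$. Any isometry $g$ of $(V,\Sigma)$ permutes $\Sigma$, hence permutes $\Sigma^\vee$ (as $\beta\mapsto\beta^\vee=2\beta/(\beta,\beta)$ commutes with isometries), so $g$ acts $\Z$-linearly on $\Z\Sigma^\vee$; tensoring with $\fqobar^\times$ and transporting across $\Phi$ gives an action of $\Gamma$ on $\4T$ satisfying $g(h_\alpha(u))=h_{g(\alpha)}(u)$. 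This action preserves $T_m$, since $T_m$ is characteristic in $\4T$; and by Lemma \ref{l:CW(t)}(b), $\Phi_\lambda$ (for $\lambda$ of order $m$) induces a $\Z[W]$-linear — and in fact $\Z[\Gamma]$-linear, by the same argument — isomorphism $\Z\Sigma^\vee/m\Z\Sigma^\vee\Right2{\cong}T_m$. So the $\Gamma$-action on $T_m$ is identified with the action on $L/mL$, where $L=\Z\Sigma^\vee$.

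Next I would prove faithfulness. Suppose $g\in\Gamma$ acts trivially on $T_m\cong L/mL$. Then $g$ acts trivially on $L/mL$, i.e. $g(x)\equiv x\pmod{mL}$ for all $x\in L$. Taking $x=\alpha^\vee$ for $\alpha\in\Pi$: we have $\norm{g(\alpha^\vee)-\alpha^\vee}\le 2\norm{\alpha^\vee}<mr$ whenever $m\ge3$ and $r=\min\{\norm{\beta^\vee}:\beta\in\Pi\}\le\norm{\alpha^\vee}$ — wait, this needs $2\norm{\alpha^\vee}<mr$, which for $m=3$ forces $\norm{\alpha^\vee}<\tfrac32 r$, i.e. all dual roots in $\Pi$ have the same length; so instead I would argue directly that $g(\alpha^\vee)-\alpha^\vee\in L$ has norm $<mr$ when... actually the cleanest route is: $g$ is an isometry, so $g(\alpha^\vee)$ is another element of $\Sigma^\vee$ of the same length as $\alpha^\vee$, and $g(\alpha^\vee)\equiv\alpha^\vee\pmod{mL}$ with $\norm{g(\alpha^\vee)}=\norm{\alpha^\vee}\le\sqrt3\,r$ and $\norm{\alpha^\vee}\le\sqrt3\,r$, so $\norm{g(\alpha^\vee)-\alpha^\vee}\le 2\sqrt3\,r<mr$ once $m\ge4$; for $m=3$ one notes $g(\alpha^\vee)-\alpha^\vee$, if nonzero, lies in $3L$ so has norm $\ge 3r>2\sqrt3\,r$ only fails — here I would instead invoke that $g(\alpha^\vee)$ and $\alpha^\vee$ are roots in the (irreducible) system $\Sigma^\vee$, and two roots congruent mod $3L$ and of equal length must be equal unless they are proportional, and $\pm\alpha^\vee$ are the only roots proportional to $\alpha^\vee$; but $\alpha^\vee\not\equiv-\alpha^\vee\pmod{3L}$ since $2\alpha^\vee\notin 3L$. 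Hence $g(\alpha^\vee)=\alpha^\vee$ for all $\alpha\in\Pi$, so $g=\Id$ on $L$ and on $V$, proving faithfulness.

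Finally, for the scalar statement: suppose $1\ne g\in\Gamma$ with $g(t)=t^\ell$ for all $t\in T_m$. Then on $L/mL$ the isometry $g$ acts as multiplication by $\ell$. For each $\alpha\in\Pi$, $g(\alpha^\vee)\equiv\ell\alpha^\vee\pmod{mL}$, and $g(\alpha^\vee)\in\Sigma^\vee$ has the same length as $\alpha^\vee$. Now $g^2$ acts as $\ell^2$ on $L/mL$; but $g^2$ is again an isometry, and iterating, the subgroup $\gen{g}$ acts on $L/mL$ through scalars, forcing $g^2$ to fix $\alpha^\vee$ modulo $mL$ in a way that — combined with the length constraint and irreducibility — gives $g(\alpha^\vee)=\pm\alpha^\vee$; since $\ell\alpha^\vee\equiv g(\alpha^\vee)=\pm\alpha^\vee$ and $\alpha^\vee$ has infinite additive order in $L/mL$ when $m\ge3$ (as $2\alpha^\vee\notin mL$ for $m\ge3$, indeed $k\alpha^\vee\in mL\iff m\mid k$), we get $\ell\equiv\pm1\pmod m$. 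If $\ell\equiv1\pmod m$ then $g$ is trivial on $T_m$, hence trivial by faithfulness, contradicting $g\ne1$; therefore $\ell\equiv-1\pmod m$.

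The main obstacle I anticipate is the faithfulness argument and the $\ell\equiv\pm1$ dichotomy at the small value $m=3$, where the naive norm estimate $\norm{g(x)-x}\le 2\norm{x}<mr$ is too weak; the fix is to use that $g(\alpha^\vee)$ is itself a root of $\Sigma^\vee$ of prescribed length, together with the observation that distinct roots of equal length (in an irreducible root system) are never congruent modulo $3L$ unless they are negatives, and $\alpha^\vee\not\equiv-\alpha^\vee\pmod{3L}$. One should also double-check that restricting attention to $\alpha\in\Pi$ suffices, which it does since $\Pi^\vee$ spans $L$ and $g$ is $\Z$-linear.
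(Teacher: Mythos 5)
Your setup — transporting the $\Gamma$-action to $\Lambda/m\Lambda$ with $\Lambda=\Z\Sigma^\vee$ via the isomorphism of Lemma \ref{l:CW(t)} — is exactly the paper's starting point, but the heart of your argument has a genuine gap, and it sits precisely at the point you flagged. The fact you invoke to handle $m=3$, namely that two roots of $\Sigma^\vee$ of equal length which are congruent modulo $3\Lambda$ must be equal or proportional, is false. Take $\gg=G_2$, so that $\Sigma^\vee$ is again of type $G_2$: two long dual roots $u,v$ at angle $2\pi/3$ satisfy $u-v=3w$ for a \emph{short} dual root $w$ (with the usual normalization, $u=(3/2,\sqrt{3}/2)$, $v=(-3/2,\sqrt{3}/2)$, $u-v=3\,(1,0)$), so $u\equiv v\pmod{3\Lambda}$ although $u\ne\pm v$ and they are not proportional. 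Since one of the two simple dual roots of $G_2$ is long, your per-root deduction ``$g(\alpha^\vee)\equiv\alpha^\vee\pmod{3\Lambda}$ and equal length $\Rightarrow g(\alpha^\vee)=\alpha^\vee$'' breaks down there: a priori $g$ could send that simple dual root to either of the two other long roots in its congruence class, and ruling this out requires using how $g$ acts elsewhere in $\Sigma^\vee$, which your argument never does.

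The second, larger gap is the scalar statement for general $\ell$: the step ``iterating, $\gen{g}$ acts through scalars, forcing $\dots$ $g(\alpha^\vee)=\pm\alpha^\vee$'' is not an argument, and the norm estimate powering your faithfulness proof does not close it. Writing $\ell'$ for the representative with $|\ell'|\le m/2$, one only gets $\norm{g(\alpha^\vee)-\ell'\alpha^\vee}\le(1+|\ell'|)\norm{\alpha^\vee}$, which in the two-root-length cases is not below the minimal norm $m\,r$ of $m\Lambda$ for small $m$ (for $B_n$, $C_n$, $F_4$ this already fails at $m=3,4$, and for $G_2$ for all $m\le 12$); so in exactly the delicate cases the congruence yields nothing. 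The paper avoids all of this root geometry: it reduces to $m=p$ an odd prime or $m=4$, takes the integer matrix $B$ of $g$ on $\Z\Sigma^\vee$ (so $B$ has finite order and $B\equiv\ell I$ mod $mM_r(\Z)$), twists by a root of unity $\mu\in\Z_p^\times$ with $\mu\equiv\ell$ (a sign when $p=2$, a Teichm\"uller lift when $p$ is odd), and uses the $p$-adic logarithm/exponential to show that the finite-order matrix $B'=\mu^{-1}B\equiv I$ mod $mM_r(\Z_p)$ must equal $I$; hence $B=\mu I$ with $\mu=-1$ since $g\ne1$, which gives both faithfulness and $\ell\equiv-1\pmod m$ at once. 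If you want to salvage your approach you would need either this kind of $p$-adic rigidity or a genuinely global analysis of how $g$ acts on all of $\Sigma^\vee$ (not one simple root at a time), and as written neither is present.
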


\begin{proof} The action of $\Gamma$ on $\4T$ is well defined by the 
relations in Lemma \ref{theta-r}(\ref{hb.hc},\ref{T=prod}). 

Now fix $m\ge3$ prime to $q_0$, and let $T_m<\4T$ be the $m$-torsion 
subgroup. It suffices to prove the rest of the lemma when $m=p$ is an odd 
prime, or when $m=4$ and $p=2$. Fix $\lambda\in\fqobar^\times$ of order 
$m$, and let $\Phi_\lambda\:\Z\Sigma^\vee\Right3{}\4T$ be the homomorphism 
of Lemma \ref{l:CW(t)}(a). By definition of $\Phi_\lambda$, it commutes 
with the actions of $\Gamma$ on $\Z\Sigma^\vee<V$ and on $T_m$. 

Assume $1\ne{}g\in\Gamma$ and $\ell\in\Z$ are such that $g(t)=t^\ell$ for 
each $t\in{}T_m$. Set $r=\dim(V)$, and let $B\in\GL_r(\Z)$ be the matrix for the 
action of $g$ on $\Z\Sigma^\vee$, with respect to some $\Z$-basis of 
$\Z\Sigma^\vee$. Then $|g|=|B|$, and $B\equiv\ell{}I$ (mod $mM_r(\Z)$). 
If $p=2$ ($m=4$), let $\mu\in\{\pm1\}$ be such that $\ell\equiv\mu$ (mod 
$4$). If $p$ is odd (so $m=p$), then let $\mu\in(\Z_p)^\times$ be such that 
$\mu\equiv\ell$ (mod $p$) and $\mu^{p-1}=1$. Set 
$B'=\mu^{-1}B\in\GL_r(\Z_p)$. Thus $B'$ also has finite order, and 
$B'\equiv I$ (mod $mM_r(\Z_p))$. 

The logarithm and exponential maps define inverse bijections
	\[ I + m M_r(\Z_p) \RLEFT5{\ln}{\exp} m 
	M_r(\Z_p)\,. \]
They are not homomorphisms, but they do have the property that 
$\ln(M^k)=k\ln(M)$ for each $M\in I+mM_r(\Z_p)$ and each $k\ge1$. In 
particular, the only element of finite order in 
$I+mM_r(\Z_p)$ is the identity. 
Thus $B'=I$, so $B=\mu{}I$. Since $\mu\in\Z$ and $B\ne{}I$, we have 
$\mu=-1$ and $B=-I$.
\end{proof}

The following lemma about the lattice $\Z\Sigma^\vee$ will also be useful 
when working with the Weyl group action on certain subgroups of $\4T$.

\begin{Lem} \label{l:Lambda}
Assume Notation \ref{G-setup}(\ref{not1},\ref{not2}). Set 
$\Lambda=\Z\Sigma^\vee$: the lattice in $V$ generated by the dual roots. 
Assume that there are $b\in W$ of order $2$, and a splitting 
$\Lambda=\Lambda_+\times\Lambda_-$, such that $\Lambda_+,\Lambda_-\ne0$ and 
$b$ acts on $\Lambda_\pm$ via $\pm\Id$. Then $\gg\cong C_n$ ($=\Sp_{2n}$) for some 
$n\ge2$.
\end{Lem}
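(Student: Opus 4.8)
The plan is to reduce the statement to a fact about the coroot lattice as an abstract positive-definite lattice, and then invoke the classification of irreducible root systems. Since $\gg$ is simple, $\Sigma$ is irreducible and $\Lambda=\Z\Sigma^\vee$ spans $V=\R\otimes_\Z\4T^*$. The element $b\in W$ acts on $V$ by isometries, so its eigenspaces $V_+=\ker(b-\Id)$ and $V_-=\ker(b+\Id)$ are orthogonal and $V=V_+\perp V_-$. By hypothesis $\Lambda_\pm\subseteq V_\pm$, and $\Lambda=\Lambda_+\oplus\Lambda_-$ spans $V$, so $\Lambda_\pm$ spans $V_\pm$; hence $\Lambda_\pm=\Lambda\cap V_\pm$ and $\Lambda=\Lambda_+\perp\Lambda_-$ is an \emph{orthogonal} direct sum of two \emph{nonzero} sublattices. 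So it suffices to prove that for an irreducible root system $\Sigma$, the coroot lattice $\Z\Sigma^\vee$ (with its $W$-invariant inner product) admits no such decomposition unless $\Sigma$ has type $C_n$ for some $n\ge2$ — in which case $\gg\cong C_n=\Sp_{2n}$.

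Two facts drive this. First, in any positive-definite lattice $L$ a nonzero vector $v$ of minimal norm is \emph{indecomposable}: writing $v=u+w$ with $u\perp w$ nonzero gives $(v,v)=(u,u)+(w,w)$, forcing one of $(u,u),(w,w)$ to be smaller than $(v,v)$, impossible. Consequently, in any orthogonal splitting $L=L_1\perp L_2$ each minimal vector lies wholly in $L_1$ or in $L_2$, and two minimal vectors with nonzero inner product lie in the same summand. Second, $\Z\Sigma^\vee$ is precisely the root lattice of the irreducible dual root system $\Sigma^\vee$, and for any irreducible root system the short roots (i) are among its minimal lattice vectors, (ii) generate its root lattice over $\Z$ (every long root being a $\Z$-combination of short roots), and (iii) form a root subsystem which is again irreducible \emph{unless} the system has type $B_n$, in which case the short roots $\pm e_i$ are mutually orthogonal and form $nA_1$. (All of (i)--(iii) are classical and checked type by type; note $B_n^\vee=C_n$.)

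Putting these together: if $\Sigma$ is not of type $C_n$, then $\Sigma^\vee$ is not of type $B_n$, so the short roots of $\Sigma^\vee$ form a connected configuration (under nonzero inner product) of minimal vectors of $\Z\Sigma^\vee$ that generates $\Z\Sigma^\vee$; by the first fact they must lie in a single summand $L_1$ of any orthogonal splitting, whence $L_2=0$ and $\Z\Sigma^\vee$ is indecomposable — contradicting the reduction. Therefore $\Sigma$ has type $C_n$; the rank-one case $C_1=A_1$ is excluded since its coroot lattice has rank $1$ and so cannot split into two nonzero sublattices, and $B_2=C_2$ is allowed, so $\gg\cong\Sp_{2n}$ with $n\ge2$. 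The only delicate point is verifying facts (i)--(iii) about short roots of irreducible root systems — in particular that the short-root subsystem is irreducible away from type $B$ (equivalently, that the long roots of $C_n$ are the obstruction on the dual side) — but this is routine from the classification.
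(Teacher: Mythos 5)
Your proof is correct, but it takes a genuinely different route from the paper's. The paper argues type by type using Bourbaki's explicit coordinates: after conjugating $b$ into a subgroup of (signed) permutations in each case, it exhibits a concrete vector $v\in\Lambda$ whose components in the two eigenspaces of $b$ do not lie in $\Lambda$, contradicting $\Lambda=\Lambda_+\times\Lambda_-$ (in the $G_2$ case it simply notes that $\Lambda$ has no orthogonal basis). You instead prove the stronger, $b$-free statement that for $\Sigma$ not of type $C_n$ the coroot lattice admits no orthogonal splitting into two nonzero sublattices at all: the only role of $b$ is that it is an isometric involution, which makes the eigenlattice decomposition orthogonal, and then the minimal-vector argument takes over --- minimal vectors are indecomposable in any orthogonal splitting, the minimal vectors of $\Z\Sigma^\vee$ are the short roots of $\Sigma^\vee$, these are connected under nonzero inner product exactly when $\Sigma^\vee$ is not of type $B_n$ (equivalently $\Sigma$ not of type $C_n$), and they generate the lattice. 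What your approach buys is a cleaner and more general statement: no normalization of $b$ or choice of coordinates is needed, and the conclusion applies to any orthogonal splitting, not just one induced by a Weyl involution. What it costs is that the burden is shifted onto your three ``classical facts'' about short roots, which, like the paper's own verification, still require a routine type-by-type check --- in particular that the minimal vectors of each root lattice are exactly the short roots (e.g.\ the norm-one vectors $\tfrac12(\pm1,\pm1,\pm1,\pm1)$ in the $F_4$ case) and the bookkeeping around the coincidence $B_2=C_2$, both of which you address correctly, so there is no gap.
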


\begin{proof} Fix $b\in{}w$ and a splitting 
$\Lambda=\Lambda_+\times\Lambda_-$ as above. When considering individual 
cases, we use the notation of Bourbaki \cite[Planches I--IX]{Bourb4-6} to 
describe the (dual) roots, lattice, and Weyl group. 
\begin{itemize} 
\item If $\gg=A_n$ ($n\ge2$), then 
$\Lambda=\bigl\{(a_0,\ldots,a_n)\in\Z^{n+1}\,\big|\,a_0+\ldots+a_n=0\bigr\}$, 
and $b$ exchanges certain coordinates pairwise. Choose $v\in\Lambda$ with 
coordinates $1$, $-1$, and otherwise $0$; where the two nonzero entries are 
in separate orbits of $b$ of which at least one is nonfixed. Then 
$v\notin\Lambda_+\times\Lambda_-$, a contradiction. 

\item If $\gg=G_2$, then as described in \cite[Planche IX]{Bourb4-6}, 
$\Lambda$ is generated by the dual fundamental roots $(1,-1,0)$ and 
$(\frac23,-\frac13,-\frac13)$, and does not have an orthogonal basis.

\item If $\gg=B_n$ ($n\ge3$), $D_n$ ($n\ge4$), or $F_4$, then $\Lambda<\Z^n$ is 
the sublattice of $n$-tuples the sum of whose coordinates is even. Also, 
$b$ acts by permuting the coordinates and changing sign (or we can assume 
it acts this way in the $F_4$ case). Choose $v$ with two $1$'s and the rest 
$0$, where the $1$'s are in separate $b$-orbits, of which either at least 
one is nonfixed, or both are fixed and exactly one is negated. Then 
$v\notin\Lambda_+\times\Lambda_-$, a contradiction. 

\item If $\gg=E_8$, then $\Lambda=\Lambda(E_8)<\R^8$ is generated by 
$\frac12(1,1,\ldots,1)$ and the $n$-tuples of integers whose sum is 
even. We can assume (up to conjugation) that $b$ acts as a signed 
permutation. Choose $v$ as in the last case.

\item If $\gg=E_7$, then $\Lambda<\R^8$ is the lattice of 
all $x=(x_1,\ldots,x_8)\in\Lambda(E_8)$ such that $x_7=-x_8$. 
Up to conjugation, $b$ can be again be assumed to act on 
$A$ via a signed permutation (permuting only the first six coordinates), 
and $v$ can be chosen as in the last case.

\item If $\gg=E_6$, then $\Lambda<\R^8$ is the lattice of 
all $x=(x_1,\ldots,x_8)\in\Lambda(E_8)$ such that $x_6=x_7=-x_8$. 
Also, $W$ contains a subgroup isomorphic to $2^4:S_5$ with 
odd index which acts on the remaining five coordinates via signed 
permutations. So $b$ and $v$ can be taken as in the last three cases. 
\qedhere
\end{itemize}
\end{proof}

We finish the section with a very elementary lemma.

It will be useful to know, in certain situations, that each coset of $\4T$ 
in $N_{\4G}(\4T)$ contains elements of $G$.

\begin{Lem} \label{l:gT}
Assume that we are in the situation of Notation 
\ref{G-setup}(\ref{not1},\ref{not2}). 
Assume also that $\sigma$ acts on $\4T$ via $(t\mapsto t^m)$ for some 
$1\ne m\in\Z$. Then for each $g\in{}N_{\4G}(\4T)$, $g\4T\cap 
C_{\4G}(\sigma)\ne\emptyset$. 
\end{Lem}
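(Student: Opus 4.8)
The plan is to use the Lang--Steinberg theorem, exactly as in the proof of Lemma \ref{W0onT}. Fix $g\in N_{\4G}(\4T)$. Since $\sigma(\4T)=\4T$ and $\sigma$ normalizes $N_{\4G}(\4T)$ (because $\sigma$ is an algebraic endomorphism and $\4T$ is $\sigma$-stable, $\sigma$ permutes the maximal tori containing a fixed one, but more directly $\sigma(N_{\4G}(\4T))=N_{\4G}(\sigma(\4T))=N_{\4G}(\4T)$), the element $g^{-1}\sigma(g)$ lies in $N_{\4G}(\4T)$. First I would check that in fact $g^{-1}\sigma(g)\in\4T$: modulo $\4T$, $\sigma$ acts on $W=N_{\4G}(\4T)/\4T$ as some automorphism $\tau$ (as in Lemma \ref{W0onT}), but here we are given that $\sigma$ acts on $\4T$ by $t\mapsto t^m$ with $m\ne1$; I need to see that this forces $\sigma$ to act trivially on $W$. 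Indeed, for each $\alpha\in\Sigma$ the Steinberg endomorphism sends $\4X_\alpha$ to $\4X_{\rho(\alpha)}$ where $\rho(\alpha)$ is a positive multiple of $\tau(\alpha)$, and conjugation by $\sigma(n_\alpha(1))$ realizes the reflection $w_{\rho(\alpha)}$; since $\sigma$ acts on $\4T$ as the power map $t\mapsto t^m$, which commutes with every element of $W$, one gets $w_{\rho(\alpha)}=\sigma w_\alpha\sigma^{-1}=w_\alpha$ as automorphisms of $\4T$, hence $\rho(\alpha)=\pm\alpha$; combined with $\rho(\alpha)$ being a positive multiple of $\tau(\alpha)$ and $\tau$ orthogonal, $\tau=\Id$ on $V$, so $\sigma$ acts trivially on $W$. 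Therefore $g^{-1}\sigma(g)\in\4T$.

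Next, by the Lang--Steinberg theorem applied to the connected group $\4T$ (every element of $\4T$ is of the form $t^{-1}\sigma(t)$ for some $t\in\4T$ — this is exactly the step used in the proof of Lemma \ref{W0onT}, and it uses only that $\sigma$ restricted to $\4T$ is a surjective endomorphism with finite fixed points, which holds since $\sigma$ is a Steinberg endomorphism), there is $t\in\4T$ with $g^{-1}\sigma(g)=t^{-1}\sigma(t)$, i.e. $\sigma(gt^{-1})=gt^{-1}$. Thus $gt^{-1}\in g\4T\cap C_{\4G}(\sigma)$, which is the desired nonempty intersection.

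I do not expect a real obstacle here; the only point requiring a little care is the reduction showing $g^{-1}\sigma(g)\in\4T$ rather than merely in $N_{\4G}(\4T)$, i.e. that the hypothesis ``$\sigma$ acts on $\4T$ as $t\mapsto t^m$ with $m\ne1$'' forces $\sigma$ to act trivially on the Weyl group. An alternative, perhaps cleaner, route to that reduction: the action of $\sigma$ on $W=N_{\4G}(\4T)/\4T$ is by an automorphism preserving the set of reflections and compatible with the $W$-action on $\4T$; since $\sigma$ acts on $X(\4T)$ (equivalently on $V$) as multiplication by $m$ (dually to the action on $\4T$), and this scalar action commutes with $\tau\in\Aut(V,\Sigma)$ while $\tau$ must permute the fundamental system up to $W$ and be of finite order orthogonal, the only possibility consistent with $\sigma$ being an endomorphism (so $m>0$, in fact $m$ a power of $q_0$ or its square root in the Suzuki/Ree case — but here $m\in\Z$) is $\tau=\Id$. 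Either way the argument is short, and then the Lang--Steinberg application finishes it immediately.
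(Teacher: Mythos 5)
Your core mechanism is the same as the paper's: reduce to showing $g^{-1}\sigma(g)\in\4T$, then solve inside the torus, where the ``Lang--Steinberg'' step is nothing but surjectivity of the power map $t\mapsto t^{1-m}$ on $\4T\cong(\fqobar^\times)^r$ (this is all the paper invokes --- it writes the condition $\sigma(gt)=gt$ as $g^{-1}\sigma(g)=t^{1-m}$ and solves it directly, without citing Lang--Steinberg). Where you differ is the reduction. The paper's is a one-liner: since $\sigma|_{\4T}$ is a power map, it commutes with $c_g|_{\4T}$, so $c_{\sigma(g)}|_{\4T}=c_g|_{\4T}$ (using surjectivity of $\sigma|_{\4T}$), hence $g^{-1}\sigma(g)$ centralizes $\4T$ and lies in $C_{\4G}(\4T)=\4T$ by Lemma \ref{theta-r}\eqref{CG(T)=T}; no Weyl group is needed.

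Your detour through the action on $W$ also works, but it passes through a false intermediate claim. From $w_{\rho(\alpha)}=w_\alpha$ you get $\rho(\alpha)=\pm\alpha$, and you then assert $\tau=\Id$ on $V$; in fact $\tau=-\Id$ (i.e.\ $\rho(\alpha)=-\alpha$ for all $\alpha$) occurs in exactly the situations where the paper applies this lemma, e.g.\ in Lemma \ref{invert(III.2)} where $\sigma(t)=t^{-q}$, so $m=-q<0$. For the same reason, the parenthetical ``so $m>0$'' in your alternative route is unjustified: the hypothesis allows any $m\ne1$, and negative $m$ is the case of interest. The slip is harmless, because all you actually use is that $\sigma$ induces the identity on $W=N_{\4G}(\4T)/\4T$ (so that $\sigma(g)\4T=g\4T$), and this already follows from what you proved: $\sigma$ fixes every reflection $w_\alpha$, and $W$ is generated by reflections; alternatively, $\tau=\pm\Id$ is central in $O(V)$, so conjugation by $\tau$ is trivial on $W$. (A small further point of hygiene: since $\sigma|_{\4T}$ is not injective when $|m|>1$, the expression $\sigma w_\alpha\sigma^{-1}$ should be read as the identity $c_{\sigma(n)}\circ\sigma|_{\4T}=\sigma|_{\4T}\circ c_n$ together with surjectivity of $\sigma|_{\4T}$, which is what yields $c_{\sigma(n)}|_{\4T}=c_n|_{\4T}$.) With these phrases corrected, your proof is complete, just longer than the paper's.
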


\begin{proof} Since $\sigma|_{\4T}\in Z(\Aut(\4T))$, we have 
$g^{-1}\sigma(g)\in C_{\4G}(\4T)=\4T$, the last equality by Lemma 
\ref{theta-r}\eqref{CG(T)=T}. So for each $t\in\4T$, $\sigma(gt)=gt$ if and 
only if $g^{-1}\sigma(g)=t^{1-m}$. Since $\4T\cong(\fqobar^\times)^r$ for 
some $r$, and $\fqobar$ is algebraically complete (and $1-m\ne0$), this 
always has solutions. 
\end{proof}

\newpage

\newsect{Automorphisms of groups of Lie type}
\label{s:aut}

Since automorphisms of $G$ play a central role in this paper, we need to 
fix our notation (mostly taken from \cite{GLS3}) for certain subgroups 
and elements of $\Aut(G)$. We begin with automorphisms of the algebraic 
group $\4G$.

\begin{Defi} \label{d:Aut(Gbar)}
Let $\4G$ and its root system $\Sigma$ be as in Notation 
\ref{G-setup}(\ref{not1},\ref{not2}).
\begin{enuma} 

\item When $q$ is any power of $q_0$ (the defining characteristic of 
$\4G$), let $\psi_q\in\End(\4G)$ be the field endomorphism defined by 
$\psi_q(x_\alpha(u))=x_\alpha(u^q)$ for each $\alpha\in\Sigma$ and each 
$u\in\fqobar$. Set $\Phi_{\4G}=\{\psi_{q_0^b}\,|\,b\ge1\}$: the monoid of 
all field endomorphisms of $\4G$. 

\item Let $\Gamma_{\4G}$ be the group or set of graph automorphisms of $\4G$ as 
defined in \cite[Definition 1.15.5(e)]{GLS3}. Thus when 
$(\gg,q_0)\ne(B_2,2)$, $(G_2,3)$, nor $(F_4,2)$, $\Gamma_{\4G}$ is the group of 
all $\gamma\in\Aut(\4G)$ of the form 
$\gamma(x_\alpha(u))=x_{\rho(\alpha)}(u)$ (all $\alpha\in\pm\Pi$ and 
$u\in\fqobar$) for some isometry $\rho$ of 
$\Sigma$ such that $\rho(\Pi)=\Pi$. If $(\gg,q_0)=(B_2,2)$, $(G_2,3)$, 
or $(F_4,2)$, then $\Gamma_{\4G}=\{1,\psi\}$, where for the 
angle-preserving permutation $\rho$ of $\Sigma$ which exchanges long and 
short roots and sends $\Pi$ to itself, 
$\psi(x_\alpha(u))=x_{\rho(\alpha)}(u)$ when $\alpha$ is a long root and 
$\psi(x_\alpha(u))=x_{\rho(\alpha)}(u^{q_0})$ when $\alpha$ is short.

\item A Steinberg endomorphism $\sigma$ of $\4G$ is ``standard'' if 
$\sigma=\psi_q\circ\gamma=\gamma\circ\psi_q$, where $q$ is a power of $q_0$ 
and $\gamma\in\Gamma_{\4G}$. A $\sigma$-setup $(\4G,\sigma)$ for a finite 
subgroup $G<\4G$ is standard if $\sigma$ is standard.
\end{enuma}
\end{Defi}

By \cite[Theorem 2.2.3]{GLS3}, for any $G$ with $\sigma$-setup 
$(\4G,\sigma)$ as in Notation \ref{G-setup}, $G$ is $\4G$-conjugate to a 
subgroup $G^*$ which has a standard $\sigma$-setup. This will be made more 
precise in Proposition \ref{nonstandard}(a).

Most of the time in this paper, we will be working with standard 
$\sigma$-setups. But there are a few cases where we will need to work with 
setups which are not standard, which is why this condition is not included 
in Notation \ref{G-setup}. 

Following the usual terminology, we call $G$ a ``Chevalley group'' if it 
has a standard $\sigma$-setup where $\gamma=\Id$ in the notation of 
Definition \ref{d:Aut(Gbar)}; i.e., if $G\cong\gg(q)$ where $q$ is some 
power of $q_0$.  In this case, the root groups $X_{\5\alpha}$ are all 
abelian and isomorphic to $\F_q$.  When $G$ has a standard $\sigma$-setup 
with $\gamma\ne\Id$, we refer to $G$ as a ``twisted group'', and the 
different possible structures of its root groups are described in 
\cite[Table 2.4]{GLS3}. We also refer to $G$ as a ``Steinberg group'' 
if $\gamma\ne\Id$ and is an algebraic automorphism of $\4G$; i.e., if $G$ 
is a twisted group and not a Suzuki or Ree group.

The following lemma will be useful in Sections \ref{s:X1} and \ref{s:X2}.

\begin{Lem} \label{l:tau}
Assume $\4G$ is as in Notation \ref{G-setup}(\ref{not1},\ref{not2}). Then 
for each algebraic automorphism $\gamma$ of $\4G$ which normalizes $\4T$, 
there is an orthogonal automorphism $\tau$ of $V$ such that 
$\tau(\Sigma)=\Sigma$, and 
	\[ \gamma(\4X_\alpha)=\4X_{\tau(\alpha)} \qquad\textup{and}\qquad
	\gamma(h_\alpha(\lambda))=h_{\tau(\alpha)}(\lambda) \]
for each $\alpha\in\Sigma$ and each $\lambda\in\fqobar^\times$. In 
particular, $\bigl|\gamma|_{\4T}\bigr|=|\tau|<\infty$. If, in addition, 
$\gamma$ normalizes each of the root groups $\4X_\alpha$ (i.e., 
$\tau=\Id$), then $\gamma\in\Aut_{\4T}(\4G)$.
\end{Lem}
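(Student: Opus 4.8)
The plan is to read $\tau$ off from the action of $\gamma$ on the maximal torus $\4T$ (equivalently on the character lattice $X(\4T)$), to match it with the permutation of $\Sigma$ recording how $\gamma$ permutes the root groups, and then to bootstrap the two displayed identities and the final assertions. Since $\gamma$ is an \emph{algebraic} automorphism normalizing $\4T$, it permutes the closed subgroups isomorphic to $(\fqobar,+)$ and normalized by $\4T$, namely the root groups $\4X_\alpha$ ($\alpha\in\Sigma$) (\cite[Theorem 1.9.5]{GLS3}), so $\gamma(\4X_\alpha)=\4X_{\pi(\alpha)}$ for a permutation $\pi$ of $\Sigma$. On each $\4X_\alpha$ the map $x_\alpha(u)\mapsto\gamma(x_\alpha(u))$ is an isomorphism of algebraic groups $(\fqobar,+)\to(\fqobar,+)$, hence of the form $u\mapsto c_\alpha u$ with $c_\alpha\in\fqobar^\times$ (the units of the ring of additive polynomials over $\fqobar$ are the nonzero constants); this is the one essential use of the word ``algebraic'', since a merely bijective endomorphism could insert a Frobenius twist here. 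Applying $\gamma$ to $\9tx_\alpha(u)=x_\alpha(\theta_\alpha(t)u)$ (Lemma \ref{theta-r}\eqref{^txb(u)}) and comparing gives $\theta_{\pi(\alpha)}\circ(\gamma|_{\4T})=\theta_\alpha$ in $X(\4T)$. Taking $\tau\in\GL(V)$ to be the $\R$-linear extension of the automorphism of $X(\4T)$ inverse to $\chi\mapsto\chi\circ(\gamma|_{\4T})$, and identifying a root $\alpha$ with $\theta_\alpha$, this relation reads $\tau(\alpha)=\pi(\alpha)$; in particular $\tau(\Sigma)=\Sigma$.

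Since $\4G$ is simple, $\Sigma$ is irreducible, so $\tau$ is an automorphism of an irreducible reduced root system, hence lies in $W\rtimes\Gamma$ with $\Gamma$ the group of diagram automorphisms, and in particular is an isometry of $V$ for the $W$-invariant inner product (alternatively: $\tau|_\Sigma$ preserves angles as in the discussion after Notation \ref{G-setup}, so $\tau$ is conformal, and a conformal permutation of an irreducible reduced root system has ratio $1$). For the second displayed identity I would work with $n_\alpha(\lambda)=x_\alpha(\lambda)x_{-\alpha}(-\lambda^{-1})x_\alpha(\lambda)$ and $h_\alpha(\lambda)=n_\alpha(\lambda)n_\alpha(1)^{-1}$. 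Using $\pi(-\alpha)=\tau(-\alpha)=-\pi(\alpha)$, one has $\gamma(n_\alpha(\lambda))=x_{\pi(\alpha)}(c_\alpha\lambda)x_{-\pi(\alpha)}(-c_{-\alpha}\lambda^{-1})x_{\pi(\alpha)}(c_\alpha\lambda)$; since $\gamma$ normalizes $\4T$ this element normalizes $\4T$, and inside the fixed $\SL_2$-image $\gen{\4X_{\pi(\alpha)},\4X_{-\pi(\alpha)}}$ the requirement that $\mxtwo{1-c_\alpha c_{-\alpha}}{*}{-c_{-\alpha}}{1-c_\alpha c_{-\alpha}}$ normalize the diagonal torus forces $c_{-\alpha}=c_\alpha^{-1}$. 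Then $\gamma(n_\alpha(\lambda))=n_{\pi(\alpha)}(c_\alpha\lambda)$, so $\gamma(h_\alpha(\lambda))=n_{\pi(\alpha)}(c_\alpha\lambda)n_{\pi(\alpha)}(c_\alpha)^{-1}=h_{\pi(\alpha)}(c_\alpha\lambda)h_{\pi(\alpha)}(c_\alpha)^{-1}=h_{\pi(\alpha)}(\lambda)=h_{\tau(\alpha)}(\lambda)$, using that $\mu\mapsto h_\beta(\mu)$ is a homomorphism.

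For the order statement, $\4T$ is generated by the $h_\alpha(\lambda)$ (Lemma \ref{theta-r}\eqref{T=prod}), so $\gamma^k|_{\4T}$ is determined by $\tau^k|_\Sigma$; thus $\gamma^k|_{\4T}=\Id$ iff $h_{\tau^k(\alpha)}=h_\alpha$ for all $\alpha$, iff $\tau^k(\alpha)=\alpha$ for all $\alpha$ (as $\alpha\mapsto\alpha^\vee=h_\alpha$ is injective on $\Sigma$), iff $\tau^k=\Id$ ($\Sigma$ spans $V$), whence $|\gamma|_{\4T}|=|\tau|$, which is finite since $\Aut(\Sigma)$ is. For the last statement, if $\gamma$ normalizes every $\4X_\alpha$ then $\pi=\Id$ and $\tau=\Id$; applying $\gamma$ to $n_\alpha(1)x_\beta(u)n_\alpha(1)^{-1}=x_{w_\alpha(\beta)}(\pm u)$ and using $\gamma(n_\alpha(1))=n_\alpha(c_\alpha)=h_\alpha(c_\alpha)n_\alpha(1)$ together with Lemma \ref{theta-r}\eqref{^txb(u)} gives, for $\alpha\in\Pi$, the relation $c_{w_\alpha(\beta)}=c_\alpha^{-(\alpha^\vee,\beta)}c_\beta$. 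This shows $c_\gamma/\chi(\gamma)$ is constant on $W$-orbits in $\Sigma$, where $\chi\colon\Z\Sigma\to\fqobar^\times$ is the homomorphism with $\chi(\beta)=c_\beta$ for $\beta\in\Pi$; since every root is $W$-conjugate to a simple one, $c_\gamma=\chi(\gamma)$ for all $\gamma\in\Sigma$. As $\theta_\Pi\colon\4T\to\prod_{\beta\in\Pi}\fqobar^\times$ is surjective (Lemma \ref{theta-r}\eqref{^txb(u)}), choose $t\in\4T$ with $\theta_\beta(t)=c_\beta$ for all $\beta\in\Pi$; then $\theta_\alpha(t)=\chi(\alpha)=c_\alpha$ for all $\alpha\in\Sigma$ (as $\theta$ restricted to $\Sigma$ is the inclusion $\Sigma\subseteq\Z\Sigma\subseteq X(\4T)$), so $c_t$ and $\gamma$ agree on every $\4X_\alpha$, and since the root groups generate $\4G$, $\gamma=c_t\in\Aut_{\4T}(\4G)$.

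The step I expect to need the most care is the orthogonality of $\tau$: justifying that an automorphism of the irreducible root system $\Sigma$ is genuinely an isometry of $(V,(-,-))$ rather than merely conformal or merely a lattice automorphism, and, relatedly, keeping the duality and normalization conventions relating $\gamma|_{\4T}$, its action on $X(\4T)$, and the identification $\alpha\leftrightarrow\theta_\alpha$ mutually consistent throughout. The $\SL_2$ bookkeeping for the $h_\alpha$-identity, the $W$-orbit argument for multiplicativity of the $c_\alpha$, and the surjectivity of $\theta_\Pi$ are all routine given Lemma \ref{theta-r}.
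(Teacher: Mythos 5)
Your proof is correct, but it follows a genuinely different route from the paper's. The paper disposes of the lemma in a few lines by quoting the structure theorem for algebraic automorphisms \cite[Theorem 1.15.2(b)]{GLS3}: it writes $\gamma=c_g\circ\gamma_0$ with $\gamma_0\in\Gamma_{\4G}$ a graph automorphism, observes that $g\in N_{\4G}(\4T)$ because both $\gamma$ and $\gamma_0$ normalize $\4T$, and then reads off both displayed identities from the known action of graph automorphisms together with Lemma \ref{theta-r}\eqref{wa(hb)}; the final statement is immediate since $\tau=\Id$ forces $\gamma_0=\Id$ and $g\in\4T$. You instead work directly from the Chevalley relations: the permutation $\pi$ of root groups plus the scaling constants $c_\alpha$ (your observation that algebraicity rules out Frobenius twists, i.e.\ $\Aut(\mathbb{G}_a)=\fqobar^\times$, is exactly the right use of the hypothesis), the character identity $\theta_{\pi(\alpha)}\circ\gamma|_{\4T}=\theta_\alpha$ to produce $\tau$ as a linear map, the $\SL_2$ computation giving $c_{-\alpha}=c_\alpha^{-1}$ and hence $\gamma(h_\alpha(\lambda))=h_{\tau(\alpha)}(\lambda)$, and for the last statement the cocycle relation $c_{w_\alpha(\beta)}=c_\alpha^{-(\alpha^\vee,\beta)}c_\beta$ combined with surjectivity of $\theta_\Pi$ to realize $(c_\alpha)$ as $(\theta_\alpha(t))$ for some $t\in\4T$. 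In effect you re-prove the special case of the GLS3 structure theorem that the lemma needs; this costs length but makes the argument self-contained and makes visible exactly where each hypothesis enters, whereas the paper's citation-based proof is shorter and defers all the root-group bookkeeping to \cite{GLS3}. Two small points you should tighten if you write this up: (i) the orthogonality of $\tau$ deserves its one-line justification (e.g.\ the canonical form $\sum_{\alpha\in\Sigma}\langle x,\alpha^\vee\rangle\langle y,\alpha^\vee\rangle$ is invariant under any linear automorphism preserving $\Sigma$, or use that $\tau$ has finite order together with uniqueness of the $W$-invariant form on the irreducible module $V$); and (ii) in the $\SL_2$ step, passing from ``$\gamma(n_\alpha(\lambda))$ normalizes $\4T$'' to ``the matrix normalizes the diagonal torus of $\SL_2$'' uses that the preimage of $\4T$ under the homomorphism $\SL_2(\fqobar)\to\gen{\4X_{\pi(\alpha)},\4X_{-\pi(\alpha)}}$ is exactly the diagonal torus, which holds because $\4T\cap\4K_{\pi(\alpha)}$ is the image of the diagonal (its elements centralize that image, whose centralizer in $\4K_{\pi(\alpha)}$ is itself). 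Neither point is a gap in substance, only in exposition.
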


\begin{proof} By \cite[Theorem 1.15.2(b)]{GLS3}, and since $\gamma$ is an 
algebraic automorphism of $\4G$, $\gamma=c_g\circ\gamma_0$ for some 
$g\in\4G$ and some $\gamma_0\in\Gamma_{\4G}$. Furthermore, $\gamma_0$ has 
the form: $\gamma_0(x_\alpha(u))=x_{\chi(\alpha)}(u)$ for all 
$\alpha\in\Sigma$ and $u\in\fqobar$, and some isometry $\chi\in\Aut(V)$ 
such that $\chi(\Pi)=\Pi$. 
Since $\gamma$ and $\gamma_0$ both normalize $\4T$, we have $g\in 
N_{\4G}(\4T)$. 


Thus by Lemma \ref{theta-r}\eqref{wa(hb)}, 
there is $\tau\in\Aut(V)$ such that $\tau(\Sigma)=\Sigma$, and 
$\gamma(\4X_\alpha)=\4X_{\tau(\alpha)}$ and 
$\gamma(h_\alpha(\lambda))=h_{\tau(\alpha)}(\lambda)$ for each 
$\alpha\in\Sigma$ and $\lambda\in\fqobar^\times$. In particular, 
$\bigl|\gamma|_{\4T}\bigr|=|\tau|$.

If $\tau=\Id$, then $\gamma_0=\Id$ and $g\in\4T$. Thus 
$\gamma\in\Aut_{\4T}(\4G)$. 
\end{proof}

We next fix notation for automorphisms of $G$.

\begin{Defi} \label{d:Aut(G)}
Let $\4G$ and $G$ be as in Notation 
\ref{G-setup}(\ref{not1},\ref{not2},\ref{not3}), where in 
addition, we assume the $\sigma$-setup is standard.
\begin{enuma}
\item Set 
	\[ \Inndiag(G) = \Aut_{\4T}(G)\Inn(G) \qquad\textup{and}\qquad
	\Outdiag(G)=\Inndiag(G)/\Inn(G)\,. \]

\item Set $\Phi_G=\bigl\{\psi_q|_G\,\big|\,q=q_0^b,~b\ge1\bigr\}$, the 
group of field automorphisms of $G$.  

\item If $G$ is a Chevalley group, set 
$\Gamma_G=\bigl\{\gamma|_G\,\big|\,\gamma\in\Gamma_{\4G}\bigr\}$, the group of graph 
automorphisms of $G$. Set $\Gamma_G=1$ if $G$ is a twisted group (a Steinberg, 
Suzuki, or Ree group).  \\

\end{enuma}
\end{Defi}

Note that in \cite[Definition 2.5.13]{GLS3}, when $G$ has a standard 
$\sigma$-setup $(\4G,\sigma)$, $\Inndiag(G)$ is defined to 
be the group of automorphisms induced by conjugation by elements of 
$C_{\4G/Z(\4G)}(\sigma)$ (lifted to $\4G$). By \cite[Lemma 2.5.8]{GLS3}, 
this is equal to $\Inndiag(G)$ as defined above when $\4G$ is of adjoint 
form, and hence also in the general case (since $Z(\4G)\le\4T$). 

Steinberg's theorem on automorphisms of groups of Lie type can now be 
stated.

\begin{Thm}[{\cite[\S\,3]{Steinberg-aut}}] \label{St-aut}
Let $G$ be a finite group of Lie type. Assume that $(\4G,\sigma)$ is a 
standard $\sigma$-setup for $G$, where $\4G$ is in adjoint or universal 
form. Then $\Aut(G)=\Inndiag(G)\Phi_G\Gamma_G$, where 
$\Inndiag(G)\nsg\Aut(G)$ and $\Inndiag(G)\cap(\Phi_G\Gamma_G)=1$.
\end{Thm}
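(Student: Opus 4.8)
The plan is to reproduce Steinberg's original argument: take an arbitrary $\phi\in\Aut(G)$ and successively compose it with inner, diagonal, graph, and field automorphisms until nothing remains, exploiting the split $BN$-pair structure of $G$ throughout; then establish the two structural claims separately. Write $B=U\rtimes T$ for a Borel subgroup, where $U=O_{q_0}(B)$ is the product of the positive relative root groups $X_{\5\alpha}$ ($\5\alpha\in\5\Sigma_+$), $T=\4T\cap G$, and $W_0=N_G(T)/T\cong C_W(\tau)$ by Lemma \ref{W0onT}. The crucial first observation is that $U$ is a Sylow $q_0$-subgroup of $G$, so $\phi(U)$ is $G$-conjugate to $U$; after replacing $\phi$ by an inner translate we may assume $\phi(U)=U$, hence $\phi(B)=\phi(N_G(U))=N_G(U)=B$. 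Next, $\phi(T)$ and $T$ are both complements to $U$ in $B$, and since $U$ is a solvable normal subgroup of $B$ of index prime to $q_0$, these complements are conjugate by an element of $U$ (Schur--Zassenhaus); composing with that inner automorphism, which still fixes $U$ and $B$, we arrange $\phi(T)=T$, so $\phi$ normalizes $N_G(T)$ and induces an automorphism of $W_0$.

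Since $\phi$ stabilizes $B$, it permutes the minimal overgroups of $B$, i.e.\ the rank-one parabolics $P_{\5\alpha}$ ($\5\alpha\in\5\Pi$), hence induces a symmetry of the relative Dynkin diagram of $G$, which is realized by a graph automorphism $\gamma\in\Gamma_{\4G}$ (using the classification of the possible diagram symmetries, including the extra long/short symmetry for the special pairs $(B_2,2)$, $(G_2,3)$, $(F_4,2)$ giving the Suzuki and Ree cases). Replacing $\phi$ by $\phi\gamma^{-1}$, we may assume $\phi$ fixes each $P_{\5\alpha}$, hence stabilizes $T$ and each $X_{\5\alpha}$, $X_{-\5\alpha}$ for simple $\5\alpha$. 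Then $\phi$ restricted to each rank-one subgroup $L_{\5\alpha}=\langle X_{\5\alpha},X_{-\5\alpha}\rangle$ (a group of type $(P)SL_2$, or the relevant twisted rank-one group) fixes the two opposite root subgroups and normalizes the torus, so by the known structure of such automorphism groups it is a field automorphism followed by a diagonal one. The Chevalley commutator relations, together with the parametrization of Notation \ref{G-setup} and Lemma \ref{theta-r}, then force these local field automorphisms to be restrictions of one global Frobenius power $\psi_q$, since the structure constants rigidly link the $X_\alpha$. After composing $\phi$ with $\psi_q^{-1}$, the residue scales each $X_{\5\alpha}$ by a scalar $c_{\5\alpha}$; since $\theta_\Pi$ is surjective (Lemma \ref{theta-r}\eqref{^txb(u)}) we may pick $t\in\4T$ with $\theta_{\5\alpha}(t)=c_{\5\alpha}$ for all $\5\alpha\in\5\Pi$, and $c_t^{-1}\phi$ then fixes every $X_{\5\alpha}$ and $X_{-\5\alpha}$ pointwise. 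These generate $G$, so $c_t^{-1}\phi=\Id$; as $c_t\in\Aut_{\4T}(G)\subseteq\Inndiag(G)$, this proves $\Aut(G)=\Inndiag(G)\Phi_G\Gamma_G$. (The hypothesis that $\4G$ be adjoint or universal is what makes $\Gamma_{\4G}$ available as automorphisms of $\4G$ and makes this decomposition clean.)

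For normality, $\Inn(G)\nsg\Aut(G)$ always, and $\Aut_{\4T}(G)$ is normalized by $\Phi_G$ and $\Gamma_G$ because each $\psi_q$ and each $\gamma\in\Gamma_{\4G}$ stabilizes $\4T$ (Lemma \ref{l:tau} for the graph automorphisms), so conjugation by such an automorphism carries $\Aut_{\4T}(G)$ into itself; combined with $\Aut(G)=\Inndiag(G)\Phi_G\Gamma_G$, this gives $\Inndiag(G)\nsg\Aut(G)$. For the disjointness, suppose $\theta=\psi_{q'}\gamma\in\Phi_G\Gamma_G$ lies in $\Inndiag(G)$, say $\theta=c_g\cdot\delta$ with $c_g\in\Inn(G)$ and $\delta\in\Aut_{\4T}(G)$. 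Both $\theta$ (by Lemma \ref{l:tau}) and $\delta$ stabilize $\4T$, and $\delta$ centralizes it, so $g$ normalizes $\4T$ and $\theta$ and $c_g$ induce the same automorphism of $\4T$, hence the same matrix on $X(\4T)\cong\Z^r$. The matrix of $c_g$ is that of some $w\in W$, of determinant $\pm1$; but $\psi_{q'}$ (with $q'=q_0^a$) acts on $X(\4T)$ by multiplication by $q'$, of determinant $q_0^{ar}$, and $\gamma$ by an integral matrix of determinant $\pm1$. Comparing determinants forces $a=0$, i.e.\ no field part; and then $w$ and $\gamma$ would induce the same transformation of $V=X(\4T)\otimes\R$, impossible for $\gamma\ne1$ since a nontrivial diagram symmetry of the root system is never realized by an element of $W$. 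Hence $\theta=1$, so $\Inndiag(G)\cap\Phi_G\Gamma_G=1$.

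The main obstacle will be the bookkeeping for the twisted and exceptional cases rather than the skeleton above: making precise the relative root system $\5\Sigma$, its Dynkin diagram, and the exact list of its symmetries and their realization by $\Gamma_{\4G}$; replacing the $(P)SL_2$-reduction by the structure theory of the rank-one Suzuki and Ree groups in the cases $(B_2,2)$, $(G_2,3)$, $(F_4,2)$, where the ``graph automorphism'' $\psi$ of Definition \ref{d:Aut(Gbar)}(b) is not algebraic; and spelling out the rigidity step, i.e.\ why the Chevalley commutator relations force a single global field map on all root groups simultaneously. A secondary nuisance is the short list of genuinely small groups $G$ (such as $PSL_2(q)$ for tiny $q$) where $G$ fails to be quasisimple or acquires exceptional outer automorphisms; these fall outside the statement as used here and must be isolated and checked directly.
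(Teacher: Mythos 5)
The paper does not actually prove Theorem \ref{St-aut}: its ``proof'' is a pointer to \cite[Theorems 2.5.12(a) \& 2.5.14(d)]{GLS3} and to Steinberg \cite{Steinberg-aut}, \cite{Steinberg-lect}, so there is no internal argument to compare yours with. What you have written is a reconstruction of Steinberg's original argument (normalize $U$ by Sylow, then $B=N_G(U)$ and $T$ by Schur--Zassenhaus, peel off a graph part, a field part, and finally a diagonal part using the surjectivity of $\theta_\Pi$), and in outline this is indeed how the theorem is proved. The normality of $\Inndiag(G)$ and the determinant comparison on $X(\4T)$ for the disjointness statement are also essentially right, granted that you justify comparing actions on $\4T$ (two morphisms of $\4G$ agreeing on the Zariski-dense subgroup $G$ agree on $\4G$), and modulo the short list of tiny groups you flag, where for instance $C_{\4G}(T)^0\ne\4T$ (Proposition \ref{p:CbarG(T)}(b)) and the torus-normalization and disjointness steps need separate checking.

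The one step that, as written, would fail is the assertion that the diagram symmetry induced by $\phi$ ``is realized by a graph automorphism $\gamma\in\Gamma_{\4G}$''. An automorphism stabilizing $B$ and $T$ a priori induces only a symmetry of the \emph{Coxeter} diagram, and for $\Sp_4(q)$, $G_2(q)$, $F_4(q)$ the long--short swap is such a symmetry for every $q$, while it lies in $\Gamma_{\4G}$ only when $q_0=2,3,2$ respectively (Definition \ref{d:Aut(Gbar)}(b)); likewise for the twisted groups of relative rank two ($\SU_4(q)$, $\SU_5(q)$, $\lie3D4(q)$, $\lie2E6(q)$) one has $\Gamma_G=1$ by definition although the relative diagram is of type $B_2$, $G_2$ or $F_4$. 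So you cannot simply cancel the symmetry; you must \emph{prove} that no automorphism of $G$ can permute the simple root subgroups by a length-swapping symmetry outside the special characteristics. That is exactly where Steinberg's analysis of the commutator relations (structure constants divisible by $2$ or $3$) and of the different $T$-module structures of long and short root groups enters, and it is a genuine piece of the proof rather than bookkeeping; compare how the paper itself must exclude analogous ``exotic'' diagram symmetries in Step 4 of the proof of Lemma \ref{Hyp1-N(W0)}, where for $\lie2E6(q)$ at $p=3$ such a symmetry really does occur at the level of fusion. A secondary caveat: your rank-one reduction quotes the automorphism groups of $\SL_2(q)$, $\SU_3(q)$, $\Sz(q)$ and $\lie2G2(q)$, i.e.\ the rank-one case of the very statement being proved; this is a legitimate induction base, but it must be established first and is not free.
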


\begin{proof} See, e.g., \cite[Theorem 2.5.12(a)]{GLS3} 
(together with \cite[Theorem 2.5.14(d)]{GLS3}).  Most of this follows 
from the main result in \cite{Steinberg-aut}, and from 
\cite[Theorems 30 \& 36]{Steinberg-lect}.  
\end{proof}

We also need the following characterizations of $\Inndiag(G)$ which are 
independent of the choice of $\sigma$-setup.

\begin{Prop} \label{p:Autdiag(G)}
Assume the hypotheses and notation in \ref{G-setup}. Then 
\begin{enuma} 
\item $C_{\4G}(G)=Z(\4G)$;

\item $N_{\4G}(G)=GN_{\4T}(G)$;

\item $\Inndiag(G)=\Aut_{\4T}(G)\Inn(G)=\Aut_{\4G}(G)$, and hence 
$\Outdiag(G)=\Out_{\4T}(G)$. 

\end{enuma}
In fact, (b) and (c) hold if we replace $\4T$ by \emph{any} 
$\sigma$-invariant maximal torus in $\4G$. 
\end{Prop}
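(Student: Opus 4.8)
The plan is to prove part~(a) first; parts (b) and (c) then follow by short formal arguments using the description of $\Inndiag(G)$ recorded after Definition~\ref{d:Aut(G)} (the automorphisms of $G$ induced by conjugation by elements of $C_{\4G/Z(\4G)}(\sigma)$, which by \cite[Lemma~2.5.8]{GLS3} coincides with $\Aut_{\4T}(G)\Inn(G)$) together with Steinberg's theorem (Theorem~\ref{St-aut}). I expect part~(a) to be the only real obstacle.

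For part~(a): the inclusion $Z(\4G)\le C_{\4G}(G)$ is immediate, so set $C:=C_{\4G}(G)$; this is a closed subgroup, and is $\sigma$-invariant since $\sigma$ fixes $G$ pointwise (as $G\le C_{\4G}(\sigma)$). Because $\4G$ is generated by its root subgroups $\4X_\alpha$, it suffices to show that $C$ centralizes each $\4X_\alpha$, hence that $C\le Z(\4G)$. I would reduce first to proving $C\le\4T$: once that holds, $G$ contains for each $\5\alpha\in\5\Sigma$ an element projecting to some $x_\alpha(u)$ with $u\ne0$, so each $t\in C\cap\4T$ has $\theta_\alpha(t)=1$ for all $\alpha\in\Pi$ by the formula $\9tx_\alpha(u)=x_\alpha(\theta_\alpha(t)u)$ of Lemma~\ref{theta-r}, whence $t\in\bigcap_{\alpha\in\Pi}\Ker(\theta_\alpha)=Z(\4G)$ by the same lemma. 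For the reduction $C\le\4T$ one has $C\le C_{\4G}(T)$, and $C_{\4G}(T)^0=\gen{\4T,\4X_\alpha\mid\theta_\alpha|_T=1}$ is reductive by Proposition~\ref{p:CG(T)}; when $C_{\4G}(T)=\4T$ this is finished, and the finitely many remaining configurations can be handled directly (for instance via Schur's lemma in a faithful rational representation of $\4G$ on which $G$ acts absolutely irreducibly), or one may simply cite the standard fact $C_{\4G}(G)=Z(\4G)$ from \cite[\S\,2.5]{GLS3}. This torus reduction is the main technical point.

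Granting (a), parts (b) and (c) follow. For (c): if $n\in N_{\4G}(G)$ then, since $\sigma$ is an automorphism of $\4G$ fixing $G$ pointwise (so $\sigma(G)=G$), one checks $c_{\sigma(n)}|_G=c_n|_G$, and hence $\sigma(n)n^{-1}\in C_{\4G}(G)=Z(\4G)$ by~(a); thus $nZ(\4G)\in C_{\4G/Z(\4G)}(\sigma)$ and $c_n|_G\in\Inndiag(G)$. This gives $\Aut_{\4G}(G)\le\Inndiag(G)$; the reverse inclusion is clear (every element of $\Inndiag(G)$ is conjugation by an element of $\4G$ normalizing $G$), so $\Aut_{\4G}(G)=\Inndiag(G)=\Aut_{\4T}(G)\Inn(G)$, and $\Outdiag(G)=\Out_{\4T}(G)$ on dividing by $\Inn(G)$. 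For (b): $GN_{\4T}(G)\le N_{\4G}(G)$ is trivial, and conversely, for $n\in N_{\4G}(G)$ part~(c) gives $g\in G$ and $t\in N_{\4T}(G)$ with $c_n|_G=c_{tg}|_G$, so $(tg)^{-1}n\in C_{\4G}(G)=Z(\4G)\le\4T$; since $Z(\4G)$ centralizes $G$, this element lies in $N_{\4T}(G)$, giving $n\in G\,N_{\4T}(G)$. Finally, for an arbitrary $\sigma$-invariant maximal torus $\4S$ in place of $\4T$ the same arguments apply: $C_{\4G}(\4S)=\4S$ and $Z(\4G)\le\4S$ still hold (Lemma~\ref{theta-r}), the description of $\Inndiag(G)$ via $C_{\4G/Z(\4G)}(\sigma)$ does not mention $\4T$, and $\Inndiag(G)=\Aut_{\4S}(G)\Inn(G)$ because the diagonal automorphisms of $G$ are realized inside the normalizer of any $\sigma$-stable maximal torus (\cite[\S\,2.5]{GLS3}).
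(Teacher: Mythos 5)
Your treatment of part (a) defers exactly the step that carries all the weight, and what you offer in its place does not close it. The reduction via $C_{\4G}(G)\le C_{\4G}(T)$ needs $C_{\4G}(T)=\4T$, but Proposition \ref{p:CbarG(T)} only gives $C_{\4G}(T)^0=\4T$ (so even in the "good" case you must still exclude elements of $N_{\4G}(\4T)\sminus\4T$ centralizing $T$), it is stated only for universal type and standard setups, and its exceptions are not "finitely many configurations" but infinite families ($\gg(2)$ for every $\gg$, $C_n(3)$ for all $n$, $A_1(3)$, $\lie2G2(3)$). For those cases your two fallbacks are not proofs: absolute irreducibility of $G$ on a faithful rational $\4G$-module is itself unestablished and is delicate precisely over $\F_2$ and $\F_3$, and citing "$C_{\4G}(G)=Z(\4G)$ from \cite[\S\,2.5]{GLS3}" is citing the statement to be proven. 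The paper avoids all case analysis with a uniform argument: reduce to a standard setup, write $g\in C_{\4G}(G)$ via the Bruhat decomposition as $g=unv$ with $u,v\in\4U=\prod_{\alpha\in\Sigma_+}\4X_\alpha$ and $n\in N_{\4G}(\4T)$; since $g$ centralizes the elements $\5x_\alpha(1)\in G\cap\4U$, the class $w=n\4T$ must send $\Sigma_+$ into $\Sigma_+$, so $w=1$, $n\in\4T$, and $g\in\4T\4U$; the same argument with the negative root groups gives $g\in\4T$, and then commuting with the $\5x_\alpha(1)$ forces $\theta_\alpha(g)=1$ for all $\alpha$ (your final step, which is fine).

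There is a second gap in (b)/(c), concerning the last sentence of the proposition. Your computation that $n\in N_{\4G}(G)$ has $\sigma(n)n^{-1}\in Z(\4G)$ correctly identifies $\Aut_{\4G}(G)$ with $\Inndiag(G)$ in the sense of \cite[2.5.13]{GLS3}; but equating this further with $\Aut_{\4S}(G)\Inn(G)$ for an \emph{arbitrary} $\sigma$-invariant maximal torus $\4S$ (equivalently, $N_{\4G}(G)=GN_{\4S}(G)$, i.e.\ statement (b)) is exactly the nontrivial content here, and your justification --- that diagonal automorphisms are realized in the normalizer of any $\sigma$-stable maximal torus, with a citation to \cite[\S\,2.5]{GLS3} --- is an appeal to the conclusion: GLS prove this only for the torus of a standard setup, and $\sigma$-invariant maximal tori split into many $G$-classes, so no conjugacy argument disposes of it. The paper runs the implication the other way and supplies the missing step via the Lang--Steinberg theorem: for $g\in N_{\4G}(G)$, part (a) gives $g^{-1}\sigma(g)\in Z(\4G)\le\4T^*$, Lang applied to $\4T^*$ yields $t\in\4T^*$ with $g^{-1}\sigma(g)=t^{-1}\sigma(t)$, hence $gt^{-1}\in C_{\4G}(\sigma)=G\cdot C_{\4T^*}(\sigma)$ and $g\in GN_{\4T^*}(G)$; (c) then follows for every such torus. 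Without an argument of this kind, your (b) and (c) are established only for the torus of a standard setup.
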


\begin{proof} \textbf{(a) } Since the statement is independent of the 
choice of $\sigma$-setup, we can assume that $\sigma$ is standard. Set 
$\4U=\prod_{\alpha\in\Sigma_+}\4X_\alpha$ and 
$\4U^*=\prod_{\alpha\in\Sigma_+}\4X_{-\alpha}$. 

Fix $g\in C_{\4G}(G)$. Since $\4G$ has a $BN$-pair (see \cite[Proposition 
8.2.1]{Carter}), it has a Bruhat decomposition $\4G=\4B\4N\4B=\4U\4N\4U$ 
\cite[Proposition 8.2.2(i)]{Carter}, where $\4B=\4T\4U$ and 
$\4N=N_{\4G}(\4T)$. Write $g=unv$, where $u,v\in\4U$ and $n\in\4N$. For 
each $x\in\4U\cap G$, $\9gx=\9u(\9{nv}x)\in\4U$ implies that 
$\9{nv}x=\9n(\9vx)\in\4U$. 

Since $n\in{}N_{\4G}(\4T)$, conjugation by $n$ permutes the root groups of 
$\4G$, in a way determined by the class $w=n\4T\in W=N_{\4G}(\4T)/\4T$. 
Thus $w$ sends each (positive) root in the decomposition of $\9vx$ to a 
positive root. For each $\alpha\in\Sigma_+$, $\5x_\alpha(1)\in G$, 
$\9v(\5x_\alpha(1))$ has $\alpha$ in its decomposition, and hence 
$w(\alpha)\in\Sigma_+$.

Thus $w$ sends all positive roots to positive roots, so $w(\Pi)=\Pi$, and 
$w=1$ by \cite[Corollary 2.2.3]{Carter}. So $n\in\4T$, and 
$g=unv\in\4T\4U$.

By the same argument applied to the negative root groups, $g\in\4T\4U^*$.  
Hence $g\in\4T$.

For each $\alpha\in\Sigma$, $g\in\4T$ commutes with $\5x_\alpha(1)\in G$, 
and hence $g$ centralizes $\4X_\beta$ for each $\beta\in\5\alpha$ (Lemma 
\ref{theta-r}\eqref{^txb(u)}). Thus $g$ centralizes all root groups in 
$\4G$, so $g\in Z(\4G)$. 

\smallskip

\noindent\textbf{(b) } Let $\4T^*$ be any $\sigma$-invariant maximal 
torus in $\4G$. Fix $g\in N_{\4G}(G)$. Then $g^{-1}\cdot \sigma(g) \in 
C_{\4G}(G) = Z(\4G) \leq \4T^*$ by (a). By Lang's theorem \cite[Theorem 
2.1.1]{GLS3}, there is  $t\in \4T^*$ such that $g^{-1}\cdot \sigma(g) = 
t^{-1}\cdot \sigma(t)$. Hence $g t^{-1} \in C_{\4G}(\sigma ) =  G \cdot 
C_{\4T^*}(\sigma)$, where the last equality holds by \cite[Theorem 
2.2.6(g)]{GLS3}. So $g\in G\4T^*$, and $g\in{}GN_{\4T^*}(G)$ since $g$ 
normalizes $G$.

\smallskip

\noindent\textbf{(c) } By (b), $\Aut_{\4G}(G)=\Aut_{\4T^*}(G)\Inn(G)$ for 
each $\sigma$-invariant maximal torus $\4T^*$. By definition, 
$\Inndiag(G)=\Aut_{\4T^*}(G)\Inn(G)$ when $\4T^*$ is the maximal torus in a 
standard $\sigma$-setup for $G$. Hence 
$\Inndiag(G)=\Aut_{\4G}(G)=\Aut_{\4T^*}(G)\Inn(G)$ for all such $\4T^*$. 
\end{proof}

We refer to \cite[Definitions 1.15.5(a,e) \& 2.5.10]{GLS3} for more details 
about the definitions of $\Phi_G$ and $\Gamma_G$. The next proposition 
describes how to identify these subgroups when working in a nonstandard 
setup.

\begin{Prop} \label{nonstandard}
Assume $\4G$, $\4T$, and the root system of $\4G$, are as in Notation 
\ref{G-setup}(\ref{not1},\ref{not2}).  Let $\sigma$ be any Steinberg 
endomorphism of $\4G$, and set $G=O^{q_0'}(C_{\4G}(\sigma))$. 
\begin{enuma} 
\item There is a standard Steinberg endomorphism $\sigma^*$ of $\4G$ 
such that if we set $G^*=O^{q_0'}(C_{\4G}(\sigma^*))$, 
then there is $x\in\4G$ such that $G=\9x(G^*)$. 
\end{enuma}
Fix $G^*$, $\sigma^*$, and $x$ as in (a). Let $\Inndiag(G^*)$, 
$\Phi_{G^*}$, and $\Gamma_{G^*}$ be as in Definition \ref{d:Aut(G)} (with 
respect to the $\sigma$-setup $(\4G,\sigma^*)$). Set 
$\Inndiag(G)=c_x\Inndiag(G^*)c_x^{-1}$, $\Phi_G=c_x\Phi_{G^*}c_x^{-1}$, and 
$\Gamma_G=c_x\Gamma_{G^*}c_x^{-1}$, all as subgroups of $\Aut(G)$. Then the 
following hold.
\begin{enuma}[resume]
\item $\Inndiag(G)=\Aut_{\4G}(G)$. 

\item For each $\4\alpha\in\Phi_{\4G}\Gamma_{\4G}$ such that 
$\4\alpha|_{G^*}\in\Phi_{G^*}\Gamma_{G^*}$, and each 
$\beta\in\4\alpha\cdot\Inn(\4G)$ such that $\beta(G)=G$, $\beta|_G\equiv 
c_x(\4\alpha)c_x^{-1}$ (mod $\Inndiag(G)$). 

\item If $\psi_{q_0}$ normalizes $G$, then 
$\Inndiag(G)\Phi_G=\Inndiag(G)\gen{\psi_{q_0}|_G}$. 

\end{enuma}
Thus the subgroups $\Phi_G$ and $\Gamma_G$ are well defined modulo 
$\Inndiag(G)$, independently of the choice of standard $\sigma$-setup for 
$G$.
\end{Prop}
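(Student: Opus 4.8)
The plan is to derive (b), (c), (d) and the concluding statement from (a) together with Proposition~\ref{p:Autdiag(G)}(c) and Steinberg's theorem. For \textbf{(a)} I would simply quote \cite[Theorem~2.2.3]{GLS3}: it yields a standard Steinberg endomorphism $\sigma^*$ of $\4G$ and an element $x\in\4G$ with $\sigma=c_x\circ\sigma^*\circ c_x^{-1}$. Then $h\in C_{\4G}(\sigma)$ iff $\sigma^*(\9{x^{-1}}h)=\9{x^{-1}}h$, so $C_{\4G}(\sigma)=\9x\bigl(C_{\4G}(\sigma^*)\bigr)$; taking $O^{q_0'}$ of both sides gives $G=\9x(G^*)$. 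For \textbf{(b)}, Proposition~\ref{p:Autdiag(G)}(c) applied to the standard setup $(\4G,\sigma^*)$ gives $\Inndiag(G^*)=\Aut_{\4G}(G^*)$; conjugating by $c_x$ (which carries $\Aut_{\4G}(G^*)$ onto $\Aut_{\4G}(\9x G^*)=\Aut_{\4G}(G)$, sending $c_g|_{G^*}$ with $g\in N_{\4G}(G^*)$ to $c_{xgx^{-1}}|_G$, and conversely) yields $\Inndiag(G)=c_x\Inndiag(G^*)c_x^{-1}=\Aut_{\4G}(G)$, a description visibly independent of all choices.

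For \textbf{(c)}, write $\beta=c_g\circ\4\alpha$ with $g\in\4G$; this is possible since $\Inn(\4G)\nsg\Aut(\4G)$, where I use that $\4\alpha$, although perhaps only a field \emph{endomorphism} of the algebraic group $\4G$, is a bijection (hence an abstract automorphism) because $u\mapsto u^q$ is bijective on $\fqobar$. From $\4\alpha|_{G^*}\in\Phi_{G^*}\Gamma_{G^*}\le\Aut(G^*)$ we get $\4\alpha(G^*)=G^*$, so $\varphi\defeq c_x\circ(\4\alpha|_{G^*})\circ c_x^{-1}$ is an automorphism of $G=\9x G^*$, and a direct computation using $\beta=c_g\circ\4\alpha$ gives $\beta|_G\circ\varphi^{-1}=c_{g\cdot\4\alpha(x)\cdot x^{-1}}|_G$. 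The left side is a composite of two automorphisms of $G$, hence an automorphism of $G$, hence conjugation by an element of $N_{\4G}(G)$, so it lies in $\Aut_{\4G}(G)=\Inndiag(G)$ by (b); this is exactly $\beta|_G\equiv c_x\4\alpha c_x^{-1}$ (mod $\Inndiag(G)$).

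For \textbf{(d)}, $\psi_{q_0}$ commutes with every standard Steinberg endomorphism: if $\sigma^*=\psi_q\circ\gamma$ is standard, then $\psi_{q_0}$ commutes with $\psi_q$ and with $\gamma$ (by the formulas in Definition~\ref{d:Aut(Gbar)}), hence with $\sigma^*$, and so preserves $C_{\4G}(\sigma^*)$ and $G^*$. Thus $\psi_{q_0}|_{G^*}\in\Phi_{G^*}$, and by its definition $\Phi_{G^*}=\gen{\psi_{q_0}|_{G^*}}$, a finite cyclic group. Hence $\Phi_G=c_x\Phi_{G^*}c_x^{-1}$ is cyclic on $c_x\psi_{q_0}c_x^{-1}|_G$, and (c) with $\4\alpha=\beta=\psi_{q_0}$ (legitimate precisely because $\psi_{q_0}$ normalizes $G$) shows this generator is congruent to $\psi_{q_0}|_G$ modulo $\Inndiag(G)$, whence $\Inndiag(G)\Phi_G=\Inndiag(G)\gen{\psi_{q_0}|_G}$.

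For the concluding assertion, given two choices $(\sigma_i^*,x_i,G_i^*)$, $i=1,2$, any element of $\Phi_G^{(1)}=c_{x_1}\Phi_{G_1^*}c_{x_1}^{-1}$ equals $\beta|_G$ for some $\beta\in\psi_q\cdot\Inn(\4G)$ with $\beta(G)=G$; applying (c) to the second setup (with $\4\alpha=\psi_q$, whose restriction lies in $\Phi_{G_2^*}$) shows $\beta|_G\equiv c_{x_2}\psi_q c_{x_2}^{-1}|_G\in\Phi_G^{(2)}$ modulo $\Inndiag(G)$, so $\Inndiag(G)\Phi_G$ is independent of the choice; running the same argument with $\4\alpha$ ranging over $\Phi_{\4G}\Gamma_{\4G}$ handles $\Inndiag(G)\Phi_G\Gamma_G$, and $\Gamma_G$ is then pinned down using that whether $G$ is a Chevalley or a twisted group is intrinsic (so the $\Gamma_{G_i^*}$ are simultaneously trivial, or $G$ is Chevalley and $\Gamma_G$ is the image of $\Gamma_{\4G}$). \textbf{The main obstacle} is exactly this last step: separating the graph part from the field part modulo $\Inndiag(G)$ when a graph automorphism of $\4G$ restricts to a field automorphism of $G$ (as happens for twisted groups, where the twisting automorphism becomes a field automorphism) --- which is why the hypothesis $\4\alpha|_{G^*}\in\Phi_{G^*}\Gamma_{G^*}$ is imposed in (c); everything else is bookkeeping with conjugations.
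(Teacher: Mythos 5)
Your proof is correct and follows essentially the same route as the paper: (a) by citing \cite[Theorem 2.2.3]{GLS3}, (b) from Proposition \ref{p:Autdiag(G)}(c) transported by $c_x$, (c) by the same congruence-modulo-$\Inn(\4G)$ argument (which you just make more explicit, including the useful observation that $\4\alpha$ is a bijective, hence abstract, automorphism), and (d) by applying (c) with $\4\alpha=\beta=\psi_{q_0}$ after checking that $\psi_{q_0}$ commutes with the standard $\sigma^*$. Your handling of the closing ``well defined modulo $\Inndiag(G)$'' sentence is at the same informal level as the paper, which likewise treats it as an immediate consequence of (b)--(d).
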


\begin{proof} \textbf{(a) } See, e.g., \cite[Theorem 2.2.3]{GLS3}: 
for any given choice of maximal torus, positive roots, and 
parametrizations of the root groups, each Steinberg automorphism of $\4G$ 
is conjugate, by an element of $\Inn(\4G)$, to a Steinberg automorphism of 
standard type.

\smallskip

\noindent\textbf{(b) } This follows immediately from Proposition 
\ref{p:Autdiag(G)}(c).

\smallskip

\noindent\textbf{(c) } By assumption, $\beta\equiv\4\alpha\equiv 
c_x\4\alpha{}c_x^{-1}$ (mod $\Inn(\4G)$). Since $\beta$ and 
$c_x\4\alpha{}c_x^{-1}$ both normalize $G$, $\beta|_G\equiv
c_x\alpha^*c_x^{-1}$ modulo $\Aut_{\4G}(G)=\Inndiag(G)$. 

\noindent\textbf{(d) } If $\psi_{q_0}$ normalizes $G$, then (c), applied with 
$\4\alpha=\beta=\psi_{q_0}$, implies that as elements of 
$\Aut(G)/\Inndiag(G)$, $[\psi_{q_0}|_G]=[c_x(\psi_{q_0}|_{G^*})c_x^{-1}]$ 
generates the image of $\Phi_G$. 
\end{proof}

\begin{Lem} \label{4T->4G}
Assume $\4G$, $\4T$, $\sigma$, $G=O^{q_0'}(C_{\4G}(\sigma))$, and the root 
system of $\4G$, are as in Notation \ref{G-setup}(\ref{not1},\ref{not2}). 
Assume that $\varphi\in\Aut(\4T)$ is the restriction of an algebraic 
automorphism of $\4G$ such that $[\varphi,\sigma|_{\4T}]=1$. Then there is 
an algebraic automorphism $\4\varphi\in\Aut(\4G)$ such that 
$\4\varphi|_{\4T}=\varphi$, $[\4\varphi,\sigma]=1$, and $\4\varphi(G)=G$.
\end{Lem}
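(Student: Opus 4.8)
The plan is to start from an algebraic automorphism $\psi_0$ of $\4G$ with $\psi_0|_{\4T}=\varphi$ (one exists by hypothesis), observe that $\psi_0$ normalizes $\4T$ since $\varphi(\4T)=\4T$, and then correct $\psi_0$ by an inner automorphism $c_s$ with $s\in\4T$ so that the corrected automorphism commutes with $\sigma$. Such a correction will not disturb the restriction to $\4T$: since $C_{\4G}(\4T)=\4T$ by Lemma \ref{theta-r}\eqref{CG(T)=T}, conjugation by $s\in\4T$ is the identity on $\4T$.

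First I would analyze $\sigma_1:=\psi_0^{-1}\sigma\psi_0$, which is again a Steinberg endomorphism. Since $[\varphi,\sigma|_{\4T}]=1$, we get $\sigma_1|_{\4T}=\varphi^{-1}\circ\sigma|_{\4T}\circ\varphi=\sigma|_{\4T}$. The restriction of a Steinberg endomorphism to $\4T$ determines both the permutation $\rho$ of $\Sigma$ for which $\sigma(\4X_\alpha)=\4X_{\rho(\alpha)}$ and the field exponents $q_\alpha$ in the formula $\sigma(x_\alpha(u))=x_{\rho(\alpha)}(c_\alpha u^{q_\alpha})$: applying $\sigma$ to the identity $\9tx_\alpha(u)=x_\alpha(\theta_\alpha(t)u)$ yields $\theta_{\rho(\alpha)}\circ(\sigma|_{\4T})=(\theta_\alpha)^{q_\alpha}$. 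Hence $\sigma_1$ has the same $\rho$ and the same exponents as $\sigma$, and writing $\sigma_1(x_\alpha(u))=x_{\rho(\alpha)}(c'_\alpha u^{q_\alpha})$, the only discrepancy is in the scalars $c'_\alpha/c_\alpha\in\fqobar^\times$. A comparison of the Chevalley commutator relations for $\sigma$ and for $\sigma_1$ (both land in $\4G$ with the same fixed structure constants, and $\rho$, being induced by an automorphism, is compatible with addition of roots) shows that the assignment $\rho(\alpha)\mapsto c'_\alpha/c_\alpha$ extends to a homomorphism $\Z\Sigma\to\fqobar^\times$, which by Lemma \ref{theta-r}\eqref{^txb(u)} has the form $\beta\mapsto\theta_\beta(t)$ for some $t\in\4T$. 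For this $t$ one then checks $\sigma_1=c_t\circ\sigma$ (equality holds on $\4T$ and on each root group, hence everywhere).

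Next I would carry out the Lang--Steinberg correction. For $s\in\4T$ set $\psi:=c_s\psi_0$, so $\psi|_{\4T}=\varphi$. Using $c_{s^{-1}}\sigma c_s=c_{s^{-1}\sigma(s)}\circ\sigma$ and $\psi_0^{-1}|_{\4T}=\varphi^{-1}$, one computes
	\[ \psi^{-1}\sigma\psi \;=\; c_{\varphi^{-1}(s^{-1}\sigma(s))}\circ\sigma_1 \;=\; c_{\,\varphi^{-1}(s^{-1}\sigma(s))\cdot t}\circ\sigma . \]
By Lang's theorem applied to the connected group $\4T$ and the endomorphism $\sigma|_{\4T}$ (cf. the proof of Lemma \ref{W0onT}), the map $r\mapsto r^{-1}\sigma(r)$ is onto $\4T$; choose $r\in\4T$ with $r^{-1}\sigma(r)=t^{-1}$ and put $s=\varphi(r)$. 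Since $\varphi$ commutes with $\sigma|_{\4T}$, we get $\varphi^{-1}(s^{-1}\sigma(s))=r^{-1}\sigma(r)=t^{-1}$, so $\psi^{-1}\sigma\psi=\sigma$. Then $\4\varphi:=\psi$ is an algebraic automorphism with $\4\varphi|_{\4T}=\varphi$ and $[\4\varphi,\sigma]=1$; as it commutes with $\sigma$ it stabilizes $C_{\4G}(\sigma)$, hence also its characteristic subgroup $G=O^{q_0'}(C_{\4G}(\sigma))$, so $\4\varphi(G)=G$.

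The hard part will be the middle paragraph: showing that the two Steinberg endomorphisms $\sigma$ and $\sigma_1$, which agree on $\4T$, differ by conjugation by an element of $\4T$. The rest is routine, but this step needs the bookkeeping with the Chevalley commutator relations together with the surjectivity statement in Lemma \ref{theta-r}\eqref{^txb(u)}.
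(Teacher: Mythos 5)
Your proof is correct and follows essentially the same route as the paper: extend $\varphi$ to an algebraic automorphism of $\4G$, show that its failure to commute with $\sigma$ is conjugation by an element $t\in\4T$, and then remove $t$ by a Lang--Steinberg correction inside $\4T$, which leaves the restriction to $\4T$ unchanged and forces $\4\varphi(G)=G$. The only divergence is in the middle step, where you verify by hand (monomial form of $\sigma$ on root groups, Chevalley commutator relations, and the surjectivity statement in Lemma \ref{theta-r}\eqref{^txb(u)}) that $\psi_0^{-1}\sigma\psi_0=c_t\circ\sigma$ with $t\in\4T$, whereas the paper obtains the same fact by quoting \cite[Theorem 1.15.7(a)]{GLS3} (the commutator $[\4\varphi,\sigma]$ is algebraic) together with Lemma \ref{l:tau}.
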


\begin{proof} By assumption, there is 
$\4\varphi\in\Aut(\4G)$ such that $\4\varphi|_{\4T}=\varphi$. Also, 
$[\4\varphi,\sigma]$ is an algebraic automorphism of $\4G$ by \cite[Theorem 
1.15.7(a)]{GLS3}, it is the identity 
on $\4T$, and hence $[\4\varphi,\sigma]=c_t$ for some $t\in\4T$ by Lemma 
\ref{l:tau}. Using the Lang-Steinberg theorem, upon replacing 
$\4\varphi$ by $c_u\4\varphi$ for appropriate $u\in\4T$, we can arrange 
that $[\4\varphi,\sigma]=1$. In particular, $\4\varphi(G)=G$.
\end{proof}

The following proposition is well known, but it seems to be difficult to 
find references where it is proven.

\begin{Prop} \label{p:Gu->Ga}
Fix a prime $q_0$, and a group $G\in\Lie(q_0)$ of universal type. 
Then $Z(G)$ has order prime to $q_0$, $G/Z(G)\in\Lie(q_0)$ and 
is of adjoint type, and $Z(G/Z(G))=1$.
If $G/Z(G)$ is simple, then each central extension of $G$ by a group of 
order prime to $q_0$ splits (equivalently, $H^2(G;\Z/p)=0$ for all primes 
$p\ne{}q_0$).
\end{Prop}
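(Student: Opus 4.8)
The plan is to fix a $\sigma$-setup $(\4G,\sigma)$ for $G$ with $\4G$ simply connected, to transport everything down to the adjoint quotient $\4G_1=\4G/Z(\4G)$, and only at the end to appeal to the known structure of Schur multipliers of finite simple groups of Lie type. Since $\4G$ is of universal type, $G=C_{\4G}(\sigma)=\4G^\sigma$ is generated by elements of $q_0$-power order (recalled after Definition~\ref{d:Lie}). By Proposition~\ref{p:Autdiag(G)}(a), $C_{\4G}(G)=Z(\4G)$, so $Z(G)=C_G(G)=G\cap Z(\4G)$, which is finite of order prime to $q_0$ by Lemma~\ref{theta-r}\eqref{CG(T)=T}; this is (1). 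Since $Z(\4G)$ is characteristic and finite, $\sigma$ induces a Steinberg endomorphism $\sigma_1$ of $\4G_1$, and the quotient map $\pi\colon\4G\to\4G_1$ restricts to $G\to\4G_1^{\sigma_1}$ with kernel $Z(\4G)^\sigma=Z(G)$. I would then check $\pi(G)=O^{q_0'}(\4G_1^{\sigma_1})$: the inclusion ``$\subseteq$'' holds because $\pi$ sends $q_0$-elements to $q_0$-elements, and for ``$\supseteq$'' note that $\pi$ restricts to a bijection on unipotent elements (as $|Z(\4G)|$ is prime to $q_0$), so a unipotent element of $\4G_1^{\sigma_1}$ has a unique unipotent preimage $u\in\4G$, and $\sigma(u)$ is another such preimage, forcing $\sigma(u)=u$ and $u\in\4G^\sigma=G$; since $O^{q_0'}(\4G_1^{\sigma_1})$ is generated by its unipotent elements, this gives the reverse inclusion. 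Hence $(\4G_1,\sigma_1)$ is a $\sigma$-setup for $G/Z(G)\cong\pi(G)$ with $\4G_1$ adjoint, which is (2); and applying Proposition~\ref{p:Autdiag(G)}(a) to this setup gives $C_{\4G_1}(G/Z(G))=Z(\4G_1)=1$ (the adjoint group has trivial centre), so $Z(G/Z(G))=1$, which is (3).

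\textbf{Part (4).} Write $\4G_0=G/Z(G)$ and assume it is simple. Then $\4G_0$ is perfect, and so is $G$: the universal-type groups that are not perfect all have non-simple adjoint quotient (cf.\ \cite[\S\,2.2]{GLS3}), so ``$\4G_0$ simple'' is exactly the generic situation. Thus $1\to Z(G)\to G\to\4G_0\to1$ is a stem extension, so the universal central extension $\widehat G\to\4G_0$, with kernel the Schur multiplier $M(\4G_0)=H_2(\4G_0;\Z)$, is also the universal central extension of $G$; from $\widehat G\to G\to\4G_0$ one then gets $H_2(G;\Z)=\ker(\widehat G\to G)$ together with a short exact sequence $1\to H_2(G;\Z)\to M(\4G_0)\to Z(G)\to1$. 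At this point I would invoke the classical determination of multipliers of simple groups of Lie type (\cite[\S\,6.1]{GLS3}; originally Steinberg \cite{Steinberg-lect}): the $q_0'$-part of $M(\4G_0)$ is precisely $Z(\4G)^\sigma\cong Z(G)$, the complementary ``exceptional'' part being a $q_0$-group. Comparing orders in the short exact sequence forces $H_2(G;\Z)$ to have $q_0$-power order, so $H^2(G;\Z/p)=0$ for every prime $p\ne q_0$ (using $H_1(G;\Z)=0$ and the universal coefficient theorem). Finally, a central extension of $G$ by a finite group $A$ of order prime to $q_0$ splits if and only if each primary component class in $H^2(G;A_p)$ vanishes, and $H^2(G;A_p)=0$ follows from $H^2(G;\Z/p)=0$ by induction on $|A_p|$ through a composition series; this gives the stated equivalence.

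\textbf{Main obstacle.} Parts (1)--(3) are routine bookkeeping with the central isogeny $\4G\to\4G/Z(\4G)$ together with Proposition~\ref{p:Autdiag(G)}. The real content is (4), and the honest point is that the identity ``($q_0'$-part of $M(\4G_0)$)\,$=Z(\4G)^\sigma$'' cannot be extracted from the general structure theory of Sections~\ref{s:not}--\ref{s:aut}: it rests on the case-by-case classification of Schur multipliers of simple groups of Lie type, and on the fact that the finitely many groups with a nontrivial exceptional multiplier are known. So the main step is choosing which form of this classical input to quote, and checking that the small groups excluded by the hypothesis ``$G/Z(G)$ simple'' are exactly those for which $G$ might fail to be perfect.
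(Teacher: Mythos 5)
Your argument is correct, and for the first three assertions it is essentially the paper's own proof: the paper also gets $q_0\nmid|Z(G)|$ from Lemma~\ref{theta-r}(a) together with $Z(G)\le C_{\4G}(G)=Z(\4G)$ (Proposition~\ref{p:Autdiag(G)}(a)), passes to $\4G_a=\4G/Z(\4G)$ with the induced Steinberg endomorphism, and gets $Z(G/Z(G))\le Z(\4G_a)=1$ by the same proposition; the only difference is that the paper merely asserts that the image of $G$ equals $O^{q_0'}(C_{\4G_a}(\sigma_a))$, whereas you supply the (correct) verification via the bijection on unipotent elements induced by the central isogeny.

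For the statement about central extensions you take a genuinely different route. The paper quotes the primary sources directly: Steinberg's theorems that central extensions of the universal Chevalley and Steinberg groups by groups of order prime to $q_0$ split (\cite[Th\'eor\`eme 4.5]{Steinberg-grr}, \cite[Corollary 6.2]{Steinberg-grc2}, \cite[9.4 \& 12.4]{Steinberg-rag}), together with \cite{AG} for the Suzuki and Ree groups. You instead observe that $G$ is perfect (your shortcut via the list of non-perfect universal groups works, though it is cleaner to note that $G=[G,G]Z(G)$ with $G$ generated by $q_0$-elements and $q_0\nmid|Z(G)|$ forces $G=[G,G]$), so $G\to G/Z(G)$ is a stem extension covered by the universal central extension of the simple group, giving $1\to H_2(G;\Z)\to M(G/Z(G))\to Z(G)\to1$; then you invoke the compiled classification of Schur multipliers (\cite[\S\,6.1]{GLS3}) — canonical part isomorphic to $Z(\4G)^\sigma\cong Z(G)$, exceptional part a $q_0$-group — to conclude that $H_2(G;\Z)$ is a $q_0$-group, hence $H^2(G;\Z/p)=0$ for $p\ne q_0$ and splitting follows by primary decomposition and d\'evissage. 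This is a legitimate alternative, and your closing remark is accurate: the case-by-case classical input cannot be avoided. What the paper's route buys is that Steinberg's results apply to the universal group directly, uniformly over the infinite families, without needing to know the exceptional multipliers of the simple quotient; what your route buys is a single clean reduction to one standard reference, at the cost of relying on the full table of exceptional multipliers (and on the hypothesis that $G/Z(G)$ is simple, which the paper's citations do not really need).
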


\begin{proof} Let $(\4G,\sigma)$ be a $\sigma$-setup for $G$, and 
choose a maximal torus and positive roots in $\4G$. We can 
thus assume Notation \ref{G-setup}. 
By Lemma \ref{theta-r}\eqref{CG(T)=T}, $Z(\4G)$ is finite of order 
prime to $q_0$. Since $Z(G)\le C_{\4G}(G)=Z(\4G)$ by Proposition 
\ref{p:Autdiag(G)}(a), $Z(G)$ also has order prime to $q_0$. 


Set $\4G_a=\4G/Z(\4G)$ and let $G_a<\4G_a$ be the image of $G$ under 
projection. Thus $\4G_a$ is an algebraic group of adjoint type, and 
$G_a=O^{q_0{}'}(C_{\4G_a}(\sigma_a))\in\Lie(q_0)$ where 
$\sigma_a\in\End(\4G_a)$ is induced by $\sigma$. Also, $Z(G_a)\le 
Z(\4G_a)=1$ by Proposition \ref{p:Autdiag(G)}(a) again.

It remains to prove the statement about central extensions. When $G$ is a 
Chevalley group, this was shown in \cite[Th\'eor\`eme 4.5]{Steinberg-grr}. 
It was shown in \cite[Corollary 6.2]{Steinberg-grc2} when 
$G\cong\lie2An(q)$ for $n$ even, and in \cite{AG} when $G\cong\lie2G2(q)$ 
or $\Sz(q)$. The remaining cases follow by similar arguments (see \cite[9.4 
\& 12.4]{Steinberg-rag}). (See also \cite[\S\,1]{Curtis}.)
\end{proof}

The next proposition shows that in most cases, $C_{\4G}(T)=\4T$. In the 
next section, we will see some conditions which imply that 
$C_{\4G}(O_p(T))=\4T$ when $p$ is a prime different from the defining 
characteristic.

\begin{Prop} \label{p:CbarG(T)}
Let $(\4G,\sigma)$ be a $\sigma$-setup for $G$, where $\4G$ and $G$ are of 
universal type. Assume Notation \ref{G-setup}, and in particular, that we 
have fixed a maximal torus $\4T$ and a root system $\Sigma$ in $\4G$.
\begin{enuma} 

\item Assume that $C_{\4G}(T)^0\gneqq\4T$, where $(-)^0$ denotes the connected 
component of the identity. Then there is $\alpha\in\Sigma_+$ such that 
$\theta_\alpha(T)=1$. Also, there is $\beta\in\Hom(\4T,\fqobar^\times)$ 
such that $\theta_\alpha=\beta^{-1}\sigma^*(\beta)$; i.e., 
$\theta_\alpha(t)=\beta(t^{-1}\sigma(t))$ for each $t\in\4T$. 

\item If the $\sigma$-setup is standard, then $C_{\4G}(T)^0=\4T$ 
except possibly when $G\cong{}^r\gg(2)$ for some $\gg$ and some $r\le3$, or 
when $G\cong A_1(3)$, $C_n(3)$ for $n\ge2$, or $\lie2G2(3)$.

\item If $C_{\4G}(T)^0=\4T$, then $N_G(T)/T\cong W_0$. 

\end{enuma}
\end{Prop}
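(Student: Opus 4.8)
The plan is to deduce (a) from Proposition~\ref{p:CG(T)} together with the Lang--Steinberg theorem, to turn (b) into a lattice-theoretic statement via (a) and then run through the irreducible root systems, and to get (c) directly from Lemma~\ref{W0onT}. For (a): since $\4T$ is connected, abelian, and contains $T$, we have $\4T\le C_{\4G}(T)^0$ always, so the hypothesis $C_{\4G}(T)^0\gneqq\4T$ and the formula for $C_{\4G}(T)^0$ in Proposition~\ref{p:CG(T)} force a root $\alpha$ with $T\le\Ker(\theta_\alpha)$, which may be taken positive. For the second statement, since $\4G$ (hence $G$) is of universal type we have $T=\4T\cap G=C_{\4T}(\sigma)$, and by the Lang--Steinberg theorem the (separable, surjective) morphism $L\:\4T\to\4T$, $L(t)=t^{-1}\sigma(t)$, has kernel $T$; it therefore factors through an isomorphism $\4T/T\Right2{\cong}\4T$, so any $\theta_\alpha\in X(\4T)$ trivial on $T$ equals $\beta\circ L$ for some $\beta\in X(\4T)$, i.e. $\theta_\alpha(t)=\beta(t^{-1}\sigma(t))=\bigl(\beta^{-1}\sigma^*(\beta)\bigr)(t)$.

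For (b), the computation in (a) reverses: if $\theta_\alpha=\beta^{-1}\sigma^*(\beta)$ then $\theta_\alpha|_T=1$, hence $X_\alpha\le C_{\4G}(T)^0$ by Proposition~\ref{p:CG(T)}. So, identifying $\alpha$ with $\theta_\alpha\in X(\4T)\subseteq V$ and writing the lattice additively, $C_{\4G}(T)^0=\4T$ if and only if no root lies in $(\sigma^*-\Id)X(\4T)$. Assume the setup standard, $\sigma=\psi_q\gamma$ with $\gamma\in\Gamma_{\4G}$, and first set aside the three special pairs $(\gg,q_0)\in\{(B_2,2),(G_2,3),(F_4,2)\}$; outside these $\gamma$ is an algebraic automorphism, and by Lemma~\ref{l:tau} $\sigma^*$ acts on $V$ as $q\tau$ for an isometry $\tau$ of $(V,\Sigma)$ of order at most $3$, trivial exactly when $G$ is a Chevalley group. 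For $0\ne\beta\in X(\4T)$, since $\tau$ is an isometry,
\[
\norm{q\tau(\beta)-\beta}^2=(q^2+1)\norm{\beta}^2-2q(\tau(\beta),\beta)\ \ge\ (q-1)^2\norm{\beta}^2 ,
\]
and as $\4G$ is of universal type $X(\4T)$ is the weight lattice of $\Sigma$, so $\norm{\beta}^2\ge m$, where $m$ is the minimal squared length of a nonzero weight, while every root has squared length $\le N$. Hence a root in $(\sigma^*-\Id)X(\4T)$ forces $(q-1)^2\le N/m$. A direct check of the irreducible systems gives $N/m\le4$, with equality only in types $A_1$ and $C_n$ ($n\ge2$); so $C_{\4G}(T)^0=\4T$ unless $q=2$ — whence $G\cong{}^r\gg(2)$ with $r\le3$, covering $\lie3D4(2)$ — or $q=3$ with $\gg\in\{A_1,C_n\}$, in which case $\theta_\alpha=2\beta$ really occurs ($\beta=\omega_1$ for $A_1$; $\beta=e_i$ for the long root $\alpha=2e_i$ of $C_n$), yielding the genuine exceptions $A_1(3)$ and $C_n(3)$.

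It remains to handle the three special (Suzuki/Ree) pairs, where I would argue directly using that the relevant maximal torus $T$ is cyclic of order $q-1$ for $\lie2B2(q)$ and $\lie2G2(q)$ (and of order $(q-1)^2$ for $\lie2F4(q)$): a short computation shows no root restricts trivially to $T$ except when $q=2$ (again $G\cong{}^r\gg(2)$) or $G\cong\lie2G2(3)$, where $|T|=2$ and some roots do. This finishes (b). For (c): if $C_{\4G}(T)^0=\4T$, then each $g\in N_G(T)$ normalizes $C_{\4G}(T)$, hence its identity component $\4T$; thus $N_G(T)\le N_{\4G}(\4T)$, so $N_G(T)=N_G(T)\cap N_{\4G}(\4T)$, and Lemma~\ref{W0onT} gives $N_G(T)/T\cong W_0$.

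The step I expect to be the main obstacle is the case analysis in (b): pinning down $N/m$ for every irreducible type, and separately disposing of the Suzuki and Ree groups, where $\sigma^*$ is not of the form $q\tau$ and one must use the explicit structure of $T$ — this is exactly where the precise list of exceptions gets forced. Everything else (Lang--Steinberg, the norm inequality, the reduction via Proposition~\ref{p:CG(T)}) is routine.
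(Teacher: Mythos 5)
Your argument is correct and is essentially the paper's proof: (a) via the formula for $C_{\4G}(T)^0$ in Proposition \ref{p:CG(T)} plus the dualized Lang--Steinberg exact sequence, (b) by identifying $X(\4T)$ with the weight lattice $P(\Sigma)$ and comparing $\norm{\sigma^*(\beta)-\beta}\ge(q-1)\norm{\beta}$ with the maximal root length (the paper's table of $\min\norm{v}$ over $P(\Sigma)$ versus $\max\norm{\alpha}$ is exactly your $N/m\le4$ check), and (c) from Lemma \ref{W0onT}. The only divergence is the Suzuki/Ree subcase of (b): the paper keeps the same norm inequality there, with $\sqrt{q}$ in place of $q$, which immediately reduces to the single group $\lie2B2(8)$, disposed of by a two-line computation in the lattice $P(\Sigma)=\Z^2$ for $C_2$; you instead propose to treat all three twisted families directly through the explicit split torus ($T$ cyclic of order $q-1$, resp.\ $(q-1)^2$ for $\lie2F4(q)$) and check that no root character dies on it for $q$ beyond the listed exceptions --- this works (the exponents occurring are smaller than $q-1$ once $q\ge8$, resp.\ $27$), but it is left as an asserted computation, whereas the paper's route makes the residual check essentially trivial. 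Your verification that $A_1(3)$ and $C_n(3)$ are genuine exceptions (via $\alpha=2\beta$ with $\beta$ a weight) is correct but not needed, since the statement only asserts ``except possibly.''
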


\begin{proof} 
\noindent\textbf{(a) } Since $G$ is of universal type, $G=C_{\4G}(\sigma)$ 
and $T=C_{\4T}(\sigma)$. Hence there is a short exact sequence
	\[ 1 \Right2{} T \Right4{} \4T \Right8{t\mapsto t^{-1}\sigma(t)} 
	\4T \,, \]
where the last map is onto by the Lang-Steinberg theorem. Upon dualizing, 
and regarding $\Hom(\4T,\fqobar^\times)$ additively, we get an exact sequence
	\[ 0 \Right2{} \Hom(\4T,\fqobar^\times) \Right5{\sigma^*-\Id} 
	\Hom(\4T,\fqobar^\times) \Right4{\textup{restr}} 
	\Hom(T,\fqobar^\times) \]
(see also \cite[Proposition 3.2.3]{Carter2}), where 
$\Hom(\4T,\fqobar^\times)$ is the 
group of \emph{algebraic} homomorphisms. Since $\theta_\alpha$ is in 
the kernel of the restriction map, by assumption, it has the form 
$\beta^{-1}\sigma^*(\beta)$ for some $\beta\in\Hom(\4T,\fqobar^\times)$.

\smallskip

\noindent\textbf{(b) } Let $P(\Sigma)$ and $Q(\Sigma)$ be as in 
\cite[\S\,VI.1.9]{Bourb4-6} (but with $\Sigma$ in place of $R$ to denote 
the root system). Thus $Q(\Sigma)=\Z\Sigma$, the integral lattice generated by 
$\Sigma$, and 
	\[ P(\Sigma) = \{ v\in V \,|\, (v,\alpha^\vee)\in\Z \textup{ for 
	all $\alpha\in\Sigma$} \} \ge Q(\Sigma) \,. \]

For each $v\in P(\Sigma)$, define $\theta_v\in 
X(\4T)=\Hom(\4T,\fqobar^\times)$ by setting 
$\theta_v(h_\alpha(\lambda))=\lambda^{(v,\alpha^\vee)}$ for $\alpha\in\Pi$ 
and $\lambda\in\fqobar$. Since $G$ is of universal type, this is a well 
defined homomorphism by Lemma \ref{theta-r}\eqref{T=prod}, and the same 
formula holds for all $\alpha\in\Sigma$ by Lemma 
\ref{theta-r}\eqref{hb.hc}. By Lemma \ref{theta-r}\eqref{^txb(u)}, this 
extends our definition of $\theta_\beta$ for $\beta\in\Sigma\subseteq 
P(\Sigma)$. 

Recall that $\Hom(\fqobar^\times,\fqobar^\times)\cong\Z$. For each 
$\theta\in X(\4T)$ and each $\alpha\in\Sigma$, let $n_{\theta,\alpha}\in\Z$ 
be such that $\theta(h_\alpha(\lambda))=\lambda^{n_{\theta,\alpha}}$ for 
all $\lambda\in\fqobar^\times$. For given $\theta$, there is 
$v\in{}P(\Sigma)$ such that $(v,\alpha^\vee)=n_{\theta,\alpha}$ for all 
$\alpha\in\Pi$, and hence (by Lemma \ref{theta-r}\eqref{hb.hc}) for all 
$\alpha\in\Sigma$. Then $\theta=\theta_v$ as defined above. In this way, we 
identify $P(\Sigma)$ with the lattice $X(\4T)$ of characters for $\4T$, 
while identifying $Q(\Sigma)$ with $\Z\Sigma$.

From the appendix to Chapter VI in \cite{Bourb4-6} (Planches I--IX), we 
obtain the following table:
	\[ \renewcommand{\arraystretch}{1.5}
	\begin{array}{|c||c|c|c|c|c|c|c|c|} \hline
	\textup{root system $\Sigma$} & A_n & C_n & B_n,~D_n 
	& G_2 & F_4 & 
	E_6 & E_7 & E_8 \\\hline
	\min\{\norm{v}\,|\,v\in P(\Sigma)\} & 
	\sqrt{n/(n{+}1)} & 1 & \min\{\sqrt{n/4},1\} & \sqrt2 
	& 1 & \sqrt{4/3} & \sqrt2 & \sqrt2 \\\hline
	\max\{\norm{\alpha}\,|\,\alpha\in \Sigma\} & 
	\sqrt2 & 2 & \sqrt2 & \sqrt6 & \sqrt2 
	& \sqrt2 & \sqrt2 & \sqrt2 \\\hline
	\end{array} \]
Here, the norms are given with respect to the descriptions of these 
lattices in \cite{Bourb4-6} as subgroups of Euclidean spaces.

Assume $C_{\4G}(T)^0\gneqq\4T$. By (a), there are $\alpha\in\Sigma_+$ and 
$\beta\in\Hom(\4T,\fqobar^\times)$ such that 
$\alpha=\beta^{-1}\sigma^*(\beta)$. If we regard $\alpha$ and $\beta$ as 
elements in the normed vector space $V$, then 
$\norm\alpha=\norm{\sigma^*(\beta)-\beta} 
\ge\norm{\sigma^*(\beta)}-\norm\beta$. If $G=\lie{r}\gg{}(q)$ (and $\sigma$ 
is a standard setup), then $\norm{\sigma^*(\beta)}=q\norm\beta$, except when 
$G$ is a Suzuki or Ree group in which case 
$\norm{\sigma^*(\beta)}=\sqrt{q}\norm\beta$. Thus 
	\[ \frac{\norm\alpha}{\norm\beta} + 1 \ge \begin{cases} 
	q & \textup{if $G$ is a Chevalley or Steinberg group} \\
	\sqrt{q} & \textup{if $G$ is a Suzuki or Ree group.}
	\end{cases} \]
By the above table, this is possible only if $q=2$, or if $G$ is isomorphic 
to one of the groups $A_1(3)$, $B_2(3)$, $C_n(3)$ ($n\ge3$), $\lie2G2(3)$, 
or $\lie2B2(8)$.

Assume $G\cong\lie2B2(8)\cong\Sz(8)$. It is most convenient to use the root 
system for $C_2$ constructed in \cite{Bourb4-6}: $P(\Sigma)=\Z^2$, and 
$\Sigma=\{(\pm2,0),(0,\pm2),(\pm1,\pm1)\}$. Then $\alpha$ and $\beta$ 
satisfy the above inequality only if $\norm\alpha=2$, $\norm\beta=1$, and 
$\norm{\alpha+\beta}=\sqrt8$. So $(\alpha,\beta)=\frac32$, which is 
impossible for $\alpha,\beta\in\Z^2$. Hence $C_{\4G}(T)^0=\4T$ in this 
case. 

\smallskip

\noindent\textbf{(c) } If $C_{\4G}(T)^0=\4T$, then $N_{\4G}(T)\le 
N_{\4G}(\4T)$, and so $N_G(T)/T\cong W_0$ by Lemma \ref{W0onT}.
\end{proof}

The following, more technical lemma will be needed in Section \ref{s:X2}.

\begin{Lem} \label{W0-action}
Assume the hypotheses and notation in \ref{G-setup}, and also that the 
$\sigma$-setup $(\4G,\sigma)$ is standard. Then under the action 
of $W_0$ on $\5\Sigma$, each orbit contains elements of $\5\Pi$.
\end{Lem}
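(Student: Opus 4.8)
The plan is to recognize $(\5\Sigma,\5\Pi)$, together with the $W_0$‑action, as an irreducible root system whose Weyl group is $W_0$ and whose base is $\5\Pi$, and then to quote the classical fact that every orbit of a Weyl group on an irreducible root system contains a simple root \cite[\S\,VI.1]{Bourb4-6}.

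First I would dispose of the Chevalley case separately. There $\gamma=\Id$, so $\rho=\tau=\Id$; hence all $\tau$‑classes are singletons, $\5\Sigma=\Sigma$ and $\5\Pi=\Pi$, and $W_0=C_W(\Id)=W$. The claim is then precisely the classical statement applied to $(\Sigma,\Pi,W)$.

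For the remaining (twisted) cases I would invoke the standard theory of ``folded'' root systems — see \cite[\S\S\,13.1--13.3]{Carter}, \cite[\S\,2.3]{GLS3}, and the relevant parts of \cite{Steinberg-end} — according to which $W_0=C_W(\tau)$ acts on $V_0=C_V(\tau)$ as an irreducible finite reflection group, with reflecting hyperplanes $\{\,\pr_{V_0}^\perp(\alpha)^\perp\cap V_0\mid\alpha\in\Sigma\,\}$, and with $\5\Sigma$ (after choosing a representative vector in $V_0$ for each class, one per ray) serving as the associated root system and $\5\Pi$ as a base; moreover the generating ``folded reflections'' $w_{\5\alpha}$ (for $\5\alpha\in\5\Pi$), which are either products of mutually orthogonal reflections $w_\beta$ ($\beta\in\5\alpha$) or the longest element of the rank‑$2$ parabolic spanned by $\5\alpha$, lie in $W_0$ and restrict to the simple reflections of this reflection group. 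Granting this, each $W_0$‑orbit on $\5\Sigma$ meets $\5\Pi$ by the cited fact about irreducible root systems. (For the low‑rank groups $\lie2B2(q)$ and $\lie2G2(q)$ one simply has $\5\Sigma=\{\pm\5\alpha\}$ and there is nothing to prove; for $\lie2F4(q)$ the same fact applies to the rank‑$2$ dihedral system $\5\Sigma$, which has exactly two $W_0$‑orbits, each containing one element of $\5\Pi$.) An alternative, more self‑contained route that avoids quoting ``$\5\Sigma$ is a root system'' is the usual height induction carried out inside $V_0$: writing $\5\beta\in\5\Sigma_+$ as a nonnegative integer combination of $\5\Pi$, positive‑definiteness of $(-,-)$ on $V_0$ forces some $\5\alpha\in\5\Pi$ with $(\5\beta,\5\alpha)>0$, and then $w_{\5\alpha}(\5\beta)$ again lies in $\5\Sigma_+$ but with strictly smaller height, while negative classes are reduced to positive ones using $w_{\5\alpha}(\5\alpha)=-\5\alpha$.

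The only point requiring care is structural rather than deep: one must be sure that, with the definition used here (equivalence via proportional projections to $V_0$), distinct classes correspond to distinct rays, so that $\5\Sigma$ is the \emph{reduced} root system of $W_0$ — in particular that the fold of $A_{2n}$ produces a copy of $B_n$ rather than the non‑reduced $BC_n$, in which the ``doubled'' roots would fail to be conjugate to simple roots — and that the folded reflections genuinely belong to $W_0$ and act on $V_0$ as honest reflections. All of this is contained in the references above, so the write‑up consists mainly of citing them correctly and then applying the orbit‑meets‑base statement.
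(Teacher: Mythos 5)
Your proposal is correct and follows essentially the same route as the paper, which also splits into the untwisted case (citing the classical fact that every root is $W$-conjugate to a fundamental root, via \cite[Proposition 2.1.8]{Carter}) and the twisted case (citing the descriptions of $W_0$ and $\5\Sigma$ in \cite[\S\S\,13.2--13.3]{Carter}). Your extra remarks (the height-induction alternative and the caution that the classes in $\5\Sigma$ correspond to rays, so the $A_{2n}$ fold behaves like a reduced system) are sound but not needed beyond what those citations already provide.
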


\begin{proof} When $\rho=\Id$, this is \cite[Proposition 
2.1.8]{Carter}. When $\rho\ne\Id$, it follows from the descriptions of 
$W_0$ and $\5\Sigma$ in \cite[\S\S\,13.2--13.3]{Carter}. 
\end{proof}

\newpage

\newsect{The equicharacteristic case} 
\label{s:=}

\tdef{minSupp}   \tdef{Tr}
\newcommand{\hgt}{\textup{ht}}

The following notation will be used in this section. 

\begin{Not} \label{G-setup-Q}
Assume the notation in \ref{G-setup}, and also that 
$\rho(\Pi)=\Pi$, $q_0=p$, and $Z(\4G)=1$.  Thus $\4G=\gg(\fpbar)$ 
is a connected, simple group over $\fpbar$ in adjoint form, $\sigma$ is a 
Steinberg endomorphism of $\4G$ of standard form, and 
$G=O^{p'}(C_{\4G}(\sigma))$. 
\begin{enumA}[start=4]  

\item\label{not4=} Set $\4U=\Gen{\4X_\alpha\,\big|\,\alpha\in\Sigma_+}$ and 
$\4B\defeq N_{\4G}(\4U)=\4U\4T$ (the \emph{Borel subgroup} of $\4G$).  Set 
	\[ U=C_{\4U}(\sigma)=\gen{X_{\5\alpha}\,|\,\5\alpha\in\5\Sigma_+}\,, 
	\qquad B=N_G(U)\,,
	\qquad\textup{and}\qquad T=\4T\cap{}G. \]
Thus $U=\prod_{\5\alpha\in\5\Sigma_+}X_{\5\alpha}\in\sylp{G}$, and $B=UT$.   
(See, e.g., \cite[Theorems 2.3.4(d) \& 2.3.7]{GLS3}, or \cite[Theorems 
5.3.3(ii) \& 9.4.10]{Carter} in the case of Chevalley groups.)  When  
$\5J\subsetneqq\5\Pi$ is the image in $\5\Sigma_+$ of a $\tau$-invariant 
subset $J\subsetneqq\Pi$, let $U_{\5J}\le U$ be the subgroup generated by 
root groups for positive roots in $\Sigma_+{\sminus}\gen{J}$ (the 
\emph{unipotent radical subgroup} associated to $\5J$), and set 
$\Par{\5J}=N_G(U_{\5J})=B\Gen{X_{-\5\alpha}\,\big|\,\alpha\in\gen{J}}$ (the 
\emph{parabolic subgroup} associated to $\5J$).  Thus $U=U_\emptyset$ and 
$B=\Par\emptyset$.  We also write $U_{\5\alpha}=U_{\{\5\alpha\}}$ and 
$\Par{\5\alpha}=\Par{\{\5\alpha\}}$ for each $\5\alpha\in\5\Pi$. 


\item\label{not5=} The \emph{height} of a positive root 
$\alpha=\sum_{\gamma\in\Pi}n_\gamma\gamma\in\Sigma_+$ ($n_\gamma\ge0$) is 
defined by $\htt(\alpha)=\sum_{\gamma\in\Pi}n_\gamma$.   The height 
$\htt(\5\alpha)$ of a class of roots $\5\alpha\in\5\Sigma_+$ is the minimum 
of the heights of roots in the class $\5\alpha$.

\item\label{not6=} Set $\calf=\calf_U(G)$ and $\call=\call_U^c(G)$. 

\item\label{not7=} Set $U_0=\gen{X_{\5\alpha}\,|\,\5\alpha\in\5\Sigma_+,~ 
\5\alpha\cap{}\Pi=\emptyset}=\gen{X_{\5\alpha}\,|\,\htt(\5\alpha)\ge2}$.

\item\label{not8=} The \emph{Lie rank} of $G$ is equal to 
$|\5\Pi|$; equivalently, to the number of maximal parabolic subgroups 
containing $B$.

\end{enumA}

\end{Not}

For example, assume $\sigma=\psi_q\circ\gamma$, where $\gamma\in\Aut(\4G)$ 
is a graph automorphism which induces $\rho\in\Aut(\Sigma_+)$, and $\psi_q$ 
is the field automorphism induced by $t\mapsto{}t^q$. Then for 
$\5\alpha\in\5\Sigma$, $X_{\5\alpha}\cong\F_q$ when $\5\alpha=\{\alpha\}$ 
contains only one root, $X_{\5\alpha}\cong\F_{q^a}$ if 
$\5\alpha=\{\rho^i(\alpha)\}$ is the $\rho$-orbit of $\alpha$ with length 
$a$, and $X_{\5\alpha}$ is nonabelian if $\5\alpha$ contains a root 
$\alpha$ and sums of roots in its $\rho$-orbit.

We need the following, stronger version of Theorem \ref{St-aut}. 

\begin{Thm}[{\cite[\S\,3]{Steinberg-aut}}] \label{St-aut2}
Assume $G$ is as in Notation \ref{G-setup} and \ref{G-setup-Q}.  
If $\alpha\in\Aut(G)$ is such that $\alpha(U)=U$, then 
$\alpha=c_udfg$ for unique automorphisms $c_u\in\Aut_U(G)$, 
$d\in\Inndiag(G)=\Aut_{\4T}(G)$, $f\in\Phi_G$, and $g\in\Gamma_G$. 
\end{Thm}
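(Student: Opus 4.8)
The plan is to deduce the refined statement from Theorem \ref{St-aut} (Steinberg's theorem in the form $\Aut(G)=\Inndiag(G)\Phi_G\Gamma_G$ with $\Inndiag(G)\nsg\Aut(G)$ and $\Inndiag(G)\cap\Phi_G\Gamma_G=1$), together with control of the Borel subgroup $\4B=\4U\4T$ and its $\sigma$-fixed points $B=UT$. First I would observe that it suffices to prove the existence of such a decomposition; the uniqueness of $c_u$, $d$, $f$, $g$ then follows from the uniqueness statement in Theorem \ref{St-aut} (since $\Aut(G)=\Inndiag(G)\Phi_G\Gamma_G$ with trivial intersection) once we know $c_u\in\Aut_U(G)$ is the unique representative of its coset modulo $\Inndiag(G)\Phi_G\Gamma_G$ that lies in $\Aut_U(G)$ — and that in turn reduces to showing $N_G(U)\cap\,(\text{stabilizer of everything else})$ is trivial, which is the standard fact that the only inner automorphism fixing $U$, $T$, and a pinning is trivial (Bruhat decomposition, as in the proof of Proposition \ref{p:Autdiag(G)}(a)).

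The main step is existence. Given $\alpha\in\Aut(G)$ with $\alpha(U)=U$, write $\alpha=c_g\cdot d_0\cdot f\cdot g$ with $c_g\in\Inn(G)$, $d_0\in\Aut_{\4T}(G)$, $f\in\Phi_G$, $g\in\Gamma_G$ using Theorem \ref{St-aut} (absorbing the inner part of $\Inndiag(G)$ into $c_g$). The field and graph automorphisms $f$ and $g$ are chosen in the standard $\sigma$-setup so that they normalize $\4U$, $\4T$, hence $U$ and $T$; likewise $d_0\in\Aut_{\4T}(G)$ centralizes $\4T$ modulo nothing relevant and normalizes $\4U$, so $d_0fg$ normalizes $U$. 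Therefore $c_g$ normalizes $U$, i.e. $\9gU=U$, so $g\in N_G(U)=B=UT$. Write $g=ut$ with $u\in U$, $t\in T$; then $c_g=c_u\circ c_t$ with $c_u\in\Aut_U(G)$ and $c_t\in\Aut_T(G)\le\Aut_{\4T}(G)=\Inndiag(G)$. Hence $\alpha=c_u\cdot(c_t d_0)\cdot f\cdot g$ with $d:=c_t d_0\in\Inndiag(G)$, which is the required form.

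The potential obstacle is bookkeeping about which representatives of $\Phi_G$ and $\Gamma_G$ normalize $U$ and $T$: in a standard $\sigma$-setup the canonical generators $\psi_q$ and the graph automorphisms do preserve the pinning (the $\4X_\alpha$ for $\alpha\in\pm\Pi$), hence preserve $\4U$, $\4T$, and the $\sigma$-orbit structure, so they preserve $U=C_{\4U}(\sigma)$ and $T=\4T\cap G$; this is exactly why Definitions \ref{d:Aut(Gbar)} and \ref{d:Aut(G)} are set up as they are, so nothing new is needed — one just has to cite it. The only genuinely substantive point is the uniqueness of $c_u$, and as noted that is the Bruhat-decomposition argument already carried out in the proof of Proposition \ref{p:Autdiag(G)}(a): if $c_u c_{u'}^{-1}\in\Inndiag(G)\Phi_G\Gamma_G$ with $u,u'\in U$, then comparing with Theorem \ref{St-aut} forces $c_{u'^{-1}u}$ to lie in $\Inndiag(G)$ and to fix $U$ and the pinning, whence $u'^{-1}u\in Z(\4G)\cap G=1$. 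This finishes the proof.
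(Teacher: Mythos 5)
Your proposal is correct and follows essentially the same route as the paper: both rest on Theorem \ref{St-aut}, use that $\Phi_G\Gamma_G$ and $\Aut_{\4T}(G)$ normalize $U$ together with $N_G(U)=UT$ to split off the $\Aut_U(G)$ factor, and then reduce uniqueness to $\Phi_G\cap\Gamma_G=1$ and $\Aut_U(G)\cap\Aut_{\4T}(G)=1$. The only (minor) difference is the last check: the paper observes that $c_u$ has $p$-power order while $c_t$ (for $t\in\4T$) has order prime to $p$, whereas you invoke $C_{\4G}(G)=Z(\4G)=1$ and $U\cap\4T=1$ --- either works, and your appeal to ``fixing a pinning'' is not actually needed for this.
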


\begin{proof} Let $N_{\Aut(G)}(U)\le\Aut(G)$ and 
$N_{\Inndiag(G)}(U)\le\Inndiag(G)$ be the subgroups of those automorphisms 
which send $U$ to itself.  Since $\Phi_G\Gamma_G\le N_{\Aut(G)}(U)$ by 
definition, Theorem \ref{St-aut} implies that 
$N_{\Aut(G)}(U)=N_{\Inndiag(G)}(U)\cdot(\Phi_G\Gamma_G)$, a semidirect 
product. Since $\Phi_G\cap\Gamma_G=1$, it remains to show that 
$N_{\Inndiag(G)}(U)=\Aut_U(G)\Aut_{\4T}(G)$ and 
$\Aut_U(G)\cap\Aut_{\4T}(G)=1$.  The first is immediate: since 
$\Aut_{\4T}(G)\le N_{\Aut(G)}(U)$ and $N_G(U)=TU$, 
	\begin{align*} 
	N_{\Inndiag(G)}(U) &= \bigl(\Inn(G)\Aut_{\4T}(G)\bigr)\cap 
			N_{\Aut(G)}(U) \\
	&= \Aut_{N_G(U)}(G)\Aut_{\4T}(G) = \Aut_U(G)\Aut_{\4T}(G)\,. 
	\end{align*}
Finally, if $c_u=c_t\in\Aut(G)$ where $u\in{}U$ and $t\in\4T$, then 
$c_u=\Id_G$, since $u$ has $p$-power order and $t$ has order prime to $p$.
\end{proof}

\begin{Lem} \label{kappa_inj}
Assume $G\in\Lie(p)$.  Then for $U\in\sylp{G}$, 
$\4\kappa_G$ sends $\Out(G)$ injectively into $\Out(U,\calf)$.
\end{Lem}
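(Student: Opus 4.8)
The plan is to use Lemma \ref{kappa-lift}(b), which identifies $\Ker(\4\kappa_G)$ with $C_{\Aut(G)}(U)/\Aut_{C_G(U)}(G)$. As the reverse inclusion is automatic, it is enough to show that each $\alpha\in\Aut(G)$ with $\alpha|_U=\Id_U$ is of the form $c_g|_G$ for some $g\in C_G(U)$. First I would make two harmless reductions: by Proposition \ref{nonstandard}(a) I may assume the $\sigma$-setup $(\4G,\sigma)$ is standard, and by passing to the adjoint form $G/Z(G)$ (which changes neither $\calf_U(G)$, nor $\Out(G)$, nor $\4\kappa_G$ -- cf.\ \cite[Theorem 2.5.14(d)]{GLS3}, and note $|Z(G)|$ is prime to $p$) I may assume $\4G$ is of adjoint type. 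Thus Notation \ref{G-setup-Q} and Theorem \ref{St-aut2} apply.

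Given such an $\alpha$, since $\alpha(U)=U$, Theorem \ref{St-aut2} writes $\alpha=c_u\,d\,f\,g$ uniquely, with $c_u\in\Aut_U(G)$, $d=c_t|_G\in\Aut_{\4T}(G)$ (for some $t\in\4T$), $f\in\Phi_G$ and $g\in\Gamma_G$. The main tool is the action on the quotient $U/U_0$, where $U_0\nsg U$ is generated by the root groups $X_{\5\alpha}$ with $\htt(\5\alpha)\ge2$ (Notation \ref{G-setup-Q}): the commutator relations in $U$ give a direct product decomposition $U/U_0=\prod_{\5\gamma\in\5\Pi}\5X_{\5\gamma}$ into the abelian groups $\5X_{\5\gamma}=X_{\5\gamma}U_0/U_0$, and on this quotient $c_u$ acts trivially, $g$ permutes the factors according to the permutation of $\5\Pi$ it induces, $f$ stabilizes each factor acting there by a power of the defining Frobenius, and $d$ stabilizes each factor $\5X_{\5\gamma}$ acting there by multiplication by $\theta_\gamma(t)$ (with $\gamma$ of minimal height in $\5\gamma$). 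Since $\alpha|_{U/U_0}=\Id$, comparison of the induced permutations of the factors forces $g$ to fix each factor, so $g=1$ because $\Gamma_G$ acts faithfully on $\5\Pi$; and then $d$ and $f$ both induce the identity on each $\5X_{\5\gamma}$. Triviality of $f$ on $U/U_0$ gives $f=1$, by faithfulness of $\Phi_G$ there (which can be checked on a single $\5X_{\5\gamma}$, one of which is a vector space over the full field of definition of $G$). For $d$, evaluating at $1$ gives $\theta_\gamma(t)=1$ for each $\5\gamma\in\5\Pi$, i.e.\ for one fundamental root per $\rho$-orbit; since $c_t$ normalizes $G$ we have $t^{-1}\sigma(t)\in C_{\4G}(G)=Z(\4G)$ (Proposition \ref{p:Autdiag(G)}(a) and its proof), and since $\sigma$ carries $\4X_{\rho^{-1}(\gamma)}$ onto $\4X_\gamma$ up to a Frobenius twist, $\theta_\gamma(t)=\theta_\gamma(\sigma(t))=\theta_{\rho^{-1}(\gamma)}(t)^{q_\gamma}$ for appropriate powers $q_\gamma$ of $q_0$; running around each $\rho$-orbit propagates $\theta_\gamma(t)=1$ to all $\gamma\in\Pi$, hence, by additivity of $\theta$ in the root, to all $\gamma\in\Sigma$, so that $t$ centralizes every root subgroup of $\4G$ and $t\in Z(\4G)=C_{\4G}(G)$; thus $d=c_t|_G=\Id_G$.

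Having shown $d=f=g=1$, we are left with $\alpha=c_u$ for some $u\in U$, and then $\alpha|_U=\Id_U$ forces $u\in C_U(U)=Z(U)\le C_G(U)$, which proves the claim, so $\Ker(\4\kappa_G)=1$. I expect the vanishing of $d$ to be the only real obstacle: the quotient $U/U_0$ exposes essentially all of the diagonal/field/graph content of $\alpha$, but $\5\Pi$ only records one fundamental root per $\rho$-orbit, so the Steinberg-endomorphism bookkeeping above is needed to pass from $\theta_\gamma(t)=1$ on orbit representatives to all of $\Pi$; by comparison $g=1$ and $f=1$ are soft faithfulness statements about $\Gamma_G$ and $\Phi_G$ acting on $U/U_0$.
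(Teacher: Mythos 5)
Your proof is correct and follows essentially the same route as the paper's: reduce to the adjoint case, invoke Lemma \ref{kappa-lift}, write $\alpha=c_u\,d\,f\,g$ via Theorem \ref{St-aut2}, and kill $g$, $f$, $d$ in turn by their effect on the fundamental root groups modulo higher ones. Two small remarks. First, $U/U_0$ is not always a direct product of abelian groups: for $\lie2A{2n}(q)$ (classes $\5\gamma$ of size $3$) and for the Suzuki and Ree groups the image of $X_{\5\gamma}$ in $U/U_0$ is nonabelian and $[U,U]\nleq U_0$, so you should work modulo $U_0[U,U]$ (equivalently with the abelianizations $X_{\5\gamma}\ab$), exactly as the paper's phrase ``modulo higher root groups and commutators'' does; since your argument only ever uses those abelianized factors (evaluation at $1$, the semilinear form $u\mapsto \theta_\gamma(t)u^{q_0^b}$), this is a harmless correction. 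Second, your $\rho$-orbit propagation for $d=c_t$ is valid (in the adjoint case $t^{-1}\sigma(t)\in Z(\4G)=1$, so $\sigma(t)=t$ and $\theta_{\rho(\gamma)}(t)=\theta_\gamma(t)^{q_\gamma}$ as you say), but the paper short-circuits it: once $f=g=\Id$, the equality $c_t|_U=c_{u^{-1}}|_U$ of an automorphism of order prime to $p$ with one of $p$-power order forces $c_t|_U=\Id$ on \emph{all} of $U$, whence $\theta_\alpha(t)=1$ for every $\alpha\in\Sigma_+$ at once, with no bookkeeping around the $\gen{\rho}$-orbits.
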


\begin{proof} Assume that $\4\kappa_{G/Z(G)}$ is injective. Since 
$Z(G)$ is a $p'$-group (since $Z(G)\le\4T$), and since $O^{p'}(G)=G$ by 
definition of $\Lie(p)$, $\Aut(G)$ injects into $\Aut(G/Z(G))$, and hence 
$\4\kappa_G$ is injective. It thus suffices to prove the lemma when $G$ is 
in adjoint form.

We can thus assume Notation \ref{G-setup-Q}.
By Lemma \ref{kappa-lift}, it will suffice to 
prove that $C_{\Aut(G)}(U)\le\Inn(G)$.  Fix $\beta\in\Aut(G)$ such that 
$\beta|_U=\Id_U$. By Theorem \ref{St-aut2}, there are unique automorphisms 
$c_u\in\Aut_U(G)$, $d\in\Aut_{\4T}(G)$, $f\in\Phi_G$, and $g\in\Gamma_G$ 
such that $\beta=c_u dfg$. 

If $g\ne\Id$, then it permutes the fundamental root groups nontrivially, 
while $c_u df|_U$ sends each such group to itself modulo higher root groups 
and commutators.  Hence $g=\Id$.  Similarly, $f=\Id$, since otherwise 
$\beta$ would act on the fundamental root groups (modulo higher root 
groups) via some automorphism other than a translation.  

Thus $\beta=c_ud$, where $d=c_t$ for some $t\in N_{\4T}(G)$.  Then $u$ has 
$p$-power order while $t$ has order prime to $p$, so $d|_U=c_t|_U=\Id$.  
By Lemma \ref{theta-r}\eqref{^txb(u)}, $c_t$ sends each root group in 
$\4U$ to itself via $x_\alpha(u)\mapsto 
x_\alpha(\theta_\alpha(t){\cdot}u)$ for some character 
$\theta_\alpha\in\Hom(\4T,\fpbar^\times)$ which is linear in 
$\alpha$. For each $\5\alpha\in \5\Sigma_+$, $c_t|_{X_{\5\alpha}}=\Id$ 
implies that $\theta_\alpha(t)=1$ for all $\alpha\in\5\alpha$.  Thus 
$\theta_\alpha(t)=1$ for all $\alpha\in\Sigma_+$, so $c_t=\Id_{\4G}$, and 
$\beta=c_u\in\Inn(G)$. 
\end{proof}

It now remains, when proving Theorem \ref{ThE}, to show the surjectivity 
of $\kappa_G$.  This will be done case-by-case.  We first handle groups of 
Lie rank at least three, then those of rank one, and finally those of rank 
two.

For simplicity, we state the next two propositions only for groups of 
adjoint type, but they also hold without this restriction. The first 
implies that each element of $\Aut(U,\calf)$ permutes the subgroups 
$U_{\5J}$ (as defined in Notation \ref{G-setup-Q}), and that each element 
of $\Aut\typ^I(\call_S^c(G))$ induces an automorphism of the amalgam of 
parabolics $\Par{\5J}$ for $\5J\subsetneqq\5\Pi$.

\begin{Prop} \label{Borel-Tits}
Assume Notation \ref{G-setup-Q}. For $1\ne{}P\le{}U$, the following are 
equivalent:
\begin{enumr} 
\item $P=U_{\5J}$ for some $\5J\subsetneqq\5\Pi$;

\item $P\nsg B$, $C_U(P)\le P$, and $O_p(\outf(P))=1$; and 

\item $P\nsg B$, $C_G(P)\le P$, and $O_p(N_G(P))=P$. 

\end{enumr}
Hence for each $\varphi\in\Aut(U,\calf)$, $\varphi$ permutes the subgroups 
$U_{\5J}$, and in particular permutes the subgroups $U_{\5\alpha}$ for 
$\5\alpha\in\5\Pi$.
\end{Prop}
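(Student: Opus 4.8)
The plan is to prove the cyclic chain of implications (i) $\Rightarrow$ (iii) $\Rightarrow$ (ii) $\Rightarrow$ (i), and then deduce the final statement about $\Aut(U,\calf)$ from the equivalence of (i) and (ii).

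\textbf{Proof of (i) $\Rightarrow$ (iii).} Assume $P=U_{\5J}$ for some $\5J\subsetneqq\5\Pi$. By definition $U_{\5J}$ is generated by the root groups $X_{\5\alpha}$ for those $\5\alpha\in\5\Sigma_+$ not lying in $\gen{J}$, and $\Par{\5J}=N_G(U_{\5J})$. First I would check $U_{\5J}\nsg B$: since $B=UT$ normalizes each root group it contains and permutes them according to the action on $\5\Sigma_+$, and since the set $\5\Sigma_+\sminus\gen{J}$ is closed under adding positive roots, $B$ normalizes $U_{\5J}$. Next, that $O_p(N_G(P))=P$: the parabolic $\Par{\5J}$ has Levi decomposition $\Par{\5J}=U_{\5J}\rtimes L_{\5J}$ where $L_{\5J}=\Gen{T,X_{\pm\5\alpha}\mid\5\alpha\in\5J}$ is a reductive-type group, $U_{\5J}=O_p(\Par{\5J})$, and $L_{\5J}$ has trivial $O_p$; this is the Borel--Tits theorem (standard, e.g.\ from \cite[Theorem 2.6.5]{GLS3} or \cite[\S 2.6]{Carter}), and it also gives $C_G(U_{\5J})\le U_{\5J}$ since $C_{L_{\5J}}(U_{\5J})$ is a normal $p'$-ish subgroup which must be central in the Levi and meet $T$ trivially modulo $Z$, hence is trivial as $Z(\4G)=1$.

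\textbf{Proof of (iii) $\Rightarrow$ (ii).} This is essentially formal: $O_p(N_G(P))=P$ together with $P\nsg B$ gives $P\nsg B$ directly, $C_G(P)\le P$ implies $C_U(P)=C_G(P)\cap U\le P$, and finally $O_p(\outf(P))=1$ because $\outf(P)=\autf(P)/\Inn(P)=\Aut_G(P)/\Inn(P)=\Out_G(P)$ (here $P$ is fully automized, being normal in the Sylow-containing $B$... more precisely one uses that $N_G(P)$ controls $\calf$-automorphisms of $P$ since $N_U(P)=U\in\sylp{N_G(P)}$), and any nontrivial normal $p$-subgroup of $N_G(P)/P$ would pull back to a normal $p$-subgroup of $N_G(P)$ strictly larger than $P$, contradicting $O_p(N_G(P))=P$.

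\textbf{Proof of (ii) $\Rightarrow$ (i).} This is the main obstacle. Given $1\ne P\le U$ with $P\nsg B$, $C_U(P)\le P$, and $O_p(\outf(P))=1$, I want to identify $P$ as some $U_{\5J}$. The key point is that $P\nsg B$ forces $P$ to be generated by root groups: a normal subgroup of $B=UT$ that is stable under the $T$-action is a product of root groups $X_{\5\alpha}$ for $\5\alpha$ ranging over a set closed under addition (this is the standard classification of $T$-invariant, hence $\4T$-invariant after passing to $\4G$, normal subgroups of the unipotent radical — using that the root groups for distinct $\5\alpha$ are distinguished by the $T$-action since $q_0=p$ and the characters $\theta_\alpha$ are independent, via Lemma~\ref{theta-r}). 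So $P=\prod_{\5\alpha\in\Psi}X_{\5\alpha}$ for some closed subset $\Psi\subseteq\5\Sigma_+$. Then I would argue: the condition $C_U(P)\le P$ forces $\Psi$ to contain every $\5\alpha$ not in $\gen{J}$ where $\5J=\5\Pi\sminus\{\5\alpha\in\5\Pi\mid X_{\5\alpha}\le P\}$, i.e.\ $P\supseteq U_{\5J}$; and the condition $O_p(\outf(P))=1$ (equivalently $O_p(N_G(P))=P$) forces $\Psi$ to contain \emph{no} fundamental classes beyond being exactly $\5\Sigma_+\sminus\gen{J}$ — because if $P$ strictly contained $U_{\5J}$ but were strictly smaller than $B$-invariance allows in a ``Levi direction'', the unipotent part of the corresponding parabolic would give a nontrivial normal $p$-subgroup of $N_G(P)/P$. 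The cleanest route is: $P\nsg B$ plus $C_G(P)\le P$ implies (Borel--Tits again, in the form that a self-centralizing normal $p$-subgroup of a parabolic-containing subgroup is the unipotent radical of a parabolic, cf.\ \cite[Theorem 3.1.3]{GLS3}-type statements) that $N_G(P)$ is a parabolic $\Par{\5J}$ with unipotent radical $Q\supseteq P$; then $O_p(\outf(P))=1$ gives $Q=O_p(N_G(P))=P$, so $P=U_{\5J}$.

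\textbf{The final statement.} Any $\varphi\in\Aut(U,\calf)$ is an isomorphism of $\calf$ onto itself, hence preserves every purely group-theoretic/fusion-theoretic condition; in particular it preserves condition (ii), which characterizes the subgroups $U_{\5J}$ among subgroups of $U$ (note $B=N_G(U)$ and ``$P\nsg B$'' translates to ``$\auts(U)$ normalizes the image'', i.e.\ $\varphi$ conjugates the $B$-relation to itself since $\varphi$ is fusion-preserving and $\Aut_B(U)=\Aut_U(U)\cdot\Aut_T(U)$ is intrinsic to $\calf$ as $\Aut_T(U)$ is an $\calf$-automorphism group of the largest subgroup). Hence $\varphi$ permutes $\{U_{\5J}\mid\5J\subsetneqq\5\Pi\}$. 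Since $\varphi$ is an automorphism of $U$ it preserves order, and the $U_{\5\alpha}$ for $\5\alpha\in\5\Pi$ are exactly the maximal members of this family (each $U_{\5\alpha}$ has index $|X_{\5\alpha}|$ in $U$ and every $U_{\5J}$ with $|\5J|\ge1$ is contained in some $U_{\5\alpha}$), so $\varphi$ permutes $\{U_{\5\alpha}\mid\5\alpha\in\5\Pi\}$ as claimed.

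I expect the genuine work to be in (ii) $\Rightarrow$ (i): pinning down that $P\nsg B$ makes $P$ a product of root groups requires care with the $T$-action (and here it is essential that $q_0=p$, so that $T$ distinguishes all the root groups — over the algebraic group $\4T$ does, and one must descend), and the Borel--Tits identification of $N_G(P)$ as a parabolic is the crux; I would lean on \cite[Theorem 2.6.5]{GLS3} and the surrounding structure theory rather than reprove it.
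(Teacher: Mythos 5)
Your ``cleanest route'' for (ii)$\Rightarrow$(i) is in fact the paper's own argument, and the rest of your cycle (i)$\Rightarrow$(iii)$\Rightarrow$(ii)$\Rightarrow$(i) and the deduction of the final statement from the intrinsic nature of condition (ii) also match the paper; so the overall approach is right. Two points need fixing, though. First, your primary route for (ii)$\Rightarrow$(i) --- that $P\nsg B$ forces $P$ to be a product of root groups --- is false in general for small $q$: for $G=\SL_3(2)$ one has $T=1$ and $B=U\cong D_8$, and the cyclic subgroup of order $4$ is normal in $B$ but is not a product of root groups. (Such subgroups are excluded by the \emph{other} hypotheses in (ii), not by $T$-invariance; in this example $O_2(\outf(P))\ne1$.) That route should simply be dropped. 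Second, in the cleanest route the step ``$O_p(\outf(P))=1$ gives $O_p(N_G(P))=P$'' is too quick: at that stage you know only $C_U(P)\le P$, not $C_G(P)\le P$, and $\outf(P)\cong N_G(P)/PC_G(P)$, which is not $N_G(P)/P$. The missing (and essential) link, which is exactly what the paper writes, is: with $N_G(P)=\Par{\5J}$ and $P\le U_{\5J}=O_p(\Par{\5J})$, the image $U_{\5J}C_G(P)/PC_G(P)$ is a normal $p$-subgroup of $\outf(P)$, hence trivial, so $U_{\5J}\le PC_G(P)$; since $U_{\5J}\le U$ this gives $U_{\5J}\le PC_U(P)=P$, whence $P=U_{\5J}$. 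This is the one place where the hypothesis $C_U(P)\le P$ is actually used. Note also that to see $N_G(P)$ is parabolic you need only $P\nsg B$, so that $N_G(P)\ge B$ and overgroups of the Borel subgroup are parabolic (Carter, Theorem 8.3.2); no self-centralizing form of Borel--Tits is needed, and in particular the hypothesis $C_G(P)\le P$, which you do not have under (ii), is never required for that step.
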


\begin{proof} \noindent\boldd{(i)$\implies$(iii): } 
For each $\5J\subsetneqq\5\Pi$, $C_G(U_{\5J})=Z(U_{\5J})$ by 
\cite[Theorem 2.6.5(e)]{GLS3} (recall that $G$ is of adjoint type).  Also, 
$O_p(N_G(U_{\5J}))=O_p(\Par{\5J})=U_{\5J}$, and $U_{\5J}$ 
is normal in $B$ since $N_G(U_{\5J})=\Par{\5J}\ge B$. 

\noindent\boldd{(iii)$\implies$(ii): } This holds since $\outf(P)\cong 
N_G(P)/PC_G(P)$.

\noindent\boldd{(ii)$\implies$(i): } In this case, $P\nsg B$, so 
$N_G(P)\ge B$, and $N_G(P)=\Par{\5J}$ for some $\5J\subsetneqq\5\Pi$ (cf. 
\cite[Theorem 8.3.2]{Carter}). Then $P\le O_p(\Par{\5J})=U_{\5J}$. Also, 
$U_{\5J}C_G(P)\big/PC_G(P)\le O_p(N_G(P)/PC_G(P))=1$, so $U_{\5J}\le 
PC_G(P)$. Since $U_{\5J}\le U$, this implies that $U_{\5J}\le PC_U(P)=P$; 
i.e., that $P=U_{\5J}$. So (i) holds.

The last statement follows from the equivalence of (i) and (ii). 
\end{proof}

When $G$ has large Lie rank, Theorem \ref{ThE} now follows from 
properties of Tits buildings.

\begin{Prop} \label{rk>2-tame}
Assume $G\in\Lie(p)$ is of adjoint type and has Lie rank at least $3$. 
Fix $U\in\sylp{G}$.  Then $\kappa_G$ is split surjective.
\end{Prop}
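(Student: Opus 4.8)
The plan is to exploit the fact that for a group $G \in \Lie(p)$ of adjoint type and Lie rank at least $3$, the Tits building and the amalgam of parabolic subgroups containing $B$ are rigid enough that automorphisms of the fusion/linking system must come from group automorphisms. Concretely, I would first invoke Proposition \ref{Borel-Tits} to see that every $\varphi \in \Aut(U,\calf)$ (and correspondingly every class in $\Out\typ(\call)$, working through $\mu_G$ and Lemma \ref{kappa_inj}) permutes the subgroups $U_{\5J}$ for $\5J \subsetneqq \5\Pi$, hence induces a (type-preserving up to diagram symmetry) automorphism of the poset of parabolics $\Par{\5J}$, equivalently of the Tits building $\Delta(G)$.

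Next I would pass from automorphisms of the fusion/linking system to automorphisms of the amalgam $\mathcal{A} = \{\Par{\5J} : \5J \subsetneqq \5\Pi\}$ (or just the rank-$2$ piece, the parabolics $\Par{\5\alpha}$ and their intersections). The key input here is that for each maximal parabolic $\Par{\5\alpha}$, the subgroup $U_{\5\alpha} = O_p(\Par{\5\alpha})$ is $p$-centric in $G$, so $\Par{\5\alpha} = N_G(U_{\5\alpha})$ is recovered inside the linking system $\call$ as (essentially) $\Aut_\call(U_{\5\alpha})$ together with its distinguished subgroup, and similarly $B = N_G(U)$ is $\Aut_\call(U)$. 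An automorphism $\beta \in \Aut\typ^I(\call)$, after adjusting by $c_\gamma$ for a suitable $\gamma \in \Aut_\call(U)$, therefore yields a compatible family of isomorphisms of the $\Par{\5J}$'s restricting to the identity (or a diagram automorphism) on the common $B$; this is precisely an automorphism of the rank-$\ge 3$ amalgam. By the theory of Tits — the Curtis–Tits / Phan-type presentation, or equivalently the fact that a group of Lie rank $\ge 3$ is the universal completion (amalgam) of its parabolics, see e.g. the building being thick and irreducible of rank $\ge 3$ so that $G = \Inndiag(G)$-part is determined by the amalgam and $\Aut(\Delta) \cong \Aut(G)$ modulo the known pieces — such an amalgam automorphism extends uniquely to an automorphism of $G = \langle \Par{\5\alpha} \rangle$. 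This produces an element of $\Aut(G)$, well-defined modulo $\Inn(G)$, mapping to $[\beta]$, giving surjectivity of $\kappa_G$.

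For the splitting, I would observe that the construction above is natural: sending $[\beta] \in \Out\typ(\call)$ to the class in $\Out(G)$ of the extension-to-$G$ of the induced amalgam automorphism is a homomorphism (composition of amalgam automorphisms corresponds to composition of the extensions, by the uniqueness of the extension), and it is a section of $\kappa_G$ because if $[\beta] = \kappa_G([\alpha])$ then $\alpha$ already induces on the $\Par{\5J}$'s the amalgam automorphism attached to $\beta$, so its extension is $\alpha$ again up to $\Inn(G)$. Thus $\kappa_G$ is split surjective.

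The main obstacle is the bookkeeping in the middle step: carefully checking that the data an automorphism of $\call$ carries — permutation of the $U_{\5J}$, the induced isomorphisms $\Aut_\call(U_{\5J}) \xrightarrow{\cong} \Aut_\call(U_{\5J'})$, compatibility with inclusions and distinguished subgroups — assembles into an honest automorphism of the amalgam of parabolics, and then that this amalgam automorphism is realized by a genuine automorphism of $G$ (invoking the rank $\ge 3$ rigidity, which is where one uses that the Lie rank is at least $3$ rather than $2$). One must also handle the interplay with $\Inndiag(G)$ and the graph/field automorphisms so that the section lands correctly in $\Out(G)$, and be careful that $Z(U) \ne 1$ issues (the difference between $\Aut_\call(U)$ and $\Aut(U)$) are absorbed by passing to $\Out$. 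I expect the first and last paragraphs to be essentially formal, with all the real work concentrated in establishing that amalgam automorphisms of the rank-$\ge 3$ parabolic system extend to $\Out(G)$ compatibly.
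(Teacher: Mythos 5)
Your proposal is correct and follows essentially the same route as the paper: use Proposition \ref{Borel-Tits} to see that any $\alpha\in\Aut\typ^I(\call)$ permutes the subgroups $U_{\5J}$, identify $\Aut_\call(U_{\5J})=N_G(U_{\5J})=\Par{\5J}$ (since $C_G(U_{\5J})=Z(U_{\5J})$), so that $\alpha$ induces an automorphism of the amalgam of parabolics, and then invoke Tits' theorem that for Lie rank at least $3$ the group $G$ is the amalgamated sum of its parabolics to extend $\alpha$ uniquely to an automorphism of $G$, the resulting map $\Out\typ(\call)\to\Out(G)$ being a section of $\kappa_G$. The only inessential difference is that no adjustment by $c_\gamma$ is needed before passing to the amalgam, and the paper cites Tits (or Serre) directly rather than Curtis--Tits-type presentations.
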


\begin{proof} Set $\call=\call_U^c(G)$.  By Proposition \ref{Borel-Tits}, 
for each $\alpha\in\Aut\typ^I(\call)$, $\alpha$ permutes the subgroups 
$U_{\5J}$ for $\5J\subsetneqq\5\Pi$.  For each such $\5J$, 
$C_G(U_{\5J})=Z(U_{\5J})$, so $\Aut_\call(U_{\5J})=N_G(U_{\5J})=\Par{\5J}$.  
Thus $\alpha$ induces an automorphism of the amalgam of parabolic subgroups 
$\Par{\5J}$.  Since $G$ is the amalgamated sum of these subgroups by a 
theorem of Tits (see \cite[Theorem 13.5]{Tits} or \cite[p. 95, Corollary 
3]{Serre}), $\alpha$ extends to a unique automorphism $\4\alpha$ of $G$.  

Thus $\alpha\mapsto\4\alpha$ defines a homomorphism 
$\5s\:\Aut\typ^I(\call)\Right2{}\Aut(G)$.  If $\alpha=c_\gamma$ for 
$\gamma\in\Aut_\call(U)=N_G(U)$, then $\4\alpha$ is conjugation by 
$\gamma\in G$ and hence lies in $\Inn(G)$.  Hence $\5s$ factors through 
$s\:\Out\typ(\call)\Right2{}\Out(G)$, $\kappa_G\circ 
s=\Id_{\Out\typ(\call)}$, and thus $\kappa_G$ is split surjective.
\end{proof}


Before we can handle the rank $1$ case, two elementary lemmas are needed.

\begin{Lem} \label{Aut-lemma}
Let $G$ be a finite group with normal Sylow $p$-subgroup $S\nsg{}G$.  Fix 
subgroups $1=S_0<S_1<\cdots<S_k=S$ normal in $G$ such that the following hold:
\begin{enumr} 

\item $S_{k-1}\le\Fr(S)$;

\item $C_G(S)\le S$; and 

\item for each $1\le i\le k-1$, $S_i$ is characteristic in $G$, $[S,S_i]\le 
S_{i-1}$, $S_i/S_{i-1}$ has exponent $p$, and 
$\Hom_{\F_p[G/S]}(S/\Fr(S),S_i/S_{i-1})=0$ (i.e., no irreducible 
$\F_p[G/S]$-submodule of $S_i/S_{i-1}$ appears as a submodule of 
$S/\Fr(S)$).

\end{enumr}
Let $\alpha\in\Aut(G)$ be such that $[\alpha,S]\le S_{k-1}$.  Then 
$\alpha\in\Aut_S(G)$.
\end{Lem}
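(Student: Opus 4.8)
The plan is to prove that $\alpha$ is a product of conjugations by elements of $S$; since $\Aut_S(G)=\{c_s\,|\,s\in S\}$ is a subgroup of $\Aut(G)$, this is exactly the assertion $\alpha\in\Aut_S(G)$. Note first that $\alpha(S)=S$, as $S$ is the unique Sylow $p$-subgroup of $G$, and that by (i) the hypothesis $[\alpha,S]\le S_{k-1}\le\Fr(S)$ says $\alpha$ induces the identity on $S/\Fr(S)$. The key preliminary step is to deduce that $\alpha$ also induces the identity on $G/S$: for $g\in G$ the relation $\alpha\circ c_g=c_{\alpha(g)}\circ\alpha$ on $S$ shows, after passing to $S/\Fr(S)$, that $\alpha(g)g^{-1}$ lies in the kernel $K$ of the conjugation action of $G$ on $S/\Fr(S)$; but $S\le K$, $K/S\le G/S$ has order prime to $p$, and by Schur--Zassenhaus a $p'$-complement $L$ to $S$ in $K$ acts trivially on $S/\Fr(S)$, hence (coprime action) trivially on $S$, hence $L\le C_G(S)\le S$ by (ii), so $L=1$ and $K=S$. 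Therefore $\delta(g):=\alpha(g)g^{-1}\in S$ for every $g\in G$, and $\delta$ is a $1$-cocycle for the conjugation action of $G$ on $S$ (i.e.\ $\delta(g_1g_2)=\delta(g_1)\cdot{}^{g_1}\delta(g_2)$) with $\delta(t)\in S_{k-1}$ for all $t\in S$.

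Next I would establish a ``descent step'': if $M\nsg G$, $M\le S$, and $\delta(t)\in M$ for all $t\in S$, then after replacing $\alpha$ by $c_z^{-1}\alpha$ for a suitable $z\in S$ (which is harmless, since $c_z\in\Aut_S(G)$) the new cocycle takes values in $M$ on all of $G$. To see this, reduce modulo $M$: the cocycle $\bar\delta\colon G\to S/M$ vanishes on $S$, and a short manipulation of the cocycle identity (writing $tg=g\cdot(g^{-1}tg)$ for $t\in S$) shows that $\bar\delta$ factors through $G/S$ and lands in $C_{S/M}(S)$. The latter is a $p$-group on which $G/S$ acts, and $|G/S|$ is prime to $p$, so $H^1(G/S;C_{S/M}(S))=0$; thus $\bar\delta(g)=\bar z\cdot{}^{\bar g}\bar z^{-1}$ for some $\bar z\in C_{S/M}(S)$, and lifting $\bar z$ to $z\in S$ and replacing $\alpha$ by $c_z^{-1}\alpha$ replaces $\delta(g)$ by $z^{-1}\delta(g)\,{}^{g}z$, which is $\equiv1\pmod M$ by a direct calculation. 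Applying this with $M=S_{k-1}$, we may assume henceforth that $\delta(g)\in S_{k-1}$ for all $g\in G$.

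I would then run a downward induction on $j=k-1,\dots,0$, maintaining (after further replacements inside $\Aut_S(G)$) the condition $\delta(g)\in S_j$ for all $g$. Given this for some $j$ with $1\le j\le k-1$: by (iii), $[S,S_j]\le S_{j-1}$, so $S$ acts trivially on the elementary abelian group $S_j/S_{j-1}$; hence the restriction to $S$ of the reduction of $\delta$ modulo $S_{j-1}$ is a \emph{homomorphism} $\mu\colon S\to S_j/S_{j-1}$, necessarily trivial on $\Fr(S)$. Using the cocycle identity to write $\delta(gtg^{-1})=\delta(g)\cdot{}^g\delta(t)\cdot{}^{gtg^{-1}}(\delta(g)^{-1})$ and that $gtg^{-1}\in S$ acts trivially on $S_j/S_{j-1}$, one gets $\mu(gtg^{-1})={}^{\bar g}\mu(t)$; thus $\mu$ descends to an $\F_p[G/S]$-module homomorphism $S/\Fr(S)\to S_j/S_{j-1}$, which is $0$ by (iii). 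Hence $\delta(t)\in S_{j-1}$ for all $t\in S$, and the descent step with $M=S_{j-1}$ lets us arrange $\delta(g)\in S_{j-1}$ for all $g$. For $j=0$ this gives $\delta\equiv1$, so the current $\alpha$ equals $\Id_G$; unwinding the replacements, the original $\alpha$ is a product of the conjugations $c_z$ used along the way and hence lies in $\Aut_S(G)$.

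The one point requiring care is the verification that $\mu$ is $\F_p[G/S]$-equivariant — that is, the identity $\delta(gtg^{-1})=\delta(g)\cdot{}^g\delta(t)\cdot{}^{gtg^{-1}}(\delta(g)^{-1})$ combined with triviality of the $S$-action on $S_j/S_{j-1}$ — since this is precisely where hypothesis (iii) enters; the remaining ingredients (the coprime-action lemma, vanishing of $H^1$ of a $p'$-group with $p$-group coefficients, and the elementary cocycle computations) are routine.
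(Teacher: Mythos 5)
Your proof is correct, and it takes a genuinely different route from the paper's, even though the two arguments share their main ingredients: the faithfulness of $G/S$ on $S/\Fr(S)$ (coprime action plus hypothesis (ii)), the $\Hom$-vanishing in (iii), and the vanishing of $H^1$ of the $p'$-group $G/S$ with $p$-group coefficients. The paper argues by induction on $k$ from the bottom of the filtration: it first settles the case $\alpha|_S=\Id$ using $H^1(G/S;Z(S))=0$ together with \cite[Lemma 1.2]{OV2}, then passes to the quotient $G/S_1$ (this is where it needs $S_1$ to be \emph{characteristic} in $G$, so that $\alpha$ descends), arranges by induction that $\alpha$ induces the identity on $G/S_1$, and kills the last layer via an $\F_p[G/S]$-linear map $S/\Fr(S)\to S_1$. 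You instead stay inside $G$ and work with the derivation $\delta(g)=\alpha(g)g^{-1}$, descending from the top of the filtration: at each layer, (iii) pushes $\delta|_S$ from $S_j$ into $S_{j-1}$, and then the coboundary adjustment --- vanishing of $H^1(G/S;C_{S/M}(S))$, where $C_{S/M}(S)$ is indeed abelian since $[x,S]\le M$ and $[y,S]\le M$ force $[x,y]\in M$ --- pushes $\delta$ on all of $G$ into $S_{j-1}$ after composing with some $c_z$, $z\in S$. The points you single out as delicate (the $\F_p[G/S]$-equivariance of $\mu$, the factorization of $\bar\delta$ through $G/S$ with values in $C_{S/M}(S)$, and the coprime-action step identifying the kernel $K$ with $S$) all check out as you describe them. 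What your route buys: it is self-contained (no appeal to \cite[Lemma 1.2]{OV2} and no passage to quotient groups), and it uses only that the $S_i$ are normal in $G$, not that they are characteristic; what the paper's route buys is brevity, given the cited lemma and the quotient induction.
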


\begin{proof} For $1\ne g\in G$ of order prime to $p$, the conjugation 
action of $g$ on $S$ is nontrivial since $C_G(S)\le S$, and hence the 
conjugation action on $S/\Fr(S)$ is also nontrivial (see \cite[Theorem 
5.3.5]{Gorenstein}). Thus $G/S$ acts faithfully on $S/\Fr(S)$. Since 
$\alpha$ induces the identity on $S/\Fr(S)$, $\alpha$ also induces the 
identity on $G/S$.

Assume first that $\alpha|_S=\Id$. Since $S$ is a $p$-group 
and $G/S$ has order prime to $p$, $H^1(G/S;Z(S))=0$. So by \cite[Lemma 
1.2]{OV2}, $\alpha\in\Inn(G)$. If $g\in{}G$ is such that $\alpha=c_g$, then 
$[g,S]=1$ since $\alpha|_S=\Id$, and $g\in S$ since $G/S$ acts faithfully 
on $S/\Fr(S)$. Thus $\alpha\in\Aut_S(G)$ in this case.

In particular, this proves the lemma when $k=1$. So assume $k\ge2$. We can 
assume inductively that the lemma holds for $G/S_1$, and hence can 
arrange (after composing by an appropriate element of $\Aut_S(G)$) that 
$\alpha$ induces the identity on $G/S_1$.

Let $\varphi\in\Hom(S,S_1)$ be such that $\alpha(x)=x\varphi(x)$ for each 
$x\in S$ (a homomorphism since $S_1\le Z(S)$). Then $\varphi$ factors 
through $\4\varphi\in\Hom(S/\Fr(S),S_1)$ since $S_1$ is elementary abelian, 
and $\4\varphi$ is a homomorphism of $\F_p[G/S]$-modules since 
$\alpha(g)\equiv g$ (mod $S_1$) for each $g\in{}G$ (and $S_1\le Z(S)$). Thus 
$\varphi=1$ since $\Hom_{G/S}(S_k/S_{k-1},S_1)=0$ by (iii), so
$\alpha|_S=\Id$, and we already showed that this implies 
$\alpha\in\Aut_S(G)$. 
\end{proof}

The next lemma will be useful when checking the hypotheses of Lemma 
\ref{Aut-lemma}.

\begin{Lem} \label{l:Fq*onFq}
Fix a prime $p$ and $e\ge1$, and set $q=p^e$ and $\Gamma=\F_q^\times$. 
For each $a\in\Z$, set 
$V_a=\F_q$, regarded as an $\F_p\Gamma$-module with action 
$\lambda(x)=\lambda^ax$ for $\lambda\in\Gamma$ and $x\in\F_q$. 
\begin{enuma} 
\item For each $a$, $V_a$ is $\F_p\Gamma$-irreducible if and only if 
$a/\gcd(a,q-1)$ does not divide $p^t-1$ for any $t|e$, $t<e$. 

\item For each $a,b\in\Z$, $V_a\cong V_b$ as $\F_p\Gamma$-modules if 
and only if $a\equiv bp^i$ (mod $q-1$) for some $i\in\Z$.
\end{enuma}
\end{Lem}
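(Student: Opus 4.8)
**Plan for the proof of Lemma \ref{l:Fq*onFq}.**

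The statement concerns the $\F_p\Gamma$-module structure of the one-dimensional $\F_q$-modules $V_a$, where $\Gamma = \F_q^\times$ acts through the power character $\lambda \mapsto \lambda^a$. The natural tool is to decompose $V_a$ after extending scalars to $\overline{\F}_p$ (or to $\F_q$ itself), where $\Gamma$, being cyclic of order $q-1$ prime to $p$, is diagonalizable: every $\overline{\F}_p\Gamma$-module splits as a sum of one-dimensional characters $\chi^j \colon \lambda \mapsto \lambda^j$ with $j \in \Z/(q-1)$. The Galois group $\mathrm{Gal}(\F_q/\F_p) = \gen{\mathrm{Frob}_p}$ of order $e$ acts on these characters by $\chi^j \mapsto \chi^{pj}$, and the $\F_p\Gamma$-irreducible modules correspond bijectively to the Frobenius orbits on $\Z/(q-1)$; the dimension of the irreducible attached to an orbit equals the size of that orbit. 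First I would record this standard correspondence.

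For part (b): two $\F_p\Gamma$-modules are isomorphic iff they become isomorphic over $\F_q$, and $V_a \otimes \F_q$ is the sum of the $\F_q$-conjugates $\chi^{a}, \chi^{pa}, \dots$ — more precisely, $V_a$ corresponds to the Frobenius orbit of $a \bmod (q-1)$. Hence $V_a \cong V_b$ iff $a$ and $b$ lie in the same orbit, i.e. $a \equiv b p^i \pmod{q-1}$ for some $i$. This is essentially immediate once the dictionary is set up; the only thing to check carefully is that $V_a$ over $\F_q$ really is $\bigoplus_{i=0}^{d-1}\chi^{p^i a}$ where $d$ is the orbit length — this follows because $V_a$ is $\F_q$ itself with $\Gamma$ acting by $\lambda^a$, and diagonalizing over $\F_q$ (which contains all $(q-1)$-th roots of unity) splits it into eigenlines for the characters $\chi^{p^i a}$ forced by the $\F_p$-rationality (Galois descent).

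For part (a): $V_a$ is irreducible over $\F_p$ iff the Frobenius orbit of $a \bmod (q-1)$ has full length $e$, i.e. iff $p^i a \equiv a \pmod{q-1}$ holds for no $0 < i < e$. Writing $g = \gcd(a, q-1)$ and $a = g a'$ with $a'$ coprime to $(q-1)/g$, the congruence $p^i a \equiv a$ becomes $(p^i - 1) a \equiv 0 \pmod{q-1}$, equivalently $(p^i-1) \equiv 0 \pmod{(q-1)/g}$. So a proper period $i \mid e$ exists iff $(q-1)/g \mid p^i - 1$ for some proper divisor $i$ of $e$ — but I should phrase it via $a/\gcd(a,q-1)$ as in the statement. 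Here I need the small arithmetic fact that the set of $i$ with $p^i \equiv 1 \pmod m$ (for $m = (q-1)/g$, which divides $p^e - 1$) is exactly the multiples of $\mathrm{ord}_m(p)$, and that $\mathrm{ord}_m(p)$ divides $e$; so the orbit length is $\mathrm{ord}_m(p)$ and it is proper iff $m \mid p^t - 1$ for some $t \mid e$, $t < e$. Translating $m = (q-1)/\gcd(a,q-1) = a'/\gcd(a',\dots)$... — actually $m = (q-1)/\gcd(a,q-1)$ and one checks $a/\gcd(a,q-1)$ has the same set of divisors among divisors of $p^t-1$ as $m$ does in the relevant sense, because what matters is when $m \mid p^t-1$. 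The main obstacle is just bookkeeping: reconciling the ``$a/\gcd(a,q-1)$ divides $p^t-1$'' phrasing in the statement with the cleaner ``$(q-1)/\gcd(a,q-1)$ divides $p^t-1$'' that falls out of the orbit computation; I expect these to be equivalent because $a/\gcd(a,q-1)$ and $(q-1)/\gcd(a,q-1)$ have the same radical relevant to divisibility into $p^t-1 \mid p^e-1$, but I would verify this identity explicitly as the one genuinely fiddly step.
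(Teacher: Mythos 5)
Your part (b) is essentially the paper's argument verbatim: extend scalars to $\F_q$, decompose $\F_q\otimes_{\F_p}V_a$ into the one-dimensional $\F_q\Gamma$-modules with characters $\lambda\mapsto\lambda^{ap^i}$, and use that isomorphism after extension of scalars detects isomorphism over $\F_p$. For part (a) you take a slightly heavier but correct route through the semisimple dictionary (irreducible $\F_p\Gamma$-modules correspond to Frobenius orbits of characters, so $V_a$ is irreducible iff the orbit of $a$ mod $q-1$ has length $e$), whereas the paper argues directly: if $t<e$ is the order of $p$ in $(\Z/\frac{q-1}{d})^\times$ with $d=\gcd(a,q-1)$, then $\lambda^a\in\F_{p^t}$ for all $\lambda$, so $\F_{p^t}$ is a proper nonzero submodule; conversely any submodule of dimension $i<e$ is a union of $\Gamma$-orbits, all of which have length $(q-1)/d$, forcing $(q-1)/d\mid p^i-1$. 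Both roads lead to the same criterion, phrased in terms of $(q-1)/\gcd(a,q-1)$.

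The one step you postponed is the place to change course: the condition ``$a/\gcd(a,q-1)$ divides $p^t-1$'' is \emph{not} equivalent to ``$(q-1)/\gcd(a,q-1)$ divides $p^t-1$'', and no bookkeeping will reconcile them. For example, take $p=2$, $e=4$, $q=16$, $a=3$: the Frobenius orbit of $3$ modulo $15$ is $\{3,6,12,9\}$ of length $4$, so $V_3$ is irreducible, yet $a/\gcd(a,q-1)=1$ divides $p^t-1$ for $t=1,2$. The criterion your computation produces, with $(q-1)/\gcd(a,q-1)$, is the correct one; it is also what the paper's own proof establishes (it works throughout with the order of $p$ modulo $(q-1)/d$), and it is the form actually used in the applications, where what matters is that the order of the image of $\Gamma$ acting on the module, namely $(q-1)/\gcd(a,q-1)$, divides no $p^t-1$ with $t<e$. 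So read the ``$a/$'' in the printed statement as a slip for ``$(q-1)/$'' rather than as an identity your proof still owes.
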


\begin{proof} \noindent\textbf{(a) } Set $d=\gcd(a,q-1)$, and let $t$ be 
the order of $p$ in $(\Z/\frac{q-1}d)^\times$. Thus $t|e$ since 
$\frac{q-1}d\big|(p^e-1)$. If $t<e$, then $\lambda^a\in\F_{p^t}$ for each 
$\lambda\in\F_q$, so $0\ne\F_{p^t}\subsetneqq V_a$ is a proper 
$\F_p\Gamma$-submodule, and $V_a$ is reducible.

Conversely, if $V_a$ is reducible, then it contains a proper submodule 
$0\ne W\subsetneqq V_a$ of dimension $i$, some $0<i<e$. All $\Gamma$-orbits 
in $V_a{\sminus}0$, hence in $W{\sminus}0$, have length $\frac{q-1}d$, so 
$\frac{q-1}d\big|(p^i-1)$, and $t\le i<e$. 

\smallskip

\noindent\textbf{(b) } For each $a\in\Z$, let $\4V_a\cong\F_q$ be the 
$\F_q\Gamma$-module where $\Gamma$ acts via $\lambda(x)=\lambda^ax$. Then 
$\F_q\otimes_{\F_p}V_a\cong 
\4V_a\oplus\4V_{ap}\oplus\cdots\oplus\4V_{ap^{e-1}}$ as 
$\F_q\Gamma$-modules. Since $\4V_b\cong\4V_a$ if and only if $b\equiv a$ 
(mod $q-1$), $V_b\cong V_a$ if and only if $b\equiv ap^i$ (mod $q-1$) for 
some $i$. 
\end{proof}

In principle, we don't need to look at the fusion systems of the simple 
groups of Lie rank $1$ if we only want to prove tameness.  Their fusion is 
controlled by the Borel subgroup, so their fusion systems are tame by 
Proposition \ref{p:constrained}.  But the following proposition is 
needed when proving Theorem \ref{ThE} in its stronger form, and will also 
be used when working with groups of larger Lie rank. 

\begin{Prop} \label{rk1equi}
Fix a prime $p$, and a group $G\in\Lie(p)$ of Lie rank $1$.  
Assume $(G,p)\not\cong(\Sz(2),2)$.  
Then each $\varphi\in\Aut(U,\calf)$ extends to an automorphism of $G$.  
Also, if $[\varphi,U]\le[U,U]$, then $\varphi\in\Inn(U)$.
\end{Prop}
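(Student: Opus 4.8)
The plan is to reduce to $G$ of adjoint type, then to a statement purely about automorphisms of the Borel subgroup $B=N_G(U)$, and finally to apply the two lemmas just proved (Lemmas~\ref{Aut-lemma} and~\ref{l:Fq*onFq}), family by family.

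\emph{Reductions.} Since $Z(G)$ is a $p'$-group (Lemma~\ref{theta-r}\eqref{CG(T)=T}, Proposition~\ref{p:Autdiag(G)}(a)) while $G^{\ab}$ is a $p$-group, the kernel of $\Aut(G)\to\Aut(G/Z(G))$ is trivial; it is onto because $\Out(G)\cong\Out(G/Z(G))$ by \cite[Theorem 2.5.14(d)]{GLS3} and $\Inn(G)\cong\Inn(G/Z(G))$, and $U$ maps isomorphically onto its image in $G/Z(G)$ carrying $\calf=\calf_U(G)$ onto the corresponding fusion system of $G/Z(G)$. So I may assume Notation~\ref{G-setup-Q} with $G$ of Lie rank~$1$; thus $G\cong\PSL_2(q)$, $\PSU_3(q)$, $\Sz(q)$ or $\lie2G2(q)$ with $q=p^e$. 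Put $B=N_G(U)=UT$. By Proposition~\ref{Borel-Tits} with $\5J=\emptyset$ we get $C_G(U)=Z(U)\le U$; in particular $T$ acts faithfully on $U$, $B$ controls $p$-fusion in the rank-$1$ group $G$ so $\calf=\calf_U(B)$, and $B$ is a model for $\calf$.

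\emph{Passage to $\Aut(B)$.} By Lemma~\ref{extend_phi} applied with $P=U$, every $\varphi\in\Aut(U,\calf)$ extends to some $\chi\in\Aut(B)$. By Theorems~\ref{St-aut} and~\ref{St-aut2}, and since $\Gamma_G=1$ in Lie rank~$1$, the image of the restriction map $N_{\Aut(G)}(U)\to\Aut(U)$ equals $\cala_0:=\gen{\Inn(U),\ D,\ \Phi_G|_U}$, where $D\le\Aut(U)$ is the group of automorphisms induced by conjugation by $N_{\4T}(G)$ and $\Phi_G|_U$ is the group of field automorphisms. Every automorphism of $G$ preserving $U$ restricts to a fusion-preserving automorphism of $U$, so $\cala_0\subseteq\Aut(U,\calf)$; thus the first assertion reduces to showing that the restriction map $\Aut(B)\to\Aut(U)$ has image exactly $\cala_0$ (the inclusion $\cala_0\subseteq\Aut(B)|_U$ being obvious, as each of the three generating subgroups extends visibly to $B$).

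\emph{The module-theoretic core.} For each of the four families fix a central, $B$-characteristic filtration $1=U_k\le\dots\le U_1=U$ with $U_{k-1}=\Fr(U)\supseteq[U,U]$: namely $1\le Z(U)\le U$ for $\PSL_2$, $\PSU_3$, $\Sz$ (where $Z(U)=\Fr(U)=[U,U]$), and $1\le Z(U)\le[U,U]\le U$ for $\lie2G2$ (where $\Fr(U)=[U,U]$). Each successive quotient is an $\F_pT$-module of the form $V_a$ as in Lemma~\ref{l:Fq*onFq}, with exponent $a$ read off from the relevant (dual) root. Given $\chi\in\Aut(B)$, after composing with an inner automorphism of $B$ I may assume $\chi(T)=T$; on each quotient $\chi$ then intertwines the $T$-action with its twist by $\chi|_T$, so Lemma~\ref{l:Fq*onFq}(b) forces $\chi|_T$ to agree on $T$ with the restriction of some power $\psi_p^{\,i}$ of the field automorphism. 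Composing $\chi$ with $\psi_p^{\,-i}$ and then with suitable elements of $D$ and $\Inn(U)$ (again using Lemma~\ref{l:Fq*onFq} for the torus action on $U/\Fr(U)$), I reduce to the case $[\chi,U]\le\Fr(U)=U_{k-1}$. In each family, hypothesis~(iii) of Lemma~\ref{Aut-lemma} — that no $\F_pT$-constituent of $U_i/U_{i+1}$ for $i<k$ occurs in $U/\Fr(U)$ — holds by Lemma~\ref{l:Fq*onFq}(b). Hence Lemma~\ref{Aut-lemma}, with its ``$G$'' taken to be $B$ and ``$S$'' to be $U$, gives $\chi\in\Aut_U(B)$, so $\chi|_U\in\Inn(U)\subseteq\cala_0$; this proves $\Aut(B)|_U=\cala_0$ and the first assertion. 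For the second assertion: if $[\varphi,U]\le[U,U]\le\Fr(U)$, the same application of Lemma~\ref{Aut-lemma} to an extension $\chi\in\Aut(B)$ of $\varphi$ gives $\varphi\in\Inn(U)$ directly.

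\emph{Main obstacle.} The real work is the family-by-family bookkeeping: computing $\Fr(U)$, the central filtration, and the $\F_pT$-module exponents for $\PSL_2(q)$, $\PSU_3(q)$, $\Sz(q)$ and $\lie2G2(q)$, and then verifying in each case both the ``$\chi|_T$ is a field automorphism'' step and hypothesis~(iii) of Lemma~\ref{Aut-lemma} via Lemma~\ref{l:Fq*onFq}(b). The very small groups ($\PSL_2(2)$, $\PSL_2(3)$, $\PSU_3(2)$, $\lie2G2(3)$) must be checked by hand, since the module arguments degenerate there, but in each of these $\Aut(U,\calf)$ is still generated by inner and field automorphisms. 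The one genuine exception is $(G,p)=(\Sz(2),2)$: here $U\cong C_4$ is cyclic, its inversion is (vacuously) fusion-preserving, yet it does not extend to $\Aut(\Sz(2))=\Inn(\Sz(2))$, which is exactly why that case is excluded.
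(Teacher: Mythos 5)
Your proposal is correct, and its skeleton coincides with the paper's: reduce to adjoint type, pass to the Borel subgroup $B=UT$ via Lemma~\ref{extend_phi}, conclude with Lemmas~\ref{Aut-lemma} and~\ref{l:Fq*onFq}, and check the degenerate small groups by hand (with $(\Sz(2),2)$ excluded for exactly the reason you give). Where you genuinely diverge is in the identification of the field-automorphism component. The paper works directly with the fusion-preserving $\varphi$ on $U$ (resp.\ on $U/[U,U]$ for $\PSU_3$, $\Sz$, $\lie2G2$): fusion-preservation makes $\varphi$ semilinear over $\Aut_T(U)$, and after a diagonal normalization a minimal-polynomial argument shows it agrees with a field automorphism; $\PSL_2(q)$ is treated entirely by this argument, and Lemma~\ref{Aut-lemma} is only used afterwards for the other three families. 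You instead prove the stronger, uniform statement that every $\chi\in\Aut(B)$ restricts on $U$ to an element of $\cala_0$: normalize $T$, use the twisted-module criterion of Lemma~\ref{l:Fq*onFq}(b) to force $\chi|_T$ to be a Frobenius power, and absorb the residual $T$-linear scalar on $U/\Fr(U)$ by a diagonal automorphism before invoking Lemma~\ref{Aut-lemma}. This buys a single argument covering all four families (no separate $\PSL_2$ case) and never uses fusion-preservation beyond the extension to $B$, at the cost of some verifications your ``bookkeeping'' must actually contain: Lemma~\ref{l:Fq*onFq}(b) is stated for $\Gamma=\F_q^\times$, so you need its (routine) analogue for $T$ acting through an index-$\le3$ subgroup of $\F_q^\times$ or $\F_{q^2}^\times$, plus faithfulness of the $T$-action on the chosen layer, and you must check that all $T$-linear scalars on $U/\Fr(U)$ are realized by $\Inndiag(G)$ in the adjoint group (true in each family). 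Two harmless slips: for $\PSL_2(q)$ the filtration degenerates ($U$ is abelian, so $Z(U)=U\ne\Fr(U)=1$ and the second assertion is vacuous there), and for $\PSU_3(2)$ one needs diagonal as well as field automorphisms to exhaust $\Aut(U,\calf)$ — which your $\cala_0$ does include.
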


\begin{proof} If $G$ is of universal form, then $Z(G)$ is cyclic of order 
prime to $p$ by Proposition \ref{p:Gu->Ga}.  For each $Z\le Z(G)$, 
$\Out(G/Z)\cong\Out(G)$ by \cite[Theorem 2.5.14(d)]{GLS3}, and 
$\Out(U,\calf_U(G/Z))\cong\Out(U,\calf_U(G))$ since $G$ and $G/Z$ have the 
same $p$-fusion systems.  It thus suffices to prove the proposition when 
$G$ has adjoint form.

Assume first $G=\PSL_2(q)$.  Thus $U\cong\F_q$ (as an additive group), 
$T\cong C_{(q-1)/\gee}$ where $\gee=\gcd(q-1,2)$, and 
$\Gamma\defeq\Aut_T(U)$ is the subgroup of index $\gee$ in $\F_q^\times$.  
If $\varphi\in\Aut(U)$ is fusion preserving, then under these 
identifications, there is $\alpha\in\Aut(\Gamma)$ such that 
$\alpha(u)\varphi(v)=\varphi(uv)$ for each $u\in\Gamma\le\F_q^\times$ and 
$v\in\F_q$.  After composing with an appropriate diagonal automorphism 
(conjugation by a diagonal element of $PGL_2(q)$), we can assume that 
$\varphi(1)=1$.  Hence the above formula (with $v=1$) implies that 
$\alpha=\varphi|_{\Gamma}$, and thus that 
$\varphi(uv)=\varphi(u)\varphi(v)$ for each $u,v\in\F_q$ with $u\in\Gamma$.  
If $\gee=1$, then $\varphi$ acts as a field automorphism on $U$, hence is 
the restriction of a field automorphism of $G$, and we are done.  
Otherwise, there is $u\in\Gamma$ such that $\F_q=\F_p(u)$, $u$ and 
$\varphi(u)$ have the same minimal polynomial over $\F_p$, and there is 
$\psi\in\Aut(\F_q)$ (a field automorphism) such that $\psi(u)=\varphi(u)$.  
Thus $\psi(u^i)=\varphi(u^i)$ for each $i$, so $\psi=\varphi$ since both 
are additive homomorphisms, and hence $\varphi$ extends to a field 
automorphism of $G$.  (Note that this argument also holds when $q=3$ and 
$\Gamma=1$.)

Next assume $G=\PSU_3(q)$. Following the conventions in 
\cite[Satz II.10.12(b)]{Huppert}, we identify 
	\begin{align*} 
	U &= \bigl\{ \pair[a,b] \,\big|\, 
	a,b\in\F_{q^2},~ b+b^q=-a^{q+1} \bigr\} &
	&\textup{where} &
	\pair[a,b] &= \Bigl(\begin{smallmatrix} 1&a&b\\0&1&-a^q\\0&0&1
	\end{smallmatrix} \Bigr); \\
	T &= \bigl\{d(\lambda)\,\big|\, \lambda\in\F_{q^2}^\times\bigr\} &
	&\textup{where} &
	d(\lambda)&= \diag(\lambda^{-q},\lambda^{q-1},\lambda).
	\end{align*}
Here, whenever we write a matrix, we mean its class in $\PSU_3(q)$.  
Then $B=UT=N_G(U)\le G$ (see \cite[Satz II.10.12(b)]{Huppert}), and 
	\[ \pair[a,b]\cdot\pair[c,d] = \pair[a+c,b+d-ac^q] 
	\qquad\textup{and}\qquad
	\9{d(\lambda)}\pair[a,b] = \pair[\lambda^{1-2q}a,\lambda^{-1-q}b]\,. \]
Set $\gee=\gcd(2q-1,q^2-1)=\gcd(2q-1,q^2-2q) =\gcd(q+1,3)$.  Then 
$d(\lambda)=1$ exactly when $\lambda^\gee=1$, $C_T(U)=1$, and hence 
$|T|=|\Aut_{B}(U/Z(U))|=(q^2-1)/\gee$. If $q>2$, then $|T|$ does 
not divide $p^i-1$ 
for any power $1<p^i<q^2$, and by Lemma \ref{l:Fq*onFq}(a), $U/Z(U)$ and 
$Z(U)$ are both irreducible as $\F_p[T]$-modules. (Note, in particular, the 
cases $q=5$ and $q=8$, where $(U/Z(U),T)$ is isomorphic to $(\F_{25},C_8)$ 
and $(\F_{64},C_{21})$, respectively.)  

Fix $\varphi\in\Aut(U,\calf)$, and extend it to $\alpha\in\Aut(B)$ (Lemma 
\ref{extend_phi}).  Via the same argument as that used when $G=\PSL_2(q)$, 
we can arrange (without changing the class of $\varphi$ modulo 
$\Im(\4\kappa_G)$) that $\varphi\equiv\Id$ (mod $[U,U]$).  If $q>2$, then 
the hypotheses of Lemma \ref{Aut-lemma} hold (with $[U,U]<U<B$ in the role 
of $S_1<S_2=S<G$), so $\alpha\in\Aut_U(B)$ and $\varphi\in\Inn(U)$.  

If $G\cong\PSU_3(2)\cong C_3^2\sd{}Q_8$ (cf. \cite[p. 123--124]{Taylor}), 
then $U\cong Q_8$ and $T=1$, so 
$\Out(U,\calf)=\Out(U)\cong\Sigma_3$.  By Theorem \ref{St-aut} (or by 
direct computation), $\Out(G)=\Outdiag(G)\Phi_G$ has order six, since 
$|\Outdiag(G)|=\gcd(3,q+1)=3$ and $|\Phi_G|=2$.  Thus $\4\kappa_G$ is an 
isomorphism, since it is injective by Lemma \ref{kappa_inj}.

The proof when $G=Sz(q)$ is similar.  Set $\theta=\sqrt{2q}$.  We follow 
the notation in \cite[\S\,XI.3]{HB3}, and identify $U$ as the group of all 
$S(a,b)$ for $a,b\in\F_q$ and $T<B=N_G(U)$ as the group of all $d(\lambda)$ for 
$\lambda\in\F_q^\times$, with relations 
	\[ S(a,b)\cdot S(c,d) = S(a+c,b+d+a^\theta c) 
	\quad\textup{and}\quad
	\9{d(\lambda)}S(a,b) = S(\lambda a,\lambda^{1+\theta}b)\,. \]
As in the last case, we can arrange that $\varphi\in\Aut(U,\calf)$ is the 
identity modulo $[U,U]$.  Since $q\ge8$ ($q\ne2$ by hypothesis), $Z(U)$ and 
$U/Z(U)$ are nonisomorphic, irreducible $\F_2T$-modules by Lemma 
\ref{l:Fq*onFq}(a,b) (and since $Z(U)\cong V_{1+\theta}$ and $U/Z(U)\cong 
V_1$ in the notation of that lemma).  We can thus apply Lemma 
\ref{Aut-lemma} to show that $\varphi\in\Inn(U)$.

It remains to handle the Ree groups $\lie2G2(q)$, where $q=3^m$ for some 
odd $m\ge1$.  Set $\theta=\sqrt{3q}$.  We use the notation in \cite[Theorem 
XI.13.2]{HB3}, and identify $U=(\F_q)^3$ with multiplication given by 
	\[ (x_1,y_1,z_1){\cdot}(x_2,y_2,z_2) =
	(x_1+x_2,y_1+y_2+x_1{\cdot}x_2^\theta, z_1+z_2 -x_1{\cdot}y_2 
	+ y_1{\cdot}x_2 - x_1{\cdot}x_1^\theta{\cdot}x_2)\,. \]
Note that $x^{\theta^2}=x^3$.  Let $T\le B=N_G(U)$ be the set of all 
$d(\lambda)$ for $\lambda\in\F_q^\times$, acting on $U$ via 
	\[ \9{d(\lambda)}(x,y,z) = 
	(\lambda x, \lambda^{\theta+1} y, \lambda^{\theta+2}z). \]
Again, we first reduce to the case where $\varphi\in\Aut(U,\calf)$ is such 
that $[\varphi,U]\le[U,U]$, and extend $\varphi$ to $\alpha\in\Aut(B)$.  If 
$q>3$, then $U/[U,U]\cong V_1$, $[U,U]/Z(U)\cong V_{\theta+1}$, 
and $Z(U)\cong V_{\theta+2}$ are irreducible and pairwise nonisomorphic as 
$\F_3T$-modules by Lemma \ref{l:Fq*onFq} (for $V_a$ as defined in that 
lemma), since neither $\theta+1$ nor $\theta+2$ is a power of $3$. So 
$\varphi\in\Inn(U)$ by Lemma \ref{Aut-lemma}.


If $q=3$, then $U=\gen{a,b}$, where $|a|=9$, $|b|=3$, and $[a,b]=a^3$.  Set 
$Q_i=\gen{ab^i}\cong C_9$ ($i=0,1,2$): the three subgroups of $U$ 
isomorphic to $C_9$.  Let $\Aut^0(U)\le\Aut(U)$ be the group of those 
$\alpha\in\Aut(U)$ which send each $Q_i$ to itself.  For each such 
$\alpha$, the induced action on $U/Z(U)$ sends each subgroup of order three 
to itself, hence is the identity or $(g\mapsto g^{-1})$, and the latter is 
seen to be impossible using the relation $[a,b]=a^3$.  Thus each 
$\alpha\in\Aut^0(U)$ induces the identity on $U/Z(U)$ and on $Z(U)$, and has 
the form $\alpha(g)=g\varphi(g)$ for some $\varphi\in\Hom(U/Z(U),Z(U))$.  
So $\Aut^0(U)=\Inn(U)$ since they both have order $9$ (and clearly 
$\Inn(U)\le\Aut^0(U)$).  The action of $\Aut(U)$ on $\{Q_0,Q_1,Q_2\}$ thus 
defines an embedding of $\Out(U)$ into $\Sigma_3$, and the automorphisms 
$(a,b)\mapsto(ab,b)$ and $(a,b)\mapsto(a^{-1},b)$ show that 
$\Out(U)\cong\Sigma_3$.  Since $|\outf(U)|=2$ and 
$\autf(U)\nsg\Aut(U,\calf)$, it follows that $\Out(U,\calf)=1=\Out(G)$.  
(See also \cite[Theorem 2]{BC} for more discussion about $\Aut(U)$.)
\end{proof}


\newcommand{\bigG}{\mathfrak{G}}
\newcommand{\apt}{\mathscr{T}}

It remains to show that $\kappa_G$ (at the prime $p$) is surjective when 
$G\in\Lie(p)$ has Lie rank $2$, with the one exception when 
$G\cong\SL_3(2)$. Our proof is based on ideas taken from the article of 
Delgado and Stellmacher \cite{DS}, even though in the end, we do not 
actually need to refer to any of their results in our argument. The third 
author would like to thank Richard Weiss for explaining many of the details 
of how to apply the results in \cite{DS}, and also to Andy Chermak and 
Sergey Shpectorov for first pointing out the connection. 

Fix a prime $p$, and a finite group $G\in\Lie(p)$ of Lie rank two. We 
assume Notation \ref{G-setup} and \ref{G-setup-Q}. In particular, 
$(\4G,\sigma)$ is a $\sigma$-setup for $G$, $\4T\le\4G$ is a maximal torus, 
$U\in\sylp{G}$ is generated by the positive root subgroups, and $B=N_G(U)$ 
is a Borel subgroup. Set $\5\Pi=\{\5\alpha_1,\5\alpha_2\}$, and set 
$\Par1=\Par{\5\alpha_1}=\gen{B,X_{-\5\alpha_1}}$ and 
$\Par2=\Par{\5\alpha_2}=\gen{B,X_{-\5\alpha_2}}$: the two maximal parabolic 
subgroups of $G$ containing $B$. Our proofs are based on the following 
observation:

\begin{Lem} \label{reduce2star}
Assume, for $G\in\Lie(p)$ of rank $2$ and its amalgam of parabolics as 
above, that 
	\beqq \parbox{\short}{each automorphism of the amalgam 
	$(\Par1>B<\Par2)$ extends to an automorphism of $G$.} 
	\label{star} \tag{$*$} \eeqq
Then $\kappa_G$ is surjective.
\end{Lem}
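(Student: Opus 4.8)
The plan is to read off, from a typed automorphism of $\call=\call_U^c(G)$, an automorphism of the amalgam $(\Par1>B<\Par2)$, extend that to $G$ using \eqref{star}, and then check that the round trip through $\Out(G)$ recovers the original class. So fix $\alpha\in\Aut\typ^I(\call)$. First I would note that $\alpha$ fixes the object $U$: since $\alpha$ sends inclusions to inclusions it permutes $\Ob(\call)$ as a poset under $\leq$, and $U$ is the maximum. Let $\varphi\in\Aut(U)$ be the restriction to $U\cong\dsg{U}$ of the automorphism $\alpha_U$ of $\Aut_\call(U)$ induced by $\alpha$; then $\varphi\in\Aut(U,\calf)$ (this is $\mu_G([\alpha])$), so by Proposition \ref{Borel-Tits} $\varphi$ permutes $\{U_{\5\alpha_1},U_{\5\alpha_2}\}$, and since $U$ and the $U_{\5\alpha_i}$ are fully normalized, $\alpha$ acts on them through $\varphi$. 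Now write $\5\Pi=\{\5\alpha_1,\5\alpha_2\}$; its proper subsets are $\emptyset,\{\5\alpha_1\},\{\5\alpha_2\}$, with $U_\emptyset=U$ and $\Par\emptyset=B$. By the Borel--Tits equality $C_G(U_{\5J})=Z(U_{\5J})$ (used in the proof of Proposition \ref{Borel-Tits}), we have $C'_G(U_{\5J})=1$, so $\Aut_\call(U_{\5J})=N_G(U_{\5J})=\Par{\5J}$; in particular $\Aut_\call(U)=B$, with no $p'$-quotient. Since $B\leq\Par{\5\alpha_i}=N_G(U_{\5\alpha_i})$ normalizes $U_{\5\alpha_i}$, each morphism $\dsg[U]{g}$ ($g\in B$) restricts along $\iota_{U_{\5\alpha_i},U}$ to an automorphism of $U_{\5\alpha_i}$ in $\call$; as $\alpha$ respects inclusions, hence restrictions, it carries the amalgam inclusion $B\hookrightarrow\Par{\5\alpha_i}$ to $B\hookrightarrow\Par{\5\alpha_{s(i)}}$ compatibly for $i=1,2$. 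This is exactly an automorphism $\5\theta$ of the amalgam $(\Par1>B<\Par2)$.

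By \eqref{star}, $\5\theta$ extends to some $\4\alpha\in\Aut(G)$. Since $\5\theta$ stabilizes $B=\Par1\cap\Par2$, we get $\4\alpha(B)=B$ and hence $\4\alpha(U)=\4\alpha(O_p(B))=O_p(B)=U$, so $\kappa_G([\4\alpha])$ is defined. Now I would replace $\alpha$ by $\kappa_G([\4\alpha])^{-1}\cdot\alpha$ and compose with a suitable $c_\gamma$ ($\gamma\in\Aut_\call(U)=B$) to cancel the remaining inner discrepancy on $\Aut_\call(U)$; after this reduction we may assume that $\alpha$ is the identity on the full subcategory $\call_0\subseteq\call$ with object set $\{U,U_{\5\alpha_1},U_{\5\alpha_2}\}$ --- identity on these objects, on the morphism groups $\Par{\5J}=\Aut_\call(U_{\5J})$, and on the inclusions $\iota_{U_{\5\alpha_i},U}$. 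It then suffices to prove that such an $\alpha$ is $\Id_\call$, for then $[\alpha]=1$ and $\kappa_G$ is onto.

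The heart of the argument, and the step I expect to be the main obstacle, is precisely this last one: upgrading ``identity on the parabolic data'' to ``identity on $\call$''. Here I would invoke Alperin's fusion theorem for centric linking systems: $\call$ is generated by $\Aut_\call(U)$ together with the groups $\Aut_\call(E)$ for $E$ an $\calf$-essential subgroup and the inclusions among objects. Combined with the Borel--Tits theorem on $p$-local subgroups of groups of Lie type in the defining characteristic, this shows that every $\calf$-essential subgroup of $\calf_U(G)$ is $U$-conjugate to $U_{\5\alpha_1}$ or $U_{\5\alpha_2}$: its normalizer is a proper parabolic, and for $\outf(E)$ to contain a strongly $p$-embedded subgroup that parabolic must be maximal, forcing $E$ to be a $U$-conjugate of some $U_{\5\alpha_i}$ by Proposition \ref{Borel-Tits}. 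For $E={}^uU_{\5\alpha_i}$ with $u\in U$, the isomorphism $\dsg[U_{\5\alpha_i},E]{u}$ is the restriction of $\dsg[U]{u}$ along inclusions, all of which $\alpha$ fixes; conjugating $\Aut_\call(U_{\5\alpha_i})$ by this isomorphism shows $\alpha$ is the identity on $\Aut_\call(E)$ as well. Being the identity on $\Aut_\call(U)$, on all $\Aut_\call(E)$ with $E$ essential, and on inclusions, $\alpha$ is the identity on the generating data, hence $\alpha=\Id_\call$; this completes the proof that $\kappa_G$ is surjective.
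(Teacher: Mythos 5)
Your argument is correct and follows essentially the same route as the paper's proof: Proposition \ref{Borel-Tits} shows that a typed automorphism of $\call$ permutes $U_1$ and $U_2$, the identifications $\Aut_\call(U_{\5J})=N_G(U_{\5J})=\Par{\5J}$ turn it into an automorphism of the amalgam, and \eqref{star} provides the extension to $G$. The only addition is your final rigidity step (Alperin's fusion theorem together with the Borel--Tits identification of the $\calf$-essential subgroups with $U_1,U_2$) showing that the extension induces the original class in $\Out\typ(\call)$; the paper asserts this identification without comment, and your justification of it is sound.
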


Here, by an automorphism of the amalgam, we mean a pair $(\chi_1,\chi_2)$, 
where either $\chi_i\in\Aut(\Par{i})$ for $i=1,2$ or 
$\chi_i\in\Iso(\Par{i},\Par{3-i})$ for $i=1,2$, and also 
$\chi_1|_B=\chi_2|_B$. 

\begin{proof} Set $\call=\call_U^c(G)$ and $U_i=O_p(\Par{i})$. By 
Proposition \ref{Borel-Tits}, each $\chi\in\Aut\typ^I(\call)$ either sends 
$U_1$ and $U_2$ to themselves or exchanges them. For each $i=1,2$, 
$C_G(U_i)\le U_i$, so $\Aut_\call(U_i)=N_G(U_i)=\Par{i}$.  
Thus $\chi$ induces an automorphism of the amalgam $(\Par1>B<\Par2)$. By 
assumption, this extends to an automorphism $\4\chi$ of $G$, and 
$\kappa_G(\4\chi)=\xi$.
\end{proof}

Set $\bigG=\Par1\*_B\Par2$: the amalgamated free product over $B$. Let 
$\rho\:\bigG\Right2{}G$ be the natural surjective homomorphism. Since 
each automorphism of the amalgam induces an automorphism of $\bigG$, 
\eqref{star} holds if for each automorphism of $(\Par1>B<\Par2)$, the 
induced automorphism of $\bigG$ sends $\Ker(\rho)$ to itself.

Let $\Gamma$ be the tree corresponding to the amalgam $(\Par1>B<\Par2)$. 
Thus $\Gamma$ has a vertex $[g\Par{i}]$ for each coset $g\Par{i}$ (for all 
$g\in\bigG$ and $i=1,2$), and an edge $g(e_B)$ connecting $[g\Par1]$ to 
$[g\Par2]$ for each coset $gB$ in $\bigG$. Also, $\bigG$ acts on $\Gamma$ 
via its canonical action on the cosets, and in particular, it acts on 
$g(e_B)$ with stabilizer subgroup $\9gB$. 

Similarly, let $\Gamma_G$ be the graph of $G$ with respect to the same 
amalgam: the graph with vertex set $(G/\Par1)\cup(G/\Par2)$ and edge set 
$G/B$. Equivalently, since $\Par1$, $\Par2$, and $B$ are self-normalizing, 
$\Gamma_G$ is the graph whose vertices are the maximal parabolics in $G$ 
and whose edges are the Borel subgroups. Let $\5\rho\:\Gamma\Right2{}\Gamma_G$ 
be the canonical map which sends a vertex $[g\Par{i}]$ in $\Gamma$ to the 
vertex in $\Gamma_G$ corresponding to the image of $g\Par{i}$ in $G$.

Fix a subgroup $N\le G$ such that $(B,N)$ is a $BN$-pair for $G$, and such 
that $B\cap{}N=T$ and $N/T\cong W_0$ (where $T$ and $W_0$ are as defined in 
Notation \ref{G-setup}). We refer to \cite[\S\S\,8.2, 13.5]{Carter} for the 
definition of $BN$-pairs, and the proof that $G$ has a $BN$-pair $(B,N)$ 
which satisfies these conditions. In order to stay close to the notation in 
\cite{DS}, we also set $T$: their notation for the Cartan subgroup. For 
$i=1,2$, choose $t_i\in(N\cap\Par{i}){\sminus}B=(N\cap\Par{i}){\sminus}T$. 
Since $(N\cap\Par{i})/T\cong C_2$ and $N=\gen{N\cap\Par1,N\cap\Par2}$, we 
have $N=T\gen{t_1,t_2}$, consistent with the notation in \cite{DS}. Note 
that $T$ can be the trivial subgroup. We also regard the $t_i\in\Par{i}$ as 
elements of $\bigG$, and $T\le B$ as a subgroup of $\bigG$, when 
appropriate.

Let $\apt$ be the union of the edges in the $T\gen{t_1,t_2}$-orbit of $e_B$. 
Thus $\apt$ is a path of infinite length in $\Gamma$ of the following form: 
	\[ \cdots \xy 0;<2cm,0cm>:
	(0,0)*+{}; (7,0)*+{} **@{-};
	(1,0)*+{\bullet};
	(2,0)*+{\bullet};
	(3,0)*+{\bullet};
	(4,0)*+{\bullet};
	(5,0)*+{\bullet};
	(6,0)*+{\bullet};
	(1,-0.16)*+{\8[t_1t_2\Par1]};
	(2,-0.16)*+{\8[t_1\Par2]};
	(3,-0.16)*+{\8[\Par1]};
	(4,-0.16)*+{\8[\Par2]};
	(5,-0.16)*+{\8[t_2\Par1]};
	(6,-0.16)*+{\8[t_2t_1\Par2]};
	(0.5,0.12)*+{\8t_1t_2t_1(e_B)};
	(1.5,0.12)*+{\8t_1t_2(e_B)};
	(2.5,0.12)*+{\8t_1(e_B)};
	(3.5,0.12)*+{\8e_B};
	(4.5,0.12)*+{\8t_2(e_B)};
	(5.5,0.12)*+{\8t_2t_1(e_B)};
	(6.5,0.12)*+{\8t_2t_1t_2(e_B)};
	\endxy \cdots \]
Thus $\5\rho(\apt)$ is an apartment in the building $\Gamma_G$ under Tits's 
definition and construction of these structures in \cite[3.2.6]{Tits}. 

A path in $\Gamma$ is always understood not to double back on itself.

\begin{Lem} \label{r=n=s-1}
Let $G$ and $\Gamma$ be as above. Let $n\in\{3,4,6,8\}$ be such that 
$W_0\cong D_{2n}$. Then each path in $\Gamma$ of length 
at most $n+1$ is contained in $g(\apt)$ for some $g\in\bigG$.
\end{Lem}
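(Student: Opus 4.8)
\emph{Plan of proof.} The plan is to transport the statement down to the building $\Gamma_G$ via the covering map $\5\rho$, prove the analogue there using only elementary metric properties of generalized $n$-gons, and then lift apartments back to $\Gamma$. First I would check that $\Ker(\rho)$ acts freely on the tree $\Gamma$: the restriction of $\rho$ to each of the stabilizers $\Par1,\Par2$ (of vertices) and $B$ (of edges) is injective, since these subgroups inject into $\bigG$ and hence into $G$, so no nontrivial element of $\Ker(\rho)$ fixes a simplex. Therefore $\5\rho\colon\Gamma\to\Gamma_G=\Gamma/\Ker(\rho)$ is the universal covering of $\Gamma_G$, and $\Gamma_G$ is the incidence graph of the generalized $n$-gon attached to the rank-two $BN$-pair $(B,N)$ of $G$, with $n$ the order of $t_1t_2$ in $W_0\cong D_{2n}$ (see \cite[3.2.6]{Tits}). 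In particular $\Gamma_G$ is bipartite, has girth $2n$ and diameter $n$, its apartments are exactly its circuits of length $2n$, $G$ acts transitively on these circuits, and $\5\rho(\apt)$ is one of them. I would also record the standard fact (cf.\ \cite{Serre}) that, for the free action of $\Ker(\rho)$ on the tree $\Gamma$, every $1\ne\gamma\in\Ker(\rho)$ displaces every vertex by at least $\textup{girth}(\Gamma_G)=2n$: the geodesic from $x$ to $\gamma x$ in $\Gamma$ projects in $\Gamma_G$ to a non-nullhomotopic closed walk, whose length is at least the girth.

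Now let $P$ be a path in $\Gamma$ of length $m\le n+1$. Since $m<2n$, the displacement fact forces $\5\rho$ to be injective on the vertices of $P$, so $\5P:=\5\rho(P)$ is a non-backtracking path of length $m$ in $\Gamma_G$. The crux is to show that any non-backtracking path $\5P$ of length $\le n+1$ in $\Gamma_G$ lies in an apartment. Let $Q$ be a geodesic joining the two endpoints of $\5P$, of length $k\le n$. If $\5P$ is itself a geodesic — which by bipartiteness and $k\le n$ is exactly the case $m\le n$ — then one extends it to a geodesic of length $n$ between a pair of opposite vertices and completes it by a second such geodesic to a $2n$-circuit containing $\5P$, which is a standard property of generalized $n$-gons. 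Otherwise $k<m$; bipartiteness then forces $k\le m-2$, so $\5P\cdot Q^{-1}$ is a non-nullhomotopic closed walk of length $m+k\le(n+1)+(n-1)=2n$. Being non-nullhomotopic it has length $\ge 2n$ as well; hence $m+k=2n$, no cancellation takes place when the walk is reduced, and $\5P\cup Q^{-1}$ is itself a $2n$-circuit — an apartment $A$ — containing $\5P$.

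It remains to lift $A$ back to $\Gamma$. Choose $g_0\in\bigG$ with $\5\rho(g_0\apt)=A$, possible because $G$ is transitive on apartments. Then $P\subseteq\5\rho^{-1}(A)=\bigcup_{\gamma\in\Ker(\rho)}\gamma g_0\apt$. Since $\5\rho$ is a covering and $A$ is $2$-regular, $\5\rho^{-1}(A)$ is a $2$-regular subgraph of the tree $\Gamma$, hence a disjoint union of bi-infinite lines; each $\gamma g_0\apt$ is one of these lines (it is connected, $2$-regular, and maps onto $A$), so $\gamma g_0\apt$ coincides with the connected component of $\5\rho^{-1}(A)$ through any of its points. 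As $P$ is connected it lies in a single component, i.e.\ $P\subseteq\gamma g_0\apt$ for some $\gamma\in\Ker(\rho)$; taking $g=\gamma g_0$ gives $P\subseteq g\apt$, as required.

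The step I expect to be the real obstacle is the middle one — confining $\5P$ to an apartment of $\Gamma_G$ — and within it the borderline case $m=n+1$, where one must combine the parity constraint coming from bipartiteness with the girth bound to conclude that $\5P$, completed by a shortest return path, is an honest $2n$-circuit and not a closed walk that collapses under reduction. The remainder is routine covering-space bookkeeping, the one point requiring care being the identification of the connected components of $\5\rho^{-1}(A)$ with the translates $\gamma g_0\apt$.
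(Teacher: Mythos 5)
Your argument is correct, but it is a genuinely different proof from the one in the paper. The paper stays entirely inside the tree $\Gamma$ and argues by induction on the length of the path, using the group theory of the amalgam: $B=TU$ with $U$ a product of $n$ root subgroups, $\Par{i}=B\cup Bt_iB$ with $[\Par{i}:B]=1+p^j$, so that $B$ permutes transitively the $p^j$ extensions of a path at each step, and the stabilizer of the length-$k$ initial subpath of $\apt$ is $T$ times $n+1-k$ root subgroups; transitivity of $\bigG$ on edges then finishes the argument. You instead push the problem down to $\Gamma_G$ along $\5\rho$ (after checking that $\Ker(\rho)$ acts freely with minimal displacement at least the girth $2n$, so a path of length $\le n+1$ maps injectively), prove inside the generalized $n$-gon that every non-backtracking path of length $\le n+1$ lies in a $2n$-circuit, and lift that circuit back using that the preimage of a $2$-regular subgraph of $\Gamma_G$ in the tree is a disjoint union of lines, each a translate of $g_0\apt$. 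This is consistent and non-circular, since the facts you use about $\Gamma_G$ (girth $2n$, diameter $n$, apartments $=2n$-circuits) come from Tits's building structure, which the paper itself invokes for claim \eqref{claim2} in the proof of Proposition \ref{DS-thm}; but note that your route leans on two further standard-but-unproved inputs that the paper's computation avoids: the extension/completion of a geodesic of length $\le n$ to an apartment in a thick generalized $n$-gon, and the transitivity of $G$ on \emph{all} $2n$-circuits (equivalently, uniqueness of the apartment system of a rank-two spherical building), which you need in order to write the apartment containing $\5\rho(P)$ as a $G$-translate of $\5\rho(\apt)$ before lifting. What the paper's approach buys is self-containedness at the level of root subgroups and the Bruhat decomposition, with no appeal to apartment-system uniqueness; what yours buys is a shorter, purely geometric argument that reuses the polygon facts already present and isolates the covering-space bookkeeping, including the delicate boundary case $m=n+1$, which you handle correctly by the girth bound forcing $m+k=2n$ with no cancellation.
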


\begin{proof} A path of length $1$ is an edge, and is in the $\bigG$-orbit 
of $e_B$ which has stabilizer group $B$. If $e_B$ is extended to a path of 
length $2$ with the edge $t_i(e_B)$ ($i=1$ or $2$), then this path has 
stabilizer group 
	\[ B\cap\9{t_i}B = 
	\prod_{\5\alpha\in\5\Sigma_+{\sminus}\{\5\alpha_i\}}X_{\5\alpha} 
	\cdot T \,. \]
(Recall that $\9{t_i}X_{\5\alpha_i}=X_{-\5\alpha_i}$, and 
$X_{-\5\alpha_i}\cap{}B=1$ by \cite[Lemma 7.1.2]{Carter}.) Thus the 
stabilizer subgroup has index $p^j$ in $B$, where 
$p^j=|X_{\5\alpha_i}|$. Furthermore, $|\Par{i}/B|=1+p^j$, since by 
\cite[Proposition 8.2.2(ii)]{Carter}, 
	\[ \Par{i}=B\cup(Bt_iB) \quad\textup{where}\quad
	|Bt_iB|=|B|\cdot|B/(B\cap\9{t_i}B)| = |B|\cdot p^j \,. \]
Hence there are exactly $p^j$ extensions of $e_B$ to a path of length $2$ 
containing the vertex $[\Par{i}]$ in the interior, and these are permuted 
transitively by $B$.

Upon continuing this argument, we see inductively that for all $2\le k\le 
n+1$, the paths of length $k$ starting at $e_B$ with endpoint $[\Par{3-i}]$ 
are permuted transitively by $B$, and of them, the one contained in $\apt$ 
has stabilizer subgroup the product of $T$ with $(n+1-k)$ root subgroups in 
$U$. (Recall that $B=TU$, and $U$ is the product of $n$ root subgroups.) 
Since $\bigG$ acts transitively on the set of edges in $\Gamma$, each path 
of length $k$ is in the $\bigG$-orbit of one which begins with 
$e_B$ (and with endpoint $[\Par1]$ or $[\Par2]$), and hence in the 
$\bigG$-orbit of a subpath of $\apt$.
\end{proof}

\begin{Prop} \label{DS-thm}
Let $G$, $\bigG$, and $(T,t_1,t_2)$ be as above, and let $n$ be such that 
$W_0\cong D_{2n}$. Assume that 
	\beqq \parbox{\short}{for each $(\chi_1,\chi_2)\in 
	\Aut\bigl(\Par1>B<\Par2\bigr)$, where $\chi_i\in\Aut(\Par{i})$ or 
	$\chi_i\in\Iso(\Par{i},\Par{3-i})$ for $i=1,2$, 
	we have $(\chi_1(t_1)\chi_2(t_2))^n\in\chi_1(T)$.}
	\label{dag} \tag{$\dagger$} \eeqq
Then \eqref{star} holds (each automorphism of $(\Par1>B<\Par2)$ extends to 
an automorphism of $G$), and hence $\kappa_G$ is onto.
\end{Prop}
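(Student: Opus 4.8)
The plan is to show that the hypothesis $(\dagger)$ forces any automorphism of the amalgam $(\Par1>B<\Par2)$ to carry $\Ker(\rho)$ to itself, where $\rho\:\bigG=\Par1\*_B\Par2\Right2{}G$ is the natural surjection; granting this, the induced automorphism of $\bigG$ descends to an automorphism of $G=\bigG/\Ker(\rho)$, which establishes $(*)$, and then $\kappa_G$ is onto by Lemma \ref{reduce2star}. So the crux is a clean description of $\Ker(\rho)$ in terms of the tree $\Gamma$ of the amalgam.

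First I would recall the geometry: $\bigG$ acts on the tree $\Gamma$ with edge stabilizers the conjugates of $B$, vertex stabilizers the conjugates of $\Par1$ and $\Par2$, and $\rho$ induces the equivariant map $\5\rho\:\Gamma\Right2{}\Gamma_G$ onto Tits's building, which sends the path $\apt$ (the $T\gen{t_1,t_2}$-orbit of $e_B$) onto an apartment. The element $w_0\defeq t_1t_2\in N/T\cong W_0\cong D_{2n}$ has order $n$ in $W_0$, so $(t_1t_2)^n\in T$ inside $G$; hence the word $(t_1t_2)^n$, read in $\bigG$, lies in $\Ker(\rho)\cdot T$, i.e. $(t_1t_2)^n\cdot z \in\Ker(\rho)$ for a suitable $z\in T$. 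Translating $e_B$ by the cyclic group $\gen{t_1t_2}$ in $\bigG$ traces out the bi-infinite path $\apt$, and $(t_1t_2)^n$ is precisely the smallest power that maps $\apt$ to itself with the nontrivial "closing-up" identification that collapses the $2n$-step segment around a vertex; this is the standard fact that $\Ker(\rho)$ is the normal closure in $\bigG$ of the set of such relators, one for each orbit of length-$(n+1)$ path. The key structural input, which is exactly Lemma \ref{r=n=s-1}, is that every path in $\Gamma$ of length $\le n+1$ is $\bigG$-conjugate to a subpath of $\apt$; combined with the Tits/Serre theory of buildings of rank $2$ as incidence geometries of girth $2n$, this shows $\Ker(\rho)$ is normally generated by the single element $(t_1t_2)^n z$ with $z\in T$, or more precisely by the $\bigG$-conjugacy class of words of the form $(t_1t_2)^n z$.

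Now let $(\chi_1,\chi_2)$ be an automorphism of the amalgam, inducing $\chi\in\Aut(\bigG)$. By $(\dagger)$, $(\chi_1(t_1)\chi_2(t_2))^n\in\chi_1(T)=\chi_2(T)\le\chi_1(B)$; I would also note that $\chi_i(t_i)$ still represents the nontrivial coset of $B$ in $\Par{i}$ (or in $\Par{3-i}$ in the exchanging case), since $\chi_i$ is an isomorphism sending $B$ to $B$, so $\chi$ maps the relator set into itself up to the error term lying in $T$. The point is that for any $z'\in T$, the word $(\chi_1(t_1)\chi_2(t_2))^n$ differs from a conjugate of $(t_1t_2)^nz$ only by an element of $T\le B$, and $\chi$ permutes the $\bigG$-conjugates of such elements because it preserves $B$, the $t_i$ modulo $B$, and (by $(\dagger)$) the fact that the $n$-th power lands back in $T$; hence $\chi(\Ker(\rho))\subseteq\Ker(\rho)$, and by symmetry (applying the argument to $\chi^{-1}$, which is also induced by an amalgam automorphism satisfying $(\dagger)$) we get equality. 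Thus $\chi$ descends to an automorphism $\4\chi$ of $G$ restricting to $\chi_i$ on each $\Par{i}$, which is $(*)$.

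The main obstacle I expect is the precise bookkeeping in the claim that $\Ker(\rho)$ is normally generated by the single family $\{g(t_1t_2)^n z g^{-1}\}$: one must invoke the theorem (Tits, or Delgado--Stellmacher \cite{DS}) identifying a Moufang building of rank $2$ with the "universal" completion of its apartment-with-stabilizers data, so that the only relations among the parabolics beyond the amalgam relations are the ones coming from closing up apartments, which by Lemma \ref{r=n=s-1} are all conjugate to $(t_1t_2)^n\in T$. Establishing that carefully — and checking it survives the error term $z\in T$ and the possible exchange of $\Par1$ with $\Par2$ — is where the real work lies; once it is in place, the descent argument is formal.
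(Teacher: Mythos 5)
Your overall strategy --- describe $\Ker(\rho)$ by relators and show the induced automorphism $\chi$ of $\mathfrak{G}$ preserves it --- is reasonable, and your structural claim that $\Ker(\rho)$ is normally generated by the apartment relators $g(t_1t_2)^nz\,g^{-1}$ is essentially equivalent to what the paper proves, in the guise of the graph isomorphism $\Gamma/{\approx}\cong\Gamma_G$ (established from Lemma \ref{r=n=s-1} together with the fact that $\Gamma_G$, being a building whose apartments are circuits of length $2n$, has no shorter circuits); the paper proves this directly via retractions rather than citing Tits or Delgado--Stellmacher, which is exactly the point of the section. The genuine gap is in your descent step. Writing $R=(t_1t_2)^nz\in\Ker(\rho)$ with $z\in T$, the automorphism $\chi$ sends $R$ to $(\chi_1(t_1)\chi_2(t_2))^n\chi_1(z)$, and hypothesis $(\dagger)$ only tells you that the image of this element under $\rho$ lies in the torus $\chi_1(T)$ --- not that it is trivial. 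So $\chi(R)$ need not lie in $\Ker(\rho)$ by this reasoning, and the phrase ``$\chi$ permutes the $\mathfrak{G}$-conjugates of such elements'' does not rescue it: the set of elements of $\mathfrak{G}$ whose image under $\rho$ lies in a conjugate of $T$ is far larger than $\Ker(\rho)$ and is not a subgroup, so its preservation says nothing about preservation of the kernel. Put differently, $\rho\circ\chi$ is another surjection of $\mathfrak{G}$ onto $G$ extending an embedding of the amalgam, and whether it has the same kernel as $\rho$ is precisely the issue; it does not follow formally from $(\dagger)$ plus normal generation.

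What $(\dagger)$ actually buys, and how the paper uses it, is geometric: since $(\chi_1(t_1)\chi_2(t_2))^n\in\chi_1(T)\le B$, the image in $\Gamma_G$ of the transformed path $\5\chi(\mathscr{T})$ closes up into a circuit of length $2n$, so $\5\rho\circ\5\chi$ is constant on ${\approx}$-classes and factors through $\Gamma/{\approx}\cong\Gamma_G$, yielding a graph automorphism of $\Gamma_G$. Only then does kernel preservation follow: for $k\in\Ker(\rho)$, equivariance shows that $\rho(\chi(k))$ acts trivially on $\Gamma_G$, and since the $G$-action on $\Gamma_G$ is faithful (the intersection of all conjugates of $B$ is trivial in this setting), one gets $\chi(k)\in\Ker(\rho)$. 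In particular, the torus discrepancy in your computation is indeed trivial --- but seeing this requires the graph-level argument (or an equivalent bridge), which your proof as written does not supply.
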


\begin{proof} Let $\approx$ be the equivalence relation on the set of 
vertices in $\Gamma$ generated by setting $x\approx y$ if $x$ and $y$ are 
of distance $2n$ apart in some path in the $\bigG$-orbit of $\apt$. Since 
$T\gen{t_1,t_2}/T\cong D_{2n}$ as a subgroup of $N_G(T)/T$, the natural map 
$\5\rho\:\Gamma\Right2{}\Gamma_G$ sends $\apt$ to a loop of length $2n$, 
and hence sends all apartments in the $\bigG$-orbit of $\apt$ to loops of 
length $2n$. Hence $\Gamma\Right2{}\Gamma_G$ factors through 
$\Gamma/{\approx}$. 

We claim that 
	\begin{align} 
	&\textup{$\Gamma_G$ contains no loops of length strictly less than 
	$2n$; and} \label{claim2} \\
	&\textup{each pair of points in $\Gamma/{\approx}$ is connected by a 
	path of length at most $n$.} \label{claim1} 
	\end{align}
Assume \eqref{claim2} does not hold: let $L$ be a loop of minimal length 
$2k$ ($k<n$), and fix edges $\sigma_i=[x_i,y_i]$ in $L$ ($i=1,2$) such that 
the path from $x_i$ to $y_{3-i}$ in $L$ has length $k-1$. Since $\Gamma_G$ 
is a building whose apartments are loops of length $2n$ \cite[3.2.6]{Tits}, 
there is an apartment $\Sigma$ which contains $\sigma_1$ and $\sigma_2$. By 
\cite[Theorem 3.3]{Tits} or \cite[p. 86]{Brown}, there is a retraction of 
$\Gamma_G$ onto $\Sigma$. Hence the path from $x_i$ to $y_{3-i}$ in 
$\Sigma$ (for $i=1,2$) has length at most $k-1$, these two paths must be 
equal to the minimal paths in $L$ since there are no loops of length less 
than $2k$, and this contradicts the assumption that $L$ and $\Sigma$ are 
loops of different lengths. (See also \cite[\S\,IV.3, Exercise 1]{Brown}. 
Point \eqref{claim2} also follows since $\Gamma_G$ is a generalized $n$-gon 
in the sense of Tits \cite[p. 117]{Brown}, and hence any two vertices are 
joined by at most one path of length less than $n$.) 

Now assume \eqref{claim1} does not hold: let $x,y$ be vertices in 
$\Gamma$ such that the shortest path between their classes in 
$\Gamma/{\approx}$ has length $k\ge n+1$. Upon replacing $x$ and $y$ by 
other vertices in their equivalence classes, if needed, we can assume that 
the path $[x,y]$ in $\Gamma$ has length $k$. Let $x^*$ be the vertex in the 
path $[x,y]$ of distance $n+1$ from $x$. By Lemma \ref{r=n=s-1}, $[x,z]$ is 
contained in $g(\apt)$ for some $g\in\bigG$; let $x'$ be the vertex in 
$g(\apt)$ of distance $2n$ from $x$ and distance $n-1$ from $z$. Then 
$x'\approx x$, and $[x',y]$ has length at most $(n-1)+(k-n-1)=k-2$, a 
contradiction. This proves \eqref{claim1}. 

Assume the map $(\Gamma/{\approx})\Right2{}\Gamma_G$ induced by $\5\rho$ is 
not an isomorphism of graphs, and let $x$ and $y$ be distinct vertices in 
$\Gamma/{\approx}$ whose images are equal in $\Gamma_G$. By \eqref{claim1}, 
there is a path from $x$ to $y$ of length at most $n$, and of even length 
since the graph is bipartite. This path cannot have length $2$ since 
$\5\rho\:\Gamma\Right2{}\Gamma_G$ preserves valence, so its image in 
$\Gamma_G$ is a loop of length at most $n$, and this contradicts 
\eqref{claim2}. We conclude that $\Gamma_G\cong\Gamma/{\approx}$. 

Now let $(\chi_1,\chi_2)$ be an automorphism of the amalgam 
$(\Par1>B<\Par2)$. Let $\chi\in\Aut(\bigG)$ be the induced automorphism of 
the amalgamated free product, and let $\5\chi\in\Aut(\Gamma)$ be the 
automorphism which sends a vertex $[g\Par{i}]$ to $[\chi(g\Par{i})]$. Since 
$(\chi_1(t_1)\chi_2(t_2))^n=1$ in $G$ by assumption, $\5\rho(\5\chi(\apt))$ 
is a loop of length $2n$ in $\Gamma_G$. Hence $\5\rho\circ\5\psi$ factors 
through $(\Gamma/{\approx})\cong\Gamma_G$, and by an argument similar to 
that used to show that $\Gamma_G\cong\Gamma/{\approx}$, the induced map 
$\Gamma_G\Right2{}\Gamma_G$ is an automorphism of $\Gamma_G$. So $\chi$ 
sends $\Ker[\bigG\Right2{\rho}G]$ to itself, and thus induces an 
automorphism of $G$. The last statement ($\kappa_G$ is onto) now follows 
from Lemma \ref{reduce2star}.
\end{proof}

It remains to find conditions under which \eqref{dag} holds. 
The following proposition handles all but a small number of cases.

\begin{Prop} \label{ThA:most}
Assume $N=N_G(T)$ (and hence $N_G(T)/T$ is dihedral of order $2n$). Then 
\eqref{dag} holds, and hence each automorphism of the amalgam 
$(\Par1>B<\Par2)$ extends to an automorphism of $G$. In particular, 
\eqref{dag} and \eqref{star} hold, and hence $\kappa_G$ is onto, 
whenever $G=\lie{r}Xn(q)\in\Lie(p)$ has 
Lie rank $2$ for $q>2$ and $G\not\cong\Sp_4(3)$. 
\end{Prop}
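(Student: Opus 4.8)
The plan is to verify the hypothesis $(\dagger)$ of Proposition~\ref{DS-thm} in the case $N=N_G(T)$, and then to check the last sentence by eliminating the handful of small groups where the hypothesis $N=N_G(T)$ may fail.

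First I would observe that $(\dagger)$ only depends on $N$ as a subgroup of $\Par1\gen{t_1}=\Par1$ and $\Par2=\Par2\gen{t_2}$: the element $(\chi_1(t_1)\chi_2(t_2))^n$ lives in $\chi_1(\Par1)$ or $\chi_1(\Par2)$, and it is the image under an amalgam automorphism of $(t_1t_2)^n$. Since $(B,N)$ is a $BN$-pair with $N/T\cong W_0\cong D_{2n}$, and $t_1,t_2$ are chosen in $N\cap\Par1$ and $N\cap\Par2$ respectively mapping to the two generating reflections of $D_{2n}$, the product $t_1t_2$ maps to an element of order $n$ in $W_0$, so $(t_1t_2)^n\in T$. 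The point is that $N=N_G(T)$ makes $T$ \emph{canonically recognizable} inside each parabolic: $T=O_{p'}(N)\cdot\!\bigl(\text{something}\bigr)$, or more precisely $T$ is determined intrinsically as, say, $N_{\Par{i}}(U')\cap N_{\Par{i}}(U'')$ for the two unipotent radicals, or simply because $N_{\Par i}(T)$-conjugacy pins down $T$ inside $\Par i$. So an amalgam automorphism $(\chi_1,\chi_2)$ must send $T$ to $T$ (it sends $\Par i$ and $B$ to themselves or swaps them, and within that it respects the characteristic subgroup $T$), hence sends $N\cap\Par i=T\gen{t_i}$ to $T\gen{t_{j}}$ for the appropriate $j$, hence sends $N=\gen{N\cap\Par1,N\cap\Par2}$ to $N$. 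Therefore $\chi_1(t_1)\chi_2(t_2)\in N$ maps to an element of order $n$ in $N/T\cong W_0$ (it is a product of the images of the two reflection-like elements $t_{j_1}$ and $t_{j_2}$ with $\{j_1,j_2\}=\{1,2\}$), so its $n$-th power lies in $T=\chi_1(T)$. This is exactly $(\dagger)$, and then Proposition~\ref{DS-thm} gives $(*)$ and the surjectivity of $\kappa_G$.

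The main obstacle will be justifying cleanly that an amalgam automorphism preserves $T$ when $N=N_G(T)$. The cleanest route is: $T$ is the unique (up to $\Par i$-conjugacy) complement-type subgroup arising as a maximal torus, but more robustly, under the hypothesis $N=N_G(T)$ one has $N_{\Par i}(T)/T\cong C_2$ and $T=O_{p'}(N_{\Par i}(T))$ when $p\nmid|W_0|$, or in general $T$ is characterized inside $\Par i$ as a Hall $p'$-subgroup of $N_{\Par i}(U_{\5\alpha_i})=B$ which is normalized by an element of $\Par i\setminus B$; since $B$ and the coset structure are preserved by any amalgam automorphism, so is $T$ up to $B$-conjugacy, and after adjusting $\chi_1,\chi_2$ by inner automorphisms of $B$ (which does not affect whether $(t_1t_2)^n\in T$) we may assume $\chi_i(T)=T$. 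I would spell this adjustment out carefully, noting it is legitimate because replacing $(\chi_1,\chi_2)$ by $(c_b\chi_1,c_b\chi_2)$ for $b\in B$ preserves membership of amalgam automorphisms and the target group $\chi_1(T)$ is then just $T$.

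Finally, for the last sentence of the statement I would invoke the reduction already set up before the proposition: by Notation~\ref{G-setup-Q}\eqref{not8=}, a group $G=\lie{r}Xn(q)\in\Lie(p)$ of Lie rank $2$ has $W_0\cong D_{2n}$ with $n\in\{3,4,6,8\}$, so $(\dagger)$, and hence $(*)$ and the surjectivity of $\kappa_G$, follow from the first part \emph{provided} $N=N_G(T)$. By Proposition~\ref{p:CbarG(T)}(b,c), $N_G(T)/T\cong W_0$ (equivalently $C_{\4G}(T)^0=\4T$, equivalently $N=N_G(T)$ for a suitable choice of $BN$-pair) except possibly when $G\cong{}^r\gg(2)$, or $G\cong A_1(3)$, $C_n(3)$ with $n\ge2$, or $\lie2G2(3)$. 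Restricting to rank $2$ and to $q>2$, the only remaining exception is $C_2(3)=\Sp_4(3)$; the rank-$1$ group $A_1(3)$ and the rank-$2$ group ${}^2G_2(3)\cong\operatorname{Aut}(\SL_2(8))$ (which is not simple, and in any case not of the form $\lie rXn(q)$ with $q>2$) do not arise here. Hence for $q>2$ and $G\not\cong\Sp_4(3)$ we have $N=N_G(T)$, completing the proof. I would remark that the cases $q=2$, as well as $\Sp_4(3)$, $G_2(2)$, $\lie3D4(2)$, and the Tits group, are handled separately in Propositions~\ref{ThA:n=4}, \ref{G2(2)-tame}, \ref{3D4(2)-tame}, and \ref{Tits-tame}.
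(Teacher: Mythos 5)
Your proposal is correct and follows essentially the paper's own argument: verify \eqref{dag} by noting that the complements to $U$ in $B$ are all $B$-conjugate, so after adjusting an amalgam automorphism by conjugation in $B$ (which does not affect \eqref{dag}) it preserves $T$, whence $N=N_G(T)$ forces $\chi_i(t_i)\in Tt_j$ and $(\chi_1(t_1)\chi_2(t_2))^n\in T$; then deduce the last sentence from Proposition \ref{p:CbarG(T)}. The only small points to tidy are that Proposition \ref{p:CbarG(T)} is stated for groups of universal type while Notation \ref{G-setup-Q} puts you in the adjoint setting, so you need the paper's one-line remark that \eqref{dag} for the universal group passes to its adjoint quotient (equivalently, that $N_{G}(T)/T\cong W_0$ descends), and that $\lie2G2(3)$ is excluded because its Lie rank is $1$ in the sense of Notation \ref{G-setup-Q}\eqref{not8=}, not for the reason you give (it does have $q=3>2$).
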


\begin{proof} Assume that $N_G(T)=N=T\gen{t_1,t_2}$. Then the choices of 
the $t_i$ are unique modulo $T$. Also, any two choices of $T$ are 
$B$-conjugate, so each automorphism of the amalgam is $B$-conjugate to one 
which sends $\apt$ to itself. Thus \eqref{dag} holds, and so \eqref{star} 
follows from Proposition \ref{DS-thm}. 

The last statement now follows from Proposition \ref{p:CbarG(T)}. Note that 
if \eqref{dag} holds for $G$ of universal type, then it also holds for 
$G/Z(G)$ of adjoint type.
\end{proof}

What can go wrong, and what does go wrong when $G=\SL_3(2)$, is that an 
automorphism of the amalgam can send $t_1,t_2$ to another pair of elements 
whose product (modulo $T$) has order strictly greater than $2n$. This 
happens when $\apt$ is sent to another path not in the $\bigG$-orbit of 
$\apt$: one whose image in $\Gamma_G$ is a loop of a different length.

\begin{Ex} \label{ex:ThA:L3(2)}
Assume $G=\SL_3(2)$. In particular, $T=1$. Let $B$ be the group of upper 
triangular matrices, let $t_1$ and $t_2$ be the permutation matrices for 
$(1\,2)$ and $(2\,3)$, respectively, and set $\Par{i}=\gen{B,t_i}$. 

Consider the automorphism $\alpha$ of the amalgam which is the identity on 
$\Par1$ (hence on $B$), and which is conjugation by $e_{13}$ (the involution 
in $Z(B)$) on $\Par2$. Set $t'_i=\alpha(t_i)$. Thus 
	\[ t'_1=\mxthree010100001 \qquad \textup{and}\qquad
	t'_2=\mxthree111001010\,. \]
One checks that $t'_1t'_2$ has order $4$, so that $\gen{t'_1,t'_2}\cong 
D_8$ while $\gen{t_1,t_2}\cong D_6$. In other words, $\alpha$ sends the 
lifting (from $\Gamma_G$ to $\Gamma$) of a loop of length $6$ to the 
lifting of a loop of length $8$, hence is not compatible with the relation 
$\approx$, hence does not extend to an automorphism of $G$.
\end{Ex}

We are left with seven cases: four cases with $n=4$, two with $n=6$, and 
one with $n=8$. Those with $n=4$ are relatively easy to handle.

\begin{Prop} \label{ThA:n=4}
Assume $G$ is one of the groups $\Sp_4(2)$, $\PSp_4(3)$, $\PSU_4(2)$, or 
$\PSU_5(2)$. Then \eqref{dag} holds, and hence \eqref{star} also holds and 
$\kappa_G$ is onto. 
\end{Prop}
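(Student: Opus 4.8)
The plan is to verify condition \eqref{dag} for each of the four groups $G$; then \eqref{star}, and hence the surjectivity of $\kappa_G$, follow from Proposition \ref{DS-thm} and Lemma \ref{reduce2star}. For $G=\PSp_4(3)$ I would first pass to the universal group $\Sp_4(3)$, since \eqref{dag} for a group of universal type implies it for the adjoint quotient (as in the proof of Proposition \ref{ThA:most}); the other three groups already equal their universal forms. In each case $n=4$, so $W_0\cong D_8$ and $\Gamma_G$ is a thick generalized quadrangle. I would fix an explicit model of the amalgam: $\Par1=U_1\rtimes L_1$, $\Par2=U_2\rtimes L_2$, with $U_i=O_p(\Par i)\le U$, $L_i$ a Levi subgroup of type $A_1$, and $B=UT$ --- the underlying geometry being the symplectic quadrangle $W(q)$ for $\Sp_4(q)$ and a Hermitian quadrangle for $\PSU_4(2)$ and $\PSU_5(2)$. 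Since $q\le 3$, the groups $U_i$, $L_i$, $\Par i$ and $B$ are small and explicit, so that $\Aut(\Par i)$ can be determined completely.

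The next step is to reduce the test of \eqref{dag} to a short list of candidate amalgam automorphisms $(\chi_1,\chi_2)$, using that two operations preserve whether \eqref{dag} holds. First, replacing $(\chi_1,\chi_2)$ by $(c_b\chi_1c_b^{-1},\,c_b\chi_2c_b^{-1})$ for fixed $b\in B$ conjugates $\chi_1(t_1)\chi_2(t_2)$ and $\chi_1(T)$ by the same element. Second, composing with the restriction to the amalgam of an automorphism $\gamma$ of $G$ normalizing $\{\Par1,\Par2\}$ and $B$ is harmless: if $(\chi_1,\chi_2)$ satisfies \eqref{dag} then so does $(\gamma\chi_1,\gamma\chi_2)$, since $(\gamma\chi_1(t_1)\cdot\gamma\chi_2(t_2))^n=\gamma\bigl((\chi_1(t_1)\chi_2(t_2))^n\bigr)$, and any $(\chi_1,\chi_2)$ coming from $\Aut(G)$ satisfies \eqref{dag} outright (taking $\chi_i=\gamma|_{\Par i}$ gives $(\chi_1(t_1)\chi_2(t_2))^n=\gamma((t_1t_2)^n)\in\gamma(T)=\chi_1(T)$). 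By Steinberg's theorem (Theorem \ref{St-aut}), such $\gamma$ modulo $\Inn(B)$ are generated by $\Outdiag(G)$, the field automorphisms $\Phi_G$, and the graph automorphisms $\Gamma_G$, the last interchanging $\Par1$ and $\Par2$. Combining these reductions with the explicit computation of $\Aut(\Par i)\to\Aut(B)$ and its kernel --- bounded via $p$-constraint of $\Par i$ (note $O_{p'}(\Par i)=1$, as $L_i$ acts nontrivially on $O_p(\Par i)$), which gives $C_{\Par i}(B)\le Z(B)$ --- and the constraint $\chi_1|_B=\chi_2|_B$, one is left with a bounded list.

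The residual pairs fall into two types: those restricting to the identity on $B$, necessarily of the form $\chi_j=c_{z_j}|_{\Par j}$ with $z_j\in Z(B)$ (up to a possible piece from $\Out(\Par i)$ or an isomorphism $\Par1\to\Par2$), and those involving such an outer or swapping piece. For the first type the key identity is: if $z_1z_2^{-1}\in Z(\Par j)$ for one of $j\in\{1,2\}$, then, writing $z=z_{3-j}$ and using that $z$ centralizes $t_j\in\Par j$,
\[ \chi_1(t_1)\chi_2(t_2) \;=\; z\,(t_1t_2)\,z^{-1}, \]
so $\chi_1(t_1)\chi_2(t_2)$ is $B$-conjugate to $t_1t_2$ and \eqref{dag} holds for it. The crux --- and the point where the hypothesis $n=4$ (rather than $n=3$) is used --- is the verification for these four groups that every amalgam automorphism restricting to an inner automorphism of $B$ is itself a restriction of an inner automorphism of $G$; concretely, that $Z(B)$ is generated by $Z(\Par1)$ and $Z(\Par2)$, so that the hypothesis of the identity always holds. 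By contrast, for $\SL_3(2)$ one has $Z(\Par1)=Z(\Par2)=1\lneq Z(B)\cong C_2$, and the amalgam automorphism $(\Id_{\Par1},\,c_z|_{\Par2})$ with $1\ne z\in Z(B)$ raises the order of $t_1t_2$ from $3$ to $4$, violating \eqref{dag} (Example \ref{ex:ThA:L3(2)}).

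The main obstacle is thus the finite but delicate bookkeeping behind the reductions and the identity above: determining $Z(B)$, $Z(\Par1)$, $Z(\Par2)$, $\Out(\Par i)$ and the action of $\Out(\Par i)$ on $B$ precisely enough to conclude that, modulo $\Inn(B)$, the amalgam automorphisms are exhausted by restrictions of $\Aut(G)$ --- so that no apartment-twisting automorphism of the kind present for $\SL_3(2)$ survives --- and, for the parabolic-swapping case, ruling it out where it cannot occur (automatic for $\PSU_4(2)$ and $\PSU_5(2)$, where $|\Par1|\ne|\Par2|$) or checking it directly otherwise. Granting this, each of the four cases reduces to a bounded verification that $(\chi_1(t_1)\chi_2(t_2))^4\in\chi_1(T)$, which is \eqref{dag}.
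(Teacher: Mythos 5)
Your route is not the paper's: the paper never classifies the amalgam automorphisms for these four groups. Instead it normalizes $(\chi_1,\chi_2)$ so that $\chi_i(T)=T$ (Schur--Zassenhaus), observes that $t_i^*=\chi_i(t_i)$ then lies in $N_{\Par{i}}(T)$, so $t_1^*t_2^*\in N_G(T)$, gets the lower bound $|t_1^*t_2^*|\ge 4$ from the absence of short loops in the building, and pins the order to exactly $4$ by structural facts about $N_G(T)/T$: the focal subgroup argument inside $\Sigma_6\supset A_6$ for $\Sp_4(2)$, the absence of dihedral subgroups of order $12$ in $A_4\wr C_2$ for $\Sp_4(3)$, and the projection $N_G(T)/T\cong\Sigma_3\wr C_2\to D_8$ for $\SU_4(2)$, $\SU_5(2)$. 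What you propose --- proving \eqref{star} directly by showing every amalgam automorphism comes from $\Aut(G)$ --- is the heavier strategy the paper reserves for $G_2(2)$ and $\lie3D4(2)$, where it says the \eqref{dag}-via-$N_G(T)$ shortcut is unavailable.

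As a proof, your proposal has a genuine gap: its crux is deferred rather than done. The whole content of the statement is to rule out every amalgam automorphism violating \eqref{dag}, and your argument reduces this to the assertion that, modulo $\Inn(B)$ and restrictions of $\Aut(G)$, all amalgam automorphisms restrict to inner automorphisms of $B$ --- which requires actually computing $\Aut(\Par{i})$, $\Out(\Par{i})$ and the restriction maps to $\Aut(B)$ for each of the four groups (for $\PSU_5(2)$ the parabolics have order in the tens of thousands, and for $\Sp_4(3)$ one must control, e.g., the outer automorphisms of the extraspecial radical $3^{1+2}_+$ not induced by the Levi); none of this is carried out, and it is not "bounded bookkeeping" in any sense that makes it a proof. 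Two further points need repair even granting that classification. First, an automorphism of $\Par{j}$ that is the identity on $B$ is not a priori inner; you need a model-uniqueness or $H^1$ argument (as in Lemma \ref{extend_phi} via \cite[Theorem I.4.9(b)]{AKO} applied to $U\in\sylp{\Par{j}}$) to conclude $\chi_j=c_{z_j}$ with $z_j\in C_{Z(U)}(T)\le Z(B)$; the remark $C_{\Par{j}}(B)\le Z(B)$ only gives uniqueness of such a $z_j$, not its existence. Second, your criterion $Z(B)=\langle Z(\Par1),Z(\Par2)\rangle$ does not yield the stated hypothesis $z_1z_2^{-1}\in Z(\Par{j})$ for a single $j$ (a product $ab$ with $a\in Z(\Par1)$, $b\in Z(\Par2)$ both nontrivial lies in neither center); the intended conclusion survives, since writing $z_1z_2^{-1}=ab$ exhibits the pair $(c_{z_1}|_{\Par1},c_{z_2}|_{\Par2})$ as the restriction of an inner automorphism of $G$, but as written the "key identity" does not cover all first-type automorphisms. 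The generation claim itself is plausible for these four groups (it holds for $\Sp_4(2)$ and $\Sp_4(3)$, where $Z(B)$ is the long-root subgroup times the center and is already contained in the center of one parabolic precisely because $q-1$ or $q+1$ is so small), but it too is asserted, not verified --- and since it fails for $\SL_3(2)$, verifying it and everything around it is exactly where the proof would live.
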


\begin{proof} In all cases, we work in the universal groups $\Sp_4(q)$ and 
$\SU_n(2)$, but the arguments are unchanged if we replace the subgroups 
described below by their images in the adjoint group. Recall that $p$ is 
always the defining characteristic, so the second and third cases are 
distinct, even though $\PSp_4(3)\cong\SU_4(2)$ (see 
\cite[\S\,3.12.4]{Wilson} or \cite[Corollary 10.19]{Taylor}). 

Let $(\chi_1,\chi_2)$ be an automorphism of $(\Par1>B<\Par2)$. Since all 
subgroups of $B$ isomorphic to $T$ are conjugate to $T$ by the 
Schur-Zassenhaus theorem, we can also assume that $\chi_i(T)=T$. Set 
$\chi_0=\chi_1|_B=\chi_2|_B$ and $t_i^*=\chi_i(t_i)$ for short; we must 
show that $|t_1^*t_2^*|=n=4$. Note that $t_1^*t_2^*$ has order at least 
$4$, since otherwise $\Gamma_G$ would contain a loop of length strictly 
less than $8=2n$, which is impossible by point \eqref{claim2} in the proof 
of Proposition \ref{DS-thm}. 

\smallskip

\noindent\boldd{$G=\Sp_4(2)\cong\Sigma_6$ : } Set $G'=[G,G]$: the subgroup 
of index $2$. The elements $x_\gamma(1)$ for $\gamma\in\Sigma$ are all 
$\Aut(G)$-conjugate: the long roots and the short roots are all 
$W$-conjugate and a graph automorphism exchanges them. Since these elements 
generate $G$, none of them are in $G'$. Hence for $i=1,2$, all involutions 
in 
	\[ \Gen{x_{\alpha_i}(1),x_{-\alpha_i}(1)} \cong \GL_2(2) \cong 
	\Sigma_3 \]
lie in $G{\sminus}G'$, and in particular, $t_i\in G{\sminus}G'$. 

Each automorphism of the amalgam sends the focal subgroup to itself (as a 
subgroup of $B$), and hence also sends the intersections $\Par{i}\cap{}G'$ 
to themselves. So $t_1^*,t_2^*\in G{\sminus}G'$, and $t_1^*t_2^*\in G'\cong 
A_6$. It follows that $|t_1^*t_2^*|\le5$, and $|t_1^*t_2^*|=4$ since every 
dihedral subgroup of order $10$ in $\Sigma_6$ is contained in $A_6$.

\smallskip 

\noindent\boldd{$G=\Sp_4(3)$ : } In this case, $T\cong C_2^2$, and 
$N_G(T)\cong\SL_2(3)\wr C_2$. Hence $N_G(T)/T\cong A_4\wr C_2$ contains 
elements of order $2$, $3$, $4$, and $6$, but no dihedral subgroups of 
order $12$. Since $t_1^*t_2^*$ has order at least $4$, $|t_1^*t_2^*|=4$, 
and condition \eqref{dag} holds.

\smallskip

\noindent\boldd{$G=\SU_n(2)$ for $n=4$ or $5$ : } We regard these as matrix 
groups via
	\[ \SU_n(2) = \bigl\{ M\in \SL_n(4) \,\big|\, \4M^t=M^{-1} \bigr\} 
	\qquad \textup{where}\qquad
	\4{\bigl(a_{ij}\bigr)}^t = \bigl(\4{a_{n+1-j,n+1-i}}\bigr)\,, \]
and where $\4x=x^2$ for $x\in\F_4$. We can then take $B$ to be the group of 
upper triangular matrices in $\SU_n(2)$, $U$ the group of strict upper 
triangular matrices, and $T$ the group of diagonal matrices. We thus have
	\begin{align*} 
	T &= \bigl\{ \diag(x,x^{-1},x^{-1},x) \,\big|\, x\in\F_4 \bigr\} 
	\cong C_3 && \textup{if $n=4$} \\
	T &= \bigl\{ \diag(x,y,xy,y,x) \,\big|\, x,y\in\F_4 \bigr\} 
	\cong C_3^2 && \textup{if $n=5$.} 
	\end{align*}
Since $N_G(T)$ must permute the eigenspaces of the action of $T$ on 
$\F_4^n$, we have $N_{\GU_n(2)}(T)\cong\GU_2(2)\wr C_2$ (if $n=4$) or 
$(\GU_2(2)\wr C_2)\times\F_4^\times$ (if $n=5$). So in both cases, 
	\[ N_G(T)/T \cong \PGU_2(2)\wr C_2 \cong \Sigma_3\wr C_2 \cong 
	C_3^2\rtimes D_8 \,. \]

Set $Q=N_G(T)/O_3(N_G(T))\cong D_8$, and let $\psi\:N_G(T)\Right2{}Q$ be 
the natural projection. Set $Q_0=\psi(C_G(T))$. Since 
$C_G(T)/T\cong\Sigma_3\times\Sigma_3$ (the subgroup of elements 
which send each eigenspace to itself), $Q_0\cong C_2^2$ and 
$C_G(T)=\psi^{-1}(Q_0)$. 

Choose the indexing of the parabolics such that $\Par1$ is the subgroup 
of elements which fix an isotropic point and $\Par2$ of those which fix an 
isotropic line. Thus 
	\[ \Par1=\left\{ \left. \mxthree{u}vx0Aw00u \right| \, 
	A\in\GU_{n-2}(2) \right\} 
	\quad\textup{and}\quad 
	\Par2= \begin{cases} 
	\left\{ \left. \mxtwo{A}X0{(\4A^t)^{-1}} \right| \, 
	A\in\SL_2(4) \right\} & \textup{if $n=4$} 
	\vphantom{\underset{|}{|}} \\
	\left\{ \left. \mxthree{A}vX0uw00{(\4A^t)^{-1}} \right| \, 
	A\in\GL_2(4) \right\} &  
	\textup{if $n=5$} 
	\end{cases} \]
Then $\psi(N_{\Par1}(T))\le Q_0$: no matrix in $\Par1$ can 
normalize $T$ and exchange its eigenspaces. Also, $N_B(T)$ contains 
$C_U(T)=\gen{e_{1,n}(1),e_{2,n-1}(1)}$, where $e_{i,j}(u)$ denotes the 
elementary matrix with unique off-diagonal entry $u$ in position $(i,j)$. 
Thus $Q_0\ge\psi(N_{\Par1}(T))\ge\psi(N_B(T))\cong C_2^2$, so these 
inclusions are all equalities. Also, $\Par2$ contains the permutation 
matrix for the permutation $(1\,2)(n{-}1\,n)$, this element 
exchanges the eigenspaces of rank $2$ for $T$, and so 
$\psi(N_{\Par2}(T))=Q$.

Since $T\gen{t_1,t_2}/T\cong D_8$, $\gen{\psi(t_1),\psi(t_2)}=Q$, and so 
$\psi(t_1)\in{}Q_0{\sminus}Z(Q)$ and 
$\psi(t_2)\in Q{\sminus}Q_0$. Since $(\chi_1,\chi_2)$ induces an 
automorphism of the amalgam $(Q>Q_0=Q_0)$, this implies that 
$\psi(t_1^*)\in{}Q_0{\sminus}Z(Q)$ and $\psi(t_2^*)\in{}Q{\sminus}Q_0$. But 
then $\gen{\psi(t_1^*),\psi(t_2^*)}=Q$ since these elements generate modulo 
$Z(Q)$, so $|t_1^*t_2^*|\in4\Z$, and $|t_1^*t_2^*|=4$ since 
$N_G(T)/T\cong\Sigma_3\wr C_2$ contains no elements of order $12$. 
\end{proof}

It remains to handle the groups $G_2(2)$, $\lie3D4(2)$, and $\lie2F4(2)$. 
In the first two cases, if $t_i^*$ is an arbitrary involution in 
$N_{\Par{i}}(T){\sminus}N_B(T)$ for $i=1,2$, then $t_1^*t_2^*$ can have 
order $6$ or $8$ (or order $7$ or $12$ when $G=G_2(2)$), and there does not 
seem to be any way to prove condition \eqref{dag} short of analyzing 
automorphisms of the amalgam sufficiently to prove \eqref{star} directly. 

Let $\{\alpha,\beta\}$ be a fundamental system in the root system of $G_2$ 
where $\alpha$ is the long root. Let $\alpha,\alpha',\alpha''$ be the three 
long positive roots, and $\beta,\beta',\beta''$ the three short positive 
roots, as described in \eqref{e:G2roots} below. 

Let $\gamma_0,\gamma_1,\gamma_2,\gamma_3$ denote the four fundamental roots 
in the $D_4$ root system, where $\gamma_0$ is in the center of the Dynkin 
diagram, and the other three are permuted cyclically by the triality 
automorphism.  Set $\gamma_{ij}=\gamma_i+\gamma_j$ (when it is a 
root), etc. We identify the six classes of positive roots in $\lie3D4$ with the 
roots in $G_2$ by identifying the following two diagrams:
	\beqq
	\xy 0;<1.3cm,0cm>:<0cm,1.126cm>::
	(1.2,0)*{\8\beta};
	(1.7,1.2)*+{\8\alpha'};
	(0.7,1.2)*+{\8\beta''};
	(-0.7,1.2)*+{\8\beta'};
	(-1.7,1.2)*+{\8\alpha};
	(0,2.2)*+{\8\alpha''};
	(-1.25,0)*+{\8-\beta};
	(1.7,-1.15)*+{\8-\alpha};
	(-0.5,-0.8)*+{G_2};
	\ar(1,0);(0,0);  
	\ar(-1.5,1);(0,0);
	\ar(-0.5,1);(0,0);
	\ar(0.5,1);(0,0);
	\ar(1.5,1);(0,0);
	\ar(0,2);(0,0);
	\ar@{.>}(-1,0);(0,0);
	\ar@{.>}(1.5,-1);(0,0);
	\endxy 
	\qquad\qquad
	\xy 0;<1.3cm,0cm>:<0cm,1.126cm>::
	(1.7,0)*{\8\{\gamma_1,\gamma_2,\gamma_3\}};
	(1.9,1.2)*+{\8\gamma_{0123}};
	(0.9,1.45)*+{\8\left\{\begin{smallmatrix}\gamma_{012},\\\gamma_{023},\\
	\gamma_{013} \end{smallmatrix}\right\}};
	(-0.85,1.45)*+{\8\left\{\begin{smallmatrix}\gamma_{01},\\\gamma_{02},\\
	\gamma_{03} \end{smallmatrix}\right\}};
	(-1.7,1.2)*+{\8\gamma_0};
	(0,2.2)*+{\8\gamma_{00123}};
	(-1.3,0)*+{\8-\5\gamma_1};
	(1.7,-1.15)*+{\8-\gamma_0};
	(-0.5,-0.8)*+{\lie3D4};
	\ar(1,0);(0,0);  
	\ar(-1.5,1);(0,0);
	\ar(-0.5,1);(0,0);
	\ar(0.5,1);(0,0);
	\ar(1.5,1);(0,0);
	\ar(0,2);(0,0);
	\ar@{.>}(-1,0);(0,0);
	\ar@{.>}(1.5,-1);(0,0);
	\endxy 
	\label{e:G2roots} \eeqq

The following list gives all nontrivial commutator relations among root 
subgroups of $G_2(q)$ or $\lie3D4(q)$ (see \cite[Theorems 1.12.1(b) \& 
2.4.5(b)]{GLS3}):  
	\begin{align} 
	[x_{\alpha}(u),x_{\beta}(v)] &\equiv 
	x_{\beta'}(\pm uv) x_{\beta''}(\pm uv^{1+q}) 
	&&\pmod{X_{\alpha'}X_{\alpha''}} \label{e:G1} \\
	[x_{\beta'}(u),x_{\beta}(v)] &\equiv x_{\beta''}(\pm(uv^{q}+u^{q}v)) 
	&&\pmod{X_{\alpha'}X_{\alpha''}} \label{e:G2} \\
	[x_{\alpha}(u),x_{\alpha'}(v)] &= x_{\alpha''}(\pm uv) \label{e:G3} \\
	[x_{\beta'}(u),x_{\beta''}(v)] &= x_{\alpha''}(\pm\Tr(uv^q)) 
	\label{e:G4} \\
	[x_{\beta''}(u),x_{\beta}(v)] &= x_{\alpha'}(\pm\Tr(u^qv))\,. 
	\label{e:G5} \end{align}
Again, $\Tr\:\F_{q^3}\Right2{}\F_q$ denotes the trace.  
Note that when $G=G_2(q)$, then $u,v\in\F_q$ in 
all cases, and hence $u^q=u^{q^2}=u$, $u^{q+q^2}=u^2$, and $\Tr(u)=3u$. 
When $G=\lie3D4(q)$, the notation $x_\beta(-)$, $x_{\beta'}(-)$, and 
$x_{\beta''}(-)$ is somewhat ambiguous (and formula \eqref{e:G2} depends on 
making the right choice), but this doesn't affect the arguments given below.

\begin{Prop} \label{G2(2)-tame}
Assume $p=2$ and $G=G_2(2)$. Then \eqref{star} holds: each automorphism 
of the amalgam $(\Par\alpha>B<\Par\beta)$ extends to an automorphism of $G$. 
(In fact, each automorphism of the amalgam is conjugation by some 
element of $B$.) In particular, $\kappa_G$ is onto.
\end{Prop}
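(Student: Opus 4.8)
\textbf{Proof strategy for Proposition \ref{G2(2)-tame}.}

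The plan is to prove the stronger statement that every automorphism $(\chi_\alpha,\chi_\beta)$ of the amalgam $(\Par\alpha>B<\Par\beta)$ is realized by conjugation by an element of $B$; by Lemma \ref{reduce2star} (together with the fact that inner automorphisms of $G$ restrict to amalgam automorphisms) this immediately gives \eqref{star} and the surjectivity of $\kappa_G$. First I would record the concrete structure of the groups involved when $q=2$: here $T=1$, so $B=U$ is the Sylow $2$-subgroup of order $2^6$, and $\Par\alpha,\Par\beta$ are the two maximal parabolics, each of the form $U\rtimes\SL_2(2)\cong U\rtimes\Sigma_3$. The key point is that $U$ is generated by the six root groups $X_\delta\cong\F_2$ for $\delta$ running over the positive roots $\alpha,\alpha',\alpha'',\beta,\beta',\beta''$, and the commutator relations \eqref{e:G1}--\eqref{e:G5} (specialized to $q=2$, where $\Tr$ becomes multiplication by $3=1$ and all entries lie in $\F_2$) completely determine the group law. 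Since $\Par\alpha/U\cong\Par\beta/U\cong\Sigma_3$ have trivial outer automorphism group and $Z(U)$ is small, one can pin down exactly which automorphisms of $U$ extend to each parabolic.

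The main steps: (1) Identify $Z(U)$ and the lower/upper central structure of $U$ from the relations --- in particular $Z(U)=X_{\alpha''}$ (the highest root group), $[U,U]$ contains $X_{\alpha'},X_{\alpha''}$ and $X_{\beta''}$, and the Frattini quotient $U/\Fr(U)$ is spanned by the images of $X_\alpha,X_\beta$ (the two fundamental root groups). (2) Show that $\Aut_{\Par\alpha}(U)$ and $\Aut_{\Par\beta}(U)$ are each exactly $\Aut_U(U)\cdot\langle$action of $\SL_2(2)$ on the relevant $\alpha$- or $\beta$-chain$\rangle$ --- this uses that $C_{\Par i}(U)=Z(U)$ and the uniqueness-of-complement type arguments (Schur--Zassenhaus is vacuous here but one still needs that the $\Sigma_3$ acts as described). (3) Given an amalgam automorphism $(\chi_\alpha,\chi_\beta)$, set $\chi_0=\chi_\alpha|_B=\chi_\beta|_B\in\Aut(U,\calf)$; by Proposition \ref{rk1equi} applied to the rank-one parabolics (or directly), after composing with an inner automorphism of $B$ we may assume $\chi_0$ induces the identity on $U/\Fr(U)$, i.e. $[\chi_0,U]\le\Fr(U)$. (4) Then analyze $\chi_0$ on the successive central factors using the relations: the relation \eqref{e:G1} forces the action on $X_{\beta'}$ and $X_{\beta''}$ once the action on $X_\alpha,X_\beta$ is fixed, \eqref{e:G3} forces the action on $X_{\alpha''}$, etc., so $\chi_0$ is essentially determined and lies in $\Inn(U)$ --- possibly after one more adjustment by a diagonal-type element, but since $T=1$ there are no such, so in fact $\chi_0\in\Inn(U)$. (5) Finally, having reduced to $\chi_0=\Id_U$, show $\chi_\alpha$ and $\chi_\beta$ must each be the identity (the $\Sigma_3$-part is rigid since it acts faithfully on $U/\Fr(U)$ modulo $U$ and $\chi_i$ fixes $U$ pointwise), hence the whole amalgam automorphism is trivial modulo $\Inn_B$.

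The main obstacle I anticipate is step (4): carefully tracking how an automorphism of $U$ that is trivial on $U/\Fr(U)$ can behave on the interior root groups, using \emph{only} the commutator relations \eqref{e:G1}--\eqref{e:G5} at $q=2$. The subtlety is that $U$ has a fairly rich structure with several non-obvious commutator constraints (e.g.\ \eqref{e:G2} and \eqref{e:G4} involve the short-root groups in ways that interact), and one must be sure that after using the parabolic extendability of $\chi_\alpha$ and $\chi_\beta$ there is genuinely no residual freedom --- equivalently, that $C_{\Aut(U,\calf)}(U/\Fr(U))\cap\{$extends to both parabolics$\}=\Inn(U)$. A clean way to organize this is to observe that the constraint ``extends to $\Par\alpha$'' kills the freedom in the $\langle\alpha,\alpha',\alpha''\rangle$-direction and ``extends to $\Par\beta$'' kills it in the $\langle\beta,\beta',\beta''\rangle$-direction, and the relations linking the two chains (notably \eqref{e:G1}) then leave nothing; I would present this as a short explicit computation rather than a structural argument, since $|U|=64$ makes it entirely finite and checkable. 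Once this is done, the parenthetical claim that every amalgam automorphism is conjugation by an element of $B$ falls out, and $\kappa_G$ onto follows from Lemma \ref{reduce2star}.
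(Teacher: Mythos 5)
There is a genuine gap at your step (3), and it is the crux of the whole argument. You propose to arrange, "after composing with an inner automorphism of $B$", that $\chi_0$ induces the identity on $U/\Fr(U)$. But $B=U$ here ($T=1$), and inner automorphisms of a $p$-group act trivially on its Frattini quotient, so composing with $\Inn(B)$ cannot change the induced action on $U/\Fr(U)$ at all. Nor is triviality on $U/\Fr(U)$ automatic from extendability to the two parabolics: $\chi_0$ must preserve $U_\alpha=O_2(\Par\alpha)$ and $U_\beta=O_2(\Par\beta)$, but $U_\beta$ surjects onto $U/\Fr(U)\cong C_2^2$, so only the line coming from $U_\alpha$ is visibly fixed, and a priori $\chi_0$ could still move the image of $X_\alpha$. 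Ruling this out needs an extra input; the paper's proof supplies it by observing that an amalgam automorphism preserves the focal subgroup $U\cap G'$, where $G'=[G,G]\cong\SU_3(3)$ has index $2$ (here $U\cap G'=A\gen{t}$ with $A\cong C_4\times C_4$), and preserving both $U\cap G'$ and $U_\alpha=A\gen{r}$ pins down the action on $U/\Fr(U)$ and, more importantly, forces invariance of $A$ and of the two quaternion factors of $U_\beta\cong Q_8\times_{C_2}Q_8$. Your outline contains no substitute for this ingredient, and the citation of Proposition \ref{rk1equi} cannot serve as one: that proposition concerns rank-one simple groups, not the $2$-constrained parabolics of $G_2(2)$, whose Levi quotients are just $\Sigma_3$.

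Two further points. Your structural claim in step (1) that $X_{\beta''}\le[U,U]$ is false: $[U,U]\le A$ (since $U/A\cong C_2^2$), and $x_{\beta''}\notin A$; only products such as $x_{\beta'}x_{\beta''}$ modulo $X_{\alpha'}X_{\alpha''}$ lie in $[U,U]$. And steps (4)--(5), which are where the real content sits, remain a plan rather than a proof: the subgroup of $\Aut(U)$ acting trivially on $U/\Fr(U)$ is a sizable $2$-group, and cutting it down to $\Inn(U)$ genuinely requires the interaction with both parabolics (in the paper this is done via invariance of $\gen{ab}$ and $\gen{ab^{-1}}$ inside the quaternion factors, an order-$3$ element $g\in\Par\alpha$ with prescribed action on $\Omega_1(A)$, and finally $Z(A\gen{g})=1$ and $C_{\Par\beta}(U_\beta)=Z(U_\beta)$), not from the commutator relations \eqref{e:G1}--\eqref{e:G5} alone. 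As written, the proposal would not compile into a complete argument without redoing that analysis, essentially along the paper's lines.
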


\begin{proof} In this case, $T=1$, and 
	\[ \Par\alpha\cong (C_4\times C_4)\rtimes D_{12} 
	\qquad\textup{and}\qquad
	\Par\beta\cong (Q_8\times_{C_2}Q_8)\rtimes\Sigma_3 \,. \]
Also, $B=U$ has presentation $U=A\rtimes\gen{r,t}$, where 
	\[ A=\gen{a,b}\cong C_4\times C_4,~ \gen{r,t}\cong C_2^2,~ 
	\9ra=a^{-1},~ \9rb=b^{-1},~ \9ta=b,~ \9tb=a\,. \]
In terms of the generators $x_\gamma=x_\gamma(1)$ for $\gamma\in\Sigma_+$, 
we have $A=\Gen{x_{\beta'}x_\beta,x_{\beta''}x_\beta}$ and 
$\Omega_1(A)=\Gen{x_{\alpha'},x_{\alpha''}}$, and we can take 
$r=x_{\beta''}$, $t=x_\alpha$, and $a=x_{\beta}x_{\beta''}$ (and then 
$b=\9ta$). Note that \eqref{e:G2} takes the more precise form 
$[x_{\beta'},x_{\beta}]=x_{\alpha'}x_{\alpha''}$ in this case. Also, 
	\begin{align*} 
	U_{\alpha} &= A\gen{r} \cong (C_4\times C_4)\rtimes C_2 \\
	U_{\beta} &= \gen{ab^{-1},a^2t}\times_{\gen{a^2b^2}}\gen{ab,a^2rt} 
	\cong Q_8\times_{C_2}Q_8 \\
	U\cap G' &= A\gen{t} \cong C_4\wr C_2\,.
	\end{align*}
The last formula holds since $G'=[G,G]\cong\SU_3(3)$ has index two in $G$ 
(see \cite[\S\,4.4.4]{Wilson} or \cite[pp. 146--150]{Dickson}), since 
$x_\alpha,x_{\alpha'},x_{\alpha''}\in G'$ (note that 
$x_\alpha=[x_{-\beta},x_{\beta'}]$), and since $x_\beta$, $x_{\beta'}$, and 
$x_{\beta''}$ are all $G$-conjugate and hence none of them lies in $G'$.

Fix an automorphism $(\chi_\alpha,\chi_\beta)$ of the amalgam 
$(\Par\alpha>B<\Par\beta)$, and set $\chi_0=\chi_\alpha|_B=\chi_\beta|_B$. 
Then $\chi_0\in\Aut(U)$ sends each of the subgroups $U_{\alpha}$, 
$U_{\beta}$, and $U\cap G'$ to itself. Hence it sends each quaternion 
factor in $U_{\beta}$ to itself, and sends $U_{\alpha}\cap G'=\gen{a,b}$ to 
itself. After composing by an appropriate element of $\Aut_U(\Par\beta)$, 
we can arrange that $\chi_0(ab)=ab$ and $\chi_0(ab^{-1})=ab^{-1}$. In 
particular, $\chi_0$ induces the identity on $\Omega_1(A)$ and hence also 
on $A/\Omega_1(A)$. 

Let $g\in\Par\alpha$ be an element of order $3$, chosen so that 
$\9g(a^2)=b^2$ and $\9g(b^2)=a^2b^2$. The image of $\gen{g}$ in 
$\Par\alpha/A\cong D_{12}$ is normal, so $\chi_\alpha(g)\in{}Ag$. Let 
$x\in\Omega_1(A)$ be such that $\chi_\alpha(b)=\chi_0(b)=ax$. Then 
$\9gb\in\gen{ab,b^2}\le C_A(\chi_0)$, so $\9gb=\chi_\alpha(\9gb)=\9g(bx)$ 
implies that $\9gx=1$ and hence $x=1$. Thus $\chi_0|_A=\Id$. Also, 
$\chi_\alpha(\gen{g})\in\syl3{\Par\alpha}$ is conjugate to $\gen{g}$ by an 
element of $A$, so we can arrange that $\chi_\alpha(\gen{g})=\gen{g}$ and 
hence that $\chi_\alpha|_{A\gen{g}}=\Id$. But then $\chi_\alpha$ is the 
identity modulo $C_{\Par\alpha}(A\gen{g})=Z(A\gen{g})=1$, so 
$\chi_\alpha=\Id_{\Par\alpha}$.

Since $\chi_\beta|_{U_\beta}=\Id$, $\chi_\beta$ induces the identity modulo 
$C_{\Par\beta}(U_\beta)=Z(U_\beta)\cong C_2$. It thus has the form 
$\chi_\beta(x)=x\psi(x)$ for some $\psi\in\Hom(\Par\beta,Z(U_\beta))$. 
Hence $\chi_\beta=\Id$, since it is the identity on $U\in\syl2{\Par\beta}$. 
\end{proof}

\begin{Prop} \label{3D4(2)-tame}
Assume $p=2$ and $G=\lie3D4(2)$. Then \eqref{star} holds, and $\kappa_G$ is 
onto.
\end{Prop}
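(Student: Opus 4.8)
The plan is, by Lemma \ref{reduce2star}, to reduce to verifying \eqref{star} --- that every automorphism of the amalgam $(\Par\alpha>B<\Par\beta)$ extends to an automorphism of $G$ --- and then to imitate the proof of Proposition \ref{G2(2)-tame}. The two new features compared with the $G_2(2)$ case are that the Cartan subgroup $T$ is now nontrivial and that the short root groups are copies of $\F_8$ rather than of $\F_2$, so that the Levi factor of $\Par\beta$ is $\cong\SL_2(8)$ and carries the order-$3$ group of field automorphisms. First I would record the relevant structure: via the identification \eqref{e:G2roots} the three ``long'' classes of positive roots satisfy $X_\alpha\cong X_{\alpha'}\cong X_{\alpha''}\cong\F_2$ and the three ``short'' classes satisfy $X_\beta\cong X_{\beta'}\cong X_{\beta''}\cong\F_8$, so $|U|=2^{12}$, $U\in\syl2{G}$, and $T\cong C_7$ acts trivially on each long root group (there being no nontrivial automorphism of $\F_2$) and faithfully, via $\F_8^\times$, on each short root group, while $B=U\rtimes T$. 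Moreover $\Par\alpha$ has $O_2(\Par\alpha)=U_{\5\alpha}$ of index $2$ in $U$ and Levi complement $\gen{X_\alpha,X_{-\alpha}}\times T\cong\SL_2(2)\times C_7$, while $\Par\beta$ has $O_2(\Par\beta)=U_{\5\beta}$ of index $8$ in $U$ and Levi complement $\gen{X_\beta,X_{-\beta}}\cong\SL_2(8)$, which contains $T$ as a maximal torus; and since $\lie3D4$ is twisted and of adjoint type, Theorem \ref{St-aut} gives $\Out(G)=\Phi_G$ cyclic of order $3$, generated by the field automorphism $\psi=\psi_2|_G$, which preserves the amalgam, acts trivially on each long root group, and acts on each short root group as the Frobenius $x\mapsto x^2$.

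Given an automorphism $(\chi_\alpha,\chi_\beta)$ of the amalgam, set $\chi_0=\chi_\alpha|_B=\chi_\beta|_B\in\Aut(B)$. Since $|U/U_{\5\alpha}|=2\ne8=|U/U_{\5\beta}|$, the automorphism $\chi_0$ cannot interchange $O_2(\Par\alpha)$ and $O_2(\Par\beta)$ (alternatively this follows from Proposition \ref{Borel-Tits}), so the two parabolics are not interchanged, $\chi_\alpha\in\Aut(\Par\alpha)$ and $\chi_\beta\in\Aut(\Par\beta)$, and $\chi_0$ fixes each of $U_{\5\alpha},U_{\5\beta}$. By the Schur--Zassenhaus theorem all complements to $U$ in $B$ are $U$-conjugate, so after composing $(\chi_\alpha,\chi_\beta)$ with conjugation by a suitable element of $U$ I may assume $\chi_0(T)=T$. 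Then $\chi_0$ induces an automorphism of $U/U_{\5\beta}\cong\F_8$ (as abelian group) normalizing the image of $T$, which is a Singer cycle in $\GL_3(2)$; since $N_{\GL_3(2)}(C_7)\cong C_7\rtimes C_3$ is the group of $\F_2$-semilinear automorphisms of $\F_8$, this induced automorphism is $\F_8$-semilinear. Composing $(\chi_\alpha,\chi_\beta)$ with a suitable power of $\psi$ and then with conjugation by a suitable element of $T$, I may therefore assume in addition that $\chi_0$ acts trivially on $U/U_{\5\beta}$; as it acts trivially on $U/U_{\5\alpha}\cong\F_2$ automatically, it acts trivially on $U/U_0$, where $U_0=U_{\5\alpha}\cap U_{\5\beta}=[U,U]=\gen{X_{\5\gamma}\,|\,\htt(\5\gamma)\ge2}$.

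To finish, I would use that the lower central series of $U$ equals its height filtration $U=U^{(1)}>U^{(2)}>\cdots>U^{(5)}=Z(U)>1$, with $U^{(k)}=\gen{X_{\5\gamma}\,|\,\htt(\5\gamma)\ge k}$ and $U^{(k)}/U^{(k+1)}$ the height-$k$ root group, each $U^{(k)}$ characteristic and hence fixed by $\chi_0$. The commutator relations \eqref{e:G1}--\eqref{e:G5} express each higher root group, modulo the next term of the filtration, in terms of iterated commutators of $X_\alpha$ and $X_\beta$; an induction on $k$ then shows $\chi_0$ acts trivially on every quotient $U^{(k)}/U^{(k+1)}$ (the cases $k=4,5$ being automatic, those quotients being copies of $\F_2$). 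It then remains, exactly as in the proof of Proposition \ref{G2(2)-tame}, to remove the residual ``unipotent'' ambiguity: using the action on $U_{\5\alpha}$ of an element of order $3$ in the Levi factor $\SL_2(2)\times C_7$ of $\Par\alpha$, together with a Lemma \ref{Aut-lemma}-type comparison of the $\F_2T$-module structure of the chief factors of $U$ (via Lemma \ref{l:Fq*onFq}), one reduces first to $\chi_0=\Id_U$, and then --- using in $\Par\alpha$ a subgroup with trivial centralizer, and in $\Par\beta$ that an automorphism trivial on $U\in\syl2{\Par\beta}$ is conjugation by an element of $X_\beta\le U$ --- to $\chi_\alpha=\Id_{\Par\alpha}$ and $\chi_\beta=\Id_{\Par\beta}$. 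Hence the original amalgam automorphism differs from the identity by a conjugation by an element of $B$ and a power of $\psi$, so it extends to an automorphism of $G$; this proves \eqref{star}, and with Lemma \ref{reduce2star} that $\kappa_G$ is onto. The main obstacle is the bookkeeping in this last step: checking that the explicit coefficients in \eqref{e:G1}--\eqref{e:G5} genuinely force triviality on each $U^{(k)}/U^{(k+1)}$, and that the $\F_2T$-module structure of $U$ is non-degenerate enough to eliminate the unipotent part --- routine but tedious, in the style of the $G_2(2)$ computation.
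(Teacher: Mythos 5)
Your reductions in the first two paragraphs are sound and essentially parallel the paper's opening moves (normalize by field automorphisms and $B$-conjugations so that $\chi_0(T)=T$ and $\chi_0$ is trivial modulo the height filtration), and the induction through the layers $U^{(k)}/U^{(k+1)}$ using \eqref{e:G1}--\eqref{e:G5} does work. The genuine gap is in the last step, exactly where you defer to ``routine but tedious'' bookkeeping: triviality on all layers does \emph{not} reduce to $\chi_0=\Id_U$ by a Lemma \ref{Aut-lemma}-type argument, because the hypothesis of that lemma fails here. The relations \eqref{e:G4} and \eqref{e:G5} show that both $X_{\beta'}$ and $X_\beta$ are dual to $X_{\beta''}$ as $\F_2[T]$-modules, so $X_\beta\cong X_{\beta'}$; moreover the trivial module occurs both in $U/\Fr(U)$ (as the image of $X_\alpha$) and again as $X_{\alpha'}$ and $Z(U)=X_{\alpha''}$. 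Consequently $\Hom_{\F_2[T]}(U/\Fr(U),\,U^{(k)}/U^{(k+1)})\neq0$ in several layers, and there is a genuine supply of $T$-equivariant, layer-trivial automorphisms of $B$ that are not conjugations: for instance an $\F_8$'s worth of deformations $x_\beta(v)\mapsto x_\beta(v)\,x_{\beta'}(\lambda v)\cdots$, of which only the $\F_2$'s worth coming from conjugation by $X_\alpha$ (via \eqref{e:G1}) are inner. The $\F_2T$-module structure of the chief factors of $U$ simply cannot kill these; one must invoke the structure of the parabolics themselves. That is what the paper's proof does: since $U_\beta$ is extraspecial with $C_{\Par\beta}(U_\beta)=Z(U_\beta)\cong C_2$, an automorphism of $\Par\beta$ trivial on $U_\beta$ and on $\Par\beta/U_\beta$ is of the form $x\mapsto x\psi(x)$ with $\psi\in\Hom(\SL_2(8),C_2)=1$, which eliminates precisely those deformations; and on the other side $C_{\Par\alpha}(T)\cong\Sigma_4\times C_7$ with $\Out(\Sigma_4)=1$ handles the $X_\alpha$-direction ambiguity.

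Two further points in your sketch would need repair. First, the statement that an automorphism of $\Par\beta$ trivial on $U\in\syl2{\Par\beta}$ ``is conjugation by an element of $X_\beta\le U$'' is false as written: any nontrivial element of $X_\beta$ acts nontrivially on $U$ (e.g.\ on $X_{\beta'}$, by \eqref{e:G2}); the relevant centralizer is $C_{\Par\beta}(U)=Z(U)=X_{\alpha''}$, and even then innerness is not automatic --- it is the $\Hom(\SL_2(8),C_2)=1$ argument above that does the work. Second, the corrections made on the two parabolics must be compatible: after forcing $\chi_\beta=\Id_{\Par\beta}$, the residual adjustment needed on $\Par\alpha$ is conjugation by some $z\in Z(C_U(T))=X_{\alpha''}$, and it is essential (and used in the paper) that $X_{\alpha''}=Z(\Par\beta)$, so that composing with $c_z$ does not destroy $\chi_\beta=\Id$. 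Your sketch never addresses this compatibility, and without it the final conclusion $\chi_\alpha=\Id_{\Par\alpha}$, $\chi_\beta=\Id_{\Par\beta}$ does not follow.
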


\begin{proof} In this case, $T\cong\F_8^\times\cong C_7$, 
$\Par\alpha/U_{\alpha}\cong C_7\times\Sigma_3$, and 
$\Par\beta/U_\beta\cong\SL_2(8)$. Also, by \eqref{e:G3} and \eqref{e:G4}, 
$U_\beta$ is extraspecial with center $X_{\alpha''}$. Fix an automorphism 
$(\chi_\alpha,\chi_\beta)$ of the amalgam $(\Par\alpha>B<\Par\beta)$, and 
set $\chi_0=\chi_\alpha|_B=\chi_\beta|_B$. We must show that $\chi_\alpha$ 
and $\chi_\beta$ are the restrictions of some automorphism of $G$.

By Theorem \ref{St-aut}, and since $\Outdiag(\SL_2(8))=1=\Gamma_{\SL_2(8)}$, 
$\Out(\Par\beta/U_\beta)\cong\Out(\SL_2(8))$ is generated by field 
automorphisms, and hence automorphisms which are restrictions of field 
automorphisms of $G$. So we can compose $\chi_{\beta}$ and 
$\chi_{\alpha}$ by restrictions of elements of 
$\Aut_{B}(G)\Phi_G=N_{\Aut_{\Par\beta}(G)}(U)\Phi_G$, to arrange that 
$\chi_{\beta}$ induces the identity on $\Par\beta/U_{\beta}$. Then, upon 
composing them by some element of $\Aut_U(G)$, we can also arrange that 
$\chi_0(T)=T$. Since $X_{\beta'}$ and $X_{\beta''}$ are dual to each other 
by \eqref{e:G4} and hence nonisomorphic as $\F_2[T]$-modules, $\chi_0$ 
sends each of them to itself. 

Since $\chi_0(T)=T$, $\chi_0$ sends 
$C_U(T)=X_{\alpha}X_{\alpha'}X_{\alpha''}\cong D_8$ to itself. It cannot 
exchange the two subgroups $X_{\alpha}X_{\alpha''}$ and 
$X_{\alpha'}X_{\alpha''}$ (the first is not contained in $U_\alpha$ and the 
second is), so $\chi_0|_{C_U(T)}\in\Inn(C_U(T))$. Hence
after composing by an element of $\Aut_{C_U(T)}(G)$, we can arrange that 
$\chi_0$ is the identity on this subgroup. Also, by applying \eqref{e:G1} 
with $u=1$, and since $\chi_0|_{X_\beta}\equiv\Id$ (mod $U_{\beta}$) and 
$[X_\alpha,U_{\beta}]\le X_{\alpha''}$, we see that $\chi_0$ is the 
identity on $X_{\beta'}X_{\beta''}$. We conclude that $\chi_0$ is the 
identity on $U_\beta$. 

Since $\chi_{\beta}$ induces the identity on $U_{\beta}$ and on 
$\Par\beta/U_\beta$, it has the form 
$\chi_\beta(x)=x\psi(x)$ (all $x\in\Par\beta$) for some 
	\[ \psi\in \Hom(\Par\beta/U_{\beta};Z(U_{\beta}))
	\cong\Hom(\SL_2(8),C_2)=1\,. \] 
So $\chi_{\beta}=\Id_{\Par\beta}$. 

Now, $C_{\Par\alpha}(T)\cong\Sigma_4\times C_7$, and $\Out(\Sigma_4)=1$. 
Hence $\chi_{\alpha}|_{C_{\Par\alpha}(T)}$ must be conjugation by some 
element $z\in{}Z(C_U(T))=X_{\alpha''}=Z(\Par\beta)$. After composing 
$\chi_\alpha$ and $\chi_\beta$ by restrictions of $c_z$, we can thus assume 
that $\chi_{\alpha}$ is the identity on $C_{\Par\alpha}(T)$ (and still 
$\chi_{\beta}=\Id_{\Par\beta}$). Since $\chi_{\alpha}|_U=\Id$ and 
$\Par\alpha=\gen{U,C_{\Par\alpha}(T)}$, we have 
$\chi_{\alpha}=\Id_{\Par\alpha}$. 
\end{proof}

It remains only to handle $\lie2F4(2)$ and the Tits group.

\begin{Prop} \label{Tits-tame}
Assume $G=\lie2F4(2)'$ or $\lie2F4(2)$.  Then $\kappa_G$ is an 
isomorphism.
\end{Prop}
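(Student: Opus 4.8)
The strategy is the one already used for $G_2(2)$ and $\lie3D4(2)$: reduce to analyzing automorphisms of the amalgam of the two maximal parabolics $\Par1>B<\Par2$ of $G=\lie2F4(2)$ containing a fixed Borel $B=U$ (note $T=1$ here, since the defining field is $\F_2$), and show that every such automorphism is realized, up to conjugation in $B$, by an element of $\Aut(G)$ of the form (inner)$\cdot$(graph-field). Combined with Lemma \ref{reduce2star} this gives surjectivity of $\kappa_G$; injectivity of $\4\kappa_G$ (hence of $\kappa_G$) comes from Lemma \ref{kappa_inj}, and injectivity of $\mu_G$ will follow once we know $\kappa_G$ and $\4\kappa_G$ have the same (trivial) kernel and the same image, so that all three maps are isomorphisms. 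The case $\lie2F4(2)'$ will be handled in parallel: the Tits group is the derived subgroup of index $2$, each automorphism of its amalgam of parabolics extends to the amalgam for $\lie2F4(2)$ (because $\Par{i}\cap G'$ is recovered as the preimage of the focal subgroup, as in the $G_2(2)$ argument), and $\Out(\lie2F4(2)')\cong C_2$ is generated by the ``graph-field'' automorphism that is visible on the amalgam.

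First I would record the structure of $U\in\syl2{G}$ and of the two parabolics. For $\lie2F4(q)$ the Borel has a filtration by ten (classes of) root subgroups, and for $q=2$ this is an explicit group of order $2^{11}$; the two maximal parabolics $\Par1,\Par2$ have the form $U_i\rtimes L_i$ with $L_1\cong L_2$ a group with $L_i/O_2(L_i)\cong\SL_2(2)\cong\Sigma_3$ on one side and $\Sz(2)\cong C_5\rtimes C_4$ on the other (the rank-one Levi factors of the two nodes of the twisted $F_4$ diagram). One reads off the commutator relations among root subgroups from \cite[Theorem 2.4.5]{GLS3}. Then, given an amalgam automorphism $(\chi_1,\chi_2)$ with common restriction $\chi_0=\chi_1|_U=\chi_2|_U$, I would: (i) use Theorem \ref{St-aut} on the Levi quotients $L_i/O_2(L_i)$ to compose $\chi_1,\chi_2$ with restrictions of elements of $\Aut_B(G)\Phi_G\Gamma_G$ so that $\chi_i$ induces the identity on $\Par{i}/O_2(\Par{i})$ — here the only outer automorphisms available on the $\Sz(2)$-Levi and on the $\SL_2(2)$-Levi are, respectively, the field automorphism of order $2$ of $\Sz(2)$ (which globalizes to the graph-field automorphism of $\lie2F4(2)$) and nothing, so this step uses up exactly the ``graph-field'' part of $\Out(G)$; (ii) identify suitable $\chi_0$-invariant subgroups of $U$ — the successive terms of the lower central / root filtration, and the two ``unipotent radicals'' $U_1,U_2$, which must be preserved by Proposition \ref{Borel-Tits} — and compose with inner automorphisms to force $\chi_0$ to be the identity on a generating set of $U$, exactly as in the $G_2(2)$ and $\lie3D4(2)$ proofs, using that the relevant sections are pairwise non-isomorphic as modules over the Levi factors (Lemma \ref{l:Fq*onFq} controls the $\F_2\Sz(2)$- and $\F_2\SL_2(2)$-module structure of these sections); (iii) having arranged $\chi_i|_U=\Id$, note that each $\chi_i$ then induces the identity on $O_2(\Par{i})$ and on $\Par{i}/O_2(\Par{i})$, so it is $x\mapsto x\psi_i(x)$ for some $\psi_i\in\Hom(\Par{i}/O_2(\Par{i}),Z(O_2(\Par{i})))$; one checks this $\Hom$-group vanishes (the Levi quotients are perfect-enough, or have no quotient of the relevant order, so $\Hom(L_i,C_2)=0$ on the $\SL_2(2)$ side one must be slightly careful since $\Sigma_3$ does have a $C_2$-quotient — but then that potential discrepancy is absorbed by a further inner automorphism, or is ruled out because it would force a loop of the wrong length in the building, cf. point \eqref{claim2} in the proof of Proposition \ref{DS-thm}). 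Then $\chi_1=\Id_{\Par1}$ and $\chi_2=\Id_{\Par2}$, so the original amalgam automorphism was (inner)$\cdot$(graph-field), hence extends to $\Aut(G)$.

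Once $\kappa_G$ is shown to be onto, I would finish as follows: $\4\kappa_G=\mu_G\circ\kappa_G$ is injective by Lemma \ref{kappa_inj}, so $\kappa_G$ is injective, hence bijective; and then $\mu_G$ is injective because $\4\kappa_G$ is and $\kappa_G$ is onto, while $\mu_G$ is onto because $\4\kappa_G$ is onto (which follows from $\kappa_G$ onto plus $\mu_G$ defined so that the triangle commutes) — so $\mu_G$ too is an isomorphism, giving the claim. For the Tits group, restriction along $G'\le G$ and the computation $\Out(\lie2F4(2)')\cong\Out(\lie2F4(2))\cong C_2$ (the graph-field class) reduce everything to the statement just proved for $\lie2F4(2)$, exactly as the parabolics $\Par{i}\cap G'$ were used in Proposition \ref{G2(2)-tame}.

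\textbf{Main obstacle.} The hard part is step (ii)–(iii): carrying out the bookkeeping of the root filtration of the order-$2^{11}$ group $U$ in the twisted $F_4$ case, where the root subgroups are not all abelian and the commutator relations (from \cite[Table 2.4]{GLS3} and \cite[Theorem 2.4.5]{GLS3}) are genuinely messy, and verifying that the relevant sections of $U_1$ and $U_2$ are non-isomorphic modules over the respective Levi factors so that Lemma \ref{Aut-lemma}-type arguments apply. I expect the cleanest route is not to push the filtration argument all the way but to combine a modest amount of it with the building-theoretic constraint from Proposition \ref{DS-thm}: the amalgam has $n=8$ (since $W_0\cong D_{16}$ for $\lie2F4$), so any amalgam automorphism sending $t_1,t_2$ to a pair whose product has order $\ne 8$ would create a loop of the wrong length in the $\lie2F4(2)$ building, which is impossible; this kills most of the potential ``bad'' behaviour and leaves only finitely many near-identity cases to eliminate by the module-theoretic argument above.
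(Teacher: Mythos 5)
Your overall strategy (a direct analysis of the rank-two amalgam in the style of Propositions \ref{G2(2)-tame} and \ref{3D4(2)-tame}, supplemented by the building constraint from Proposition \ref{DS-thm}) is reasonable in spirit, and indeed the paper's proof ends by noting a variant of this route via Fan's classification of extensions of the amalgam \cite{Fan}. But as written the proposal has a genuine gap, rooted in two incorrect facts. First, $\Out(\lie2F4(2))$ is trivial, not $C_2$: the groups $\lie2F4(2^{2m+1})$ have only field outer automorphisms, of odd order $2m+1$, so there is no ``graph-field'' class for $q=2$ (and $\Sz(2)$ has no field automorphism of order $2$ either). The outer automorphism of the Tits group $\lie2F4(2)'$ is induced by conjugation by elements of $\lie2F4(2)$ outside the derived subgroup, not by any automorphism of $\lie2F4(2)$; in particular your claimed isomorphism $\Out(\lie2F4(2)')\cong\Out(\lie2F4(2))\cong C_2$ is false. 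Second, since $[\lie2F4(2):\lie2F4(2)']=2=p$, the two groups have different Sylow $2$-subgroups and genuinely different fusion and linking systems, so ``restriction along $G'\le G$'' does not transfer a computation of $\Out\typ(\call_S^c(\lie2F4(2)))$ to $\Out\typ(\call_T^c(\lie2F4(2)'))$. The paper goes in the opposite direction: it does the hard computation inside the Tits group, using Parrott's structure of the parabolics $H=C_G(Z(T))$ and $N$ to bound $|\Out\typ(\call)|$ by $|\Hom_{H/J}(J/E,E/Z)|\le2$, and then concludes from $|\Out(\lie2F4(2)')|=2$ together with injectivity of $\kappa_G$ (Lemma \ref{kappa_inj}); only afterwards is the result transferred to $\lie2F4(2)$ via the pullback square of \cite[Lemma 2.15]{AOV1}. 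Your sketch supplies no mechanism replacing this transfer.

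Relatedly, the point where you wave your hands is exactly where the content lies. In your step (iii) the claim $\Hom(L_i,C_2)=0$ fails on both sides: both Levi quotients, $\Sigma_3$ and $\Sz(2)\cong C_5\rtimes C_4$, have $C_2$-quotients. For the Tits group the resulting ``near-identity'' discrepancy cannot be absorbed by an inner automorphism or excluded by loop-length considerations, because it is precisely the nontrivial outer automorphism and must be shown to be \emph{realized}; this is why the paper's argument is a counting comparison ($|\Out\typ(\call)|\le2=|\Out(G)|$ plus injectivity of $\kappa_G$) rather than a proof that every amalgam automorphism is inner. Without that comparison, or an appeal to Fan's amalgam theorem, the residual cases in your plan are not eliminated. (Minor slips: the Borel subgroup of $\lie2F4(2)$ has order $2^{12}$, not $2^{11}$; the order $2^{11}$ is that of a Sylow $2$-subgroup of the Tits group.)
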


\begin{proof} By the pullback square in \cite[Lemma 2.15]{AOV1} (and since 
$\Out\typ(\call)$ is independent of the choice of objects in $\call$ by 
\cite[Lemma 1.17]{AOV1}), $\kappa_G$ is an isomorphism when $G=\lie2F4(2)$ 
if it is an isomorphism when $G$ is the Tits group.  So from now on, we 
assume $G=\lie2F4(2)'$.

We adopt the notation for subgroups of $G$ used by Parrott \cite{Parrott}.  
Fix $T\in\syl2{G}$, and set $Z=Z(T)\cong C_2$, $H=C_G(Z)$, and $J=O_2(H)$.  Let 
$z\in{}Z$ be a generator. Then $H$ is the parabolic subgroup of order 
$2^{11}\cdot5$, $|J|=2^9$, and $H/J\cong C_5\sd{}C_4$.  Set $E=[J,J]$.  By 
\cite[Lemma 1]{Parrott}, $E=Z_2(J)=\Fr(J)\cong C_2^5$, and by the proof 
of that lemma, the Sylow $5$-subgroups of $H$ act irreducibly on $J/E\cong 
C_2^4$ and on $E/Z\cong C_2^4$. Since each element of $\Aut_{H/J}(J/E)$ 
sends $C_{J/E}(T/J)\cong C_2$ to itself,
	\beqq \Aut_{H/J}(J/E)=\{\Id_{J/E}\} \quad\textup{and}\quad
	|\Hom_{H/J}(J/E,E/Z)| \le |\Hom_{H/J}(J/E,J/E)|=2\,. 
	\label{e:|Hom|} \eeqq

Let $N>T$ be the other parabolic, and set $K=O_2(N)$.  Thus 
$N/K\cong\Sigma_3$, and $[T:K]=2$.

Fix $P\in\syl5{H}\subseteq\syl5{G}$ (so $P\cong C_5$). By \cite[p. 
674]{Parrott}, $H/E=(J/E)\cdot(N_G(P)/Z)$, where $N_G(P)/Z\cong H/J\cong 
C_5\sd{}C_4$. For each $\beta\in\Aut(H)$ such that $\beta(T)=T$, there is 
$\beta_1\equiv\beta$ (mod $\Aut_J(H)$) such that $\beta_1(P)=P$. Since each 
automorphism of $H/J$ which sends $T/J\cong C_4$ to itself is conjugation 
by an element of $T/J$, there is $\beta_2\equiv\beta_1$ (mod 
$\Aut_{N_T(P)}(H)$) such that $\beta_2$ induces the identity on $H/J$. 
By \eqref{e:|Hom|}, $\beta_2$ also induces the 
identity on $J/E$, and hence on $H/E=(J/E)\cdot(N_G(P)/Z)$. Thus
	\beqq N_{\Aut(H)}(T) = \Aut_T(H)\cdot \{\beta\in\Aut(H)\,|\, 
	\beta(P)=P,~ [\beta,H]\le E \}\,. \label{e:Aut(H)} \eeqq

Now set $\call=\call_T^c(G)$ for short, and identify $N=\Aut_\call(K)$ and 
$H=\Aut_\call(J)$. For each $\alpha\in\Aut\typ^I(\call)$, let 
$\alpha_H\in\Aut(H)$ and $\alpha_N\in\Aut(N)$ be the induced automorphisms, 
and set $\alpha_T=\alpha_H|_T=\alpha_N|_T$. Set 
	\[ \cala_0 = \bigl\{\alpha\in\Aut\typ^I(\call) \,\big|\, 
	[\alpha_H,H]\le E \textup{ and } \alpha_H|_P=\Id_P \bigr\}\,. \]
By \eqref{e:Aut(H)}, each class in $\Out\typ(\call)$ contains at least one 
automorphism in $\cala_0$.

Fix $\alpha\in\cala_0$. Since 
$[\alpha_H,H]$ must be normal in $H$, we have $[\alpha_H,H]\in\{E,Z,1\}$.  
If $[\alpha_H,H]=Z$, then $\alpha_H|_{JP}=\Id$, so 
$[\alpha_H,K]=[\alpha_N,K]=Z$, which is impossible since $Z$ is not normal 
in $N$ by \cite[Lemma 6]{Parrott} (or since $z\notin Z(G)$ and 
$G=\gen{H,N}$). Thus either $\alpha_H=\Id$, or $[\alpha_H,H]=E$.  

If $\alpha_H=\Id_H$, then $\alpha_N|_T=\Id$. In this case, $\alpha_N$ 
determines an element of $H^1(N/K;Z(K))$ whose restriction to 
$H^1(T/K;Z(K))$ is trivial, and since this restriction map for 
$H^1(-;Z(K))$ is injective (since $T/K\in\syl2{N/K}$), $\alpha_N\in\Inn(N)$ 
(see, e.g., \cite[Lemma 1.2]{OV2}). Hence $\alpha_N\in\Aut_Z(N)$ since 
$\alpha_N|_T=\Id$ (and $Z=Z(T)$). So $\alpha\in\Aut_Z(\call)$ in this case, 
and $[\alpha]=1\in\Out\typ(\call)$.

Set $\4H=H/Z$, and similarly for subgroups of $H$.  Let 
$\4\alpha_H\in\Aut(\4H)$ and $\4\alpha_T\in\Aut(\4T)$ be the automorphisms 
induced by $\alpha_H$ and $\alpha_T$, and set 
$\beta=\4\alpha_T|_{\overline{J}}$.  
Then $\4E=Z(\4J)$ since $E=Z_2(J)$, so $\beta(g)=g\varphi(\5g)$ for some 
$\varphi\in\Hom_{H/J}(J/E,\4E)$.  If 
$\varphi=1$, so that $[\alpha,J]\le Z$, then since $\alpha|_P=\Id$, we 
have $[\alpha_H,H]<E$ and so $\alpha_H=\Id$.

We have now constructed a homomorphism from $\cala_0$ to 
$\Hom_{H/J}(J/E,\4E)$ with kernel $\Aut_{Z}(\call)$. Thus 
	\[ |\Out\typ(\call)| \le |\cala_0/\Aut_Z(\call)| \le
	|\Hom_{H/J}(J/E,\4E)| \le 2 \,. \]
where the last inequality holds by \eqref{e:|Hom|}. 
Since $|\Out(G)|=2$ by \cite[Theorem 2]{GrL}, and since $\kappa_G$ is injective 
by Lemma \ref{kappa_inj}, this proves that $\kappa_G$ is an isomorphism.

Alternatively, this can be shown using results in \cite{Fan}.  Since 
$T/[T,T]\cong C_2\times C_4$ by the above description of $T/E$ (where 
$E\le[T,T]$), $\Aut(T)$ and hence $\Out\typ(\call)$ are $2$-groups.  
So each automorphism of the amalgam $H>T<N$ determines a larger 
amalgam.  Since the only extension of this amalgam is to that of 
$\lie2F4(2)$ by \cite[Theorem 1]{Fan}, $|\Out\typ(\call)|=2$. 
\end{proof}

\newpage

\newsect{The cross characteristic case: I}
\label{s:X1}

Throughout this section, we will work with groups $G=C_{\4G}(\sigma)$ which 
satisfy the conditions in Hypotheses \ref{G-hypoth-X} below.  In 
particular, \ref{G-hypoth-X}\eqref{not4x} implies that $G$ is not a Suzuki 
or Ree group. We will see in Section \ref{s:X2} (Proposition 
\ref{G-cases-odd}) that while these hypotheses are far from including all 
finite Chevalley and Steinberg groups, their fusion systems at the prime 
$p$ do include almost all of those we need to consider.

For any finite abelian group $B$, we denote its ``scalar automorphisms'' by
	\[ \psi_k^B\in\Aut(B),\qquad \psi_k^B(g)=g^k \qquad
	\textup{for all $k$ such that $(k,|B|)=1$} \]
and define the group of its scalar automorphisms
	\[ \Aut\scal(B) = \bigl\{ \psi_k^B \,\big|\, (k,|B|)=1 
	\bigr\} \le Z(\Aut(B))~. \]

\begin{Hyp} \label{G-hypoth-X}
Assume we are in the situation of Notation 
\ref{G-setup}(\ref{not1},\ref{not2},\ref{not3}). 
\begin{enumI}[widest=(I)]

\item\label{not4x} Let $p$ be a prime distinct from $q_0$ such that 
$p\big||W_0|$.  Assume also that 
$\sigma=\psi_q\circ\gamma=\gamma\circ\psi_q\in\End(\4G)$, where 
\medskip

\begin{itemize} 
\item $q$ is a power of the prime $q_0$;

\item $\psi_q\in\Phi_{\4G}$ is the field automorphism (see Definition 
\ref{d:Aut(Gbar)}(a)); and 

\item $\gamma\in\Aut(\4G)$ is an algebraic automorphism of finite order 
which sends $\4T$ to itself and commutes with $\psi_{q_0}$ (so that 
$\psi_{q_0}(G)=G$). 

\end{itemize}
\medskip

\noindent Also, there is a free $\gen{\tau}$-orbit of the form 
$\{\alpha_1,\alpha_2,\ldots,\alpha_s\}$ or 
$\{\pm\alpha_1,\pm\alpha_2,\ldots,\pm\alpha_s\}$ in $\Sigma$ such that the 
set $\{\alpha_1,\alpha_2,\ldots,\alpha_s\}$ is linearly independent in $V$.

\item  The algebraic group $\4G$ is of universal type, and $N_G(T)$ contains 
a Sylow $p$-subgroup of $G$. 

\item  Set $A=O_p(T)$. Assume 
one of the following holds: either 

\smallskip

\begin{enumerate}[label=\textup{(\Roman{enumIi}.\arabic*)},
ref=(\Roman{enumIi}.\arabic*),labelindent=3mm,leftmargin=!,itemsep=6pt,
topsep=6pt]

\item \label{easy_case}
$q\equiv1$ (mod $p$), $q\equiv1$ (mod $4$) if $p=2$, $|\gamma|\le2$, and 
$\gamma\in\Gamma_{\4G}$ (thus $\rho(\Pi)=\Pi$); or


\item \label{minus_case}
$p$ is odd, $q\equiv-1$ (mod $p$), $G$ is a Chevalley group (i.e., 
$\gamma\in\Inn(\4G)$), and $\gamma(t)=t^{-1}$ for each $t\in\4T$; or 


\item \label{messy_case}
$p$ is odd, $|\tau|=\ordp(q)\ge2$, $C_A(O_{p'}(W_0))=1$, 
$C_S(\Omega_1(A))=A$, $\Aut_G(A)=\Aut_{W_0}(A)$, 
	\[ N_{\Aut(A)}(\Aut_{W_0}(A)) \le
	\Aut\scal(A)\Aut_{\Aut(G)}(A) \]
where $\Aut_{\Aut(G)}(A)=\bigl\{\delta|_A\,\big|\,\delta\in\Aut(G),~ 
\delta(A)=A \bigr\}$, and 
	\[ \Aut_{W_0}(A)\cap\Aut\scal(A) \le
	\begin{cases} 
	\Gen{\gamma|_A} & \textup{if $2\big|\ordp(q)$ or $-\Id\notin W$} \\
	\Gen{\gamma|_A,\psi_{-1}^A} & \textup{otherwise,}
	\end{cases} \]
\end{enumerate}
\end{enumI} 
\end{Hyp}

Since $W_0$ acts on $T$ by Lemma \ref{W0onT}, it also acts on $A=O_p(T)$.

We will see in Lemma \ref{NG(T)} that the conditions $C_S(\Omega_1(A))=A$ 
(or $C_S(A)=A$ when $p=2$) and $\Aut_G(A)=\Aut_{W_0}(A)$, both assumed here 
in \ref{messy_case}, also hold in cases \ref{easy_case} and 
\ref{minus_case}. 

Recall, in the situation of \ref{messy_case}, that $|\tau|=|\gamma|_{\4T}|$ 
by Lemma \ref{l:tau}.

Note that the above hypotheses eliminate the possibility that $G$ be a 
Suzuki or Ree group. Since we always assume the Sylow $p$-subgroups are 
nonabelian, the only such case which needs to be considered here (when 
$q_0\ne{}p$) is that of $\lie2F4(q)$ when $p=3$, and this will be handled 
separately. 

By Lemma \ref{l:tau}, whenever 
$\sigma=\psi_q\circ\gamma$, and $\gamma$ is an algebraic automorphism of 
$\4G$ which normalizes $\4T$, there is $\tau\in\Aut(V)$ such that 
$\tau(\Sigma)=\Sigma$ and $\sigma(\4X_\alpha)=\4X_{\tau(\alpha)}$ for each 
$\alpha\in\Sigma$. So under Hypotheses \ref{G-hypoth-X}, the condition at 
the beginning of Notation \ref{G-setup}\eqref{not3} holds automatically, 
and with $\rho=\tau|_\Sigma$. To simplify the notation, throughout this 
section and the next, we write $\tau=\rho$ to denote this induced 
permutation of $\Sigma$. 

The following notation will be used throughout this section, in addition to 
that in Notation \ref{G-setup}.  
Note that $\5\Pi$ and $\5\Sigma$ are defined in Notation 
\ref{G-setup}\eqref{not3} only when $\rho(\Pi)=\Pi$, and hence only 
in case \ref{easy_case} of Hypotheses \ref{G-hypoth-X}. It will be 
convenient, in some of the proofs in this section, to extend this 
definition to case \ref{minus_case}.

Recall (Notation \ref{G-setup}) that for 
$\alpha\in\Sigma$, $w_\alpha\in{}W$ denotes the reflection in the 
hyperplane $\alpha^\perp\subseteq V$.  

\begin{Not} \label{G-setup-X}
Assume we are in the situation of Notation \ref{G-setup} and Hypotheses 
\ref{G-hypoth-X}.  
\begin{enumA}[start=4]

\item\label{not5x} If \ref{minus_case} holds, then set $\5\Sigma=\Sigma$, 
$\5\Pi=\Pi$, and $V_0=V$. Note that $W_0=W$ in this case.

\item\label{not6x} If \ref{easy_case} holds, then for each $\5\alpha\in\5\Sigma$, 
let $w_{\5\alpha}\in W_0$ be the element in 
$\gen{w_\alpha\,|\,\alpha\in\5\alpha}$ which acts on $V_0$ as the 
reflection across the hyperplane $\gen{\5\alpha}^\perp$, and which 
exchanges the positive and negative roots in the set 
$\gen{\5\alpha}\cap\Sigma$. (Such an element exists and lies in $W_0$ by 
\cite[Proposition 13.1.2]{Carter}.) 

\item\label{not7x} If \ref{easy_case} or \ref{minus_case} holds, then for each 
$\alpha\in\Sigma$ and each $\5\alpha\in\5\Sigma$, set 
	\begin{align*} 
	\4K_\alpha&=\gen{\4X_\alpha,\4X_{-\alpha}} &
	\4T_\alpha&=h_\alpha(\fqobar^\times) \\
	\4K_{\5\alpha} &= \gen{\4K_\alpha\,|\,\alpha\in\5\alpha} &
	\4T_{\5\alpha} &= \gen{\4T_\alpha\,|\,\alpha\in\5\alpha} \,.
	\end{align*}

\item\label{not8x} Set $N=N_G(T)/O_{p'}(T)$, and identify $A=O_p(T)$ with 
$T/O_{p'}(T)\nsg{}N$.  If \ref{easy_case} or \ref{minus_case} holds, 
then for $\5\alpha\in\5\Sigma$, set $A_{\5\alpha}=A\cap\4T_{\5\alpha}$.

\item\label{not9x} Fix $S\in\sylp{G}$ such that $A\le{}S\le{}N_G(T)$, and 
set $\calf=\calf_S(G)$.  (Recall that $N_G(T)$ contains a Sylow 
$p$-subgroup of $G$ by Hypotheses \ref{G-hypoth-X}(II).) Set 
	\[ \Aut(A,\calf) = \bigl\{\beta\in\Aut(A) \,\big|\,
	\beta=\4\beta|_A,~\textup{some}~
	\4\beta\in\Aut(S,\calf)\bigr\} \,. \]
Set $\Aut\dg(S,\calf)=C_{\Aut(S,\calf)}(A)=
\bigl\{\beta\in\Aut(S,\calf)\,\big|\,\beta|_A=\Id\bigr\}$, 
and let $\Out\dg(S,\calf)$ be the image of $\Aut\dg(S,\calf)$ in 
$\Out(S,\calf)$.  
\end{enumA}
\end{Not}

Note that when $(\4G,\sigma)$ is a standard setup (i.e., in case 
\ref{easy_case}), $W_0$ acts faithfully on $V_0$ (see \cite[Lemma 
13.1.1]{Carter}).

Recall that $N=N_G(T)/O_{p'}(T)$.  We identify $A=O_p(T)$ with 
$T/O_{p'}(T)\nsg N$.

\begin{Lem} \label{NG(T)}
Assume Hypotheses \ref{G-hypoth-X} and Notation \ref{G-setup-X}. 
\begin{enuma} 

\item If condition \ref{easy_case} or \ref{minus_case} holds, then 
$C_W(A)=1$, $C_{\4G}(A)=C_{\4G}(T)=\4T$, $C_G(A)=T$, and $C_S(A)=A$. If $p$ 
is odd, then $C_W(\Omega_1(A))=1$ and $C_S(\Omega_1(A))=A$. 

\item If $C_{\4G}(A)^0=\4T$ (in particular, if \ref{easy_case} or 
\ref{minus_case} holds), then 
$N_G(A)=N_G(T)\le N_{\4G}(\4T)$, and the inclusion of $N_G(T)$ in 
$N_{\4G}(\4T)$ induces isomorphisms $W_0\cong N_G(T)/T\cong N/A$. Thus 
$\Aut_G(A)=\Aut_{W_0}(A)$.

\end{enuma}
\end{Lem}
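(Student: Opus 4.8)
\textbf{Proof plan for Lemma \ref{NG(T)}.}

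The plan is to prove the two parts in order, using the structural results about centralizers of subtori already established in the excerpt. For part (a), I would treat cases \ref{easy_case} and \ref{minus_case} together as much as possible. In case \ref{minus_case}, $\gamma$ acts as inversion on $\4T$, so $\sigma=\psi_q\gamma$ acts on $\4T$ via $t\mapsto(t^q)^{-1}=t^{-q}$, hence $T=C_{\4T}(\sigma)=\{t\in\4T\mid t^{q+1}=1\}$; since $p\mid q+1$ and (as assumed throughout) the Sylow $p$-subgroups are nonabelian, $A=O_p(T)$ is the $p$-torsion part of $\{t\mid t^{q+1}=1\}$ and has rank $=\dim\4T$. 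In case \ref{easy_case}, $q\equiv1\pmod p$ (and mod $4$ if $p=2$), and similarly $A$ is the full $p$-torsion of $\{t\in\4T\mid \sigma(t)=t\}$, again of rank $\dim\4T$ by Lemma \ref{v(q^i-1)}. In both cases the key point is that $A$ (or $\Omega_1(A)$, when $p$ is odd) contains a subgroup of the form $T_m=\{t\in\4T\mid t^m=1\}$ for some $m\ge3$ prime to $q_0$ — take $m=p$ when $p$ is odd (using $\Omega_1(A)=T_p$), and $m=4$ when $p=2$ (using $q\equiv1\pmod4$ so $T_4\le A$). Then Lemma \ref{scal-mod-m}, applied with $\Gamma=W$ acting on $\4T$ by the action of Lemma \ref{scal-mod-m} (well defined since $\gg$ is of universal type by \ref{G-hypoth-X}(II)), shows $W$ acts \emph{faithfully} on $T_m$, hence $C_W(A)=1$ (resp. $C_W(\Omega_1(A))=1$ for $p$ odd). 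Once $C_W(A)=1$: since $A\le\4T$ and $C_{\4G}(\4T)=\4T$ by Lemma \ref{theta-r}\eqref{CG(T)=T}, the description of $C_{\4G}(A)$ in Proposition \ref{p:CG(T)} gives $C_{\4G}(A)=\4T\cdot\{g\in N_{\4G}(\4T)\mid[g,A]=1\}$; but $[g,A]=1$ for $g\in N_{\4G}(\4T)$ forces the image of $g$ in $W$ to centralize $A$, hence to be trivial, so $g\in\4T$ and $C_{\4G}(A)=\4T=C_{\4G}(T)$ (the reverse inclusion $C_{\4G}(T)\le C_{\4G}(A)$ being trivial, and $C_{\4G}(\4T)\le C_{\4G}(T)$ since $C_{\4G}(\4T)=\4T\le C_{\4G}(T)$). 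Intersecting with $G$ gives $C_G(A)=T$; intersecting with $S$ and taking the $p$-part gives $C_S(A)=O_p(T\cap S)=O_p(T)=A$ (and similarly $C_S(\Omega_1(A))=A$ when $p$ is odd, since $C_G(\Omega_1(A))\le C_{\4G}(\Omega_1(A))\cap G=\4T\cap G=T$).

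For part (b), assume $C_{\4G}(A)^0=\4T$. Then $C_{\4G}(A)$ has $\4T$ as its identity component, so every maximal torus of $C_{\4G}(A)$ equals $\4T$, hence $N_{\4G}(A)$ normalizes $\4T$ (it permutes the maximal tori of $C_{\4G}(A)^0=\4T$, of which there is only one); thus $N_{\4G}(A)\le N_{\4G}(\4T)$. Since $A$ is characteristic in $T$ (being $O_p(T)$) and $T=\4T\cap G$, we get $N_G(T)\le N_G(A)$; conversely any $g\in N_G(A)$ normalizes $\4T$ by the above, and since $g\in G$ normalizes $A$ and $C_{\4G}(A)\cap G\supseteq T$... more directly: $g\in N_G(A)\subseteq N_{\4G}(\4T)\cap G$ normalizes $\4T\cap G=T$, so $N_G(A)=N_G(T)$, and both lie in $N_{\4G}(\4T)$. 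Now $N_G(T)\le N_G(T)\cap N_{\4G}(\4T)$, so by Lemma \ref{W0onT} the inclusion induces $N_G(T)/T\cong C_W(\tau)=W_0$. Modding out by $O_{p'}(T)$ (which is normal in $N_G(T)$) gives $N/A\cong N_G(T)/T\cong W_0$, and the action of $N/A$ on $A$ is by definition $\Aut_G(A)$ up to inner automorphisms of... in fact $\Aut_G(A)=\Aut_{N_G(T)}(A)=\Aut_{N/A}(A)=\Aut_{W_0}(A)$, where the $W_0$-action on $A$ is the restriction of its action on $\4T$ from Lemma \ref{W0onT}. Finally, to deduce the parenthetical claim that $C_{\4G}(A)^0=\4T$ holds in cases \ref{easy_case} and \ref{minus_case}: this is immediate from part (a), since there we proved $C_{\4G}(A)=\4T$, which is already connected.

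The main obstacle I anticipate is the bookkeeping around applying Lemma \ref{scal-mod-m} cleanly — one must check that the relevant $p$-torsion subgroup $T_m$ of $\4T$ genuinely sits inside $A$ (for $p=2$ this needs $q\equiv1\pmod4$ in \ref{easy_case}, which is part of the hypothesis, and in \ref{minus_case} $p$ is odd so only $m=p$ is needed and $T_p\le\Omega_1(A)$ requires $p\mid q+1$, again hypothesized) — and that faithfulness of $W$ on $T_m$ really transfers to $C_W(A)=1$ rather than only $C_W(T_m)=1$; the point is that $C_W(A)\le C_W(T_m)=1$ since $T_m\le A$, so this direction is harmless. A secondary subtlety is making sure, in the $p$ odd case of part (a), that $\Omega_1(A)=T_p$ exactly (not a proper subgroup): this holds because $A\le\4T\cong(\fqobar^\times)^r$ is a finite abelian $p$-group inside a product of $p$-divisible groups, so its $p$-torsion is exactly the full $p$-torsion of $\4T$ that is fixed by $\sigma$, and the $\sigma$-fixed $p$-torsion of $\4T$ of exponent $p$ is all of $T_p$ precisely when $p\mid q-1$ (case \ref{easy_case}) or $p\mid q+1$ (case \ref{minus_case}). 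None of this is deep, but it should be spelled out carefully.
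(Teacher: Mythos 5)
Your part (b) and the Chevalley-group half of part (a) follow essentially the paper's route (faithfulness of $W$ on the $m$-torsion via Lemma \ref{scal-mod-m} with $m=p$ or $4$, then Proposition \ref{p:CG(T)} and Lemma \ref{W0onT}), and that portion is sound. A small gloss: when you quote Proposition \ref{p:CG(T)} in the form $C_{\4G}(A)=\4T\cdot\{g\in N_{\4G}(\4T)\mid [g,A]=1\}$ you are implicitly asserting $C_{\4G}(A)^0=\4T$, i.e.\ that no root satisfies $A\le\Ker(\theta_\alpha)$; this does follow from $C_W(A)=1$ via Lemma \ref{theta-r}\eqref{wa(hb)} (such a root would force $w_\alpha\in C_W(A)$), but it should be said.

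The genuine gap is in part (a) for the twisted groups allowed by case \ref{easy_case}: there $\gamma\in\Gamma_{\4G}$ may have order $2$, so $G$ can be a Steinberg group such as $\SU_n(q)$, $\Spin_{2n}^-(q)$ or $\lie2E6(q)$ with $q\equiv1\pmod p$. For these groups your key claims are false: an element $t\in\4T$ of order dividing $m$ (with $m=p$, resp.\ $4$) satisfies $t^q=t$, hence $\sigma(t)=\gamma(t)$, so $t\in T$ only if $\gamma(t)=t$. Thus $A$ contains only $C_{T_m}(\gamma)$, not the full $m$-torsion $T_m$ of $\4T$, and $\rk(A)=\dim C_V(\tau)<\dim\4T$; in particular "the $\sigma$-fixed $p$-torsion of exponent $p$ is all of $T_p$" and "rank $\dim\4T$" both fail. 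Consequently the step "$T_m\le A$, so $C_W(A)\le C_W(T_m)=1$" is unavailable, and faithfulness of $W$ on $T_m$ says nothing about $C_W(C_{T_m}(\gamma))$ (a priori a reflection could fix that smaller subgroup pointwise). The paper bridges exactly this point with a separate argument: it applies Lemma \ref{scal-mod-m} inside $C_{\4G}(\gamma)$ --- a simple algebraic group of type $B_r$, $C_r$ or $F_4$ whose maximal torus is $C_{\4T}(\gamma)$ and whose root system lies in $V_0=C_V(\tau)$ --- to conclude that any $w\in C_W(A_0)$ restricts to the identity on $V_0$; since $w$ and $\tau$ are orthogonal, $w$ then preserves the $(-1)$-eigenspace of $\tau$, hence $w\in W_0=C_W(\tau)$, and faithfulness of $W_0$ on $V_0$ gives $w=1$. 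Without an argument of this kind your proof covers only the Chevalley groups of case \ref{easy_case} and case \ref{minus_case}.
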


\begin{proof} \noindent\textbf{(a) } Assume condition \ref{easy_case} or 
\ref{minus_case} holds. We first prove that $C_W(A)=1$, and also that 
$C_W(\Omega_1(A))=1$ when $p$ is odd. 

If $p$ is odd, set $A_0=\Omega_1(A)$ and $\5p=p$. 
If $p=2$, set $A_0=\Omega_2(A)$ and $\5p=4$. Thus in all cases, $A_0$ is 
the $\5p$-torsion subgroup of $A$. Set $\gee=1$ if we are in case 
\ref{easy_case}, or $\gee=-1$ in case \ref{minus_case}. By assumption, 
$\5p|(q-\gee)$. Choose $\lambda\in\F_q^\times$ (or 
$\lambda\in\F_{q^2}^\times$ if $\gee=-1$) of order $\5p$. Set 
$\Pi=\{\alpha_1,\ldots,\alpha_r\}$.  Fix $w\in{}C_W(A_0)$.

Assume first $G=\gg(q)$, a Chevalley group. Then 
$T=\bigl\{t\in\4T\,\big|\,t^{q-\gee}=1\bigr\}$, and $A_0$ contains all 
elements of order $\5p$ in $\4T$. So $w=1$ by Lemma \ref{scal-mod-m}.

Now assume that $\Id\ne\gamma\in\Gamma_{\4G}$; i.e., $G$ is one of the 
Steinberg groups $\lie2An(q)$, $\lie2Dn(q)$, or $\lie2E6(q)$.  Then 
$C_{\4G}(\gamma)$ is a simple algebraic group of type $B_m$, $C_m$, or 
$F_4$ (cf. \cite[\S\,13.1--3]{Carter}) with root system $\5\Sigma\subseteq 
V_0=C_V(\tau)$, and $A_0$ contains all $\5p$-torsion in $C_{\4T}(\gamma)$. 
By Lemma \ref{scal-mod-m} again, $w|_{V_0}=\Id$. Since $w$ 
and $\tau$ are both orthogonal, $w$ also sends the $(-1)$-eigenspace for 
the action of $\tau$ to itself, and thus $w\in{}C_W(\tau)=W_0$.  But $W_0$ 
acts faithfully on $V_0$ (see, e.g., \cite[13.1.1]{Carter}), so $w=1$. 

Thus $C_W(A_0)=1$. Hence $C_{\4G}(A_0)=\4T$ by Proposition \ref{p:CG(T)}, and
the other statements follow immediately.

\smallskip

\noindent\textbf{(b) } If $C_{\4G}(A)^0=\4T$, then $N_{\4G}(T)\le 
N_{\4G}(A)\le N_{\4G}(\4T)$ (recall that $A$ is the $p$-power torsion in 
$T$).  If $g\in N_{\4G}(\4T)$ and $\sigma(g)=g$, then $g$ also normalizes 
$T=C_{\4T}(\sigma)$.  Thus $N_G(T)=N_G(A)\le N_{\4G}(\4T)$, and hence 
$N_G(T)/T\cong W_0$ by Lemma \ref{W0onT}. The identification $N/A\cong 
N_G(T)/T$ is immediate from the definition of $N$.
\end{proof}

Recall (Notation \ref{G-setup-X}\eqref{not7x}) that when case \ref{easy_case} of 
Hypotheses \ref{G-hypoth-X} holds (in particular, when $p=2$),  we set 
$\4K_{\5\alpha}=\gen{\4K_\alpha\,|\,\alpha\in\5\alpha}$ for 
$\5\alpha\in\5\Sigma$, where $\4K_\alpha=\gen{\4X_\alpha,\4X_{-\alpha}}$. 
The conditions in \ref{easy_case} imply that each class 
in $\5\Sigma$ is of the form $\{\alpha\}$, $\{\alpha,\tau(\alpha)\}$, or 
$\{\alpha,\tau(\alpha),\alpha+\tau(\alpha)\}$ for some $\alpha$.  This last 
case occurs only when $G\cong\SU_n(q)$ for some odd $n\ge3$ and some 
$q\equiv1$ (mod $p$ or mod $4$).  

\begin{Lem} \label{Kr,Hr}
Assume Hypotheses \ref{G-hypoth-X}, case \ref{easy_case}, and Notation 
\ref{G-setup-X}.  For each 
$\alpha\in\Sigma$, $\4K_\alpha\cong\SL_2(\fqobar)$.  For each 
$\5\alpha\in\5\Sigma$, $\4K_{\5\alpha}\cong{}\SL_2(\fqobar)$, 
$\SL_2(\fqobar)\times\SL_2(\fqobar)$, or $\SL_3(\fqobar)$ whenever the 
class $\5\alpha$ has order $1$, $2$, or $3$, respectively.  Also, 
$G\cap\4K_{\5\alpha}$ is isomorphic to $\SL_2(q)$, $\SL_2(q^2)$, or 
$\SU_3(q)$, respectively, in these three cases.  
\end{Lem}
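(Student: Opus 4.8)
The statement concerns the structure of $\4K_\alpha$ and $\4K_{\5\alpha}$ as algebraic groups, and of their intersections with $G$ as finite groups. I will treat the two halves separately. For the algebraic part, I would start from the standard fact (already invoked in the excerpt, e.g. in the proof of Lemma~\ref{theta-r}) that for each root $\alpha$ there is a surjective homomorphism $\SL_2(\fqobar)\to\4K_\alpha=\gen{\4X_\alpha,\4X_{-\alpha}}$ sending $\mxtwo1u01\mapsto x_\alpha(u)$ and $\mxtwo10v1\mapsto x_{-\alpha}(v)$; since $\4G$ is of universal type (Hypotheses~\ref{G-hypoth-X}(II)), this map is an isomorphism onto $\4K_\alpha$, giving the first claim. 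For $\4K_{\5\alpha}$ I would go through the three possible shapes of a class $\5\alpha$ under the hypotheses of case~\ref{easy_case}: a singleton $\{\alpha\}$, a pair $\{\alpha,\tau(\alpha)\}$ of orthogonal roots, or a triple $\{\alpha,\tau(\alpha),\alpha+\tau(\alpha)\}$ (the latter occurring only for $\SU_n(q)$, $n$ odd). In the first case $\4K_{\5\alpha}=\4K_\alpha\cong\SL_2(\fqobar)$. In the second, $\alpha\perp\tau(\alpha)$ forces $\4K_\alpha$ and $\4K_{\tau(\alpha)}$ to commute (Chevalley's commutator formula: $[\4X_\alpha,\4X_{\tau(\alpha)}]=1$ since $\alpha+\tau(\alpha)\notin\Sigma$, and likewise for the other sign combinations) and to intersect only in their common centre; together with the universal-type hypothesis this identifies $\4K_{\5\alpha}\cong\SL_2(\fqobar)\times\SL_2(\fqobar)$. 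In the third case the three roots span a subsystem of type $A_2$, so $\4K_{\5\alpha}$ is the subgroup generated by all root groups for roots in $\Z\5\alpha\cap\Sigma$, which (again using universal type) is $\cong\SL_3(\fqobar)$.

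For the finite part, the point is to compute the fixed subgroup of $\sigma$ on $\4K_{\5\alpha}$ and to recognise it inside $G$. Since $\sigma=\psi_q\circ\gamma$ permutes the $\4X_\beta$ by $\beta\mapsto\tau(\beta)$ with a $q$-power twist, $\sigma$ preserves each $\4K_{\5\alpha}$ (because $\tau$ preserves the class $\5\alpha$). When $\5\alpha=\{\alpha\}$ is a singleton, $\tau$ fixes $\alpha$, so $\sigma$ restricts on $\4K_\alpha\cong\SL_2(\fqobar)$ to the standard field endomorphism $\psi_q$, and $C_{\4K_\alpha}(\sigma)\cong\SL_2(q)$. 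When $\5\alpha=\{\alpha,\tau(\alpha)\}$, $\sigma$ exchanges the two $\SL_2$ factors (up to $\psi_q$), so its fixed subgroup is the ``diagonal'' copy $\{(g,\psi_q(g))\}\cong\SL_2(\fqobar)$ fixed by $\psi_{q^2}$, i.e. $\cong\SL_2(q^2)$. When $\5\alpha$ is a triple, $\sigma$ acts on $\4K_{\5\alpha}\cong\SL_3(\fqobar)$ as $\psi_q$ composed with the graph automorphism coming from $\tau$ exchanging $\alpha$ and $\tau(\alpha)$, which is (conjugate to) the standard twisted Steinberg endomorphism of $\SL_3$, so the fixed subgroup is $\SU_3(q)$. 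Throughout I would identify $G\cap\4K_{\5\alpha}=C_{\4K_{\5\alpha}}(\sigma)$ using that $G=C_{\4G}(\sigma)$ (valid since $\4G$ is of universal type, as noted after Definition~\ref{d:Lie}), and that $C_{\4K_{\5\alpha}}(\sigma)$ is connected and already equals its own $O^{q_0'}$, being generated by $q_0$-elements (the images of the $x_\beta(u)$).

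\textbf{Main obstacle.} The routine computations with $\SL_2$ and $\SL_3$ are harmless; the delicate point is handling the twisted cases cleanly, i.e. checking that when $\tau$ acts nontrivially on the class $\5\alpha$, the restriction of $\sigma$ to $\4K_{\5\alpha}$ really is a \emph{standard} Steinberg endomorphism of the expected type (so that its fixed points are $\SL_2(q^2)$ or $\SU_3(q)$ rather than some other twisted or nonstandard form). This requires being careful that the parametrizations $x_\beta$ for $\beta\in\5\alpha$ are chosen compatibly so that $\gamma$ acts on $\4K_{\5\alpha}$ by the naive permutation of root groups (this is part of what Hypotheses~\ref{G-hypoth-X}(I) guarantees: $\gamma\in\Gamma_{\4G}$ with $|\gamma|\le2$ in case~\ref{easy_case}), and in the $\SU_3$ case that the relevant subsystem of type $A_2$ is $\tau$-stable with $\tau$ inducing the nontrivial graph automorphism — which is exactly the setup for $\SU_3$. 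Invoking Definition~\ref{d:Aut(Gbar)} and the standard description of $\sigma$-fixed points in simple algebraic groups of type $A_1$ and $A_2$ (e.g.\ via \cite[\S\,13.1--3]{Carter} or \cite[Ch.~2]{GLS3}) should dispose of this, but it is where the care is needed.
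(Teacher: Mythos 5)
Your proposal is correct and follows essentially the same route as the paper: identify $\4K_\alpha\cong\SL_2(\fqobar)$ via the universal-type hypothesis, treat the three shapes of a class (singleton, orthogonal pair with commuting $\SL_2$'s meeting trivially inside the torus, $A_2$-triple giving $\SL_3(\fqobar)$), and then compute $G\cap\4K_{\5\alpha}=C_{\4K_{\5\alpha}}(\sigma)$ case by case as $\SL_2(q)$, $\SL_2(q^2)$, or $\SU_3(q)$ according to how $\gamma$ permutes the root groups. The only cosmetic difference is that the paper first reduces to classes $\5\alpha\in\5\Pi$ via Lemma~\ref{W0-action}, which makes the triviality of $\4K_\alpha\cap\4K_{\tau(\alpha)}$ (and the injectivity of $h_\alpha$) immediate from Lemma~\ref{theta-r}(\ref{T=prod}); your direct argument needs the same fact and would in practice invoke the same $W_0$-conjugation.
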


\begin{proof} By Lemma \ref{W0-action}, each class in $\5\Sigma$ is 
in the $W_0$-orbit of a class in $\5\Pi$.  So it suffices to prove the 
statements about $\4K_\alpha$ and $\4K_{\5\alpha}$ when $\alpha\in\Pi$, and 
when $\5\alpha\in\5\Pi$ is its equivalence class.

By Lemma \ref{theta-r}\eqref{T=prod} (and since $\4G$ is universal), 
$\4K_\alpha\cong\SL_2(\fqobar)$ for each $\alpha\in\Pi$. So when 
$\alpha=\tau(\alpha)$ (when $|\5\alpha|=1$), 
$\4K_{\5\alpha}=\4K_\alpha\cong\SL_2(\fqobar)$.  

When $\alpha\ne\tau(\alpha)$ and they are not orthogonal, then 
$\4G\cong\SL_{2n+1}(\fqobar)$ for some $n$, and the inclusion of 
$\SL_3(\fqobar)$ is clear. When $\alpha\perp\tau(\alpha)$, then 
$[\4K_\alpha,\4K_{\tau(\alpha)}] = 1$, and $\4K_\alpha\cap 
\4K_{\tau(\alpha)}=1$ by Lemma \ref{theta-r}\eqref{T=prod} and since $G$ is universal, and 
since the intersection is contained in the centers of the two factors and 
hence in the maximal tori.  Hence $\bar K_{\widehat\alpha} = \gen{\bar 
X_{\pm \alpha}, \bar X_{\pm\tau(\alpha)}} \cong \4K_\alpha\times \4 
K_{\tau(\alpha)} \cong SL_2(\fqobar) \times SL_2(\fqobar)$.

In all cases, since $\4G$ is universal, $ G \cap \4K_{\5\alpha} =  
C_{\4G}(\sigma) \cap \4K_{\5\alpha} = C_{\4K_{\5\alpha}} (\sigma) $. 
If $\alpha=\tau(\alpha)$, then $\gamma$ acts trivially on $\4K_{\5\alpha}$, 
and $C_{\4K_{\5\alpha}} (\sigma) \cong SL_2(q)$. If 
$\alpha\perp\tau(\alpha)$ then $\gamma$ exchanges the two factors and 
$C_{\4K_{\5\alpha}} (\sigma) \cong SL_2(q^2)$. Finally, if 
$\alpha\ne\tau(\alpha)$ and they are not orthogonal, then $\gamma$ is the graph 
automorphism of $SL_3(\fqobar)$, so $C_{\4K_{\5\alpha}} (\sigma) \cong 
SU_3(q)$.
\end{proof}

We also recall here the definition of the focal subgroup of a saturated fusion 
system $\calf$ over a finite $p$-group $S$: 
	\[ \foc(\calf) = \Gen{ xy^{-1} \,\big|\, x,y\in S,~ 
	\textup{$x$ is $\calf$-conjugate to $y$} }. \]
By the focal subgroup theorem for groups (cf. \cite[Theorem 
7.3.4]{Gorenstein}), if $\calf=\calf_S(G)$ for some finite group $G$ with 
$S\in\sylp{G}$, then $\foc(\calf)=S\cap[G,G]$.

\begin{Lem} \label{Kr,Hr2}
Assume Hypotheses \ref{G-hypoth-X}, case \ref{easy_case} or 
\ref{minus_case}, and Notation \ref{G-setup-X}. Assume also that 
$|\5\Pi|\ge2$. Then the following hold.
\begin{enuma} 

\item If $p$ is odd, then $[w_{\5\alpha},A]=A_{\5\alpha}$ for each 
$\5\alpha\in\5\Sigma$. If $p=2$, then for each $\5\alpha\in\5\Sigma$, 
$[w_{\5\alpha},A]\le A_{\5\alpha}$ with index at most $2$, 
and $[w_{\5\alpha},A]=A_{\5\alpha}$ with the following exceptions:
\medskip
\begin{itemize} 
\item $\tau=\Id$, $\gg=C_n$ (or $B_2$), and $\5\alpha=\{\alpha\}$ 
where $\alpha$ is a long root; or

\item $|\tau|=2$, $\gg=D_n$ (or $A_3$), and 
$\5\alpha=\{\alpha,\tau(\alpha)\}$ where $\alpha\perp\tau(\alpha)$.

\item $|\tau|=2$, $\gg=A_{2n}$, and $|\5\alpha|=3$. 
\end{itemize}

\item For each $w\in W_0$ of order $2$, $w=w_{\5\alpha}$ for some 
$\5\alpha\in\5\Sigma$ if and only if $[w,A]$ is cyclic.

\item If $p=2$, then for each $\5\alpha\in\5\Sigma$, 
	\[ C_{\4G}(C_A(w_{\5\alpha}))=
	\begin{cases} 
	\4T\4K_{\5\alpha} & \textup{if $|\5\alpha|\le2$} \\
	\4T\4K_{\alpha+\tau(\alpha)} & \textup{if 
	$\5\alpha=\{\alpha,\tau(\alpha),\alpha+\tau(\alpha)\}$.}
	\end{cases} \]
If in addition, $|\5\alpha|\le2$, then 
	\[ A_{\5\alpha}=A\cap\bigl[C_G(C_A(w_{\5\alpha})),
	C_G(C_A(w_{\5\alpha}))\bigr] 
	=A\cap\foc(C_\calf(C_A(w_{\5\alpha}))). \]

\end{enuma}
\end{Lem}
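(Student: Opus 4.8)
The plan is to work case-by-case according to the orbit structure of $\5\alpha$ under $\tau$, using the concrete description of $\4K_{\5\alpha}$ and $G\cap\4K_{\5\alpha}$ from Lemma \ref{Kr,Hr}, together with the identification $\4T=\prod_{\beta\in\Pi}h_\beta(\fqobar^\times)$ from Lemma \ref{theta-r}\eqref{T=prod} and the reflection formula $w_\alpha(h_\beta(\lambda))=h_\beta(\lambda)h_\alpha(\lambda^{-(\beta^\vee,\alpha)})$ from Lemma \ref{theta-r}\eqref{wa(hb)}. Since $p=2$ here, we are in case \ref{easy_case}, so $q\equiv1$ (mod $4$), $A$ is the $2$-power torsion in $\4T$, and each class $\5\alpha$ has the form $\{\alpha\}$, $\{\alpha,\tau(\alpha)\}$ with $\alpha\perp\tau(\alpha)$, or $\{\alpha,\tau(\alpha),\alpha+\tau(\alpha)\}$ with $\4G\cong\SL_n(\fqobar)$ for odd $n$.

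First I would compute $C_A(w_{\5\alpha})$ explicitly. In each of the three cases, $w_{\5\alpha}$ acts on $\4T$ through its action on the sublattice $\Z\Sigma^\vee$; using $\4T\cong\Z\Sigma^\vee\otimes\fqobar^\times$ (Lemma \ref{l:CW(t)}(a)) restricted to $2$-power torsion, $C_A(w_{\5\alpha})$ corresponds to the fixed sublattice of $w_{\5\alpha}$ modulo the torsion order. Then $C_{\4G}(C_A(w_{\5\alpha}))^0$ is computed from Proposition \ref{p:CG(T)}: it is generated by $\4T$ together with those $\4X_\beta$ with $C_A(w_{\5\alpha})\le\Ker(\theta_\beta)$, i.e. those $\beta$ vanishing on the fixed sublattice of $w_{\5\alpha}$. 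For $|\5\alpha|\le2$ this picks out exactly the roots spanned by $\5\alpha$, giving $C_{\4G}(C_A(w_{\5\alpha}))^0=\4T\4K_{\5\alpha}$; and since $w_{\5\alpha}\in C_W(C_A(w_{\5\alpha}))$ one checks via \eqref{e:5.4a} that the full centralizer does not grow (the non-identity component is absorbed into $N_{\4G}(\4T)\cap C_{\4G}(\cdot)$, which already lies in $\4T\4K_{\5\alpha}$ because $w_{\5\alpha}$ is the only reflection fixing that sublattice pointwise up to sign—here the hypothesis $q\equiv1$ (mod $4$), hence $|A_{\5\alpha}|\ge4$, is what rules out spurious extra centralizing elements, via Lemma \ref{l:CW(t)}(c,d)). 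For $|\5\alpha|=3$ the fixed sublattice of $w_{\5\alpha}$ is orthogonal to $\alpha+\tau(\alpha)$ but not to $\alpha$ or $\tau(\alpha)$ individually, so only $\4X_{\pm(\alpha+\tau(\alpha))}$ survive, yielding $\4T\4K_{\alpha+\tau(\alpha)}$. Then intersect with $C_{\4G}(\sigma)$ to read off $C_G(C_A(w_{\5\alpha}))$.

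Next, for the focal-subgroup statement (assuming $|\5\alpha|\le2$): by Lemma \ref{Kr,Hr}, $G\cap\4K_{\5\alpha}\cong\SL_2(q)$ or $\SL_2(q^2)$, which is perfect (as $q\equiv1$ (mod $4$) forces $q\ge5$), so $C_G(C_A(w_{\5\alpha}))$ has commutator subgroup containing $G\cap\4K_{\5\alpha}$, and its intersection with $A$ is $A\cap\4K_{\5\alpha}=A_{\5\alpha}$ by definition (Notation \ref{G-setup-X}\eqref{not8x}); conversely the diagonal part $C_{\4T}(\sigma)$-piece contributes nothing new to $[\cdot,\cdot]\cap A$ since $\4T$ is abelian and the torus part of $C_G(C_A(w_{\5\alpha}))$ centralizes $C_A(w_{\5\alpha})$ which already contains a complement to $A_{\5\alpha}$ in $A$. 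This gives $A_{\5\alpha}=A\cap[C_G(C_A(w_{\5\alpha})),C_G(C_A(w_{\5\alpha}))]$. The fusion-theoretic equality $A\cap[C_G(H),C_G(H)]=A\cap\foc(C_\calf(H))$ for $H=C_A(w_{\5\alpha})$ then follows from the focal subgroup theorem for groups (cited right before Lemma \ref{Kr,Hr2}): $\foc(\calf_S(G))=S\cap[G,G]$ applied to the subgroup $N_G(H)$, together with the fact that $N_S(H)$ contains a Sylow $p$-subgroup of $N_G(H)$ and that fusion in $C_\calf(H)$ is realized by $C_G(H)$.

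The main obstacle I anticipate is the verification that the full centralizer $C_{\4G}(C_A(w_{\5\alpha}))$ (not just its identity component) equals $\4T\4K_{\5\alpha}$ (resp. $\4T\4K_{\alpha+\tau(\alpha)}$): one must show the component group contributes nothing outside $\4K_{\5\alpha}$, which amounts to showing that no element of $N_{\4G}(\4T)\smallsetminus\4T\4K_{\5\alpha}$ centralizes $C_A(w_{\5\alpha})$. This is where the precise value of $C_W(C_A(w_{\5\alpha}))$ matters, and it is exactly the kind of computation Lemma \ref{l:CW(t)}(b) is designed for—choosing a lattice vector $x$ of small norm with $\Phi_\lambda(x)$ generating (a large enough part of) $C_A(w_{\5\alpha})$ so that $C_W(C_A(w_{\5\alpha}))=C_W(x)$, and then identifying $C_W(x)$ directly from the root system. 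Checking the norm bound in Lemma \ref{l:CW(t)}(c) holds (using $m=|A_{\5\alpha}|\ge4$) is the delicate point, and may require treating the low-rank exceptional classes $\5\alpha$ by hand.
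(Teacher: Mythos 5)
Your proposal addresses only part (c) of the lemma. Parts (a) and (b) are not touched at all, and your opening reduction ("since $p=2$ here, we are in case \ref{easy_case}") mis-scopes the statement: only (c) assumes $p=2$, while (a) and (b) are asserted for odd $p$ as well and in case \ref{minus_case} too. Part (a) needs an explicit computation with the reflection formula of Lemma \ref{theta-r}\eqref{wa(hb)} applied to the generators $\5h_\beta(\lambda)$ of $T$ (after reducing to $\5\alpha\in\5\Pi$ via Lemma \ref{W0-action}), and it is exactly this computation that detects the three exceptional families ($C_n$ with $\alpha$ long, $D_n$ with $\alpha\perp\tau(\alpha)$, $A_{2n}$ with $|\5\alpha|=3$) where $[w_{\5\alpha},A]$ has index $2$ in $A_{\5\alpha}$. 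Part (b) is the genuinely nontrivial converse: for an involution $w\in W_0$ that is not any $w_{\5\alpha}$, one must show $[w,A]$ is noncyclic; the paper does this by transporting the question to the coroot lattice $\Lambda_0=C_{\Z\Sigma^\vee}(\tau)$ via the $\Z[W_0]$-isomorphism of Lemma \ref{l:CW(t)}(a), noting $\dim(V_0/C_{V_0}(w))\ge2$ because $w$ is not a reflection of the (folded) root system, and using $p^m\ge3$ (here $q\equiv1$ mod $4$ when $p=2$) to conclude $[w,\Lambda_0/p^m\Lambda_0]$ is noncyclic. Nothing in your proposal supplies these arguments, so as it stands the proof is incomplete.

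For part (c) itself, your route is essentially the paper's: the identity component of $C_{\4G}(C_A(w_{\5\alpha}))$ falls out of Proposition \ref{p:CG(T)}, and the whole difficulty is the inclusion $C_W(C_A(w_{\5\alpha}))\le W_{\5\alpha}$, which both you and the paper attack with Lemma \ref{l:CW(t)}(b,c). You correctly flag this as the delicate point, but you leave it as a plan; the paper carries it out by testing $w\in C_W(C_A(w_{\5\alpha}))$ against specific torsion elements $h_\beta(\lambda)$, $h_\beta(\lambda)h_{\tau(\beta)}(\lambda^a)$, and $h_\beta(\lambda)h_{w_{\5\alpha}(\beta)}(\lambda)$, concluding $w$ fixes a set $\Sigma^*$ whose only orthogonal roots are $\pm\Delta$, and then handling $\SU_3(q)$ separately (there $h_{\alpha^*}(-1)$ suffices). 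You would need to supply this, or an equivalent fixed-lattice argument, and also note that in the $|\5\alpha|=3$ case the answer is $\4T\4K_{\alpha+\tau(\alpha)}$ rather than $\4T\4K_{\5\alpha}$, which your sketch does get right. The focal-subgroup step is fine in outline provided you invoke that $C_\calf(C_A(w_{\5\alpha}))$ is the fusion system of $C_G(C_A(w_{\5\alpha}))$ (as in \cite[Proposition I.5.4]{AKO}) and apply the focal subgroup theorem to that centralizer, not to a normalizer.
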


\begin{proof} \noindent\textbf{(a) } If we are in case \ref{easy_case} of 
Hypotheses \ref{G-hypoth-X}, then by Lemma \ref{W0-action}, each orbit of 
$W_0$ under its action on $\5\Sigma$ contains an element of $\5\Pi$. If we 
are in case \ref{minus_case}, then $W_0=W$ and $\5\Sigma=\Sigma$, so the 
statement holds by the same lemma. So in either case, it suffices to prove 
this when $\5\alpha\in\5\Pi$.  

Fix $\alpha\in\Pi$, and let $\5\alpha\in\5\Pi$ be its class. Since 
$w_{\5\alpha}\in\gen{w_\alpha,w_{\tau(\alpha)}}$, $[w_{\5\alpha},A]\le 
A\cap\4T_{\5\alpha}=A_{\5\alpha}$ in all cases by Lemma 
\ref{theta-r}\eqref{wa(hb)}. By the same lemma, 
$w_{\5\alpha}(\5h_\alpha(\lambda))=\5h_\alpha(\lambda^{-1})$ for all 
$\lambda\in\fqobar^\times$ if $|\5\alpha|\le2$; and 
$w_{\5\alpha}(\5h_\alpha(\lambda))=\5h_\alpha(\lambda^{-q})$ for 
$\lambda\in\F_{q^2}^\times$ if $|\5\alpha|=3$. So 
$[w_{\5\alpha},A]=A_{\5\alpha}$ if $p$ is odd, and $[w_{\5\alpha},A]$ has 
index at most $2$ in $A_{\5\alpha}$ if $p=2$.

Assume now that $p=2$, and hence that $q\equiv1$ (mod $4$). 
If $\tau=\Id$ (and hence $\5\alpha=\{\alpha\}$), then for each $\beta\in\Pi$ and 
each $\lambda\in\F_q^\times$, Lemma \ref{theta-r}\eqref{wa(hb)} implies 
that
	\[ w_\alpha(h_\beta(\lambda)) = \begin{cases} 
	h_\beta(\lambda) & \textup{if $\beta\perp\alpha$} \\
	h_\beta(\lambda)h_\alpha(\lambda) 
	& \textup{if $\beta\not\perp\alpha$, $\norm\beta\ge\norm\alpha$} \\
	h_\beta(\lambda)h_\alpha(\lambda^k) 
	& \textup{if $\beta\not\perp\alpha$, 
	$\norm\alpha=\sqrt{k}\cdot\norm\beta$, $k=1,2,3$.}
	\end{cases} \]
(Note that $w_\alpha(\beta^\vee)=\beta^\vee$, $\beta^\vee+\alpha^\vee$, or 
$\beta^\vee+k\alpha^\vee$, respectively, in these three cases.) 
Since $T$ is generated by the $h_\beta(\lambda)$ for $\beta\in\Pi$ and 
$\lambda\in\F_q^\times$,, it follows that $[w_\alpha,A]$ has index $2$ in 
$A_\alpha$ exactly when there are roots with two lengths and ratio 
$\sqrt2$, $\alpha$ is a long root, and orthogonal to all other long roots 
in $\Pi$. This happens only when $\gg\cong C_n$ or $B_2$.

Now assume $|\tau|=2$. In particular, all roots in $\Sigma$ have the same 
length. By Lemma \ref{theta-r}\eqref{wa(hb)} again, for each 
$\beta\in\Pi{\sminus}\5\alpha$ such that $\beta\not\perp\alpha$ and with 
class $\5\beta\in\5\Pi$, we have 
	\[ w_{\5\alpha}(\5h_\beta(\lambda)) = \begin{cases} 
	\5h_\beta(\lambda)\5h_\alpha(\lambda) & \textup{if $|\5\beta|=1$ 
	and $\lambda\in\F_q^\times$} \\
	\5h_\beta(\lambda)\5h_\alpha(\lambda) & \textup{if $|\5\beta|\ge2$, 
	$|\5\alpha|=2$, and $\lambda\in\F_{q^2}^\times$} \\
	\5h_\beta(\lambda)\5h_\alpha(\lambda^{q+1}) & \textup{if 
	$|\5\beta|\ge2$, $|\5\alpha|=1$ or $3$, and $\lambda\in\F_{q^2}^\times$} 
	\end{cases} \]
Thus $[w_{\5\alpha},A]=A_{\5\alpha}$ exactly when $|\5\alpha|=1$, or 
$|\5\alpha|=2$ and there is some $\beta\in\Pi$ such that 
$\beta\not\perp\alpha$ and $\beta\ne\tau(\beta)$. The only cases where this 
does not happen are when $\gg=D_n$ or $A_3$ and $|\5\alpha|=2$, and when 
$\gg=A_{2n}$ and $|\5\alpha|\ge3$.

\smallskip

\noindent\textbf{(b) } For each $\5\alpha\in\5\Sigma$, $[w_{\5\alpha},A]\le 
A_{\5\alpha}$ by (a), and hence is cyclic. It remains to prove the 
converse.

When we are in case \ref{minus_case} (and hence the setup is not 
standard), it will be convenient to define $V_0=V$. (Recall that $V_0$ is 
defined in Notation \ref{G-setup}\eqref{not3} only when $(\4G,\sigma)$ is a 
standard setup.) Note that by assumption, $G$ is always a Chevalley group 
in this case. 

Let $w\in{}W_0$ be an element of order $2$ which is not equal to 
$w_{\5\alpha}$ for any $\5\alpha$. If $G$ is a Chevalley group (if $W_0=W$ 
and $V_0=V$), then $C_V(w)$ contains no points in the interior of any Weyl 
chamber, since $W$ permutes freely the Weyl chambers (see \cite[\S\,V.3.2, 
Th\'eor\`eme 1(iii)]{Bourb4-6}). Since $w$ is not the reflection in a root 
hyperplane, it follows that $\dim(V/C_V(w))\ge2$. If $G$ is a Steinberg 
group (thus in case \ref{easy_case} with a standard setup), then 
$W_0$ acts on $V_0$ as the Weyl group of a certain root system on $V_0$ 
(see \cite[\S\,13.3]{Carter}), so $\dim(V_0/C_{V_0}(w))\ge2$ by a similar 
argument. 

Set $\gee=+1$ if we are in case \ref{easy_case}, or $\gee=-1$ if we are 
in case \ref{minus_case}. Set $m=v_p(q-\gee)$, and choose 
$\lambda\in(\F_{q^2})^\times$ of order $p^m$. Set $\Lambda=\Z\Sigma^\vee$, 
regarded as the lattice in $V$ with $\Z$-basis 
$\Pi^\vee=\{\alpha^\vee\,|\,\alpha\in\Pi\}$. Let 
	\[ \Phi_\lambda\: \Lambda/p^m\Lambda \Right5{} \4T \]
be the $\Z[W]$-linear monomorphism of Lemma \ref{l:CW(t)}(a) with image the 
$p^m$-torsion in $\4T$. Thus $\Phi_\lambda(\alpha^\vee)=h_\alpha(\lambda)$ 
for each $\alpha\in\Sigma$. 
Also, $\sigma(h_\alpha(\lambda))=h_{\tau(\alpha)}(\lambda)$ for each 
$\alpha\in\Sigma$ ($\lambda^q=\lambda$ by assumption), and thus 
$\Phi_\lambda$ commutes with the actions of $\tau$ on $\Lambda<V$ and of 
$\sigma$ on $\4T$. 

Set $\Lambda_0=C_\Lambda(\tau)$ in case \ref{easy_case}, or 
$\Lambda_0=\Lambda$ in case \ref{minus_case}. Then 
$C_{\Lambda/p^m\Lambda}(\tau)=\Lambda_0/p^m\Lambda_0$ in case 
\ref{easy_case}, since $\tau$ permutes the basis $\Pi^\vee$ of $\Lambda$. 
We claim that $\Phi_\lambda$ restricts to a $\Z[W_0]$-linear 
isomorphism 
	\[ \Phi_0\: \Lambda_0/p^m\Lambda_0 \Right5{\cong} \Omega_m(A)\,, \]
where $\Omega_m(A)$ is the $p^m$-torsion subgroup of $A$ and hence of 
$T=C_{\4T}(\sigma)$. If $G$ is a Chevalley group (in either case 
\ref{easy_case} or \ref{minus_case}), then $\Lambda_0=\Lambda$, so 
$\Im(\Phi_0)$ is the $p^m$-torsion subgroup of $\4T$ and equal to 
$\Omega_m(A)$. If $G$ is a Steinberg group, then $\gee=+1$, each element of 
order dividing $p^m$ in $\4T$ is fixed by $\psi^q$, and hence lies in 
$\Omega_m(A)$ if and only if it is fixed by $\gamma$ (thus in 
$\Phi_\lambda(C_{\Lambda/p^m\Lambda}(\tau))$). 

Thus $[w,A]\ge[w,\Omega_m(A)]\cong[w,\Lambda_0/p^m\Lambda_0]$. Set 
$B=\Lambda_0/p^m\Lambda_0$ for short; we will show that $[w,B]$ is 
noncyclic. Set 
	\[ r=\rk(\Lambda_0)=\dim(V_0) \qquad\textup{and}\qquad
	s=\rk(C_{\Lambda_0}(w))=\dim_{\R}(C_{V_0}(w))\le r-2\,. \] 
For each $b\in{}C_B(w)$, and each $v\in\Lambda_0$ such that 
$b=v+p^m\Lambda_0$, $v+w(v)\in{}C_{\Lambda_0}(w)$ maps to $2b\in{}C_B(w)$. 
Thus $B\cong(\Z/p^m)^r$, while $\{2b\,|\,b\in{}C_B(w)\}$ is contained in 
$C_{\Lambda_0}(w)/p^mC_{\Lambda_0}(w)\cong(\Z/p^m)^s$. Since $p^m>2$ by 
assumption (and $r-s\ge2$), it follows that $B/C_B(w)\cong[w,B]$ is not cyclic.

\smallskip

\noindent\textbf{(c) } Fix $\5\alpha\in\5\Sigma$. We set up our notation as 
follows.
\begin{enumerate}[label=\bf Case (\arabic*): ,leftmargin=22mm] 

\item $|\5\alpha|=1$ or $3$. Set $\alpha^*=\alpha$ if $\5\alpha=\{\alpha\}$ 
(where $\tau(\alpha)=\alpha$), or $\alpha^*=\alpha+\tau(\alpha)$ if 
$\5\alpha=\{\alpha,\tau(\alpha),\alpha+\tau(\alpha)\}$. Set 
$w_{\5\alpha}=w_{\alpha^*}$, $W_{\5\alpha}=\gen{w_{\5\alpha}}$, and 
$\Delta=\{\pm\alpha^*\}\subseteq\Sigma$.  

\item $\5\alpha=\{\alpha,\tau(\alpha)\}$ where 
$\alpha\perp\tau(\alpha)$.  Set 
$w_{\5\alpha}=w_{\alpha}w_{\tau(\alpha)}$, 
$W_{\5\alpha}=\gen{w_{\alpha},w_{\tau(\alpha)}}$, and 
$\Delta=\{\pm\alpha,\pm\tau(\alpha)\}\subseteq\Sigma$.  

\end{enumerate}

In case (1), by Lemma \ref{theta-r}(\ref{^txb(u)},\ref{wa(hb)}), 
	\[ C_{\4T}(w_{\5\alpha})= C_{\4T}(w_{\alpha^*})= \Ker(\theta_{\alpha^*})
	= C_{\4T}(\4X_{\alpha^*}) = C_{\4T}(\4X_{-\alpha^*})\,. \]
Hence $C_{\4G}(C_A(w_{\5\alpha})) \ge C_{\4G}(C_{\4T}(w_{\5\alpha})) \ge 
\4T\gen{\4X_{\alpha^*},\4X_{-\alpha^*}} = \4T\4K_{\alpha^*}$. In case (2), 
by the same lemma, 
	\[ C_{\4T}(w_{\5\alpha})=C_{\4T}(\gen{w_\alpha,w_{\tau(\alpha)}}) 
	= C_{\4T}(\gen{\4X_{\alpha},\4X_{-\alpha},\4X_{\tau(\alpha)},
	\4X_{-\tau(\alpha)}})
	= C_{\4T}(\4K_\alpha\4K_{\tau(\alpha)}) \]
so that $C_{\4G}(C_A(w_{\5\alpha}))\ge \4T\4K_{\5\alpha}$.
This proves one of the inclusions in the first statement in (c).  By 
Proposition \ref{p:CG(T)}, the opposite inclusion will follow 
once we show that 
	\beqq C_W(C_A(w_{\5\alpha})) \le W_{\5\alpha} . 
	\label{e:w_in_cent} \eeqq

Fix $w\in{}C_W(C_A(w_{\5\alpha}))$.
\begin{itemize} 

\item Let $\beta\in\Sigma\cap\Delta^\perp$ be such that 
$\beta=\tau(\beta)$. Then $h_{\beta}(\lambda)\in{}C_A(w_{\5\alpha})$ for 
$\lambda\in\fqobar^\times$ of order $4$, so 
$w(h_{\beta}(\lambda))=h_{\beta}(\lambda)$, and $\beta\in{}C_V(w)$ by Lemma 
\ref{l:CW(t)}(c).

\item Let $\beta\in\Sigma\cap\Delta^\perp$ be such that $\beta\ne\tau(\beta)$, 
and set $\beta'=\tau(\beta)$ for short.  Let $r\ge2$ be such that $q\equiv1+2^r$ 
(mod $2^{r+1}$), and choose $\lambda\in\fqobar^\times$ of 
order $2^{r+1}$.  Set $a=1-2^r$, so $\lambda^a=\lambda^q$.  Then 
	\[ h_{\beta}(\lambda)h_{\beta'}(\lambda^a), ~
	h_{\beta}(\lambda^a)h_{\beta'}(\lambda)\in C_A(w_{\5\alpha})\le 
	C_{\4T}(w)\,. \]
Also, $\norm{\beta+a\beta'}=\norm{a\beta+\beta'}<(1-a)\norm{\beta}
=\frac12|\lambda|\norm{\beta}$ since $a<0$ and 
$\beta'\ne-\beta$ (since $\tau(\Sigma_+)=\Sigma_+$). Thus 
$\beta+a\beta',a\beta+\beta'\in{}C_V(w)$ by Lemma \ref{l:CW(t)}(b), so  
$\beta,\beta'\in C_V(w)$.

\item Let $\beta\in\Sigma$ be such that $\beta=\tau(\beta)$ and 
$\beta\notin\Delta^\perp$, and set $\eta=\beta+w_{\5\alpha}(\beta)$. Since 
$w_{\5\alpha}\tau=\tau w_{\5\alpha}$ in $\Aut(V)$, 
$\tau(\eta)=\eta$. Since $\beta\notin\Delta^\perp=C_V(w_{\5\alpha})$, we 
have $w_{\5\alpha}(\beta)\ne\beta$, and hence $\norm\eta<2\norm\beta$. 
For $\lambda\in\fqobar^\times$ of order $4$, 
$t=h_{\beta}(\lambda)h_{w_{\5\alpha}(\beta)}(\lambda)\in{}C_A(w_{\5\alpha})$, 
so $w(t)=t$, and $\eta=\beta+w_{\5\alpha}(\beta)\in C_V(w)$ by Lemma 
\ref{l:CW(t)}(b). 

\end{itemize}
Consider the set 
	\[ \Sigma^* = 
	\bigl(\Sigma\cap\Delta^\perp\bigr) \cup 
	\bigl\{\beta+w_{\5\alpha}(\beta)\,\big|\, \beta\in\Sigma,~ 
	\tau(\beta)=\beta,~ \beta\not\perp\Delta\bigr\} \subseteq V \,. \]
We have just shown that $w(\eta)=\eta$ for each $\eta\in\Sigma^*$, and 
hence that $w|_{\gen{\Sigma^*}}=\Id$.  From the description of the root 
systems in \cite[Planches I--IX]{Bourb4-6}, we see that 
$\Sigma\cap(\Sigma^*)^\perp=\Delta$, except when $\gg\cong A_2$ and 
$\tau\ne\Id$ (i.e., when $G\cong\SU_3(q)$).

Thus when $G\not\cong\SU_3(q)$, the only reflection hyperplanes which contain 
$\gen{\Sigma^*}$ are those in the set $\{\beta^\perp\,|\,\beta\in\Delta\}$. 
Fix a ``generic'' element $v\in\gen{\Sigma^*}$; i.e., one which is not 
contained in any of these hyperplanes. In case (1), $v$ is contained in 
only the one reflection hyperplane $\alpha^*{}^\perp$, and hence is in 
the closure of exactly two Weyl chambers for $(\Sigma,W)$: chambers which are 
exchanged by $w_{\5\alpha}$.  In case (2), $v$ is contained in the two 
reflection hyperplanes $\alpha^\perp$ and $\tau(\alpha)^\perp$, and hence 
in the closure of four Weyl chambers which are permuted freely and 
transitively by $W_{\5\alpha}=\gen{w_{\alpha},w_{\tau(\alpha)}}$.  Since 
$W$ permutes the Weyl chambers freely and transitively (see 
\cite[\S\,V.3.2, Th\'eor\`eme 1(iii)]{Bourb4-6}), and since 
$\gen{w,W_{\5\alpha}}$ permutes the chambers whose closures contain $v$, we 
have $w\in W_{\5\alpha}$. 

This proves \eqref{e:w_in_cent} when $G\not\cong\SU_3(q)$. If 
$G\cong\SU_3(q)$, then $h_{\alpha^*}(-1)\in C_A(w_{\5\alpha})$. But no 
element of order $2$ in $\4T<\SL_3(\fqobar)$ centralizes the full Weyl 
group $W\cong\Sigma_3$, so \eqref{e:w_in_cent} also holds in this case.

If $|\5\alpha|\le2$, then 
	\[ C_{G}(C_A(w_{\5\alpha}))= G\cap C_{\4G}(C_A(w_{\5\alpha})) =
	T(G\cap\4K_{\5\alpha}) \]
where by Lemma \ref{Kr,Hr}, $G\cap\4K_{\5\alpha}\cong\SL_2(q)$ or 
$\SL_2(q^2)$. Hence $C_G(C_A(w_{\5\alpha}))$ has commutator subgroup 
$G\cap\4K_{\5\alpha}$, and focal subgroup $A_{\5\alpha}$. Since 
$C_\calf(C_A(w_{\5\alpha}))$ is the fusion system of 
$C_G(C_A(w_{\5\alpha}))$ (cf. \cite[Proposition I.5.4]{AKO}), this proves 
the last statement. 
\end{proof}

\begin{Lem} \label{CG(T)-2}
Assume Hypotheses \ref{G-hypoth-X}, case \ref{easy_case}, and Notation 
\ref{G-setup-X}.  
\begin{enuma} 

\item Assume that all classes in $\5\Sigma$ have order $1$ or $2$.  
(Equivalently, $\tau(\alpha)=\alpha$ or $\tau(\alpha)\perp\alpha$ for each 
$\alpha\in\Sigma$.)  Then $C_{\4T}(W_0)=C_{\4T}(W)=Z(\4G)$, and 
$Z(G)=C_T(W_0)$.

\item Assume that $\5\Sigma$ contains classes of order $3$.  Then 
$\4G\cong\SL_{2n-1}(\fqobar)$ and $G\cong\SU_{2n-1}(q)$ for some $n\ge2$.  
Also, $C_{\4T}(W_0)\cong\fqobar^\times$, and $\sigma(t)=t^{-q}$ for all 
$t\in C_{\4T}(W_0)$.

\end{enuma}
\end{Lem}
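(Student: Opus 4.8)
\textbf{Proof proposal for Lemma \ref{CG(T)-2}.}

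The plan is to compute $C_{\4T}(W)$ and $C_{\4T}(W_0)$ directly from the structure of the dual root lattice $\Lambda=\Z\Sigma^\vee$, using Lemma \ref{l:CW(t)}(a) to identify $\4T\cong\Lambda\otimes\fqobar^\times$ as a $\Z[W]$-module. For part (a): since $\4G$ is of universal type, Proposition \ref{p:CG(T)} gives $Z(\4G)=C_{\4T}(W)$, so it suffices to show $C_{\4T}(W_0)=C_{\4T}(W)$. The inclusion $\supseteq$ is trivial, so I must show that any $t\in\4T$ fixed by $W_0$ is already fixed by all of $W$. Using the isomorphism $\4T\cong\Lambda\otimes\fqobar^\times$, this amounts to: if $x\in\Lambda\otimes\fqobar^\times$ is fixed by $W_0$, then it is fixed by $W$. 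Since all classes in $\5\Sigma$ have order $1$ or $2$, each $w_\alpha$ ($\alpha\in\Sigma$) either lies in $W_0$ (when $\tau(\alpha)=\alpha$) or, when $\tau(\alpha)\perp\alpha$, the product $w_\alpha w_{\tau(\alpha)}=w_{\5\alpha}$ lies in $W_0$. In the first case $w_\alpha(x)=x$ immediately. In the second case, $w_\alpha$ and $w_{\tau(\alpha)}$ act on orthogonal summands, and $w_\alpha w_{\tau(\alpha)}(x)=x$ together with a symmetry argument ($\tau$ exchanges the two reflections and commutes with... — actually I will instead argue on $V$: $x$ regarded in $V$ is fixed by $w_\alpha w_{\tau(\alpha)}$, and since $w_\alpha$, $w_{\tau(\alpha)}$ are commuting involutions with $(1-w_\alpha)(1-w_{\tau(\alpha)})=0$ on the relevant space, one deduces $w_\alpha(x)=x$). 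So $W=\gen{w_\alpha\mid\alpha\in\Sigma}$ fixes $x$, giving $C_{\4T}(W_0)\le C_{\4T}(W)$. Finally $Z(G)=C_G(G)$; since $Z(G)\le Z(\4G)\cap G\le C_{\4T}(\sigma)=T$ and $Z(G)\le C_{\4G}(G)=Z(\4G)=C_{\4T}(W)=C_{\4T}(W_0)$ (using Proposition \ref{p:Autdiag(G)}(a)), we get $Z(G)\le C_T(W_0)$; conversely $C_T(W_0)=T\cap C_{\4T}(W_0)=T\cap Z(\4G)\le Z(G)$, so $Z(G)=C_T(W_0)$.

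For part (b): the hypothesis that $\5\Sigma$ contains a class of order $3$ means there is $\alpha\in\Sigma$ with $\tau(\alpha)\ne\alpha$, $\tau(\alpha)\not\perp\alpha$, and $\alpha+\tau(\alpha)\in\Sigma$; by the analysis in Notation \ref{G-setup-X} (and Lemma \ref{Kr,Hr}) this forces $\4G\cong\SL_n(\fqobar)$ with $\gamma$ the graph automorphism and $n$ odd, say $n=2m-1$, so $G\cong\SU_{2m-1}(q)$. I would then compute $C_{\4T}(W_0)$ explicitly in the standard model: $\4T$ is the diagonal torus in $\SL_{2m-1}(\fqobar)$, $W=\Sigma_{2m-1}$ permutes coordinates, $\tau$ acts by $i\mapsto 2m-i$ composed with inversion, and $W_0=C_W(\tau)$ is the hyperoctahedral-type subgroup. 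An element $\mathrm{diag}(\mu_1,\dots,\mu_{2m-1})$ of determinant $1$ is fixed by $W_0$ iff the $\mu_i$ are constrained so that coordinates in the same $W_0$-orbit agree; working this out shows $C_{\4T}(W_0)$ consists of the diagonal matrices $\mathrm{diag}(\mu,\dots,\mu,\mu^{-(2m-2)},\mu,\dots,\mu)$ (with the exceptional entry in the center), i.e.\ $C_{\4T}(W_0)\cong\fqobar^\times$ via $\mu$. For the action of $\sigma=\psi_q\circ\gamma$: $\gamma$ sends such an element to its transpose-inverse-along-antidiagonal, which on $C_{\4T}(W_0)$ is $\mu\mapsto\mu^{-1}$, and $\psi_q$ is $\mu\mapsto\mu^q$, so $\sigma(t)=t^{-q}$ for $t\in C_{\4T}(W_0)$, as claimed.

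The main obstacle I expect is part (a), specifically making the reflection-theoretic argument fully rigorous in the case $\tau(\alpha)\perp\alpha$: one needs to pass between the action on $\4T=\Lambda\otimes\fqobar^\times$ and the action on the real vector space $V$, and ensure that being fixed by the single product $w_\alpha w_{\tau(\alpha)}$ really forces being fixed by each factor. Because $\fqobar^\times$ has torsion, I cannot naively tensor with $\R$; instead I will fix $\lambda\in\fqobar^\times$ of large order $p^m\ge 4$, use the $\Z[W]$-linear embedding $\Phi_\lambda\colon\Lambda/p^m\Lambda\hookrightarrow\4T$ of Lemma \ref{l:CW(t)}(a,b), and run the norm estimate of Lemma \ref{l:CW(t)}(b,c) to lift fixed-point statements modulo $p^m$ to the lattice $\Lambda$ and hence to $V$. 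Once that bookkeeping is in place, the combinatorics of which $w_\alpha$ (or products $w_\alpha w_{\tau(\alpha)}$) lie in $W_0$ is exactly the hypothesis on classes of order $\le 2$, and the argument closes. Part (b) should be routine matrix computation once the reduction to $\SU_{2m-1}(q)$ is made, the only care being the precise form of the antidiagonal transpose-inverse used to define $\SU$ and the resulting sign/inversion on the one-parameter subgroup $C_{\4T}(W_0)$.
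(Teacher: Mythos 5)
Your part (b) is correct and is essentially the paper's own argument: a class of order $3$ forces $\4G\cong\SL_{2n-1}(\fqobar)$ and $G\cong\SU_{2n-1}(q)$, and the explicit computation in the diagonal torus, with $W=\Sigma_{2n-1}$ permuting coordinates and $W_0\cong C_2\wr\Sigma_{n-1}$, gives $C_{\4T}(W_0)=\bigl\{\diag(\mu,\ldots,\mu,\mu^{-(2n-2)},\mu,\ldots,\mu)\bigr\}\cong\fqobar^\times$, inverted by $\gamma$, whence $\sigma(t)=t^{-q}$. The frame of your part (a) is also right: reduce to showing that for $\alpha\in\Pi$ with $\tau(\alpha)\perp\alpha$, an element of $\4T$ fixed by $w_\alpha w_{\tau(\alpha)}$ is fixed by $w_\alpha$ and $w_{\tau(\alpha)}$ separately, then invoke Proposition \ref{p:CG(T)} and identify $Z(G)=T\cap Z(\4G)$ (your use of Proposition \ref{p:Autdiag(G)}(a) for that last step is a fine variant of the paper's appeal to $C_G(T)=T$).

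The gap is in how you propose to make that key step rigorous. You cannot ``run the norm estimate of Lemma \ref{l:CW(t)}(b,c) to lift fixed-point statements modulo $p^m$ to the lattice $\Lambda$ and hence to $V$'': a class in $\Lambda/p^m\Lambda$ fixed by $w$ need not have any $w$-fixed representative in $\Lambda$ (for $p=2$, already $2^{m-1}\alpha^\vee$ is fixed by $w_\alpha$ modulo $2^m$), Lemma \ref{l:CW(t)}(c) applies only to elements $t=\Phi_\lambda(x)$ admitting a representative $x$ with $\norm{x}<\tfrac12|\lambda|\cdot\min_{\alpha\in\Pi}\norm{\alpha^\vee}$ — which a general torsion element of $\4T$ does not have — and in any case your reduction sees only $p$-power torsion, while $t\in C_{\4T}(W_0)$ is an arbitrary element of $\4T$. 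The repair is short and never passes through $V$: from $w_\alpha w_{\tau(\alpha)}(t)=t$ one gets $w_\alpha(t)=w_{\tau(\alpha)}(t)$, so the elements $t^{-1}w_\alpha(t)\in\4T_\alpha$ and $t^{-1}w_{\tau(\alpha)}(t)\in\4T_{\tau(\alpha)}$ (Lemma \ref{theta-r}\eqref{wa(hb)}) coincide; since $\4G$ is universal, $\4T_\alpha\cap\4T_{\tau(\alpha)}=1$ by Lemma \ref{theta-r}\eqref{T=prod} (one may take $\alpha,\tau(\alpha)\in\Pi$, as $W$ is generated by simple reflections and $\rho(\Pi)=\Pi$), so both are trivial and $w_\alpha(t)=t$. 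Equivalently, in your lattice language: $\alpha^\vee$ and $\tau(\alpha)^\vee$ are distinct members of a $\Z$-basis of $\Lambda=\Z\Sigma^\vee$, so the two components of $(1-w_\alpha w_{\tau(\alpha)})$ applied to any element vanish separately modulo every $N$. Universality, not a norm bound, is what excludes the torsion counterexamples (which genuinely occur in adjoint quotients); with this substitution your proof of (a) closes and coincides with the paper's.
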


\begin{proof} \textbf{(a) }  Assume that $\tau(\alpha)=\alpha$ or 
$\tau(\alpha)\perp\alpha$ for each $\alpha\in\Sigma$. We first show, for 
each $\5\alpha=\{\alpha,\tau(\alpha)\}\in\5\Pi$, that 
$C_{\4T}(w_{\5\alpha})=C_{\4T}(w_\alpha,w_{\tau(\alpha)})$.  This is clear 
if $\alpha=\tau(\alpha)$.  If $\alpha\perp\tau(\alpha)$, then 
$w_{\5\alpha}=w_\alpha{}w_{\tau(\alpha)}$, so if $t\in 
C_{\4T}(w_{\5\alpha})$, then $w_\alpha(t)=w_{\tau(\alpha)}(t)$ and 
$t^{-1}w_\alpha(t)=t^{-1}w_{\tau(\alpha)}(t)$. Also, 
$t^{-1}w_\alpha(t)\in\4T_\alpha$ and $t^{-1}w_{\tau(\alpha)}(t)\in 
\4T_{\tau(\alpha)}$ by Lemma \ref{theta-r}\eqref{wa(hb)}.  Since 
$\4T_\alpha\cap \4T_{\tau(\alpha)}=1$ by Lemma \ref{theta-r}\eqref{T=prod}, 
$t^{-1}w_\alpha(t)=1$, and hence $t\in 
C_{\4T}(w_\alpha,w_{\tau(\alpha)})$.  

Since $W=\gen{w_{\alpha}\,|\,\alpha\in\Pi}$, this proves that 
$C_{\4T}(W_0)=C_{\4T}(W)$. Since $\4G$ is universal, $C_{\4T}(W)=Z(\4G)$ by 
Proposition \ref{p:CG(T)}. In
particular, $C_T(W_0)\le G\cap{}Z(\4G)\le Z(G)$; while $Z(G)\le C_T(W_0)$ 
since $C_G(T)=T$ by Lemma \ref{NG(T)}(a).

\smallskip

\noindent\textbf{(b) } Assume $\5\Sigma$ contains a class of order $3$.  
Then by \cite[(2.3.2)]{GLS3}, $\gamma\ne\Id$, $\gg=\SL_{2n-1}$, and 
$G\cong\SU_{2n-1}(q)$ (some $n\ge2$).  Also, if we identify 
	\[ \4T = \bigl\{\diag(\lambda_1,\ldots,\lambda_{2n-1})\,\big|\, 
	\lambda_i\in\fqobar^\times,~ 
	\lambda_1\lambda_2\cdots\lambda_{2n-1}=1 \bigr\} \,, \]
and identify $W=\Sigma_{2n-1}$ with its action on $\4T$ permuting the 
coordinates, then 
	\[ \gamma\bigl(\diag(\lambda_1,\ldots,\lambda_{2n-1})\bigr)=
	\diag(\lambda_{2n-1}^{-1},\ldots,\lambda_1^{-1}), \] 
and $W_0\cong C_2\wr\Sigma_{n-1}$ is generated by the permutations 
$(i\,2n{-}i)$ and $(i\,j)(2n{-}i\,2n{-}j)$ for $i,j<n$.  So $C_{\4T}(W_0)$ 
is the group of all matrices $\diag(\lambda_1,\ldots,\lambda_{2n-1})$ such 
that $\lambda_i=\lambda_1$ for all $i\ne{}n$ and 
$\lambda_n=\lambda_1^{-(2n-2)}$, and $C_{\4T}(W_0)\cong\fqobar^\times$. 
Also, $\gamma$ inverts $C_{\4T}(W_0)$, so $\sigma(t)=t^{-q}$ for 
$t\in{}C_{\4T}(W_0)$. 
\end{proof}

Recall (Notation \ref{G-setup-X}\eqref{not9x}) that $\Aut(A,\calf)$ is the 
group of automorphisms of $A$ which extend to elements of $\Aut(S,\calf)$. 
The next result describes the structure of $\Aut(A,\calf)$ for a group $G$ 
in the situation of case~\ref{easy_case} or \ref{minus_case} of 
Hypotheses~\ref{G-hypoth-X}. Recall that $W_0$ acts faithfully on $A$ by 
Lemma~\ref{NG(T)}(a), and hence that $W_0\cong\Aut_N(A)=\Aut_{N_G(T)}(A)$ 
by Lemma~\ref{NG(T)}(b). It will be convenient to identify $W_0$ with this 
subgroup of $\Aut(A)$. Since each element of $\Aut(A,\calf)$ is fusion 
preserving, this group normalizes and hence acts on $W_0$, and 
$W_0\Aut(A,\calf)$ is a subgroup of $\Aut(A)$. 

For convenience, we set $\Aut_{\Aut(G)}(A)=\bigl\{\delta|_A\,\big|\,
\delta\in\Aut(G),~ \delta(A)=A \bigr\}$. 

\begin{Lem} \label{Hyp1-N(W0)}
Assume that $G$ and $(\4G,\sigma)$ satisfy Hypotheses 
\ref{G-hypoth-X}, case \ref{easy_case} or \ref{minus_case}. Assume also 
Notation \ref{G-setup-X}. 
\begin{enuma} 

\item $C_{W_0\Aut(A,\calf)}(W_0) \le W_0\Aut\scal(A)$.

\item $\Aut(A,\calf) \le \Aut\scal(A)\Aut_{\Aut(G)}(A)$,
with the exceptions \smallskip
\begin{itemize} 
\item $(G,p)\cong(\lie2E6(q),3)$, or
\item $(G,p)\cong(G_2(q),2)$ and $q_0\ne3$, or 
\item $(G,p)\cong(F_4(q),3)$ and $q_0\ne2$. 
\end{itemize}

\item In all cases, $\Aut(A,\calf)\cap\Aut\scal(A)\Aut_{\Aut(G)}(A)$ has 
index at most $2$ in $\Aut(A,\calf)$. 
\end{enuma}
\end{Lem}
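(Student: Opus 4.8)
The three parts are not independent: part (a) is the key structural input, and parts (b) and (c) are deduced from it together with a case analysis of which ``extra'' automorphisms of $A$ normalize $\Aut_{W_0}(A)$. The plan is to work throughout inside $\Aut(A)$, identifying $W_0$ with $\Aut_N(A)=\Aut_{W_0}(A)$ via Lemma \ref{NG(T)}(b), and to exploit the fact that every $\varphi\in\Aut(A,\calf)$ is fusion preserving, hence normalizes $W_0=\Aut_N(A)$ (the subgroup generated by the $\calf$-conjugation maps of $A$ coming from $N_G(T)$).

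For part (a): fix $\varphi\in\Aut(A,\calf)$ with $\varphi\in C_{\Aut(A)}(W_0)$, i.e.\ $\varphi$ commutes with the $W_0$-action on $A$. First I would reduce to the action on $\Omega_1(A)$ (or $\Omega_2(A)$ when $p=2$) using Lemma \ref{scal-mod-m}: an element of $\Aut(A)$ commuting with $W_0$ is determined modulo scalars by what it does there, since the $W_0$-module $\Omega_m(A)$ is, via $\Phi_\lambda$ of Lemma \ref{l:CW(t)}(a), the reduction mod $p^m$ of the lattice $\Z\Sigma^\vee$ (resp.\ $C_{\Z\Sigma^\vee}(\tau)$). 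So the real content is: an endomorphism of $V_0$ (or of $\Z\Sigma^\vee\otimes(\Z/p^m)$) commuting with the $W_0$-action is a scalar. This is Schur-type: when $W_0$ acts irreducibly on $V_0\otimes\Q$ (the case distinction here is the root system of the relevant Chevalley/Steinberg/folded type), $\mathrm{End}_{\Q W_0}(V_0\otimes\Q)=\Q$, so over $\Z/p^m$ the commuting endomorphisms are scalars up to the obstruction coming from $p$-torsion in the character ring — which is exactly what Lemma \ref{scal-mod-m} is designed to control. In the few reducible cases (type $A_1\times\cdots$, or the folded $BC$-type decompositions), the summands are pairwise non-isomorphic $W_0$-modules, so $\mathrm{End}_{W_0}$ is still a product of fields, and the possible ``independent scalars on the factors'' are pinned down by the requirement that $\varphi$ preserve fusion — in particular that it carry the reflections $w_{\5\alpha}$ to reflections, which by Lemma \ref{Kr,Hr2}(b) is detected by $[w,A]$ being cyclic. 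Combining, $\varphi$ agrees with a scalar automorphism modulo $W_0$ itself, giving $C_{W_0\Aut(A,\calf)}(W_0)\le W_0\Aut\scal(A)$.

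For part (b): given $\varphi\in\Aut(A,\calf)$, the conjugation action of $\varphi$ on $W_0=\Aut_N(A)$ is an automorphism of the Coxeter group $W_0$ which (being induced by a fusion-preserving map) preserves the set of reflections $\{w_{\5\alpha}\}$ distinguished by Lemma \ref{Kr,Hr2}(b), hence is a diagram automorphism of $W_0$ composed with an inner one. Now I would compose $\varphi$ with a suitable element $\delta|_A$ for $\delta\in\Aut(G)$ — using that field and graph automorphisms of $G$ normalize $T$ hence act on $A=O_p(T)$, and realize, modulo $\Inndiag$, all the relevant diagram automorphisms of $W_0$ — to reduce to the case where $\varphi$ \emph{centralizes} $W_0$; then part (a) finishes it, putting $\varphi$ into $\Aut\scal(A)\Aut_{\Aut(G)}(A)$. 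The exceptions $(\lie2E6(q),3)$, $(G_2(q),2)$, $(F_4(q),3)$ (with the stated restrictions on $q_0$) are precisely the cases where $W_0$ has a diagram automorphism (the triality-type symmetry of $D_4$ inside $F_4$ or of the folded $E_6$, or the symmetry swapping the two nodes of $G_2$) that is \emph{not} induced by any automorphism of $G$ — there one only gets index $\le 2$ rather than equality, which is part (c). So for (c): the obstruction to (b) is the image of $\varphi$ in $\mathrm{Out}(W_0)/(\text{image of }\Aut(G))$, a group of order $\le 2$ in every case (the outer automorphism group of the Coxeter groups in question is a $2$-group, and only one ``extra'' generator can survive), so $\Aut(A,\calf)\cap\Aut\scal(A)\Aut_{\Aut(G)}(A)$ has index at most $2$ in $\Aut(A,\calf)$; in all cases except the three listed it has index exactly $1$.

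The main obstacle will be part (a) in the reducible cases — making precise, uniformly across the root-system types allowed by Hypotheses \ref{G-hypoth-X}, that a fusion-preserving $\varphi$ commuting with $W_0$ cannot mix the isotypic factors of $A$ in a way not accounted for by a scalar, and handling the small primes ($p=2$, and $p=3$ for $G_2/F_4/\lie3D4$) where $\Omega_1(A)$ is too small for a naive Schur argument; this is exactly why the argument must route through $\Omega_m(A)$ with $m=v_p(q-\gee)\ge1$ (resp.\ $m\ge2$ for $p=2$) and invoke Lemma \ref{scal-mod-m} and Lemma \ref{Kr,Hr2}(b,c) rather than working over a field. A secondary bookkeeping issue is verifying, for each exceptional pair, both that the extra diagram automorphism of $W_0$ really is fusion-preserving on $A$ (so the index is genuinely $2$, not $1$) and that it genuinely is not induced from $\Aut(G)$ under the stated hypothesis on $q_0$; this is a finite check using the explicit root data in \cite[Planches I--IX]{Bourb4-6} and the structure of $\Aut(G)$ from Theorem \ref{St-aut}.
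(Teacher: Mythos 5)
The outline you give for parts (b) and (c) — adjust $\varphi$ by an element of $W_0$ so that it normalizes the set of distinguished reflections, read off an automorphism of the Coxeter diagram, and realize it by a graph or field automorphism of $G$ except in the three listed cases — is essentially the paper's Steps 3 and 4, so that half of the plan is sound (though you would still need to check that in the remaining small cases, namely $(B_2(q),2)$, $(F_4(q),2)$, $(G_2(q),3)$ and their Steinberg analogues $\lie2A3$, $\lie2A4$, the diagram symmetry is \emph{not} fusion preserving, which the paper does by comparing the subgroups $\Omega_1(A_\alpha)$ for long and short roots; otherwise extra exceptions would appear). The genuine gap is in your part (a), which is the engine of the whole lemma. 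Your central claim — that an automorphism of $A$ commuting with the $W_0$-action is scalar up to an obstruction "controlled by Lemma \ref{scal-mod-m}", and is determined modulo scalars by its restriction to $\Omega_1(A)$ (or $\Omega_2(A)$) — is false. Take $G=\Sp_{2n}(q)$ with $q\equiv1\pmod4$ and $p=2$, so $A\cong(C_{2^\ell})^n$ and $W_0\cong\{\pm1\}\wr\Sigma_n$ acts by signed permutations of the coordinates; the automorphism of $A$ with matrix $I+2^{\ell-1}(J-I)$ ($J$ the all-ones matrix) centralizes all of $W_0$, restricts to the identity on $\Omega_1(A)$ (and on $\Omega_2(A)$ when $\ell\ge3$), yet is neither scalar nor of the form $w\cdot\psi$ with $w\in W_0$ and $\psi\in\Aut\scal(A)$. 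So the commutant of $W_0$ in $\Aut(A)$ is strictly larger than $W_0\Aut\scal(A)$ even when the reflection representation is absolutely irreducible: reduction of the coroot lattice modulo $p^\ell$ creates congruence endomorphisms that Schur's lemma over $\Q$ does not see, and Lemma \ref{scal-mod-m} says nothing about commutants — it only identifies which isometries of $(V,\Sigma)$ act on the torsion subgroup as power maps. Consequently the fusion-preserving hypothesis must be used in an essential way in \emph{all} cases, not just the "reducible" ones as in your sketch; moreover for Steinberg groups at $p=2$ the factors $A_{\5\alpha}$ have different orders ($C_{2^m}$ and $C_{2^{m+1}}$), so $A$ is not even the reduction of a lattice at a single power and the module-theoretic shortcut has no starting point.

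What the paper actually does for (a) is in two steps, neither of which is Schur-type. First it shows that every $\varphi\in C_{W_0\Aut(A,\calf)}(W_0)$ preserves each cyclic factor $A_{\5\alpha}$: for odd $p$ this is immediate from $A_{\5\alpha}=[w_{\5\alpha},A]$, but for $p=2$ (where $[w_{\5\alpha},A]$ can have index $2$ in $A_{\5\alpha}$) one writes $\varphi=w\circ\varphi_0$ with $\varphi_0\in\Aut(A,\calf)$ and uses that $\varphi_0$, being the restriction of a fusion-preserving automorphism of $S$, carries $\foc(C_\calf(C_A(w_{\5\alpha})))$ to $\foc(C_\calf(C_A(w_{\5\beta})))$, which identifies $A_{\5\alpha}$ via Lemma \ref{Kr,Hr2}(c); the case $|\5\alpha|=3$ (the groups $\SU_{2n+1}(q)$) needs a separate argument inside $\GU_2(q)\wr\Sigma_n$. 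Second, knowing $\varphi$ acts on each $A_{\5\alpha}$ as some power map $u_{\5\alpha}$, the equality of all the $u_{\5\alpha}$ is obtained by chasing the coroot relations $h_\beta(\lambda)=h_{\alpha_1}(\lambda)h_{\alpha_2}(\lambda)$ across adjacent nodes of the (quotient) Dynkin diagram, using the decomposition $A=\prod_{\5\alpha\in\5\Pi}A_{\5\alpha}$ and connectivity of the diagram. Your proposal would need to be rebuilt along these lines; as written, the Schur step at the heart of (a) does not hold, and (b), (c) inherit the gap.
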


\begin{proof} Recall that in Notation \ref{G-setup}\eqref{not3}, $V_0$, 
$\5\Sigma$, and $\5\Pi$ are defined when $\rho(\Pi)=\Pi$, and hence in case 
\ref{easy_case} of Hypotheses \ref{G-hypoth-X}. In case \ref{minus_case}, 
we defined $V_0=V$, $\5\Sigma=\Sigma$, and $\5\Pi=\Pi$ in Notation 
\ref{G-setup-X}\eqref{not5x}. So under the hypotheses of the lemma (and 
since $G$ is always a Chevalley group in case \ref{minus_case}), we have 
$V_0=V$ and $\5\Pi=\Pi$ if and only if $G$ is a Chevalley group. 


If $\alpha\in\Pi$ and $\alpha+\tau(\alpha)\in\Sigma$, then 
$\4T_{\alpha+\tau(\alpha)}\le\4T_\alpha\4T_{\tau(\alpha)}$ by Lemma 
\ref{theta-r}\eqref{hb.hc}.  Hence 
$\4T_{\5\alpha}=\4T_\alpha\4T_{\tau(\alpha)}$ (the maximal torus in 
$\4K_{\5\alpha}$) for each $\alpha\in\Pi$.  So by Lemma 
\ref{theta-r}\eqref{T=prod}, in all cases,
	\beqq T = C_{\4T}(\psi_q\gamma) 
	= \prod_{\5\alpha\in\5\Pi}C_{\4T_{\5\alpha}}(\psi_q\gamma) 
	\qquad\textup{and hence}\qquad
	A = \prod_{\5\alpha\in\5\Pi}A_{\5\alpha}~. \label{e:Tsplit} \eeqq

Set
	\[ \gee= \begin{cases} 
	+1 & \textup{if $q\equiv1$ (mod $p$)\quad (case \ref{easy_case})}\\
	-1 & \textup{if $q\equiv-1$ (mod $p$) and $p$ is odd\quad (case \ref{minus_case})}
	\end{cases}
	\qquad \textup{and}\qquad m=v_p(q-\gee)\,. \]
By assumption, $\gee=1$ if $G$ is a Steinberg group or if $p=2$, and $m>0$ 
in all cases. 

If $G$ is a Chevalley group, then $\tau=\gee\cdot\Id_V$, so $W_0=W$. Also, 
$\sigma=\gamma\psi_q$ acts on $\4T$ via $\sigma(t)=t^{\gee{}q}$, so 
$T=\{t\in\4T\,|\,t^{q-\gee}=1\}$, and $A=\{t\in\4T\,|\,t^{p^m}=1\}$. Thus 
for each $\alpha\in\Sigma$, $T_\alpha\cong C_{q-\gee}$ and $A_\alpha\cong 
C_{p^k}$. 

Now assume $G$ is a Steinberg group ($\tau\ne\Id$). 
For each $\5\alpha\in\5\Pi$, either 
\begin{itemize} 

\item $\5\alpha\cap\Pi=\{\alpha,\tau(\alpha)\}$ for some $\alpha\in\Pi$ 
such that $\alpha\ne\tau(\alpha)$, in which case $\sigma=\psi_q\gamma$ acts 
on $\4T_{\5\alpha}=\4T_\alpha\times{}\4T_{\tau(\alpha)}$ by sending $(a,b)$ 
to $(b^q,a^q)$, and so $C_{\4T_{\5\alpha}}(\psi_q\gamma)\cong{}C_{q^2-1}$; 
\quad or 

\item $\5\alpha=\{\alpha\}$ for some $\alpha\in\Pi$ such that 
$\alpha=\tau(\alpha)$, in which case $\psi_q\gamma$ acts on 
$\4T_{\5\alpha}=\4T_\alpha$ via $(a\mapsto{}a^q)$, and  
$C_{\4T_{\5\alpha}}(\psi_q\gamma)\cong{}C_{q-1}$.

\end{itemize}
Since $v_p(q^2-1)=m$ ($p$ odd) or $m+1$ ($p=2$), we have now shown that in 
all cases,
	\beqq A_{\5\alpha}\cong C_{p^m}\textup{ if $p$ is odd; } \qquad
	A_{\5\alpha}\cong \begin{cases} 
	C_{2^m} & \textup{if $p=2$ and $|\5\alpha|=1$}\\
	C_{2^{m+1}} & \textup{if $p=2$ and $|\5\alpha|\ge2$.} 
	\end{cases} \label{e:4.11a} \eeqq

\smallskip

\noindent\textbf{Step 1: } We first prove that 
	\beqq \varphi\in C_{W_0\Aut(A,\calf)}(W_0) \quad\implies\quad 
	\varphi(A_{\5\alpha})=A_{\5\alpha} \textup{ for all 
	$\5\alpha\in\5\Sigma$.} \label{e:4.11o} \eeqq
If $p$ is odd, then $A_{\5\alpha}=[w_{\5\alpha},A]$
by Lemma \ref{Kr,Hr2}(a), so \eqref{e:4.11o} is immediate.

Next assume that $p=2$, and also that $|\5\alpha|\le2$. Write 
$\varphi=w\circ\varphi_0$, where $w\in{}W_0$ and 
$\varphi_0\in\Aut(A,\calf)$. Then 
$\varphi_0(C_A(w_{\5\alpha}))=w^{-1}(C_A(w_{\5\alpha})) =C_A(w_{\5\beta})$, 
where $\5\beta=w^{-1}(\5\alpha)$.  By definition of $\Aut(A,\calf)$ 
(Notation \ref{G-setup-X}),  $\varphi_0=\4\varphi_0|_A$ for some 
$\4\varphi_0\in\Aut(S,\calf)$. Since $\4\varphi_0$ is fusion preserving, it 
sends $\foc(C_\calf(C_A(w_{\5\alpha})))$ onto
$\foc(C_\calf(C_A(w_{\5\beta})))$. Since these focal subgroups are 
$A_{\5\alpha}$ and $A_{\5\beta}$, respectively, by Lemma \ref{Kr,Hr2}(c), 
$\varphi(A_{\5\alpha})=w(A_{\5\beta})=A_{w(\5\beta)}=A_{\5\alpha}$ also in 
this case (the second equality by Lemma \ref{theta-r}\eqref{wa(hb)}).

It remains to consider the case where $p=2$ and $|\5\alpha|=3$, and thus 
where $G\cong\SU_{2n+1}(q)$ for some $n\ge1$. There is a subgroup 
$(H_1\times\cdots\times H_n)\rtimes\Sigma_n<G$ of odd index, where 
$H_i\cong\GU_2(q)$. Fix $S_i\in\syl2{H_i}$; then $S_i\cong\SD_{2^k}$ where 
$k=v_2(q^2-1)+1\ge4$. Let $A_i,Q_i<S_i$ denote the 
cyclic and quaternion subgroups of index $2$ in $S_i$. Then we can take 
$A=A_1\times\cdots\times A_n\cong (C_{2^{k-1}})^n$,
$N=(S_1\times\cdots\times S_n)\rtimes\Sigma_n$, and $S\in\syl2{N}$. 

There are exactly $n$ classes $\5\alpha_1,\ldots,\5\alpha_n\in\5\Sigma_+$ 
of order $3$, which we label so that $[w_{\5\alpha_i},A]\le A_i$ 
($[w_{\5\alpha_i},A]=A\cap Q_i$). Equivalently, these are chosen so that 
$w_{\5\alpha_i}$ acts on $A$ via conjugation by an element of 
$S_i{\sminus}A_i$. Let $\alpha^*_i\in\Sigma_+$ be the root in the class 
$\5\alpha_i$ which is the sum of the other two. 

Write $\varphi=w\circ\varphi_0$, where $w\in{}W_0$ and 
$\varphi_0\in\Aut(A,\calf)$, and let $\4\varphi_0\in\Aut(S,\calf)$ be such 
that $\varphi_0=\4\varphi_0|_A$. For each $1\le i\le n$, 
$\varphi_0(C_A(w_{\5\alpha_i}))=w^{-1}(C_A(w_{\5\alpha_i})) 
=C_A(w_{\5\alpha_{f(i)}})$, where $f\in\Sigma_n$ is such that 
$\5\alpha_{f(i)}=w^{-1}(\5\alpha_i)$.  Since $\4\varphi_0$ is fusion 
preserving, it sends $\foc(C_\calf(C_A(w_{\5\alpha_i})))$ onto 
$\foc(C_\calf(C_A(w_{\5\alpha_{f(i)}})))$. By Lemma \ref{Kr,Hr2}(c), 
$C_G(C_A(w_{\5\alpha_i}))=G\cap(\4T\4K_{\alpha^*_i})$, its commutator 
subgroup is $G\cap\4K_{\alpha^*_i}\cong\SL_2(q)$, and hence 
$\foc(C_\calf(C_A(w_{\5\alpha_i})))=Q_i$. Thus $\4\varphi_0(Q_i)=Q_{f(i)}$.

For each $i$, set $Q^*_i=\gen{Q_j\,|\,j\ne{}i}$. Then $C_G(Q^*_i)$ is the 
product of $G\cap\4K_{\5\alpha_i}\cong\SL_3(q)$ (Lemma \ref{Kr,Hr}) with 
$Z(Q^*_i)$. Thus $\4\varphi_0$ sends $\foc(C_\calf(Q^*_i))=S_i$ to 
$\foc(C_\calf(Q^*_{f(i)}))=S_{f(i)}$, and hence $\varphi_0(A_i)=A_{f(i)}$. 
So $\varphi(A_i)=w(A_{f(i)})=A_i$ for each $i$ where $A_i=A_{\5\alpha_i}$, 
and this finishes the proof of \eqref{e:4.11o}.

\smallskip

\noindent\textbf{Step 2: } We next prove point (a): that 
$C_{W_0\Aut(A,\calf)}(W_0) \le W_0\Aut\scal(A)$. 
Let $\varphi\in W_0\Aut(A,\calf)$ be an element 
which centralizes $\Aut_N(A)\cong N/A\cong{}W_0$. By \eqref{e:4.11o}, 
$\varphi(A_{\5\alpha})=A_{\5\alpha}$ for each $\5\alpha\in\5\Sigma$. 
Since $A_{\5\alpha}$ is cyclic for each $\5\alpha\in\5\Sigma_+$ by 
\eqref{e:4.11a}, $\varphi|_{A_{\5\alpha}}$ is multiplication by some unique 
$u_{\5\alpha}\in(\Z/q_{\5\alpha})^\times$, where 
$q_{\5\alpha}=|A_{\5\alpha}|$.  We must show that $u_{\5\alpha}$ is 
independent of $\5\alpha$.

Assume first that $\tau=\Id$. By \eqref{e:4.11a}, 
$|A_\alpha|=p^m$ for each $\alpha\in\Pi$.  Fix $\alpha_1,\alpha_2\in\Pi$ 
and $\beta\in\Sigma_+$ such that $\frac1k\beta=\frac1k\alpha_1+\alpha_2$, 
where either
\begin{itemize} 
\item $k=1$ and all three roots have the same length; or

\item $k\in\{2,3\}$ and 
$\norm\beta=\norm{\alpha_1}=\sqrt{k}\cdot\norm{\alpha_2}$.
\end{itemize}
The relation between the three roots is chosen so that $h_\beta(\lambda)= 
h_{\alpha_1}(\lambda)h_{\alpha_2}(\lambda)$ for all 
$\lambda\in\fqobar^\times$ by Lemma \ref{theta-r}\eqref{hb.hc}.  Hence 
$u_{\alpha_1}\equiv u_\beta\equiv u_{\alpha_2}$ (mod $p^m$) by 
\eqref{e:Tsplit}.  By the connectivity of the Dynkin diagram, the 
$u_\alpha$ for $\alpha\in\Pi$ are all equal, and $\varphi\in\Aut\scal(A)$. 

Now assume $|\tau|=2$; the argument is similar but slightly more 
complicated.  By assumption, $\gg$ is of type $A_n$, $D_n$, or $E_n$; i.e., 
all roots have the same length.  Set $m'=v_p(q^2-1)$; then 
$m'=m$ if $p$ is odd, and $m'=m+1$ if $p=2$. Fix 
$\alpha_1,\alpha_2\in\Pi$ such that $\alpha_1\ne\tau(\alpha_2)$ and 
$\beta\defeq\alpha_1+\alpha_2\in\Sigma_+$.  Choose 
$\lambda\in\4\F_{q_0}^\times$ of order $p^{m'}$.

If $\alpha_1\ne\tau(\alpha_1)$ and $\alpha_2\ne\tau(\alpha_2)$, then 
$|A_{\5\alpha_1}|=|A_{\5\alpha_2}|=p^{m'}$ by \eqref{e:4.11a}, and 
	\[ \5h_{\alpha_1}(\lambda)\5h_{\alpha_2}(\lambda) =
	h_{\alpha_1}(\lambda)h_{\tau(\alpha_1)}(\lambda^q)
	h_{\alpha_2}(\lambda)h_{\tau(\alpha_2)}(\lambda^q)
	= h_{\beta}(\lambda)h_{\tau(\beta)}(\lambda^q)
	= \5h_\beta(\lambda) \in A_{\5\beta} \,. \]
Hence 
	\[ \bigl(\5h_{\alpha_1}(\lambda)\5h_{\alpha_2}(\lambda)\bigr) 
	^{u_{\5\beta}} 
	= \varphi\bigl(\5h_{\alpha_1}(\lambda)\5h_{\alpha_2}(\lambda)\bigr)
	= \5h_{\alpha_1}(\lambda)^{u_{\5\alpha_1}} \cdot
	\5h_{\alpha_2}(\lambda)^{u_{\5\alpha_2}} \,, \]
and together with \eqref{e:Tsplit}, this proves that 
$u_{\5\alpha_1}\equiv u_{\5\beta}\equiv u_{\5\alpha_2}$ (mod $p^{m'}$).  

If $\tau(\alpha_i)=\alpha_i$ for $i=1,2$, then a similar argument shows 
that $u_{\5\alpha_1}\equiv u_{\5\beta}\equiv u_{\5\alpha_2}$ (mod 
$p^{m}$).  It remains to handle the case where 
$\alpha_1\ne\tau(\alpha_1)$ and $\alpha_2=\tau(\alpha_2)$.  In this case,  
$|A_{\5\alpha_1}|=p^{m'}$ and $|A_{\5\alpha_2}|=p^{m}$ by \eqref{e:4.11a}, 
and these groups are generated by $\5h_{\alpha_1}(\lambda)=
h_{\alpha_1}(\lambda)h_{\tau(\alpha_1)}(\lambda^q)$ and 
$h_{\alpha_2}(\lambda^{q+1})$, respectively.  Then 
	\[ \5h_{\alpha_1}(\lambda)\5h_{\alpha_2}(\lambda^{q+1}) =
	h_{\alpha_1}(\lambda)h_{\tau(\alpha_1)}(\lambda^q)
	h_{\alpha_2}(\lambda^{q+1})
	= h_{\beta}(\lambda)h_{\tau(\beta)}(\lambda^q) = \5h_\beta(\lambda)
	\in A_{\5\beta} \,, \]
so 
	\[ \bigl(\5h_{\alpha_1}(\lambda)\5h_{\alpha_2}(\lambda^{q+1})\bigr) 
	^{u_{\5\beta}} = 
	\varphi\bigl(\5h_{\alpha_1}(\lambda)\5h_{\alpha_2}(\lambda^{q+1})\bigr) 
	= \5h_{\alpha_1}(\lambda)^{u_{\5\alpha_1}} \cdot
	h_{\alpha_2}(\lambda^{q+1}) ^{u_{\5\alpha_2}} \,, \]
and $u_{\5\alpha_1}\equiv u_{\5\beta}\equiv u_{\5\alpha_2}$ (mod $p^{m}$) 
by \eqref{e:Tsplit} again.

Since the Dynkin diagram is connected, and since the subdiagram of nodes in 
free orbits in the quotient diagram is also connected, this shows that the 
$u_{\5\alpha}$ are all congruent for $\5\alpha\in\5\Pi$ (modulo $p^m$ or 
$p^{m'}$, depending on where they are defined), and hence that 
$\varphi\in\Aut\scal(A)$. 

\smallskip

\noindent\textbf{Step 3: } Consider the subset 
$W_{\5\Pi}=\{w_{\5\alpha}\,|\,\5\alpha\in\5\Pi\}$. We need to study the 
subgroup $N_{W_0\Aut(A,\calf)}(W_{\5\Pi})$: the group of elements of 
$W_0\Aut(A,\calf)$ which permute the set $W_{\5\Pi}$.  Note that 
$W_0=\gen{W_{\5\Pi}}$ (see, e.g., \cite[Proposition 13.1.2]{Carter}, and 
recall that $W_0=W$ and $\5\Pi=\Pi$ in case \ref{minus_case}). 
We first show that 
	\beqq \Aut(A,\calf) \le W_0N_{W_0\Aut(A,\calf)}(W_{\5\Pi})\,. 
	\label{e:4.11b} \eeqq

Write $\5\Pi=\{\5\alpha_1,\dots,\5\alpha_k\}$, ordered so that for each 
$2\le{}i\le{}k$, $\5\alpha_i$ is orthogonal to all but one of the 
$\5\alpha_j$ for $j<i$.  Here, $\5\alpha_i\perp\5\alpha_j$ means orthogonal 
as vectors in $V_0$. Thus $w_{\5\alpha_i}$ commutes with all but one of the 
$w_{\5\alpha_j}$ for $j<i$.  By inspection of the Dynkin diagram of $\gg$ 
(or the quotient of that diagram by $\tau$), this is always possible.

Fix $\varphi\in\Aut(A,\calf)$. In particular, $\varphi$ normalizes $W_0$ 
(recall that we identify $W_0=\Aut_{W_0}(A)$) since $\varphi$ is fusion 
preserving. (Recall that $\Aut_G(A)=\Aut_{W_0}(A)$ by Lemma 
\ref{NG(T)}(b).) We must show that some element of $\varphi{}W_0$ 
normalizes the set $W_{\5\Pi}$. 

By definition of $\Aut(A,\calf)$ (Notation \ref{G-setup-X}), 
$\varphi=\4\varphi|_A$ for some $\4\varphi \in \Aut(S,\calf)$. Since 
$\4\varphi$ is fusion preserving, $\varphi$ normalizes 
$\autf(A)=\Aut_G(A)$, where $\Aut_G(A)\cong N/A\cong W_0$ since $C_N(A)=A$ 
by Lemma \ref{NG(T)}(a). Thus there is a unique automorphism 
$\5\varphi\in\Aut(W_0)$ such that $\5\varphi(w)=\varphi\circ w\circ 
\varphi^{-1}$ for each $w\in{}W_0$. 

For each $i$, since $|\5\varphi(w_{\5\alpha_i})|=2$ and 
$[\5\varphi(w_{\5\alpha_i}),A]\cong[w_{\5\alpha_i},A]$ is 
cyclic, $\5\varphi(w_{\5\alpha_i})=w_{\5\alpha'_i}$ for some 
$\5\alpha'_i\in\5\Sigma$ by Lemma \ref{Kr,Hr2}(b), where $\5\alpha'_i$ is 
uniquely determined only up to sign. For $i\ne{}j$, 
	\[ \5\alpha_i\perp\5\alpha_j ~\iff~ 
	[w_{\5\alpha_i},w_{\5\alpha_j}]=1 ~\iff~
	[\5\varphi(w_{\5\alpha_i}),\5\varphi(w_{\5\alpha_j})]=1 ~\iff~
	\5\alpha'_i\perp\5\alpha'_j\,. \]
So using the assumption about 
orthogonality, we can choose successively 
$\5\alpha'_1,\5\alpha'_2,\ldots,\5\alpha'_k$ so that 
$\5\varphi(w_{\5\alpha_i})=w_{\5\alpha'_i}$ for each $i$, and 
$\gen{\5\alpha'_i,\5\alpha'_j}\le0$ for $i\ne{}j$.  

For each $i\ne{}j$, since 
$|w_{\5\alpha_i}w_{\5\alpha_j}|=|w_{\5\alpha'_i}w_{\5\alpha'_j}|$, the 
angle (in $V_0$) between $\5\alpha_i$ and $\5\alpha_j$ is equal to that 
between $\5\alpha'_i$ and $\5\alpha'_j$ (by assumption, both angles are 
between $\pi/2$ and $\pi$). The roots $\5\alpha'_i$ for $1\le i\le k$ 
thus generate $\5\Sigma$ as a root system on $V_0$ with Weyl group $W_0$, 
and hence are the fundamental roots for another Weyl chamber for 
$\5\Sigma$. (Recall that $\5\Sigma=\Sigma$, $V_0=V$, and $W_0=W$ in case 
\ref{minus_case}.) Since $W_0$ permutes the Weyl chambers transitively 
\cite[\S\,VI.1.5, Theorem 2(i)]{Bourb4-6}, there is $w\in{}W_0$ which sends 
the set $\{w_{\5\alpha_i}\}$ onto $\{\5\varphi(w_{\5\alpha_i})\}$. Thus 
$c_w^{-1}\circ\varphi\in N_{W_0\Aut(A,\calf)}(W_{\5\Pi})$, so $\varphi\in 
W_0N_{W_0\Aut(A,\calf)}(W_{\5\Pi})$, and this proves \eqref{e:4.11b}. 

\smallskip

\noindent\textbf{Step 4: } Set $\Aut_{W_0\Aut(A,\calf)}(W_{\5\Pi})= 
N_{W_0\Aut(A,\calf)}(W_{\5\Pi})\big/ C_{W_0\Aut(A,\calf)}(W_{\5\Pi})$: the 
group of permutations of the set $W_{\5\Pi}$ which are induced by elements 
of $W_0\Aut(A,\calf)$. By (a) (Step 2) and \eqref{e:4.11b}, and since 
$W_0=\gen{W_{\5\Pi}}$, there is a surjection 
	\beqq 
	\Aut_{W_0\Aut(A,\calf)}(W_{\5\Pi}) \Onto5{\textup{onto}} 
	\frac{W_0N_{W_0\Aut(A,\calf)}(W_{\5\Pi})}
	{W_0C_{W_0\Aut(A,\calf)}(W_{\5\Pi})} 
	= \frac{W_0\Aut(A,\calf)}{W_0\Aut\scal(A)} \,.
	\label{e:4.11d} \eeqq
To finish the proof of the lemma, we must show that each element of 
$\Aut_{W_0\Aut(A,\calf)}(W_{\5\Pi})$ is represented by an element of 
$\Aut_{\Aut(G)}(A)$ (i.e., the restriction of an automorphism of $G$), with 
the exceptions listed in point (b). 

In the proof of Step 3, we saw that each element of 
$\Aut_{W_0\Aut(A,\calf)}(W_{\5\Pi})$ preserves angles between the 
corresponding elements of $\5\Pi$, and hence induces an automorphism of the 
Coxeter diagram for $(V_0,\5\Sigma)$ (i.e., the Dynkin diagram without 
orientation on the edges).

\smallskip

\noindent\textbf{Case 1: } Assume $G=\gg(q)$ is a Chevalley group. The 
automorphisms of the Coxeter diagram of $\gg$ are well known, and we have 
	\beqq 
	\bigl|\Aut_{W_0\Aut(A,\calf)}(W_{\5\Pi})\bigr| 
	\le \begin{cases} 
	6 & \textup{if $\gg=D_4$} \\
	2 & \textup{if $\gg=A_n$ ($n\ge2$), $D_n$ ($n\ge5$), 
	$E_6$, $B_2$, $G_2$, or $F_4$} \\
	1 & \textup{otherwise.}
	\end{cases} \label{e:4.11c} \eeqq
In case \ref{easy_case} (i.e., when the setup is standard), all of these 
automorphisms are realized by restrictions of graph automorphisms in 
$\Gamma_G$ (see \cite[\S\S\,12.2--4]{Carter}), except possibly when $G\cong 
B_2(q)$, $G_2(q)$, or $F_4(q)$. In case \ref{minus_case}, with the 
same three exceptions, each such automorphism is realized by some graph 
automorphism $\varphi\in\Gamma_{\4G}$, and $\varphi|_{\4T}$ commutes with 
$\sigma|_{\4T}\in Z(\Aut(\4T))$. Hence by 
Lemma \ref{4T->4G}, $\varphi|_T$ extends to an automorphism of $G$ whose 
restriction to $A$ induces the given symmetry of the Coxeter diagram. 
Together with \eqref{e:4.11d}, this proves the lemma 
for Chevalley groups, with the above exceptions. 

If $G\cong B_2(q)$ or $F_4(q)$ and $p\ne2$, then 
$\bigl|\Aut_{W_0\Aut(A,\calf)}(W_{\5\Pi})\bigr|=2$, and the nontrivial 
element is represented by an element of $\Aut_{\Gamma_G}(A)$ exactly when 
$q_0=2$. This proves the lemma in these cases, and a similar argument holds 
when $G\cong G_2(q)$ and $p\ne3$. 

It remains to check the cases where $(G,p)\cong(B_2(q),2)$, $(G_2(q),3)$, 
or $(F_4(q),2)$. We claim that $\Aut_{W_0\Aut(A,\calf)}(W_{\5\Pi})=1$ in 
these three cases; then the three groups in \eqref{e:4.11d} are trivial, 
and so  $\Aut(A,\calf)\le W_0\Aut\scal(A)$. If $(\gg,p)=(B_2,2)$ or 
$(G_2,3)$, then with the help of Lemma 
\ref{theta-r}(\ref{hb.hc},\ref{T=prod}), one shows that the subgroups 
$\Omega_1(A_\alpha)$ are all equal for $\alpha$ a short root, and are all 
distinct for the distinct (positive) long roots. More precisely, of the 
$p+1$ subgroups of order $p$ in $\Omega_1(A)\cong C_p^2$, one is equal to 
$A_\alpha$ when $\alpha$ is any of the short roots in $\Sigma_+$, while 
each of the other $p$ is equal to $A_\alpha$ for one distinct long root 
$\alpha$. Since $\Omega_1(A_\alpha)=\Omega_1([w_\alpha,A])$ for each 
$\alpha$, no element of $N_{W_0\Aut(A,\calf)}(W_{\5\Pi})$ can exchange the 
long and short roots, so $\Aut_{W_0\Aut(A,\calf)}(W_{\5\Pi})=1$. 

Now assume $(\gg,p)=(F_4,2)$. Let $\alpha,\beta\in\Pi$ be such that 
$\alpha$ is long, $\beta$ is short, and $\alpha\not\perp\beta$. Then 
$\alpha$ and $\beta$ generate a root system of type $B_2$, and by the 
argument in the last paragraph, no element of 
$N_{W_0\Aut(A,\calf)}(W_{\5\Pi})$ can exchange them. Thus no element in 
$N_{W_0\Aut(A,\calf)}(W_{\5\Pi})$ can exchange the long and short roots in 
$\gg$, so again $\Aut_{W_0\Aut(A,\calf)}(W_{\5\Pi})=1$.

\smallskip

\noindent\textbf{Case 2: } Assume $G$ is a Steinberg group. In particular, 
we are in case \ref{easy_case}. The Coxeter diagram for the root system 
$(V_0,\5\Sigma)$ has type $B_n$, $C_n$, or $F_4$ (recall that we excluded 
the triality groups $\lie3D4(q)$ in Hypotheses \ref{G-hypoth-X}), and hence 
has a nontrivial automorphism only when it has type $B_2$ or $F_4$. It thus 
suffices to consider the groups $G=\lie2A3(q)$, $\lie2A4(q)$, and 
$\lie2E6(q)$. 

For these groups, the elements $\5h_\alpha(\lambda)$ for 
$\lambda\in\F_q^\times$, and hence the $(q-1)$-torsion in the subgroups 
$T_{\5\alpha}$ for $\5\alpha\in\5\Sigma_+$, have relations similar to those 
among the corresponding subgroups of $T$ when $G=B_2(q)$ or $F_4(q)$. This 
follows from Lemma \ref{l:CW(t)}(a): if $\lambda\in\F_q^\times$ is a 
generator, then $\Phi_\lambda$ restricts to an isomorphism from 
$C_{\Z\Sigma^\vee}(\tau)/(q-1)$ to the $(q-1)$-torsion in $T$, and the 
elements in $\5\Pi$ can be identified in a natural way with a basis for 
$C_{\Z\Sigma^\vee}(\tau)$. Hence when $p=2$, certain subgroups 
$\Omega_1(A_{\5\alpha})$ are equal for distinct $\5\alpha\in\5\Sigma_+$, 
proving that no element in $N_{W_0\Aut(A,\calf)}(W_{\5\Pi})$ can exchange 
the two classes of roots. Thus the same argument as that used in Case 1 
when $(G,p)=(B_2(q),2)$ or $(F_4(q),2)$ applies to prove that 
$N_{W_0\Aut(A,\calf)}(W_{\5\Pi})=\Aut\scal(A)$ in these cases. 

Since $p\big||W_0|$ by Hypotheses \ref{G-hypoth-X}(I), we 
are left only with the case where $p=3$ and $G=\lie2E6(q)$ for some 
$q\equiv1$ (mod $3$). Then $(V_0,\5\Sigma)$ is the root system of $F_4$, 
so $\Aut(A,\calf)\cap W_0\Aut\scal(A)$ has index at most $2$ in 
$\Aut(A,\calf)$ by \eqref{e:4.11d} and \eqref{e:4.11c}. Thus (c) holds in 
this case. (In fact, the fusion system of $G$ is isomorphic to that 
of $F_4(q)$ by \cite[Example 4.4]{BMO1}, and does have an 
``exotic'' graph automorphism.) 
\end{proof}

We now look at groups which satisfy any of the cases 
\ref{easy_case}, \ref{minus_case}, or \ref{messy_case} in Hypotheses 
\ref{G-hypoth-X}. Recall that $\4\kappa_G=\mu_G\circ\kappa_G\: \Out(G) 
\Right2{}\Out(S,\calf)$.

\begin{Lem} \label{Aut-diag}
Assume Hypotheses \ref{G-hypoth-X} and Notation 
\ref{G-setup-X}. Then each $\varphi\in\Aut\dg(S,\calf)$ 
is the restriction of a diagonal automorphism of $G$.  More precisely, 
$\4\kappa_G$ restricts to an epimorphism from $\Outdiag(G)$ onto 
$\Out\dg(S,\calf)$ whose kernel is the $p'$-torsion subgroup. 
Also, $C_A(W_0)=O_p(Z(G))$.
\end{Lem}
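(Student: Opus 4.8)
\textbf{Proof plan for Lemma \ref{Aut-diag}.}

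The plan is to analyze $\Aut\dg(S,\calf)$, the group of fusion-preserving automorphisms of $S$ that fix $A=O_p(T)$ pointwise, and to show each such automorphism comes from a diagonal automorphism of $G$. First I would establish the key structural fact $C_A(W_0)=O_p(Z(G))$, which is used throughout. In cases \ref{easy_case} and \ref{minus_case} this follows from Lemma \ref{CG(T)-2}(a): there $C_{\4T}(W_0)=Z(\4G)$, so $C_A(W_0)=O_p(C_T(W_0))=O_p(Z(G))$ (recall $Z(G)=C_T(W_0)$ by that lemma, and $Z(G)\le\4T$ by Proposition \ref{p:Autdiag(G)}(a)). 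In case \ref{easy_case} with classes of order $3$, Lemma \ref{CG(T)-2}(b) gives $C_{\4T}(W_0)\cong\fqobar^\times$ with $\sigma$ acting by $t\mapsto t^{-q}$, so its $p$-torsion is $C_{(q+1)_p}$-type and one checks $C_A(W_0)=O_p(Z(G))$ directly since $Z(\SU_{2n-1}(q))$ is the $(2n-1,q+1)$-torsion in that same subgroup. In case \ref{messy_case}, the hypothesis $C_A(O_{p'}(W_0))=1$ forces $C_A(W_0)=1$, and $O_p(Z(G))=1$ as well since $Z(G)\le C_T(W_0)\le C_A(O_{p'}(W_0))$ (after passing to $A$, using that $Z(G)$ is a $p'$-group would be too strong — instead note $O_p(Z(G))\le C_A(W_0)=1$).

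Next, for the main statement: given $\varphi\in\Aut\dg(S,\calf)$, extend it to $\4\varphi\in\Aut(S,\calf)$ (it already is one, by hypothesis $\varphi$ fixes $A$ and is fusion preserving on $S$). Since $\varphi$ is fusion preserving and $A$ is weakly closed enough — more precisely, since $\Aut_G(A)=\Aut_{W_0}(A)$ (Lemma \ref{NG(T)}(b), which holds in all three cases by the remark after Hypotheses \ref{G-hypoth-X}) — conjugation by $\varphi$ normalizes $W_0=\Aut_G(A)$ inside $\Aut(A)$, and because $\varphi|_A=\Id$ it in fact \emph{centralizes} $W_0$. So $\varphi|_A=\Id\in C_{W_0\Aut(A,\calf)}(W_0)$, which is consistent; the real content is to lift $\varphi$ to $G$. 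Here I would invoke $C_S(\Omega_1(A))=A$ (for $p$ odd) or $C_S(A)=A$ (for $p=2$), which holds in cases \ref{easy_case}, \ref{minus_case} by Lemma \ref{NG(T)}(a) and is assumed in \ref{messy_case}: this makes $A$ self-centralizing in $S$, hence $N_S(A)=S$ and $A\nsg S$, and lets me apply an extension argument. By Lemma \ref{extend_phi} (with $P=A$, using $C_G(A)=T$ so $C_G(A)\le$ something — one needs $C_G(A)\le A$, which fails, so instead use that $N_G(A)=N_G(T)$ has $N_S(A)=S\in\sylp{N_G(T)}$), $\varphi$ extends to $\4\psi\in\Aut(N_G(T))$; since $\varphi|_A=\Id$ and $C_{N_G(T)}(A)=T$, $\4\psi$ induces the identity on $N_G(T)/T\cong W_0$, hence $\4\psi$ acts on $T$ as an element of $N_{\4T}(G)/C$ — this is where I identify $\4\psi$ with conjugation by an element of $N_{\4T}(G)$, i.e.\ a diagonal automorphism, using Proposition \ref{p:Autdiag(G)}(b,c).

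Finally, to get the statement about $\4\kappa_G$ restricting to an epimorphism $\Outdiag(G)\onto2\Out\dg(S,\calf)$ with $p'$-torsion kernel: surjectivity is precisely what the previous paragraph gives (every element of $\Out\dg(S,\calf)$ lifts to a diagonal automorphism). For the kernel computation, note $\Ker(\4\kappa_G|_{\Outdiag(G)})$ consists of classes of diagonal automorphisms $c_t$, $t\in N_{\4T}(G)$, acting trivially on $S$, hence trivially on $A$; by Lemma \ref{Hyp1-N(W0)}(a) combined with the above, such $c_t$ acts on $A$ as a scalar, and since it is trivial on $A$ the scalar is $1$, so $c_t$ centralizes $\Omega_1(A)$ (resp.\ $A$). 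Then $t\in C_{\4T}(\Omega_1(A))$, and $C_{\4T}(\Omega_1(A))=\4T$ gives no constraint — so instead I bound the order: $\Outdiag(G)/\Ker\cong\Out\dg(S,\calf)$, and since $\Outdiag(G)\cong\Outdiag(G)_p\times\Outdiag(G)_{p'}$ with $\Outdiag(G)_p$ mapping injectively (a diagonal automorphism of $p$-power order acting trivially on $A=O_p(T)$ must act trivially on $T$, hence is inner), the kernel is exactly the $p'$-part. The main obstacle I anticipate is the bookkeeping in case \ref{messy_case}, where one does not have the explicit torus description and must run everything through the abstract hypotheses $C_S(\Omega_1(A))=A$, $\Aut_G(A)=\Aut_{W_0}(A)$, and the containment $N_{\Aut(A)}(\Aut_{W_0}(A))\le\Aut\scal(A)\Aut_{\Aut(G)}(A)$ — verifying that this last hypothesis delivers exactly the lifting to a diagonal automorphism, rather than to some other automorphism of $G$, will require care in tracking which coset of $\Aut\scal(A)\cap\Aut_{W_0}(A)$ the automorphism lands in, using the final displayed hypothesis in \ref{messy_case}.
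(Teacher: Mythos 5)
The central step of the lemma — that a fusion-preserving automorphism of $S$ which is the identity on $A$ comes from a diagonal automorphism of $G$ — is not actually proved in your plan. Lemma \ref{extend_phi} cannot be invoked with $P=A$, since it requires $C_G(P)\le P$ while $C_G(A)=T$ has a nontrivial $p'$-part in general; you notice this but the proposed repair is only gestured at. More seriously, even granting an extension of $\varphi$ to $N_G(T)$ (or to $N=N_G(T)/O_{p'}(T)$) which is the identity on $A$ and induces the identity on $N/A\cong W_0$, such an automorphism is nothing more than a $1$-cocycle $W_0\to A$, i.e.\ a class in $H^1(W_0;A)$; it is the restriction of conjugation by an element of $N_{\4T}(G)$ precisely when that class lies in the image of the connecting map $\delta\colon C_{\4T}(W_0)\to H^1(W_0;T)$ coming from the Lang sequence $1\to T\to\4T\to\4T\to1$, equivalently when it dies in $H^1(W_0;\4T)$. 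Your argument jumps from ``identity on $A$ and on $W_0$'' to ``conjugation by an element of $N_{\4T}(G)$'' with no justification, and this is exactly where the fusion-preserving hypothesis must be used in an essential way: in the paper one shows that the cocycle values $u_{\5\alpha}$ at the fundamental reflections lie in $A_{\5\alpha}$ — for $p=2$ via the focal subgroups $\foc(C_\calf(C_A(w_{\5\alpha})))$ and Lemma \ref{Kr,Hr2}(c) — and then constructs an explicit $t\in\4T$ whose coboundary is the cocycle. Your plan uses fusion-preservation only to normalize $\Aut_G(A)$, which is far too weak: $\Out\dg(S,A)\cong H^1(S/A;A)$ is in general much bigger than the image of $\Outdiag(G)$, so some such argument is unavoidable.

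The kernel computation has a parallel gap: every diagonal automorphism $c_t$ with $t\in N_{\4T}(G)$ acts trivially on $T$ (and on $A$) simply because $\4T$ is abelian, so your implication ``a diagonal automorphism of $p$-power order acting trivially on $A$ must act trivially on $T$, hence is inner'' is vacuous and proves nothing. Injectivity of $O_p(\Outdiag(G))\to\Out\dg(S,\calf)$ is a genuine statement; the paper proves it (claim (2) in its proof) using $C_{\4T}(W_0)=Z(\4G)$ from Lemma \ref{CG(T)-2}, and obtains surjectivity onto $\Out\dg(S,\calf)$ not by a direct lifting but from the order identity $|O_p(\Outdiag(G))|=|\Im(\delta)_{(p)}|=|C_A(W_0)|$ together with the containment $\chi(\Out\dg(S,\calf))\le\Ker\bigl[H^1(W_0;T)\to H^1(W_0;\4T)\bigr]=\Im(\delta)$. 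Note also that the asserted equality $C_A(W_0)=O_p(Z(G))$ is, in case \ref{easy_case}, part of this same exact-sequence computation (in particular the $\SU_{2n-1}(q)$ subcase goes through $C_T(W_0)=\Ker(\Psi_*)$), not a formal consequence of Lemma \ref{CG(T)-2} alone as your sketch suggests; your handling of cases \ref{minus_case} and \ref{messy_case} via $C_A(O_{p'}(W_0))=1$ is, by contrast, essentially the paper's.
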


\begin{proof} 
In general, whenever $H$ is a group and $B\nsg{}H$ is a normal abelian 
subgroup, we let $\Aut\dg(H,B)$ be the group of all $\varphi\in\Aut(H)$ 
such that $\varphi|_B=\Id_B$ and $[\varphi,H]\le{}B$, and let 
$\Out\dg(H,B)$ be the image of $\Aut\dg(H,B)$ in $\Out(H)$.  There is a 
natural isomorphism 
$\Aut\dg(H,B)/\Aut_B(H)\RIGHT2{\eta_{H,B}}{\cong}H^1(H/B;B)$ (cf. 
\cite[2.8.7]{Sz1}), and hence $H^1(H/B;B)$ surjects onto $\Out\dg(H,B)$. If 
$B$ is centric in $H$ (if $C_H(B)=B$), then $\Out\dg(H,B)\cong 
H^1(H/B;B)$ since $\Aut_B(H)=\Inn(H)\cap\Aut\dg(H,B)$. 

In particular, $\Out\dg(S,A)$ is a $p$-group since $H^1(S/A;A)$ is a 
$p$-group. Also, $C_S(A)=A$ by Lemma \ref{NG(T)}(a) (or by assumption 
in case \ref{messy_case}), and hence we have 
$\Out\dg(S,A)\cong\Aut\dg(S,A)/\Aut_A(S)$. So $\Aut\dg(S,A)$ is a 
$p$-group, and its subgroup $\Aut\dg(S,\calf)$ is a $p$-group. It follows 
that
	\[ \Aut\dg(S,\calf)\cap\Aut_G(S) = \Aut\dg(S,\calf)\cap\Inn(S) = 
	\Aut_A(S)\,, \]
and thus $\Out\dg(S,\calf)\cong\Aut\dg(S,\calf)/\Aut_A(S)$. 

Since $\Outdiag(G)=\Out_{\4T}(G)$ by Proposition \ref{p:Autdiag(G)}(c), 
$\4\kappa_G(\Outdiag(G))\le\Out\dg(S,\calf)$, and in particular, 
$\4\kappa_G$ sends all torsion prime to $p$ in $\Outdiag(G)$ to the 
identity.  It remains to show that it sends $O_p(\Outdiag(G))$ 
isomorphically to $\Out\dg(S,\calf)$.  

Consider the following commutative diagram of automorphism groups and 
cohomology groups:
	\beqq \vcenter{\xymatrix@C=40pt{
	\Out\dg(S,\calf) \cong
	\Aut\dg(S,\calf)/\Aut_A(S) \ \ar[r]^-{\chi} \ar@<14mm>[d]^{\incl} 
	& H^1(\Aut_{G}(A);A) \ar[d]^{\rho_2} \\
	\Out\dg(S,A) \cong
	\Aut\dg(S,A)/\Aut_A(S) \ar[r]^-{\eta_{S,A}}_-{\cong} & 
	H^1(\Aut_S(A);A) \rlap{\,.}
	}} \label{e:4.12a0} \eeqq
Here, $\rho_2$ is induced by restriction, and is injective by 
\cite[Theorem XII.10.1]{CE} and since $\Aut_S(A)\in\sylp{\Aut_G(A)}$ (since 
$A\nsg S\in\sylp{G}$). 
For each $\omega\in\Aut\dg(S,\calf)$, since $\omega$ is fusion preserving, 
$\eta_{S,A}([\omega])\in{}H^1(\Aut_S(A);A)$ is stable with respect to 
$\Aut_G(A)$-fusion, 
and hence by \cite[Theorem XII.10.1]{CE} is the restriction of a unique 
element $\chi([\omega])\in{}H^1(\Aut_{G}(A);A)$.

The rest of the proof splits into two parts, depending on which of cases 
\ref{easy_case}, \ref{minus_case}, or \ref{messy_case} in Hypotheses 
\ref{G-hypoth-X} holds. Recall that $\autf(A)=\Aut_G(A)=\Aut_{W_0}(A)$: the 
second equality by Lemma \ref{NG(T)}(b) in cases \ref{easy_case} or 
\ref{minus_case}, or by assumption in case \ref{messy_case}. 

\smallskip

\noindent\textbf{Cases \ref{minus_case} and \ref{messy_case}: } We show 
that in these cases, $\Outdiag(G)$, $\Out\dg(S,\calf)$, $Z(G)$, and 
$C_A(W_0)$ all have order prime to $p$. Recall that 
$p$ is odd in both cases. By hypothesis in case \ref{messy_case}, and since 
$\gamma|_{\4T}\in{}O_{p'}(W_0)$ inverts $\4T$ in case \ref{minus_case}, 
$C_A(O_{p'}(W_0))=1$.  In particular, $C_A(W_0)=1$. Since $Z(G)\le Z(\4G)$ 
by Proposition \ref{p:Autdiag(G)}(a), and $Z(\4G)\le\4T$ by 
Lemma \ref{theta-r}\eqref{CG(T)=T}, $Z(G)\le G\cap{}C_{\4T}(W)\le C_T(W_0)$, so 
$O_p(Z(G))\le C_A(W_0)=1$. This proves the last statement.

Now, $O_p(\Outdiag(G))=1$ since $\Outdiag(G)\cong Z(G)$ (see \cite[Theorem 
2.5.12(c)]{GLS3}) and $O_p(Z(G))=1$. 
Also, 
	\[ H^1(\Aut_{G}(A);A) = H^1(\Aut_{W_0}(A);A)\cong 
	H^1(\Aut_{W_0}(A)/\Aut_{O_{p'}(W_0)}(A);C_A(O_{p'}(W_0)))=0 \]
since $A$ is a $p$-group and $C_A(O_{p'}(W_0))=1$. Hence 
$\Out\dg(S,\calf)=1$ by diagram \eqref{e:4.12a0}.

\smallskip

\noindent\textbf{Case \ref{easy_case}: } Since $C_W(A)=1$ by Lemma 
\ref{NG(T)}(a) (and since $\Aut_G(A)=\Aut_{W_0}(A)$), we can identify 
$H^1(\Aut_G(A);A)=H^1(W_0;A)$. 
Consider the following commutative diagram of automorphism groups and 
cohomology groups
	\beqq \vcenter{\xymatrix@C=40pt{
	O_p(\Outdiag(G)) \ar[r]^-{R} \ar[dd]_{\4\kappa_G} & 
	O_p(\Out\dg(N_G(T),T)) \ar[r]^-{\eta_{N(T),T}}_-{\cong} 
	\ar[d]_{\cong}^-{\sigma_1} & 
	H^1(W_0;T)\ploc \ar[d]_{\cong}^-{\sigma_2} \\
	& \Out\dg(N,A) \ar[r]^-{\eta_{N,A}}_-{\cong} \ar[d]^{\rho_1} 
	& H^1(W_0;A) \ar[d]^{\rho_2} \\
	\Out\dg(S,\calf) \ar[r]^-{\incl} \ar[ur]^{\chi_0} 
	\ar@/^3pc/[urr]^(0.3){\chi}
	& \Out\dg(S,A) \ar[r]^-{\eta_{S,A}}_-{\cong} & 
	H^1(S/A;A) 
	}} \label{e:4.12a} \eeqq
where $R$ is induced by restriction to $N_G(T)$.  By Lemma \ref{NG(T)}(a), 
$T$ is centric in $N_G(T)$ and $A$ is centric in $N$, so the three $\eta$'s 
are well defined and isomorphisms (i.e., 
$\Out\dg(N,A)=\Aut\dg(N,A)/\Aut_A(N)$, etc.). The maps $\sigma_i$ are 
induced by dividing out by $O_{p'}(T)$, and are isomorphisms since 
$A=O_p(T)$.  The maps $\rho_i$ are induced by restriction, and are 
injective since $S/A\in\sylp{W_0}$ (see \cite[Theorem XII.10.1]{CE}).  


Consider the short exact sequence
	\[ 1 \Right2{} T \Right4{} \4T \Right4{\Psi} \4T \Right2{} 1, \]
where $\Psi(t)=t^{-1}\cdot\gamma\psi_q(t)=t^{-1}\gamma(t^q)$ for $t\in\4T$.  
Let 
	\beqq 1 \Right2{} C_T(W_0) \Right3{} C_{\4T}(W_0) 
	\Right3{\Psi_*} C_{\4T}(W_0) \Right3{\delta} H^1(W_0;T) 
	\Right3{\theta} H^1(W_0;\4T) \label{e:4.12b} \eeqq
be the induced cohomology exact sequence for the $W_0$-action, and recall 
that $H^1(W_0;A)\cong H^1(W_0;T)_{(p)}$ by \eqref{e:4.12a}. We claim that 
\begin{enumerate}[label*=(\arabic*) ,leftmargin=10mm,itemsep=6pt]
\setcounter{enumi}{\theequation}
\item\label{e:4.12i} $|O_p(\Outdiag(G))|=|\Im(\delta)_{(p)}|=|O_p(Z(G))|
=|C_A(W_0)|$; 
\item\label{e:4.12ii} $R$ is injective; and 
\item\label{e:4.12iii} $\chi(\Out\dg(S,\calf))\le\Ker(\theta)$. 
\end{enumerate}\addtocounter{equation}{3}
These three points will be shown below.  It then follows from the 
commutativity of diagram \eqref{e:4.12a} (and since 
$\Im(\delta)=\Ker(\theta)$) that $\4\kappa_G$ sends 
$O_p(\Outdiag(G))$ isomorphically onto $\Out\dg(S,\calf)$.

\smallskip

\noindent\textbf{Proof of \ref{e:4.12i} and \ref{e:4.12ii}: } Assume first 
that $\gamma\ne\Id$ and $\gg=\SL_{2n-1}$ (some $n\ge1$).  Thus 
$G\cong{}\SU_{2n-1}(q)$.  By \cite[3.4]{Steinberg-aut}, $\Outdiag(G)$ 
and $Z(G)$ are cyclic of order $(q+1,2n-1)$, and hence have no 
$p$-torsion (recall $p|(q-1)$).  By Lemma \ref{CG(T)-2}(b), 
$C_{\4T}(W_0)\cong\fqobar^\times$, and $\sigma(u)=u^{-q}$ for 
$u\in{}C_{\4T}(W_0)$. Thus $\Psi_*(u)=u^{-1}\sigma(u)=u^{-1-q}$ for 
$u\in{}C_{\4T}(W_0)$, so $\Psi_*$ is onto, and $\Im(\delta)=1\cong 
O_p(\Outdiag(G))$ in this case. Also, $C_T(W_0)=\Ker(\Psi_*)$ has 
order $q+1$, so $C_A(W_0)=O_p(C_T(W_0))=1$. 

Now assume $\gamma=\Id$ or $\gg\ne{}\SL_{2n-1}$.  
By Lemma \ref{CG(T)-2}, in all such cases,
	\beqq C_{\4T}(W_0) = C_{\4T}(W) = Z(\4G)
	\qquad\textup{and}\qquad
	C_T(W_0)=Z(G)~. \label{e:4.12c} \eeqq
In particular, these groups are all finite, and hence 
$|\Im(\delta)|=|Z(G)|$ by the exactness of \eqref{e:4.12b}.  By 
\cite[Theorem 2.5.12(c)]{GLS3}, $\Outdiag(G)\cong{}Z(G)$ in all cases, and 
hence $|\Outdiag(G)|=|\Im(\delta)|$.  

If $[\varphi]\in\Ker(R)$, then we can assume that it is the class of 
$\varphi\in\Aut_{\4T}(G)$. Thus $\varphi=c_x$ for some $x\in N_{\4T}(G)$, 
and $\varphi|_{N_G(T)}=c_y$ for some $y\in N_G(T)$ which centralizes $A$. 
Then $y\in{}C_G(A)=T$ by Lemma \ref{NG(T)}(a), and upon replacing $\varphi$ 
by $c_y^{-1}\circ\varphi$ and $x$ by $y^{-1}x$ (without changing the class 
$[\varphi]$), we can arrange that $\varphi|_{N_G(T)}=\Id$. Then 
$x\in{}C_{\4T}(W_0)$ since it centralizes $N_G(T)$ (and since 
$N_G(T)/T\cong{}W_0$ by Lemma \ref{NG(T)}(b)), so $x\in{}Z(\4G)$ by 
\eqref{e:4.12c}, and hence $\varphi=\Id_G$.  Thus $R$ is injective.

\smallskip

\newcommand{\muu}{\underbar{c}}

\noindent\textbf{Proof of \ref{e:4.12iii}: }  
Fix $\varphi\in\Aut\dg(S,\calf)$. Choose $\4\varphi\in\Aut\dg(N,A)$ 
such that $\4\varphi|_S=\varphi$ (i.e., 
such that $[\4\varphi]=\chi_0([\varphi])$ in diagram \eqref{e:4.12a}).
Recall that $W_0\cong N/A$ by Lemma \ref{NG(T)}(b). Let 
$\muu\:W_0\cong{}N/A\Right2{}A$ be such that $\4\varphi(g)=\muu(gA){\cdot}g$ 
for each $g\in{}N$; thus $\eta_{N,A}([\varphi])=[\muu]$.  We must show that 
$\theta([\muu])=1$:  that this is a consequence of $\varphi$ being fusion 
preserving.

For each $\5\alpha\in\5\Pi$, set $u_{\5\alpha}=\muu(w_{\5\alpha})$.  
Thus for $g\in{}N$, $\4\varphi(g)=u_{\5\alpha}g$ if $g\in{}w_{\5\alpha}$ 
(as a coset of $A$ 
in $N$).  Since $w_{\5\alpha}^2=1$, $g^2=\4\varphi(g^2)=(u_{\5\alpha}g)^2$, 
and hence  $w_{\5\alpha}(u_{\5\alpha})=u_{\5\alpha}^{-1}$. We claim that 
$u_{\5\alpha}\in{}A_{\5\alpha}=A\cap{}\4K_{\5\alpha}$ for each 
$\5\alpha\in\5\Pi$.  
\begin{itemize} 

\item If $p$ is odd, then $u_{\5\alpha}\in A_{\5\alpha}$, since 
$A_{\5\alpha}=\{a\in{}A\,|\,w_{\5\alpha}(a)=a^{-1}\}$ by Lemma 
\ref{theta-r}\eqref{wa(hb)}.  

\item If $p=2$, $w_{\5\alpha}\in{}S/A$, and $|\5\alpha|\le2$, choose 
$g_{\5\alpha}\in{}S\cap\4K_{\5\alpha}$ such that 
$w_{\5\alpha}=g_{\5\alpha}A$. (For example, if we set 
$g=\prod_{\alpha\in\5\alpha}n_\alpha(1)$ (see Notation 
\ref{G-setup}\eqref{not2}), then $g\in N_G(T)$ represents the class 
$w_{\5\alpha}\in W_0$, and is $T$-conjugate to an element of 
$S\cap\4K_{\5\alpha}$.) By Lemma \ref{Kr,Hr2}(c), 
$C_G(C_A(w_{\5\alpha}))=G\cap\4T\4K_{\5\alpha}$, where 
$G\cap\4K_{\5\alpha}\cong\SL_2(q)$ or $\SL_2(q^2)$ by Lemma \ref{Kr,Hr}. 
Hence 
	\[ \qquad \foc(C_\calf(C_A(w_{\5\alpha}))) 
	= \foc(C_G(C_A(w_{\5\alpha}))) 
	= S\cap[G\cap\4T\4K_{\5\alpha},G\cap\4T\4K_{\5\alpha}]
	= S\cap\4K_{\5\alpha} \]
(see the remarks before Lemma \ref{Kr,Hr2}), 
and $g_{\5\alpha}$ lies in this subgroup. Since $\varphi$ is fusion 
preserving, 
$\varphi(g_{\5\alpha})\in\foc(C_\calf(C_A(w_{\5\alpha})))$.  
By Lemma \ref{Kr,Hr2}(c) again,
	\[ u_{\5\alpha} = \varphi(g_{\5\alpha}) \cdot g_{\5\alpha}^{-1} \in
	A\cap\foc(C_\calf(C_A(w_{\5\alpha}))) = A_{\5\alpha}\,. \]

\item If $p=2$, $w_{\5\alpha}\in{}S/A$, and 
$\5\alpha=\{\alpha,\tau(\alpha),\alpha^*\}$ where 
$\alpha^*=\alpha+\tau(\alpha)$, then $w_{\5\alpha}=w_{\alpha^*}$. Choose 
$g_{\5\alpha}\in{}S\cap{}\4K_{\alpha^*}$ such that 
$g_{\5\alpha}A=w_{\5\alpha}\in N/A$. (For example, there is such a $g_{\5\alpha}$ 
which is $T$-conjugate to $n_{\alpha^*}(1)$.)  By Lemma \ref{Kr,Hr2}(c), 
$C_G(C_A(w_{\5\alpha}))=G\cap\4T\4K_{\alpha^*}$, 
$G\cap\4K_{\alpha^*}\cong\SL_2(q)$, and hence 
$g_{\5\alpha}\in\foc(C_\calf(C_A(w_{\5\alpha})))$. So 
$\varphi(g_{\5\alpha})\in\foc(C_\calf(C_A(w_{\5\alpha})))$ since 
$\varphi|_S$ is fusion preserving.  By Lemma \ref{Kr,Hr2}(c),
	\[ u_{\5\alpha} = \varphi(g_{\5\alpha}) \cdot g_{\5\alpha}^{-1} \in
	A\cap \foc(C_\calf(C_A(w_{\5\alpha}))) 
	= A \cap \4K_{\alpha^*} \le A_{\5\alpha}\,. \]

\item If $p=2$ and $w_{\5\alpha}\notin{}S/A\in\syl2{W_0}$, then it is 
$W_0$-conjugate to some other reflection $w_{\5\beta}\in{}S/A$ (for 
$\5\beta\in\5\Sigma_+$), $\muu(w_{\5\beta})\in{}A_{\5\beta}$ by the 
above argument, and hence 
$u_{\5\alpha}=\muu(w_{\5\alpha})\in{}A_{\5\alpha}$.
\end{itemize}


Consider the homomorphism
	\[ \Phi = (\Phi_\alpha)_{\alpha\in\Pi} \: \4T \Right5{} 
	\prod_{\alpha\in\Pi} \4T_\alpha 
	\qquad\textup{where}\qquad
	\Phi_\alpha(t)=t^{-1}w_\alpha(t) \quad
	\textup{$\forall~ t\in\4T,~ \alpha\in\Pi$.} \]
Since $W=\gen{w_\alpha\,|\,\alpha\in\Pi}$, we have 
$\Ker(\Phi)=C_{\4T}(W)=Z(\4G)$ is finite (Proposition \ref{p:CG(T)}). Thus 
$\Phi$ is (isomorphic to) a homomorphism from $(\fqobar^\times)^r$ to 
itself with finite kernel (where $r=|\Pi|$), and any such homomorphism is 
surjective since $\fqobar^\times$ has no subgroups of finite index. 

Choose elements $v_\alpha\in\4T_\alpha$ for $\alpha\in\Pi$ as follows.
\begin{itemize} 
\item If $\5\alpha=\{\alpha\}$ where $\tau(\alpha)=\alpha$, we set
$v_\alpha=u_{\5\alpha}$. 

\item If $\5\alpha=\{\alpha,\tau(\alpha)\}$, where 
$\alpha\perp\tau(\alpha)$, then 
$\4T_{\5\alpha}=\4T_\alpha\times\4T_{\tau(\alpha)}$, and we let 
$v_\alpha,v_{\tau(\alpha)}$ be such that $v_\alpha 
v_{\tau(\alpha)}=u_{\5\alpha}$. 

\item If $\5\alpha=\{\alpha,\tau(\alpha),\alpha^*\}$ where 
$\alpha^*=\alpha+\tau(\alpha)$, then 
$u_{\5\alpha}=h_\alpha(\lambda)h_{\tau(\alpha)}(\lambda')$ for some 
$\lambda,\lambda'\in\fqobar^\times$,
	\[ w_{\5\alpha}(h_\alpha(\lambda)h_{\tau(\alpha)}(\lambda')) = 
	h_\alpha(\lambda'{}^{-1})h_{\tau(\alpha)}(\lambda^{-1}) \]
by Lemma \ref{theta-r}\eqref{wa(hb)}, and $\lambda=\lambda'$ since 
$w_{\5\alpha}(u_{\5\alpha})=u_{\5\alpha}^{-1}$. Set 
$v_\alpha=h_\alpha(\lambda)$  and $v_{\tau(\alpha)}=1$. (This depends on 
the choice of $\alpha\in\5\alpha\cap\Pi$.)

\end{itemize}
Let $t\in\4T$ be such that $\Phi(t)=(v_\alpha)_{\alpha\in\Pi}$. We claim 
that $t^{-1}w_{\5\alpha}(t)=u_{\5\alpha}$ for each $\5\alpha\in\5\Pi$. This 
is clear when $|\5\alpha|\le2$. If 
$\5\alpha=\{\alpha,\tau(\alpha),\alpha^*\}$ and $\lambda$ are as above, then 
	\begin{align*} 
	w_{\5\alpha}(t)= w_{\alpha^*}(t) &= w_{\tau(\alpha)} w_{\alpha} 
	w_{\tau(\alpha)}(t) = w_{\tau(\alpha)} (w_{\alpha}(t))
	= w_{\tau(\alpha)} \bigl( t \cdot h_{\alpha}(\lambda) \bigr) \\
	&= t \cdot w_{\tau(\alpha)} (h_{\alpha}(\lambda)) 
	= t \cdot h_{\alpha^*}(\lambda) = t \cdot u_{\5\alpha}.
	\end{align*}

Thus $\muu(w_{\5\alpha})=dt(w_{\5\alpha})$ for each $\5\alpha\in\5\Pi$. 
Since $W_0=\gen{w_{\5\alpha}\,|\,\5\alpha\in\5\Pi}$ (and since $\muu$ and 
$dt$ are both cocycles), this implies that $\muu=dt$, and hence that 
$[\muu]=0$ in $H^1(W_0;\4T)$. 
\end{proof}

As one consequence of Lemma \ref{Aut-diag}, the $Z^*$-theorem holds for 
these groups. This is known to hold for all finite groups (see 
\cite[\S\,7.8]{GLS3}), but its proof for odd $p$ depends on the 
classification of finite simple groups, which we prefer not to assume here.

\begin{Cor} \label{c:Z*}
Assume that $G\in\Lie(q_0)$, $p\ne{}q_0$, and $S\in\sylp{G}$ satisfy Hypotheses 
\ref{G-hypoth-X}. Then $Z(\calf_S(G))=O_p(Z(G))$. 
\end{Cor}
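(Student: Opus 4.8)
**Proof proposal for Corollary \ref{c:Z*}.**

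The plan is to identify the center $Z(\calf_S(G))$ with $C_A(W_0)$ and then to invoke the last statement of Lemma \ref{Aut-diag}, namely $C_A(W_0)=O_p(Z(G))$. Recall that by definition, $Z(\calf)$ for a saturated fusion system $\calf$ over $S$ is the largest subgroup $Z\le Z(S)$ such that every morphism in $\calf$ extends to a morphism fixing $Z$ pointwise; equivalently (since $\calf=\calf_S(G)$ is realized by the finite group $G$), $Z(\calf_S(G))$ is the set of $z\in Z(S)$ such that $z$ is central in $C_G(Q)$ for every fully centralized $Q\le S$, or more simply the set of $z\in Z(S)$ whose $G$-conjugacy class inside $S$ is $\{z\}$. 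First I would observe that $O_p(Z(G))$ is visibly contained in $Z(\calf_S(G))$: a central element of $G$ of $p$-power order lies in $Z(S)$ and is fixed by every $G$-conjugation, hence by every morphism of $\calf_S(G)$.

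For the reverse inclusion, first I note that under Hypotheses \ref{G-hypoth-X} we always have $Z(G)\le\4T$ (by Proposition \ref{p:Autdiag(G)}(a) and Lemma \ref{theta-r}\eqref{CG(T)=T}), so $O_p(Z(G))\le A=O_p(T)$. Now fix $z\in Z(\calf_S(G))$; I must show $z\in O_p(Z(G))$. Since $z\in Z(S)$ and $A\le S$, and since (by Lemma \ref{NG(T)}(a) in cases \ref{easy_case}, \ref{minus_case}, or by the hypothesis $C_S(\Omega_1(A))=A$ resp.\ $C_S(A)=A$ in case \ref{messy_case}) the subgroup $A$ is self-centralizing in $S$, we get $z\in C_S(A)=A$ (using $C_S(\Omega_1(A))=A\Rightarrow C_S(A)=A$ when $p$ is odd). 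So $z\in A\cap Z(S)$. Because $z\in Z(\calf_S(G))$, $z$ is fixed by $\autf(A)=\Aut_G(A)=\Aut_{W_0}(A)$ (the last equality by Lemma \ref{NG(T)}(b), or by hypothesis in case \ref{messy_case}); that is, $z\in C_A(W_0)$. By the final assertion of Lemma \ref{Aut-diag}, $C_A(W_0)=O_p(Z(G))$, so $z\in O_p(Z(G))$, as required. Combining the two inclusions gives $Z(\calf_S(G))=O_p(Z(G))$.

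The one point needing care — and the likely main obstacle to a clean writeup — is making precise the claim that every $z\in Z(\calf_S(G))$ is fixed by all of $\Aut_G(A)$, rather than merely by $\Aut_S(A)$. This is exactly the extension/stability property built into the definition of $Z(\calf)$: a morphism $\varphi\in\Aut_G(A)$ can be expressed via a sequence of $\calf$-conjugations between subgroups containing $z$ in their center (using that $A$ is fully normalized and that $z\in Z(S)\cap A$), each of which must fix $z$ since $z\in Z(\calf)$; hence $\varphi$ fixes $z$. Alternatively, and perhaps more transparently, I would argue directly at the group level: $z\in Z(\calf_S(G))$ forces the $G$-conjugacy class of $z$ to meet $S$ only in $\{z\}$; since $N_G(A)$ contains a Sylow $p$-subgroup of $C_G(z)$-adjacent structure — more simply, since any $\Aut_G(A)$-conjugate $\varphi(z)$ of $z$ lies in $A\le S$ and is $G$-conjugate to $z$, it must equal $z$. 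Thus $z\in C_A(\Aut_G(A))=C_A(W_0)$. I would present this second, group-theoretic version to avoid having to quote the abstract extension axiom, since all the relevant objects here are genuinely realized by $G$.
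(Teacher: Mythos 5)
Your proposal is correct and follows essentially the same route as the paper: both prove $Z(\calf_S(G))\le C_A(W_0)$ using $C_S(A)=A$ (from Lemma \ref{NG(T)}(a) or the hypothesis $C_S(\Omega_1(A))=A$ in case \ref{messy_case}) together with $\Aut_G(A)=\Aut_{W_0}(A)$, and then quote the identity $C_A(W_0)=O_p(Z(G))$ from Lemma \ref{Aut-diag}, the reverse inclusion being clear. Your extra care about why elements of $Z(\calf_S(G))$ are fixed by all of $\Aut_G(A)$ is fine but not needed, since this is immediate from the definition of the center of a fusion system.
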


\begin{proof} By Lemma \ref{Aut-diag}, $O_p(Z(G))=C_A(W_0)$, where 
$C_S(A)=A$ and $\Aut_G(A)=\Aut_{W_0}(A)$ by Lemma \ref{NG(T)}(a,b) or by 
hypothesis (in Case \ref{G-hypoth-X}\ref{messy_case}). Hence 
$Z(\calf_S(G))\le O_p(Z(G))$, while the other inclusion is clear. 
\end{proof}

We now need the following additional hypotheses, in order to be able to 
compare $\Aut\scal(A)$ with the group of field automorphisms of $G$. 

\begin{Hyp} \label{G-hypoth-X2}
Fix a prime $p$ and a prime power $q$. Assume that $q=q_0^b$ where $q_0$ 
is prime, $b\ge1$, $q_0\ne p$, and 
\begin{enumr}
\item $q_0\equiv\pm3$ (mod $8$) if $p=2$;
\item the class of $q_0$ generates $(\Z/p^2)^\times$ if $p$ is odd; and 
\item $b|(p-1)p^\ell$ for some $\ell\ge0$. 
\end{enumr} 
We will also say that ``$G$ satisfies Hypotheses \ref{G-hypoth-X2}'' (for a 
given prime $p$) if $G\cong{}^t\gg(q)$ for some $t$ and $\gg$, and some $q$ 
which satisfies the above conditions.
\end{Hyp}

By Hypothesis \ref{G-hypoth-X}\eqref{not4x}, $\psi_{q_0}(G)=G$, and thus 
all field endomorphisms of $\4G$ normalize $G$. When $G$ has a 
\emph{standard} $\sigma$-setup, $\Phi_G$ was defined to be the group of 
restrictions of such endomorphisms $\psi_{q_0^a}\in\Phi_{\4G}$ for $a\ge0$. Under 
our Hypotheses \ref{G-hypoth-X}, this applies only when we are in case 
\ref{easy_case} (although Proposition \ref{nonstandard} describes the 
relation between $\Phi_G$ and $\psi_{q_0}$ in the other cases). In what 
follows, it will be useful to set 
	\[ \5\Phi_G=\gen{\psi_{q_0}|_G}\le\Aut(G). \]
By Proposition \ref{nonstandard}(d), 
$\Inndiag(G)\5\Phi_G=\Inndiag(G)\Phi_G$, although $\5\Phi_G$ can be 
strictly larger than $\Phi_G$ ($\5\Phi_G\cap\Inndiag(G)$ need not be 
trivial). Note that since each element of this group acts on $\4T$ via 
$(t\mapsto t^r)$ for some $r$, $\5\Phi_G$ normalizes $T$ and each of its 
subgroups. 

Recall that $\tau\in\Aut(V)$ is the automorphism induced by $\sigma$, and 
also denotes the induced permutation of $\Sigma$.

\begin{Lem} \label{Phi->scal}
Assume Hypotheses \ref{G-hypoth-X} and \ref{G-hypoth-X2} and Notation 
\ref{G-setup-X}. Let 
	\[ \chi_0\:\5\Phi_G\Right6{}\Aut(A,\calf) \]
be the homomorphism induced by restriction from $G$ to $A$. Set 
$m=|\tau|=\bigl|\gamma|_{\4T}\bigr|$. Then the following hold.
\begin{enuma} 

\item Either $T$ has exponent $q^m-1$; or $p$ is odd, $m=\ordp(q)$, $m$ is 
even, and $(q^{m/2}+1)\big|\expt(T)\big|(q^m-1)$. 

\item If $p$ is odd, then $\chi_0(\5\Phi_G)=\Aut\scal(A)$. If $p=2$, then 
$\chi_0(\5\Phi_G)$ has index $2$ in $\Aut\scal(A)$, and 
$\Aut\scal(A)=\Im(\chi_0)\gen{\psi_{-1}^A}$. 

\item If $p=2$, then $\chi_0$ is injective.  If $p$ is odd, then 
	\[ \Ker(\chi_0) = \begin{cases} 
	\gen{\psi_q|_G} = \gen{\gamma|_G} & \textup{in case \ref{easy_case}} 
	\\
	\gen{(\psi_q|_G)^m} = \gen{\gamma^m|_G} = 
	\5\Phi_G\cap\Aut_{\4T}(G) & \textup{in cases \ref{minus_case} and 
	\ref{messy_case}.}
	\end{cases} \]


\end{enuma}
\end{Lem}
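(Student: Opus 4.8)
The plan is to analyze the homomorphism $\chi_0\:\5\Phi_G\to\Aut(A,\calf)$ by first understanding how $\psi_{q_0}$ acts on $\4T$ and hence on $A=O_p(T)$, then extracting the three claims from this action. First I would establish (a): since $\sigma=\psi_q\circ\gamma$ with $\gamma$ of order $m=|\tau|$ on $\4T$, the group $T=C_{\4T}(\sigma)$ is the fixed subgroup of $t\mapsto\gamma(t^q)$. Writing $\4T\cong(\fqobar^\times)^r$ and decomposing $\4T$ into $\gamma$-isotypic pieces (or, more precisely, using the splitting $\4T=\prod_{\5\alpha\in\5\Pi}\4T_{\5\alpha}$ of \eqref{e:Tsplit} and the description of $C_{\4T_{\5\alpha}}(\sigma)$ in the proof of Lemma \ref{Hyp1-N(W0)}), one sees that $\expt(T)$ divides $q^m-1$, and equals $q^m-1$ except in the situation where a $\gamma$-orbit of size $m$ contributes a factor whose exponent divides $q^{m/2}+1$ rather than $q^m-1$; this is exactly the twisted case with $m$ even where $-\Id$ enters, giving the stated dichotomy with $(q^{m/2}+1)\mid\expt(T)\mid(q^m-1)$. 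Here I would use Hypotheses \ref{G-hypoth-X} to rule out Suzuki/Ree groups and to pin down $m=\ordp(q)$ in case \ref{messy_case}.

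Next, for (b): the element $\psi_{q_0}$ acts on $A$ as multiplication by $q_0$ on each cyclic factor, so $\chi_0(\psi_{q_0}^j)=\psi_{q_0^j}^A$, and $\chi_0(\5\Phi_G)=\langle\psi_{q_0}^A\rangle\le\Aut\scal(A)$. Since $\Aut\scal(A)\cong(\Z/\expt(A))^\times$ where $\expt(A)=p^{v_p(\expt(T))}$, I need to know that $q_0$ generates this group, or index $2$ in it. For $p$ odd, Hypothesis \ref{G-hypoth-X2}(ii) says $q_0$ generates $(\Z/p^2)^\times$, hence generates $(\Z/p^k)^\times$ for all $k$, so $\chi_0(\5\Phi_G)=\Aut\scal(A)$ (using part (a) to know $\expt(A)$ is a $p$-power of the expected form; the exceptional case $q^{m/2}+1$ only affects $\expt(T)$ by a bounded $p$-part and doesn't change that $q_0$ generates the relevant cyclic $p$-group of units). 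For $p=2$, Hypothesis \ref{G-hypoth-X2}(i) says $q_0\equiv\pm3$ (mod $8$), so $q_0$ generates the index-$2$ subgroup of $(\Z/2^k)^\times$ for $k\ge3$ (namely $\langle\pm3\rangle=\langle q_0\rangle$ has index $2$, complemented by $\langle-1\rangle$ when $q_0\equiv3$, or one checks directly when $q_0\equiv5$); hence $\chi_0(\5\Phi_G)$ has index $2$ in $\Aut\scal(A)$ and $\psi_{-1}^A$ together with it generates. I should double-check here that $\psi_{-1}^A$ really does lie in $\Aut\scal(A)$ (it does, trivially) and that $\expt(A)\ge8$ so that index is genuinely $2$ and not $1$; when $\expt(A)\le4$ the Sylow $2$-subgroups would typically be abelian, contradicting our standing assumption, but I would note this reduction explicitly.

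For (c): I must compute $\Ker(\chi_0)=\{\psi_{q_0}^j|_G : \psi_{q_0}^j \text{ acts trivially on } A\}$. The automorphism $\psi_{q_0}^j$ acts on $A$ as multiplication by $q_0^j$, so it lies in $\Ker(\chi_0)$ iff $q_0^j\equiv1\pmod{\expt(A)}$. Combined with (b) and the identification of $\expt(A)$ from (a), this says $\psi_{q_0}^j|_A=\Id$ iff $\psi_{q_0^j}$ induces a scalar that is trivial, i.e. iff $q^{j'}\equiv\pm1$ appropriately — but I should instead work directly: $\psi_{q_0}^j$ acts trivially on $T$ iff $t^{q_0^j}=t$ for all $t\in T$ iff $\expt(T)\mid q_0^j-1$. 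In case \ref{easy_case}, $q\equiv1\pmod p$ and $\expt(T)=q-1$ (or the relevant factor), and one checks $\psi_q|_G$ acts trivially on $A$; moreover $\psi_q=\gamma^{-1}\sigma$ acts on $\4T$ the same way $\gamma^{-1}$ does since $\sigma$ is... no — rather, on $A\le T=C_{\4T}(\sigma)$ we have $\sigma|_T=\Id$, so $\psi_q|_A=(\gamma|_A)^{-1}$, and since $\gamma$ has order $m$ on $\4T$ but $m\le2$ in case \ref{easy_case} with $\gamma\in\Gamma_{\4G}$, and $\psi_q$ acting trivially on $A$ forces $\gamma|_A=\Id$... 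I would carefully chase the identity $\sigma|_A=\Id$ to get $\psi_{q^{1/?}}$-type relations. The cleanest route: on $A$, $\sigma=\Id$ gives $\psi_q|_A=(\gamma|_A)^{-1}$, hence $\langle\psi_q|_G\rangle$ and $\langle\gamma|_G\rangle$ have the same image in $\Aut(A)$, and $\Ker(\chi_0)\cap\5\Phi_G$ is generated by $\psi_q|_G$ in case \ref{easy_case} (order dividing $b$, consistent with $\gamma|_G$ since $\psi_q|_A=(\gamma|_A)^{-1}$ and both are forced to be trivial together). In cases \ref{minus_case} and \ref{messy_case} one gets the smaller kernel $\langle(\psi_q|_G)^m\rangle=\langle\gamma^m|_G\rangle=\5\Phi_G\cap\Aut_{\4T}(G)$ because $\gamma$ now has order $m>1$ and $(\psi_q)^m$ differs from $\sigma^m$ by $\gamma^{-m}$ which is inner-diagonal. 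For $p=2$, injectivity of $\chi_0$ follows because $\psi_{q_0}$ has order equal to $v_2(\expt(A))\cdot(\text{something})$... more carefully, $\psi_{q_0}$ has infinite order on $\4G$ but finite order $b\cdot(\text{order of }q_0\text{ in }(\Z/\expt(A))^\times)$-type on $G$; the point is that when $p=2$, since $q_0$ generates the index-$2$ subgroup which has order $\expt(A)/4$, and one verifies $\psi_{q_0}^j|_G=\Id$ forces $j=0$ given the structure — I expect this to follow from $\Phi_G$ (or $\5\Phi_G$) acting faithfully on $G$ for these groups, which can be extracted from Theorem \ref{St-aut} together with the $p=2$ exponent bound.

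The main obstacle I anticipate is part (c): precisely identifying $\Ker(\chi_0)$ as an abstract subgroup of $\5\Phi_G$ rather than just its image in $\Aut(A)$, and in particular verifying the equalities $\langle(\psi_q|_G)^m\rangle=\langle\gamma^m|_G\rangle=\5\Phi_G\cap\Aut_{\4T}(G)$ in the twisted/minus/messy cases. This requires carefully tracking the relation $\psi_q\circ\gamma=\gamma\circ\psi_q=\sigma$ and the fact that $\sigma$ restricts to the identity on $T$ (hence on $A$), while also knowing which powers of $\psi_{q_0}$ land in $\Aut_{\4T}(G)$ (i.e. induce diagonal automorphisms) — this last uses Proposition \ref{nonstandard}(c,d) and the structure of $\Phi_G$ versus $\5\Phi_G$. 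I would also need to handle the bookkeeping of the exceptional exponent $q^{m/2}+1$ from part (a) when it occurs, checking it does not disturb the kernel computation (it won't, since that case is governed by $-\Id\in W$ and $m$ even, which is precisely excluded from the first branch of the displayed formula for $\Aut_{W_0}(A)\cap\Aut\scal(A)$ in Hypothesis \ref{G-hypoth-X}\ref{messy_case}).
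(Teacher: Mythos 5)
Your outline does track the paper's at a high level (the upper bound $\expt(T)\mid q^m-1$ from $\psi_q|_T=\gamma^{-1}|_T$, Hypotheses \ref{G-hypoth-X2} to see that $q_0$ generates, or has index $2$ in, $(\Z/p^k)^\times$, and a kernel tied to $\psi_q$ and $\gamma$), and your part (b) is essentially correct. But there are two genuine gaps. In (a), the nontrivial content is the \emph{lower} bound on $\expt(T)$, and the tool you propose --- the splitting $T=\prod_{\5\alpha\in\5\Pi}C_{\4T_{\5\alpha}}(\sigma)$ from the proof of Lemma \ref{Hyp1-N(W0)} --- exists only in cases \ref{easy_case} and \ref{minus_case}; in case \ref{messy_case} the classes $\5\Pi$ are not even defined and $\gamma$ is an essentially arbitrary finite-order automorphism of $\4T$ with $|\tau|=\ordp(q)$, so ``decomposing $\4T$ into $\gamma$-isotypic pieces'' is unjustified. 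The hypothesis that does the work is the free $\gen{\tau}$-orbit condition in Hypotheses \ref{G-hypoth-X}\eqref{not4x}: from a free orbit $\{\alpha_1,\ldots,\alpha_s\}$ (resp.\ $\{\pm\alpha_1,\ldots,\pm\alpha_s\}$) of linearly independent roots one writes down explicit $\sigma$-fixed elements $t(\lambda)=\prod_i h_{\tau^i(\alpha_1)}(\lambda^{q^i})$, giving a cyclic subgroup of $T$ of order $q^m-1$ (resp.\ $q^{m/2}+1$). You never invoke this hypothesis, so the dichotomy in (a) is not established; also the second branch is not ``the twisted case'' --- it occurs for untwisted groups, e.g.\ case \ref{minus_case} and $\Sp_{2n}(q)$ with $\ordp(q)$ even.

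In (c) the decisive step is missing: your criterion (``$\psi_{q_0}^j$ trivial on $T$ iff $\expt(T)\mid q_0^j-1$'') concerns $T$ rather than $A$, and faithfulness of $\5\Phi_G$ on $G$ is beside the point (it is a subgroup of $\Aut(G)$; what is needed is faithfulness on $A$). What actually pins down the kernel is an exact valuation/order count: by (a), $\expt(A)=p^e$ with $e=v_p(q^m-1)=v_p(q_0^{bm}-1)$, and Lemma \ref{v(q^i-1)} together with Hypotheses \ref{G-hypoth-X2}(ii,iii) (so $bm_0=(p-1)p^\ell$ for $p$ odd with $m_0=\ordp(q)$, and $b$ a power of $2$ for $p=2$) gives $e=1+\ell$, resp.\ $e=2+v_2(bm)$. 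Hence for $p$ odd $|\Aut\scal(A)|=bm_0$, so since $\chi_0(\psi_{q_0}|_G)$ generates $\Aut\scal(A)$ by (b), $\Ker(\chi_0)=\gen{(\psi_{q_0}|_G)^{bm_0}}=\gen{(\psi_q|_G)^{m_0}}$; for $p=2$, $|\Im(\chi_0)|=2^{e-2}=bm$ while $(\psi_{q_0}|_G)^{bm}=(\psi_q|_G)^m=(\gamma^{-1}|_G)^m=\Id_G$ (as $m=|\gamma|$ in case \ref{easy_case}), forcing injectivity. The remaining identifications ($\gen{\psi_q|_G}=\gen{\gamma|_G}$ since $\psi_q|_G=(\gamma|_G)^{-1}$ on $G$; $(\psi_q|_G)^m=\gamma^{-m}|_G\in\Aut_{\4T}(G)$ by Lemma \ref{l:tau}; and $\5\Phi_G\cap\Aut_{\4T}(G)\le\Ker(\chi_0)$ because $\4T$ is abelian) then give the displayed kernels; an appeal to Proposition \ref{nonstandard}(c,d) alone, without this count, cannot yield the stated equalities.
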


\begin{proof} We first recall some of the assumptions in 
cases (III.1--3) of Hypotheses \ref{G-hypoth-X}:
	\beqq \renewcommand{\arraystretch}{1.5}
	\renewcommand{\arraycolsep}{3mm}
	\begin{array}{|l||l|l|}
	\hline
	\textup{case \ref{easy_case}} & \ordp(q)=1,~ m=|\gamma|, 
	\textup{ and } m\le2 
	&  \\\hline
	\textup{case \ref{minus_case}} & \ordp(q)=m=2 
	& \textup{$p$ is odd} \\\hline
	\textup{case \ref{messy_case}} & \ordp(q)=m 
	& \textup{$p$ is odd} \\\hline
	\end{array} \label{e:list_cases} \eeqq
(Recall that $\gamma$ is a graph automorphsm in case \ref{easy_case}, so 
$|\gamma|=|\tau|=m$.) In all of these cases, $p|(q^m-1)$ since $\ordp(q)|m$. 

\smallskip

\noindent\textbf{(a) } For each $t\in{}T=C_{\4T}(\psi_q\circ\gamma)$, 
$t^q=\psi_q(t)=\gamma^{-1}(t)$. Hence 
$t=\gamma^{-m}(t)=(\psi_q)^m(t)=t^{q^m}$, and $t^{q^m-1}=1$. Thus 
$\expt(T)\big|(q^m-1)$. 

By Hypotheses \ref{G-hypoth-X}\eqref{not4x}, there is a linearly 
independent subset $\Omega=\{\alpha_1,\ldots,\alpha_s\}\subseteq\Sigma$ 
such that either $\Omega$ or $\pm\Omega=\{\pm\alpha_1,\ldots,\pm\alpha_s\}$ 
is a free $\gen{\tau}$-orbit in $\Sigma$. Assume $\Omega$ is a free orbit 
(this always happens in case \ref{easy_case}). 
In particular, $m=|\tau|=s$. For each 
$1\ne\lambda\in\fqobar^{\times}$ such that $\lambda^{q^s-1}=1$, the element 
	\[ t(\lambda)= \prod_{i=0}^{m-1} h_{\tau^i(\alpha_1)}(\lambda^{q^i}) 
	\]
is fixed by $\sigma=\psi_q\circ\gamma$ (recall 
$\sigma(h_\beta(\lambda))=h_{\tau(\beta)}(\lambda^q)$ for each 
$\beta\in\Sigma$ by Lemma \ref{l:tau}). Hence $t(\lambda)\in{}T$, 
and $t(\lambda)\ne1$ when $\lambda\ne1$ by Lemma 
\ref{theta-r}(\ref{hb.hc},\ref{T=prod}). Thus $T$ contains the subgroup 
$\{t(\lambda)\,|\,\lambda^{q^m-1}=1\}$ of order $q^m-1$, this subgroup is 
cyclic (isomorphic to a subgroup of $\fqobar^{\times}$), and hence 
$\expt(T)=q^m-1$.

Assume now that $\pm\Omega$ is a free $\gen{\tau}$-orbit (thus 
$m=|\tau|=2s$). In particular, we are not in case \ref{easy_case}, so $p$ 
is odd and $m=\ordp(q)$. 
Then $\tau^i(\alpha_1)=-\alpha_1$ for some $0<i<2s$, and 
$i=s$ since $\tau^{2i}(\alpha_1)=\alpha_1$. For 
each $1\ne\lambda\in\fqobar^{\times}$ such that $\lambda^{q^s+1}=1$, 
	\[ t(\lambda)= \prod_{i=0}^{s-1} h_{\tau^i(\alpha_1)}(\lambda^{q^i}) 
	\]
is fixed by $\sigma=\psi_q\circ\gamma$ by Lemma \ref{l:tau} and since 
$h_{\tau^s(\alpha_1)}(\lambda^{q^s})=h_{-\alpha_1}(\lambda^{-1}) 
=h_{\alpha_1}(\lambda)$. Hence 
$t(\lambda)\in{}T$, and $t(\lambda)\ne1$ when $\lambda\ne1$ by Lemma 
\ref{theta-r} again. Thus  $\{t(\lambda)\,|\,\lambda^{q^e+1}=1\}\le T$ is 
cyclic of order $q^s+1$, and so $(q^s+1)\big|\expt(T)$.

\smallskip

\noindent\textbf{(b) } By definition, $\Im(\chi_0)=\chi_0(\5\Phi_G)$ is 
generated by $\chi_0(\psi_{q_0})=\psi_{q_0}|_A$, which acts on $A$ via 
$(a\mapsto a^{q_0})$. If $p$ is odd, then by Hypotheses 
\ref{G-hypoth-X2}(ii), the class of $q_0$ generates $(\Z/p^2)^{\!\times}$, 
and hence generates $(\Z/p^k)^{\!\times}$ for each $k>0$. So 
$\Im(\chi_0)=\Aut\scal(A)$ in this case.

If $p=2$, then $q_0\equiv\pm3$ (mod $8$) by Hypotheses 
\ref{G-hypoth-X2}(i). So for each $k\ge2$, $\gen{q_0}$ has index $2$ in 
$(\Z/2^k)^{\!\times}=\gen{q_0,-1}$. Hence $\Im(\chi_0)=\gen{\psi_{q_0}|_A}$ has 
index $2$ in $\Aut\scal(A)=\gen{\psi_{q_0}|_A,\psi_{-1}^A}$.

\smallskip

\noindent\textbf{(c) } Set $\phi_0=\psi_{q_0}|_G$, a generator of 
$\5\Phi_G$. Then $(\phi_0)^b=\psi_q|_G=(\gamma|_G)^{-1}$ since 
$G=C_{\4G}(\psi_q\circ\gamma)$, and so $\bigl|\phi_0|_T\bigr|$ divides 
$b|\gamma|_{\4T}|=bm$. Also, 
$(\phi_0)^{bm}=(\gamma|_G)^{-m}\in\Aut_{\4T}(G)$ by Lemma \ref{l:tau}.


By (a), either $\expt(T)=q^m-1$; or $m$ is even, $p$ is odd, $\ordp(q)=m$, 
and $(q^{m/2}+1)\big|\expt(T)\big|(q^{m}-1)$. In the latter case, 
$v_p(q^{m/2}+1)=v_p(q^m-1)>0$ since $p\nmid(q^{m/2}-1)$. Thus 
	\beqq \expt(A)=p^e \qquad\textup{where}\qquad
	e=v_p(q^m-1) = v_p(q_0{}^{bm}-1) >0\,. \label{e:e=v_p} \eeqq

If $p=2$, then we are in case \ref{easy_case}. In particular, 
$q=q_0^b\equiv1$ (mod $4$), and $m\le2$. Also, $b$ (and hence $bm$) is a 
power of $2$ by Hypotheses \ref{G-hypoth-X2}(iii). If $bm=1$, then 
$q=q_0\equiv5$ (mod $8$), so $e=v_2(q-1)=2$. If $bm$ is even, then 
$e=v_2(q_0^{bm}-1)=v_2(q_0^2-1)+v_2(bm/2)=3+v_2(bm/2)$ by Lemma 
\ref{v(q^i-1)}. Thus in all cases, $e=2+v_2(bm)$. So 
$\Im(\chi_0)\le\Aut\scal(A)\cong(\Z/2^e)^{\!\times}$ has order $2^{e-2}=bm$. 
Since $(\psi_{q_0}|_G)^{bm}=(\psi_q|_G)^m=(\gamma^{-1}|_G)^m=\Id_G$ (recall 
$m=|\gamma|$ in case \ref{easy_case}), $\chi_0$ is injective.

Now assume $p$ is odd, and set $m_0=\ordp(q)$. Then $b|(p-1)p^\ell$ for 
some $\ell\ge0$ by Hypotheses \ref{G-hypoth-X2}(iii), and $q=q_0^b$ where 
the class of $q_0$ generates $(\Z/p^k)^{\!\times}$ for each $k\ge1$. 
For $r\in\Z$, $q^r=q_0^{br}\equiv1$ (mod $p$) if and only if $(p-1)|br$. 
Hence $bm_0=b\cdot\ordp(q)=(p-1)p^\ell$ for some $\ell\ge0$. Since 
$v_p(q_0^{p-1}-1)=1$, and since $m=m_0$ or $2m_0$, Lemma \ref{v(q^i-1)} 
implies that
	\[ e=v_p(q^m-1)=v_p(q_0^{bm}-1)=v_p(q_0^{bm_0}-1)
	=1+v_p(p^\ell)=1+\ell\,. \]

Thus $\ell=e-1$, where $p^e=\expt(A)$ by \eqref{e:e=v_p}, so 
$|\Aut\scal(A)|=(p-1)p^{e-1}=bm_0$. Since $\chi_0$ sends the generator 
$\phi_0$ of $\5\Phi_G$ to the generator $\chi_0(\phi_0)$ of $\Aut\scal(A)$, 
this proves that 
$\Ker(\chi_0)=\gen{\psi_q^{m_0}|_G}=\gen{\gamma^{m_0}|_G}$. The 
descriptions in the different cases now follow immediately. Note that in 
cases \ref{minus_case} and \ref{messy_case} (where $m=m_0$), 
$\phi_0^{bm}=\gamma^{-m}|_G\in\Aut_{\4T}(G)$ by Lemma \ref{l:tau}. 
The converse is immediate: $\5\Phi_G\cap\Aut_{\4T}(G)\le\Ker(\chi_0)$. 
\end{proof}

Before applying these results to describe $\Out(S,\calf)$ and the 
homomorphism $\4\kappa_G$, we need to know in which cases the subgroup $A$ 
is characteristic in $S$. 

\begin{Prop} \label{p:AcharS}
Assume Hypotheses \ref{G-hypoth-X} and Notation \ref{G-setup-X}. 
\begin{enuma} 

\item If $p=2$, then $A$ is characteristic in $S$, and is the unique 
abelian subgroup of $S$ of order $|A|$, except when $q\equiv5$ (mod $8$) 
and $G\cong\Sp_{2n}(q)$ for some $n\ge1$.

\item If $p$ is odd, then $A$ is characteristic in $S$, and $\Omega_1(A)$ 
is the unique elementary abelian subgroup of $S$ of maximal rank, except 
when $p=3$, $q\equiv1$ (mod $3$), $v_3(q-1)=1$, and $G\cong\SU_3(q)$ or 
$G_2(q)$.
\end{enuma}
In all cases, each normal subgroup of $S$ isomorphic to $A$ is 
$N_G(S)$-conjugate to $A$.
\end{Prop}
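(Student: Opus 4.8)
\textbf{Plan for proving Proposition \ref{p:AcharS}.}

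The plan is to treat the two claimed properties of $A$ separately, always keeping in mind the three families of Hypotheses \ref{G-hypoth-X} (cases \ref{easy_case}, \ref{minus_case}, \ref{messy_case}), and then deduce the final statement about normal subgroups isomorphic to $A$ as a corollary. For the claim that $A$ is characteristic in $S$, the natural strategy is to exhibit $A$ intrinsically: either as the unique abelian subgroup of $S$ of order $|A|$ (when $p=2$), or as the subgroup generated by the unique elementary abelian subgroup $\Omega_1(A)$ of maximal rank together with its iterated centralizers / power structure (when $p$ is odd). So the work reduces to proving the ``uniqueness of the large abelian (resp.\ large elementary abelian) subgroup'' statements, since any characteristic-freeness of $A$ then follows because $\Aut(S)$ must permute the (unique) subgroup with that intrinsic description. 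First I would dispose of cases \ref{minus_case} and \ref{messy_case}: there $p$ is odd, and I expect that the action of $W_0$ on $A$ (faithful by Lemma \ref{NG(T)}(a), with $C_A(O_{p'}(W_0))=1$ in case \ref{messy_case} and $\gamma$ inverting $\4T$ in case \ref{minus_case}) forces $\Omega_1(A)$ to be self-centralizing in $S$ and of strictly larger rank than any other elementary abelian subgroup — one uses that a larger elementary abelian subgroup $E$ would give a nontrivial $p$-element of $S/A$ acting on $A$, and then a rank count against the $W_0$-module structure of $\Omega_1(A)$ rules this out except in the listed exceptions.

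The heart of the argument is case \ref{easy_case}, where $A=\prod_{\5\alpha\in\5\Pi}A_{\5\alpha}$ is a product of cyclic groups (equation \eqref{e:Tsplit} and \eqref{e:4.11a}), $S=A\rtimes S_0$ for $S_0\in\sylp{W_0}$, and one must understand abelian subgroups of $S$ that are not contained in $A$. The plan is: given an abelian subgroup $B\le S$ with $B\not\le A$, pick $1\ne w\in BA/A\le S_0$; then $B\le C_S(B)\le C_S(w)=C_A(w)\rtimes C_{S_0}(w)$ modulo the obvious adjustments, and I estimate $|C_A(w)|$ using the $W_0$-action on the lattice $\Z\Sigma^\vee$ (Lemmas \ref{l:CW(t)}, \ref{Kr,Hr2}(a,b)); since $|C_A(w)| < |A|$ whenever $w$ moves $A$ with more than a cyclic fixed-point defect, one gets $|B|<|A|$ unless $w$ is (a product of) reflections of the special type $w_{\5\alpha}$ with $[w_{\5\alpha},A]$ ``as small as possible'' — precisely the exceptional configurations $C_n$ (long root, $p=2$), $D_n$/$A_3$ and $A_{2n}$ (the $p=2$ twisted cases), and the $\SU_3$, $G_2$ cases when $p=3$ and $v_3(q-1)=1$. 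For each surviving potential exception I would either verify that $A$ really is not unique (matching the stated exclusions $\Sp_{2n}(q)$, $q\equiv5\bmod 8$; and $\SU_3(q)$, $G_2(q)$, $p=3$, $v_3(q-1)=1$) or show the putative larger abelian subgroup still forces $w=\Id$. When $p=2$ I also need the refinement that $A$ is literally the \emph{unique} abelian subgroup of its order and not merely one of maximal order, which follows from the same centralizer computation once one checks $|C_A(w)\rtimes C_{S_0}(w)| < |A|$ strictly; when $p$ is odd I pass to $\Omega_1$ throughout and argue with ranks rather than orders, which is cleaner because $\Omega_1(A_{\5\alpha})$ has rank one.

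Finally, for the last sentence: let $B\nsg S$ with $B\cong A$. In the non-exceptional cases $B=A$ outright by uniqueness, so there is nothing to prove. In each exceptional case I would argue directly that $N_G(S)$ acts transitively on the set of such $B$. Concretely, $N_G(S)/S$ acts on $S$, hence permutes the finitely many normal subgroups isomorphic to $A$; since $A\le{}B\cap(\text{something canonical})$ fails, one instead observes that all these $B$ lie in a single $S$-orbit under conjugation inside a slightly larger $p$-subgroup (e.g.\ in the $\Sp_{2n}(q)$, $q\equiv\pm3\bmod 8$ case, $S\cong(Q_8)^n\rtimes P$ and the $3^n$ abelian subgroups of order $2^{2n}$ are visibly permuted transitively by $N_G(S)$, as noted in the discussion following Theorem \ref{ThJ} in the introduction), and that $N_G(S)$ realizes enough of this permutation action. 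The main obstacle I anticipate is the bookkeeping in case \ref{easy_case}: carefully identifying, for every $\gg$ and every involution $w\in W_0$, exactly when $|C_A(w)|$ (or the rank of $C_{\Omega_1(A)}(w)$) is large enough to allow an abelian subgroup of $S$ rivaling $A$ — essentially a case analysis over the Dynkin diagrams parallel to, and drawing on, Lemma \ref{Kr,Hr2}. The Suzuki/Ree groups are excluded by Hypotheses \ref{G-hypoth-X}, so they do not enter here.
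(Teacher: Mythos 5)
There is a genuine gap in the central estimate of your case \ref{easy_case} argument. From an abelian subgroup $B\le S$ with $B\not\le A$ and a single element $1\ne w\in BA/A$, the inequality $|C_A(w)|<|A|$ does \emph{not} yield $|B|<|A|$: since $B\cap A\le C_A(B)$ and $B/(B\cap A)\cong BA/A$, what you actually get is $|B|\le |BA/A|\cdot|C_A(BA/A)|$, and the subgroup $D=BA/A\le W_0$ may be large, so losing the factor $|D|$ against the single centralizer $C_A(w)$ destroys the count. What must be ruled out is precisely the existence of a nontrivial subgroup $D\le W_0$ with $|D|\cdot|C_A(D)|\ge|A|$, i.e.\ a nontrivial (best) offender on $A$; this is how the paper proceeds for $p=2$, and it then invokes Timmesfeld's replacement theorem \cite[Theorem 2.5]{O-Ch} to reduce to \emph{quadratic} best offenders before running the lattice argument on $\Lambda=\Z\Sigma^\vee$ (Lemmas \ref{l:CW(t)} and \ref{l:Lambda}). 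Without such a device your proposed case analysis is over all abelian $2$-subgroups of $W_0$, not over the single reflections $w_{\5\alpha}$ treated in Lemma \ref{Kr,Hr2}, and that lemma's criterion (smallness of $[w_{\5\alpha},A]$) is also not the right invariant: the actual exception depends on $v_2(q-1)$. For instance $\Sp_{2n}(q)$ with $q\equiv1\pmod 8$ is \emph{not} an exception, and the paper's proof of this uses exactly the failure of quadratic action when $v_2(q-1)\ge3$ (the step $[b,[b,\Lambda/2^\ell\Lambda]]\ne1$), something your reflection-based bookkeeping would not detect; likewise the twisted cases ($\SU_n$, $\Spin_{2n}^-$) that your Lemma \ref{Kr,Hr2} criterion flags as potential exceptions are in fact not exceptions, which the paper proves by reducing their offender problem to the untwisted one.

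Two further points of comparison. For odd $p$ the paper does not reprove uniqueness of the maximal-rank elementary abelian subgroup at all: it cites \cite[10-2(1,2)]{GL}, lists the $p=3$ exceptions, and disposes of the surviving ones ($\ell>1$: $A$ is the unique abelian subgroup of index $p$; $\ell=1$: $S$ extraspecial of order $27$, reduce to $q=4$ via Theorem \ref{OldThA}(a) and use $\SU_3(2)\cong S\rtimes Q_8\le G$ to get the $N_G(S)$-conjugacy). Your plan instead proposes to establish the odd-$p$ uniqueness by a direct ``rank count against the $W_0$-module structure''; that is essentially redoing a nontrivial part of the Gorenstein--Lyons analysis across all types and is not supplied by the sketch. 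Finally, your treatment of the last sentence in the exceptional $\Sp_{2n}$ case is in the right spirit, but note that the paper proves the $N_G(S)$-transitivity there by an explicit construction (choosing $x_i\in N_{H_i}(Q_i)\cong\SL_2(3)$ constant on $R$-orbits so that $x=x_1\cdots x_n\in N_G(S)$ conjugates $A$ to the given normal abelian subgroup); the remark after Theorem \ref{ThJ} that you appeal to is a consequence of this proposition, not an available input.
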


\begin{proof} If $p$ is odd, then by \cite[10-2(1,2)]{GL}, there is a 
unique elementary $p$-subgroup $E\le S$ of rank equal to that of $A$ 
(denoted $r_{m_0}$ in \cite{GL}), except when $p=3$ and $G$ is isomorphic 
to one of the groups $\SL_3(q)$ ($q\equiv1$ (mod $3$)), $\SU_3(q)$ 
($q\equiv-1$ (mod $3$)), or $G_2(q)$, $\lie3D4(q)$, or $\lie2F4(q)$ 
($q\equiv\pm1$ (mod $3$)). When there is a unique such subgroup $E$, then 
$A=C_S(E)$ by Lemma \ref{NG(T)}(a) (or by assumption in case 
\ref{messy_case}), and hence $A$ is characteristic in $S$. 

Among the exceptions, $\SL_3(q)$ and $G_2(q)$ are the only ones which 
satisfy Hypotheses \ref{G-hypoth-X}. In both cases, $S$ is an extension of 
$A\cong(C_{3^\ell})^2$ by $C_3$, where $\ell=v_3(q-1)$, and where 
$Z(S)=C_A(S)$ has order $3$. If $\ell>1$, then 
$A$ is the unique abelian subgroup of index $p$ in $S$. If 
$\ell=1$, then $S$ is extraspecial of order $3^3$ and exponent $3$. By 
Theorem \ref{OldThA}(a), we can assume $q=4$ without changing the 
isomorphism type of the fusion system, so $G$ contains $\SU_3(2)$. This is 
a semidirect product $S\sd{}Q_8$ (cf. \cite[p. 123--124]{Taylor}), and 
hence the four subgroups of $S$ of order $9$ are $N_G(S)$-conjugate. 

It remains to prove the proposition when $p=2$. We use 
\cite[\S\,2]{O-Ch} as a reference for information about best offenders, 
since this contains what we need in a brief presentation. Assume $A$ is not 
the unique abelian subgroup of $S$ of order $|A|$. Then there is an abelian 
subgroup $1\ne{}B\le W_0$ such that $|B|\cdot|C_A(B)|\ge|A|$. In other 
words, the action of the Weyl group $W_0$ on $A$ has a nontrivial best 
offender \cite[Definition 2.1(b)]{O-Ch}. Hence by Timmesfeld's replacement 
theorem \cite[Theorem 2.5]{O-Ch}, there is a quadratic best offender $1\ne 
B\le W_0$: an offender such that $[B,[B,A]]=1$. 

We consider three different cases.

\smallskip

\noindent\boldd{Case 1: $G\cong\gg(q)$ is a Chevalley group, where either 
$q\equiv1$ (mod $8$), or $G\not\cong\Sp_{2n}(q)$ 
for any $n\ge1$. } Set $n=\rk(A)=\rk(T)$: the Lie rank of $G$ (or of 
$\gg$). Set $\ell=v_2(q-1)\ge2$. Then $A\cong(C_{2^\ell})^n$ is the group of 
all $2^\ell$-torsion elements in $T$ (or in $\4T$). 
Since the result is clear when $n=1$ ($G\cong\SL_2(q)\cong\Sp_2(q)$, $A\cong 
C_{2^\ell}$, and $S\cong Q_{2^{\ell+1}}$), we assume $n\ge2$.

Let $\Lambda=\Z\Sigma^\vee$ be the lattice in $V$ generated by the dual 
roots. By Lemma \ref{l:CW(t)}(a), there are $\Z[W]$-linear isomorphisms 
$A\cong\Lambda/2^\ell\Lambda$ and $\Omega_1(A)\cong\Lambda/2\Lambda$.

Assume first that $B$ acts faithfully on $\Omega_1(A)$. Since $B$ has 
quadratic action, it is elementary abelian \cite[Lemma 2.4]{O-Ch}. Set 
$k=\rk(B)$; thus $B\cong C_2^k$ and $|A/C_A(B)|\le2^k$. 

Since the $B$-action on $V$ is faithful, the characters 
$\chi\in\Hom(B,\{\pm1\})$ which have nontrivial eigenspace on $V$ generate 
the dual group $B^*$. So we can choose a basis $\chi_1,\ldots,\chi_k$ for 
$B^*$ such that each $\chi_i$ has nontrivial eigenspace. Let $b\in B$ be 
the unique element such that $\chi_i(b)=-1$ for each $i=1,\ldots,k$. Let 
$V_+,V_-$ be the $\pm1$-eigenspaces for the $b$-action on $V$, and set 
$\Lambda_{\pm}=\Lambda\cap V_{\pm}$. By construction, $\dim(V_-)\ge k$.

Let $v\in\Lambda$ be an element whose class modulo $2^\ell\Lambda$ is 
fixed by $b$, and write $v=v_++v_-$ where $v_\pm\in V_\pm$. Then 
$2v_-=v-b(v)\in2^\ell\Lambda\cap V_-=2^\ell\Lambda_-$, so 
$v_-\in2^{\ell-1}\Lambda_-$ and $v_+=v-v_-\in\Lambda\cap V_+=\Lambda_+$. 
Thus $C_{\Lambda/2^\ell\Lambda}(b) 
=(\Lambda_+\times2^{\ell-1}\Lambda_-)/2^\ell\Lambda$.
Set $r=\rk(\Lambda_-)=\dim(V_-)\ge k$; then 
	\begin{align*} 
	2^k\ge |A/C_A(B)|\ge |A/C_A(b)| 
	= |\Lambda/(\Lambda_+\times2^{\ell-1}\Lambda_-)|
	&= 2^{r(\ell-1)}\cdot|\Lambda/(\Lambda_+\times\Lambda_-)| \\
	&\ge 2^{k(\ell-1)}\cdot|\Lambda/(\Lambda_+\times\Lambda_-)| . 
	\end{align*}
In particular, $\Lambda=\Lambda_+\times\Lambda_-$. But then $b$ acts 
trivially on $\Lambda/2\Lambda$, hence on $\Omega_1(A)$, which contradicts 
our assumption.


Thus $B$ does not act faithfully on $\Omega_1(A)$. Set 
$B_0=C_B(\Omega_1(A))\cong C_B(\Lambda/2\Lambda)\ne1$. If $-\Id_V\in{}B_0$, 
then it inverts $A$, $[B,\Omega_1(A)]\le[B,[B_0,A]]=1$ since $B$ acts 
quadratically, so $B=B_0$, and 
$|B_0|\ge|A/C_A(B)|\ge|A/\Omega_1(A)|=2^{(\ell-1)n}$. If $b\in{}B_0$ is 
such that $b^2=-\Id_V$, then $b$ defines a $\C$-vector space structure on 
$V$, and hence does not induce the identity on $\Lambda/2\Lambda$, a 
contradiction. 

Thus there is $b\in B_0$ which does not act on $V$ via $\pm\Id$. Let 
$V_{\pm}\ne0$ be the $\pm1$-eigenspaces for the $b$-action on $V$, and set 
$\Lambda_\pm=\Lambda\cap V_\pm$. For each $v\in\Lambda$, 
$v-b(v)\in2\Lambda$ since $b$ acts trivially on 
$\Omega_1(A)\cong\Lambda/2\Lambda$.  Set $v=v_++v_-$, where $v_\pm\in 
V_\pm$. Then $2v_-=v-b(v)\in2\Lambda\cap V_-=2\Lambda_-$ implies that 
$v_-\in\Lambda_-$, and hence $v_+\in\Lambda_+$. Thus 
$V\in\Lambda_+\times\Lambda_-$, so by Lemma \ref{l:Lambda}, $\gg=C_n$. By 
assumption, $q\equiv1$ (mod $8$), so $\ell\ge3$, and 
$[b,[b,\Lambda/2^\ell\Lambda]]\ge4\Lambda_-/2^\ell\Lambda_-\ne1$, 
contradicting the assumption that $B$ acts quadratically on $A$. 

\smallskip

\noindent\boldd{Case 2: $G\cong\Sp_{2n}(q)$ for some $n\ge1$ and some 
$q\equiv5$ (mod $8$). } Fix subgroups $H_i\le G$ ($1\le i\le n$) and 
$K<G$ such that $H_i\cong\Sp_2(q)$ for each $i$, $K\cong\Sigma_n$ 
is the group of permutation matrices (in $2\times2$ blocks), and $K$ 
normalizes $H=H_1\times\cdots\times H_n$ and permutes the factors in the 
obvious way. We can also fix isomorphisms 
$\chi_i\:H_i\Right2{\cong}\Sp_2(q)$ such that the action of $K$ on the 
$H_i$ commutes with the $\chi_i$. 

Fix subgroups $\5A<\5Q<\Sp_2(q)$, where $\5Q\cong Q_8$ (a Sylow 
2-subgroup), and $\5A\cong C_4$ is contained in the maximal torus. Set 
$Q_i=\chi_i^{-1}(\5Q)$ and $A_i=\chi_i^{-1}(\5A)$, and set 
$Q=Q_1Q_2\cdots{}Q_n$ and $A=A_1A_2\cdots{}A_n$. Thus $A=O_2(T)$ is as in 
Hypotheses \ref{G-hypoth-X}(III): the 2-power torsion in the maximal torus 
of $G$. By \cite[\S\,I]{CF}, $S=QR$ for some 
$R\in\syl2{K}$. Also, $W\cong QK/A\cong C_2\wr\Sigma_n$ acts on 
$A$ via signed permutations of the coordinates. 

Let $B$ be any nontrivial best offender in $W$ on $A$. Consider the action 
of $B$ on the set $\{1,2,\ldots,n\}$, let $X_1,\ldots,X_k$ be the set of 
orbits, and set $d_i=|X_i|$. For $1\le i\le k$, let $A_i\le A$ be the 
subgroup of elements whose coordinates vanish except for those in positions 
in $X_i$; thus $A_i\cong(C_4)^{d_i}$ and $A=A_1\times\cdots\times A_k$. Set 
$B_i=B/C_B(A_i)$; then $|B|\le\prod_{i=1}^k|B_i|$. Since $B$ is abelian, 
either $|B_i|=d_i$ and $B_i$ permutes the coordinates freely, or 
$|B_i|=2d_i$ and there is a unique involution in $B_i$ which inverts all 
coordinates in $A_i$. In the first case, $|C_{A_i}(B_i)|=4$, and so 
$|B_i|\cdot|C_{A_i}(B_i)|=d_i\cdot4\le4^{d_i}=|A_i|$ with equality only if 
$d_i=1$. In the second case, $|C_{A_i}(B_i)|=2$, and again 
$|B_i|\cdot|C_{A_i}(B_i)|=2d_i\cdot2\le4^{d_i}=|A_i|$ with equality only if 
$d_i=1$. Since 
	\[  \prod_{i=1}^k|A_i| = |A| \le |B|\cdot|C_A(B)| 
	= |B|\cdot\prod_{i=1}^k|C_{A_i}(B_i)| \le 
	\prod_{i=1}^k \bigl(|B_i| \cdot |C_{A_i}(B_i)| \bigl)
	 \,, \]
we conclude that $d_i=1$ for all 
$i$, and hence that $B$ acts only by changing signs in certain coordinates. 

For each $1\le i\le n$, let $\pr_i\:Q\Right2{}Q_i$ be the projection onto 
the $i$-th factor. If $A^*\le S$ is abelian of order $4^n$, then 
$A^*A/A$ is a best offender in $W$ on $A$, and hence $A^*\le Q$ by 
the last paragraph. Also, $\pr_i(A^*)$ is cyclic of order at 
most $4$ for each $i$, and since $|A^*|=4^n$, $\pr_i(A^*)\cong C_4$ for 
each $i$ and $A^*=\prod_{i=1}^n\pr_i(A^*)$. Thus there are exactly $3^n$ 
such subgroups. 

Now assume $A^*\nsg S$, and set $A^*_i=\pr_i(A^*)\le Q_i$ for short. Since 
$A^*$ is normal, the subgroups $\chi_i(A^*_i)\le\5Q<\Sp_2(q)$ are equal for 
all $i$ lying in any $R$-orbit of the set $\{1,2,\ldots,n\}$. Hence we can 
choose elements $x_1,x_2,\ldots,x_n$, where $x_i\in 
N_{H_i}(Q_i)\cong\SL_2(3)$ and $\9{x_i}(A_i)=A^*_i$ for each $i$, and such 
that $\chi_i(x_i)\in\Sp_2(q)$ is constant on each $R$-orbit. Set 
$x=x_1x_2\cdots{}x_n$; then $\9xA=A^*$, and $x\in N_G(S)$. 

\smallskip

\noindent\boldd{Case 3: $G$ is a Steinberg group. } Assume 
$\gamma\in\Gamma_{\4G}$ is a graph automorphism of order $2$, and that 
$G=C_{\4G}(\sigma)$ where $\sigma=\gamma\psi_q$. Set 
$G_0=C_{\4G}(\gamma,\psi_q)$; thus $G_0\le G$. Set $\ell=v_2(q-1)\ge2$. We 
must again show that the action of $W_0$ on $A$ has no nontrivial best 
offenders.

If $G\cong\lie2E6(q)$ or $\Spin_{2n}^-(q)$ ($n\ge4$), then $G_0\cong 
F_4(q)$ or $\Spin_{2n-1}(q)$, respectively, and $W_0$ is the Weyl group of 
$G_0$. If $1\ne B\le W_0$ is a best offender in $W_0$ on $A$, then it is 
also a best offender on $\Omega_\ell(A)\le G_0$, which is impossible by 
Case 1.

If $G\cong\SU_{2n+1}(q)\cong\lie2A{2n}(q)$, then 
$S\cong(\SD_{2^{\ell+2}})^n\sd{}R$ for some $R\in\syl2{\Sigma_n}$ \cite[pp. 
143--144]{CF}. Thus $A\cong(C_{2^{\ell+1}})^n$, $W_0\cong C_2\wr\Sigma_n$, 
$\Sigma_n<W_0$ acts on $A$ by permuting the coordinates, and the subgroup 
$W_1\cong(C_2)^n$ in $W_0$ has a basis each element of which acts on one 
coordinate by $(a\mapsto a^{2^{\ell}-1})$. If $B\le W_0$ is a nontrivial 
quadratic best offender on $A$, then it is also a best offender on 
$\Omega_\ell(A)$ \cite[Lemma 2.2(a)]{O-Ch}, hence is contained in $W_1$ by 
the argument in Case 2, which is impossible since no nontrivial element in 
this subgroup acts quadratically. Thus $A$ is characteristic in this case.

It remains to consider the case where 
$G\cong\SU_{2n}(q)\cong\lie2A{2n-1}(q)$. Since the case 
$\SU_2(q)\cong\Sp_2(q)$ has already been handled, we can assume $n\ge2$. 
Set $\5G=\GU_{2n}(q)>G$, set 
$G_0=\GU_2(q)\times\cdots\times\GU_2(q)\le\5G$, and set 
$G_1=N_{\5G}(G_0)\cong\GU_2(q)\wr\Sigma_n$. Then $G_1$ has odd index in $\5G$  
\cite[pp. 143--144]{CF}, so we can assume $S\le G_1\cap{}G$. Fix 
$H_0\in\syl2{G_0}$; thus $H_0\cong(\SD_{2^{\ell+2}})^n$. Since 
$v_2(q+1)=1$, and since the Sylow 2-subgroups of $\SU_2(q)$ are quaternion,
	\[ G\cap H_0 = \Ker\bigl[ H_0 \cong (\SD_{2^{\ell+2}})^n 
	\Right3{\chi^n} C_2^n \Right3{\textup{sum}} C_2 \bigr] \,, \]
where $\chi\:\SD_{2^{\ell+2}}\Right2{}C_2$ is the surjection with 
quaternion kernel. As in the last case, $W_0\cong C_2\wr\Sigma_n$ with 
normal subgroup $W_1\cong C_2^n$. If $B\le W_0$ is a nontrivial quadratic 
best offender on $A$, then it is also a best offender on $\Omega_\ell(A)$ 
\cite[Lemma 2.2(a)]{O-Ch}, so $B\le W_1$ by the argument used in Case 2. Since 
no nontrivial element in $W_1$ acts quadratically on $A$, we 
conclude that $A$ is characteristic in this case.
\end{proof}

The next lemma is needed to deal with the fact that not all fusion 
preserving automorphisms of $A$ lie in $\Aut(A,\calf)$ (since they need not 
extend to automorphisms of $S$). 

\begin{Lem} \label{Aut(A,F)-Aut(S,F)}
Let $G$ be any finite group, fix $S\in\sylp{G}$, and let $S_0\nsg S$ be a 
normal subgroup. Let $\varphi\in\Aut(G)$ be such that $\varphi(S_0)=S_0$ 
and $\varphi|_{S_0}\in N_{\Aut(S_0)}(\Aut_S(S_0))$.  Then there is 
$\varphi'\in\Aut(G)$ such that $\varphi'|_{S_0}=\varphi|_{S_0}$, 
$\varphi'(S)=S$, and $\varphi'\equiv\varphi$ (mod $\Inn(G)$). 
\end{Lem}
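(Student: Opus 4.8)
\textbf{Proof proposal for Lemma \ref{Aut(A,F)-Aut(S,F)}.}

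The plan is to use $\varphi$ to transport $\Aut_S(S_0)$ to another subgroup of $\Aut_{S_0}(S_0)$-overgroup lying inside $\Aut_G(S_0)$, and then correct by an inner automorphism so that a Sylow subgroup is preserved. Concretely, write $H=N_G(S_0)$ and consider the group $\Aut_H(S_0)=N_G(S_0)/C_G(S_0)$. Since $S_0\nsg S$, we have $S\le H$, so $\Aut_S(S_0)$ is a subgroup of $\Aut_H(S_0)$, and in fact $\Aut_S(S_0)$ is a Sylow $p$-subgroup of $\Aut_H(S_0)$: indeed $S$ is a Sylow $p$-subgroup of $H$ (it is one of $G$ and is contained in $H$), and $\Aut_S(S_0)=SC_G(S_0)/C_G(S_0)$ is its image. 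Now $\varphi(H)=N_G(\varphi(S_0))=N_G(S_0)=H$, so $\varphi$ restricts to an automorphism of $H$; and since $\varphi|_{S_0}$ normalizes $\Aut_S(S_0)$ inside $\Aut(S_0)$, the conjugated subgroup $\varphi(\Aut_S(S_0))$ (meaning $\{\varphi|_{S_0}\circ\psi\circ(\varphi|_{S_0})^{-1}\mid\psi\in\Aut_S(S_0)\}$) equals $\Aut_S(S_0)$ as a subgroup of $\Aut(S_0)$. On the other hand $\varphi$ carries $\Aut_S(S_0)$ into $\Aut_{\varphi(S)}(S_0)$, which is also a Sylow $p$-subgroup of $\Aut_H(S_0)$. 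So $\Aut_S(S_0)$ and $\Aut_{\varphi(S)}(S_0)$ are two Sylow $p$-subgroups of $\Aut_H(S_0)$ that happen to be equal as subgroups of $\Aut(S_0)$; they are in particular conjugate in $\Aut_H(S_0)$, say by $c_h$ for some $h\in H$, i.e.\ $\Aut_{\9h\varphi(S)}(S_0)=\Aut_{hS_0}(S_0)\cdot\Aut_{\varphi(S)}(S_0)$ gets us $\Aut_S(S_0)$.

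The key step is then to arrange that $c_h\circ\varphi$ actually sends $S$ to $S$, not merely induces the right subgroup of $\Aut(S_0)$. After replacing $\varphi$ by $c_h\circ\varphi$ (which changes nothing modulo $\Inn(G)$ and, since we will show $h$ can be chosen in $C_G(S_0)$, does not change $\varphi|_{S_0}$), we have $\Aut_{\varphi(S)}(S_0)=\Aut_S(S_0)$ as subgroups of $\Aut(S_0)$. This means $\varphi(S)C_G(S_0)=SC_G(S_0)$, so $\varphi(S)$ and $S$ are two Sylow $p$-subgroups of the group $SC_G(S_0)$ — here I use that $S$ is Sylow in $SC_G(S_0)\le H$ and likewise $\varphi(S)$, since both have index prime to $p$ in $SC_G(S_0)$. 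Hence there is $c\in C_G(S_0)$ with $\9c(\varphi(S))=S$. Set $\varphi'=c_c\circ\varphi$. Then $\varphi'(S)=S$, and $\varphi'|_{S_0}=c_c|_{S_0}\circ\varphi|_{S_0}=\varphi|_{S_0}$ since $c$ centralizes $S_0$. Also $\varphi'\equiv\varphi$ (mod $\Inn(G)$), as required.

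The one point that needs care — and the main obstacle — is making sure the conjugating element at the first stage can also be chosen to centralize $S_0$, so that the restriction $\varphi|_{S_0}$ is genuinely unchanged throughout. To handle this cleanly I would do the two stages at once: work inside the group $P:=\varphi(S)C_G(S_0)$. Note $\varphi(S)$ is a Sylow $p$-subgroup of $P$. I claim $S\le P$ as well: from $\varphi|_{S_0}\in N_{\Aut(S_0)}(\Aut_S(S_0))$ we get $\varphi(S)C_G(S_0)\supseteq\Aut_S(S_0)$-preimage $=SC_G(S_0)$, hence $S\le P$ and $S$ too is Sylow in $P$. So by Sylow's theorem there is $c\in P\le G$ with $\9c(\varphi(S))=S$; but any such $c$ lies in $P=\varphi(S)C_G(S_0)$, and modifying $c$ on the left by an element of $\varphi(S)\le P$ (which does not affect $\9c(\varphi(S))$ up to the same coset) lets us assume $c\in C_G(S_0)$. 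Then $\varphi'=c_c\circ\varphi$ works: $\varphi'(S)=S$, $\varphi'|_{S_0}=\varphi|_{S_0}$ because $c$ centralizes $S_0$, and $\varphi'\equiv\varphi$ (mod $\Inn(G)$). This avoids any appeal to the internal structure of $\Aut_G(S_0)$ and keeps the argument to a single Sylow-conjugacy step.
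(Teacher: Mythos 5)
Your argument is correct and is essentially the paper's proof: from $\varphi|_{S_0}\in N_{\Aut(S_0)}(\Aut_S(S_0))$ and $c_{\varphi(g)}=\varphi c_g\varphi^{-1}$ one gets $\Aut_{\varphi(S)}(S_0)=\Aut_S(S_0)$, hence $\varphi(S)C_G(S_0)=SC_G(S_0)$, and a single Sylow conjugation inside this group by an element of $C_G(S_0)$ yields $\varphi'$ — exactly the paper's route. The only slip is in your final adjustment of the conjugating element: since $\varphi(S)$ normalizes both $C_G(S_0)$ and itself, you should factor $c=dv$ with $d\in C_G(S_0)$, $v\in\varphi(S)$, and modify $c$ on the \emph{right} (replace $c$ by $cv^{-1}=d$), which leaves $\9c{(\varphi(S))}$ unchanged; modifying on the left by $v^{-1}\in\varphi(S)$, as you wrote, would replace $S=\9c{(\varphi(S))}$ by $\9{v^{-1}}S$, which need not equal $S$.
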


\begin{proof} Since $\varphi|_{S_0}$ normalizes $\Aut_S(S_0)$, and 
$c_{\varphi(g)}=\varphi{}c_g\varphi^{-1}$ for each $g\in{}G$, we have 
$\Aut_{\varphi(S)}(S_0)=\9\varphi\Aut_S(S_0)=\Aut_S(S_0)$. Hence 
$\varphi(S)\le C_G(S_0)S$. Since $S$ normalizes $C_G(S_0)$ and 
$S\in\sylp{C_G(S_0)S}$, we have $\varphi(S)=\9xS$ for some 
$x\in{}C_G(S_0)$. Set $\varphi'=c_x^{-1}\circ\varphi\in\Aut(G)$; then 
$\varphi'(S)=S$ and $\varphi'|_{S_0}=\varphi|_{S_0}$. 
\end{proof}

In the next two propositions, we will be referring to the short exact 
sequence
	\beqq 1 \Right2{} \Aut\dg(S,\calf) \Right5{} N_{\Aut(S,\calf)}(A) 
	\Right5{R} \Aut(A,\calf) \Right2{} 1 \,. \label{e:Aut_dg(A)} \eeqq
Here, $R$ is induced by restriction, and $\Aut(A,\calf)=\Im(R)$ and 
$\Aut\dg(S,\calf)=\Ker(R)$ by definition of these two groups (Notation 
\ref{G-setup-X}\eqref{not9x}). By Proposition \ref{p:AcharS}, in all cases, 
each class in $\Out(S,\calf)$ is representated by elements of 
$N_{\Aut(S,\calf)}(A)$.

\begin{Prop} \label{kappa_onto}
Assume Hypotheses \ref{G-hypoth-X} and \ref{G-hypoth-X2} and Notation 
\ref{G-setup-X}. Then $\4\kappa_G$ is surjective, except in the following 
cases:
\begin{itemize} 
\item $(G,p)\cong(\lie2E6(q),3)$, or
\item $(G,p)\cong(G_2(q),2)$ and $q_0\ne3$, or 
\item $(G,p)\cong(F_4(q),3)$ and $q_0\ne2$. 
\end{itemize}
In the exceptional cases, $|\Coker(\4\kappa_G)|\le2$.
\end{Prop}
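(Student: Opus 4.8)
\textbf{Proof strategy for Proposition \ref{kappa_onto}.}

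The plan is to analyze the restriction homomorphism $R$ of the exact sequence \eqref{e:Aut_dg(A)} and to lift fusion preserving automorphisms of $S$, stepwise, to automorphisms of $G$. By Proposition \ref{p:AcharS}, $A$ is characteristic in $S$ in all cases under consideration (the exception $\Sp_{2n}(q)$ with $q\equiv5$ (mod $8$) does \emph{not} satisfy Hypotheses \ref{G-hypoth-X2}(i) when $p=2$, and the odd-$p$ exceptions $\SU_3(q)$, $G_2(q)$ with $v_3(q-1)=1$ are handled by replacing $q$ by $q_0$ via Theorem \ref{OldThA}(a), i.e. passing to $q=4$ where $A\cong(C_3)^2$ is still characteristic because it equals the centralizer of the unique maximal elementary abelian subgroup). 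Hence every class in $\Out(S,\calf)$ is represented by an element of $N_{\Aut(S,\calf)}(A)$, so to prove $\4\kappa_G$ surjective it suffices to show: (i) each $\beta\in\Aut(A,\calf)=\Im(R)$ is the restriction to $A$ of an automorphism of $G$ which normalizes $S$; and (ii) each $\varphi\in\Aut\dg(S,\calf)=\Ker(R)$ is the restriction of an automorphism of $G$. Point (ii) is exactly Lemma \ref{Aut-diag}, which says $\Aut\dg(S,\calf)$ consists of restrictions of diagonal automorphisms, so that part is done.

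For (i), the first step is to use Lemma \ref{Hyp1-N(W0)}(b): outside the three exceptional pairs $(\lie2E6(q),3)$, $(G_2(q),2)$ with $q_0\ne3$, $(F_4(q),3)$ with $q_0\ne2$, one has $\Aut(A,\calf)\le\Aut\scal(A)\Aut_{\Aut(G)}(A)$. Next I would invoke Lemma \ref{Phi->scal}(b): when $p$ is odd, $\chi_0(\5\Phi_G)=\Aut\scal(A)$, so every scalar automorphism of $A$ is realized by a field automorphism of $G$; when $p=2$, $\Aut\scal(A)=\chi_0(\5\Phi_G)\gen{\psi^A_{-1}}$, but under Hypotheses \ref{G-hypoth-X}\ref{easy_case} with $-\Id\in W$ the inversion $\psi^A_{-1}$ is realized by an element of $W_0=\Aut_G(A)$ (it lies in $\Aut_G(A)\cap\Aut\scal(A)$), while if $-\Id\notin W$ then by the last condition in \ref{messy_case}-type analysis (or directly, since then $p=2$, $q\equiv1$ (mod $4$), and the relevant $\gg$ forces $A$'s structure) one checks $\psi^A_{-1}$ is still covered by $\Aut_G(A)\Aut_{\Aut(G)}(A)$, or one simply notes $\psi^A_{-1}\in\Aut_{\Gamma_G}(A)$ in the remaining $D_n$-type cases. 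In any case $\Aut\scal(A)\le\Aut_G(A)\Aut_{\Aut(G)}(A)$, so combining, $\Aut(A,\calf)\le\Aut_{\Aut(G)}(A)$. Thus each $\beta\in\Aut(A,\calf)$ equals $\delta|_A$ for some $\delta\in\Aut(G)$ with $\delta(A)=A$; then $\delta$ automatically normalizes $S$ up to an element of $C_G(A)\le N_G(A)$, and since $\beta=\delta|_A$ extends to $\Aut(S,\calf)$ it normalizes $\Aut_S(A)$, so Lemma \ref{Aut(A,F)-Aut(S,F)} (with $S_0=A$) produces $\delta'\in\Aut(G)$, $\delta'\equiv\delta$ (mod $\Inn(G)$), with $\delta'(S)=S$ and $\delta'|_A=\beta$. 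This establishes (i), and together with (ii) and the exactness of \eqref{e:Aut_dg(A)} shows $N_{\Aut(S,\calf)}(A)$ — hence $\Out(S,\calf)$ — lies in the image of $\4\kappa_G$.

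For the exceptional cases, Lemma \ref{Hyp1-N(W0)}(c) gives that $\Aut(A,\calf)\cap\Aut\scal(A)\Aut_{\Aut(G)}(A)$ has index at most $2$ in $\Aut(A,\calf)$; running the argument of the previous paragraph on this index-$\le 2$ subgroup shows its image in $\Out(S,\calf)$ is contained in $\Im(\4\kappa_G)$, whence $|\Coker(\4\kappa_G)|\le 2$. The main obstacle I anticipate is the $p=2$ bookkeeping in step (i): verifying that $\psi^A_{-1}$ is always realized either by the Weyl group action ($-\Id\in W$) or by a graph automorphism, across the classical types $C_n$, $D_n$, $A_{2n-1}$, $A_{2n}$ permitted by Hypotheses \ref{G-hypoth-X}\ref{easy_case}; this requires a short case check using the explicit root-lattice identifications in Lemmas \ref{l:CW(t)} and \ref{l:Lambda} and the description of $\Gamma_{\4G}$ in Definition \ref{d:Aut(Gbar)}(b), but no essentially new ideas beyond those already deployed in Lemma \ref{Hyp1-N(W0)}.
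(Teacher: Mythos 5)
Your proposal is, in substance, the paper's own proof: the same reduction via Proposition \ref{p:AcharS} to automorphisms normalizing $A$, then Lemma \ref{Hyp1-N(W0)}(b,c) plus Lemma \ref{Phi->scal}(b) (with $\psi_{-1}^A$ realized by $\Inn(G)$ or $\Inn(G)\Gamma_G$ when $p=2$) to see that $\Aut(A,\calf)$ consists of restrictions of automorphisms of $G$, Lemma \ref{Aut(A,F)-Aut(S,F)} to adjust such an automorphism so that it normalizes $S$, Lemma \ref{Aut-diag} to absorb the kernel $\Aut\dg(S,\calf)$ of the restriction map \eqref{e:Aut_dg(A)}, and the index-$\le2$ statement of Lemma \ref{Hyp1-N(W0)}(c) for the three exceptional families.

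There is, however, one incorrect justification in your first step. You claim that the exceptional case of Proposition \ref{p:AcharS}(a) — $G\cong\Sp_{2n}(q)$ with $q\equiv5\pmod 8$, $p=2$ — cannot occur under Hypotheses \ref{G-hypoth-X2}(i). That is false: for $p=2$, Hypotheses \ref{G-hypoth-X2} force $b$ to be a power of $2$ and $q_0\equiv\pm3\pmod8$, and $b=1$ with $q=q_0\equiv5\pmod8$ (the congruence $q\equiv1\pmod4$ of case \ref{easy_case} rules out $q\equiv3$) is perfectly allowed; e.g.\ $G=\Sp_{2n}(5)$ at $p=2$ satisfies both sets of hypotheses, and there $A$ is \emph{not} characteristic in $S$ (there are $3^n$ abelian subgroups of maximal order). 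So "$A$ characteristic in $S$ in all cases" is not available. The repair is exactly the last statement of Proposition \ref{p:AcharS}: every normal subgroup of $S$ isomorphic to $A$ is $N_G(S)$-conjugate to $A$, so each class in $\Out(S,\calf)$ still has a representative in $N_{\Aut(S,\calf)}(A)$ — which is all your argument (and the paper's) actually uses. For the same reason your ad hoc treatment of the odd-$p$ exceptions via Theorem \ref{OldThA}(a) is unnecessary (and somewhat delicate, since $\4\kappa_G$ concerns the specific group $G$, not just its fusion system); the conjugacy statement of \ref{p:AcharS} already covers them. With that one-line correction, your proof goes through and coincides with the paper's.
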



\begin{proof} We first claim that for $\varphi\in \Aut(S,\calf)$,
	\beqq \varphi(A)=A \quad\textup{and}\quad 
	\varphi|_A\in\Aut\scal(A)\Aut_{\Aut(G)}(A) 
	\qquad\implies\qquad
	[\varphi]\in\Im(\4\kappa_G)\,. \label{e:onto} \eeqq
To see this, fix such a $\varphi$. 
By Lemma \ref{Phi->scal}(b), each element of $\Aut\scal(A)$, or of 
$\Aut\scal(A)/\gen{\psi_{-1}^A}$ if $p=2$, is the restriction of an 
element of $\5\Phi_G$. If $p=2$, then we are in case \ref{easy_case}, the 
$\sigma$-setup is standard, and hence the inversion automorphism 
$\psi_{-1}^A$ is the restriction of an inner automorphism of $G$ (if 
$-\Id_V\in W$) or an element of $\Inn(G)\Gamma_G$. Thus 
$\varphi|_A$ extends to an automorphism of $G$.

Now, $\varphi|_A$ normalizes $\Aut_S(A)$ since $\varphi(S)=S$. So by Lemma 
\ref{Aut(A,F)-Aut(S,F)}, $\varphi|_A$ is the restriction of an automorphism 
of $G$ which normalizes $S$, and hence is the restriction of an element 
$\psi\in\Aut(S,\calf)$ such that $[\psi]\in\Im(\4\kappa_G)$. Then 
$\varphi\psi^{-1}\in\Ker(R)=\Aut\dg(S,\calf)$ by the 
exactness of \eqref{e:Aut_dg(A)}, and $[\varphi\psi^{-1}]\in\Im(\4\kappa_G)$ 
by Lemma \ref{Aut-diag}. So $[\varphi]\in\Im(\4\kappa_G)$, which proves 
\eqref{e:onto}. 

By Proposition \ref{p:AcharS}, each class in $\Out(S,\calf)$ is represented 
by an element of $N_{\Aut(S,\calf)}(A)$. Hence by \eqref{e:onto}, 
$|\Coker(\4\kappa_G)|$ is at most the index of 
$\Aut(A,\calf)\cap\Aut\scal(A)\Aut_{\Aut(G)}(A)$ in $\Aut(A,\calf)$. So by Lemma 
\ref{Hyp1-N(W0)}, $|\Coker(\4\kappa_G)|\le2$, and 
$\4\kappa_G$ is surjective with the exceptions listed above.
\end{proof}

We now want to refine Proposition \ref{kappa_onto}, and finish the proof of 
Theorem \ref{ThX}, by determining $\Ker(\4\kappa_G)$ in each case where 
\ref{G-hypoth-X} and \ref{G-hypoth-X2} hold and checking whether it is 
split. In particular, we still want to show that each of these fusion 
systems is tamely realized by some finite group of Lie type (and not just 
an extension of such a group by outer automorphisms).

Since $O_{p'}(\Outdiag(G))\le\Ker(\4\kappa_G)$ in all cases by Lemma 
\ref{Aut-diag}, $\4\kappa_G$ induces a quotient homomorphism 
	\[ \0\kappa_G\: \Out(G)/O_{p'}(\Outdiag(G)) \Right5{} \Out(S,\calf) 
	\,, \]
and it is simpler to describe $\Ker(\0\kappa_G)$ than $\Ker(\4\kappa_G)$. 
The projection of $\Out(G)$ onto the quotient $\Out(G)/O_{p'}(\Outdiag(G))$ 
is split: by Steinberg's theorem (Theorem \ref{St-aut}), it splits back to 
$O_p(\Outdiag(G))\Phi_G\Gamma_G$. Hence $\0\kappa_G$ is split surjective if 
and only if $\4\kappa_G$ is split surjective.

\begin{Prop} \label{Ker(kappa)}
Assume Hypotheses \ref{G-hypoth-X} and \ref{G-hypoth-X2} and Notation 
\ref{G-setup-X}. Assume also that none of the following hold: neither
\begin{itemize} 
\item $(G,p)\cong(\lie2E6(q),3)$, nor
\item $(G,p)\cong(G_2(q),2)$ and $q_0\ne3$, nor 
\item $(G,p)\cong(F_4(q),3)$ and $q_0\ne2$. 
\end{itemize}
\begin{enuma} 
\item If $p=2$, then $\0\kappa_G$ is an isomorphism, and $\4\kappa_G$ is 
split surjective.

\item Assume that $p$ is odd, and that we are in the situation of case 
\ref{easy_case} of Hypotheses \ref{G-hypoth-X}. Then $\sqrt{q}\in\N$, and 
	\[ \Ker(\0\kappa_G) = \begin{cases} 
	\Gen{[\psi_{\sqrt{q}}]}\cong C_2 & \textup{if $\gamma=\Id$ and 
	$-\Id\in W$} \\
	\Gen{[\gamma_0\psi_{\sqrt{q}}]}\cong C_2 & \textup{if $\gamma=\Id$ and 
	$-\Id\notin W$} \\
	\Gen{[\psi_{\sqrt{q}}]}\cong C_4 & \textup{if $\gamma\ne\Id$ ($G$ is a 
	Steinberg group)}
	\end{cases} \]
where in the second case, $\gamma_0\in\Gamma_G$ is a graph automorphism of 
order $2$. Hence $\4\kappa_G$ and $\0\kappa_G$ are split 
surjective if and only if either $\gamma=\Id$ and $-\Id\notin W$, or 
$p\equiv3$ (mod $4$). 

\item Assume that $p$ is odd, and that we are in the situation of case 
\ref{minus_case} or
\ref{messy_case} of Hypotheses \ref{G-hypoth-X}. Assume also that $G$ is a Chevalley 
group ($\gamma\in\Inn(\4G)$), and that $\ordp(q)$ is even or $-\Id\notin 
W_0$. Let $\Phi_G,\Gamma_G\le\Aut(G)$ be as in Proposition 
\ref{nonstandard}. Then $\Phi_G\cap\Ker(\0\kappa_G)=1$, so 
$|\Ker(\0\kappa_G)|\le|\Gamma_G|$, and $\4\kappa_G$ and $\0\kappa_G$ are 
split surjective.


\item Assume that $p$ is odd, and that we are in the situation of case 
\ref{messy_case} of Hypotheses \ref{G-hypoth-X}. 
Assume also that $G$ is a Steinberg group ($\gamma\notin\Inn(\4G)$), and 
that $\ordp(q)$ is even. Then 
	\[ \Ker(\0\kappa_G) = \begin{cases} 
	\Gen{[\gamma|_G]}\cong C_2 & \textup{if 
	$\gamma|_A\in\Aut_{W_0}(A)$} \\
	1 & \textup{otherwise.}
	\end{cases} \]
Hence $\4\kappa_G$ and $\0\kappa_G$ are split surjective if and only if 
$q$ is an odd power of $q_0$ or 
$\Ker(\4\kappa_G)=O_{p'}(\Outdiag(G))$. If $\4\kappa_G$ is not split 
surjective, then its kernel contains a graph automorphism of order $2$ in 
$\Out(G)/\Outdiag(G)$.

\end{enuma}
\end{Prop}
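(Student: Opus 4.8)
\textbf{Proof plan for Proposition \ref{Ker(kappa)}.}

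The plan is to exploit the short exact sequence \eqref{e:Aut_dg(A)} together with the structure results already established: Lemma \ref{Aut-diag} (which identifies $\Out\dg(S,\calf)$ with $O_p(\Outdiag(G))$ via $\4\kappa_G$), Lemma \ref{Hyp1-N(W0)} (which pins down $\Aut(A,\calf)$ up to scalars and automorphisms coming from $\Aut(G)$), and Lemma \ref{Phi->scal} (which says exactly which scalar automorphisms of $A$ are realized by $\5\Phi_G$, and computes $\Ker(\chi_0)$). The key observation is that by Proposition \ref{p:AcharS}, $A$ is characteristic in $S$ (under our standing hypotheses, excluding the $\Sp_{2n}(q)$ case, which falls under case \ref{minus_case}/\ref{messy_case} with $\ordp(q)$ odd and is excluded here), so every class in $\Out(S,\calf)$ is represented in $N_{\Aut(S,\calf)}(A)$, and the restriction map $R$ of \eqref{e:Aut_dg(A)} controls everything. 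An element $[\varphi]\in\Out(G)$ lies in $\Ker(\4\kappa_G)$ precisely when $\varphi|_S\in\Aut_S(G)\cdot\Aut\dg(S,\calf)$, i.e.\ when $R(\varphi|_S)$ is trivial modulo $\Aut_S(A)$; combined with Lemma \ref{Aut-diag}, which already handles the diagonal part, this reduces the computation of $\Ker(\0\kappa_G)$ to computing, for the generating automorphisms $\psi_{\sqrt q}$, $\gamma$, $\gamma_0$ (and their products) in the relevant case, whether they restrict to $A$ as elements of $\Aut_{W_0}(A)\Aut_S(A)$.

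For part (a), $p=2$: here $\chi_0$ is injective by Lemma \ref{Phi->scal}(c), and one shows $\Ker(\0\kappa_G)=1$ by noting that a nontrivial field or graph automorphism acts nontrivially on $\Omega_1(A)$ or on the set of root-subgroup-tori $A_{\5\alpha}$, hence cannot restrict trivially; surjectivity is Proposition \ref{kappa_onto}. Splitting is then automatic. For part (b), case \ref{easy_case}: since $\ordp(q)=1$ and $q=q_0^b$ with $b\mid(p-1)p^\ell$ and $q_0$ generating $(\Z/p^2)^\times$, one checks $v_p(q-1)$ is even forces $\sqrt q\in\N$ (this is the arithmetic input; more precisely $\psi_{\sqrt q}$ is the unique element of order $2$ in $\5\Phi_G/\Ker(\chi_0)$ acting as the scalar $-1$ on $A$), and $\psi_{\sqrt q}|_A=\psi_{-1}^A\in\Aut\scal(A)$. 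Whether $\psi_{-1}^A\in\Aut_{W_0}(A)$ is governed by whether $-\Id\in W$ (Chevalley case) or by the last clause of Hypothesis \ref{messy_case}, which is why the answer bifurcates: if $-\Id\notin W$ one must twist by a graph automorphism $\gamma_0$ to land in $\Aut_{W_0}(A)\Aut_S(A)$; the order of the resulting cyclic kernel ($C_2$ or $C_4$) comes from $\Ker(\chi_0)$ in Lemma \ref{Phi->scal}(c). Splitting: $C_2$ always splits, and $C_4$ splits off $\Out(G)/O_{p'}(\Outdiag(G))$ exactly when the order-$4$ cyclic group $\gen{\psi_{q_0}}$ maps onto it with a complement, which happens iff $p\equiv3\pmod 4$ (when $\Phi_G$ is itself too small to contain a $C_4$), giving the stated criterion.

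For parts (c) and (d), case \ref{minus_case}/\ref{messy_case} with $\gamma$ inner resp.\ not inner: by Lemma \ref{Phi->scal}(c), $\Ker(\chi_0)=\gen{\gamma^{m_0}|_G}=\5\Phi_G\cap\Aut_{\4T}(G)$, so $\5\Phi_G$ injects into $\Out(S,\calf)$ modulo $\Outdiag(G)$ once $\ordp(q)$ is even (which rules out the parity obstruction of the previous paragraph); this gives $\Phi_G\cap\Ker(\0\kappa_G)=1$ in (c), so $|\Ker(\0\kappa_G)|\le|\Gamma_G|$, and in (c) $\Gamma_G$ itself maps to a complement, giving split surjectivity. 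In (d), the Steinberg case, $\gamma|_G$ has order $2$ in $\Out(G)/\Outdiag(G)$ and the entire question is whether $\gamma|_A\in\Aut_{W_0}(A)$: if yes, $[\gamma|_G]\in\Ker(\0\kappa_G)$ and generates a $C_2$; if no, $\Ker(\0\kappa_G)=1$. For the splitting criterion one observes that $\5\Phi_G$ surjects onto $\Out(G)/\Outdiag(G)$, and a complement to $\Ker(\0\kappa_G)$ inside $O_p(\Outdiag(G))\Phi_G\Gamma_G$ exists iff the $C_2$ is a direct factor of $\5\Phi_G$'s image, i.e.\ iff $b$ is odd (equivalently $q$ is an odd power of $q_0$); otherwise the generator of $\Ker(\0\kappa_G)$ is a square in $\5\Phi_G$ and is visibly a graph automorphism of order $2$ in $\Out(G)/\Outdiag(G)$ (the $\psi_{q_0}^{b}=\gamma^{-1}$ relation), giving the final assertion.

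\textbf{Main obstacle.} The routine parts are the arithmetic ($v_p$ computations via Lemma \ref{v(q^i-1)}, already done in Lemma \ref{Phi->scal}) and bookkeeping of $\Out(G)=O_p(\Outdiag(G))\Phi_G\Gamma_G$. The genuinely delicate step is deciding, in cases \ref{easy_case} and \ref{messy_case}, whether the relevant scalar or graph automorphism restricts to $A$ inside $\Aut_{W_0}(A)$ — equivalently whether $\psi_{-1}^A\in\Aut_{W_0}(A)$ or $\gamma|_A\in\Aut_{W_0}(A)$ — since this is exactly the dichotomy feeding the statement, and its resolution uses the classification-of-root-systems input packaged in Lemma \ref{l:Lambda} / Lemma \ref{scal-mod-m} (the fact that $-\Id\in W$ only for certain types, and the last clause of Hypothesis \ref{messy_case}). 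The other subtlety is the splitting argument when $\Ker(\0\kappa_G)\cong C_4$: one must be careful that "split surjective" refers to splitting $\0\kappa_G$ as a homomorphism onto its image, and verify that a $C_2$ or $C_4$ subgroup of a cyclic group $\5\Phi_G$ (whose order is $bm$) has a complement iff the index is coprime to the subgroup's order — this is where $p\equiv 3\pmod 4$ versus $p\equiv 1\pmod 4$ enters.
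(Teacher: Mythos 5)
Your overall route is the paper's: reduce via the exact sequence \eqref{e:Aut_dg(A)}, Lemma \ref{Aut-diag} and Proposition \ref{p:AcharS} to deciding which classes in $\Phi_G\Gamma_G$ restrict to $A$ inside $\Aut_{W_0}(A)$, and then feed in Lemma \ref{Phi->scal} and the faithfulness/scalar statements (Lemma \ref{scal-mod-m}, resp.\ the last clause of case \ref{messy_case}). Parts (a), (c), (d) are in order, up to a small looseness in (a): the criterion for lying in $\Ker(\0\kappa_G)$ is that the restriction to $A$ lands in $\Aut_{W_0}(A)$, not merely that it is nontrivial, so you need Lemma \ref{scal-mod-m} together with Lemma \ref{Phi->scal}(b) (that $\psi_{-1}^A\notin\chi_0(\5\Phi_G)$ when $p=2$), exactly as your framing earlier in the plan suggests.

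The genuine gap is the splitting analysis in part (b). You assert that ``$C_2$ always splits,'' but this is false precisely in the subcase $\gamma=\Id$, $-\Id\in W$, $p\equiv1\pmod4$, and with that assertion your argument would prove the criterion ``$\gamma=\Id$ or $p\equiv3\pmod4$,'' contradicting the proposition's ``($\gamma=\Id$ and $-\Id\notin W$) or $p\equiv3\pmod4$.'' When $-\Id\in W$ the kernel is $\Gen{[\psi_{\sqrt q}]}$, the unique involution of the cyclic group $\Phi_G\cong C_b$ with $b=(p-1)p^\ell$; if $p\equiv1\pmod4$ then $4\mid b$, and since $\Ker(\0\kappa_G)$ has order $2$ and is contained in $\Phi_G$, the only preimages in $\Out(G)/O_{p'}(\Outdiag(G))$ of $\0\kappa_G([\psi_{q_0}])$ are $[\psi_{q_0}]$ and $[\psi_{q_0}^{1+b/2}]$, whose $2$-parts have order $2^{v_2(b)}$, strictly bigger than the $2$-part $2^{v_2(b)-1}$ of the image; hence no section exists, and the map splits exactly when $4\nmid b$, i.e.\ $p\equiv3\pmod4$. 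The subcase that \emph{does} always split is $\gamma=\Id$, $-\Id\notin W$, where the kernel is $\Gen{[\gamma_0\psi_{\sqrt q}]}$ and involves the graph automorphism, so that $O_p(\Outdiag(G))\Phi_G O_3(\Gamma_G)$ is an explicit complement. Note that your own ``main obstacle'' paragraph records the correct cyclic-group complement criterion; the fix is to apply it to the $-\Id\in W$ kernel as well, and to reserve the ``always splits'' conclusion for the $\gamma_0\psi_{\sqrt q}$ kernel only.
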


\begin{proof} In all cases, $\kappa_G$ is surjective by Proposition 
\ref{kappa_onto} (with the three exceptions listed above).

By definition and Proposition \ref{p:AcharS}, 
	\[ \Out(S,\calf) = \Aut(S,\calf)/\autf(S) 
	\cong N_{\Aut(S,\calf)}(A) \big/ N_{\autf(S)}(A)\,. \]
Also, $\Out\dg(S,\calf)$ is the image in $\Out(S,\calf)$ of 
$\Aut\dg(S,\calf)$. Since $N_{\autf(S)}(A)$ is the group of automorphisms 
of $S$ induced by conjugation by elements in $N_G(S)\cap{}N_G(A)$, 
the short exact sequence \eqref{e:Aut_dg(A)} 
induces a quotient exact sequence 
	\beqq 1 \Right2{} \Out\dg(S,\calf) \Right5{} \Out(S,\calf) 
	\Right5{\4R} \Aut(A,\calf)\big/\Aut_{N_G(S)}(A) \Right2{} 1 \,. 
	\label{e:4.15a} \eeqq

We claim that 
	\beqq \Aut_{N_G(S)}(A) = \Aut(A,\calf) \cap \Aut_G(A) \,. 
	\label{e:4.15b} \eeqq
That $\Aut_{N_G(S)}(A)$ is contained in the two other groups is 
clear. Conversely, assume $\alpha\in\Aut(A,\calf)\cap\Aut_G(A)$. Then 
$\alpha=c_g|_A$ for some $g\in{}N_G(A)$, and 
$\alpha\in{}N_{\Aut(A)}(\Aut_S(A))$ since it is the restriction of an 
element of $\Aut(S,\calf)$. Hence $g$ normalizes $SC_G(A)$, and since 
$S\in\sylp{SC_G(A)}$, there is $h\in{}C_G(A)$ such that $hg\in{}N_G(S)$. Thus 
$\alpha=c_g|_A=c_{hg}|_A\in\Aut_{N_G(S)}(A)$, and this finishes the proof 
of \eqref{e:4.15b}.

By Lemma \ref{Aut-diag}, $\4\kappa_G$ sends $\Outdiag(G)$ onto 
$\Out\dg(S,\calf)$ with kernel $O_{p'}(\Outdiag(G))$. Hence by the 
exactness of \eqref{e:4.15a}, restriction to $A$ induces an isomorphism
	\begin{multline} 
	\Ker(\0\kappa_G) \RIGHT3{\4R_0}{\cong} 
	\Ker\bigl[ \Out(G)/\Outdiag(G) \Right3{} 
	\Aut(A,\calf)/\Aut_{N_G(S)}(A) \bigr] \\
	= \Ker\bigl[ \Out(G)/\Outdiag(G) \Right3{} 
	N_{\Aut(A)}(\Aut_G(A))/\Aut_G(A) \bigr] \,, 
	\label{e:4.15c} 
	\end{multline}
where the equality holds by \eqref{e:4.15b}. 


Recall that for each $\ell$ prime to $p$, $\psi_\ell^A\in\Aut\scal(A)$ 
denotes the automorphism $(a\mapsto a^\ell)$.


\smallskip

\noindent\textbf{(a,b) } Under either assumption (a) or (b), we are in case 
\ref{easy_case} of Hypotheses \ref{G-hypoth-X}. In particular, 
$(\4G,\sigma)$ is a standard $\sigma$-setup for $G$. Set $k=v_p(q-1)$; then 
$k\ge1$, and $k\ge2$ if $p=2$. 

If $p$ is odd, then by Hypotheses \ref{G-hypoth-X2}(b), the class of $q_0$ 
generates $(\Z/p)^\times$. Since $q=q_0^b\equiv1$ (mod $p$), this implies 
that $(p-1)|b$. In particular, $b$ is even, and $\sqrt{q}=q_0^{b/2}\in\N$. 

Since $\Out(G)/\Outdiag(G)\cong\Phi_G\Gamma_G$ by Theorem \ref{St-aut}, 
where $\Phi_G\Gamma_G$ normalizes $T$ and hence $A$, \eqref{e:4.15c} takes 
the form
	\beqq \Ker(\0\kappa_G) \cong 
	\bigl\{ \varphi\in\Phi_G\Gamma_G \,\big|\, \varphi|_A\in\Aut_{W_0}(A) 
	\bigr\} \,. 
	\label{e:4.15d} \eeqq
In fact, when $\Ker(\0\kappa_G)$ has order prime to $p$ (which is the case 
for all examples considered here), the isomorphism in \eqref{e:4.15d} is an 
equality since $\Outdiag(G)/O_{p'}(\Outdiag(G))$ is a $p$-group. 

Assume first that $G=\gg(q)$ is a Chevalley group. Thus $\sigma=\psi_q$ 
where $q\equiv1$ (mod $p$), and 
$A=\bigl\{t\in\4T\,\big|\,t^{p^k}=1\bigr\}$. By Lemma \ref{scal-mod-m} 
(applied with $m=p^k\ge3$), the group $\Aut_W(\4T)\Aut_{\Gamma_{\4G}}(\4T)$
acts faithfully on $A$, and its action intersects $\Aut\scal(A)$ only in 
$\gen{\psi_{-1}^A}$. By Lemma \ref{Phi->scal}(b,c), restriction to $A$ sends 
$\5\Phi_G$ isomorphically onto $\Aut\scal(A)$ if $p$ is odd, and with index 
$2$ if $p=2$. So when $\gg$ is not one of the groups $B_2$, $F_4$, or 
$G_2$, then $\Phi_G\Gamma_G$ acts faithfully on $A$, and 
	\[ \bigl\{ \varphi\in\Phi_G\Gamma_G \,\big|\, \varphi|_A\in\Aut_{W_0}(A) 
	\bigr\} = \begin{cases} 
	1 & \textup{if $p=2$} \\ 
	\gen{\psi_{\sqrt{q}}} & \textup{if $p$ is odd and $-\Id\in W$} \\
	\gen{\gamma_0\psi_{\sqrt{q}}} & \textup{if $p$ is odd and $-\Id\notin W$}
	\end{cases} \]
where in the last case, $\gamma_0\in\Gamma_G$ is a graph automorphism such 
that the coset $\gamma_0W$ contains $-\Id$. (Note that 
$b=(p-1)p^\ell$ for some $\ell\ge0$ by Hypotheses \ref{G-hypoth-X2}(b,c) 
and since $p|(q-1)$. Hence $\sqrt{q}\equiv-1$ modulo $p^k=\expt(A)$, and 
$\psi_{\sqrt{q}}|_A=\psi_{-1}^A$.)

Thus by \eqref{e:4.15d}, $\0\kappa_G$ is injective if $p=2$, and 
$|\Ker(\0\kappa_G)|=2$ if $p$ is odd. When $p$ is odd, since 
$\Ker(\0\kappa_G)$ is normal of order prime to $p$ in $\Out(G)$ (hence of 
order prime to $|O_{p}(\Outdiag(G))|$), $\Ker(\0\kappa_G)$ is generated by 
$[\psi_{\sqrt{q}}]$ if $-\Id\in{}W$ (i.e., if there is an inner 
automorphism which inverts $\4T$ and hence $A$), or by 
$[\gamma_0\psi_{\sqrt{q}}]$ otherwise for $\gamma_0$ as above. In the 
latter case, $\0\kappa_G$ is split since it sends 
$O_p(\Outdiag(G))\Phi_GO_3(\Gamma_G)$ isomorphically onto $\Out(S,\calf)$ 
(recall $\Gamma_G\cong C_2$ or $\Sigma_3$). When 
$\Ker(\0\kappa_G)=\Gen{[\psi_{\sqrt{q}}]}$, the map is split if and only if 
$4\nmid|\Phi_G|=b$, and since $b=(p-1)p^m$ for some $m$, this holds exactly 
when $p\equiv3$ (mod $4$).

If $(G,p)\cong(B_2(q),2)$, $(F_4(q),2)$, or $(G_2(q),3)$, then since 
$q_0\ne{}p$, $\Gamma_G=1$. So similar arguments show that 
$\Ker(\0\kappa_G)=1$, $1$, or $\Gen{[\psi_{\sqrt{q}}]}\cong C_2$, 
respectively, and that $\0\kappa_G$ is split in all cases.

Next assume $G=G_2(q)$, where $p=2$, $q=3^b$, and $b$ is a power of $2$. 
Then $b\ge2$ since $q\equiv1$ (mod $4$). The above argument shows that 
$\Phi_G$ injects into $\Out(S,\calf)$. Since $\Out(G)$ is cyclic of order 
$2b$, generated by a graph automorphism whose square generates $\Phi_G$ 
(and since $2|b$), $\Out(G)$ injects into $\Out(S,\calf)$.

If $G=F_4(q)$, where $p=3$, $q=2^b$, and $b=2\cdot3^\ell$ for some 
$\ell\ge0$, then the same argument shows that $\Phi_G$ injects into 
$\Out(S,\calf)$. Since $\Out(G)$ is cyclic of order $2b=4\cdot3^\ell$, 
generated by a graph automorphism whose square generates $\Phi_G$, 
$\Out(G)$ injects into $\Out(S,\calf)$. 

It remains to handle the Steinberg groups. Let $\hh$ be such that 
$C_{\4G}(\gamma)=\hh(\fqobar)$: a simple algebraic group by \cite[Theorem 
1.15.2(d)]{GLS3}. In particular, $G\ge H=\hh(q)$. Also, $W_0$ is the Weyl 
group of $\hh$ by \cite[Theorem 1.15.2(d)]{GLS3} (or by the proof of 
\cite[Theorem 8.2]{Steinberg-end}). By Lemma \ref{scal-mod-m} applied to 
$\hh(\fqobar)$, $W_0$ acts faithfully on $A\cap{}H=\Omega_k(A)$, and 
intersects $\Aut\scal(A)$ at most in $\gen{\psi_{-1}^A}$. 

If $p=2$, then by Lemma \ref{Phi->scal}(b), $\psi_{-1}^A$ is not the 
restriction of an element in $\Phi_G$. Also, $\Phi_G\cong C_{2b}$ is sent 
injectively into $\Aut\scal(A)$ by Lemma \ref{Phi->scal}(c), so 
$\0\kappa_G$ is injective by \eqref{e:4.15d}. 

If $p$ is odd, then $\psi_{q_0}|_A$ has order $b$ in $\Aut\scal(A)$ by 
Lemma \ref{Phi->scal}(c). Since $(\psi_{q_0})^{b/2}=\psi_{\sqrt{q}}$ where 
$\sqrt{q}\equiv-1$ (mod $p$) (recall $b|(p-1)p^\ell$ for some $\ell$ by 
Hypotheses \ref{G-hypoth-X2}(iii)), $\psi_{q_0}|_A$ has order $b/2$ modulo 
$\Aut_{W_0}(A)$. So by \eqref{e:4.15d} and the remark afterwards, and since 
$\Phi_G$ is cyclic of order $2b$, 
$\Ker(\0\kappa_G)=\Gen{[\psi_{\sqrt{q}}]}\cong C_4$. In particular, 
$\0\kappa_G$ is split only if $b/2$ is odd; equivalently, $p\equiv3$ (mod 
$4$). 

\smallskip

\noindent\textbf{(c,d) } In both of these cases, $p$ is odd, $\ordp(q)$ is 
even or $-\Id\notin W$, and we are in the situation of case 
\ref{minus_case} or \ref{messy_case} in Hypothesis \ref{G-hypoth-X}. 
Then $\gamma|_G=(\psi_q|_G)^{-1}$ since $G\le C_{\4G}(\gamma\psi_q)$. Also, 
$\psi_{q_0}(G)=G$ by \ref{G-hypoth-X}\eqref{not4x}, and hence 
$\gamma(G)=G$. Since $\psi_{q_0}$ and $\gamma$ both normalize $\4T$ by 
assumption or by construction, they also normalize $T=G\cap\4T$ and 
$A=O_p(T)$. By Proposition \ref{nonstandard}(d), $[\psi_{q_0}]$ generates 
the image of $\Phi_G$ in $\Out(G)/\Outdiag(G)$. 

We claim that in all cases,
	\beqq \Aut_G(A)=\Aut_{W_0}(A) \qquad\textup{and}\qquad
	\Aut_G(A)\cap\Aut\scal(A) \le \gen{\gamma|_A}\,. \label{e:4.15e} \eeqq
This holds by assumption in case \ref{messy_case}, and since $\ordp(q)$ 
is even or $-\Id\notin W_0$. In case \ref{minus_case}, the first 
statement holds by Lemma \ref{NG(T)}(b), and the second by Lemma 
\ref{scal-mod-m} (and since $W_0=W$ and 
$A$ contains all $p^k$-torsion in $\4T$).

\smallskip

\noindent\textbf{(c) } Assume in addition that $G$ is a Chevalley group. 
Thus $\gamma\in\Inn(\4G)$, so 
$\gamma|_G\in\Inndiag(G)=\Inn(G)\Aut_{\4T}(G)$ by Proposition 
\ref{nonstandard}(b), and hence $\gamma|_A\in\Aut_G(A)$. Also, 
$\gamma|_A=(\psi_q|_A)^{-1}=(\psi_{q_0}|_A)^{-b}$ since 
$\sigma=\gamma\psi_q$ centralizes $G\ge A$. Since $\psi_{q_0}|_A$ has order 
$b\cdot\ordp(q)$ in $\Aut\scal(A)$ by Lemma \ref{Phi->scal}(c), its class 
in $N_{\Aut(A)}(\Aut_G(A))/\Aut_G(A)$ has order $b$ by \eqref{e:4.15e}. 

Thus by \eqref{e:4.15c}, $\0\kappa_G$ sends $O_p(\Outdiag(G))\Phi_G$ 
injectively into $\Out(S,\calf)$. Since $\Gamma_G$ is isomorphic to $1$, 
$C_2$, or $\Sigma_3$ (and since $\0\kappa_G$ is onto by Proposition 
\ref{kappa_onto}), $\0\kappa_G$ and $\4\kappa_G$ are split. 

\smallskip

\noindent\textbf{(d) } Assume $G$ is a Steinberg group and 
$\ordp(q)$ is even. In this case, $\gamma\notin\Inn(\4G)$, and 
$\Out(G)/\Outdiag(G)\cong\Phi_G$ is cyclic of order $2b$, generated by the 
class of $\psi_{q_0}|_G$. Hence by \eqref{e:4.15c}, $\Ker(\0\kappa_G)$ is 
isomorphic to the subgroup of those $\psi\in\Phi_G$ such that 
$\psi|_A\in\Aut_G(A)$. By \eqref{e:4.15e} and since 
$\psi_q|_A=\gamma^{-1}|_A$, $\Aut_G(A)\cap\Aut\scal(A)\le\gen{\psi_q^A}$. 
Thus $|\Ker(\0\kappa_G)|\le2$, and 
	\beq |\Ker(\0\kappa_G)|=2 \quad\iff\quad 
	\gamma|_A\in\Aut_G(A) = \Aut_{W_0}(A)\,. \eeq
When $\Ker(\0\kappa_G)\ne1$, $\0\kappa_G$ is split if and only if 
$4\nmid|\Phi_G|=2b$; i.e., when $b$ is odd.
\end{proof}

In the situation of Proposition \ref{Ker(kappa)}(c), if $-\Id\notin W$, 
then $\Ker(\0\kappa_G)=\Gen{[\gamma_0\psi_{\sqrt{q}}]}$ where $\gamma_0$ is 
a nontrivial graph automorphism. If $-\Id\in{}W$ (hence $\ordp(q)$ is 
even), then $\0\kappa_G$ is always injective: either because $\Gamma_G=1$, 
or by the explicit descriptions in the next section of the setups when 
$\ordp(q)=2$ (Lemma \ref{invert(III.2)}), or when $\ordp(q)>2$ and 
$\gg=D_{2n}$ (Lemma \ref{classical(III.3)}).

The following examples help to illustrate some of the complications in the 
statement of Proposition \ref{Ker(kappa)}.

\begin{Ex} \label{ex:nonsplit}
Set $p=5$.  If $G=\Spin_{4k}^-(3^4)$, $\Sp_{2k}(3^4)$, or 
$\SU_k(3^4)$ ($k\ge5$), then by Proposition 
\ref{Ker(kappa)}(b), $\4\kappa_G$ is surjective 
but not split. (These groups satisfy case \ref{easy_case} of Hypotheses 
\ref{G-hypoth-X} by Lemma \ref{case(III.1)}.) The fusion systems of the 
last two are tamely realized by $\Sp_{2\ell}(3^2)$ and $\SL_n(3^2)$, 
respectively (these groups satisfy case \ref{minus_case} by Lemma 
\ref{invert(III.2)}, hence Proposition \ref{Ker(kappa)}(c) applies). 
The fusion system of $\Spin_{4k}^-(3^4)$ is also 
realized by $\Spin_{4k}^-(3^2)$, but not tamely (Example 
\ref{ex:nonsplit2}(b)). It is tamely realized by $\Spin_{4k-1}(3^2)$ 
(see Propositions \ref{OldPrA3}(c) and \ref{Ker(kappa)}(c)).\\
\end{Ex}


\newpage

\newsect{The cross characteristic case: II}
\label{s:X2}

In Section \ref{s:X1}, we established certain conditions on a finite group 
$G$ of Lie type in characteristic $q_0$, on a $\sigma$-setup for $G$, and 
on a prime $p\ne{}q_0$, and then proved that the $p$-fusion system of $G$ 
is tame whenever those conditions hold. It remains to prove that for each 
$G$ of Lie type and each $p$ different from the characteristic, there is 
another group $G^*$ whose $p$-fusion system is tame by the results of 
Section \ref{s:X1}, and is isomorphic to that of $G$. 

We first list the groups which satisfy case \ref{easy_case} of Hypotheses 
\ref{G-hypoth-X}.

\begin{Lem} \label{case(III.1)}
Fix a prime $p$ and a prime power $q\equiv1$ (mod $p$), where $q\equiv1$ 
(mod $4$) if $p=2$.  Assume $G\cong\gg(q)$ for some simple group scheme 
$\gg$ over $\Z$ of universal type, or $G\cong{}^2\gg(q)$ for $\gg\cong 
A_n$, $D_n$, or $E_6$ of universal type.  Then $G$ has a 
$\sigma$-setup $(\4G,\sigma)$ such that Hypotheses \ref{G-hypoth-X}, 
case \ref{easy_case} holds.
\end{Lem}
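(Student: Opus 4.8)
\textbf{Proof plan for Lemma \ref{case(III.1)}.}

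The plan is to construct an explicit standard $\sigma$-setup and verify the three conditions \ref{not4x}, (II), \ref{easy_case} one at a time. First I would take $\4G=\gg(\fqobar)$ where $q=q_0^b$, fix a maximal torus $\4T$, a fundamental system $\Pi$, and a Chevalley parametrization as in Notation \ref{G-setup}. For the Chevalley case set $\sigma=\psi_q$ (so $\gamma=\Id$); for the twisted case $G\cong{}^2\gg(q)$ set $\sigma=\psi_q\circ\gamma$ where $\gamma\in\Gamma_{\4G}$ is the graph automorphism of order $2$ realizing the nontrivial symmetry $\rho$ of the Dynkin diagram of type $A_n$, $D_n$, or $E_6$. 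In both cases $\gamma$ commutes with $\psi_{q_0}$ by the defining formulas in Definition \ref{d:Aut(Gbar)}, and $\gamma$ has order $\le2$, so $\sigma$ is a standard Steinberg endomorphism with $O^{q_0{}'}(C_{\4G}(\sigma))\cong G$ (this is exactly the standard description of the Chevalley/Steinberg groups, cf. \cite[\S\,2.4]{GLS3} and \cite[Theorem 2.2.6]{GLS3}); since $\gg$ is of universal type, $\4G$ is simply connected, and $C_{\4G}(\sigma)=G$ by \cite[Theorem 12.4]{Steinberg-end}, so this is a genuine $\sigma$-setup for $G$.

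Next I would check \ref{not4x}. The condition $p\ne q_0$ holds since $q\equiv1\pmod p$ forces $p\nmid q$. The condition $p\mid|W_0|$: here $W_0=C_W(\tau)$ is $W$ in the Chevalley case and the Weyl group of type $B_m/C_m/F_4$ in the twisted cases, and in all cases $p\mid|W_0|$ because $p\mid q-1$ and the Sylow $p$-subgroups of $G$ are nonabelian (the standing hypothesis of this part of the paper) — more precisely one invokes \cite[10-1]{GL} as in the proof of Proposition \ref{list-simp}(i), which gives $p\bigm||W(\gg)|$, and then checks $p\mid|W_0|$ separately (this is automatic when $\gamma=\Id$, and for the twisted types one notes $|W|=|W_0|\cdot|\!\det|$ pattern or argues directly from the root system). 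The form $\sigma=\psi_q\gamma=\gamma\psi_q$ and the properties of $\gamma$ are immediate from the construction. For the free $\gen{\tau}$-orbit of linearly independent roots: when $\tau=\Id$ take any single root $\{\alpha\}$; when $|\tau|=2$, pick a fundamental root $\alpha$ with $\alpha\ne\tau(\alpha)=\rho(\alpha)$ and $\alpha\perp\rho(\alpha)$ (which exists in $A_n$ with $n\ge2$, $D_n$, $E_6$ — e.g.\ an end node), so $\{\alpha,\rho(\alpha)\}$ is free and linearly independent.

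Then (II) is immediate: $\4G$ is of universal type by hypothesis, and $N_G(T)$ contains a Sylow $p$-subgroup of $G$ by \cite[Theorem 4.10.2]{GLS3}, since $p\ne q_0$ and $A=O_p(T)$ contains the relevant $p$-torsion of the maximal torus (this is the same citation used in the proof of Proposition \ref{OldPrA3}). Finally \ref{easy_case} holds by the choices already made: $q\equiv1\pmod p$ (and $\equiv1\pmod4$ if $p=2$) by hypothesis, $|\gamma|\le2$ by construction, and $\gamma\in\Gamma_{\4G}$ with $\rho(\Pi)=\Pi$ by construction (graph automorphisms preserve $\Pi$ by definition). The main obstacle, and the only place requiring genuine checking rather than bookkeeping, is the claim $p\mid|W_0|$ in the twisted cases — one must confirm that the standing nonabelian-Sylow hypothesis, via \cite[10-1]{GL}, forces $p$ to divide the order of the \emph{relative} Weyl group $W_0$ and not merely $W$; this needs a short case inspection of the root systems $A_n$, $D_n$, $E_6$ and their folded diagrams.
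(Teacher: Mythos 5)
Your construction is exactly the paper's: $\4G=\gg(\fqobar)$, $\sigma=\gamma\psi_q$ with $\gamma=\Id$ or a graph automorphism of order $2$, the free $\gen{\tau}$-orbit $\{\alpha\}$ or $\{\alpha,\tau(\alpha)\}$, and $[\gamma,\psi_{q_0}]=1$ because $\gamma\in\Gamma_{\4G}$. The divergence is precisely at the one point where the paper does real work, namely condition (II) that $N_G(T)$ contains a Sylow $p$-subgroup for \emph{this} torus $T=C_{\4T}(\sigma)$. You dismiss this as "immediate" from \cite[Theorem 4.10.2]{GLS3}, but that theorem only produces \emph{some} $\sigma$-stable maximal torus (of a type depending on $\ord_p(q)$, with caveats at $p=2$) whose normalizer contains a Sylow $p$-subgroup; to use it here you would still have to identify that torus with the maximally split one of your setup, i.e.\ show that $v_p(|T|)$ is maximal among the maximal tori of $G$ — and that identification is essentially the computation the paper carries out directly: in the Chevalley case $v_p(|G|)=\sum_i v_p(q^{d_i}-1)=v_p(|T|)+v_p(|W|)$ via Lemma \ref{v(q^i-1)}, and in the twisted case the analogous count using the signs $\gee_i,\eta_i$ and \cite[Proposition 14.2.1]{Carter} to get $v_p(|G|)-v_p(|T|)=v_p(|W_0|)$, uniformly covering $p=2$ with $q\equiv1\pmod 4$. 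So the step you label as bookkeeping is the actual content of the proof, and as written your citation does not close it.

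Conversely, the step you single out as "the main obstacle" — verifying $p\bigm||W_0|$ — is not something the lemma's proof establishes, nor could it: with only the stated hypotheses ($q\equiv1\pmod p$, no assumption on Sylow subgroups) it can fail, e.g.\ $G=\SL_2(q)$ with $p$ odd dividing $q-1$. The paper treats $p\bigm||W_0|$ as a hypothesis on the pair $(G,p)$ that is checked where the lemma is invoked (Propositions \ref{G-cases-2} and \ref{G-cases-odd}, Theorem \ref{ThJ}), under the nonabelian-Sylow assumption and \cite[10-1]{GL}; importing it into this proof via an unstated "standing hypothesis" misplaces the burden. So: right construction, but the genuine verification of (II) is missing, and the effort is spent on a condition that is not part of what the lemma must prove.
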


\begin{proof} Set $\4G=\gg(\4\F_q)$, and let $\psi_q\in\Phi_{\4G}$ be the field 
automorphism.  Set $\sigma=\gamma\psi_q\in\End(\4G)$, where $\gamma=\Id$ if 
$G\cong\gg(q)$, and $\gamma\in\Gamma_{\4G}$ has order $2$ if 
$G\cong{}^2\gg(q)$.  

\smallskip

\noindent\boldd{$N_G(T)$ contains a Sylow $p$-subgroup of $G$. } If 
$\gamma=\Id$, then by \cite[Theorem 9.4.10]{Carter} (and since $G$ is in 
universal form), $|G|=q^N\prod_{i=1}^r(q^{d_i}-1)$ for some integers 
$N,d_1,\ldots,d_r$ ($r=\rk(\gg)$), where $d_1d_2\cdots{}d_r=|W|$ by 
\cite[Theorem 9.3.4]{Carter}. Also, $|T|=(q-1)^r$, $N_G(T)/T\cong W$, and 
so 
	\[ v_p(|G|) = \sum_{i=1}^r v_p(q^{d_i}-1) 
	= \sum_{i=1}^r\bigl( v_p(q-1) + v_p(d_i) \bigr) 
	= v_p(|T|)+v_p(|W|) = v_p(N_G(T)) \,, \]
where the second equality holds by Lemma \ref{v(q^i-1)}.


If $|\gamma|=2$, then by \cite[\S\S\,14.2--3]{Carter}, for $N$ and $d_i$ as 
above, there are $\gee_i,\eta_i\in\{\pm1\}$ for $1\le i\le r$ such that 
$|G|=q^N\prod_{i=1}^r(q^{d_i}-\gee_i)$ and $|T|=\prod_{i=1}^r(q-\eta_i)$. 
(More precisely, the $\eta_i$ are the eigenvalues of the $\gamma$-action on 
$V$, and polynomial generators $I_1,\ldots,I_r\in\R[x_1,\ldots,x_r]^W$ can 
be chosen such that $\deg(I_i)=d_i$ and $\tau(I_i)=\gee_iI_i$.)  
By \cite[Proposition 14.2.1]{Carter}, 
	\[ |W_0| = \lim_{t\to1} \,
	\prod_{i=1}^r\Bigl(\frac{1-\gee_it^{d_i}}{1-\eta_it}\Bigr)
	\quad\implies\quad \renewcommand{\arraystretch}{1.5}
	\begin{array}{l}
	\bigl| \{1\le i\le r\,|\,\gee_i=1\}\bigr|
	=\bigl|\{1\le i\le r\,|\,\eta_i=1\}\bigr|\\
	\textup{and}\quad |W_0|=\prod\{d_i\,|\,\gee_i=+1\}.
	\end{array} \]
Also, $v_p(q^d+1)=v_p(q+1)$ for all $d\ge1$: they are both $0$ if $p$ 
is odd, and both $1$ if $p=2$. Hence
	\[ v_p(|G|) - v_p(|T|) = \sum_{\substack{i=1\\\gee_i=+1}}^r 
	v_p\Bigl(\frac{q^{d_i}-1}{q-1}\Bigr) = 
	\sum_{\substack{i=1\\\gee_i=+1}}^r v_p(d_i) = v_p(|W_0|)
	= v_p(|N_G(T)|) - v_p(|T|) \]
by Lemma \ref{v(q^i-1)} again, and so $N_G(T)$ contains a Sylow 
$p$-subgroup of $G$.

\smallskip

\noindent\boldd{The free $\gen{\gamma}$-orbit} $\{\alpha\}$ (if 
$\gamma=\Id$) or $\{\alpha,\tau(\alpha)\}$ (if $|\gamma|=2$ and 
$\alpha\ne\tau(\alpha)$), for any $\alpha\in\Sigma$, satisfies the 
hypotheses of this condition.

\smallskip

\noindent\boldd{$[\gamma,\psi_{q_0}]=\Id$ } since $\gamma\in\Gamma_{\4G}$.
\end{proof}

We are now ready to describe the reduction, when $p=2$, to groups with 
$\sigma$-setups satisfying Hypotheses \ref{G-hypoth-X}.

\begin{Prop} \label{G-cases-2}
Assume $G\in\Lie(q_0)$ is of universal type for some odd prime $q_0$. 
Fix $S\in\syl2{G}$, and assume $S$ is nonabelian.  Then there is an odd 
prime $q_0^*$, a group $G^*\in\Lie(q_0^*)$ of universal type, and $S^*\in\syl2{G^*}$, such 
that $\calf_S(G)\cong\calf_{S^*}(G^*)$, and $G^*$ has a $\sigma$-setup which 
satisfies case \ref{easy_case} of Hypotheses \ref{G-hypoth-X} and also Hypotheses 
\ref{G-hypoth-X2}. Moreover, if $G^*\cong G_2(q^*)$ where $q^*$ is a power 
of $q_0^*$, then we can arrange that either $q^*=5$ or $q_0^*=3$. 
\end{Prop}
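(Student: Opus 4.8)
\textbf{Proof strategy for Proposition \ref{G-cases-2}.} The plan is to run a case analysis over the possible isomorphism types of $G\in\Lie(q_0)$ of universal type with nonabelian Sylow $2$-subgroups, using the $2$-local equivalences collected in Theorem \ref{OldThA}, Proposition \ref{OldPrA3}, and Lemma \ref{cond(i-iii)} to replace $G$ by a group $G^*$ defined over a well-chosen field $\F_{q^*}$, and then to verify that $G^*$ satisfies case \ref{easy_case} of Hypotheses \ref{G-hypoth-X} via Lemma \ref{case(III.1)}, together with Hypotheses \ref{G-hypoth-X2}. The first reduction is to fix the prime $q_0^*$: by Lemma \ref{cond(i-iii)}(b), for the given $q$ (a power of $q_0$) there is $\gee=\pm1$ and $k\ge0$ with $\gee\equiv q$ (mod $8$) and $\4{\gen{q}}=\4{\gen{\gee\cdot3^{2^k}}}$, unless $\4{\gen{q}}=\4{\gen{3}}$ or $\4{\gen 5}$; in the first alternative we take $q_0^*=3$, and using $\4{\gen 5}=\4{\gen{-3}}$ we can always arrange $q_0^*=3$ with the possible exception of a case that forces us to keep $q_0^*\equiv\pm3$ (mod $8$) as required by Hypotheses \ref{G-hypoth-X2}(i). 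In all cases we arrange $q^*=(q_0^*)^b$ with $b$ a power of $2$, so Hypotheses \ref{G-hypoth-X2}(iii) holds, and $q^*\equiv1$ (mod $4$) so that Lemma \ref{case(III.1)} applies.

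The second and main part is to reduce the list of groups $G$ under consideration. Since $q_0$ is odd and $S$ is nonabelian, $2\bmid|W_0|$, so $G$ cannot be a group whose relevant Weyl group is a $2'$-group; in practice this eliminates $\lie2G2(q)$ (abelian Sylow $2$ anyway, by \cite[Theorem 8.5]{Ree-G2}), and leaves the Chevalley, Steinberg, and $\lie2F4$, $\lie3D4$ families. For $\lie3D4(q)$ we use \cite[Example 4.5]{BMO1} to replace $G$ by a group with $S\in\syl2{G_2(q)}$; for $\lie2F4(q)$ the Sylow $2$-subgroup question is handled separately (and $\lie2F4$ is excluded from Hypotheses \ref{G-hypoth-X}, but $G_2$ is the intended output anyway). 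Among Chevalley and Steinberg groups, for $G\cong\SU_n(q)$ or $\lie2E6(q)$ we apply Theorem \ref{OldThA}(d) to pass to $\SL_n(q^\vee)$ or $E_6(q^\vee)$, and similarly $\Sp_{2n}(q)\sim_2\SL_{2n}(q)$ when $\ord_2(q)$ is even (Proposition \ref{OldPrA3}(a)) and $\Spin_{2n+1}(q)\sim_2\Sp_{2n}(q)$ (Proposition \ref{OldPrA3}(b)); for $\Spin_{2n}^\gee(q)$ we use Theorem \ref{OldThA}(a,b,d) or Proposition \ref{OldPrA3}(c) to arrive at $\Spin_{2n}^\pm(q^*)$ or $\Spin_{2n-1}(q^*)$. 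Each resulting $G^*$ is then of the form $\gg(q^*)$ or $\lie2\gg(q^*)$ with $\gg$ of type $A$, $D$, or $E_6$ of universal type, so Lemma \ref{case(III.1)} provides the required $\sigma$-setup satisfying case \ref{easy_case} of Hypotheses \ref{G-hypoth-X}; combined with the choice of $q^*$ above, Hypotheses \ref{G-hypoth-X2} also holds.

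The $G_2$ case requires the extra care flagged in the statement. If after the reductions $G^*\cong G_2(q^*)$, then $G_2$ is not of type $A$, $D$, or $E_6$, so Lemma \ref{case(III.1)} still applies (it covers all simple $\gg$ of universal type), but we want either $q^*=5$ or $q_0^*=3$. Here $q_0^*=3$ is exactly the first alternative of Lemma \ref{cond(i-iii)}(b), i.e.\ $\4{\gen q}=\4{\gen 3}$; if instead $\4{\gen q}=\4{\gen 5}$ we can take $q^*=5$ directly (noting $5\equiv5\pmod 8$, so Hypotheses \ref{G-hypoth-X2}(i) holds, and $5\equiv1\pmod 4$); and if $\4{\gen q}=\4{\gen{\gee\cdot 3^{2^k}}}$ with $\gee=-1$ then $\4{\gen q}=\4{\gen{5^{2^{k-1}}}}$ (using $\4{\gen 5}=\4{\gen{-3}}$), so we may take $q_0^*=5$ with $q^*=5^{2^{k-1}}$—but to match the stated dichotomy we instead invoke Theorem \ref{OldThA}(a,c): since $-\Id$ lies in the Weyl group of $G_2$, $G_2(q)\sim_2 G_2(q')$ whenever $\4{\gen{-1,q}}=\4{\gen{-1,q'}}$, which lets us replace the field by one in $\gen 3$ or $\gen 5$ up to $\{\pm1\}$-adjustment and land in the $q^*=5$ or $q_0^*=3$ alternative. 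The main obstacle I expect is precisely this bookkeeping: tracking, through the chain of $p$-local equivalences, which fields can be reached while simultaneously respecting $q^*\equiv1\pmod 4$, $q_0^*\equiv\pm3\pmod 8$, and $b$ a power of $2$, and confirming that the output group is genuinely of universal type (which for the twisted families requires checking the "$d=1$" condition in the sense of \cite[Lemma 14.1.2(iii)]{Carter}).
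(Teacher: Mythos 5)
There is a genuine gap in the middle step of your argument: the reduction of the symplectic and spin groups via Proposition \ref{OldPrA3} is not available at the prime $2$. That proposition is stated, and proved, only for \emph{odd} $p$ (its proof passes from $\Spin_m^\gee(q)$ to $\Omega_m^\gee(q)$ through a central subgroup of order $2$, which requires $p$ odd), and at $p=2$ its hypotheses are vacuous or its conclusions false: the condition ``$\ord_p(q)$ even'' in part (a) never holds for $p=2$ and $q$ odd, and $\Sp_{2n}(q)$ is genuinely exceptional at the prime $2$ (cf.\ Theorem \ref{ThJ}), so the chain $\Spin_{2n+1}(q)\sim_2\Sp_{2n}(q)\sim_2\SL_{2n}(q)$ and the use of \ref{OldPrA3}(c) for $\Spin_{2n}^\gee(q)$ collapse. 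A second misstep is the unconditional untwisting via Theorem \ref{OldThA}(d): if $q\equiv1\pmod4$ and you replace $\SU_n(q)$ by $\SL_n(q^\vee)$ with $\4{\gen{q^\vee}}=\4{\gen{-q}}$, then $q^\vee\equiv3\pmod4$, so Lemma \ref{case(III.1)} no longer applies, and Theorem \ref{OldThA}(a) cannot repair this since $\4{\gen{q'}}$ with $q'\equiv1\pmod4$ can never equal $\4{\gen{q^\vee}}$ with $q^\vee\equiv3\pmod4$. (Also, $\lie2F4(q)$ lies in $\Lie(2)$ only, so it simply does not arise when $q_0$ is odd.)

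None of this type-by-type reduction is needed, and the paper's proof avoids it entirely: since Lemma \ref{case(III.1)} covers \emph{every} Chevalley group $\gg(q^*)$ of universal type with $q^*\equiv1\pmod4$, together with ${}^2\gg(q^*)$ for $\gg=A_n$, $D_n$, $E_6$, the only moves required at $p=2$ are to replace $\lie3D4(q)$ by $G_2(q)$ (as you do), to choose $q^*=(q_0^*)^{2^k}$ with $q_0^*\in\{3,5\}$ by Lemma \ref{cond(i-iii)}(b) so that $\4{\gen{q}}=\4{\gen{\gee\cdot q^*}}$ where $\gee\equiv q\pmod4$, and then to keep exactly the same (possibly twisted) type ${}^\tau\gg(q^*)$ when $\gee=1$ (Theorem \ref{OldThA}(a)) or when $\gee=-1$ and $-\Id\in W$ (Theorem \ref{OldThA}(c)), toggling twisted/untwisted via Theorem \ref{OldThA}(d) only when $\gee=-1$ and $-\Id\notin W$ (so $\gg=A_n$, $D_n$ with $n$ odd, or $E_6$). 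In every case $q^*\equiv1\pmod4$, Hypotheses \ref{G-hypoth-X2} hold by the choice of $q_0^*$ and $b=2^k$, and the $G_2$ refinement ($q^*=5$ or $q_0^*=3$) is automatic from $q_0^*\in\{3,5\}$. Your proposal has the right endpoints, but the route through Proposition \ref{OldPrA3} and the unconditional use of \ref{OldThA}(d) would fail for the symplectic, spin, and $q\equiv1\pmod4$ twisted cases.
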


\begin{proof} Since $q_0$ is odd, and since the Sylow $2$-subgroups of 
$\lie2G2(3^{2k+1})$ are abelian for all $k\ge1$ \cite[Theorem 8.5]{Ree-G2}, 
$G$ must be a Chevalley or Steinberg group. If $G\cong\lie3D4(q)$, then 
$\calf$ is also the fusion system of $G_2(q)$ by \cite[Example 4.5]{BMO1}. 
So we can assume that $G\cong{}^\tau\gg(q)$ for some odd prime power $q$, 
some $\gg$, and some graph automorphism $\tau$ of order $1$ or $2$. 

Let $\gee\in\{\pm1\}$ be such that $q\equiv\gee$ (mod $4$). By Lemma 
\ref{cond(i-iii)}, there is a prime $q_0^*$ and $k\ge0$ such that 
$\4{\gen{q}}=\4{\gen{\gee\cdot (q_0^*)^{2^k}}}$, where either $q_0^*=5$ and 
$k=0$, or $q_0^*=3$ and $k\ge1$. 

If $\gee=1$, then set $G^*={}^\tau\gg((q_0^*)^{2^k})$, and fix 
$S^*\in\syl2{G^*}$. Then $\calf_{S^*}(G^*)\cong\calf_S(G)$ by Theorem 
\ref{OldThA}(a), $G^*$ satisfies case \ref{easy_case} of Hypotheses 
\ref{G-hypoth-X} by Lemma \ref{case(III.1)} (and since $(q_0^*)^{2^k}\equiv1$ 
(mod $4$)), and $G^*$ also satisfies Hypotheses \ref{G-hypoth-X2}.

Now assume $\gee=-1$. If $-\Id$ is in the Weyl group of $G$, then set 
$G^*={}^\tau\gg((q_0^*)^{2^k})$. If $-\Id$ is not in the Weyl group, then 
$\gg=A_n$, $D_n$ for $n$ odd, or $E_6$, and we set $G^*=\gg((q_0^*)^{2^k})$ if 
$\tau\ne\Id$, and $G^*={}^2\gg((q_0^*)^{2^k})$ if $G=\gg(q)$. In all cases, for 
$S^*\in\sylp{G^*}$, $\calf_{S^*}(G^*)\cong\calf_S(G)$ by Theorem 
\ref{OldThA}(c,d), $G^*$ satisfies case \ref{easy_case} of Hypotheses 
\ref{G-hypoth-X} by Lemma \ref{case(III.1)} again, and also satisfies 
Hypotheses \ref{G-hypoth-X2}.

By construction, if $\gg=G_2$, then either $q_0^*=3$ or 
$(q_0^*)^{2^k}=5$. 
\end{proof}

When $G\cong G_2(5)$ and $p=2$, $G$ satisfies Hypotheses \ref{G-hypoth-X} 
and \ref{G-hypoth-X2}, but $\4\kappa_G$ is not shown to be surjective in 
Proposition \ref{kappa_onto} (and in fact, it is not surjective). Hence 
this case must be handled separately.

\begin{Prop} \label{p:G2(5)}
Set $G=G_2(5)$ and $G^*=G_2(3)$, and fix $S\in\syl2{G}$ and 
$S^*\in\syl2{G^*}$. 
Then $\calf_{S^*}(G^*)\cong\calf_{S}(G)$ as fusion systems, and 
$\4\kappa_{G^*}=\mu_{G^*}\circ\kappa_{G^*}$ is an isomorphism from 
$\Out(G^*)\cong C_2$ onto $\Out(S^*,\calf_{S^*}(G^*))$. 
\end{Prop}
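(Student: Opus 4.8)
\textbf{Proof proposal for Proposition \ref{p:G2(5)}.}

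The plan is to deal separately with the two assertions: first the isomorphism of fusion systems $\calf_{S^*}(G^*)\cong\calf_S(G)$, and then the claim that $\4\kappa_{G^*}$ is an isomorphism onto $\Out(S^*,\calf_{S^*}(G^*))\cong C_2$. For the first, I would invoke Theorem \ref{OldThA}(a): the connected reductive group scheme $\gg=G_2$ over $\Z$ satisfies $\gg(5)\sim_2\gg(q')$ whenever $\4{\gen5}=\4{\gen{q'}}$ as closed subgroups of $\Z_2^\times$. Since $5\equiv-3$ (mod $8$), we have $\4{\gen5}=\4{\gen{-3}}$, and since $-3=3^2\cdot(-1)/3$... more directly $\4{\gen5}=\4{\gen{-3}}=\4{\gen3}$ because $5$ and $3$ both generate the same index-$2$ subgroup of $\Z_2^\times$ (indeed $5\equiv 5$, $3\equiv 3$ (mod $8$), both of order... ) — the precise check is that $\4{\gen q}$ for $q\equiv\pm3$ (mod $8$) depends only on $v_2(q^2-1)$, and $v_2(5^2-1)=v_2(24)=3=v_2(3^2-1)$, so $\4{\gen5}=\4{\gen3}$. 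Hence $G_2(5)\sim_2 G_2(3)$, which gives a fusion preserving isomorphism $S^*\xrightarrow{\cong}S$ and therefore $\calf_{S^*}(G^*)\cong\calf_S(G)$.

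For the second assertion I would work entirely with $G^*=G_2(3)$, which satisfies case \ref{easy_case} of Hypotheses \ref{G-hypoth-X} (with $p=2$, $q=3$, so $q\equiv3$ (mod $4$) — so actually one uses the reduction via Theorem \ref{OldThA}(c,d) or simply works with a convenient model; more cleanly, since $3\equiv3$ (mod $8$) one may instead apply the results of Section \ref{s:X1} to $G_2(9)$, which has the same $2$-fusion system by Theorem \ref{OldThA}(a) and does satisfy $9\equiv1$ (mod $8$)). I would first establish $\Out(G^*)\cong C_2$: by Theorem \ref{St-aut}, $\Out(G_2(3))=\Inndiag(G_2(3))\Phi_{G_2(3)}\Gamma_{G_2(3)}$, where $\Inndiag=1$ since $Z(G_2)=1$ and $\Outdiag(G)\cong Z(G)$, $\Phi_{G_2(3)}=\gen{\psi_3}\cong C_1$ since $3=3^1$, and $\Gamma_{G_2(3)}=\gen{\psi}\cong C_2$ is the exceptional graph automorphism (since $(\gg,q_0)=(G_2,3)$ is one of the special pairs in Definition \ref{d:Aut(Gbar)}(b)). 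So $\Out(G^*)=\gen{\psi}\cong C_2$. Then injectivity of $\4\kappa_{G^*}$ follows from Lemma \ref{kappa_inj}, so it remains only to show $\4\kappa_{G^*}$ is surjective onto $\Out(S^*,\calf_{S^*}(G^*))$, equivalently (by Lemma \ref{kappa-lift}(a)) that every $\varphi\in\Aut(S^*,\calf^*)$ extends to an automorphism of $G^*$.

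The surjectivity step is the main obstacle, since Proposition \ref{kappa_onto} explicitly excludes $(G_2(q),2)$ with $q_0\ne3$, and here $q_0=3$ — so in fact Proposition \ref{kappa_onto} \emph{does} apply to $G_2(3)$ (the exception is only when $q_0\ne3$), giving $\4\kappa_{G^*}$ surjective directly. Thus I would: (i) verify that $G_2(3)$, or rather a $2$-locally equivalent power $G_2(3^{2^k})$ satisfying Hypotheses \ref{G-hypoth-X2}, falls under case \ref{easy_case} of Hypotheses \ref{G-hypoth-X} via Lemma \ref{case(III.1)}; (ii) apply Proposition \ref{kappa_onto} to conclude $\4\kappa$ is surjective for that group, noting $|\Coker|\le 2$ is actually $0$ since the excluded case requires $q_0\ne3$; (iii) transport back along the fusion-system isomorphism of Theorem \ref{OldThA}(a) to get surjectivity for $G_2(3)$ itself. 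Combined with Lemma \ref{kappa_inj} and $\Out(G_2(3))\cong C_2$, this shows $\4\kappa_{G^*}$ is an isomorphism and in particular $\Out(S^*,\calf^*)\cong C_2$. The one delicate point I anticipate is bookkeeping the field/graph automorphisms correctly when passing between $G_2(3)$ and $G_2(3^{2^k})$ — one must check that the exceptional graph automorphism of $G_2(3)$ corresponds under the equivalence to an automorphism in the image of $\4\kappa$ for the larger group, which follows from the description of $\Gamma_G$ modulo $\Inndiag(G)$ in Proposition \ref{nonstandard} being independent of the standard $\sigma$-setup chosen.
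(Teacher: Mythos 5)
Your first step is justified incorrectly: Theorem \ref{OldThA}(a) needs $\4{\gen{5}}=\4{\gen{3}}$ in $\Z_2^\times$, and this is false. One has $\4{\gen{3}}=\{x\equiv1,3\pmod 8\}$ while $\4{\gen{5}}=1+4\Z_2=\{x\equiv1,5\pmod 8\}$ (compare Lemma \ref{cond(i-iii)}(b), which lists these as distinct cases); the equality $v_2(5^2-1)=v_2(3^2-1)$ only gives $\4{\gen{-1,5}}=\4{\gen{-1,3}}$. That is exactly the hypothesis of Theorem \ref{OldThA}(c), which applies because $-\Id$ lies in the Weyl group of $G_2$, so the conclusion $G_2(5)\sim_2G_2(3)$ is still true and this part is repairable — but as written the verification is wrong.

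The second half has two genuine gaps. For injectivity, Lemma \ref{kappa_inj} is an equicharacteristic statement ($G\in\Lie(p)$ with $U\in\sylp G$); here $G_2(3)\in\Lie(3)$ and $p=2$, so it says nothing, and one must actually prove that the exceptional graph automorphism cannot centralize $S^*$ — the paper does this by showing $S^*$ has a unique index-$2$ subgroup $Q\cong Q_8\times_{C_2}Q_8$ and that any outer automorphism centralizing the involution $x\in Z(S^*)$ must exchange the two $\SL_2(\4\F_3)$ central factors of $C_{\4G}(x)$, hence the two $Q_8$'s. For surjectivity, Proposition \ref{kappa_onto} simply cannot be applied on the $G_2(3)$ side: $G_2(3)$ at $p=2$ fails Hypotheses \ref{G-hypoth-X} altogether (case \ref{easy_case} requires $q\equiv1\pmod4$, cases \ref{minus_case} and \ref{messy_case} require $p$ odd), and your proposed detour through $G_2(9)$ fails because $\4{\gen{9}}=1+8\Z_2\ne\4{\gen{3}}$; indeed $v_2(|G_2(3)|)=6$ while $v_2(|G_2(9)|)=8$, so these groups do not even have isomorphic Sylow $2$-subgroups, and no $G_2(3^{2^k})$ with $k\ge1$ is $2$-locally equivalent to $G_2(3)$. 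The actual argument runs the other way: apply Proposition \ref{kappa_onto} to $G_2(5)$ (which does satisfy Hypotheses \ref{G-hypoth-X} and \ref{G-hypoth-X2} but lies in the exceptional list, giving only $|\Coker(\4\kappa_{G_2(5)})|\le2$), use $\Out(G_2(5))=1$ to deduce $|\Out(S,\calf_S(G))|\le2$, and then combine this bound with the injectivity of $\4\kappa_{G_2(3)}$ and $|\Out(G_2(3))|=2$ to force $\4\kappa_{G_2(3)}$ to be an isomorphism. The whole point of the proposition is that the extra fusion-system automorphism is not realized by $\Aut(G_2(5))$ and is realized only by the graph automorphism of $G_2(3)$, so no version of Proposition \ref{kappa_onto}, applied directly or after a change of field on the $G_2(3)$ side, can yield the surjectivity you need.
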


\begin{proof} The first statement follows from Theorem \ref{OldThA}(c). 
Also, $|\Out(G)|=2$ and $|\Out(G^*)|=1$ by Theorem \ref{St-aut}, 
and since $G$ and $G^*$ have no field automorphisms and all diagonal 
automorphisms are inner (cf. \cite[3.4]{Steinberg-aut}), and $G=G_2(3)$ has a 
nontrivial graph automorphism while $G^*=G_2(5)$ does not 
\cite[3.6]{Steinberg-aut}.
Since $G$ satisfies Hypotheses \ref{G-hypoth-X} and \ref{G-hypoth-X2}, 
$|\Coker(\4\kappa_{G})|\le2$ by Proposition \ref{kappa_onto}, so 
$|\Out(S,\calf_{S}(G))|\le2$.

By \cite[Proposition 4.2]{O-rk4}, $S^*$ contains a unique subgroup $Q\cong 
Q_8\times_{C_2}Q_8$ of index $2$. Let $x\in{}Z(Q)=Z(S^*)$ be the central 
involution. Set $\4G=G_2(\4\F_3)>G^*$. Then $C_{\4G}(x)$ is connected since 
$\4G$ is of universal type \cite[Theorem 8.1]{Steinberg-end}, so 
$C_{\4G}(x)\cong\SL_2(\4\F_3)\times_{C_2}\SL_2(\4\F_3)$ by Proposition 
\ref{p:CG(T)}. Furthermore, any outer (graph) automorphism which 
centralizes $x$ exchanges the two central factors $\SL_2(\4\F_3)$. Hence 
for each $\alpha\in\Aut(G^*){\sminus}\Inn(G^*)$ which normalizes $S^*$, 
$\alpha$ exchanges the two factors $Q_8$, and in particular, does not 
centralize $S^*$. Thus $\4\kappa_{G^*}$ is injective, and hence an 
isomorphism since $|\Out(G^*)|=2$ and 
$|\Out(S^*,\calf_{S^*}(G^*))|=|\Out(S,\calf_{S}(G))|\le2$. 
\end{proof}

We now turn to case \ref{minus_case} of Hypotheses \ref{G-hypoth-X}. 

\begin{Lem} \label{invert(III.2)}
Fix an odd prime $p$, and an odd prime power $q$ prime to $p$ such that 
$q\equiv-1$ (mod $p$).  Let $G$ be one of the groups $\Sp_{2n}(q)$, 
$\Spin_{2n+1}(q)$, $\Spin_{4n}^+(q)$ ($n\ge2$), $G_2(q)$, $F_4(q)$, $E_7(q)$, 
or $E_8(q)$ (i.e., $G=\gg(q)$ for some $\gg$ whose Weyl group contains 
$-\Id$), and assume that the Sylow $p$-subgroups of $G$ are nonabelian. 
Then $G$ has a $\sigma$-setup $(\4G,\sigma)$ such that Hypotheses 
\ref{G-hypoth-X}, case \textup{\ref{minus_case}}, hold.
\end{Lem}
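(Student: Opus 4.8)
The goal is to produce a standard $\sigma$-setup $(\4G,\sigma)$ for $G$ which satisfies case \ref{minus_case} of Hypotheses \ref{G-hypoth-X}. The plan is to take $\4G=\gg(\fqobar)$ with a maximal torus $\4T$, fundamental roots $\Pi$, and Chevalley parametrization fixed as in Notation \ref{G-setup}, and to set $\sigma=\gamma\circ\psi_q=\psi_q\circ\gamma$, where $\psi_q\in\Phi_{\4G}$ is the field endomorphism and $\gamma=c_n\in\Inn(\4G)$ is conjugation by an element $n\in N_{\4G}(\4T)$ representing the longest element $w_0\in W$. Since $-\Id\in W$ for each of the listed $\gg$, we have $w_0=-\Id$ acting on $V$, hence $w_0$ sends $\Sigma_+$ to $\Sigma_-$ and by Lemma \ref{theta-r}\eqref{wa(hb)} acts on $\4T$ by $t\mapsto t^{-1}$; thus $\gamma(t)=t^{-1}$ for all $t\in\4T$, which is one of the requirements of \ref{minus_case}. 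I should note that in this situation $G=O^{q_0'}(C_{\4G}(\sigma))$ is a Chevalley group — indeed $C_{\4G}(\sigma)\cong C_{\4G}(\psi_{q'})$ for the twisted-torus version, but more simply, since $\gamma$ is inner, $\sigma$ is conjugate in $\Inn(\4G)$ to a standard field endomorphism, so $G\cong\gg(q)$ up to isomorphism (cf. \cite[Theorem 2.2.3]{GLS3} or Proposition \ref{nonstandard}(a)); this is why $G$ appears as $\gg(q)$ in the hypothesis list.

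First I would verify condition \ref{G-hypoth-X}\eqref{not4x}: $p\ne q_0$ by hypothesis; $p\bigm||W_0|$ because $W_0=W$ here (since $\gamma$ acts trivially on the Dynkin diagram, $\tau=-\Id$ preserves $\Sigma$ and $W_0=C_W(\tau)=W$ as $-\Id$ is central in $W$), and $p$ divides $|W|$ — this last point follows from the assumption that the Sylow $p$-subgroups of $G$ are nonabelian together with \cite[10-1]{GL}, exactly as in the proof of Proposition \ref{list-simp}(i). The conditions that $\psi_{q_0}$ commutes with $\gamma$ and normalizes $G$ hold because $\gamma=c_n$ with $n\in N_{\4G}(\4T)$ built from the elements $n_\alpha(1)$ (which are fixed by $\psi_{q_0}$), so $[\gamma,\psi_{q_0}]=\Id$. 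Finally the linear-independence clause: since $\tau=-\Id$, each $\langle\tau\rangle$-orbit in $\Sigma$ is $\{\pm\alpha\}$, which is a free orbit, and I take $\Omega=\{\alpha\}$ for a single root $\alpha$ — then $\pm\Omega=\{\pm\alpha\}$ is the free orbit and $\Omega$ is (trivially) linearly independent; this is the ``$\{\pm\alpha_1,\dots,\pm\alpha_s\}$'' alternative with $s=1$.

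Next I would check \ref{G-hypoth-X}(II): $\4G$ is of universal type by construction (the hypothesis lists $\Sp_{2n}$, $\Spin_{2n+1}$, etc., i.e.\ simply connected forms), and $N_G(T)$ contains a Sylow $p$-subgroup. For the latter, since $\sigma$ acts on $\4T$ by $t\mapsto t^{-q}$, we get $T=C_{\4T}(\sigma)=\{t\in\4T\mid t^{q+1}=1\}\cong(C_{q+1})^r$ where $r=\rk(\4G)$; and $|G|=q^N\prod_{i=1}^r(q^{d_i}-\gee_i)$ where $\gee_i=(-1)^{d_i}$ for the degrees $d_i$ of the fundamental invariants (this is the standard formula for $\gg(q)$ with $q\equiv-1$, computed from the twisted expression — or simply use that $G\cong\gg(q)$ and the usual order formula). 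Using $v_p(q^{d}-1)=v_p(q^2-1)+v_p(d/2)$ when $d$ is even (Lemma \ref{v(q^i-1)} applied to $q^2\equiv1$) and $v_p(q^d+1)=0$ when $d$ is odd ($p$ odd), and comparing with $v_p(|N_G(T)|)=v_p((q+1)^r)+v_p(|W|)=r\cdot v_p(q+1)+\sum v_p(d_i)$, I would show these valuations agree — the bookkeeping is parallel to the $|\gamma|=2$ case in the proof of Lemma \ref{case(III.1)}, using $d_1\cdots d_r=|W|$ and that the number of even $d_i$ equals $\dim C_V(w_0)$'s complement, i.e.\ equals $r$ since $w_0=-\Id$. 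I would also record here, for later use in \ref{minus_case}, that $C_A(O_{p'}(W_0))=1$: since $\gamma|_{\4T}$ inverts $\4T$ and $\gamma|_A\in O_{p'}(W_0)$ (it has order $\le2$, $p$ odd), we get $C_A(O_{p'}(W_0))\le C_A(\gamma|_A)=\{a\in A\mid a=a^{-1}\}=1$ as $A$ is an odd $p$-group — this also gives $\ordp(q)=2$, consistent with the convention in \ref{minus_case} (note $|\tau|=2$).

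The remaining verification is the last two lines of \ref{G-hypoth-X}\ref{messy_case}-type conditions that are still required in \ref{minus_case} — but in fact \ref{minus_case} itself only demands ``$p$ odd, $q\equiv-1$ (mod $p$), $G$ a Chevalley group, $\gamma(t)=t^{-1}$'', so once the three bulleted items of \ref{not4x} and the two items of (II) are checked, \ref{minus_case} holds verbatim. The main obstacle I anticipate is the order-comparison showing $N_G(T)\in\syl p{}$-containment, i.e.\ correctly identifying which invariant degrees $d_i$ are even and matching $p$-adic valuations; this requires knowing that $w_0=-\Id$ forces all $d_i$ even for these root systems (true for $C_n$, $B_n$, $D_n$ with $n$ even, $G_2$, $F_4$, $E_7$, $E_8$ — one checks the standard tables, e.g.\ \cite[Planches]{Bourb4-6}), and then invoking Lemma \ref{v(q^i-1)}. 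Everything else is a direct substitution into the definitions, closely modeled on Lemmas \ref{case(III.1)} and the arguments already used for Chevalley groups elsewhere in Section \ref{s:X1}.
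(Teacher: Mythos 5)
Your proposal is correct and follows essentially the same route as the paper: take $\sigma=\gamma\psi_q$ with $\gamma$ inner conjugation by a lift of $-\Id\in W$ (so $\gamma$ inverts $\4T$), take $\{\pm\alpha\}$ as the free $\gen{\tau}$-orbit, and get the Sylow containment from the order formula together with the fact that all invariant degrees are even and Lemma \ref{v(q^i-1)}. The only differences are cosmetic: you secure $[\gamma,\psi_{q_0}]=\Id$ by choosing the explicit $\psi_{q_0}$-fixed representative built from the $n_\alpha(1)$, where the paper invokes Lemma \ref{l:gT}, and you verify a few conditions (e.g.\ $p\bigm||W_0|$, $C_A(O_{p'}(W_0))=1$) that the paper leaves implicit or that case \ref{minus_case} does not require.
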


\begin{proof} Assume $q=q_0^b$ where $q_0$ is prime and $b\ge1$. Set 
$\4G=\gg(\fqobar)$, and let $\4T<\4G$ be a maximal torus. 
Set $r=\rk(\4T)$ and $k=v_p(q+1)$. 

In all of these cases, $-\Id\in{}W$, so there is a coset $w_0\in 
N_{\4G}(\4T)/\4T$ which inverts $\4T$. Fix $g_0\in{}N_{\4G}(\4T)$ such that 
$g_0\4T=w_0$ and $\psi_{q_0}(g_0)=g_0$ (Lemma \ref{l:gT}). Set 
$\gamma=c_{g_0}$ and $\sigma=\gamma\circ\psi_q$. We identify 
$G=O^{q_0'}(C_{\4G}(\sigma))$, $T=G\cap\4T$, and $A=O_p(T)$. Since 
$\sigma(t)=t^{-q}$ for each $t\in\4T$, $T\cong(C_{q+1})^r$ is the 
$(q+1)$-torsion subgroup of $\4T$, and $A\cong(C_{p^k})^r$. 

\smallskip

\noindent\boldd{$N_G(T)$ contains a Sylow $p$-subgroup of $G$. } In all 
cases, by \cite[Theorem 9.4.10]{Carter} (and since $G$ is in universal 
form), $|G|=q^N\prod_{i=1}^r(q^{d_i}-1)$, where $d_1d_2\cdots{}d_r=|W|$ by 
\cite[Theorem 9.3.4]{Carter}. Also, the $d_i$ are all even in the cases 
considered here (see \cite[Theorem 25]{Steinberg-lect} or 
\cite[Corollary 10.2.4 \& Proposition 10.2.5]{Carter}). Hence by Lemma 
\ref{v(q^i-1)} and since $p$ is odd,
	\begin{align*} 
	v_p(|G|) 
	&= \sum_{i=1}^r v_p(q^{d_i}-1) 
	= \sum_{i=1}^r v_p\bigl((q^2)^{d_i/2}-1\bigr) 
	= \sum_{i=1}^r \bigl(v_p(q^2-1) +  v_p(d_i/2)\bigr) \\
	&= r\cdot v_p(q+1) + \sum_{i=1}^r v_p(d_i) 
	= v_p(|T|) + v_p(|W|) = v_p(|N_G(T)|)\,. 
	\end{align*}


\smallskip

\noindent\boldd{$[\gamma,\psi_{q_0}]=\Id$ } since $\gamma=c_{g_0}$ and 
$\psi_{q_0}(g_0)=g_0$. 

\smallskip

\noindent\boldd{A free $\gen{\gamma}$-orbit in $\Sigma$. } For each 
$\alpha\in\Sigma$, $\{\pm\alpha\}$ is a free $\gen{\gamma}$-orbit.
\end{proof}

We now consider case \ref{messy_case} of Hypotheses \ref{G-hypoth-X}. By 
\cite[10-1,2]{GL}, when $p$ is odd, each finite group of Lie type has a 
$\sigma$-setup for which $N_G(T)$ contains a Sylow $p$-subgroup of $G$. 
Here, we need to construct such setups explicitly enough to be able to 
check that the other conditions in Hypotheses \ref{G-hypoth-X} hold.

When $p$ is a prime, $A$ is a finite abelian $p$-group, and 
$\Id\ne\xi\in\Aut(A)$ has order prime to $p$, we say that $\xi$ is a 
\emph{reflection} in $A$ if $[A,\xi]$ is cyclic.  In this case, there is a 
direct product decomposition $A=[A,\xi]\times C_A(\xi)$, and we call 
$[A,\xi]$ the \emph{reflection subgroup} of $\xi$. This terminology will be 
used in the proofs of the next two lemmas.

\newcommand{\XX}{\boldsymbol{\tau}}
\newcommand{\YY}{\boldsymbol{\xi}}

\begin{Lem} \label{classical(III.3)}
Fix an odd prime $p$, and an odd prime power $q$ prime to $p$ such that 
$q\not\equiv1$ (mod $p$).  Let $G$ be one of the classical groups 
$\SL_n(q)$, $\Sp_{2n}(q)$, $\Spin_{2n+1}(q)$, or $\Spin_{2n}^{\pm}(q)$, and 
assume that the Sylow $p$-subgroups of $G$ are nonabelian. Then $G$ has a 
$\sigma$-setup $(\4G,\sigma)$ such that case \ref{messy_case} of 
Hypotheses \ref{G-hypoth-X} holds.
\end{Lem}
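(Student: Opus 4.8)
\textbf{Proof plan for Lemma \ref{classical(III.3)}.}
The strategy is to produce, for each of the four families of classical groups, an explicit $\sigma$-setup $(\4G,\sigma)$ with $\sigma=\gamma\psi_q$ (where $\gamma$ is conjugation by a suitable element of $N_{\4G}(\4T)$ commuting with $\psi_{q_0}$), and then verify the conditions in \ref{G-hypoth-X}\eqref{not4x}, \ref{G-hypoth-X}(II), and \ref{G-hypoth-X}\ref{messy_case} one at a time. For the underlying data I would follow the standard reference \cite[Theorem 4.10.2]{GLS3}: since the Sylow $p$-subgroups are nonabelian and $q\not\equiv1$ (mod $p$), with $m_0=\ordp(q)\ge2$, there is a $\sigma$-invariant maximal torus $\4T$ with $N_G(T)\supseteq S\in\sylp{G}$, and $A=O_p(T)$ is a product of cyclic $p$-groups permuted and acted on by $W_0=C_W(\tau)$ in a way that can be read off from a wreath-product description as in Table \ref{tb:III.2a} (to be set up in the proof). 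Concretely, in each family $A$ decomposes as a direct sum of ``blocks'' of size $m_0$ (or $2m_0$, or with a small leftover block), on which $W_0$ acts as a product of groups of the form $C_{m_0}\wr\Sigma_k$ (signed/unsigned according to the type), so that $O_{p'}(W_0)$ acts on each block with no nonzero fixed points and $\Aut_{W_0}(A)$ contains the full monomial group.

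The first group of conditions is quick. Condition \ref{G-hypoth-X}\eqref{not4x} holds because $\sigma$ is built as $\gamma\psi_q=\psi_q\gamma$ with $\gamma$ algebraic of finite order normalizing $\4T$ and commuting with $\psi_{q_0}$ (using Lemma \ref{l:gT} to choose the representative of $\gamma$ fixed by $\psi_{q_0}$), and the required free $\gen{\tau}$-orbit of linearly independent roots exists since $|\tau|=m_0\ge2$ and one can take (the $\tau$-orbit of) a fundamental root supported in a single $m_0$-block. Condition (II) is the statement that $\4G$ is universal (true by choice) and $N_G(T)\supseteq S$, which is exactly \cite[Theorem 4.10.2]{GLS3}; alternatively one verifies $v_p(|G|)=v_p(|N_G(T)|)$ by the argument of Lemmas \ref{case(III.1)} and \ref{invert(III.2)} using the degrees $d_i$ of the invariants of $W_0$ and Lemma \ref{v(q^i-1)}.

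The substantive work is checking \ref{messy_case}. The conditions $|\tau|=\ordp(q)\ge2$ and $C_A(O_{p'}(W_0))=1$ are immediate from the block description, since $O_{p'}(W_0)$ acts on each $m_0$-block with trivial fixed subgroup (a cyclic group of order $m_0=\ordp(q)$ acting by a primitive ``rotation'', possibly composed with a sign). The conditions $C_S(\Omega_1(A))=A$ and $\Aut_G(A)=\Aut_{W_0}(A)$ follow from Lemma \ref{NG(T)}(a,b) once one knows $C_{\4G}(A)^0=\4T$; here I would invoke Proposition \ref{p:CbarG(T)}(b) together with Lemma \ref{l:CW(t)}(d) (or a direct check that $C_W(\Omega_1(A))=1$, using that $\Omega_1(A)$ already generates enough of $\4T$), handling the small excluded cases $q_0=2$, $\gg=C_n(3)$, etc., which either do not satisfy $q\not\equiv1$ or have abelian Sylow $p$-subgroups. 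The main obstacle, and the part needing genuine care, is the two displayed inclusions: $N_{\Aut(A)}(\Aut_{W_0}(A))\le\Aut\scal(A)\Aut_{\Aut(G)}(A)$ and $\Aut_{W_0}(A)\cap\Aut\scal(A)\le\gen{\gamma|_A}$ (resp. $\gen{\gamma|_A,\psi_{-1}^A}$). For the second, one computes directly: a scalar automorphism $\psi_\ell^A$ lies in the monomial group $\Aut_{W_0}(A)$ only if $\ell$ acts as a root of unity of order dividing $m_0$ on each block compatibly with the sign structure, which by Lemma \ref{scal-mod-m}-type reasoning forces $\ell\equiv$ a power of the eigenvalue realized by $\gamma$, giving exactly $\gen{\gamma|_A}$, with an extra $\psi_{-1}^A$ precisely when $-\Id\in W$ and $m_0$ is odd. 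For the first inclusion, I would argue that any automorphism of $A$ normalizing the monomial group $\Aut_{W_0}(A)=(C_{m_0}\wr\Sigma_k)\times\cdots$ must permute the blocks and, on each block, normalize a ``Singer-type'' cyclic subgroup, hence be monomial up to a scalar; then each block permutation and each monomial piece is realized by a graph automorphism, a diagonal automorphism, or an element of $N_{\4T}(G)$ acting by a Weyl element — i.e. by an element of $\Aut_{\Aut(G)}(A)$ (using Steinberg's theorem \ref{St-aut} and Proposition \ref{p:Autdiag(G)}(c) to identify $\Aut_{\4G}(G)=\Inndiag(G)$) — leaving only a scalar ambiguity, which is absorbed into $\Aut\scal(A)$. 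This normalizer computation is where the four types genuinely differ (unsigned vs. signed wreath products, the role of the $\pm1$ on $\Spin^\pm$, and the leftover block in odd-dimensional or $\SL_n$ cases), so I would organize it as a short case analysis keyed to Table \ref{tb:III.2a}, each case being a finite, essentially combinatorial check on monomial matrix groups over $\Z/p^k$.
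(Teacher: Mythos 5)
Your overall strategy (an explicit $\sigma$-setup with $\sigma=\gamma\psi_q$, $\gamma$ chosen via Lemma \ref{l:gT} to commute with $\psi_{q_0}$, a block/wreath-product description of $A$ and $W_0$, and a condition-by-condition check) is the same as the paper's, and several of your verifications (the free $\gen\tau$-orbit, $C_A(O_{p'}(W_0))=1$, the order count for $N_G(T)$, the computation of $\Aut_{W_0}(A)\cap\Aut\scal(A)$) are essentially what the paper does. But there is a genuine gap in how you propose to verify $C_S(\Omega_1(A))=A$ and, more seriously, $\Aut_G(A)=\Aut_{W_0}(A)$. You want to deduce these from Lemma \ref{NG(T)}(a,b) by showing $C_{\4G}(A)^0=\4T$, invoking Proposition \ref{p:CbarG(T)}(b) and Lemma \ref{l:CW(t)}(d), or a direct check that $C_W(\Omega_1(A))=1$. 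In case \ref{messy_case} these statements are simply false in general: $A=O_p(T)$ is supported only on the ``blocks'' of the torus, while the leftover torus factors (of order prime to $p$, since $p\nmid q-1$) are centralized by $A$, so $C_{\4G}(A)^0$ contains the corresponding classical subgroup acting on the eigenvalue-one eigenspace (e.g.\ for $G=\Sp_{2n}(q)$ with $\mu\nmid n$ it contains an $\Sp_2(\fqobar)$ beyond $\4T$), and the factor $H$ of $W_0^*$ in the paper's Table \ref{tb:III.2a} centralizes $A$, so $C_W(\Omega_1(A))\ne1$. Note also that Proposition \ref{p:CbarG(T)}(b) concerns $C_{\4G}(T)^0$, and since $A\le T$ this gives no control over $C_{\4G}(A)^0$; and Lemma \ref{NG(T)}(a) is stated only for cases \ref{easy_case} and \ref{minus_case}. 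Because $C_{\4G}(A)\supsetneq\4T$ here, one cannot conclude $N_G(A)\le N_{\4G}(\4T)$; the paper instead proves $\Aut_G(A)=\Aut_{W_0}(A)$ by a different argument (given $g\in N_G(A)$, conjugate $\9g\4T$ back to $\4T$ inside the reductive group $C_{\4G}(A)$, then use a splitting $\4T=\4T_1\times\4T_2$ with $\gamma|_{\4T_2}=\Id$ and $[C_W(A),\4T_1]=1$ to force the resulting Weyl element into $W_0$), and gets $C_S(\Omega_1(A))=A$ directly from the block description since $p\nmid|H|$ and $p\nmid 2\mu$. Your route, as written, would fail at this step.

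A second, smaller but genuine, problem is your structural claim that any element of $N_{\Aut(A)}(\Aut_{W_0}(A))$ ``must permute the blocks and \ldots hence be monomial up to a scalar.'' This is false precisely in the exceptional case $G\cong\Spin_8^{\pm}(q)$, $p=3$, $\ordp(q)=2$, where $\Aut_{W_0}(A)\cong W(D_4)$ acting on $A\cong(C_{3^t})^4$: its normalizer in $\Aut(A)$ contains non-monomial elements inducing triality (the subgroup of $\Aut_{W_0}(A)$ preserving the coordinate factors $A_i$ is not characteristic), so the purely combinatorial reduction to monomial-times-scalar breaks down. The conclusion of the lemma survives because these extra elements are realized by graph automorphisms of $\Spin_8(q)$ — which your $\Aut_{\Aut(G)}(A)$ allows — but this requires a separate argument (the paper's Case 2, via $O_2(W(D_4))\cong Q_8\times_{C_2}Q_8$ and $\Gamma_G\cong\Sigma_3$), not the general monomial normalizer computation you describe; your case analysis would need to identify and treat this case explicitly.
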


\begin{proof} Set $m=\ord_p(q)$; $m>1$ by assumption. 
We follow Notation \ref{G-setup}, except that we have yet to fix the 
$\sigma$-setup for $G$. Thus, for example, $q_0$ is the prime of which 
$q$ is a power. 

When defining and working with the $\sigma$-setups for the spinor groups, 
it is sometimes easier to work with orthogonal groups than with their 
2-fold covers. For this reason, throughout the proof, we set $\gg\7=\SO_\ell$ 
when $\gg=\Spin_\ell$, set $\4G\7=\SO_\ell(\fqobar)$ when 
$\4G=\Spin_\ell(\fqobar)$, and let $\chi\:\4G\Right2{}\4G\7$ be the 
natural surjection. We then set $G\7=C_{\4G\7}(\sigma)\cong\SO_\ell^\pm(q)$, 
once $\sigma$ has been chosen so that 
$G=C_{\4G}(\sigma)\cong\Spin_\ell^\pm(q)$, and set $\4T\7=\chi(\4T)$ and 
$T\7=C_{\4T\7}(\sigma)$. Also, in order to prove the lemma without 
constantly considering these groups as a separate case, we set 
$\4G\7=\4G$, $G\7=G$, $\chi=\Id$, etc. when $G$ is linear or 
symplectic. Thus $G_c$ and $\4G_c$ are classical groups in all cases.

Regard $\4G\7$ as a subgroup of $\Aut(\4V,\bb)$, where $\4V$ is an 
$\fqobar$-vector space of dimension $n$, $2n$, or $2n+1$, and $\bb$ is a 
bilinear form. Explicitly, $\bb=0$ if $\gg=\SL_n$, and $\bb$ has 
matrix $\mxtwo01{-1}0^{\oplus n}$ if $\gg=\Sp_{2n}$, $\mxtwo0110^{\oplus 
n}$ if $\gg=\Spin_{2n}$, or $\mxtwo0110^{\oplus n}\oplus(1)$ if 
$\gg=\Spin_{2n+1}$.  Let $\4T\7$ be the group of diagonal matrices in $\4G\7$, 
and set 
	\[ [\lambda_1,\ldots,\lambda_n] = \begin{cases} 
	\diag(\lambda_1,\ldots,\lambda_n) & \textup{if $\gg=\SL_n$} \\
	\diag(\lambda_1,\lambda_1^{-1},\ldots,\lambda_n,\lambda_n^{-1}) & 
	\textup{if $\gg=\Sp_{2n}$ or $\Spin_{2n}$} \\
	\diag(\lambda_1,\lambda_1^{-1},\ldots,\lambda_n,\lambda_n^{-1},1) & 
	\textup{if $\gg=\Spin_{2n+1}$.} \\
	\end{cases} \]
In this way, we identify the maximal torus $\4T\7<\4G\7$ with 
$(\fqobar^\times)^n$ in the symplectic and orthogonal cases, and with 
a subgroup of $(\fqobar^\times)^n$ in the linear case.

Set $W^*=W$ (the Weyl group of $\gg$ and of $\gg\7$), except when 
$\gg=\Spin_{2n}$, in which case we let $W^*<\Aut(\4T\7)$ be the group of all 
automorphisms which permute and invert the coordinates. Thus in this last 
case, $W^*\cong\{\pm1\}\wr\Sigma_n$, while $W$ is the group of signed 
permutations which invert an even number of coordinates (so $[W^*:W]=2$). 
Since $W^*$ induces a group of isometries of the root system for $\Spin_{2n}$ 
and contains $W$ with index $2$, it is generated by $W$ and the restriction 
to $\4T\7$ of a graph automorphism of order $2$ (see, e.g., \cite[\S\,VI.1.5, 
Proposition 16]{Bourb4-6}). 

We next introduce some notation in order to identify certain elements in $W^*$.
For each $r,s$ such that $rs\le n$, let $\XX_r^s\in\Aut(\4T\7)$ be the Weyl 
group element induced by the permutation $(1\cdots 
r)({r{+}1}\cdots2r)\cdots ((s{-}1)r+1\cdots sr)$; i.e., 
	\[ \XX_r^s([\lambda_1,\ldots,\lambda_n]) = 
	[\lambda_r,\lambda_1,\ldots,\lambda_{r-1},
	\lambda_{2r},\lambda_{r+1},\ldots,
	\lambda_{sr},\lambda_{(s-1)r+1},\ldots,\lambda_{sr-1},
	\lambda_{sr+1},\ldots]. \]
For $1\le i\le n$, let $\YY_i\in\Aut(\4T)$ be the automorphism which 
inverts the $i$-th coordinate.  Set $\XX_{r,+1}^s=\XX_r^s$ and 
$\XX_{r,-1}^s=\XX_r^s\YY_r\YY_{2r}\cdots\YY_{sr}$. Thus for 
$\theta=\pm1$,
	\[ \XX_{r,\theta}^s([\lambda_1,\ldots,\lambda_n]) = 
	[\lambda_r^{\theta},\lambda_1,\ldots,\lambda_{r-1},
	\lambda_{2r}^{\theta},\lambda_{r+1},\ldots
	\lambda_{sr}^{\theta},\lambda_{(s-1)r+1},\ldots,\lambda_{sr-1},
	\lambda_{sr+1},\ldots]. \]

Recall that $m=\ordp(q)$. Define parameters $\mu$, $\theta$, $k$, and 
$\kappa$ as follows:
	\[ \renewcommand{\arraystretch}{1.5}
	\renewcommand{\arraycolsep}{5mm}
	\begin{array}{rllll}
	\textup{if $m$ is odd~:} & \mu=m &  \theta=1  & &
	\kappa=[n/\mu]=[n/m] \\
	\textup{if $m$ is even~:} & \mu=m/2 & \theta=-1 &  
	\raisebox{2.1ex}[0pt]{$k=[n/m]$} & \kappa=[n/\mu]=[2n/m] \,.
	\end{array} \]

We can now define our $\sigma$-setups for $G$ and $G\7$. 
Recall that we assume $m>1$. 
In Table \ref{tb:III.2a}, we define an element $w_0\in{}W^*$, and then 
describe $T\7=C_{\4T\7}(w_0\circ\psi_q)$ and $W_0^*=C_{W^*}(w_0)$ (where 
$W_0=C_W(w_0)$ has index at most $2$ in $W_0^*$). 
	\begin{table}[ht]
	\[ \renewcommand{\arraystretch}{1.5}
	\begin{array}{|c|c|c|c|c|} \hline
	G\7 & \textup{conditions} & w_0=\gamma|_{\4T\7} & T\7 & W_0^*  \\
	\hline\hline
	\SL_n(q) &  & \XX_m^k & 
	(C_{q^m-1})^k\times C_{q-1}^{n-mk-1} & (C_m\wr\Sigma_k)\times H
	\\\hline
	\Sp_{2n}(q) & & & & \\
	\cline{1-1}
	\SO_{2n+1}(q) &  & \XX_{\mu,\theta}^{\kappa} & 
	(C_{q^{\mu}-\theta})^\kappa\times C_{q-1}^{n-\kappa\mu} & 
	(C_{2\mu}\wr\Sigma_\kappa)\times H \\ \cline{1-2}
	 & \gee=\theta^\kappa & & & \\\cline{2-5} 
	 & {\gee\ne\theta^\kappa,~\mu{\nmid}n} & 
	\XX_{\mu,\theta}^{\kappa}\,\YY_n 
	& (C_{q^{\mu}-\theta})^{\kappa}\times C_{q-1}^{n-\kappa\mu-1} 
	\times C_{q+1}
	& (C_{2\mu}\wr\Sigma_{\kappa})\times H \\\cline{2-5}
	\halfup{\SO_{2n}^{\gee}(q)} & 
	\dbl{\gee\ne\theta^\kappa,~ \mu|n}{\theta=-1} & 
	\XX_{\mu,\theta}^{\kappa-1}
	& (C_{q^{\mu}-\theta})^{\kappa-1}\times C_{q-1}^{\mu} 
	&  \\\cline{2-4}
	 & \dbl{\gee\ne\theta^\kappa,~ \mu|n}{\theta=+1} & 
	\XX_{\mu,\theta}^{\kappa-1}\,\YY_n 
	& (C_{q^{\mu}-\theta})^{\kappa-1}\times C_{q-1}^{\mu-1} \times 
	C_{q+1} 
	& \raisebox{3.0ex}[0pt]{$(C_{2\mu}\wr\Sigma_{\kappa-1})\times H$} 
	\\\hline
	\multicolumn{5}{|c|}{\textup{\small{In all cases, 
	$T\xrightarrow{\chi}T\7$ has kernel and cokernel of order $\le2$, 
	and so $A=O_p(T)\cong O_p(T\7)$.}}} \\\hline
	\end{array}
	\]
	\caption{} \label{tb:III.2a}
	\end{table}
In all cases, we choose $\gamma\in\Aut(\4G\7)$ as follows. Write 
$w_0=w_0'\circ\gamma_0|_{\4T\7}$ for some $w'_0\in{}W$ and 
$\gamma_0\in\Gamma_{\4G\7}$ (possibly $\gamma_0=\Id$). Choose 
$g_0\in{}N_{\4G\7}(\4T\7)$ such that $g_0\4T\7=w'_0$ and $\psi_{q_0}(g_0)=g_0$ 
(Lemma \ref{l:gT}), and set $\gamma=c_{g_0}\circ\gamma_0$. Then 
$[\gamma,\psi_{q_0}]=\Id_{\4G\7}$, since $c_{g_0}$ and $\gamma_0$ both commute 
with $\psi_{q_0}$, and we set $\sigma=\gamma\circ\psi_q=\psi_q\circ\gamma$. 
When $\gg=\Spin_{2n}$ or $\Spin_{2n+1}$, since $\4G$ is a perfect group 
and $\Ker(\chi)\le Z(\4G)$, $\gamma$ and $\sigma$ lift to unique 
endomorphisms of $\4G$ which we also denote $\gamma$ and $\sigma$ (and 
still $[\gamma,\psi_{q_0}]=1$ in $\Aut(\4G)$). 

Thus $G\cong C_{\4G}(\sigma)$ and $G\7\cong C_{\4G\7}(\sigma)$ in all 
cases, and we identify these groups. Set $T=C_{\4T}(\sigma)$, 
$T\7=C_{\4T\7}(\sigma)$, $W_0^*=C_{W^*}(\gamma)$, and $W_0=C_{W}(\gamma)$. 
If $\gg=\Spin_{2n+1}$ or $\Spin_{2n}$, then $\chi(T)$ is the kernel of 
the homomorphism $T\7\to\Ker(\chi)$ which sends $\chi(t)$ to 
$t^{-1}\sigma(t)$, and thus has index at most $2$ in $T\7$. Since $p$ is 
odd, this proves the statement in the last line of Table \ref{tb:III.2a}.

In the description of $W_0^*$ in Table \ref{tb:III.2a}, $H$ always denotes 
a direct factor of order prime to $p$. The first factor in the description 
of $W_0^*$ acts on the first factor in that of $T$, and $H$ acts on the 
other factors. 

When $G\7=\SL_n(q)$ and $m|n$, the second factor $C_{q-1}^{-1}$ in the 
description of $T$ doesn't make sense. It should be interpreted to mean 
that $T$ is ``a subgroup of index $q-1$ in the first factor 
$(C_{q^m-1})^k$''. 

Recall that $T\7=C_{\4T\7}(\gamma\circ\psi_q)$.  When 
$U=(\fqobar^\times)^{\mu}$, then 
	\begin{align*} 
	C_U(\XX_{\mu,\theta}^1\circ\psi_q) 
	&= \bigl\{(\lambda,\lambda^q,\lambda^{q^2},\ldots, 
	\lambda^{q^{\mu-1}}) \,\big|\, 
	(\lambda^{q^{\mu-1}})^{q\theta}=\lambda \bigr\} \\
	&= \bigl\{(\lambda,\lambda^q,\lambda^{q^2},\ldots, 
	\lambda^{q^{\mu-1}} ) \,\big|\, \lambda^{q^{\mu}-\theta}=1 \bigr\} 
	\cong C_{q^{\mu}-\theta}\,. 
	\end{align*}
This explains the description of $T\7$ in the 
symplectic and orthogonal cases: it is always the direct product of 
$(C_{q^{\mu}-\theta})^{\kappa}$ or $(C_{q^{\mu}-\theta})^{\kappa-1}$ with a 
group of order prime to $p$. (Note that $p|(q{+}1)$ only when $m=2$; i.e., 
when $\theta=-1$ and $1=\mu|n$.)

Since the cyclic permutation $(1\,2\,\cdots\,\mu)$ generates its own 
centralizer in $\Sigma_{\mu}$, the centralizer of $\XX_{\mu,\theta}^1$ in 
$\{\pm1\}\wr\Sigma_{\mu}<\Aut((\fqobar^\times)^{\mu})$ is generated by 
$\XX_{\mu,\theta}^1$ and $\psi_{-1}^{\4T}$. If $\theta=-1$, then 
$(\XX_{\mu,\theta}^1)^{\mu}=\psi_{-1}^{\4T}$, while if $\theta=1$, then 
$\XX_{\mu,\theta}^1$ has order $\mu$. Since $m=\mu$ is odd in the latter 
case, the centralizer is cyclic of order $2\mu$ in both cases. This is why, 
in the symplectic and orthogonal cases, the first factor in $W_0^*$ is 
always a wreath product of $C_{2\mu}$ with a symmetric group.


We are now ready to check the conditions in case \ref{messy_case} of 
Hypotheses \ref{G-hypoth-X}. 

\noindent\boldd{$N_{G}(T)$ contains a Sylow $p$-subgroup of $G$. } 
Set 
	\[ e = v_p(q^m-1) = v_p(q^\mu-\theta)\,. \]
The second equality holds since if $2|m$, then 
$p\nmid(q^{\mu}-1)$ and hence $e=v_p(q^{\mu}+1)$. 
Recall also that $m|(p-1)$, so $v_p(m)=0$. Consider the information listed 
in Table \ref{tb:III.2b}, where the formulas for $v_p(|T|)=v_p(|T\7|)$ and 
$v_p(|W_0|)$ follow from Table \ref{tb:III.2a}, and those for $|G|$ 
are shown in \cite[Theorems 25 \& 35]{Steinberg-lect} and also in 
\cite[Corollary 10.2.4, Proposition 10.2.5 \& Theorem 14.3.2]{Carter}.
	\begin{table}[ht]
	\[ \renewcommand{\arraystretch}{1.5}
	\begin{array}{|c|c|c|c|c|} \hline
	G & \textup{cond.} & v_p(|G|) & v_p(|T|) & v_p(|W_0|) \\
	\hline\hline
	\SL_n(q) &  & \sum_{i=2}^nv_p(q^i-1) &
	ke & v_p(k!) \\\hline
	\Sp_{2n}(q) & & & &  \\
	\cline{1-1}
	\Spin_{2n+1}(q) &  
	& \halfup{\sum_{i=1}^nv_p(q^{2i}-1)} &  &  \\
	\cline{1-3}
	 & \gee=\theta^\kappa &  & \halfup{\kappa e} & 
	\halfup{v_p(\kappa!)} \\
	\cline{2-2}
	\Spin_{2n}^\gee(q) & \gee\ne\theta^\kappa,~ \mu{\nmid} n & 
	\halfup{v_p(q^n-\gee)}\hfill
	&  &  \\\cline{2-2}\cline{4-5}
	 & \gee\ne\theta^\kappa,~ \mu|n
	& \halfup{\quad+\sum_{i=1}^{n-1}v_p(q^{2i}-1)}
	& (\kappa{-}1)e & v_p((\kappa{-}1)!) \\\hline
	\end{array}
	\]
	\caption{} \label{tb:III.2b}
	\end{table}

For all $i>0$, we have
	\[ v_p(q^i-1)=\begin{cases} 
	e+v_p(i/m) & \textup{if $m|i$}\\ 0 & \textup{if $m{\nmid}i$.}
	\end{cases} \]
The first case follows from Lemma \ref{v(q^i-1)}, and the second case 
since $m=\ordp(q)$. Using this, we check that 
$v_p(q^{2i}-1)=v_p(q^i-1)$ for all $i$ whenever $m$ is odd, and that 
	\[ v_p(q^n-\gee)=\begin{cases} 
	e+v_p(2n/m) & \textup{if $m|2n$ and $\gee=(-1)^{2n/m}$}\\ 
	0 & \textup{otherwise.}
	\end{cases} \]
So in all cases, $v_p(|G|)=v_p(|T|)+v_p(|W_0|)$ by the above relations 
and the formulas in Table \ref{tb:III.2b}. Since $N_G(T)/T\cong W_0$ by 
Lemma \ref{NG(T)}(b), this proves that $v_p(|G|)=v_p(|N_{G}(T)|)$, 
and hence that $N_{G}(T)$ contains a Sylow $p$-subgroup of $G$. 

\smallskip

\noindent\boldd{$\bigl|\gamma|_{\4T}\bigr|=|\tau|=\ordp(q)\ge2$ and 
$[\gamma,\psi_{q_0}]=\Id$ } by construction. Note, when 
$G$ is a spinor group, that these relations hold in $\4G$ if and only 
if they hold in $\4G\7$. 

\smallskip

\noindent\boldd{$C_S(\Omega_1(A))=A$ } by Table \ref{tb:III.2a} and since 
$p\nmid|H|$.

\smallskip

\noindent\boldd{$C_A(O_{p'}(W_0))=1$. } By Table \ref{tb:III.2a}, in all 
cases, there are $r,t\ge1$ and $1\ne s|(p-1)$ such that $A\cong(C_{p^t})^r$, 
and $\Aut_{W_0^*}(A)\cong C_s\wr\Sigma_r$ acts on $A$ by acting on and 
permuting the cyclic factors. In particular, $\Aut_{O_{p'}(W_0)}(A)$ 
contains a subgroup of index at most $2$ in $(C_s)^r$, this subgroup acts 
nontrivially on each of the cyclic factors in $A$, and hence 
$C_A(O_{p'}(W_0))=1$.

\smallskip

\noindent\boldd{A free $\gen{\gamma}$-orbit in $\Sigma$. } This can be 
defined as described in Table \ref{tb:III.2x}. In each case, we use the 
notation of Bourbaki \cite[pp. 250--258]{Bourb4-6} for the roots of $\gg$. 
Thus, for example, the roots of $\SL_n$ are the $\pm(\gee_i-\gee_j)$ for 
$1\le i<j\le n$, and the roots of $\SO_{2n}$ the $\pm\gee_i\pm\gee_j$. Note 
that since $S$ is assumed nonabelian, $p\big||W_0|$, and hence $n\ge pm$ in the 
linear case, and $n\ge p\mu$ in the other cases. 
	\begin{table}[ht]
	\[ \renewcommand{\arraystretch}{1.5}
	\begin{array}{|c||c|c|} \hline
	G & \theta=1 & \theta=-1 \\
	\hline\hline
	\SL_n(q) & \multicolumn{2}{c|}{\{\gee_i-\gee_{m+i}\,|\,1\le i\le m\}} \\\hline
	\Sp_{2n}(q) & \{2\gee_i\,|\,1\le i\le \mu\} & 
	\{\pm2\gee_i\,|\,1\le i\le \mu\} \\ \hline
	\Spin_{2n+1}(q) &  \{\gee_i\,|\,1\le i\le \mu\} & 
	\{\pm \gee_i\,|\,1\le i\le \mu\} \\ \hline
	\Spin_{2n}^\gee(q) & \{\gee_i-\gee_{\mu+i}\,|\,1\le i\le \mu\} & 
	\{\pm(\gee_i-\gee_{\mu+i})\,|\,1\le i\le \mu\} \\ \hline
	\end{array} \]
	\caption{} \label{tb:III.2x}
	\end{table}

\smallskip

\noindent\boldd{$\Aut_{W_0}(A)\cap\Aut\scal(A)\le
\begin{cases} 
\gen{\gamma|_A} & \textup{if $\ordp(q)$ even or $-\Id\notin W_0$}\\
\gen{\gamma|_A,\psi_{-1}^A} & \textup{otherwise.}
\end{cases}$}\\
Set $K^*=\Aut_{W_0^*}(A)\cap\Aut\scal(A)$ and 
$K=\Aut_{W_0}(A)\cap\Aut\scal(A)$ for short. By Table \ref{tb:III.2a}, 
$|K^*|=m$ if $G\cong\SL_n(q)$, and $|K^*|=2\mu$ otherwise. Also, 
$\gen{\gamma|_A}=\gen{\psi_q^{-1}|_A}$ has order $\ordp(q)$. Thus $K\le 
K^*=\gen{\gamma|_A}$ except when $G$ is symplectic or orthogonal and 
$m=\ordp(q)$ is odd. In this last case, $K=K^*$ (so $|K|=2\mu=2m$) if 
$W_0$ contains an element which inverts $A$ (hence which inverts 
$T$ and $\4T$); and $|K^*/K|=2$ ($|K|=m$) otherwise.

\smallskip

\noindent\boldd{$\Aut_{G}(A)=\Aut_{W_0}(A)$. } Since $A=O_p(T)\cong 
O_p(T\7)$ by Table \ref{tb:III.2a}, it suffices to prove this for $G\7$. Fix 
$g\in{}N_{G\7}(A)$.  Since $\9g{\4T\7}$ is a maximal torus in the 
algebraic group $C_{\4G\7}(A)$ (Proposition \ref{p:CG(T)}), there is 
$b\in{}C_{\4G\7}(A)$ such that $\9b{\4T\7}=\9g{\4T\7}$.  Set 
$a=b^{-1}g\in{}N_{\4G\7}(\4T\7)$; thus $c_a=c_g\in\Aut(A)$.  Set 
$w=a\4T\7\in{}W=N_{\4G\7}(\4T\7)/\4T\7$; thus $w\in{}N_W(A)$, and 
$w|_A=c_g|_A$. 

By the descriptions in Table \ref{tb:III.2a}, we can factor 
$\4T\7=\4T_1\times\4T_2$, where $\gamma$ and each element of $N_W(A)$ 
send each factor to itself, $\gamma|_{\4T_2}=\Id$, $A\le\4T_1$, and 
$[C_W(A),\4T_1]=1$.  Since $\sigma(g)=g$, $\sigma(a)\equiv a$ (mod 
$C_{\4G\7}(A)$), and so $\tau(w)\equiv w$ (mod $C_W(A)$).  Thus 
$\tau(w)|_{\4T_1}=w|_{\4T_1}$ since $C_W(A)$ acts trivially on this 
factor, $\tau(w)|_{\4T_2}=w|_{\4T_2}$ since $\gamma|_{\4T_2}=\Id$, 
and so $w\in{}W_0=C_W(\tau)$. 

\smallskip

\noindent\boldd{$N_{\Aut(A)}(\Aut_{W_0}(A))\le\Aut\scal(A) 
\Aut_{\Aut(G)}(A)$. } By Table \ref{tb:III.2a}, for some $r,t\ge1$, 
$A=A_1\times\cdots\times A_r$, where $A_i\cong C_{p^t}$ for each $i$. Also, 
for some $1\ne s|(p-1)$, $\Aut_{W_0^*}(A)\cong C_s\wr\Sigma_r$ acts on $A$ 
via faithful actions of $C_s$ on each $A_i$ and permutations of the $A_i$. 

Let $\Aut^0_{W_0^*}(A)\nsg\Aut_{W_0^*}(A)$ and 
$\Aut^0_{W_0}(A)\nsg\Aut_{W_0}(A)$ be the subgroups of elements which 
normalize each cyclic subgroup $A_i$. Thus $\Aut^0_{W_0^*}(A)\cong(C_s)^r$, 
and contains $\Aut^0_{W_0}(A)$ with index at most $2$.

\noindent\textbf{Case 1: } Assume first that $\Aut^0_{W_0}(A)$ is 
characteristic in $\Aut_{W_0}(A)$. Fix some 
$\alpha\in{}N_{\Aut(A)}(\Aut_{W_0}(A))$. We first show that $\alpha\in 
\Aut_{W_0^*}(A)\Aut\scal(A)$.

Since $\alpha$ normalizes $\Aut_{W_0}(A)$, it also normalizes 
$\Aut^0_{W_0}(A)$. For each $\beta\in\Aut^0_{W_0}(A)$, $[\beta,A]$ is a 
product of $A_i$'s. Hence the factors $A_i$ are characterized as the 
minimal nontrivial intersections of such $[\beta,A]$, and are 
permuted by $\alpha$. So after composing with an appropriate element of 
$\Aut_{W_0^*}(A)$, we can assume that $\alpha(A_i)=A_i$ for each $i$.

After composing $\alpha$ by an element of $\Aut\scal(A)$, 
we can assume that $\alpha|_{A_1}=\Id$.  Fix $i\ne1$ ($2\le i\le r$), 
let $u\in\Z$ be such that $\alpha|_{A_i}=\psi_u^{A_i}=(a\mapsto a^u)$, and 
choose $w\in\Aut_{W_0}(A)$ such that $w(A_1)=A_i$.  Then 
$w^{-1}\alpha w\alpha^{-1}\in\Aut_{W_0}(A)$ since $\alpha$ normalizes 
$\Aut_{W_0}(A)$, and 
$\bigl(w^{-1}\alpha w\alpha^{-1}\bigr)\big|_{A_1} = \psi_u^{A_1}$. Hence 
$u^s\equiv1$ (mod $p^t=|A_1|$), and since this holds for each $i$, 
$\alpha\in\Aut_{W^*_0}(A)$. 

Thus $N_{\Aut(A)}(\Aut_{W_0}(A))\le\Aut_{W_0^*}(A)\Aut\scal(A)$. By Table 
\ref{tb:III.2a}, each element of $\Aut_{W^*_0}(A)$ extends to some 
$\varphi\in\Aut_{W^*}(\4T)$ which commutes with $\sigma|_{\4T}$. So 
$\Aut_{W^*_0}(A)\le\Aut_{\Aut(G)}(A)$ by Lemma \ref{4T->4G}, and this 
finishes the proof of the claim. 

\noindent\textbf{Case 2: } Now assume that $\Aut^0_{W_0}(A)$ is not 
characteristic in $\Aut_{W_0}(A)$. Then $r\le4$, and since $p\le r$, we 
have $p=3$ and $r=3,4$. Also, $s=2$ since $s|(p-1)$ and $s\ne1$. Thus 
$r=4$, since $\Aut^0_{W_0}(A)=O_2(\Aut_{W_0}(A))$ if $r=3$. Thus 
$\Aut_{W_0}(A)\cong 
C_2^3\sd{}\Sigma_4$: the Weyl group of $D_4$. Also, $m=2$ since $p=3$, so 
(in the notation used in the tables) $\mu=1$, $\theta=-1$, and $\kappa=n$. 
By Table \ref{tb:III.2a}, $G\cong\SO_8(q)$ for some $q\equiv2$ (mod $3$) 
(and $W_0=W$). 

Now, $O_2(W)\cong Q_8\times_{C_2}Q_8$, and so $\Out(O_2(W))\cong\Sigma_3\wr 
C_2$. Under the action of $W/O_2(W)\cong\Sigma_3$, the elements of order 
$3$ act on both central factors and those of order $2$ exchange the 
factors. (This is seen by computing their centralizers in $O_2(W)$.) It 
follows that 
$N_{\Out(O_2(W))}(\Out_W(O_2(W)))/\Out_W(O_2(W))\cong\Sigma_3\cong\Gamma_{G}$, 
and all classes in this quotient extend to graph 
automorphisms of $G\cong\Spin_8(q)$. So for each $\alpha\in 
N_{\Aut(A)}(\Aut_{W}(A))$, after composing with a graph automorphism of 
$G$ we can arrange that $\alpha$ commutes with $O_2(W)$, and in 
particular, normalizes $\Aut^0_W(A)$. Hence by the same argument as used in 
Case 1, $\alpha\in\Aut\scal(A)\Aut_{\Aut(G)}(A)$. 


This finishes the proof that this $\sigma$-setup for $G$ 
satisfies case \ref{messy_case} of Hypotheses \ref{G-hypoth-X}.
\end{proof}

\begin{Ex} \label{ex:nonsplit2}
Fix distinct odd primes $p$ and $q_0$, and a prime power $q=q_0^b$ where 
$b$ is even and 
$\ordp{q}$ is even. Set $G=\Spin_{4k}^-(q)$ for some $k\ge2$. Let 
$(\4G,\sigma)$ be the setup for $G$ of Lemma \ref{classical(III.3)}, where 
$\sigma=\psi_q\gamma$ for $\gamma\in\Aut(\4G)$. In the notation 
of Table \ref{tb:III.2a}, $m=\ordp(q)$, $\mu=m/2$, $\theta=-1=\gee$, 
$n=2k$, and $\kappa=[2k/\mu]=[4k/m]$. There are three cases to consider:
\begin{enuma} 

\item If $q^{2k}\equiv-1$ (mod $p$); equivalently, if $m|4k$ and 
$\kappa=4k/m$ is odd, then $\gee=\theta^\kappa$, 
$w_0=\gamma|_{\4T\7}=\XX_{\mu,\theta}^\kappa$, $\rk(A)=\kappa$, and 
$W_0^*\cong C_m\wr\Sigma_\kappa$. Then $W_0^*$ acts faithfully on $A$ while 
$w_0\in W_0^*{\sminus}W_0$, and so $\gamma|_A\notin\Aut_{W_0}(A)$. Hence by 
Proposition \ref{Ker(kappa)}(d), $\4\kappa_G$ is split.

\item If $q^{2k}\equiv1$ (mod $p$); equivalently, if $m|4k$ and 
$\kappa=4k/m$ is even, then $\gee\ne\theta^\kappa$, 
$\gamma|_{\4T\7}=\XX_{\mu,\theta}^{\kappa-1}$, $\rk(A)=\kappa-1$, and 
$W_0^*\cong (C_m\wr\Sigma_{\kappa-1})\times H$ where 
$H\cong(C_2\wr\Sigma_\mu)$. Then $H$ acts trivially on $A$ and contains 
elements in $W_0^*{\sminus}W_0$, so $\gamma|_A\in\Aut_{W_0}(A)$. Hence 
$\4\kappa_G$ is not split.

\item If $q^{2k}\not\equiv\pm1$ (mod $p$); equivalently, if $m\nmid4k$, 
then in either case ($\kappa$ even or odd), the factor $H$ in the last 
column of Table \ref{tb:III.2a} is nontrivial, acts trivially on $A$, and 
contains elements in $W_0^*{\sminus}W_0$. Hence $\gamma|_A\in\Aut_{W_0}(A)$ 
in this case, and $\4\kappa_G$ is not split. 

\end{enuma}
\end{Ex}

We also need the following lemma, which handles the only case of a 
Chevalley group of exceptional type which we must show satisfies case 
\ref{messy_case} of Hypotheses \ref{G-hypoth-X}. 

\begin{Lem} \label{E8(III.3)}
Set $p=5$, let $q$ be an odd prime power such that $q\equiv\pm2$ (mod $5$), 
and set $G=E_8(q)$.  Then $G$ has a $\sigma$-setup which satisfies 
Hypotheses \ref{G-hypoth-X} (case \ref{messy_case}).
\end{Lem}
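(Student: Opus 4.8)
Let $\gg=E_8$, so $\4G=\gg(\fqobar)$ is simple of universal type (indeed adjoint too, since $Z(E_8)=1$), and $W=W(E_8)$ contains $-\Id$. Since $q\equiv\pm2$ (mod $5$) we have $\ordp(q)=4$ (the order of $\pm2$ in $(\Z/5)^\times$ is $4$), so $m=\ordp(q)=4$, which is even; in the notation of Lemma \ref{classical(III.3)} this corresponds to $\mu=2$, $\theta=-1$. The first task is to choose a Steinberg endomorphism $\sigma=\psi_q\circ\gamma$ with $\gamma=c_{g_0}$ (an inner automorphism of $\4G$, so $G$ will be a Chevalley group), where $g_0\in N_{\4G}(\4T)$ is chosen by Lemma \ref{l:gT} to represent a suitable element $w_0\in W$ of order $4$ with $\psi_{q_0}(g_0)=g_0$, and such that $[\gamma,\psi_{q_0}]=\Id$. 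The element $w_0$ should be an element of $W$ whose action on $\4T$ has minimal polynomial (over $\Q$) equal to the $4$th cyclotomic polynomial $x^2+1$ on a suitable $8$-dimensional sublattice; since $E_8$ has rank $8=2\cdot4$, one can take $w_0$ to act as a product of two ``$4$-cycles'' (i.e.\ as multiplication by a primitive $4$th root of unity on a $\Z[i]$-module structure on $\Z\Sigma^\vee$). Concretely, one chooses $w_0$ so that $C_{\4T}(\sigma)=T$ has a subgroup isomorphic to $(C_{q^4-1})^2$ (the $5$-part of which is $A=O_5(T)\cong(C_{5^e})^2$ with $e=v_5(q^4-1)>0$), and $W_0=C_W(w_0)$ contains $C_4\wr\Sigma_2$, which is exactly the structure giving $5\bmid|W_0|$ so that $S\in\syl5{G}$ is nonabelian.

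Once the setup is fixed, the plan is to verify each bullet of Hypotheses \ref{G-hypoth-X} in turn, mostly by the same arguments as in Lemma \ref{classical(III.3)}, specialized to rank $2$ for the $\langle i\rangle$-module structure. First, \ref{not4x}: $p=5$ divides $|W_0|$, $\sigma=\psi_q\gamma=\gamma\psi_q$ with $\gamma$ algebraic of finite order normalizing $\4T$ and commuting with $\psi_{q_0}$ by construction; the required linearly independent free $\langle\tau\rangle$-orbit (of size $4$, or $\pm$-orbit of size $4$) in $\Sigma$ can be exhibited explicitly inside an $A_4$ or $D_4$ subsystem of $E_8$ on which $w_0$ acts as a $4$-cycle. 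For (II), $\4G$ is universal, and $N_G(T)$ contains a Sylow $5$-subgroup: this is the count $v_5(|G|)=v_5(|T|)+v_5(|W_0|)$, which one reads off from the degrees $d_i$ of $E_8$ (namely $2,8,12,14,18,20,24,30$) together with the eigenvalues $\eta_i$ of $w_0$ on $V$, using Lemma \ref{v(q^i-1)} and \cite[Prop.\ 14.2.1]{Carter} exactly as in Lemma \ref{case(III.1)}; alternatively one simply cites \cite[10-1,2]{GL}. The key point is that $v_5(q^{d_i}-1)$ is nonzero precisely when $4\mid d_i$, and for those $d_i$ one gets $e+v_5(d_i/4)$, which must match $v_5(|T|)+v_5(|W_0|)$.

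For (III), case \ref{messy_case}: we need $|\tau|=\ordp(q)=4\ge2$ (true), $C_A(O_{p'}(W_0))=1$, $C_S(\Omega_1(A))=A$, $\Aut_G(A)=\Aut_{W_0}(A)$, the normalizer condition on $\Aut_{W_0}(A)$, and the intersection $\Aut_{W_0}(A)\cap\Aut\scal(A)\le\langle\gamma|_A\rangle$ (since $\ordp(q)=4$ is even, we are in the first branch). Here $A\cong(C_{5^e})^2$ and $\Aut_{W_0}(A)$ contains $C_4\wr\Sigma_2$ acting on the two cyclic factors and permuting them; so $O_{p'}(W_0)$ acts through $C_4$ on each factor nontrivially, giving $C_A(O_{p'}(W_0))=1$, and $C_S(\Omega_1(A))=A$ since $S/A$ is a $5$-group acting faithfully by the standard argument. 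The equality $\Aut_G(A)=\Aut_{W_0}(A)$ follows as in Lemma \ref{classical(III.3)}: for $g\in N_G(A)$, $\9g\4T$ is a maximal torus of $C_{\4G}(A)$ (Proposition \ref{p:CG(T)}), adjust by an element of $C_{\4G}(A)$ to land in $N_{\4G}(\4T)$, and deduce the resulting Weyl element lies in $C_W(\tau)=W_0$ using $\sigma(g)=g$. For $\Aut_{W_0}(A)\cap\Aut\scal(A)$: a scalar automorphism of $A\cong(C_{5^e})^2$ lying in $C_4\wr\Sigma_2$ must lie in the base $C_4\times C_4$ acting by the \emph{same} scalar on both factors, hence in the diagonal $C_4=\langle\gamma|_A\rangle$. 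Finally $N_{\Aut(A)}(\Aut_{W_0}(A))\le\Aut\scal(A)\Aut_{\Aut(G)}(A)$: with $r=2$ cyclic factors and $s=4$, the subgroup $\Aut^0_{W_0}(A)\cong C_4\times C_4$ is characteristic in $\Aut_{W_0}(A)\cong C_4\wr\Sigma_2$ (it is $O_{2'}(-)\times$ an explicit characteristic $2$-part, or simply the unique abelian normal subgroup of its order and exponent), so the ``Case 1'' argument of Lemma \ref{classical(III.3)} applies verbatim: any $\alpha$ normalizing $\Aut_{W_0}(A)$ permutes the two factors $A_i$ (characterized via $[\beta,A]$ for $\beta\in\Aut^0_{W_0}(A)$), and after composing with elements of $\Aut_{W_0^*}(A)$ and $\Aut\scal(A)$ one forces $\alpha$ into $\Aut_{W_0^*}(A)\le\Aut_{\Aut(G)}(A)$, using Lemma \ref{4T->4G} to extend torus automorphisms to $G$. (Here $W_0^*=W_0$ since $E_8$ has no graph automorphisms, which actually simplifies matters.)

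\textbf{Main obstacle.} The routine-but-delicate part is pinning down the exact isomorphism type of $W_0=C_W(w_0)$ and of the torus $T=C_{\4T}(\sigma)$ for the chosen $w_0\in W(E_8)$ of order $4$ — i.e.\ confirming that $w_0$ can be chosen so that $\Z\Sigma^\vee$ becomes a free rank-$2$ module over $\Z[i]$ with $w_0$ acting as $i$, giving $W_0\supseteq C_4\wr\Sigma_2$ and $T\supseteq(C_{q^4-1})^2$ with everything else of order prime to $5$. This is a concrete fact about $E_8$ (such a $w_0$ is a regular element of order $4$, lying in a $\Z[i]^2\subset E_8$ sublattice, e.g.\ inside $D_8\subset E_8$ or via the $4A$ class), and once it is in hand all the Hypotheses-\ref{G-hypoth-X} checks reduce to the rank-$2$ abelian-group bookkeeping above, identical in spirit to Lemma \ref{classical(III.3)}. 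I would either invoke \cite[10-1,2]{GL} for the Sylow-normalizer statement and the structure of $S$, or do the degree count explicitly; the former keeps the proof short.
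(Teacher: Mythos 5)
There is a genuine gap, and it is in the very first, ``concrete'' step: your choice of $w_0$. You propose $w_0\in W(E_8)$ acting as a product of two disjoint $4$-cycles, so that $T\supseteq(C_{q^4-1})^2$ and $A=O_5(T)\cong(C_{5^e})^2$ has rank $2$, with $W_0\supseteq C_4\wr\Sigma_2$. This cannot satisfy Hypotheses \ref{G-hypoth-X}(II). Indeed, by the degree count you yourself propose (degrees $2,8,12,14,18,20,24,30$ of $E_8$, with $v_5(q^{d}-1)\ne0$ exactly when $4\mid d$), one gets $v_5(|G|)=4a+1$ where $a=v_5(q^4-1)=v_5(q^2+1)$; but with your torus $v_5(|T|)=2a$, while $v_5(|W_0|)\le v_5(|W(E_8)|)=2$, so $v_5(|N_G(T)|)\le 2a+2<4a+1$ for all $a\ge1$, and $N_G(T)$ never contains a Sylow $5$-subgroup. (Your remark that $C_4\wr\Sigma_2$ ``gives $5\mid|W_0|$'' is also false as stated, since $|C_4\wr\Sigma_2|=2^5$.) Note too the internal inconsistency: if $w_0$ acts as multiplication by $i$ for a $\Z[i]$-structure on $\Z\Sigma^\vee$ (rank $4$ over $\Z[i]$, not $2$), then $\sigma=\psi_q\gamma$ has fixed torus of order $\det(qw_0-1)=(q^2+1)^4$, so $T\cong(C_{q^2+1})^4$ and $A\cong(C_{5^a})^4$ of rank $4$ --- which is the paper's setup, not the rank-$2$ one you then compute with. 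The correct $w_0$ is this regular, fixed-point-free element of order $4$; then $v_5(|T|)=4a$, $W_0=C_W(w_0)$ has order $2^{10}\cdot3^2\cdot5$ with $O_2(W_0)$ of order $2^6$ and $W_0/O_2(W_0)\cong\Sigma_6$, and the Sylow count $4a+1=4a+v_5(|W_0|)$ works out.

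This error propagates into the verification of case \ref{messy_case}. Because the actual $\Aut_{W_0}(A)$ is not of wreath-product form $C_s\wr\Sigma_r$ acting on cyclic factors, the ``Case 1 argument of Lemma \ref{classical(III.3)} applies verbatim'' step does not apply: with $A\cong(C_{5^a})^4$ and $W_0$ as above, one must argue separately that $C_A(O_{5'}(W_0))=1$ (here $w_0\in O_{5'}(W_0)$ acts fixed-point-freely), that $\Aut_{W_0}(A)\cap\Aut\scal(A)\le\gen{\gamma|_A}$ (using that $Z(W_0)$ has no element of order $5$), and --- the delicate point --- that $N_{\Aut(A)}(\Aut_{W_0}(A))\le\Aut\scal(A)\Aut_{W_0}(A)$, which in the paper is done by identifying four reflection subgroups $A_1,\dots,A_4\le A$ permuted transitively by $W_0$, showing any normalizing automorphism may be assumed to centralize $O_2(W_0)$, and then ruling out the nontrivial class in $\Hom(W_0/O_2(W_0),Z(O_2(W_0)))\cong\Hom(\Sigma_6,C_4)$ because composing with $w_0^2$ (inversion on $T$) destroys reflections. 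So the overall strategy (inner twist by an order-$4$ Weyl element plus a checklist as in Lemma \ref{classical(III.3)}) is right in spirit, but the specific torus you pin down is the wrong one, and the remaining checks need the genuine rank-$4$ analysis rather than the rank-$2$ wreath-product bookkeeping.
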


\begin{proof} We use the notation in \ref{G-setup}, where $q$ is a power of 
the odd prime $q_0$, and $\4G=E_8(\4\F_{q_0})$. 

By \cite[Planche VII]{Bourb4-6}, the of roots of $E_8$ can be given 
the following form, where $\{\gee_1,\dots,\gee_8\}$ denotes the standard orthonormal 
basis of $\R^8$:
	\[ \Sigma = \Bigl\{\pm \gee_i\pm \gee_j \,\Big|\, 1\le i<j\le8\Bigr\} \cup
	\Bigl\{ \frac12\sum_{i=1}^8 (-1)^{m_i}\gee_i \,\Big|\, 
	\sum_{i=1}^8m_i \textup{ even} \Bigr\} \subseteq \R^8~. \]
By the same reference, the Weyl group $W$ is the group of all automorphisms 
of $\R^8$ which permute $\Sigma$ ($A(R)=W(R)$ in the notation of 
\cite{Bourb4-6}). Give $\R^8$ a complex structure by setting 
$i\gee_{2k-1}=\gee_{2k}$ and $i\gee_{2k}=-\gee_{2k-1}$, and set 
$\gee^*_k=\gee_{2k-1}$ for $1\le k\le4$.  Multiplication by $i$ permutes 
$\Sigma$, and hence is the action of an element $w_0\in{}W$. Upon writing 
the elements of $\Sigma$ with complex coordinates,  we get the following 
equivalent subset $\Sigma^*\subseteq\C^4$:
	\begin{multline*}  
	\Sigma^* = \Big\{(\pm1\pm{}i)\gee^*_k\,\Big|\,1\le k\le4\Bigr\} 
	\cup \Big\{i^m\gee^*_k+i^n\gee^*_\ell
	\,\Big|\,1\le k<\ell\le4,~ m,n\in\Z\Bigr\} \\ \cup
	\Big\{\frac{1+i}2\sum_{k=1}^4i^{m_k}\gee^*_k\,\Big|\,\sum m_k 
	\textup{ even}\Bigr\} ~. 
	\end{multline*}

Let $\Z\Sigma\subseteq\R^8$ be the lattice generated by $\Sigma$. By Lemma 
\ref{theta-r}\eqref{hb.hc} (and since $(\alpha,\alpha)=2$ for all 
$\alpha\in\Sigma$), we can identify 
$\4T\cong\Z\Sigma\otimes_{\Z}\fqobar^\times$ by sending $h_\alpha(\lambda)$ 
to $\alpha\otimes\lambda$ for $\alpha\in\Sigma$ and 
$\lambda\in\fqobar^\times$. Set $\Lambda_0=\Z\Sigma\cap\Z^8$, a lattice in 
$\R^8$ of index $2$ in $\Z\Sigma$ and in $\Z^8$. The inclusions of lattices 
induce homomorphisms
	\[ \4T \cong \Z\Sigma\otimes_{\Z}\fqobar^\times \Left5{\chi_1} 
	\Lambda_0\otimes_{\Z}\fqobar^\times \Right5{\chi_2} 
	\Z^8\otimes_{\Z}\fqobar^\times \cong (\fqobar^\times)^8 \]
each of which is surjective with kernel of order $2$ (since 
$\textup{Tor}^1_{\Z}(\Z/2,\fqobar^\times)\cong\Z/2$). 
We can thus identify $\4T=(\fqobar^\times)^8$, modulo $2$-power torsion, 
in a way so that 
	\[ \alpha=\sum_{i=1}^8k_i\gee_i\in\Sigma,~ \lambda\in\fqobar^\times 
	\quad\implies\quad h_\alpha(\lambda) = 
	(\lambda^{k_1},\ldots,\lambda^{k_8})\,. \]
Under this identification, by the formula in Lemma \ref{theta-r}\eqref{^txb(u)}, 
	\beqq \beta=\sum_{i=1}^8\ell_i\gee_i\in\Sigma 
	\quad\implies\quad \theta_\beta(\lambda_1,\ldots,\lambda_8) = 
	\lambda_1^{\ell_1}\cdots\lambda_8^{\ell_8}  \label{e:theta} \eeqq
for $\lambda_1,\ldots,\lambda_8\in\fqobar^\times$.  Also, 
	\[ w_0(\lambda_1,\ldots,\lambda_8)
	=(\lambda_2^{-1},\lambda_1,\lambda_4^{-1},
	\lambda_3,\ldots,\lambda_8^{-1},\lambda_7) \] 
for each $(\lambda_1,\ldots,\lambda_8)$.  


Choose $g_0\in N_{\4G}(\4T)$ such that $g_0\4T=w_0$ and 
$\psi_{q_0}(g_0)=g_0$ (Lemma \ref{l:gT}), and set 
$\gamma=c_{g_0}\in\Inn(\4G)$. Thus 
$\sigma=\psi_q\circ\gamma=\gamma\circ\psi_q$, $G=C_{\4G}(\sigma)$, and 
$T=C_{\4T}(\sigma)$. By the Lang-Steinberg theorem \cite[Theorem 
10.1]{Steinberg-end}, there is $h\in\4G$ such that $g=h\psi_q(h^{-1})$; 
then $\sigma=c_h\psi_qc_h^{-1}$ and $G\cong C_{\4G}(\psi_q)=E_8(q)$. It 
remains to check that the setup $(\4G,\sigma)$ satisfies the list of 
conditions in Hypotheses \ref{G-hypoth-X}.

We identify $W_0=C_W(w_0)$ with the group of $\C$-linear automorphisms of 
$\C^4$ which permute $\Sigma^*$.  The order of $W_0$ is computed in 
\cite[Table 11]{Carter-cj} (the entry $\Gamma=D_4(a_1)^2$), but since we 
need to know more about its structure, we describe it more precisely here.  
Let $W_2\le\GL_4(\C)$ be the group of monomial matrices with nonzero 
entries $\pm1$ or $\pm i$, and with determinant $\pm1$.  Then $W_2\le W_0$, 
$|W_2|=\frac12\cdot4^4\cdot4!=2^{10}\cdot3$, and $W_2$ acts on $\Sigma^*$ 
with three orbits corresponding to the three subsets in the above 
description of $\Sigma^*$.  The (complex) reflection of order 2 in the 
hyperplane orthogonal to $\frac{1+i}2(\gee^*_1+\gee^*_2+\gee^*_3+\gee^*_4)$ 
sends $(1+i)\gee^*_1$ to 
$\frac{1+i}2(\gee^*_1-\gee^*_2-\gee^*_3-\gee^*_4)$, and it sends 
$(\gee^*_1+i\gee^*_2)$ to 
$\frac{1+i}2(i^3\gee^*_1+i\gee^*_2-\gee^*_3-\gee^*_4)$.  Thus $W_0$ acts 
transitively on $\Sigma^*$.  

Let $\4\Sigma\subseteq P(\C^4)$ be the set of projective points representing 
elements of $\Sigma^*$, and let $[\alpha]\in\4\Sigma$ denote the class of 
$\alpha\in{}\Sigma^*$.  To simplify notation, we also write $[x]=[\alpha]$ for 
$x\in\C^4$ representing the same point, also when $x\notin \Sigma^*$.  Let 
$\sim$ denote the relation on $\4\Sigma$:  $[\alpha]\sim[\beta]$ if 
$\alpha=\beta$, or if $\alpha\perp\beta$ and the projective line $\Gen{[\alpha],[\beta]} 
\subseteq P(\C^4)$ contains four other points in $\4\Sigma$.  By inspection, 
$[\gee^*_j]\sim[\gee^*_k]$ for all $j,k\in\{1,2,3,4\}$, and these are the only 
elements $[\alpha]$ such that $[\alpha]\sim[\gee^*_j]$ for some $j$.  Since 
this relation is preserved by $W_0$, and $W_0$ acts transitively on $\4\Sigma$, 
we see that $\sim$ is an equivalence relation on $\4\Sigma$ with 15 classes of 
four elements each.  Set $\Delta=\4\Sigma/{\sim}$, and let $[\alpha]_\Delta$ 
denote the class of $[\alpha]$ in $\Delta$.  Thus $|\4\Sigma|=\frac14|\Sigma|=60$ and 
$|\Delta|=15$.  Since $W_2$ is the stabilizer subgroup of $[\gee^*_1]_\Delta$ 
under the transitive $W_0$-action on $\Delta$, we have
$|W_0|=|W_2|\cdot15=2^{10}\cdot3^2\cdot5$.

Let $W_1\nsg W_0$ be the subgroup of elements which act trivially on 
$\Delta$.  By inspection, $W_1\le W_2$, $|W_1|=2^6$, and $W_1$ is generated 
by $w_0=\diag(i,i,i,i)$, $\diag(1,1,-1,-1)$, $\diag(1,-1,1,-1)$, and the 
permutation matrices for the permutations $(1\,2)(3\,4)$ and 
$(1\,3)(2\,4)$.  Thus $W_1\cong C_4\times_{C_2}D_8\times_{C_2}D_8$.  

By the above computations, $|W_0/W_1|=2^4\cdot3^2\cdot5=|\Sp_4(2)|$.  
There is a bijection from $\Delta$ to the set of maximal isotropic 
subspaces in $W_1/Z(W_1)$ which sends a class $[\alpha]_\Delta$ to the 
subgroup of those elements in $W_1$ which send each of the four projective 
points in $[\alpha]_\Delta$ to itself.  Hence for each 
$w\in{}C_{W_0}(W_1)$, $w$ acts via the identity on $\Delta$, and so 
$w\in{}W_1$ by definition.  Thus $W_0/W_1$ injects into 
$\Out(W_1)\cong\Sigma_6\times C_2$, and injects into the first factor since 
$Z(W_1)=Z(W_0)$ ($\cong C_4$).  So by counting, $W_0/W_1\cong\Sigma_6$.  
Also, $W_1=O_2(W_0)$. 

Set $a=v_5(q^4-1)=v_5(q^2+1)$, and fix $u\in\4\F_{q_0}^\times$ of order 
$5^a$. Let $A$ be as in Notation \ref{G-setup-X}\eqref{not8x}: the subgroup 
of elements in $T$ of $5$-power order. Thus 
	\beqq A = \bigl\{ (u_1,u_1^q,u_2,u_2^q,u_3,u_3^q,u_4,u_4^q) 
	\,\big|\, u_1,u_2,u_3,u_4\in\gen{u}\bigr\} \cong (C_{5^a})^4 ~. 
	\label{e:A} \eeqq
By \eqref{e:A} and \eqref{e:theta}, there is no $\beta\in\Sigma$ such that 
$A\le\Ker(\theta_\beta)$.  Hence $C_{\4G}(A)^0=\4T$ by Proposition 
\ref{p:CG(T)}.  So by Lemma \ref{NG(T)}(b), 
	\beqq N_G(A)=N_G(T)\quad \textup{and}\quad
	N_G(T)/T = W_0\,. 
	\label{e:NG(T)} \eeqq

We are now ready to check the conditions in Case \ref{messy_case} of Hypotheses 
\ref{G-hypoth-X}.

\smallskip

\noindent\boldd{$N_G(T)$ contains a Sylow $p$-subgroup of $G$. } Let $S$ be 
a Sylow $p$-subgroup of $N_G(T)$ which contains $A$. Since $N_G(T)/T=W_0$ 
by \eqref{e:NG(T)}, 
$A\cong(C_{5^a})^4$, and $W_0/O_{2}(W_0)\cong\Sigma_6$, $|S|=5^{4a+1}$. By 
\cite[Theorem 25]{Steinberg-lect} or \cite[Corollary 10.2.4 \& Proposition 
10.2.5]{Carter}, and since $v_5(q^k-1)=0$ when $4\nmid k$ and 
$v_5(q^{4\ell}-1)=a+v_5(\ell)$ (Lemma \ref{v(q^i-1)}),
	\[ v_5(|G|)=v_5\bigl((q^{24}-1)(q^{20}-1)(q^{12}-1)(q^8-1)\bigr)
	=4a+1\,. \]  
Thus $S\in\sylp{G}$.

\smallskip

\noindent\boldd{$\bigl|\gamma|_{\4T}\bigr|=\ordp(q)\ge2$ and
$[\gamma,\psi_{q_0}]=\Id$. } The first is clear, and
the second holds since $\gamma=c_{g_0}$ where $\psi_{q_0}(g_0)=g_0$. 

\noindent\boldd{$C_S(\Omega_1(A))=A$ } by the above description of the 
action of $W_0$ on $A$.

\smallskip

\noindent\boldd{$C_A(O_{p'}(W_0))=1$ } since $w_0\in O_{5'}(W_0)$ and 
$C_A(w_0)=1$.

\smallskip

\noindent\boldd{A free $\gen{\gamma}$-orbit in $\Sigma$. } The subset 
$\{\pm(\gee_1+\gee_3),\pm(\gee_2+\gee_4)\}\subseteq\Sigma$ is a free 
$\gen{\gamma}$-orbit.

\smallskip

\noindent\boldd{$\Aut_{W_0}(A)\cap\Aut\scal(A)\le\gen{\gamma|_A}$. } Recall 
that $\bigl|\gamma|_{\4T}\bigr|=4$ and $|\Aut\scal(A)|=4\cdot5^k$ for some 
$k$, and $W_0$ acts faithfully on $A$. So if this is not true, then there 
is an element of order $5$ in $Z(W_0)$, which is impossible by the above 
description of $W_0$.

\smallskip

\noindent\boldd{$\Aut_G(A)=\Aut_{W_0}(A)$ } by \eqref{e:NG(T)}.


\smallskip


\noindent\boldd{$N_{\Aut(A)}(\Aut_{W_0}(A))\le\Aut\scal(A)\Aut_{W_0}(A)$. } 
For $j=1,2,3,4$, let $A_j<A$ be the cyclic subgroup of all elements as in 
\eqref{e:A} where $u_k=1$ for $k\ne{}j$. The group $W_0$ contains as 
subgroup $C_2\wr\Sigma_4$:  the group which permutes pairs of coordinates 
up to sign.  So each of the four subgroups $A_j$ is the reflection subgroup 
of some reflection in $W_0$.  

For each $\varphi\in{}C_{\Aut(A)}(\Aut_{W_0}(A))$, $\varphi(A_j)=A_j$ 
for each $j$, and $\varphi(a)=a^{n_j}$ for some $n_j\in(\Z/5^a)^\times$.  
Also, $n_1=n_2=n_3=n_4$ since the $A_j$ are permuted transitively by 
elements of $W_0$, and hence $\varphi\in\Aut\scal(A)$.

Now assume $\varphi\in N_{\Aut(A)}(\Aut_{W_0}(A))$.  Since $\varphi$ 
centralizes $Z(W_1)=\gen{w_0}=\gen{\diag(i,i,i,i)}$ (since $\diag(i,i,i,i)\in 
Z(\Aut(A))$), $c_\varphi|_{W_1}\in\Inn(W_1)$, and we can assume (after 
composing by an appropriate element of $W_1$) that $[\varphi,W_1]=1$.  So 
$c_\varphi\in\Aut(W_0)$ has the form 
$c_\varphi(g)=g\chi(\4g)$, where $\4g\in{}W_0/W_1\cong\Sigma_6$ is the 
class of $g\in{}W_0$, and where 
$\chi\in\Hom(W_0/W_1,Z(W_1))\cong\Hom(\Sigma_6,C_4)\cong C_2$ is some 
homomorphism.  Since $(w_0)^2$ inverts the torus $T$, composition with 
$(w_0)^2$ does not send reflections (in $A$) to reflections, and so we 
must have $c_\varphi=\Id_{W_0}$.  Thus $\varphi\in 
C_{\Aut(A)}(\Aut_{W_0}(A))=\Aut\scal(A)$ (modulo $\Aut_{W_0}(A)$).
\end{proof}

The following lemma now reduces the proof of Theorem \ref{ThX} 
to the cases considered in Section \ref{s:X1}, together with certain small 
cases handled at the end of this section.  As before, when $p$ is a prime 
and $p\nmid n$, $\ordp(n)$ denotes the multiplicative order of $n$ in 
$\F_p^\times$. 

\begin{Prop} \label{G-cases-odd}
Fix an odd prime $p$, and assume $G\in\Lie(q_0)$ is of universal type for 
some prime $q_0\ne{}p$.   
Fix $S\in\sylp{G}$, and assume $S$ is nonabelian. Then 
there is a prime $q_0^*\ne{}p$, a group $G^*\in\Lie(q_0^*)$ of 
universal type, and $S^*\in\sylp{G^*}$, such that 
$\calf_S(G)\cong\calf_{S^*}(G^*)$, 
and one of the following holds: either 
\begin{enuma} 
\item $G^*$ has a $\sigma$-setup which satisfies Hypotheses 
\ref{G-hypoth-X} and \ref{G-hypoth-X2}, $G^*\cong\gg(q^*)$ or 
${}^2\gg(q^*)$ where $q^*$ is a power of $q_0^*$, and \medskip
\begin{enumerate}[label=\rm(a.\arabic*) ,leftmargin=10mm]
\item $-\Id\notin W$ and $G^*$ is a Chevalley group, or
\item $-\Id\in W$ and $\ordp(q^*)$ is even, or
\item $p\equiv3$ (mod $p$) and $\ordp(q^*)=1$
\end{enumerate}
\smallskip

\noindent where $W$ is the Weyl group of $\gg$; \quad or

\item $p=3$, $q_0^*=2$, $G\cong\lie3D4(q)$ or $\lie2F4(q)$ for 
$q$ some power of $q_0$, and $G^*\cong\lie3D4(q^*)$ or $\lie2F4(q^*)$ for 
$q^*$ some power of $2$.

\end{enuma}
Moreover, if $p=3$ and $G^*=F_4(q^*)$ where $q^*$ is a power of $q_0^*$, 
then we can assume $q_0^*=2$. In all cases, we can choose $G^*$ to be 
either one of the groups listed in Proposition \ref{list-simp}(a--e), or 
one of $E_7(q^*)$ or $E_8(q^*)$ for some $q^*\equiv-1$ (mod $p$).
\end{Prop}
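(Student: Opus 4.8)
The strategy is to combine Proposition \ref{list-simp} with the explicit $\sigma$-setups constructed in Lemmas \ref{case(III.1)}, \ref{invert(III.2)}, \ref{classical(III.3)}, and \ref{E8(III.3)}, running through the list of ``$p$-local representatives'' (a)--(e) of Proposition \ref{list-simp} one case at a time. First I would apply Proposition \ref{list-simp} to replace $G$ by a group $G'\sim_pG$ of universal type lying in that list (since $S$ is nonabelian, this is legitimate, and $q_0'$ may be taken to be any prime whose class generates $(\Z/p^2)^\times$, with $q'=(q_0')^b$ and $b\mid(p-1)p^k$, which will give Hypotheses \ref{G-hypoth-X2} automatically). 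Then for each of the five types I would choose the $\sigma$-setup and, if necessary, apply one of the equivalences in Theorem \ref{OldThA} or Proposition \ref{OldPrA3} to land in a form where one of Lemmas \ref{case(III.1)}, \ref{invert(III.2)}, \ref{classical(III.3)}, or \ref{E8(III.3)} applies and the extra condition (a.1)--(a.3) or (b) holds.

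In more detail: for type (c) ($\lie3D4(q')$ or $\lie2F4(q')$, $p=3$, $q'$ a power of $2$) there is nothing to do beyond Proposition \ref{list-simp} itself — these are exactly the groups in case (b) of the present proposition. For type (d) ($\gg(q')$ exceptional, $q'\equiv1\pmod p$), one has $\ordp(q')=1$, and I would split according to whether $p\equiv1$ or $p\equiv3\pmod4$: if $p\equiv3\pmod4$ we are in case (a.3) via Lemma \ref{case(III.1)}; if $p\equiv1\pmod4$, then using Theorem \ref{OldThA}(c) (the Weyl groups of $G_2,F_4,E_7,E_8$ contain $-\Id$, and for $E_6$ one uses the $\lie2E6\sim_pF_4$ trick already in the proof of Proposition \ref{list-simp}) one passes to a group with $q^\vee\equiv-1\pmod p$ so that $\ordp$ becomes even, landing in case (a.2) via Lemma \ref{invert(III.2)}; for the $F_4$ subcase I would additionally invoke the last clause of Proposition \ref{list-simp} to force $q_0^*=2$. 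Type (e) ($E_8(q')$, $p=5$, $q'\equiv\pm2\pmod 5$) is handled by Lemma \ref{E8(III.3)}, which gives case (a.2) since $\ordp(q')=4$ is even. Types (a) ($\SL_n(q')$) and (b) ($\Spin_{2n}^\gee(q')$) are the classical cases: here $\ordp(q')$ may be anything, but by Lemma \ref{cond(i-iii)}(a) and Theorem \ref{OldThA}(a,c) I may adjust $q'$ so that $\ordp(q')>1$ — if $\ordp(q')=1$ I pass through a power with $\ordp=$ (order of $-1$, i.e. even) unless $p\equiv3\pmod4$, in which case $\ordp(q')=1$ is acceptable as case (a.3). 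When $\ordp(q')>1$, Lemma \ref{classical(III.3)} applies directly and gives Hypotheses \ref{G-hypoth-X}(III.3); for $\SL_n$ the Weyl group $\Sigma_n$ does not contain $-\Id$ when $n\ge3$, so we are in case (a.1); for $\Spin_{2n}^\gee$ I would check, using the explicit $W_0$ from Table \ref{tb:III.2a}, whether $-\Id\in W$ and whether $\ordp(q')$ is even, and if necessary replace $q'$ by $q^\vee\equiv-q'$ (Theorem \ref{OldThA}(c) applies since for $D_n$ with $n$ even $-\Id\in W$; for $n$ odd $-\Id\notin W$ and one uses the graph automorphism via Theorem \ref{OldThA}(d) to reduce to the $+$ type or to a linear group as in Proposition \ref{list-simp}(vi)).

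The last sentence of the statement — that in all cases $G^*$ may be taken either from the Proposition \ref{list-simp} list or to be $E_7(q^*)$ or $E_8(q^*)$ with $q^*\equiv-1\pmod p$ — is simply the bookkeeping record of which substitutions were made: every substitution above either stays inside the list (types (a.1), (a.3), (c)) or replaces an exceptional Chevalley group by one of $G_2,F_4,E_6,E_7,E_8$ over a prime power with $\ordp=2$, i.e. $q^*\equiv-1\pmod p$ (and the $G_2,F_4,E_6$ subcases collapse onto $F_4$ or stay put since their Weyl groups also contain $-\Id$, but if one prefers to keep only $E_7,E_8$ on the ``new'' side one records that the $G_2,F_4,E_6$ cases were already in the list as type (d) with $q'\equiv1$ when $p\equiv3\pmod4$, and were replaced by $F_4$ — itself in the list — otherwise).

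\textbf{Main obstacle.} The routine part is checking Hypotheses \ref{G-hypoth-X2} (automatic from the choice of $q_0^*$ in Proposition \ref{list-simp}) and invoking the construction lemmas. The genuinely delicate point is the case analysis for $\Spin_{2n}^\gee$: one must track simultaneously the sign $\gee$, the parity of $n$, the value of $\ordp(q')$, and whether $-\Id$ lies in the relevant Weyl group $W_0$ (not just $W$), since condition (a.2) asks for $\ordp(q^*)$ even \emph{or} $-\Id\notin W$, and getting into that situation may require two successive applications of Theorem \ref{OldThA} (first to adjust $q$, then to switch $\gee$ or to pass to a linear/odd-orthogonal group). I expect that reconciling the $D_n$-specific behaviour of $-\Id\in W$ with the tables in Lemma \ref{classical(III.3)} will be where the real care is needed; everything else is a matter of assembling results already proved.
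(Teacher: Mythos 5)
Your overall architecture coincides with the paper's (reduce via Proposition \ref{list-simp}, verify Hypotheses \ref{G-hypoth-X} through Lemmas \ref{case(III.1)}, \ref{classical(III.3)}, \ref{invert(III.2)}, \ref{E8(III.3)}, and repair the leftovers by passing to $q^\vee$ with $\4{\gen{q^\vee}}=\4{\gen{-q}}$ via Theorem \ref{OldThA}(c)), but two of your repair steps would fail as written. For $E_6(q')$ with $q'\equiv1\pmod p$ you invoke the ``$\lie2E6\sim_pF_4$ trick''; that equivalence, as used in the proof of Proposition \ref{list-simp}, requires $\ordp(q)=2$, and no such reduction can exist here since $E_6(q')$ with $p\mid(q'-1)$ has $p$-rank $6$ while $F_4$ has rank $4$. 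It is also unnecessary: $-\Id\notin W(E_6)$ and $E_6(q')$ is a Chevalley group, so case (a.1) applies directly, which is exactly how the paper disposes of it. Similarly, for $\SL_n(q')$ with $\ordp(q')=1$ and $p\equiv1\pmod4$ you propose to ``adjust $q'$ so that $\ordp(q')>1$'' using Theorem \ref{OldThA}(a,c); part (a) cannot change $\4{\gen{q'}}$ and hence cannot change $\ordp(q')$, and part (c) is unavailable because $-\Id\notin W(A_{n-1})$ for $n\ge3$. Again no adjustment is needed, since (a.1) already covers $\SL_n(q')$ (and, for the same reason, $\Spin_{2n}^+(q')$ with $n$ odd, where Proposition \ref{list-simp}(b) has already forced $\gee=+1$, so your appeal to Theorem \ref{OldThA}(d) there is superfluous).

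The genuinely missing idea is the numerical pinning-down of which leftover cases actually occur, which is what justifies the final ``in all cases'' clause you did not establish. After (a.1), (a.2), (a.3) are checked directly, the only uncovered groups from the list are $\Spin_{2n}^+(q)$ with $n$ even, $\ordp(q)$ odd, $q^n\equiv1\pmod p$, and exceptional $\gg(q)$ with $-\Id\in W$, $q\equiv1\pmod p$, $p\equiv1\pmod4$, $p\bmid|W(\gg)|$; since $|W(G_2)|$ and $|W(F_4)|$ involve only the primes $2$ and $3$, the latter forces $p=5$ and $\gg=E_7$ or $E_8$. Only with this observation does the single substitution $q\mapsto q^\vee$ land either back in the list (for $D_{2k}$, since $(q^\vee)^n\equiv q^n\equiv1$) or on $E_7(q^\vee)$, $E_8(q^\vee)$ with $q^\vee\equiv-1\pmod p$, as the statement claims. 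Your own bookkeeping for that clause is incorrect: it asserts that $W(E_6)$ contains $-\Id$ (false) and that a replaced $F_4(q^\vee)$ with $q^\vee\equiv-1\pmod p$ is ``itself in the list'' (false, since Proposition \ref{list-simp}(d) requires $q'\equiv1\pmod p$); these errors are harmless only because the cases in which you would make those moves turn out to be vacuous, which your argument never shows. Finally, note that (a.2) is the conjunction ``$-\Id\in W$ and $\ordp(q^*)$ even'' and refers to $W$, not $W_0$; with the correct reading, the $D_n$ case analysis you flag as the main obstacle is substantially simpler than you anticipate.
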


\begin{proof} We can assume that $G=\gg(q)$ is one of the groups listed in 
one of the five cases (a)--(e) of Proposition \ref{list-simp}. In all cases 
except \ref{list-simp}(c), we can also assume that $G$ satisfies Hypotheses 
\ref{G-hypoth-X2}, with $q_0=2$ if $p=3$ and $\gg=F_4$, and with $q_0$ odd 
in cases (a) and (b) of \ref{list-simp}. If $G=\SL_n(q)$ or 
$\Spin_{2n}^\pm(q)$ where $p|(q-1)$, or $G$ is in case (d), then $G$ 
satisfies Hypotheses \ref{G-hypoth-X} by Lemma \ref{case(III.1)}. If 
$G\cong\SL_n(q)$ or $\Spin_{2n}^\pm(q)$ where $p\nmid(q-1)$, then $G$ 
satisfies Hypotheses \ref{G-hypoth-X} by Lemma \ref{classical(III.3)}. This 
leaves only case (c) in Proposition \ref{list-simp}, which corresponds to 
case (b) here, and case (e) ($p=5$, $G=E_8(q)$, $q\equiv\pm2$ (mod $5$)) 
where $G^*$ satisfies Hypotheses \ref{G-hypoth-X} by Lemma \ref{E8(III.3)}. 

We next show, in cases (a,b,d,e) of Proposition \ref{list-simp}, that we 
can arrange for one of the conditions (a.1), (a.2), or (a.3) to hold. If 
$-\Id\notin W$, then $\gg=A_n$, $D_n$ for $n$ odd, or $E_6$, and $G$ is a 
Chevalley group by the assumptions in cases (a,b,d) of Proposition 
\ref{list-simp}. So (a.1) holds. If $-\Id\in W$ and $\ord_p(q)$ is even, 
then (a.2) holds, while if $p\equiv3$ (mod $4$) and $\ordp(q)=1$, then 
(a.3) holds. 

By inspection, we are left with the following two cases:
\begin{enumerate}[label=\rm(\alph*$'$) ,ref=\alph*,itemsep=6pt,leftmargin=*]
\stepcounter{enumi}
\item $G=\gg(q)$, where $\gg=\Spin_{2n}^+$, $n\ge p$ is even, 
$\ordp(q)$ is odd, and $q^n\equiv1$ (mod $p$); or 

\stepcounter{enumi}

\item $G=\gg(q)$ where $\gg=G_2$, $F_4$, $E_6$, $E_7$, or $E_8$, 
$p\equiv1$ (mod $4$), $p|(q-1)$, and $p\bmid|W(\gg)|$.
\end{enumerate}
In case (d$'$), the above conditions leave only the possibility $p=5$ and 
$\gg=E_7$ or $E_8$ (see the computations of $|W|$ in, e.g., 
\cite[\S\,VI.4]{Bourb4-6}). In either case, by Lemma 
\ref{cond(i-iii)}(a), we can choose a prime power $q^\vee$ which satisfies 
Hypotheses \ref{G-hypoth-X2} and such that $\4{\gen{q^\vee}}=\4{\gen{-q}}$ 
in $\Z_p^\times$, and set $G^\vee=\gg(q^\vee)$. Then $G^\vee\sim_pG$ by 
Theorem \ref{OldThA}(c), $\ord_p(q^\vee)$ is even, so (a.2) holds, and 
$G^\vee$ satisfies Hypotheses \ref{G-hypoth-X} by Lemma \ref{invert(III.2)} 
or \ref{classical(III.3)}. 
\end{proof}

We now consider the two families of groups which appear in Proposition 
\ref{G-cases-odd}(b): those not covered by Hypotheses \ref{G-hypoth-X}.

\begin{Prop} \label{p:lowrank}
Let $G$ be one of the groups $\lie3D4(q)$ where $q$ is a prime power prime 
to $3$, $\lie2F4(2^{2m+1})$ for $m\ge0$, or $\lie2F4(2)'$.  Then the 
$3$-fusion system of $G$ is tame. If $G\cong\lie3D4(2^n)$ ($n\ge1$), 
$\lie2F4(2^{2m+1})$ ($m\ge0$), or $\lie2F4(2)'$, then $\kappa_G$ is split 
surjective, and $\Ker(\kappa_G)$ is the subgroup of field automorphisms of 
order prime to $3$.
\end{Prop}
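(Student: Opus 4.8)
The strategy is to handle the two families by reducing to cases already covered, using the cross-characteristic equivalences of Theorem \ref{OldThA} and Proposition \ref{list-simp}, together with direct computation in the small cases. For $p=3$ and $G\cong\lie3D4(q)$, $q$ prime to $3$, recall first that by Proposition \ref{p:Gu->Ga} (these groups already have trivial center, $d=1$ in \cite[Lemma 14.1.2(iii)]{Carter}), $G$ is of both universal and adjoint type, so it suffices to prove tameness for $G$ itself. If the Sylow $3$-subgroups of $G$ are abelian, tameness follows from Proposition \ref{p:constrained}(b), so we may assume they are nonabelian. By Theorem \ref{OldThA}(a) applied with the group scheme of type $\lie3D4$, $\lie3D4(q)\sim_3\lie3D4(q')$ whenever $\widebar{\gen{q}}=\widebar{\gen{q'}}$ in $\Z_3^\times$; and since $\lie3D4(q)$ has Suzuki--Ree--type root groups only when $q_0=2$ — wait, $\lie3D4$ is a Steinberg (triality) group, defined for all $q_0\ne$ the characteristic — we use Lemma \ref{cond(i-iii)}(a) (with $r_0=2$, whose class generates $(\Z/9)^\times$) to replace $q$ by some power $q^*$ of $2$ with $\widebar{\gen{q^*}}=\widebar{\gen{q}}$. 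Thus $\calf_S(G)\cong\calf_{S^*}(\lie3D4(q^*))$ with $q^*$ a power of $2$, and we are reduced to showing $\kappa_{\lie3D4(2^n)}$ is split surjective for all $n\ge1$.

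For the groups $\lie3D4(2^n)$, $\lie2F4(2^{2m+1})$, and the Tits group, the plan is to invoke the work already done in Section \ref{s:=} (the equicharacteristic case), noting that these are precisely the rank-$2$ (or rank-$1$, in the $\lie2F4$ case) groups of Lie type. Specifically, $\lie3D4(2)$ is handled by Proposition \ref{3D4(2)-tame}, and $\lie2F4(2)'$ and $\lie2F4(2)$ by Proposition \ref{Tits-tame}; these give $\kappa_G$ an isomorphism at $p=2$, but here we need the statement at $p=3$. So instead I would argue: since $3\ne2$ is the cross-characteristic prime for these groups, and the Sylow $3$-subgroups are nonabelian only for $\lie3D4(2^n)$ with $3\mid n$ or similar divisibility conditions (by \cite[10-1]{GL}, since $3\bmid|W(\lie3D4)|$, the condition is $3\cdot\ord_3(2)\bmid n$, i.e. $6\mid n$ for $\lie3D4$), the relevant $\sigma$-setup is the standard one over $\F_{2^n}$. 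For $\lie3D4(2^n)$, by \cite[Example 4.5]{BMO1} the $3$-fusion system agrees up to isomorphism with that of $G_2(2^n)$, and $\kappa_{G_2(2^n)}$ is split surjective by Proposition \ref{Ker(kappa)}(b) (case \ref{easy_case} of Hypotheses \ref{G-hypoth-X}, which $G_2(2^n)$ satisfies by Lemma \ref{case(III.1)} once $2^n\equiv1\pmod3$, i.e. $n$ even) — but I must be careful that the isomorphism of fusion systems intertwines $\kappa$, which it does not automatically, so the cleaner route is to show directly that $\kappa_{\lie3D4(2^n)}$ is split surjective by the amalgam argument of Proposition \ref{DS-thm} adapted to the prime $3$, or by exhibiting $\Out(\lie3D4(2^n))=\gen{\psi_2}\cong C_{3n}$ with the field automorphism $\psi_{2^3}$ of order $n/\gcd(n,?)$ surjecting onto $\Out(S,\calf)$ and the kernel being the $3'$-part $\gen{\psi_2^{n/3^{v_3(n)}}}$ of the field group.

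The clean way to organize this: first establish that for $G=\lie3D4(2^n)$ with nonabelian Sylow $3$-subgroup, $\Out(G)$ is the cyclic group of field automorphisms (no diagonal or graph automorphisms, since $\lie3D4$ is already twisted and has trivial Schur multiplier part relevant here), and that $\4\kappa_G$ restricted to the $3$-part of $\Phi_G$ is injective with image all of $\Out(S,\calf)$ — this uses that $S\le N_G(T)$ for the standard setup (which holds by \cite[10-2]{GL} or by a direct count as in Lemma \ref{case(III.1)}), that $W_0$ acts faithfully on $A=O_3(T)$, and that $\mu_G$ is injective for these groups by an argument like Proposition \ref{lim1-exceptional} (or because the relevant $3$-centric subgroups and their automorphism groups are small enough to check the obstruction groups vanish). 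Then $\Ker(\4\kappa_G)=\Ker(\kappa_G)$ is exactly the $3'$-part of $\Phi_G$, and since $\Phi_G$ is cyclic, $\kappa_G$ is split by splitting off the $3$-part. The same template applies to $\lie2F4(2^{2m+1})$: these have rank $1$ over $\F_{2^{2m+1}}$, their $3$-Sylows are abelian unless $3\mid(2m+1)$ in a suitable sense — actually by \cite[10-1]{GL} with $3\bmid|W(\lie2F4)|$, nonabelian $3$-Sylows require $3\cdot\ord_3(2)=6\mid(2m+1)$, which is impossible since $2m+1$ is odd; hence the $3$-Sylows of $\lie2F4(2^{2m+1})$ are always abelian, and tameness is immediate from Proposition \ref{p:constrained}(b), with no claim about split surjectivity of $\kappa_G$ needed beyond that. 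For $\lie2F4(2)'$ at $p=3$: $|{\lie2F4(2)'}|=2^{11}\cdot3^3\cdot5^2\cdot13$, the Sylow $3$-subgroup is nonabelian (it is extraspecial-by-something of order $27$), and one checks $\Out(\lie2F4(2)')\cong C_2$ acts nontrivially on the $3$-Sylow, forcing $\4\kappa_G$ injective; surjectivity then follows as in Proposition \ref{p:lowrank}'s own proof since $|\Out(S,\calf)|\le 2$ by a direct computation, and the map is trivially split since $\Out(G)\cong C_2$ has order prime to $3$... \emph{but} the paper's own proof (the sentence right after the statement) says this is shown in Proposition \ref{Tits-tame} when $p=2$ and \emph{in Proposition \ref{p:lowrank} when $p=3$} — so for the Tits group at $p=3$ I would simply do the direct Sylow-$3$ computation: identify $S\in\syl3{\lie2F4(2)'}$, show $N_G(S)$ controls $3$-fusion (Burnside, if $S$ is abelian — but it's not) or else show $C_G(O_3(G_0))\le O_3(G_0)$ for a suitable $3$-local $G_0$ and apply Proposition \ref{p:constrained}(a), concluding $\kappa_G$ is an isomorphism.

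\textbf{Main obstacle.} The hard part will be the bookkeeping for $\lie3D4(2^n)$: verifying that the standard $\sigma$-setup has $S\le N_G(T)$ (which requires the $v_3$-count of Lemma \ref{case(III.1)} adapted to the triality twisting — the exponents $d_i$ and signs $\varepsilon_i$ for $\lie3D4$ are $2,2,6,6$ with appropriate signs, needing care), that $W_0$ (here $C_{W(D_4)}(\text{triality})\cong W(G_2)\cong D_{12}$) acts faithfully on $A=O_3(T)$ so that $\Out\dg(S,\calf)=1$ and Lemma \ref{Aut-diag} forces $\4\kappa_G$ to collapse $\Outdiag$ (which is trivial here anyway), and finally that $\mu_G$ is injective — the last point is where I'd most want to quote an existing result (Proposition \ref{lim1-exceptional} or its analogue), since a direct verification via the obstruction theory for linking systems would be lengthy. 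Everything else is a matter of assembling Theorem \ref{OldThA}, Lemma \ref{cond(i-iii)}, Proposition \ref{p:constrained}, and Lemma \ref{onto->split} in the right order.
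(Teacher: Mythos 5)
There is a genuine gap, and it sits exactly where the real work of this proposition lies. Your claim that the Sylow $3$-subgroups of $\lie2F4(2^{2m+1})$ are always abelian is false: writing $q=2^{2m+1}$, one has $q\equiv2\pmod3$, $v_3(|G|)=2k+1$ with $k=v_3(q+1)=1+v_3(2m+1)\ge1$, and by Malle's description of the torus normalizer $(C_{q+1})^2\sd{}\GL_2(3)$ (which contains a Sylow $3$-subgroup), $S\cong(C_{3^k})^2\rtimes C_3$ is nonabelian already for $q=2$. So Proposition \ref{p:constrained}(b) does not apply, and your argument establishes neither tameness nor the asserted split surjectivity of $\kappa_G$ and the identification of $\Ker(\kappa_G)$ for this family — which the statement explicitly requires for all $m\ge0$. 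The same misreading of \cite[10-1]{GL} infects the $\lie3D4$ case: the Sylow $3$-subgroups of $\lie3D4(2^n)$ are nonabelian for every $n\ge1$ (of order $3^{2k+2}$, $k=1+v_3(n)$), not only when $6\mid n$. Moreover the proposed detour through $G_2(2^n)$ via \cite[Example 4.5]{BMO1} is unavailable at $p=3$: that equivalence is used in this paper only at $p=2$, and at $p=3$ the Sylow subgroups of $\lie3D4(q)$ and $G_2(q)$ do not even have the same order ($3^{2k+2}$ versus $3^{2k+1}$), which is precisely why $\lie3D4$ and $\lie2F4$ are carved out as the exceptional case (1.b) in Theorem \ref{ThX} and handled here rather than through Hypotheses \ref{G-hypoth-X} (note $|\tau|=3\ne\ord_3(q)$, so case \ref{messy_case} fails).

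What is missing is the computation your ``direct route'' only gestures at: one must identify $S$ explicitly as $S_j=(R/\pp^j)\rtimes C_3$ with $R=\Z[\zeta]$, $\zeta=e^{2\pi i/3}$ (via \cite{Malle} for $\lie2F4$, and \cite[10-1(4)]{GL}, \cite{Kleidman-3D4} for $\lie3D4$), compute $\Out(S_j)$ by hand, locate $\Out_G(S)\cong\gen{\omega,\eta}$ inside it, and deduce that $\Out(S,\calf)$ is cyclic, generated by the image of the field automorphism $\psi_2$, of $3$-power order equal to the order of the $3$-part of the cyclic group $\Out(G)$; this yields surjectivity, splitting, and $\Ker(\kappa_G)=$ the $3'$-part of the field group in one stroke. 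Two further corrections: injectivity of $\mu_G$ at the odd prime $3$ is quoted from \cite[Theorem C]{limz-odd}, not from Proposition \ref{lim1-exceptional} (a $p=2$ result); and the reduction of $\lie3D4(q)$, $3\nmid q$, to $\lie3D4(2^n)$ uses Theorem \ref{OldThA}(b) (the twisted case) together with Lemma \ref{cond(i-iii)}(a), though this last point is minor. Finally, for the Tits group your suggestion to apply Proposition \ref{p:constrained}(a) to a $3$-local subgroup $G_0$ would only tame $\calf_S(G_0)$, not $\calf_S(G)$, unless you also prove $G_0$ controls $3$-fusion; the paper instead compares $\Out_G(S)\cong D_8$ and $\Out_{\Aut(G)}(S)\cong\SD_{16}$ inside $\Out(S)\cong\GL_2(3)$ directly.
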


\begin{proof} Fix $S\in\syl3{G}$, and set $\calf=\calf_S(G)$.  

If $G$ is the Tits group $\lie2F4(2)'$, then $S$ is extraspecial of order 
$3^3$ and exponent $3$, so $\Out(S)\cong\GL_2(3)$.  Also, $\Out_G(S)\cong 
D_8$ and $\Out_{\Aut(G)}(S)\cong SD_{16}$, since the normalizer in 
$\lie2F4(2)$ of an element of order $3$ (the element $t_4$ in 
\cite{Shinoda}) has the form $\SU_3(2):2\cong3^{1+2}_+:\SD_{16}$ by 
\cite[Table IV]{Shinoda} or \cite[Proposition 1.2]{Malle}. Hence 
$\Out(S,\calf)\le N_{\Out(S)}(\Out_G(S))/\Out_G(S)$ has order at most $2$, 
and $\4\kappa_G$ sends $\Out(G)\cong C_2$ (\cite[Theorem 2]{GrL}) 
isomorphically to $\Out(S,\calf)$. If $G=\lie2F4(2)$, then 
$\Out_G(S)\cong\SD_{16}$, so $\Out(S,\calf)=1$ by a similar argument, and 
$\kappa_G$ is an isomorphism between trivial groups. 

Assume now that $G\cong\lie2F4(2^n)$ for odd $n\ge3$ or 
$G\cong\lie3D4(q)$ where $3\nmid q$. 
In order to describe the Sylow $3$-subgroups of these groups, set 
$\zeta=e^{2\pi{}i/3}$, $R=\Z[\zeta]$, and $\pp=(1-\zeta)R$.  Let 
$S_k$ be the semidirect product $R/\pp^k\rtimes{}C_3$, where the 
quotient acts via multiplication by $\zeta$.  Explicitly, set 
	\[ S_k = \{(x,i) \,|\, x\in{}R/\pp^k,\ i\in\Z/3 \} 
	\qquad\textup{and}\qquad
	A_k = R/\pp^k \times\{0\}, \]
where $(x,i)(y,j)=(x+\zeta^iy,i+j)$. Thus $|S_k|=3^{k+1}$. Set $s=(0,1)$, 
so that $s(x,0)s^{-1}=(\zeta{}x,0)$ for each $x\in{}R/\pp^k$. 

Assume $k\ge3$, so that $A_k$ is the unique abelian subgroup of index 
three in $S_k$.  Set $S=S_k$ and $A=A_k$ for short.  We want to describe 
$\Out(S)$.  Define automorphisms $\xi_a$ ($a\in(R/\pp^k)^\times$), $\omega$, 
$\eta$, and $\rho$ by setting
	\beqq \xi_a(x,i)=(xa,i),\quad \eta=\xi_{-1},\quad 
	\omega(x,i)=(-\widebar{x},-i),\quad 
	\rho(x,i)=(x+\lambda(i),i). \label{e:lowrank} \eeqq
Here, $x\mapsto\widebar{x}$ means complex conjugation, and 
$\lambda(i)=1+\zeta+\ldots+\zeta^{i-1}$.  Note, when checking that $\rho$ 
is an automorphism, that $\lambda(i)+\zeta^i\lambda(j)=\lambda(i+j)$. Note 
that $\rho^3\in\Inn(S)$: it is (left) conjugation by $(1-\zeta^2,0)$. 

Let $\Aut^0(S)\nsg\Aut(S)$ be the subgroup of automorphisms 
which induce the identity on $S/[S,S]=S/[s,A]$, and set 
$\Out^0(S)=\Aut^0(S)/\Inn(S)$.  Each element in $s{\cdot}[s,A]$ is 
conjugate to $s$, and thus each class in $\Out^0(S)$ is represented by an 
automorphism which sends $s$ to itself, which is unique modulo $\gen{c_s}$.  
If $\varphi\in\Aut(S)$ and $\varphi(s)=s$, then $\varphi|_A$ commutes with 
$c_s$, thus is $R$-linear under the identification $A\cong{}R/\pp^k$, and 
hence $\varphi=\xi_a$ for some $a\in1+\pp/\pp^k$.  Moreover, since 
	\[ (1+\pp/\pp^k)^\times = (1+\pp^2/\pp^k)^\times \times \gen{\zeta}
	= (1+3R/\pp^k)^\times \times \gen{\zeta} \]
as multiplicative groups (just compare orders, noting that the groups on 
the right have trivial intersection), each class in $\Out^0(S)$ is 
represented by $\xi_a$ for some unique $a\in1+3R/\pp^k$.  

Since the images of $\eta$, $\omega$, and $\rho$ generate 
$\Aut(S)/\Aut^0(S)$ (the group of automorphisms of $S/[s,A]\cong C_3^2$ 
which normalize $A/[s,A]\cong C_3$), this shows that $\Out(S)$ is generated 
by the classes of the automorphisms in \eqref{e:lowrank}.  In fact, a 
straightforward check of the relations among them shows that 
	\[ \Out(S) \cong \Bigl( 
	\underset{}{\Out^0(S)} 
	\rtimes \underset{[\omega]}{C_2}\Bigr) \times 
	\underset{[\rho],[\eta]}{\Sigma_3}
	\qquad\textup{where}\qquad
	\Out^0(S)=\bigl\{[\xi_a]\,\big|\,a\in(1+3R/\pp^k)^\times\bigr\}. \]
Also, $\omega\xi_a\omega^{-1}=\xi_{\bar{a}}$ for $a\in(1+3R/\pp^k)^\times$. 

For each $x\in1+3R$ such that $\widebar{x}\equiv{}x$ (mod $\pp^k$), we can 
write $x=r+s\zeta$ with $r,s\in\Z$, and then 
$s(\zeta-\overline{\zeta})\in{}\pp^k$, so $s\in{}\pp^{k-1}$, and 
$x\in{}r+s+\pp^k\subseteq1+3\Z+\pp^k$.  This proves that 
	\[ C_{\Out(S)}(\omega) = 
	\bigl\{[\xi_a]\,\big|\,a\in\Z\bigr\}\times 
	\Gen{[\omega]}\times\Gen{[\rho],[\eta]}. \]

For any group $G$ with $S\in\syl3{G}$ and $S\cong{}S_k$, $\Out_G(S)$ has 
order prime to $3$, and hence is a 2-group and conjugate to a subgroup of 
$\gen{\omega,\eta}\in\syl2{\Out(S)}$. If $|\Out_G(S)|=4$, then we can 
identify $S$ with $S_k$ in a way so that $\Out_G(S)=\Gen{[\omega],[\eta]}$.  
Then 
	\begin{align*} 
	\Out(S,\calf) &\le 
	N_{\Out(S)}\bigl(\Gen{[\omega],[\eta]}\bigr)/\Gen{[\omega],[\eta]} 
	\\
	&= C_{\Out(S)}\bigl(\Gen{[\omega],[\eta]}\bigr)/\Gen{[\omega],[\eta]}
	= \bigl\{ [\xi_a ]\,\big|\, a\in\Z \bigr\} = \Gen{[\xi_2]}\,, 
	\end{align*}
where the first equality holds since $O_3(\Out(S))$ has index 
four in $\Out(S)$. 

We are now ready to look at the individual groups. Assume $G=\lie2F4(q)$, 
where $q=2^n$ and $n\ge3$ is odd. By \cite[3.2--3.6]{Steinberg-aut}, 
$\Out(G)$ is cyclic of order $n$, generated by the field automorphism 
$\psi_2$.  By the main theorem in \cite{Malle}, there is a subgroup 
$\caln_G(T_8)\cong(C_{q+1})^2\sd{}\GL_2(3)$, the normalizer of a maximal 
torus, which contains a Sylow $3$-subgroup. Hence if we set 
$k=v_3(q+1)=v_3(4^n-1)=1+v_3(n)$ (Lemma \ref{v(q^i-1)}), we have $S\cong 
S_{2k}\cong(C_{3^k})^2\sd{}C_3$, and $\Out_G(S)=\gen{\omega,\eta}$ up to 
conjugacy.  So $\Out(S,\calf)$ is cyclic, generated by 
$\xi_2=\kappa_G(\psi_2)$.  Since $A\cong(C_{3^{k}})^2$, and since 
$\xi_{-1}\in\Out_G(S)$, $|\Out(S,\calf)|=|[\xi_2]|=3^{k-1}$ where 
$k-1=v_3(n)$. Thus $\4\kappa_G$ is surjective, and is split since the Sylow 
3-subgroup of $\Out(G)\cong C_n$ is sent isomorphically to $\Out(S,\calf)$.

Next assume $G=\lie3D4(q)$, where $q=2^n$ for $n\ge1$. By 
\cite[3.2--3.6]{Steinberg-aut}, $\Out(G)$ is cyclic of order $3n$, 
generated by the field automorphism $\psi_2$ (and where the field 
automorphism $\psi_{2^n}$ of order three is also a graph automorphism). Set 
$k=v_3(q^2-1)=v_3(2^{2n}-1)=1+v_3(n)$ (Lemma \ref{v(q^i-1)}). Then $S\cong 
S_{2k+1}$: this follows from the description of the Sylow structure in $G$ 
in \cite[10-1(4)]{GL}, and also from the description (based on 
\cite{Kleidman-3D4}) of its fusion system in \cite[Theorem 2.8]{ind-p12} 
(case (a.ii) of the theorem). Also, $\Out_G(S)=\gen{\omega,\eta}$ up to 
conjugacy.  So $\Out(S,\calf)$ is cyclic, generated by 
$\xi_2=\kappa_G(\psi_2)$.  Since $A\cong{}C_{3^{k}}\times{}C_{3^{k+1}}$, 
and since $\xi_{-1}\in\Out_G(S)$, $|\Out(S,\calf)|=|[\xi_2]|=3^{k}$. Thus 
$\4\kappa_G$ is surjective, and is split since the Sylow 3-subgroup of 
$\Out(G)\cong C_{3n}$ is sent isomorphically to $\Out(S,\calf)$.


By Theorem \ref{OldThA}(b) and Lemma \ref{cond(i-iii)}(a), for each prime 
power $q$ with $3\nmid q$, the $3$-fusion system of $\lie3D4(q)$ is 
isomorphic to that of $\lie3D4(2^n)$ for some $n$.  By \cite[Theorem 
C]{limz-odd}, $\mu_G$ is injective in all cases.  Thus the $3$-fusion 
systems of all of these groups are tame. 
\end{proof}


\newpage

\appendix

\newcommand{\qq}{\mathfrak{q}}
\renewcommand{\ggg}{\mathfrak{g}}
\newcommand{\defect}{\delta}

\renewcommand{\2}[1]{\textup{\textbf{2\uppercase{#1}}}}
\renewcommand{\3}[1]{\textup{\textbf{3\uppercase{#1}}}}
\newcommand{\6}[1]{{\mathversion{bold}\textbf{#1}}}
\def\7[#1,#2,#3]{\def\test{#3}\def\tst{}\ifx\test\tst
 \textup{\6{[$#1$]$^{#2}$}}\else\textup{\6{[$#1$]$^{#2}_{(#3)}$}}\fi}

\newsect{Injectivity of $\mu_G$\\Bob Oliver}
\label{s:mu}

Recall that for any finite group $G$ and any $S\in\sylp{G}$, 
	\[ \mu_G \: \Out\typ(\call_S^c(G)) \Right6{} \Out(S,\calf_S(G)) \]
is the homomorphism which sends the class of 
$\beta\in\Aut\typ^I(\call_S^c(G))$ to the class of $\beta_S|_S$, where 
$\beta_S$ is the induced automorphism of 
$\Aut_{\call_S^c(G)}(S)=N_G(S)/O_{p'}(C_G(S))$. We need to develop tools 
for computing $\Ker(\mu_G)$, taking as starting point \cite[Proposition 
4.2]{AOV1}.

As usual, for a finite group $G$ and a prime $p$, a proper subgroup $H<G$ is 
\emph{strongly $p$-embedded} in $G$ if $p\big||H|$, and $p\nmid|H\cap\9gH|$ 
for $g\in G{\sminus}H$. The following properties of groups with 
strongly embedded subgroups will be needed.

\begin{Lem} \label{l:str.emb.} 
Fix a prime $p$ and a finite group $G$. 
\begin{enuma} 

\item If $G$ contains a strongly $p$-embedded subgroup, then $O_p(G)=1$. 

\item If $H<G$ is strongly $p$-embedded, and $K\nsg{}G$ is a normal 
subgroup of order prime to $p$ such that $KH<G$, then $HK/K$ is strongly 
$p$-embedded in $G/K$.

\end{enuma}
\end{Lem}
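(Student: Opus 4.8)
}

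The plan is to prove both statements by unwinding the definition of ``strongly $p$-embedded'' and using a Sylow argument in each case.

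For (a), I would argue by contradiction. Suppose $O_p(G)\neq1$, and let $H<G$ be strongly $p$-embedded. Since $H$ contains an element of order $p$, it contains a nontrivial $p$-subgroup, hence (after conjugating $H$, which does not affect the hypothesis) we may assume $H$ contains a Sylow $p$-subgroup $S\in\sylp{G}$; in particular $S\le H$. Now $O_p(G)\le S\le H$, and $O_p(G)$ is normal in $G$, so for any $g\in G$ we have $O_p(G)=\9g{O_p(G)}\le H\cap\9gH$. Since $O_p(G)\neq1$ is a $p$-group, this forces $p\bigm| |H\cap\9gH|$ for \emph{every} $g\in G$, contradicting strong $p$-embedding (which requires $p\nmid|H\cap\9gH|$ for $g\in G{\sminus}H$, and $G{\sminus}H\neq\emptyset$ since $H<G$ is proper). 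Hence $O_p(G)=1$.

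For (b), write $\4G=G/K$ and let $\pi\colon G\to\4G$ be the projection; set $\4H=\pi(H)=HK/K$. Since $KH<G$ by hypothesis, $\4H<\4G$ is a proper subgroup, and $p\bigm||\4H|$ because $p\bigm||H|$ and $p\nmid|K|$ (so $|HK/K|=|H|/|H\cap K|$ is divisible by $p$). It remains to check that $p\nmid|\4H\cap\9{\4g}{\4H}|$ for $\4g\in\4G{\sminus}\4H$. Fix such a $\4g$, lift it to $g\in G$; then $g\notin HK$. The preimage $\pi^{-1}(\4H\cap\9{\4g}{\4H})$ equals $HK\cap\9g{(HK)}=HK\cap\9gH\,K$, a group in which $K$ is normal of order prime to $p$. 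A Sylow $p$-subgroup of $HK\cap\9gHK$ is conjugate (inside that group) to one contained in $H\cap\9g{(\text{something})}$: more precisely, by the Schur--Zassenhaus / Sylow argument, any $P\in\sylp{HK\cap\9gHK}$ is $G$-conjugate into $H\cap\9gH'$ for a suitable $G$-conjugate $H'$ of $H$ of the form $\9kH$ with $k\in K$, hence into $\9{k'}H\cap\9{gk''}H$ for $k',k''\in K$; since $g\notin HK$ we have $gk''(k')^{-1}\notin H$, so strong $p$-embedding of $H$ in $G$ gives that this intersection has order prime to $p$. Therefore $p\nmid|HK\cap\9gHK|$, and a fortiori $p\nmid|\4H\cap\9{\4g}{\4H}|$. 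The main obstacle is making the Sylow-conjugacy step in (b) precise — i.e.\ tracking the $K$-cosets carefully so that one genuinely lands in an intersection $H\cap\9xH$ with $x\notin H$ — but this is a routine application of the fact that $K\nsg HK\cap\9gHK$ has order prime to $p$, so all Sylow $p$-subgroups of $HK\cap\9gHK$ lie in $H$-conjugates modulo $K$.
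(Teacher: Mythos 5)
Your part (b) is essentially correct, but part (a) contains a gap at its key step. You write that since $H$ contains a nontrivial $p$-subgroup, ``after conjugating $H$ \dots we may assume $H$ contains a Sylow $p$-subgroup $S\in\sylp{G}$.'' As stated this is a non sequitur: a proper subgroup of order divisible by $p$ need not contain a Sylow $p$-subgroup of $G$, and conjugating $H$ cannot change the $G$-order of its Sylow $p$-subgroups. The assertion is in fact true for strongly $p$-embedded subgroups, but it needs its own argument, and that argument must itself invoke strong $p$-embedding: take $1\ne P\in\sylp{H}$ and $S\in\sylp{G}$ with $P\le S$; if $P<S$ then $N_S(P)>P$, and any $g\in N_S(P)\sminus H$ would give $1\ne P=\9gP\le H\cap\9gH$, contradicting strong embedding, so $N_S(P)\le H$, contradicting $P\in\sylp{H}$; hence $P=S$. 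With that inserted (or with the weaker statement $O_p(G)\le H$ proved in the same way), your conclusion $O_p(G)\le H\cap\9gH$ for all $g$ does finish (a). Note the paper itself simply cites \cite[Proposition A.7(c)]{AKO} for this part.

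For (b), your argument is sound and the step you flag as the ``main obstacle'' is indeed routine: since $p\nmid|K|$, Sylow's theorem in $HK$ (resp.\ in $\9g{(HK)}$) shows every $p$-subgroup of $HK$ lies in $\9{k'}H$ for some $k'\in K$ (resp.\ in $\9{gk''}H$ for some $k''\in K$), so a nontrivial $p$-subgroup of $HK\cap\9g{(HK)}$ would land, after conjugation, in $H\cap\9xH$ with $x=(k')^{-1}gk''\notin H$ because $g\notin HK$. This is a mild variant of the paper's proof, which argues more locally: it takes a single element $x$ of order $p$ in $HK\cap\9g{(HK)}$, produces order-$p$ elements $y\in H$ and $z\in\9gH$ congruent to $x$ mod $K$, and applies Sylow's theorem inside the small group $K\gen{x}$ (of order $|K|p$) to conjugate $\gen{z}$ to $\gen{y}$ by an element of $K$. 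Both routes exploit exactly the same fact, namely that $K$ is a normal $p'$-subgroup, so the difference is only one of packaging.
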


\begin{proof} \textbf{(a) } See, e.g., \cite[Proposition A.7(c)]{AKO}. 

\smallskip

\noindent\textbf{(b) } Assume otherwise. Thus there is $g\in{}G{\sminus}HK$ 
such that $p\big||(\9gHK/K)\cap(HK/K)|$, and hence $x\in\9gHK\cap{}HK$ of 
order $p$. Then $H\cap K\gen{x}$ and $\9gH\cap K\gen{x}$ have order a 
multiple of $p$, so there are elements $y\in H$ and $z\in\9gH$ of order $p$ 
such that $y\equiv x\equiv z$ (mod $K$). 

Since $\gen{y},\gen{z}\in\sylp{K\gen{x}}$, there is $k\in{}K$ such that 
$\gen{y}=\9k\gen{z}$.  Then $y\in{}H\cap\9{kg}H$, and $kg\notin H$ since 
$k\in{}K$ and $g\notin HK$. But this is impossible, since $H$ is strongly 
$p$-embedded. 
\end{proof}

For the sake of possible future applications, we 
state the next proposition in terms of abstract 
fusion and linking systems. We refer to \cite{AOV1}, and also to Chapters 
I.2 and III.4 in \cite{AKO}, for the basic definitions. Recall that if 
$\calf$ is a fusion system over a finite $p$-group $S$, and $P\le 
S$, then 
\begin{itemize} 
\item $P$ is \emph{$\calf$-centric} if $C_S(Q)\le Q$ for each $Q$ which is
$\calf$-conjugate to $P$;
\item $P$ is \emph{fully normalized in $\calf$} if $|N_S(P)|\ge|N_S(Q)|$ 
whenever $Q$ is $\calf$-conjugate to $P$; and 
\item $P$ is \emph{$\calf$-essential} if $P<S$, $P$ is $\calf$-centric and 
fully normalized in $\calf$, and if $\outf(P)$ contains a strongly 
$p$-embedded subgroup. 
\end{itemize}

For any saturated fusion system $\calf$ over a finite $p$-group $S$, set 
	\begin{multline*} 
	\5\calz(\calf) = \bigl\{E\le S\,\big|\, \textup{$E$ 
	elementary abelian, fully normalized in $\calf$,} \\
	\textup{$E=\Omega_1(Z(C_S(E)))$, 
	$\autf(E)$ has a strongly $p$-embedded subgroup} \bigr\} \,.
	\end{multline*}
The following proposition is our main tool for proving that $\mu_\call$ is 
injective in certain cases. Point (a) will be used to handle the groups 
$\Spin_n^\pm(q)$, point (c) the linear and symplectic groups, and point (b) 
the exceptional Chevalley groups.

\begin{Prop} \label{Ker(mu)}
Fix a saturated fusion system $\calf$ over a $p$-group $S$ and 
an associated centric linking system $\call$.
Let $E_1,\ldots,E_k\in\5\calz(\calf)$ 
be such that each $E\in\5\calz(\calf)$ is $\calf$-conjugate to $E_i$ for 
some unique $i$.  For each $i$, set $P_i=C_S(E_i)$ and $Z_i=Z(P_i)$. Then the 
following hold.
\begin{enuma} 
\item If $k=0$ ($\5\calz(\calf)=\emptyset$), then $\Ker(\mu_\call)=1$.

\item If $k=1$, $E_1\nsg S$, and 
$\autf(\Omega_1(Z(S)))=1$, then $\Ker(\mu_\call)=1$.

\item Assume, for each $(g_i)_{i=1}^k \in \prod_{i=1}^k 
C_{Z_i}(\Aut_S(P_i))$, that there is $g\in C_{Z(S)}(\autf(S))$ such that 
$g_i\in g\cdot C_{Z_i}(\autf(P_i))$ for each $i$.  Then 
$\Ker(\mu_\call)=1$.

\end{enuma}
\end{Prop}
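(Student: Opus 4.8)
\textbf{Proof proposal for Proposition \ref{Ker(mu)}.}

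The plan is to reduce everything to \cite[Proposition 4.2]{AOV1}, which is presumably the statement that $\Ker(\mu_\call)$ embeds into a limit (or a product of fixed-point groups) of the form $\varprojlim\nolimits^1$ or $\lim\nolimits^0$ of the functor $Z_\calf\:\calO^c(\calf)^{\op}\to\mathrm{Ab}$ sending $P$ to $Z(P)$, but with the limit taken only over the subcategory generated by $S$ together with the $\calf$-essential subgroups — and in fact, by the usual reduction (see \cite[\S\,III.4 or III.5]{AKO}), only over those essential subgroups whose automizer's strongly $p$-embedded structure forces a contribution. The first step is therefore to record the precise form of \cite[Proposition 4.2]{AOV1}: that an element of $\Ker(\mu_\call)$ is represented by a compatible family $(z_P)$ of elements $z_P\in Z(P)$, indexed over $\calf$-centric $P$, which is fixed by $\autf(P)$ in a suitable sense, and which is trivial iff it is ``coboundary-like'' (i.e.\ of the form $z_P = g\cdot(\text{something in }Z(P))$ for a single $g\in Z(S)$). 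The key observation, which I would isolate as a lemma, is that for a subgroup $P$ that is \emph{not} $\calf$-essential and not equal to $S$, the corresponding component of any such family is determined by (restricted from) larger subgroups — this is the Alperin-fusion-theorem reduction — so only the $E_i$ (via their centralizers $P_i = C_S(E_i)$) and $S$ itself can carry nontrivial ``new'' information. Here one uses that an $\calf$-essential subgroup $P$ has $\outf(P)$ with a strongly $p$-embedded subgroup, and Lemma \ref{l:str.emb.}(a) gives $O_p(\outf(P))=1$; combined with the hyperfocal/essential analysis this pins down that the relevant essential subgroups are exactly the centralizers of members of $\5\calz(\calf)$ up to conjugacy.

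With that reduction in hand, each part follows by a short argument. For (a): if $\5\calz(\calf)=\emptyset$ then there are no essential subgroups contributing, so the only subgroup in play is $S$, the compatible family is constant equal to some $g\in Z(S)$, and this is exactly the coboundary case, hence $\Ker(\mu_\call)=1$. For (b): with $k=1$ and $E_1\nsg S$, the family is determined by its value $z_{P_1}\in Z_1=Z(P_1)$ and $z_S\in Z(S)$; compatibility forces $z_S$ to be the restriction of $z_{P_1}$ along the inclusion, and $\autf$-invariance at $S$ together with the hypothesis $\autf(\Omega_1(Z(S)))=1$ (which forces $\Omega_1(Z(S))\le$ the fixed points, i.e.\ collapses the possible ambiguity at the top) shows the class is trivial. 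For (c): the general compatible family is a tuple $(g_i)_{i=1}^k$ with $g_i\in Z_i$, and $\autf(P_i)$-invariance forces $g_i\in C_{Z_i}(\autf(P_i))$ while $\Aut_S(P_i)$-compatibility with the part over $S$ forces $g_i\in C_{Z_i}(\Aut_S(P_i))$; the hypothesis then says precisely that every such tuple is congruent, factor by factor, to the restriction of a single $g\in C_{Z(S)}(\autf(S))$, which is the coboundary, so again $\Ker(\mu_\call)=1$.

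The main obstacle I expect is the first step: making precise, from \cite[Proposition 4.2]{AOV1}, exactly which subgroups index the obstruction group and why non-essential proper subgroups contribute nothing. This is where one must invoke Alperin's fusion theorem for saturated fusion systems (every morphism in $\calf$ is a composite of restrictions of automorphisms of $S$ and of $\calf$-essential subgroups) to argue that a compatible family is determined by its values on $S$ and on the essential subgroups, and then the characterization of essential subgroups via strongly $p$-embedded automizers to identify those essentials with the $C_S(E_i)$ for $E_i\in\5\calz(\calf)$ — using that $\autf(E_i)$ having a strongly $p$-embedded subgroup and Lemma \ref{l:str.emb.} force $E_i=\Omega_1(Z(C_S(E_i)))$ to be exactly the ``radical center'' controlling $P_i=C_S(E_i)$. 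Once that bookkeeping is done, parts (a)--(c) are essentially formal manipulations of fixed-point subgroups and I would not expect difficulty there; I would present them compactly, deferring the routine compatibility checks.
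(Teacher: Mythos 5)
There is a genuine gap, and it sits exactly where you flagged the "main obstacle": your reduction asserts that the essential subgroups carrying the obstruction "are exactly the centralizers $C_S(E_i)$ of members of $\5\calz(\calf)$ up to conjugacy," but that identification is not a fact, and nothing in your sketch proves it. In general an $\calf$-essential subgroup $Q$ (i.e.\ one with $\outf(Q)$ containing a strongly $p$-embedded subgroup) need not satisfy $\Omega_1(Z(Q))\in\5\calz(\calf)$, and $C_S(E_i)$ need not be essential; in particular your argument for (a) — "$\5\calz(\calf)=\emptyset$, hence no essential subgroups contribute" — is unjustified, since $\5\calz(\calf)$ can be empty while $\calf$ has many essential subgroups. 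Your one sentence in the other direction ("$\autf(E_i)$ having a strongly $p$-embedded subgroup ... forces $E_i=\Omega_1(Z(C_S(E_i)))$") starts from $E_i\in\5\calz(\calf)$, which is the wrong direction: what the proof needs is that a subgroup $Q$ at which the obstruction class is nontrivial produces an element of $\5\calz(\calf)$.

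The missing argument, which is the heart of the paper's proof, runs as follows. After normalizing so that the elements $g_{P_i}$ lie in $C_{Z_i}(\autf(P_i))$ (so $\alpha_{P_i}=\Id$), one takes $Q$ fully normalized of \emph{maximal} order with $\alpha_Q\ne\Id$; Alperin's fusion theorem makes $Q$ essential and shows the nontriviality propagates along $Q^\calf$. Setting $E=\Omega_1(Z(Q))$, $\Gamma=\autf(Q)$, $\Gamma_0=C_\Gamma(E)$, and $\Gamma_1=$ the subgroup generated by automorphisms extending to strictly larger subgroups, one shows $g_Q$ is fixed by $\Gamma_0\Gamma_1$ (coprime action for $O^p(\Gamma_0)$ on $Z(Q)$, maximality of $Q$ for $\Gamma_1$) but not by $\Gamma$, whence $\Gamma_0\Gamma_1<\Gamma$; the extension axiom then forces $N_{C_S(E)}(Q)=Q$, hence $C_S(E)=Q$ and $E=\Omega_1(Z(C_S(E)))$; finally $p\nmid[\Gamma_0:\Inn(Q)]$ and Lemma \ref{l:str.emb.}(b) transfer the strongly $p$-embedded subgroup from $\outf(Q)$ down to $\autf(E)\cong\Gamma/\Gamma_0$, so $E\in\5\calz(\calf)$, $Q\in(P_i)^\calf$, and $\alpha_{P_i}\ne\Id$ — a contradiction. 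Note that this transfer is \emph{conditional} on the nontrivial obstruction at $Q$ (it is what forces $\Gamma_0\Gamma_1<\Gamma$), so it cannot be packaged as a purely structural "bookkeeping" lemma identifying essentials with the $C_S(E_i)$; without it, none of (a)--(c) follows from \cite[Proposition 4.2]{AOV1} alone. Your treatment of (b) also skips the coprime-action step showing $\autf(S)$ acts trivially on all of $Z(S)$ (not just on $\Omega_1(Z(S))$), which is what makes (b) a special case of (c), but that is minor by comparison.
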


\begin{proof} We first show that (a) and (b) are special cases of (c), and 
then prove (c). That (a) follows from (c) is immediate.

\textbf{(b) } If $k=1$, $E_1\nsg S$, and $\autf(\Omega_1(Z(S)))=1$, then 
the group $\outf(S)$ of order prime to $p$ acts trivially on 
$\Omega_1(Z(S))$, and hence acts trivially on $Z(S)$ (cf. \cite[Theorem 
5.2.4]{Gorenstein}). Also, $P_1=C_S(E_1)\nsg S$, so 
$C_{Z_1}(\Aut_S(P_1))=Z(S)=C_{Z(S)}(\autf(S))$, and $\Ker(\mu_\call)=1$ by 
(c). 

\smallskip

\noindent\textbf{(c) } Fix a class $[\alpha]\in\Ker(\mu_\call)$. By 
\cite[Proposition 4.2]{AOV1}, there is an automorphism 
$\alpha\in\Aut\typ^I(\call)$ in the class $[\alpha]$ such that 
$\alpha_S=\Id_{\Aut_\call(S)}$.  By the same proposition, there are 
elements $g_P\in C_{Z(P)}(\Aut_S(P))$, defined for each $P\le S$ which is 
fully normalized and $\calf$-centric, such that 
\begin{enumr} 
\item $\alpha_P\in\Aut(\Aut_\call(P))$ is conjugation by $\delta_P(g_P)$; 

\item $\alpha_P=\Id$ if and only if $g_P\in C_{Z(P)}(\autf(P))$; and 

\item if $Q<P$ are both fully normalized and $\calf$-centric, then 
	\[ g_P\equiv{}g_Q \pmod{C_{Z(Q)}(N_{\autf(P)}(Q))}. \]

\end{enumr}
Furthermore, 
\begin{enumr}[resume]
\item $[\alpha]=1\in\Out\typ(\call)$ if and only if there is $g\in 
C_{Z(S)}(\autf(S))$ such that $g_P\in g\cdot C_{Z(P)}(\autf(P))$ for each 
$P<S$ such that $P$ is $\calf$-essential and $P=C_S(\Omega_1(Z(P)))$.
\end{enumr}
Set $g_i=g_{P_i}$ ($1\le i\le k$) for short.  

By hypothesis, we can assume there is an element $g\in{}C_{Z(S)}(\autf(S))$ 
such that $g_i\in{}g{\cdot}C_{Z(P_i)}(\autf(P_i))$ for each $i$.  Upon 
replacing $\alpha$ by its composite with $c_{\delta_S(g)}^{-1}$, we can 
assume $g_i\in{}C_{Z(P_i)}(\autf(P_i))$, and hence 
$\alpha_{P_i}=\Id_{\Aut_\call(P_i)}$ for each $i$.

We claim that $\alpha_P=\Id$ for all $P\le S$, and hence that $[\alpha]=1$ 
by (iv) and (ii).  Assume otherwise, and choose $Q<S$ which is fully 
normalized and of maximal order among all subgroups such that 
$\alpha_Q\ne\Id$.  Thus $\alpha_R=\Id$ for all $R\le S$ with $|R|>|Q|$. By 
Alperin's fusion theorem (cf. \cite[Theorem I.3.6]{AKO}), $Q$ is 
$\calf$-essential, and $\alpha$ is the identity on $\Mor_\call(P,P^*)$ for 
all $P,P^*\in\Ob(\call)$ such that $|P|,|P^*|>|Q|$. Also, for each $Q^*\in 
Q^\calf$, there is (by Alperin's fusion theorem again) an isomorphism 
$\chi\in\Iso_\call(Q,Q^*)$ which is a composite of isomorphisms each of 
which extends to an isomorphism between strictly larger subgroups, and 
hence $\alpha_{Q,Q^*}(\chi)=\chi$. Thus 
	\beqq Q^*\in Q^\calf, \textup{ $Q^*$ fully normalized} 
	\qquad\implies\qquad \alpha_{Q^*}\ne\Id\,. 
	\label{e:A.1c} \eeqq

Set $E=\Omega_1(Z(Q))$. Let $\varphi\in\homf(N_S(E),S)$ be such that 
$\varphi(E)$ is fully normalized (cf. \cite[Lemma I.2.6(c)]{AKO}). Then 
$N_S(Q)\le N_S(E)$, so $|N_S(\varphi(Q))|\ge|N_S(Q)|$, and $\varphi(Q)$ is 
fully normalized since $Q$ is. Since $\alpha_{Q^*}\ne\Id$ by 
\eqref{e:A.1c}, we can replace $Q$ by $Q^*$ and $E$ by $E^*$, and arrange 
that $Q$ and $E$ are both fully normalized in $\calf$ (and $Q$ is still 
$\calf$-essential). 

Set $\Gamma=\autf(Q)$, and set 
	\begin{align*} 
	\Gamma_0 &= C_\Gamma(E) = 
	\bigl\{\varphi\in\autf(Q)\,\big|\,\varphi|_E=\Id_E\bigr\} \\
	\Gamma_1 &= \Gen{ \varphi\in\autf(Q) \,\big|\, \varphi=\4\varphi|_Q~ 
	\textup{for some} ~\4\varphi\in\homf(R,S),~ R>Q } \,.
	\end{align*}
Let $\pi_Q\:\Aut_\call(Q)\Right2{}\autf(Q)$ be the homomorphism induced by 
the functor $\pi$. For each $\varphi\in\Gamma=\autf(Q)$, and each 
$\psi\in\pi_Q^{-1}(\varphi)$, we have
	\beqq \varphi(g_Q)=g_Q \quad\iff\quad 
	[\psi,\delta_Q(g_Q)]=\Id 
	\quad\iff\quad \alpha_Q(\psi)=\psi \,: \label{e:A.1a} \eeqq
the first by axiom (C) in the definition of a linking system (see, e.g., 
\cite[Definition III.4.1]{AKO}) and since $\delta_Q$ is injective, and the 
second by point (i) above.

Now, $\Aut_S(Q)\le\Gamma_1$, since each element of $\Aut_S(Q)$ extends to 
$N_S(Q)$ and $N_S(Q)>Q$ (see \cite[Theorem 2.1.6]{Sz1}). Hence 
	\[ \Gamma_0\Gamma_1 \le O^p(\Gamma_0)\cdot\Aut_S(Q)\cdot\Gamma_1
	= O^p(\Gamma_0)\Gamma_1 \,. \]
For each $\varphi\in\Gamma_0$ of order prime to $p$, 
$\varphi|_{Z(Q)}=\Id_{Z(Q)}$ since $\varphi$ is the identity on 
$E=\Omega_1(Z(Q))$ (cf. \cite[Theorem 5.2.4]{Gorenstein}). Thus 
$g_Q\in C_{Z(Q)}(O^p(\Gamma_0))$. If $\varphi\in\autf(Q)$ extends to 
$\4\varphi\in\homf(R,S)$ for some $R>Q$, then by the maximality of $Q$, 
$\alpha(\4\psi)=\4\psi$ for each $\4\psi\in\Mor_\call(R,S)$ such that 
$\pi(\4\psi)=\4\varphi$, and since $\alpha$ commutes with restriction (it 
sends inclusions to themselves), $\alpha_Q$ is the identity on 
$\4\psi|_{Q,Q}\in\pi_Q^{-1}(\varphi)$. So by \eqref{e:A.1a}, 
$\varphi(g_Q)=g_Q$. Thus $\varphi(g_Q)=g_Q$ for all $\varphi\in\Gamma_1$. 
Since $\alpha_Q\ne\Id$ by assumption, there is some $\varphi\in\autf(Q)$ 
such that $\varphi(g_Q)\ne g_Q$ (by \eqref{e:A.1a} again), and we conclude 
that 
	\beqq g_Q \in C_{Z(Q)}(\Gamma_0\Gamma_1) \qquad\textup{and}\qquad 
	\Gamma_0\Gamma_1 < \Gamma = \autf(Q)\,. \label{e:A.1b} \eeqq

Set $Q^*=N_{C_S(E)}(Q)\ge{}Q$.  Then 
$\Aut_{Q^*}(Q)=\Gamma_0\cap\Aut_S(Q)\in\sylp{\Gamma_0}$ since 
$\Aut_S(Q)\in\sylp{\Gamma}$, and by the Frattini argument, 
$\Gamma=N_\Gamma(\Aut_{Q^*}(Q))\Gamma_0$.  If $Q^*>Q$, then for each 
$\varphi\in N_\Gamma(\Aut_{Q^*}(Q))$, $\varphi$ extends to 
$\4\varphi\in\autf(Q^*)$ by the extension axiom. Thus 
$N_\Gamma(\Aut_{Q^*}(Q))\le\Gamma_1$ in this case, so 
$\Gamma=\Gamma_1\Gamma_0$, contradicting \eqref{e:A.1b}. We conclude that 
$Q^*=Q$. 

The homomorphism $\Gamma=\autf(Q)\Right2{}\autf(E)$ induced by restriction 
is surjective by the extension axiom, so 
$\autf(E)\cong\Gamma/\Gamma_0$. By \cite[Proposition I.3.3(b)]{AKO}, 
$\Gamma_1/\Inn(Q)$ is strongly $p$-embedded in $\Gamma/\Inn(Q)=\outf(Q)$; 
and $\Gamma_0\Gamma_1<\Gamma$ by \eqref{e:A.1b}. Also, 
$p\nmid|\Gamma_0/\Inn(Q)|$, since otherwise we would have $\Gamma_1\ge 
N_\Gamma(T)$ for some $T\in\sylp{\Gamma_0}$, in which case 
$\Gamma_1\Gamma_0\ge N_\Gamma(T)\Gamma_0=\Gamma$ by the Frattini argument. 
Thus $\Gamma_1\Gamma_0/\Gamma_0$ is strongly $p$-embedded in 
$\Gamma/\Gamma_0\cong\autf(E)$ by Lemma \ref{l:str.emb.}(b).

Now, $C_S(E)=Q$ since $N_{C_S(E)}(Q)=Q$ (cf. \cite[Theorem 2.1.6]{Sz1}). 
Thus $\Omega_1(Z(C_S(E)))=\Omega_1(Z(Q))=E$, and we conclude 
that $E\in\5\cale(\calf)$. 
Then $E\in(E_i)^\calf$ for some unique $1\le i\le k$, and $Q\in(P_i)^\calf$ by 
the extension axiom (and since $E$ and $E_i$ are both fully centralized).  
But then $\alpha_{P_i}\ne\Id$ by \eqref{e:A.1c}, contradicting the original 
assumption about $\alpha_{P_i}$. We conclude that $\alpha=\Id$.  
\end{proof}

\bigskip

\newsubb{Classical groups of Lie type in odd characteristic}
{s:mu-classical}

Throughout this subsection, we fix an odd prime power $q$ and an integer 
$n\ge1$.  We want to show $\Ker(\mu_G)=1$ when $G$ is one of the 
quasisimple classical groups of universal type over $\F_q$. By Theorem 
\ref{OldThA}(d), we need not consider the unitary groups.

\begin{Prop}  \label{lim1-classical}
Fix an odd prime power $q$.  Let $G$ be isomorphic to one of the quasisimple 
groups $\SL_n(q)$, $\Sp_n(q)$ ($n=2m$), or $\Spin_n^\pm(q)$ ($n\ge3$).  Then 
$\Ker(\mu_G)=1$.
\end{Prop}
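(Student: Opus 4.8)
The plan is to reduce the proof to the machinery of Proposition \ref{Ker(mu)}, applied separately in the three families $\SL_n(q)$, $\Sp_n(q)$, and $\Spin_n^\pm(q)$. For each $G$ with $S\in\syl2{G}$, the main task is to identify the set $\5\calz(\calf_S(G))$ of elementary abelian $2$-subgroups $E\le S$ that are fully normalized, satisfy $E=\Omega_1(Z(C_S(E)))$, and have $\Aut_\calf(E)$ containing a strongly $2$-embedded subgroup. Since $G\sim_2 G$ for a group obtained by replacing $q$ by a power with $\langle q\rangle$ unchanged in $\Z_2^\times$ (Theorem \ref{OldThA}(a,c)), I would first reduce to the case $q\equiv 1\pmod 4$, so that the $\sigma$-setup of Lemma \ref{case(III.1)} applies and $S\le N_G(\4T)$ with $A=O_2(T)$ the unique abelian subgroup of $S$ of maximal order (Proposition \ref{p:AcharS}), except for $\Sp_{2n}(q)$ with $q\equiv\pm3\pmod 8$, which must be handled by the explicit wreath-product description $(Q_8)^n\rtimes P$ recalled after Theorem \ref{ThJ}.

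First I would treat $\SL_n(q)$. The candidate subgroups $E\in\5\calz(\calf)$ correspond to elementary abelian $2$-subgroups of the diagonal torus on which the relevant Weyl group piece acts with a strongly $2$-embedded automizer; by the structure of $W=\Sigma_n$ acting on the $2$-torus, such $E$ are essentially the ``$2^k$-spread'' subgroups indexed by how the coordinates are clustered, and there is (up to fusion) a short bounded list. For each $E$ in the list I would compute $P=C_S(E)$, $Z=Z(P)$, and the fixed points $C_Z(\Aut_S(P))$ and $C_Z(\Aut_\calf(P))$, and then verify the compatibility hypothesis of Proposition \ref{Ker(mu)}(c): that any tuple $(g_i)$ of $\Aut_S(P_i)$-fixed central elements differs coordinatewise from a single element of $C_{Z(S)}(\outf(S))$ by an $\outf(P_i)$-fixed element. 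The point is that $Z(S)$ and the centers $Z(P_i)$ all sit inside $A$ (or a small extension thereof), and scalar matrices $\pm 1$ together with the block structure force these fixed subgroups to be controlled by $Z(S)$; in the cases where $\5\calz(\calf)$ is a singleton with $E\nsg S$ and $\outf(\Omega_1(Z(S)))=1$, part (b) applies directly. For $\Sp_n(q)$ and $\Spin_n^\pm(q)$ the argument is parallel: the symplectic case uses that $\5\calz(\calf)$ is typically empty or a singleton (the natural module supports no strongly $2$-embedded automizer on an elementary abelian subgroup once $n\ge 4$, apart from the central one), so part (a) or (b) of Proposition \ref{Ker(mu)} closes it; the spinor case is the one where a nontrivial list of $E_i$'s (coming from orthogonal decompositions of the natural module into nondegenerate pieces of small dimension) genuinely appears, and part (c) is needed.

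The main obstacle will be the spinor groups $\Spin_n^\pm(q)$: there $\5\calz(\calf)$ can have several classes, the centers $Z(P_i)$ involve the $2$-fold cover $\Spin\to\SO$ and are not simply subtori, and one must carefully track the kernel of $\Spin\to\SO$ when computing $C_{Z_i}(\Aut_S(P_i))$ versus $C_{Z_i}(\outf(P_i))$. I would handle this by working first in $\SO_n^\pm(q)$ (or $\GO_n^\pm(q)$), where the centralizers of the $E_i$ are products of smaller orthogonal and general linear groups and the fixed-point computations are transparent, and then lift to $\Spin$ using that $|\,\mathrm{ker}(\Spin\to\SO)|=2$ is central and that $q$ is odd so this kernel is the unique central involution's worth of ambiguity. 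Once the fixed subgroups are computed, verifying the hypothesis of Proposition \ref{Ker(mu)}(c) amounts to the observation that in every $C_{Z_i}(\Aut_S(P_i))$ the only elements not already in $Z(S)$ are ``local signs'' attached to the $i$-th orthogonal block, which are killed by $\outf(P_i)$ (since $\outf(P_i)$ contains a reflection acting on that block). I also expect the small-dimensional base cases $n=3,4$ for $\Spin$ and $n=2$ for $\Sp$ and $\SL$ to need separate inspection, since there the Sylow $2$-subgroup is dihedral, semidihedral, or quaternion and the general structural reductions do not apply; these are finite in number and can be checked against the known fusion systems of $\PSL_2(q)$, $\SL_2(q)$, and $\PSL_4(q)\cong\SO_6(q)$-type groups.
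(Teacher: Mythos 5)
There is a genuine gap: you have inverted where the real work lies among the three families, and two of your structural claims are false. For $\Sp_n(q)$ your assertion that $\5\calz(\calf)$ is ``typically empty or a singleton'', so that Proposition \ref{Ker(mu)}(a) or (b) suffices, is wrong. Exactly as for $\SL_n(q)$, every orthogonal decomposition $\calv$ of the natural module into subspaces of $2$-power dimensions having precisely three blocks of one (repeated) dimension and pairwise distinct remaining dimensions yields a subgroup $E(\calv)=\{\varphi\in G\mid \varphi|_{V_i}=\pm\Id \textup{ for all }V_i\in\calv\}$ with $\Aut_G(E(\calv))\cong\Sigma_3$, which does contain a strongly $2$-embedded subgroup; writing $n=2^{k_0}+\dots+2^{k_m}$ there are $m$ such $G$-classes in $\5\calz$, so in general (e.g.\ whenever $n$ has at least three binary digits) neither (a) nor (b) can apply, and the gluing hypothesis of Proposition \ref{Ker(mu)}(c) must be verified for the symplectic groups by exactly the same block-by-block scalar construction as for the linear groups (choosing $2$-power scalars $\eta_i$ on the $S$-irreducible summands, with an extra determinant correction only in the $\SL_n$ case). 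Your proposed shortcut for $\Sp_n(q)$ therefore fails, and the special treatment of $q\equiv\pm3\pmod 8$ via the wreath-product Sylow description, as well as the preliminary reduction to $q\equiv1\pmod4$ and torus-normalizer setups, are unnecessary: the argument can be run directly on the natural module, uniformly in odd $q$.

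Conversely, for $G=\Spin_n^\pm(q)$ the set $\5\calz$ is \emph{empty}, so this is the trivial case, disposed of by Proposition \ref{Ker(mu)}(a) alone; your plan to locate a nontrivial family of classes $E_i$ there and verify (c) by working in $\SO_n^\pm(q)$ and lifting is based on a mischaracterization. Any $E\in\5\calz$ satisfies $E=\Omega_1(Z(C_S(E)))$ and hence contains the central involution $z$ with $G/\gen{z}=\Omega(V,\bb)$. Decomposing $V$ into the eigenspaces of $E$ and choosing an involution of $\Omega(V,\bb)$ that preserves each eigenspace, has determinant $-1$ on exactly two suitably chosen ones, and has $(-1)$-eigenspace of square discriminant, its lift $g\in G$ satisfies $c_g(x)\in\{x,zx\}$ for all $x\in E$ with $c_g\neq\Id_E$; since $z$ is fixed by all of $\autf(E)$, this gives $1\neq c_g\in O_2(\autf(E))$, so $\autf(E)$ has no strongly $2$-embedded subgroup by Lemma \ref{l:str.emb.}(a). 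Treating the kernel of $\Spin\to\SO$ as ``one central involution's worth of ambiguity'' misses precisely this point: the spin-cover center is what kills every candidate subgroup, and the $\Sigma_3$-automizer intuition you are importing belongs to the quotient groups, where the conclusion about $\Ker(\mu)$ can genuinely differ (as the adjoint-type examples in the paper show).
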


\begin{proof}  Let $V$, $\bb$, and $\5G=\Aut(V,\bb)$ be such that 
$G=[\5G,\5G]$ if $G\cong{}\Sp_n(q)$ or $G\cong{}\SL_n(q)$, and 
$G/\gen{z}=[\5G,\5G]$ for some $z\in{}Z(\5G)$ if $G\cong\Spin_n^\pm(q)$ 
(where $z\in{}Z(G)$).  Thus $V$ is a vector space of dimension $n$ over the 
field $K=\F_q$, $\bb$ is a trivial symplectic, or quadratic form, and $\5G$ 
is one of the groups $\GL_n(q)$, $\Sp_{2n}(q)$, or $\GO_n^{\pm}(q)$.  

Fix $S\in\syl2{G}$, and set $\calf=\calf_S(G)$.  Set 
$\5\calz=\5\calz(\calf)$ for short.

\smallskip

\noindent\textbf{Case 1: }  Assume $G=\Spin(V,\bb)$, where $\bb$ is 
nondegenerate and symmetric.  Set $Z=Z(G)$, and let $z\in{}Z$ be such that 
$G/\gen{z}=\Omega(V,\bb)$.  We claim that $\5\calz=\emptyset$ in this case, 
and hence that $\Ker(\mu_G)=1$ by Proposition \ref{Ker(mu)}(a).

Fix an elementary abelian 2-subgroup $E\le{}G$ where $E\ge Z$.  Let 
$V=\bigoplus_{i=1}^mV_i$ be the decomposition as a sum of eigenspaces for 
the action of $E$ on $V$. Fix indices $j,k\in\{1,\ldots,m\}$ such that 
either $\dim(V_j)\ge2$, or the subspaces have the same 
discriminant (modulo squares). (Since $\dim(V)\ge3$, this can always be 
done.) Then there is an involution $\gamma\in\SO(V,\bb)$ such that 
$\gamma(V_i)=V_i$ for all $i$, $\gamma|_{V_i}=\Id$ for $i\ne{}j,k$, 
$\det(\gamma|_{V_j})=\det(\gamma|_{V_k})=-1$, and such that the 
$(-1)$-eigenspace of $\gamma$ has discriminant a square. This last 
condition ensures that $\gamma\in\Omega(V,\bb)$ (cf. \cite[Lemma A.4(a)]{LO}), 
so we can lift it to $g\in G$. Then for each $x\in{}E$, $c_g(x)=x$ 
if $x$ has the same eigenvalues on $V_j$ and $V_k$, and $c_g(x)=zx$ 
otherwise (see, e.g., \cite[Lemma A.4(c)]{LO}).  Since $z$ is fixed by 
all elements of $\autf(E)$, $c_g\in{}O_2(\autf(E))$, and hence 
$\autf(E)$ has no strongly $2$-embedded subgroups by Lemma 
\ref{l:str.emb.}(a).  Thus $E\notin\5\calz$.

\smallskip

\noindent\textbf{Case 2: }  Now assume $G$ is linear or symplectic, and 
fix $S\in\syl2{G}$. For each $\calv=\{V_1,\ldots,V_k\}$ such that 
$V=\bigoplus_{i=1}^kV_i$, and such that $V_i\perp V_j$ for $i\ne{}j$ if $G$ 
is symplectic, set 
	\[ E(\calv) = \bigl\{\varphi\in G \,\big|\, 
	\varphi|_{V_i}=\pm\Id\ \textup{ for each } i \bigr\} . \]

We claim that each subgroup in $\5\calz$ has this form. To see this, fix 
$E\in\5\calz$, and let $\calv=\{V_1,\ldots,V_k\}$ be the eigenspaces for 
the nonzero characters of $E$.  Then $E\le E(\calv)$, 
$V=\bigoplus_{i=1}^kV_i$, and this is an orthogonal decomposition if $G$ is 
symplectic. Also, $C_{\5G}(E)$ is the product of the groups 
$\Aut(V_i,\bb|_{V_i})$.  Since $E=\Omega_1(Z(P))$ where $P=C_S(E)$, $E$ 
contains the $2$-torsion in the center of $C_G(E)$, and thus $E=E(\calv)$.  
Furthermore, the action of $S$ on each $V_i$ must be irreducible (otherwise 
$\Omega_1(Z(C_S(E)))>E$), so $\dim(V_i)$ is a power of $2$ for each $i$.

Again assume $E=E(\calv)\in\5\calz$ for some $\calv$. Then 
$\Aut_{\widehat{G}}(E)$ is a product of symmetric groups: if $\calv$ 
contains $n_i$ subspaces of dimension $i$ for each $i\ge1$, then 
$\Aut_{\5G}(E(\calv))\cong\prod_{i\ge1}\Sigma_{n_i}$.  Each such 
permutation can be realized by a self map of determinant one (if $G$ is 
linear), so $\Aut_G(E)=\Aut_{\5G}(E)$.  Since $\Aut_G(E)$ contains a 
strongly 2-embedded subgroup by definition of $\5\calz$ (and since a direct 
product of groups of even order contains no strongly $2$-embedded 
subgroup), $\Aut_G(E)=\Aut_{\5G}(E)\cong\Sigma_3$.


Write $n=\dim(V)=2^{k_0}+2^{k_1}+\ldots+2^{k_m}$, where 
$0\le{}k_0<k_1<\cdots<k_m$.  There is an (orthogonal) decomposition 
$V=\bigoplus_{i=0}^mV_m$, where $S$ acts irreducibly on each $V_i$, and 
where $\dim(V_i)=2^{k_i}$ (see \cite[Theorem 1]{CF}).  For each $1\le i\le 
m$, fix an (orthogonal) decomposition $\calw_i$ of $V_i$ whose components 
have dimensions $2^{k_{i-1}},2^{k_{i-1}},2^{k_{i-1}+1},\ldots,2^{k_i-1}$, 
and set 
	\[ \calv_i = \{V_j\,|\,j\ne i\}\cup \calw_i \]
and $E_i=E(\calv_i)$.  Thus $\calv_i$ contains exactly three subspaces of 
dimension $2^{k_{i-1}}$, and the dimensions of the other subspaces are 
distinct. Hence $\Aut_G(E_i)\cong\Sigma_3$, and $E_i\in\5\calz$. 
Conversely, by the above analysis (and since the conjugacy class of 
$E\in\5\calz$ is determined by the dimensions of its eigenspaces), each 
subgroup in $\5\calz$ is $G$-conjugate to one of the $E_i$.

\newcommand{\muu}[2]{\lambda^{(#1)}_{#2}}

For each $1\le i\le m$, set $P_i=C_S(E_i)$ and $Z_i=Z(P_i)$ (so 
$E_i=\Omega_1(Z_i)$). Since each element of $N_G(P_i)=N_G(E_i)$ 
permutes members of $\calv_i$ of equal dimension, and the elements of 
$N_S(P_i)$ do so only within each of the $V_j$, we have 
	\beqq \begin{split} 
	Z_i &= \bigl\{ z\in G \,\big|\, z|_{X}=\muu{z}{X}\Id_{X} 
	\textup{ for all $X\in\calv_i$, some 
	$\muu{z}{X}\in O_2(\F_q^\times)$} \bigr\} \\
	C_{Z_i}(\Aut_S(P_i)) &= \bigl\{ z\in Z_i \,\big|\, 
	\muu{z}{X_i}=\muu{z}{X_i'} \bigr\} \\
	C_{Z_i}(\Aut_G(P_i)) &= \bigl\{ z\in Z_i \,\big|\, 
	\muu{z}{X_i}=\muu{z}{X_i'}=\muu{z}{V_{i-1}} \bigr\} \,,
	\end{split} \label{e:A.3a} \eeqq
where $X_i$, $X_i'$, and $V_{i-1}$ are the three 
members of the decomposition $\calv_i$ of dimension $2^{k_{i-1}}$ (and 
$X_i,X_i'\in\calw_i$). 

Fix $(g_i)_{i=1}^m\in\prod_{i=1}^mC_{Z_i}(\Aut_S(P_i))$. Then $g_i\in 
C_{Z_i}(\Aut_G(P_i))$ if and only if $\muu{g_i}{V_{i-1}}=\muu{g_i}{X_i}$. 
Choose $g\in\5G$ such that $g|_{V_i}=\eta_i\cdot\Id$ for each $i$, where 
the $\eta_i\in O_2(\F_q^\times)$ are chosen so that 
$\eta_i/\eta_{i-1}=\muu{g_i}{X_i}/\muu{g_i}{V_{i-1}}$ for each $1\le i\le 
m$.  If $G$ is linear, then $\det(g)=\theta^{2^{k_0}}$ for some $\theta\in 
O_2(\F_q^\times)$, and upon replacing $g$ by 
$g\circ\theta^{-2^{k_0}/n}\Id_V$ (recall $k_0=v_2(n)$) we can assume 
$g\in{}G$. Then $g\in C_{Z(S)}(\Aut_G(S))$ since it is a multiple of the 
identity on each $V_i$ and has $2$-power order. By construction and 
\eqref{e:A.3a}, $g\equiv g_i$ (mod $C_{Z_i}(\Aut_G(P_i))$) for each $i$; so 
$\Ker(\mu_G)=1$ by Proposition \ref{Ker(mu)}(c). 
\end{proof}

\bigskip

\newsubb{Exceptional groups of Lie type in odd characteristic}
{s:mu-exceptional}

Throughout this subsection, $q_0$ is an odd prime, and $q$ is a power of 
$q_0$. We show that $\Ker(\mu_G)=1$ when $G$ is one of the groups $G_2(q)$, 
$F_4(q)$, $E_6(q)$, $E_7(q)$, or $E_8(q)$ and is of universal type.

The following proposition is a special case of \cite[Theorem 2.1.5]{GLS3}, 
and is stated and proven explicitly in \cite[Proposition 8.5]{limz}. It 
describes, in many cases, the relationship between conjugacy classes and 
normalizers in a connected algebraic group and those in the subgroup fixed 
by a Steinberg endomorphism. 

\begin{Prop} \label{L<->Lhat}
Let $\4G$ be a connected algebraic group over $\fqobar$, 
let $\sigma$ be a Steinberg endomorphism of $\4G$, and set 
$G=C_{\4G}(\sigma)$.  Let $H\le{}G$ be any subgroup, and let $\calh$ be the 
set of $G$-conjugacy classes of subgroups $\4G$-conjugate to $H$.  Let 
$N_{\4G}(H)$ act on $\pi_0(C_{\4G}(H))$ by sending $g$ to 
$xg\sigma(x)^{-1}$ (for $x\in{}N_{\4G}(H)$).  Then there is a bijection
	\[ \omega\: \calh \Right6{\cong} \pi_0(C_{\4G}(H))/N_{\4G}(H), \]
defined by setting $\omega([\9xH])=[x^{-1}\sigma(x)]$ whenever 
$\9xH\le{}C_{\4G}(\sigma)$.  Also, for each $x\in\4G$ such that 
$\9xH\le G$, $\Aut_{G}(\9xH)$ is isomorphic to the 
stabilizer of $[x^{-1}\sigma(x)]\in\pi_0(C_{\4G}(H))/C_{\4G}(H)$ under the 
action of $\Aut_{\4G}(H)$ on this set. 
\end{Prop}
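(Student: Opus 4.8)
\textbf{Plan of proof for Proposition~\ref{L<->Lhat}.}
The statement has two parts: the bijection $\omega$ between $G$-conjugacy classes inside the $\4G$-class of $H$ and the orbit set $\pi_0(C_{\4G}(H))/N_{\4G}(H)$, and the identification of $\Aut_G(\9xH)$ with a point stabilizer. As noted, this is a special case of \cite[Theorem~2.1.5]{GLS3} and is spelled out in \cite[Proposition~8.5]{limz}, so the cleanest approach is to cite those and then indicate the mechanism, since the argument is a standard application of the Lang--Steinberg theorem. I would first fix $H\le G=C_{\4G}(\sigma)$ and observe that a subgroup $\4G$-conjugate to $H$, say $\9xH$, lies in $G$ precisely when $\sigma(\9xH)=\9xH$, i.e.\ when $x^{-1}\sigma(x)$ normalizes $H$; since $\sigma$ acts trivially on $H$ (because $H\le G$), the coset $x^{-1}\sigma(x)\,C_{\4G}(H)$ is well defined in $\pi_0(C_{\4G}(H))=C_{\4G}(H)/C_{\4G}(H)^0$, and one checks $x^{-1}\sigma(x)\in N_{\4G}(H)$ gives a class in $\pi_0(C_{\4G}(H))$ via the given twisted action of $N_{\4G}(H)$.

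The core step is to verify that $\omega([\9xH])=[x^{-1}\sigma(x)]$ is well defined and bijective. For well-definedness: if $\9xH$ and $\9yH$ are $G$-conjugate, $y=gx h$ with $g\in G$, $h\in N_{\4G}(H)$, then a direct computation shows $y^{-1}\sigma(y)$ and $x^{-1}\sigma(x)$ differ by the twisted $N_{\4G}(H)$-action (using $\sigma(g)=g$), so they give the same orbit. Surjectivity uses the Lang--Steinberg theorem \cite[Theorem~2.1.1]{GLS3}: given any $c\in N_{\4G}(H)$ representing a class in $\pi_0(C_{\4G}(H))$, one writes $c=x^{-1}\sigma(x)$ for suitable $x\in\4G$ (applying Lang--Steinberg to the connected group $C_{\4G}(H)^0$ after adjusting by $c$ within its component), and then $\9xH$ is $\sigma$-stable, hence $\9xH\le G$ and maps to the given class. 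Injectivity is the reverse of the well-definedness computation: if $y^{-1}\sigma(y)$ and $x^{-1}\sigma(x)$ lie in the same $N_{\4G}(H)$-orbit, one reconstructs an element of $G$ conjugating $\9xH$ to $\9yH$, again invoking Lang--Steinberg to solve the relevant equation $g^{-1}\sigma(g)=\cdots$ in $\4G$ with $g\in\4G$ and then checking $g\in G$.

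For the last sentence, fix $x$ with $\9xH\le G$. An element of $\Aut_G(\9xH)$ comes from some $g\in N_G(\9xH)$; conjugating back by $x$, we get $x^{-1}g x\in N_{\4G}(H)$ with $\sigma(x^{-1}gx)=x^{-1}\sigma(g)x$, and the condition $g\in G$ translates, after the same bookkeeping as above, into the statement that $x^{-1}gx$ fixes the class $[x^{-1}\sigma(x)]\in\pi_0(C_{\4G}(H))/C_{\4G}(H)$ under the $\Aut_{\4G}(H)=N_{\4G}(H)/C_{\4G}(H)$-action (here the action of $C_{\4G}(H)$ has been divided out because inner-to-$C_{\4G}(H)$ conjugations do not change the component). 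Conversely any stabilizing element lifts, via Lang--Steinberg, to an element of $N_G(\9xH)$, and the two assignments are mutually inverse group homomorphisms, giving the asserted isomorphism $\Aut_G(\9xH)\cong\mathrm{Stab}_{\Aut_{\4G}(H)}([x^{-1}\sigma(x)])$.

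The main obstacle is purely bookkeeping: keeping the twisted $N_{\4G}(H)$-action, the quotient $\pi_0(C_{\4G}(H))$, and the two levels of ``modding out'' ($C_{\4G}(H)^0$ for the set, and $C_{\4G}(H)$ for the automorphism-group statement) straight, and making sure each application of Lang--Steinberg is to the correct connected group. Since the result is quoted verbatim from \cite{GLS3} and \cite{limz}, I expect the actual proof in the paper to be just a one-line citation; the plan above records what lies behind it.
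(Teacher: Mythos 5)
Your proposal is correct and matches the paper exactly: the paper gives no proof at all, only the remark preceding the statement that it is a special case of \cite[Theorem 2.1.5]{GLS3} and is proven explicitly in \cite[Proposition 8.5]{limz}, which is precisely the citation you lead with. Your sketch of the underlying Lang--Steinberg bookkeeping is a sound record of what those references do, so there is nothing to add.
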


Since we always assume $\4G$ is of universal type in this section, the 
group $G=C_{\4G}(\sigma)$ of Proposition \ref{L<->Lhat} is equal to the 
group $G=O^{q_0'}(C_{\4G}(\sigma))$ of Definition \ref{d:Lie} and Notation 
\ref{G-setup}.

The following definitions will be useful when 
applying Proposition \ref{L<->Lhat}. For any finite group $G$, set 
	\begin{align*} 
	\mathcal{SE}(G) &= \bigl\{H\le G \,\big|\, 
	\textup{$H$ has a strongly $2$-embedded subgroup} \bigr\} \\
	\defect(G) &= \begin{cases} 
	\min \bigl\{[G:H] \,\big|\, H\in\mathcal{SE}(G) \bigr\} & 
	\textup{if $\mathcal{SE}(G)\ne\emptyset$} \\
	\infty & \textup{if $\mathcal{SE}(G)=\emptyset$.}
	\end{cases}
	\end{align*}
Thus by Proposition \ref{L<->Lhat}, if $H<\4G$ is such that 
$|\pi_0(C_{\4G}(H))|>\defect(\Out_{\4G}(H))$, then no subgroup $H^*\le 
C_{\4G}(\sigma)$ which is $\4G$-conjugate to $H$ has the property that 
$\Aut_{C_{\4G}(\sigma)}(H^*)$ has a strongly $2$-embedded subgroup. 
The next lemma provides some tools for finding lower bounds for 
$\defect(G)$.

\begin{Lem} \label{l:d(G)}
\begin{enuma} 
\item For any finite group $G$, 
$\defect(G)\ge|O_2(G)|\cdot\defect(G/O_2(G))$.

\item If $G=G_1\times G_2$ is finite, and $\defect(G_i)<\infty$ for 
$i=1,2$, then 
	\[ \defect(G)=\min\bigl\{ 
	\defect(G_1) \cdot \eta(G_2) \,,\, 
	\defect(G_2) \cdot \eta(G_1) \bigr\} \,, \]
where $\eta(G_i)$ is the smallest index of any odd order subgroup of $G_i$.

\item If $\defect(G)<\infty$, and there is a faithful $\F_2[G]$-module $V$ 
of rank $n$, then $2^{v_2(|G|)-[n/2]}\big|\defect(G)$.

\item More concretely, $\defect(\GL_3(2))=28$, 
$\defect(\GL_4(2))=112$, $\defect(\GL_5(2))=2^8\cdot7\cdot31$, and 
$\defect(\SO_4^+(2))=2=\defect(\SO_4^-(2))$. Also, 
$2^4\le\defect(\SO_6^+(2))<\infty$ and $2^6\le\defect(\SO_7(2))<\infty$.

\end{enuma}
\end{Lem}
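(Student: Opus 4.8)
\textbf{Proof proposal for Lemma \ref{l:d(G)}.}

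The plan is to prove the four parts in order, using (a)--(c) as the general machinery and then grinding out (d) by explicit computation (mostly deferred). For part (a): if $H\le G$ is strongly $2$-embedded, then $O_2(G)\le H$, since $O_2(G)$ is a normal $2$-subgroup and $p\mid |H\cap\9gH|$ for any $g$ normalizing a nontrivial $2$-subgroup contained in both. More carefully, $O_2(G)\cap\9gO_2(G)=O_2(G)$ is a $2$-group of even order for $g\in G\smallsetminus H$ if $O_2(G)\ne1$, forcing $O_2(G)\le H$ by the definition of strongly $2$-embedded; so $H/O_2(G)$ is a proper subgroup of $G/O_2(G)$ with $p\mid |H/O_2(G)|$, and it is strongly $2$-embedded there by an argument analogous to Lemma \ref{l:str.emb.}(b). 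Then $[G:H]=[G/O_2(G):H/O_2(G)]\ge\defect(G/O_2(G))$, so $\defect(G)\ge|O_2(G)|\cdot\defect(G/O_2(G))$.

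For part (b): a direct product $G_1\times G_2$ of groups of even order has no strongly $2$-embedded subgroup (by Lemma \ref{l:str.emb.}(a), since $O_2$ of such a product is nontrivial once we pass to the relevant quotient — more directly, one uses that a strongly $2$-embedded subgroup forces the group to act ``transitively enough'' on its Sylow $2$-subgroups, which fails for a product). So any strongly $2$-embedded $H\le G_1\times G_2$ must project onto a subgroup of odd index in one factor — concretely, writing $\pi_i\colon G\to G_i$, if both $\pi_i(H)$ had even index we could build an element of $G\smallsetminus H$ with even-order intersection. Thus $H$ projects onto $G_1$ (say) with $[G_1:\pi_1(H)]$ odd, hence $=1$ by minimality considerations... actually one wants: $H\cap G_2$ is strongly $2$-embedded in $G_2$ (or $H\cap G_1$ in $G_1$), and the complementary factor contributes its minimal odd-index subgroup. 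Carefully: if $H$ is strongly $2$-embedded in $G_1\times G_2$, then (up to swapping factors) $H\supseteq K\times G_2'$ where... I would organize this as: show $[G:H]=[G_1:H_1]\cdot[G_2:H_2]$ is impossible unless one $H_i$ has odd index, reduce to the case $H=H_1\times G_2''$ with $H_1$ strongly $2$-embedded in $G_1$ and $G_2''$ odd-index in $G_2$, and take the minimum over the two choices. This gives the displayed formula.

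For part (c): if $V$ is a faithful $\F_2[G]$-module of rank $n$ and $H\le G$ is strongly $2$-embedded, then $H\supseteq$ a Sylow $2$-subgroup $T$ of $G$ (since $N_G(T)\le H$ for $T\in\syl2 G$ — this is standard for strongly $2$-embedded subgroups), so $2^{v_2(|G|)}\bigm| |H|$; on the other hand $[G:H]$ divides $|G|/|H|$, and to bound $|H|$ from above one observes that $H$ stabilizes a point or a structure... The cleaner route: the number of Sylow $2$-subgroups of $G$ is $[G:N_G(T)]$, which divides $[G:H]$ times $[H:N_G(T)]$; combined with the fact that a faithful rank-$n$ module forces $H$ (which contains $N_G(T)$) to have order at most roughly $|G|/2^{\lceil n/2\rceil}$ via a counting of fixed subspaces. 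I would make this precise using that $O^{2'}(H)$ acts on $V$ with a nontrivial fixed space of dimension $\ge \lceil n/2\rceil$ when... Honestly the intended argument is likely: $H$ contains $T\in\syl2 G$, and since $H$ is strongly $2$-embedded, $T$ has a ``large'' fixed space on $V$ — no. I would instead prove $2^{v_2(|G|)-\lfloor n/2\rfloor}\mid \defect(G)$ by noting $|H|_2=2^{v_2(|G|)}$ while the $2$-part of $|H|$ is bounded using that any $2$-subgroup of $\GL_n(2)$ has order at most $2^{\binom n2}$ — wait, that is an upper bound on $v_2(|G|)$, not what is needed. The honest statement: since $G\le\GL_n(2)$ and $H\supseteq T\in\syl2 G$, we get $\defect(G)=[G:H]\mid |G|/|T|=|G|/2^{v_2(|G|)}$; the claim $2^{v_2(|G|)-\lfloor n/2\rfloor}\mid\defect(G)$ must then come from a matching upper bound $|H|\le |G|/2^{v_2(|G|)-\lfloor n/2\rfloor}$, i.e. $|H|_2\le 2^{v_2(|G|)}$... this reduces to showing $[G:H]$ is divisible by $2^{v_2(|G|)}/2^{\lfloor n/2\rfloor}$, equivalently $v_2(|H|)\le \lfloor n/2\rfloor + (\text{odd part contribution})$ — I expect the argument uses that $H$, being strongly $2$-embedded, has $H=N_G(T)$-controlled structure and its $2$-core fixes a subspace of dimension $\ge\lceil n/2\rceil$.

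\textbf{This part (c) is the main obstacle}, and I would want to revisit the precise formulation; the cleanest correct version is probably: $\defect(G)$ is divisible by $|G|_2/2^{\lfloor n/2\rfloor}$ whenever $G$ embeds faithfully in $\GL_n(2)$, proved via the fact that a strongly $2$-embedded subgroup contains the full normalizer of a Sylow $2$-subgroup together with an upper bound on $2$-local structure coming from the module. For part (d), the concrete values: $\GL_3(2)$ has $\mathcal{SE}$ realized by $\GL_2(2)\times 1$-type point stabilizers (Frobenius groups $7{:}3$ of order $21$, index $8$) — actually the strongly $2$-embedded subgroups of $\GL_3(2)$ are the point stabilizers $S_4$ of order $24$, index $7$? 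No: $S_4$ is not strongly $2$-embedded. The strongly $2$-embedded subgroup is $7{:}3$ of order $21$ giving index $8$ — but $8\ne 28$. I would recompute: $\defect(\GL_3(2))=28$ suggests $|H|=6$, i.e. $H\cong S_3$, index $168/6=28$; indeed $S_3$ (a Borel-type or torus normalizer of a split torus) can be strongly $2$-embedded. So for each listed group I would identify the minimal-index member of $\mathcal{SE}$ by hand (using (a)--(c) to rule out smaller indices and exhibiting an explicit $H$ to get the upper bound): $\defect(\GL_3(2))=28$, $\defect(\GL_4(2))=112$, $\defect(\GL_5(2))=2^8\cdot 7\cdot 31$, $\defect(\SO_4^{\pm}(2))=2$ (here $\SO_4^+(2)\cong (S_3\times S_3){:}2$ and $\SO_4^-(2)\cong S_5$, both with an obvious index-$2$ strongly $2$-embedded subgroup — $S_5$ has $S_5$'s... wait, index $2$ means $H$ is normal of index $2$; $A_5$ is not strongly $2$-embedded. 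Rather $\SO_4^-(2)\cong \Sigma_5$ and no — I would use instead that $\defect$ here is genuinely $2$, coming from $O^{2'}$ having a subgroup of the right shape). For $\SO_6^+(2)\cong\Sigma_8$ and $\SO_7(2)\cong\Sp_6(2)$ the bounds $2^4\le\defect<\infty$ and $2^6\le\defect<\infty$ follow from (c) with $n=6,7$ respectively (giving $2^{v_2-\lfloor n/2\rfloor}=2^{v_2-3}$, and $v_2(|\Sigma_8|)=7$, $v_2(|\Sp_6(2)|)=9$, so $2^4$ and $2^6$), together with a single explicit strongly $2$-embedded subgroup (e.g. a normalizer of a split torus) to show finiteness. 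I would present (d) as a list of short verifications, citing (a)--(c) for the lower bounds and the \texttt{ATLAS}/explicit subgroups for the upper bounds, without writing out every group order.
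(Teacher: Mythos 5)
The central problem is that you have misread the definition of $\defect(G)$. In the paper, $\mathcal{SE}(G)$ is the set of subgroups $H\le G$ that \emph{themselves contain} a strongly $2$-embedded subgroup, and $\defect(G)$ is the minimal index of such an $H$; it is \emph{not} the minimal index of a strongly $2$-embedded subgroup of $G$, which is what your whole argument is about. This changes everything. For (a) the correct argument is one line: if $H\in\mathcal{SE}(G)$ then $O_2(H)=1$ by Lemma \ref{l:str.emb.}(a), so $H\cap O_2(G)\le O_2(H)=1$, hence $H$ embeds in $G/O_2(G)$ as a member of $\mathcal{SE}(G/O_2(G))$ and $[G:H]=|O_2(G)|\cdot[G/O_2(G):\4H]\ge|O_2(G)|\cdot\defect(G/O_2(G))$. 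Your version, which puts $O_2(G)$ \emph{inside} $H$, cannot produce the factor $|O_2(G)|$: your last step, deducing $[G:H]\ge|O_2(G)|\cdot\defect(G/O_2(G))$ from $[G:H]=[G/O_2(G):H/O_2(G)]\ge\defect(G/O_2(G))$, is a non sequitur. For (c), under the correct reading the divisibility is immediate from the lemma the paper cites from \cite{OV2}: a group with a strongly $2$-embedded subgroup, Sylow $2$-subgroups of order $2^k$, and a faithful $\F_2$-module of rank $n$ satisfies $n\ge2k$; hence every $H\in\mathcal{SE}(G)$ has $v_2(|H|)\le[n/2]$, so $2^{v_2(|G|)-[n/2]}$ divides $[G:H]$. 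Under your reading (c) would be hopeless — a strongly $2$-embedded subgroup contains the normalizer of a Sylow $2$-subgroup and so has odd index — which is exactly why you got stuck and flagged (c) as the main obstacle.

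The misreading also breaks (b) and (d). In (b) the paper takes an arbitrary $H\le G_1\times G_2$ having a strongly $2$-embedded subgroup $K<H$, uses that all involutions of such an $H$ are $H$-conjugate to conclude that one of $H\cap G_1$, $H\cap G_2$ has odd order, and then applies Lemma \ref{l:str.emb.}(b) to see that the projection to the other factor lies in $\mathcal{SE}$ of that factor, giving $[G:H]=[G_1:\pr_1(H)]\cdot[G_2:H\cap G_2]\ge\defect(G_1)\cdot\eta(G_2)$; your sketch is aimed at the wrong objects and does not reach a proof. In (d), your assertion that $\Sigma_3$ "can be strongly $2$-embedded" in $\GL_3(2)$ is false — $\GL_3(2)$ has no strongly $2$-embedded subgroup at all, since distinct Sylow $2$-subgroups meet nontrivially — the value $28$ is realized because $\Sigma_3$ itself \emph{possesses} a strongly $2$-embedded subgroup, i.e.\ $\Sigma_3\in\mathcal{SE}(\GL_3(2))$; similarly $A_5\le\Sigma_5\cong\SO_4^-(2)$ and $C_3^2\sd{}C_4\le\SO_4^+(2)$ give the index-$2$ values. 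Finally, the lower bounds $\defect(\GL_4(2))=2^4\cdot7$ and $\defect(\GL_5(2))=2^8\cdot7\cdot31$ are not routine hand checks: the paper needs (c) together with Bender's theorem and Aschbacher's classification of maximal subgroups to exclude smaller-index members of $\mathcal{SE}$, none of which appears in your proposal. So beyond the definitional slip, parts (a)--(d) all remain essentially unproven as written.
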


\begin{proof} \textbf{(a) } If $H\in\mathcal{SE}(G)$, 
then $H\cap{}O_2(G)=1$ by Lemma \ref{l:str.emb.}(a). Hence 
there is a subgroup $H^*\le G/O_2(G)$ isomorphic to $H$, and 
	\[ [G:H]=|O_2(G)|\cdot[G/O_2(G):H^*]
	\ge|O_2(G)|\cdot\defect(G/O_2(G))\,. \]

\smallskip

\noindent\textbf{(b) } If a finite group $H$ has a strongly 2-embedded 
subgroup, then so does its direct product with any odd order group. Hence 
$\defect(G)\le\defect(G_i)\eta(G_{3-i})$ for $i=1,2$. 

Assume $H\le G$ has a strongly $2$-embedded subgroup 
$K<H$. Set $H_i=H\cap{}G_i$ for $i=1,2$. Since all involutions in $H$ are 
$H$-conjugate (see \cite[6.4.4]{Sz2}), $H_1$ and $H_2$ cannot both have 
even order. Assume $|H_2|$ is odd. Let $\pr_1$ be projection onto 
the first factor. 
If $\pr_1(K)=\pr_1(H)$, then there is $x\in(H{\sminus}K)\cap H_2$, and this 
commutes with all Sylow $2$-subgroups of $H$ since they lie in $G_1$, 
contradicting the assumption that $K$ is strongly $2$-embedded in $H$.
Thus $\pr_1(K)<\pr_1(H)$. Then $\pr_1(H)$ has a strongly 
$2$-embedded subgroup by Lemma \ref{l:str.emb.}(b), and hence 
	\[ [G:H] = [G_1:\pr_1(H)] \cdot [G_2:H_2] \ge
	\defect(G_1) \cdot \eta(G_2) \,. \] 
So $\defect(G)\ge\defect(G_i)\eta(G_{3-i})$ for $i=1$ or $2$.

\smallskip

\noindent\textbf{(c) } This follows from \cite[Lemma 1.7(a)]{OV2}: if $H<G$ 
has a strongly $2$-embedded subgroup, $T\in\syl2{H}$, and $|T|=2^k$, then 
$\dim(V)\ge2k$.

\smallskip

\noindent\textbf{(d) } The formulas for $\defect(\SO_4^\pm(2))$ hold since 
$\SO_4^+(2)\cong\Sigma_3\wr C_2$ contains a subgroup isomorphic to 
$C_3^2\sd{}C_4$ and $\SO_4^-(2)\cong\Sigma_5$ a subgroup isomorphic to 
$A_5$. Since $4|\defect(\GL_3(2))$ by (c), and since $7|\defect(\GL_3(2))$ 
(there are no subgroups of order $14$ or $42$), we have 
$28|\defect(\GL_3(2))$, with equality since $\Sigma_3$ has index $28$. The 
last two (very coarse) estimates follow from (c), and the $6$- and 
$7$-dimensional representations of these groups. 

Fix $n=4,5$, and set $G_n=\GL_n(2)$. Assume $H\le G_n$ has a strongly 
embedded subgroup, where $7\big||H|$ or $31\big||H|$. By (c), 
$2^4|\defect(G_4)$ and $2^8|\defect(G_5)$, and thus $8\nmid|H|$. If $H$ is 
almost simple, then $H\cong A_5$ by Bender's theorem (see \cite[Theorem 
6.4.2]{Sz2}), contradicting the assumption about $|H|$. So by the main 
theorem in \cite{Aschbacher}, $H$ must be contained in a member of one of 
the classes $\calc_i$ ($1\le i\le8$) defined in that paper. One quickly checks 
that since $(7\cdot31,|H|)\ne1$, $H$ is contained in a member of $\calc_1$. 
Thus $H$ is reducible, and since $O_2(H)=1$, either $H$ is isomorphic to a 
subgroup of $\GL_3(2)\times\GL_{n-3}(2)$, or $n=5$ and $H<\GL_4(2)$. By (b) 
and since $7\big||\defect(\GL_3(2))$, we must have 
$H\cong\Sigma_3\times(C_7\sd{}C_3)$, in which case $|H|<180=|\GL_2(4)|$. Thus 
$7|\defect(G_n)$ for $n=4,5$, and $31|\defect(G_5)$. Since $\GL_4(2)$ contains 
a subgroup isomorphic to $\GL_2(4)\cong C_3\times A_5$, we get 
$\defect(G_4)=2^4\cdot7$ and $\defect(G_5)=2^8\cdot7\cdot31$. 
\end{proof}

We illustrate the use of the above proposition and lemma by proving the 
injectivity of $\mu_G$ when $G=G_2(q)$.

\begin{Prop} \label{lim1-G2}
If $G=G_2(q)$ for some odd prime power $q$, then $\Ker(\mu_G)=1$.
\end{Prop}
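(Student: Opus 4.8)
The plan is to apply Proposition \ref{Ker(mu)}, which reduces the injectivity of $\mu_G$ to understanding the set $\5\calz(\calf)$ of elementary abelian $2$-subgroups $E\le S$ which are fully normalized, satisfy $E=\Omega_1(Z(C_S(E)))$, and have $\autf(E)$ containing a strongly $2$-embedded subgroup. For $G=G_2(q)$ with $q$ odd, I want to show that either $\5\calz(\calf)=\emptyset$ — in which case $\Ker(\mu_G)=1$ by Proposition \ref{Ker(mu)}(a) — or, if there is such an $E$, that it is unique up to conjugacy, normal in $S$, and the relevant centralizers behave so that Proposition \ref{Ker(mu)}(b) or (c) applies. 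The key fact to exploit is the structure of centralizers of $2$-subgroups in the universal algebraic group $\4G=G_2(\fqobar)$, combined with the translation between $\4G$-conjugacy and $G$-conjugacy provided by Proposition \ref{L<->Lhat}.

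First I would recall that $\4G$ is of universal type, so by \cite[Theorem 8.1]{Steinberg-end} (as used in the proof of Proposition \ref{p:G2(5)}) centralizers of semisimple elements — in particular of involutions and of elementary abelian $2$-subgroups — are connected. Thus for any elementary abelian $2$-subgroup $\4E\le\4G$, $C_{\4G}(\4E)$ is a connected reductive group, so $\pi_0(C_{\4G}(\4E))=1$, and Proposition \ref{L<->Lhat} then shows there is a single $G$-conjugacy class of subgroups of $G$ which are $\4G$-conjugate to a given $\4E$, with $\Aut_G(\4E^*)$ isomorphic to the full stabilizer, i.e. to $\Aut_{\4G}(\4E)$. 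Now I would enumerate the possible $C_{\4G}(\4E)$ using Proposition \ref{p:CG(T)}: an involution $x\in\4G$ has centralizer $\4T\4K_{\5\alpha}$-type, concretely $C_{\4G}(x)\cong\SL_2(\fqobar)\times_{C_2}\SL_2(\fqobar)$ for the unique class of involutions in $G_2$ (the element whose centralizer is the long-root $\SL_2$ times the short-root $\SL_2$, as in the proof of Proposition \ref{p:G2(5)}). For $E$ of rank $2$ or $3$, the centralizer is a torus or $\4T$ itself together with a few root subgroups, and $\Aut_{\4G}(E)$ is a subgroup of $\GL_2(2)\cong\Sigma_3$ or $\GL_3(2)$ respectively; since the Weyl group of $G_2$ has order $12$, the rank of elementary abelian $2$-subgroups is bounded and only ranks $1,2,3$ occur.

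The decisive step is to check, for each candidate $E\in\5\calz(\calf)$, whether $\autf(E)$ can have a strongly $2$-embedded subgroup. For rank-$1$ subgroups $\autf(E)$ is trivial, so excluded. For rank-$2$ subgroups $E$, $C_{\4G}(E)$ is a maximal torus $\4T$, and $\Aut_{\4G}(E)=\Aut_W(E)$ where $W=W(G_2)\cong D_{12}$ acts on the $2$-torsion $\Omega_1(\4T)\cong C_2^2$; this image is either trivial, $C_2$, or $\Sigma_3$, and $\Sigma_3$ does have a strongly $2$-embedded subgroup. So I would have to identify this rank-$2$ case: it is $E=\Omega_1(A)$ for $A=O_2(T)$ in a $\sigma$-setup where $q\equiv\pm1\pmod{?}$; here I would invoke the description in the proof of Proposition \ref{p:lowrank} or directly note that $E=\Omega_1(Z(C_S(E)))$ forces $C_S(E)=A$ abelian, and by Proposition \ref{p:AcharS}(b) and the structure of $S$ for $G_2(q)$, there is a unique such $E$ up to conjugacy, it is normal in $S$, and $\autf(\Omega_1(Z(S)))$ is trivial since $Z(S)$ has order $3$... wait — $p=2$ here, so $Z(S)$ is a $2$-group; I would instead check directly that $\autf(\Omega_1(Z(S)))=1$ because $Z(S)$ is cyclic or because $\outf(S)$ is a $2$-group. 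For rank-$3$ subgroups, $C_{\4G}(E)=\4T$ is impossible since $\dim\4T=2<3$, so rank $3$ cannot occur with $E=\Omega_1(Z(C_S(E)))$ and $C_S(E)$ abelian of rank $\ge3$; thus no rank-$3$ member of $\5\calz(\calf)$ exists.

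Therefore the plan concludes: either $\5\calz(\calf)=\emptyset$ (when the Weyl-group action on $\Omega_1(A)$ has image of order $\le 2$, e.g. when $v_2(q-1)$ or $v_2(q+1)$ is large enough that $A$ is "too big") and we are done by Proposition \ref{Ker(mu)}(a); or there is a unique class represented by $E_1\nsg S$ with $C_S(E_1)=A$, and I verify the hypothesis $\autf(\Omega_1(Z(S)))=1$ so that Proposition \ref{Ker(mu)}(b) gives $\Ker(\mu_G)=1$; or, if that hypothesis fails, I fall back on Proposition \ref{Ker(mu)}(c), checking that $C_{Z_1}(\Aut_S(P_1))=C_{Z_1}(\autf(P_1))$ because diagonal/field automorphisms act on the torus compatibly (using $P_1=A$ and that $N_G(A)/A\cong W_0$ by Lemma \ref{NG(T)}(b)). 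I expect the main obstacle to be the bookkeeping in the rank-$2$ case: pinning down for which $q$ the image of $W$ acting on $\Omega_1(A)\cong C_2^2$ is all of $\Sigma_3$ versus smaller, and in the $\Sigma_3$ case confirming the uniqueness and normality of $E_1$ and the triviality of $\autf(\Omega_1(Z(S)))$ — this requires knowing the precise Sylow $2$-structure of $G_2(q)$, which can be extracted from \cite{CF} or from the explicit $\sigma$-setups in Lemmas \ref{case(III.1)} and \ref{invert(III.2)}.
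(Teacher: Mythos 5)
There is a genuine gap, and it sits at the heart of your argument: the claim that centralizers of elementary abelian $2$-subgroups of $\4G=G_2(\fqobar)$ are connected is false. Steinberg's connectedness theorem applies to the centralizer of a \emph{single} semisimple element in a simply connected group (this is how it is used for $C_{\4G}(x)$ in the proof of Proposition \ref{p:G2(5)}), not to centralizers of arbitrary elementary abelian subgroups. In fact, for $E\cong C_2^2$ the $2$-torsion of a maximal torus $\4T$, Proposition \ref{p:CG(T)} gives $C_{\4G}(E)=\4T\gen{\theta}$ with $\theta$ inverting the torus, so $\pi_0(C_{\4G}(E))$ has order $2$; Proposition \ref{L<->Lhat} then produces \emph{two} $G$-classes $E^{\pm}$ of such subgroups, with $C_G(E^{\pm})\cong(C_{q\mp1})^2\rtimes C_2$. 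One of the two classes fails the condition $E=\Omega_1(Z(C_S(E)))$ (its members have $C_S(E)\cong C_2^3$) and the other lies in $\5\calz$ and is normal in a Sylow $2$-subgroup; your "single class, connected centralizer" picture cannot detect this dichotomy, and it also gives the wrong computation of $\Aut_G(E)$, which must be read off from the component group via Proposition \ref{L<->Lhat}. (Incidentally, since $W(G_2)$ acts on $\Omega_1(\4T)$ with image all of $\GL_2(2)\cong\Sigma_3$, the case distinction you anticipate over $q$ does not arise: $\Aut_G(E^{\pm})\cong\Sigma_3$ always, so $\5\calz\ne\emptyset$.)

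The second, related gap is your dismissal of rank-$3$ subgroups. Such subgroups do exist in $G_2(q)$ for odd $q$ (the $2$-rank is $3$), but they are \emph{nontoral}: by Griess, every nontoral elementary abelian $2$-subgroup $E\le\4G$ has $\rk(E)=3$, $C_{\4G}(E)=E$ and $\Aut_{\4G}(E)\cong\GL_3(2)$. Your reasoning "$C_{\4G}(E)=\4T$ is impossible since $\dim\4T=2<3$" conflates these nontoral subgroups with toral ones and proves nothing about them; note that they satisfy $E=\Omega_1(Z(C_S(E)))$ trivially, so they cannot be excluded from $\5\calz$ on those grounds. Excluding them requires an actual argument: since $|\pi_0(C_{\4G}(E))|=8$ while the minimal index of a subgroup of $\GL_3(2)$ containing a strongly $2$-embedded subgroup is $28$ (Lemma \ref{l:d(G)}), Proposition \ref{L<->Lhat} shows $\Aut_G(E)$ has no strongly $2$-embedded subgroup, whence $E\notin\5\calz$. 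With both corrections made, one does land where you aim — $\5\calz$ lies in a single $G$-class of rank-$2$ subgroups normal in a Sylow $2$-subgroup, $\Omega_1(Z(S))\cong C_2$ forces $\autf(\Omega_1(Z(S)))=1$, and Proposition \ref{Ker(mu)}(b) applies — but as written the two key finiteness-of-components computations that drive the proof are missing or wrong.
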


\begin{proof} Assume $q$ is a power of the prime $q_0$, set 
$\4G=G_2(\fqobar)$, and fix a maximal torus $\4T$. We identify 
$G=C_{\4G}(\psi_q)$, where $\psi_q$ is the field automorphism, and acts via 
$(t\mapsto t^q)$ on $\4T$. Fix $S\in\syl2{G}$, and set 
$\5\calz=\5\calz(\calf_S(G))$.

Let $E\cong C_2^2$ be the $2$-torsion subgroup of $\4T$. By Proposition 
\ref{p:CG(T)}, $C_{\4G}(V)=\4T\gen{\theta}$ where $\theta\in{}N_{\4G}(\4T)$ 
inverts the torus.  Thus by Proposition \ref{L<->Lhat}, there are two 
$G$-conjugacy classes of subgroups $\4G$-conjugate to $E$, represented by 
$E^\pm$ ($E^+=E$), where $\Aut_G(E^\pm)=\Aut(E^\pm)\cong\Sigma_3$ and 
$C_G(E^\pm)=(C_{q\mp1})^2\rtimes{}C_2$.  The subgroups in one of these 
classes have centralizer in $S$ isomorphic to $C_2^3$, hence are not in 
$\5\calz$, while those in the other class do lie in $\5\calz$. The 
latter also have normalizer of order $12(q\pm1)^2$ and hence of odd index 
in $G$, and thus are normal in some choice of Sylow $2$-subgroup.

By \cite[Table I]{Griess}, for each nontoral elementary abelian 2-subgroup 
$E\le\4G$, $\rk(E)=3$, $C_{\4G}(E)=E$, and $\Aut_{\4G}(E)\cong\GL_3(2)$. By 
Proposition \ref{L<->Lhat}, and since 
$\defect(\Aut_{\4G}(E))=28>|C_{\4G}(E)|$ by Lemma \ref{l:d(G)}, $\Aut_G(E)$ 
contains no strongly $2$-embedded subgroup, and thus $E\notin\5\calz$.  

Thus $\5\calz$ is contained in a unique $G$-conjugacy class of subgroups 
of rank $2$, and $\Ker(\mu_G)=1$ by Proposition \ref{Ker(mu)}(b).
\end{proof}

Throughout the rest of this section, fix an odd prime power $q$, and let 
$\gg$ be one of the groups $F_4$, $E_6$, $E_7$, or $E_8$. 

\newcommand{\TT}{T_{(2)}}

\begin{Hyp} \label{G-hypoth-exc}
Assume $\4G=\gg(\fqobar)$ and $G\cong\gg(q)$, where $q$ is a power of the odd 
prime $q_0$, and where $\gg=F_4$, $E_6$, $E_7$, or $E_8$ and is of 
universal type. Fix a maximal torus $\4T<\4G$. 
\begin{enumI}[widest=II]

\item Set $\TT=\{t\in\4T\,|\,t^2=1\}$. 
Let $\2a$ and $\2b$ denote the two $\4G$-conjugacy classes of noncentral 
involutions in $\4G$, as defined in \cite[Table VI]{Griess}, \emph{except} 
that when $\gg=E_7$, they denote the classes labelled $\2b$ and $\2c$, 
respectively, in that table. For each elementary abelian $2$-subgroup 
$E<\4G$, define 
	\[ \qq_E\: E\Right5{}\F_2 \]
by setting $\qq(x)=0$ if $x\in\2b\cup\{1\}$, and $\qq(x)=1$ if 
$x\in\2a\cup(Z(\4G){\sminus}1)$.

\item Assume $G=C_{\4G}(\psi_q)$, where $\psi_q$ is the field endomorphism 
with respect to some root structure with maximal torus $\4T$. Thus 
$\psi_q(t)=t^q$ for all $t\in\4T$. Fix $S\in\syl2{G}$, and set 
$\5\calz=\5\calz(\calf_S(G))$.

\end{enumI}
\end{Hyp}

By \cite[Lemma 2.16]{Griess}, $\qq_{\TT}$ is a quadratic form on $\TT$ in 
all cases, and hence $\qq_E$ is quadratic for each $E\le\TT$.  In general, 
$\qq_E$ need not be quadratic when $E$ is not contained in a maximal torus.  
In fact, Griess showed in \cite[Theorems 7.3, 8.2, \& 9.2]{Griess} that in 
many (but not all) cases, $E$ is contained in a torus if and only if 
$\qq_E$ is quadratic ($\textup{cx}(E)\le2$ in his terminology).

With the above choices of notation for noncentral involutions, all of the 
inclusions $F_4\le{}E_6\le{}E_7\le{}E_8$ restrict to inclusions of the 
classes \2a and of the classes \2b. This follows since the forms are 
quadratic, and also (for $E_7<E_8$) from \cite[Lemma 2.16(iv)]{Griess}.

\begin{Lem} \label{l:q:V->F2}
Assume Hypotheses \ref{G-hypoth-exc}, and let 
$\bb$ be the bilinear form associated to $\qq$. Define
	\begin{align*} 
	V_0 &= \bigl\{v\in \TT\,\big|\, \bb(v,\TT)=0, ~ \qq(v)=0 \bigr\} \\
	\gamma_x &= \bigl( v \mapsto v+\bb(v,x)x \bigr) \in \Aut(\TT,\qq) 
	&& \textup{for $x\in{}\TT$ with $\qq(x)=1$, $q\not\perp \TT$} 
	\end{align*}
Then the following hold.
\begin{enuma} 
\item $\Aut_{\4G}(\TT)=\Aut(\TT,\qq)$. 

\item For each nonisotropic $x\in \TT{\sminus}\TT^\perp$, $\gamma_x$ is the 
restriction to $\TT$ of a Weyl reflection on $\4T$. If $\alpha\in\Sigma$ is 
such that $\gamma_x=w_\alpha|_{\TT}$, then 
$\theta_\alpha(v)=(-1)^{\bb(x,v)}$ for each $v\in{}\TT$. 

\item If $\gg=E_r$ ($r=6,7,8$), then $\qq$ is nondegenerate ($V_0=0$), 
and the restriction to $\TT$ of each Weyl reflection is equal to $\gamma_x$ for 
some nonisotropic $x\in{}\TT{\sminus}\TT^\perp$. 

\item If $\gg=F_4$, then $\dim(V_0)=2$, and $\qq(v)=1$ for all $v\in 
\TT{\sminus}V_0$. 

\end{enuma}
\end{Lem}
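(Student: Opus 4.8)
The plan is to establish the four statements in the order (a), (b), (c), (d), since the later parts use (a), and within (a) only the ``easy half'' (the inclusion $\Aut_{\4G}(\TT)\le\Aut(\TT,\qq)$) feeds into the rest.

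\textbf{Part (a).} First I would note that $\TT$ is characteristic in $\4T$, so $N_{\4G}(\4T)\le N_{\4G}(\TT)$ and the Weyl group $W$ acts on $\TT$. Conversely, for $g\in N_{\4G}(\TT)$ both $\4T$ and $\9g\4T$ are maximal tori of the reductive group $C_{\4G}(\TT)^0$ (Proposition \ref{p:CG(T)}), hence conjugate by some $h\in C_{\4G}(\TT)$; then $hg\in N_{\4G}(\4T)$ and $c_{hg}=c_g$ on $\TT$. Thus $\Aut_{\4G}(\TT)=\Aut_W(\TT)$. Since $\4G$-conjugation preserves $\4G$-conjugacy classes of involutions, every element of $\Aut_W(\TT)$ fixes $\2a\cap\TT$ and $\2b\cap\TT$ and hence $\qq=\qq_\TT$ (which is quadratic by \cite[Lemma 2.16]{Griess}); this gives $\Aut_{\4G}(\TT)\le\Aut(\TT,\qq)$. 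For the reverse inclusion I would invoke the classical surjections of $W$ onto the relevant isometry groups, recorded in \cite{Griess}: $W(E_8)\twoheadrightarrow O_8^+(2)$ and $W(E_7)\twoheadrightarrow \Sp_6(2)$ with kernel $\{\pm1\}$, $W(E_6)\cong O_6^-(2)$, and $W(F_4)\twoheadrightarrow O(\TT,\qq)$ with kernel $\{\pm1\}$; in each case the image is all of $\Aut(\TT,\qq)$, so $\Aut_{\4G}(\TT)=\Aut(\TT,\qq)$.

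\textbf{Part (b).} Apply Lemma \ref{l:CW(t)}(a) with $\lambda=-1$ ($m=2$): $\Phi_{-1}$ gives a $\Z[W]$-linear isomorphism $\Z\Sigma^\vee/2\Z\Sigma^\vee\xrightarrow{\ \cong\ }\TT$ with $\alpha^\vee\mapsto h_\alpha(-1)$, under which, by Lemma \ref{theta-r}\eqref{^txb(u)}, $\theta_\beta$ corresponds to $y\mapsto(-1)^{(\beta,y)}$. By Lemma \ref{theta-r}\eqref{wa(hb)}, $w_\alpha(t)=t\cdot h_\alpha(\theta_\alpha(t))^{-1}$; restricting to $\TT$ and using that $\theta_\alpha|_\TT$ takes values in $\{\pm1\}$, we get that $v\mapsto[\theta_\alpha(v)=-1]$ is $\F_2$-linear and $w_\alpha|_\TT(v)=v+\ell_\alpha(v)h_\alpha(-1)$, i.e.\ $w_\alpha|_\TT$ is the identity or a transvection in the direction $h_\alpha(-1)$. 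Now suppose $x=h_\alpha(-1)$ is nonisotropic. By (a), $w_\alpha|_\TT$ is a $\qq$-isometry, and an elementary $\F_2$-computation shows that a transvection $v\mapsto v+\ell(v)x$ with $\qq(x)=1$ can be a $\qq$-isometry only if $\ell=0$ or $\ell=\bb(-,x)$. The case $w_\alpha|_\TT=\Id$ means $\TT\le\Ker(\theta_\alpha)$, i.e.\ $\alpha^\vee\in 2\,P(\Sigma^\vee)$, which fails for the roots we need because $\alpha^\vee$ has squared length $2$ (here we use the identification, from \cite{Griess}, of $\qq_\TT$ with $y\mapsto\tfrac12(y,y)\bmod 2$ on $\Z\Sigma^\vee$, up to a linear functional). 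Hence $w_\alpha|_\TT=\gamma_{x}$; comparing $\gamma_x(v)=v+\bb(v,x)x$ with $w_\alpha|_\TT(v)=v\cdot h_\alpha(\theta_\alpha(v))^{-1}$ and using $x=h_\alpha(-1)\ne 1$ yields $\theta_\alpha(v)=(-1)^{\bb(x,v)}$. Finally, every nonisotropic $x\in\TT\setminus\TT^\perp$ is of the form $h_\alpha(-1)$: the set $\{h_\alpha(-1):\alpha\in\Sigma\}$ is a single orbit under $\Aut_{\4G}(\TT)=\Aut(\TT,\qq)$ (roots form one $W$-orbit), lies in $(\TT\setminus\TT^\perp)\cap\{\qq=1\}$ (nonemptily, since $w_\alpha|_\TT\ne\Id$ forces $h_\alpha(-1)\notin\TT^\perp$), and $\Aut(\TT,\qq)$ acts transitively on that set; so the two coincide, proving the remaining clause of (b).

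\textbf{Parts (c) and (d).} For $\gg=E_r$ ($r=6,7,8$) all roots have equal length, so every coroot has squared length $2$ and $\qq(h_\alpha(-1))=1$; moreover $h_\alpha(-1)\ne 1$ since $2\Z\Sigma^\vee$ has minimal squared length $\ge 8$, and then $h_\alpha(-1)\notin\TT^\perp$ as above, so by (b) every Weyl reflection $w_\alpha$ restricts to $\gamma_{h_\alpha(-1)}$; conversely every nonisotropic $x\in\TT\setminus\TT^\perp$ is some $h_\alpha(-1)$ by the last sentence of (b). Nondegeneracy ($V_0=0$) follows from $\det(E_6)=3$, $\det(E_8)=1$ odd for $r=6,8$, and for $r=7$ from the fact that the one-dimensional radical of $\bb$ is spanned by the image of the central involution of $\4G$, on which $\qq$ takes the value $1$; this gives (c). For $\gg=F_4$, $W(F_4)$ acts on $\TT$ through $W(F_4)/\{\pm1\}$, and by (b) the long-root reflections restrict to the transvections $\gamma_{h_\alpha(-1)}$, which act transitively on the nonisotropic vectors outside $\TT^\perp$ and hence generate a faithful image; so $|\Aut(\TT,\qq)|=|W(F_4)|/2=576$. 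Among isometry classes of quadratic forms on $\F_2^4$, the only one with isometry group of order $576$ is the degenerate form with $\dim V_0=2$, anisotropic quotient $\TT/V_0$, and $\qq\equiv 1$ on $\TT\setminus V_0$ (for it $|O(\qq)|=2^{4}\cdot|GL_2(2)|\cdot|O_2^-(2)|=16\cdot 6\cdot 6=576$, while the nondegenerate and the remaining degenerate forms give strictly smaller groups); this is exactly (d). Alternatively (d) may be read off from a direct computation with the coroot lattice $\Z\Sigma^\vee(F_4)$ (the long-root sublattice of the $F_4$ root lattice) and the form $y\mapsto\tfrac12(y,y)\bmod 2$.

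\textbf{Main obstacle.} The one genuinely substantial input, used throughout (b)--(d), is the explicit description of the conjugacy-class function $\qq_\TT$ as a quadratic form matching (up to a linear functional) the reduction mod $2$ of $\tfrac12$ the norm form on $\Z\Sigma^\vee$. Proving this from scratch would require determining, for each $h_\alpha(-1)\in\4T$, which of the classes $\2a,\2b$ it lies in -- equivalently the root system of $C_{\4G}(h_\alpha(-1))^0$ via Proposition \ref{p:CG(T)} -- and checking the relevant cocycle identity; this is carried out in \cite{Griess}, and I would take it, together with the surjections $W\twoheadrightarrow\Aut(\TT,\qq)$, as known.
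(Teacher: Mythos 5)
Your strategy is genuinely different from the paper's, and much of it works, but there is a concrete gap in part (b) in the $F_4$ case. Your proof of the final clause of (b) — that every nonisotropic $x\in\TT\sminus\TT^\perp$ equals some $h_\alpha(-1)$ — rests on the claims that the roots form a single $W$-orbit and that all the classes $h_\alpha(-1)$ are nonisotropic. Both fail for $F_4$: there are two root lengths, and the short coroots reduce modulo $2\Z\Sigma^\vee$ into $V_0$ (indeed $h_\beta(-1)$ for $\beta$ short lies in class \2b, its centralizer being of type $B_4$, so $\qq=0$ there). So as written you have established (b) only for $E_6,E_7,E_8$, while the lemma (and its use in Lemma \ref{toral-piv}) needs $F_4$ as well. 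Relatedly, the hedge ``up to a linear functional'' in your identification of $\qq$ with $y\mapsto\tfrac12(y,y)\bmod2$ is too weak to justify the values $\qq(h_\alpha(-1))=1$ you use: adding a functional changes exactly which coroot classes are isotropic. Both problems are repairable with tools you already have. Your own isometry computation gives a dichotomy that makes the form identification unnecessary: once $w_\alpha|_\TT\ne\Id$, the condition $\ell(v)\bigl(\qq(x)+\bb(v,x)\bigr)=0$ forces either $x=h_\alpha(-1)\notin\TT^\perp$, $\qq(x)=1$, and $w_\alpha|_\TT=\gamma_x$, or $x\in V_0$; for $E_r$ the second case is impossible since $V_0=0$. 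For $F_4$ one must use the long-root orbit: in the coroot lattice (a $D_4$-lattice) the classes of $\pm e_i\pm e_j$ modulo $2\Z\Sigma^\vee$ are twelve pairwise distinct elements of $\TT\sminus V_0$, hence by (d) exactly the nonisotropic vectors, while the short coroots fall into $V_0$; this restores (b) for $F_4$.

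Apart from this, the comparison of routes is as follows. The paper takes only the isometry type of $\qq_\TT$ from \cite[Lemma 2.16]{Griess} and proves the hard half of (a) together with (b) by a counting argument: for $E_r$, the number of Weyl reflections ($36$, $63$, $120$) equals the number of nonisotropic vectors in $\TT\sminus\TT^\perp$, so all transvections are reflection restrictions and generate $\Aut(\TT,\qq)$; for $F_4$ it compares $|\Aut(\TT,\qq)|=576$ with $|W|/2$ and sorts the two root-length orbits among the transvection classes. Your citation of the classical facts ($W(E_6)\cong O_6^-(2)$, $W(E_7)\cong\{\pm1\}\times\Sp_6(2)$, etc.) is a legitimate shortcut for the $E_r$ cases, granted Griess's lemma to fix $|\Aut(\TT,\qq)|$. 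For $F_4$, however, your part (d) is essentially circular: the input ``the image of $W(F_4)$ is all of $\Aut(\TT,\qq)$, of order $576$'' already presupposes the isometry type of $\qq$, which is precisely the content of Griess's Lemma 2.16 that the paper simply cites; deriving (d) back from the order $576$ therefore adds nothing and should be replaced by the direct citation (or by the lattice computation you mention as an alternative).
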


\begin{proof} \textbf{(a) } Since $\Aut_{\4G}(\TT)$ has to preserve 
$\4G$-conjugacy classes, it is contained in $\Aut(\TT,\qq)$. Equality will 
be shown while proving (c) and (d).

\smallskip

\noindent\textbf{(c) } If $\gg=E_r$ for $r=6,7,8$, then $\qq$ is 
nondegenerate by \cite[Lemma 2.16]{Griess}. Hence the only orthogonal 
transvections are of the form $\gamma_x$ for nonisotropic $x$, and each 
Weyl reflection restricts to one of them. By a direct count (using the 
tables in \cite{Bourb4-6}), the number of pairs $\{\pm\alpha\}$ of roots in 
$\gg$ (hence the number of Weyl reflections) is equal to 36, 63, or 120, 
respectively. This is equal to the number of nonisotropic elements in 
$\TT\sminus\TT^\perp=\TT\sminus Z(\4G)$ (see 
the formula in \cite[Theorem 11.5]{Taylor} for the number of isotropic 
elements). So all transvections are 
restrictions of Weyl reflections, and $\Aut_{\4G}(\TT)=\Aut(\TT,\qq)$.


\smallskip

\noindent\textbf{(d) } Assume $\gg=F_4$. Then $\dim(V_0)=2$ and 
$\qq^{-1}(1)=\TT{\sminus}V_0$ by \cite[Lemma 2.16]{Griess}. Thus 
$|\Aut(\TT,\qq)|=4^2\cdot|GL_2(2)|^2=2^6\cdot3^2=\frac12|W|$ (see 
\cite[Planche VIII]{Bourb4-6}), so $\Aut_W(\TT)=\Aut(\TT,\qq)$ since $W$ 
also contains $-\Id$. 

There are three conjugacy classes of transvections 
$\gamma\in\Aut(\TT,\qq)$: one of order 36 containing those where 
$\gamma|_{V_0}\ne\Id$ (and hence $[\gamma,\TT]\le V_0$), and two of order 
12 containing those where $\gamma|_{V_0}=\Id$ (one where $[\gamma,\TT]\le 
V_0$ and one where $[\gamma,\TT]\nleq V_0$). Since there are two $W$-orbits 
of roots (long and short), each containing 12 pairs $\pm\alpha$, the 
corresponding Weyl reflections must restrict to the last two classes of 
transvections, of which one is the set of all $\gamma_x$ for 
$x\in\TT\sminus V_0$.

\smallskip

\noindent\textbf{(b) } We showed in the proofs of (c) and (d) that each 
orthogonal transvection $\gamma_x$ is the restriction of a Weyl reflection. 
If $\gamma_x=w_\alpha|_{\TT}$ for some root $\alpha\in\Sigma$, then 
$\theta_\alpha\in\Hom(\4T,\fqobar^\times)$ (Lemma 
\ref{theta-r}\eqref{^txb(u)}), so $[\TT:\Ker(\theta_\alpha|_{\TT})]\le2$. 
Also, $\Ker(\theta_\alpha)\le C_{\4T}(w_\alpha)$ by Lemma 
\ref{theta-r}\eqref{wa(hb)}, so $\Ker(\theta_\alpha|_{\TT})\le 
C_{\TT}(w_\alpha)=C_{\TT}(\gamma_x)=x^\perp$, with equality since 
$[\TT:x^\perp]=2$. Since $\theta_\alpha(\TT)\le\{\pm1\}$, it follows that 
$\theta_\alpha(v)=(-1)^{\bb(x,v)}$ for each $v\in\TT$.
\end{proof}

We are now ready to list the subgroups in $\5\calz(\gg(q))$ in all cases. 
The proof of the following lemma will be given at the end of the 
section.

\begin{Lem} \label{all-piv}
Let $\4G=\gg(\fqobar)$ and $G=\gg(q)$ be as in Hypotheses 
\ref{G-hypoth-exc}.  Assume $E\in\5\calz(G)$.  Then either $\gg\ne{}E_7$, 
$\rk(E)=2$, and $\qq_E=0$; or $\gg=E_7$, $Z=Z(\widebar{G})\cong{}C_2$, and 
$E=Z\times{}E_0$ where $\rk(E_0)=2$ and $\qq_{E_0}=0$.  In all cases, 
$\Aut_{\widebar{G}}(E)\cong\Sigma_3$. 
\end{Lem}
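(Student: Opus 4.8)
The plan is to classify all $E\in\5\calz(G)$ by combining the structural information about elementary abelian $2$-subgroups of $\4G$ (from Griess \cite{Griess}) with Proposition \ref{L<->Lhat}, which translates $G$-conjugacy data and $\Aut_G(E)$ into statements about $\pi_0(C_{\4G}(E))$ and its $\Aut_{\4G}(E)$-action. The first step is to show that no $E$ with $\rk(E)\ge3$ can lie in $\5\calz(G)$. For these, I would invoke Griess's tables (\cite[Tables II--V]{Griess}, or the summary in \cite{Griess}): for each nontoral or higher-rank toral $E$ one has an explicit description of $C_{\4G}(E)$ (hence of $\pi_0(C_{\4G}(E))$) and of $\Aut_{\4G}(E)$. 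The strategy is to bound $\defect(\Out_{\4G}(E))$ from below using Lemma \ref{l:d(G)} (parts (a)--(d)), and to compare it with $|\pi_0(C_{\4G}(E))|$; whenever $|\pi_0(C_{\4G}(E))|<\defect(\Out_{\4G}(E))$, Proposition \ref{L<->Lhat} forces $\Aut_G(E^*)$ to have no strongly $2$-embedded subgroup for any $G$-form $E^*$ of $E$, so $E^*\notin\5\calz(G)$. When $\Aut_{\4G}(E)$ is a direct product of groups of even order (e.g.\ when $C_{\4G}(E)^0$ is a product of several positive-dimensional reductive factors and $E$ meets their centers), the same conclusion follows immediately since such a product has no strongly $2$-embedded subgroup and this property passes to the stabilizers occurring in Proposition \ref{L<->Lhat}. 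The bookkeeping here is the main obstacle: it requires going case by case through the (finitely many) conjugacy classes of $E$ with $\rk(E)\ge3$ in each of $F_4,E_6,E_7,E_8$ and checking the numerical inequality, and for $E_7,E_8$ the rank-$3$ and rank-$4$ toral subgroups need care because $\defect$ of an orthogonal group on a small module is only coarsely estimated in Lemma \ref{l:d(G)}(d).

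The second step treats rank-$2$ subgroups. If $\gg\ne E_7$ and $\rk(E)=2$, I would first note $E$ must be toral: a rank-$2$ nontoral $E$ would have $C_{\4G}(E)$ with more components and larger $\Aut_{\4G}(E)$, again killed by the defect comparison (and the relevant data is in \cite{Griess}). For toral $E$, the form $\qq_E$ is quadratic, and the three possibilities ($\qq_E$ nondegenerate, $\qq_E$ degenerate nonzero, $\qq_E=0$) are distinguished by the structure of $C_{\4G}(E)$ via Lemma \ref{l:q:V->F2}(b): the roots $\alpha$ with $E\le\Ker(\theta_\alpha)$ are exactly those whose associated vector $x_\alpha\in\TT$ lies in $E^\perp$, and $C_{\4G}(E)^0=\gen{\4T,\4X_\alpha\mid E\le\Ker\theta_\alpha}$ by Proposition \ref{p:CG(T)}. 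When $\qq_E\ne0$, the subsystem is reductive but $\Aut_{\4G}(E)$ is a proper subgroup of $\GL_2(2)\cong\Sigma_3$ (it must fix the nonzero isotropic-vs-anisotropic structure or a radical line), so $\Aut_G(E^*)$ is a $\{2\}$- or trivial group with no strongly $2$-embedded subgroup — hence $E^*\notin\5\calz(G)$. When $\qq_E=0$, one gets $\Aut_{\4G}(E)=\GL(E)\cong\Sigma_3$, and a direct application of Proposition \ref{L<->Lhat} (computing $\pi_0(C_{\4G}(E))$, which for the relevant $E$ is small, exactly as in the $G_2$ case treated in Proposition \ref{lim1-G2}) shows that one of the $G$-conjugacy classes of $\4G$-conjugates of $E$ consists of subgroups with $\Aut_G(E^*)\cong\Sigma_3$ and $C_{S}(E^*)$ of the right shape, so these do lie in $\5\calz(G)$, while showing $\rk(E)=2$ and $\qq_E=0$ are the only surviving conditions.

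The third step handles $\gg=E_7$, where $Z:=Z(\4G)=Z(G)\cong C_2$ is nontrivial, so every $E\in\5\calz(G)$ contains $Z$ (since $E=\Omega_1(Z(C_S(E)))\supseteq Z$). I would write $E=Z\times E_0$ for a complement $E_0$ and run the same analysis on $E_0$: the rank bound from step one gives $\rk(E)\le3$, i.e.\ $\rk(E_0)\le2$; the case $\rk(E_0)\le1$ is excluded because then $\Aut_G(E)$ is a $2$-group (acting trivially on $Z$ and on the at-most-one remaining coordinate up to the reducibility of the $S$-action, which forces $\Omega_1(Z(C_S(E)))$ too large) and has no strongly $2$-embedded subgroup; and for $\rk(E_0)=2$ the quadratic form $\qq_{E_0}$ must be $0$ by the same argument as in step two, using that $\qq$ is nondegenerate on $\TT$ modulo $Z$ by Lemma \ref{l:q:V->F2}(c) and the inclusion $E_7<E_8$ to keep track of classes. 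Finally, in every surviving case $\Aut_{\4G}(E)\cong\Sigma_3$ and one checks, again via Proposition \ref{L<->Lhat}, that $\Aut_{\widebar{G}}(E)\cong\Sigma_3$ as claimed (the bar denoting reduction mod $Z$ when $\gg=E_7$, and $\widebar G=G$ otherwise), since the relevant element of $\pi_0(C_{\4G}(E))/\Aut_{\4G}(E)$ has full stabilizer. The main obstacle throughout is the large-rank case analysis in step one; everything else is a routine comparison of $\qq_E$ with the root-subsystem structure, parallel to the $G_2$ argument already given.
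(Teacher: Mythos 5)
Your plan rests entirely on the automizer: either $\Aut_{\4G}(E)$ is too small to contain a strongly $2$-embedded subgroup, or $\defect(\Aut_{\4G}(E))>|\pi_0(C_{\4G}(E))|$ so that Proposition \ref{L<->Lhat} applies. That is not enough, and the specific claim you use to kill the rank-$2$ cases with $\qq_E\ne0$ is false. If $E\le\TT$ is a plane all of whose involutions lie in the class \2a\ (an anisotropic plane for $\qq$; such planes exist in all four groups --- for $\gg=F_4$ take any complement to $V_0$ in $\TT$), then every automorphism of $E$ preserves $\qq_E$ and extends, by Witt's theorem, to an isometry of $(\TT,\qq)$; hence $\Aut_{\4G}(E)=\Aut(E)\cong\Sigma_3$ by Lemma \ref{l:q:V->F2}(a), not a proper subgroup of $\GL_2(2)$. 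Since $\Sigma_3$ itself has a strongly $2$-embedded subgroup, $\defect(\Aut_{\4G}(E))=1$, and no comparison with $|\pi_0(C_{\4G}(E))|$ can ever exclude such $E$. The paper eliminates these subgroups (and, more generally, all toral $E$ with $E\cap E^\perp\cap\qq^{-1}(0)=1$, as well as several nontoral configurations) using the \emph{other} half of the definition of $\5\calz$, namely $E=\Omega_1(Z(C_S(E)))$ together with $O_2(\Aut_G(E))=1$: this is Proposition \ref{not_pivotal}, resting on \cite[Proposition 8.9]{limz}, which gives $E\notin\5\calz$ as soon as some $x\in\TT\sminus E$ has a $C_W(E)$-orbit of odd order. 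The proofs of Lemmas \ref{toral-piv} and \ref{nontoral-piv} invoke exactly this criterion at the places where the automizer/defect count is powerless; your proposal never uses anything of this kind, so the classification cannot be completed as written.

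A secondary point: even where the defect comparison is the right tool, the coarse form you propose does not always close, so the higher-rank step is not just bookkeeping. For the totally isotropic rank-$4$ subgroup of $E_8$ one has $|\pi_0(C_{\4G}(E))|=2^7>112=\defect(\GL_4(2))$, and the exclusion works only by counting the $65$ conjugacy classes of the extraspecial group $\pi_0(C_{\4G}(E))$; in the final case of Table \ref{tb:E8case2} the numerical bounds give no contradiction at all, and one must show that $\Aut_{\4G}(E)$ fails to act transitively on $\pi_0(C_{\4G}(E))\cong C_2^5$, using that $C_{\4G}(F_2)^0$ has type $A_7T^1$. Moreover, the component groups $\pi_0(C_{\4G}(E))$ you need are not simply read off Griess's tables; the paper computes them from Proposition \ref{p:CG(T)}, character/dimension counts, and explicit centralizer descriptions. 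Finally, a small misreading: in the statement $\widebar{G}$ is the algebraic group $\4G$, not $G/Z(G)$, so the last assertion is about $\Aut_{\4G}(E)$, which for the surviving $E$ is $\Sigma_3$ by the Witt-extension argument above rather than by any reduction modulo the center.
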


\begin{proof} This will be shown in Lemmas \ref{toral-piv} and 
\ref{nontoral-piv}.
\end{proof}

The next two lemmas will be needed to apply Proposition \ref{Ker(mu)}(b) to 
these groups.  The first is very elementary.

\begin{Lem} \label{isotr.sgr.}
Let $V$ be an $\F_2$-vector space of dimension $k$, and let 
$\qq\:V\Right2{}\F_2$ be a quadratic form on $V$.  For $m\ge1$ such that 
$k>2m$, the number of totally isotropic subspaces of dimension $m$ in $V$ is 
odd.
\end{Lem}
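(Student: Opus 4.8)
}
The strategy is to induct on $k=\dim(V)$ for fixed $m$, reducing the count of totally isotropic $m$-dimensional subspaces of $(V,\qq)$ to the analogous counts in a subquotient of smaller dimension. First I would dispose of the case where $\qq$ is degenerate: if $R=\mathrm{rad}(\bb)\cap\qq^{-1}(0)$ is the radical on which $\qq$ vanishes, then every totally isotropic subspace decomposes (non-uniquely) relative to $R$, and a short counting argument passing to $V/R$ with the induced (now nondegenerate, or at least reduced) form handles this; so it suffices to treat $\qq$ nondegenerate, i.e.\ of type $O^{\pm}_{2\ell}$ or $O_{2\ell+1}$ with $2\ell$ or $2\ell+1$ equal to $k$.

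For the nondegenerate case the cleanest route is an explicit formula: the number $N_m(V,\qq)$ of totally isotropic $m$-subspaces is a product of Gaussian-binomial-type factors (see, e.g., \cite[Ch.~11]{Taylor}), of the shape
\[
N_m = \prod_{i=0}^{m-1} \frac{(2^{d-i}-\varepsilon_i)(2^{?}-1)}{2^{i+1}-1}
\]
with the $\varepsilon_i\in\{\pm1,0\}$ determined by the type; rather than reproduce this verbatim I would state the standard formula and then reduce it mod $2$. Each factor is a ratio of terms of the form $2^{a}-1$ (odd) and one term of the form $2^{b}-\varepsilon$; since $k>2m$ one checks that for every $i$ in the range $0\le i\le m-1$ the ``$2^{b}-\varepsilon$'' factor that occurs has $b\ge 1$ and $\varepsilon=+1$ forced only in the hyperbolic-deficit situation — in all cases the relevant exponent stays positive, so $2^b-\varepsilon$ is odd. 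Hence $N_m$ is a ratio of odd integers, and being an integer it is itself odd.

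Alternatively, and perhaps more self-containedly, I would argue by a direct recursion avoiding the closed formula: pick a nonzero isotropic vector $v\in V$ (one exists since $k\ge 2m+1\ge 3$), and count $m$-subspaces $W$ with $v\in W$ versus those with $v\notin W$. The subspaces containing $v$ correspond to totally isotropic $(m-1)$-subspaces of $v^{\perp}/\langle v\rangle$, a space of dimension $k-2>2(m-1)$ carrying an induced quadratic form, so by induction on $m$ their number is odd. For the subspaces not containing $v$, I would set up an involution on that set with no fixed points — for instance $W\mapsto$ the image of $W$ under a suitable transvection or the reflection-type map $\gamma$ built from $v$ — whose fixed points would have to contain $v$, contradiction; hence that count is even. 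Adding, $N_m$ is odd.

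The main obstacle is getting the bookkeeping of the $\pm1$ and $0$ ``deficit'' factors exactly right across the three orthogonal types ($O^+_{2\ell}$, $O^-_{2\ell}$, $O_{2\ell+1}$) and verifying that the strict inequality $k>2m$ is precisely what keeps every potentially-even factor from actually being even; the degenerate-radical reduction is routine but must be stated carefully so that the inductive hypothesis applies to the quotient. I expect the cleanest writeup to use the recursion of the third paragraph, since it isolates the role of $k>2m$ (it guarantees an isotropic vector and that $v^\perp/\langle v\rangle$ still satisfies the dimension hypothesis) and replaces the type-by-type factor analysis by a single fixed-point-free involution argument.
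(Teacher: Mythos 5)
Your preferred route (the third paragraph) has a genuine gap at its key step. For an \emph{isotropic} vector $v$ there is no orthogonal transvection over $\F_2$: the map $u\mapsto u+\bb(u,v)v$ satisfies $\qq\bigl(u+\bb(u,v)v\bigr)=\qq(u)+\bb(u,v)^2\qq(v)+\bb(u,v)^2=\qq(u)+\bb(u,v)$, so it is not an isometry of $\qq$ (transvections $\gamma_x$ exist only for anisotropic $x$, cf.\ Lemma \ref{l:q:V->F2}), and it does not even act on the set of totally isotropic subspaces: if $W\not\le v^\perp$ its image contains non-isotropic vectors. Nor can the fixed-point claim be rescued by some other map built from $v$: any candidate involution restricts to the identity on a hyperplane (e.g.\ $x^\perp$ for a genuine transvection $\gamma_x$), and every totally isotropic $m$-subspace of that hyperplane is fixed whether or not it contains $v$, so ``fixed points would have to contain $v$'' is not available. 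The statement you want — that the number of totally isotropic $m$-subspaces avoiding $v$ is even — is true, but given the odd count of those through $v$ it is essentially equivalent to the lemma itself, so it cannot be dispatched by an unspecified involution; this is the missing idea.

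The paper closes exactly this gap by a double count instead of an involution: it first proves the case $m=1$ (the number of nonzero isotropic vectors is odd) via an orthogonal splitting $V=V_1\perp V_2$ and the parity of $k_1k_2+(n_1-k_1)(n_2-k_2)$, and then, for $m>1$, sums over \emph{all} nonzero isotropic $x$ the number of totally isotropic $m$-subspaces containing $x$. Each summand is odd by induction applied in $x^\perp/\gen{x}$ (your own observation, with $\dim(x^\perp/\gen{x})\ge k-2>2(m-1)$), there are an odd number of summands by the base case, and each subspace is counted $2^m-1$ times; hence $(2^m-1)N_m$ and therefore $N_m$ is odd. Replacing your involution step by this summation repairs your argument and is in fact the paper's proof. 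Your first route (quotient by the radical, then read off the parity of the standard product formulas for the nondegenerate and defective types, using $k>2m$ to exclude the genuinely even boundary counts at $k=2m$) could also be made to work, but as written it is only a schema: the formula is left with placeholders and the radical reduction — where lifts are counted by $\Hom$-groups whose orders are even except in the extreme cases — is asserted rather than carried out.
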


\begin{proof}  This will be shown by induction on $m$, starting with the 
case $m=1$.  Since $k\ge3$, there is an orthogonal splitting 
$V=V_1\perp{}V_2$ where $V_1,V_2\ne0$.  Let $k_i$ be the number of 
isotropic elements in $V_i$ (including $0$), and set $n_i=|V_i|$.  The 
number of isotropic elements in $V$ is then $k_1k_2+(n_1-k_1)(n_2-k_2)$, 
and is even since the $n_i$ are even.  The number of 1-dimensional 
isotropic subspaces is thus odd.

Now fix $m>1$ (such that $k>2m$), and assume the lemma holds for subspaces 
of dimension $m-1$.  For each isotropic element $x\in{}V$, a subspace 
$E\le{}V$ of dimension $m$ containing $x$ is totally isotropic if and only 
if $E\le x^\perp$ \emph{and} $E/\gen{x}$ is isotropic in $x^\perp/\gen{x}$ 
with the induced quadratic form.  By the induction hypothesis, and since 
	\[ 2\cdot\dim(E/\gen{x})=2(m-1)<k-2\le\dim(x^\perp/\gen{x}), \]
the number of isotropic subspaces of dimension $m$ which contain $x$ is odd.  
Upon taking the sum over all $x$, and noting that each subspace has been 
counted $2^m-1$ times, we see that the number of isotropic subspaces of 
dimension $m$ is odd.  
\end{proof}

\begin{Lem} \label{odd-orbit}
Assume Hypotheses \ref{G-hypoth-exc}(I). Let $\sigma$ be a 
Steinberg endomorphism of $\widebar{G}$ such that for some $\gee=\pm1$, 
$\sigma(t)=t^{\gee q}$ for each $t\in\4T$. Set $G=C_{\widebar{G}}(\sigma)$. 
Fix $E\le\TT$ of rank $2$ such that $\qq(E)=0$.  Then the set of subgroups 
of $G$ which are $\widebar{G}$-conjugate to $E$, and the set of subgroups 
which are $G$-conjugate to $E$, both have odd order and contain all totally 
isotropic subgroups of rank $2$ in $\TT$. 
\end{Lem}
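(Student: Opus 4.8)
\textbf{Proof plan for Lemma \ref{odd-orbit}.}
The statement has two parts: the set of $G$-conjugates of $E$ inside $G$, and the set of $\4G$-conjugates of $E$ inside $G$, each has odd cardinality and contains every totally isotropic rank-$2$ subgroup of $\TT$. The plan is to apply Proposition \ref{L<->Lhat} with $H=E$. First I would compute $C_{\4G}(E)$ and $\pi_0(C_{\4G}(E))$: since $E\le\TT\le\4T$ and $\qq(E)=0$ with $E$ of rank $2$, Lemma \ref{l:q:V->F2} identifies $\Aut_{\4G}(\TT)=\Aut(\TT,\qq)$ and describes the Weyl reflections restricting to $\TT$ via the characters $\theta_\alpha$. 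Using Proposition \ref{p:CG(T)}, $C_{\4G}(E)^0$ is generated by $\4T$ together with the root groups $\4X_\alpha$ for those $\alpha$ with $E\le\Ker(\theta_\alpha)$, i.e. (by Lemma \ref{l:q:V->F2}(b)) those $\alpha$ with $\gamma_x=w_\alpha|_{\TT}$ for $x$ orthogonal to $E$ — and one then reads off $\pi_0(C_{\4G}(E))$ from the component group of this reductive subgroup; in the cases at hand (where $\Aut_{\4G}(E)\cong\Sigma_3$ by Lemma \ref{all-piv}) this component group will have $2$-part small enough that $\defect(\Aut_{\4G}(E))$ exceeds $|\pi_0(C_{\4G}(E))|$ — actually for the odd-order conclusion I only need the action of $\Aut_{\4G}(E)$ on $\pi_0(C_{\4G}(E))/C_{\4G}(E)$ to have all orbits of odd length, which follows because $\Aut_{\4G}(E)\cong\Sigma_3$ has odd-order Sylow-$2$-complement action, or more directly because $\sigma$ acts on $\pi_0(C_{\4G}(E))$ by $(g\mapsto xg\sigma(x)^{-1})$ with $\sigma|_{\4T}$ central.

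The key point for the odd-cardinality claim is the bijection $\omega\:\calh\xrightarrow{\ \cong\ }\pi_0(C_{\4G}(E))/N_{\4G}(E)$ of Proposition \ref{L<->Lhat}, where $N_{\4G}(E)$ acts on $\pi_0(C_{\4G}(E))$ through $N_{\4G}(E)/C_{\4G}(E)\hookrightarrow\Aut_{\4G}(E)$ via $x\mapsto xg\sigma(x)^{-1}$. Since $\sigma|_{\4T}$ is the map $t\mapsto t^{\gee q}$, which lies in the center of $\Aut(\4T)$, for $x\in N_{\4T}(E)\le N_{\4G}(E)$ the twisting element $x^{-1}\sigma(x)\in\4T$ is independent of the chosen representative modulo $\4T$, and one checks that the $\sigma$-twisted action of $N_{\4G}(E)$ on $\pi_0(C_{\4G}(E))$ agrees with the ordinary action through $\Aut_{\4G}(E)$. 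Now $\Aut_{\4G}(E)\cong\Sigma_3$ (Lemma \ref{all-piv}), and $|\pi_0(C_{\4G}(E))|$ is a power of $2$ (it is the component group of a reductive group with simply connected derived subgroup over $\fqobar$, hence a $2$-group in these situations — to be pinned down case by case from \cite{Griess}), so every orbit of $\Sigma_3$ on $\pi_0(C_{\4G}(E))$ has length $1$ or $3$, in particular odd. Hence $|\calh| = |\pi_0(C_{\4G}(E))/N_{\4G}(E)|$ has the same $2$-part as $|\pi_0(C_{\4G}(E))|$, which is odd only if... — more carefully: I want $|\calh|$ itself, but actually the quantity I need is the number of $G$-conjugates of $E$ inside $G$, which is $[G:N_G(E)]$ summed appropriately; the cleanest route is to observe that $N_{\4G}(E)$ acts transitively on $\pi_0(C_{\4G}(E))$ with stabilizer $C_{\4G}(E)$ up to the $\Sigma_3$-action, and count directly. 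The upshot I expect: the number of $G$-conjugacy classes is odd, and within each such class the index $[G:N_G(E^*)]$ is odd because $N_G(E^*)$ contains a full Sylow $2$-subgroup of $G$ (this is where I use that $E\le\TT$ with $E$ normalized by a large $2$-group in $N_{\4G}(\4T)$, so after $\sigma$-twisting $N_G(E^*)\supseteq$ a Sylow $2$-subgroup).

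For the ``contains all totally isotropic rank-$2$ subgroups of $\TT$'' part: any two totally isotropic rank-$2$ subgroups of $(\TT,\qq)$ are conjugate under $\Aut(\TT,\qq)=\Aut_{\4G}(\TT)$ (Witt's theorem for the appropriate nondegenerate or degenerate form — using Lemma \ref{l:q:V->F2}(c,d) for the precise structure), hence $\4G$-conjugate; and since they all lie in $\TT\le\4T\le G$ (as $\TT$ consists of $2$-torsion and $G\supseteq$ the $2$-torsion of $\4T$ when... — here I use Hypotheses \ref{G-hypoth-exc}(II), $\sigma=\psi_q$, so $\TT\le C_{\4G}(\psi_q)=G$, and for general $\gee$ the $2$-torsion of $\4T$ is still $\sigma$-fixed since $\sigma$ acts by $t\mapsto t^{\gee q}$ and $q$ is odd), each such subgroup actually lies in $G$. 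So they all belong both to the $\4G$-conjugates-of-$E$-in-$G$ set and, by counting / by the transitivity of $N_G(\TT)$-action via Lemma \ref{odd-orbit}'s own setup applied to $W_0$, to a single $G$-conjugacy class — but the lemma only claims they all lie in the set, not that they form one $G$-class, so this is immediate once $\qq(E)=0$ is matched.

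\textbf{Main obstacle.} The delicate point is the interaction between the $\sigma$-twisting in Proposition \ref{L<->Lhat} and the parity count: I must verify that the $\sigma$-twisted action of $N_{\4G}(E)$ on $\pi_0(C_{\4G}(E))$ coincides with (or is conjugate to) the untwisted $\Aut_{\4G}(E)$-action, which hinges on $\sigma|_{\4T}$ being central in $\operatorname{Aut}(\4T)$ and on controlling representatives of $N_{\4G}(E)$ modulo $C_{\4G}(E)^0$. A secondary obstacle is computing $\pi_0(C_{\4G}(E))$ explicitly enough in each of $F_4,E_6,E_7,E_8$ to confirm it is a $2$-group — this I would extract from the component-group data in \cite[Tables]{Griess} together with Proposition \ref{p:CG(T)}, treating $E_7$ (where $Z(\4G)\ne1$ enters) with the extra care already flagged in Lemma \ref{all-piv}.
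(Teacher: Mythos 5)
Your plan routes everything through Proposition \ref{L<->Lhat}, but that proposition only parametrizes the \emph{$G$-conjugacy classes} of subgroups $\4G$-conjugate to $E$; it says nothing about how many subgroups each class contains. To get the parity of the full sets in the lemma you would also have to show that $[G:N_G(E^*)]$ is odd (or at least control its parity) for \emph{every} class representative $E^*$, i.e.\ you would need the orders of the twisted centralizers $C_G(E^*)$, which differ from class to class and which your sketch never computes. Worse, the blanket claim you lean on — that each $N_G(E^*)$ contains a Sylow $2$-subgroup of $G$ — is false. For $\gg=E_8$ the centralizer $C_{\4G}(E)$ has two connected components, so there are two $G$-classes of subgroups $\4G$-conjugate to $E$ (exactly the situation exploited later in the proof of Proposition \ref{lim1-exceptional}); since the total count is odd, precisely one of the two classes has odd size, so the normalizer of a representative of the other class has even index. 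If your Sylow claim held for both classes the total would come out even, i.e.\ your argument as written would produce the wrong parity in the $E_8$ case. Two smaller slips point the same way: orbits of $\Sigma_3$ on a $2$-group of components can have length $2$, not only $1$ or $3$; and for $F_4$ the form $\qq$ on $\TT$ is degenerate, so Witt's theorem has to be replaced by the observation (Lemma \ref{l:q:V->F2}(d)) that $V_0$ is the \emph{unique} isotropic plane.

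The paper avoids all of this with a short fixed-point parity argument, which is the idea missing from your proposal. One first checks that every totally isotropic rank-$2$ subgroup of $\TT$ is $G$-conjugate to $E$ (Witt's theorem applied to $\Aut_{\4G}(\TT)=\Aut(\TT,\qq)$, with Lemma \ref{l:gT} supplying representatives in $G$; for $F_4$ there is only $V_0$) and that the number $|\X_0|$ of such planes is odd (Lemma \ref{isotr.sgr.} when $\rk(\TT)\ge5$, trivially for $F_4$). Then one exhibits a $2$-group acting by conjugation on the set $\4\X$ of all subgroups of $G$ that are $\4G$-conjugate to $E$, normalizing the subset $\X$ of $G$-conjugates, whose fixed-point set is exactly $\X_0$: for $E_6$ the group $\TT$ itself works (using $C_{\4G}(\TT)=\4T$ and nondegeneracy of $\qq$), while for $F_4,E_7,E_8$ one uses $-\Id\in W$ and the Lang--Steinberg theorem to adjust $\4T$ so that its $4$-torsion $\4T_{(4)}$ lies in $G$, and takes that $4$-torsion as the acting $2$-group. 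The congruence $|\4\X|\equiv|\X_0|\equiv|\X| \pmod 2$ then gives both oddness statements at once, with no computation of component groups, automizers, or centralizer orders. If you want to salvage your route through Proposition \ref{L<->Lhat}, you would have to carry out exactly those centralizer-order computations for each twisted form — essentially the analysis the paper postpones to the proof of Proposition \ref{lim1-exceptional}, where it is needed for a different purpose.
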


\begin{proof}  Let $\4\X\supseteq\X$ be the sets of subgroups of $G$ which 
are $\widebar{G}$-conjugate to $E$ or $G$-conjugate to $E$, respectively. 
Let $\X_0$ be the subset of all totally isotropic subgroups of $\TT$ of 
rank $2$. If $\qq$ is nondegenerate, then by Witt's theorem (see 
\cite[Theorem 7.4]{Taylor}), $\Aut_W(\TT)=\Aut(\TT,\qq)$ permutes $\X_0$ 
transitively, and hence all elements in $\X_0$ are $G$-conjugate to $E$ by 
Lemma \ref{l:gT}. If in addition, $\dim(\TT)\ge5$, then $|\X_0|$ is odd by 
Lemma \ref{isotr.sgr.}. Otherwise, by Lemma \ref{l:q:V->F2}(c,d), $\gg=F_4$ 
and $\X_0=\{E\}$. Thus in all cases, $\X_0\subseteq\X$ and $|\X_0|$ is odd. 

Assume $\gg=E_6$. Then $C_{\widebar{G}}(\TT)=\widebar{T}$ by Proposition 
\ref{p:CG(T)}. Consider the conjugation action of $\TT$ on $\4\X$, and let 
$\X_1$ be its fixed point set. Since $\TT\le G$ by the assumptions on 
$\sigma$, this action also normalizes $\X$. For $F\in\X_1$, either the 
action of $\TT$ fixes $F$ pointwise, in which case $F\in\X_0$, or there are 
$x,y\in{}F$ such that $[x,\TT]=1$ and $[y,\TT]=\gen{x}$.  In particular, 
$c_y\in\Aut_{\widebar{G}}(\TT)=SO(\TT,\qq)$.  For each $v\in{}\TT$ such 
that $[y,v]=x$, $\qq(v)=\qq(vx)$ and $\qq(x)=0$ imply $x\perp{}v$, so 
$x\perp{}\TT$ since $\TT$ is generated by those elements.  This is 
impossible since $\qq$ is nondegenerate by Lemma \ref{l:q:V->F2}(c), and 
thus $\X_1=\X_0$. 

Now assume $\gg=F_4$, $E_7$, or $E_8$. Then $-\Id\in W$, so there is 
$\theta\in{}N_{\widebar{G}}(\widebar{T})$ which inverts $\widebar{T}$.  
Then $C_{\4G}(\TT)=\4T\gen{\theta}$.  By the Lang-Steinberg theorem, there 
is $g\in\widebar{G}$ such that $g^{-1}\sigma(g)\in\theta\widebar{T}$; then 
$\sigma(gtg^{-1})=gt^{\mp{}q}g^{-1}$ for $t\in\widebar{T}$, and thus 
$\sigma$ acts on $g\widebar{T}g^{-1}$ via $t\mapsto{}t^{\mp{}q}$.  We can 
thus assume $\widebar{T}$ was chosen so that 
$G\cap\widebar{T}=C_{\widebar{T}}(\sigma)$ contains the 4-torsion subgroup 
$\4T_{(4)}\le\widebar{T}$.  Let $\X_1\subseteq\4\X$ be the fixed point set 
of the conjugation action of $\4T_{(4)}$ on $\4\X$.  For $F\in\X_1$, either 
the action of $\4T_{(4)}$ fixes $F$ pointwise, in which case $F\in\X_0$, or 
there are $x,y\in{}F$ such that $[x,\4T_{(4)}]=1$ and 
$[y,\4T_{(4)}]=\gen{x}$.  But then $[F,\4T_{(4)}^*]=1$ for some 
$\4T_{(4)}^*<\4T_{(4)}$ of index two, $[F,\TT]=1$ implies 
$F\le\TT\gen{\theta}$; and $F\le\TT$ since no element in 
$\4T_{(4)}{\sminus}\TT$ commutes with any element of $\TT\theta$.  So 
$\X_1=\X_0$ in this case.

Thus in both cases, $\X_0$ is the fixed point set of an action of a 
2-group on $\4\X$ which normalizes $\X$.  Since $|\X_0|$ is odd, so are 
$|\4\X|$ and $|\X|$.
\end{proof}

We are now ready to prove:

\begin{Prop}  \label{lim1-exceptional}
Fix an odd prime power $q$.  Assume $G$ is a quasisimple group of 
universal type isomorphic to $G_2(q)$, $F_4(q)$, $E_6(q)$, $E_7(q)$, or 
$E_8(q)$. Then $\Ker(\mu_G)=1$.
\end{Prop}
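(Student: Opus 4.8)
The plan is to prove $\Ker(\mu_G)=1$ for the exceptional Chevalley groups $G_2(q)$, $F_4(q)$, $E_6(q)$, $E_7(q)$, $E_8(q)$ of universal type by applying Proposition \ref{Ker(mu)}(b), using the explicit classification of $\5\calz(\calf_S(G))$ provided by Lemma \ref{all-piv}. The case $G=G_2(q)$ is already disposed of by Proposition \ref{lim1-G2}, so we may assume $\gg=F_4,E_6,E_7,E_8$ and work under Hypotheses \ref{G-hypoth-exc}, fixing $S\in\syl2{G}$ and $\calf=\calf_S(G)$.

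By Lemma \ref{all-piv}, all subgroups in $\5\calz:=\5\calz(\calf)$ lie in a single class: if $\gg\ne E_7$ they are the rank-$2$ subgroups $E\le\4T$ with $\qq_E=0$ and $\Aut_{\4G}(E)\cong\Sigma_3$, and if $\gg=E_7$ they are the subgroups $Z\times E_0$ with $Z=Z(G)\cong C_2$ and $E_0$ of that same type. So first I would verify that $k=1$ in the language of Proposition \ref{Ker(mu)}(b), i.e. that all members of $\5\calz$ are $\calf$-conjugate (equivalently $G$-conjugate). This is exactly the content of Lemma \ref{odd-orbit} applied with $\sigma=\psi_q$ (so $\gee=+1$): the set of subgroups of $G$ that are $G$-conjugate to a fixed $E\le\4T$ of rank $2$ with $\qq(E)=0$ is nonempty and contains all totally isotropic rank-$2$ subgroups of $\4T_{(2)}$, and Lemma \ref{odd-orbit} (via Witt's theorem and Lemma \ref{l:gT}) shows these form a single $G$-orbit; in the $E_7$ case one multiplies through by the central $Z$, which is fixed by every automorphism. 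Since all members of $\5\calz$ are toral of this shape by Lemma \ref{all-piv}, they are all $G$-conjugate, so $k=1$.

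Next I would check the two remaining hypotheses of Proposition \ref{Ker(mu)}(b): that the single class representative $E_1$ can be taken normal in $S$, and that $\autf(\Omega_1(Z(S)))=1$. For normality: in the proof of Lemma \ref{all-piv} (specifically Lemma \ref{toral-piv}) it is shown that these $E_1$ have $N_G(E_1)$ of odd index in $G$ — because $\Aut_G(E_1)\cong\Sigma_3$ has order $6$ and $C_G(E_1)$ is essentially a torus part — so $N_G(E_1)$ contains a Sylow $2$-subgroup, and after replacing $S$ by an appropriate conjugate we may assume $E_1\nsg S$. For the second condition: $\Omega_1(Z(S))$ is an elementary abelian $2$-group on which $\autf$ acts; since $\outf(S)=\Out_G(S)=N_G(S)/SC_G(S)$ has order prime to $2$, it suffices to observe that when $G$ is one of these exceptional groups of universal type, $\outf(\Omega_1(Z(S)))$ is trivial. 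When $\gg=E_7$ this is immediate because $\Omega_1(Z(S))$ contains the central involution $z$, and the argument of Proposition \ref{Ker(mu)}(b) only needs trivial action; more directly, since $\Out_G(S)$ acts coprimely, its action on the elementary abelian $\Omega_1(Z(S))$ is faithful on $\Omega_1(Z(S))$ itself, and the required triviality follows from the fact that $Z(S)$ is cyclic in all these cases (a standard fact, or read off from the structure of $S$), whence $\Aut_G(\Omega_1(Z(S)))$ has order prime to $2$ acting on a group of order $2$ and is trivial. With these checks in place, Proposition \ref{Ker(mu)}(b) gives $\Ker(\mu_G)=1$, completing the proof.

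The main obstacle I anticipate is not any single hard computation but rather assembling the pieces correctly — in particular, being careful in the $E_7$ case that the presence of the central $C_2$ does not break the ``$k=1$'' and normality conditions (one works with $E_0$ and adjoins $Z$ throughout), and making sure that $\autf(\Omega_1(Z(S)))=1$ genuinely holds, which may require invoking the explicit description of $Z(S)$ (cyclic) from the literature on Sylow $2$-subgroups of these groups or from \cite{Griess} rather than a one-line argument. Everything else is a routine application of Lemmas \ref{all-piv}, \ref{odd-orbit}, and Proposition \ref{Ker(mu)}(b).
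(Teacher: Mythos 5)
Your overall strategy (reduce to Proposition \ref{Ker(mu)}(b) via Lemmas \ref{all-piv} and \ref{odd-orbit}) is the paper's, but there is a genuine gap at the step where you claim $k=1$. Lemma \ref{odd-orbit} only concerns subgroups lying in the fixed torus: it says that all totally isotropic rank-$2$ subgroups of $\TT$ are $G$-conjugate to $E$ and that the number of $G$-conjugates of $E$ is odd; it does \emph{not} say that every subgroup of $G$ which is $\4G$-conjugate to $E$ is $G$-conjugate to $E$, and Lemma \ref{all-piv} only pins down the $\4G$-conjugacy type of the members of $\5\calz$. By Proposition \ref{L<->Lhat}, the number of $G$-classes inside the $\4G$-class of $E$ is governed by $\pi_0(C_{\4G}(E))$, so you still have to compute these centralizers. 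For $\gg=F_4,E_6,E_7$ the paper shows $C_{\4G}(E)$ is connected (via Griess for $E_7$, and via Steinberg's connectedness theorem applied to centralizers in $\Spin_9(\fqobar)$ and $\Spin_{10}(\fqobar)$ for $F_4,E_6$), which is what gives a single class. For $\gg=E_8$ the claim is actually false as you state it: $C_{\4G}(E)\cong(\4H_1\times_E\4H_2)\gen{\delta}$ with $\4H_i\cong\Spin_8(\fqobar)$ has two components, the $\4G$-class splits into two $G$-classes, and one must show separately that only one of them meets $\5\calz$; the paper does this by checking that the twisted representative $gEg^{-1}$ intersects the commutator subgroup of $C_G(gEg^{-1})$ trivially, so $\Omega_1(Z(T))>gEg^{-1}$ for $T\in\syl2{C_G(gEg^{-1})}$ and hence $gEg^{-1}\notin\5\calz$. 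Without this analysis your application of Proposition \ref{Ker(mu)}(b) breaks down precisely for $E_8$.

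The verification of $\autf(\Omega_1(Z(S)))=1$ is also not secured by your argument. The assertion that $Z(S)$ is cyclic is unjustified, and the paper does not use it: from $Z(S)\le C_E(\Aut_S(E))<E$ one only gets that either $|Z(S)|=2$, or ($\gg=E_7$) $Z(S)\cong C_2^2$, and in the latter case triviality of $\Aut_G(Z(S))$ is deduced from the fact that the three involutions of $Z(S)$ lie in three distinct $\4G$-classes, so no fusion can permute them. A coprimality argument through $\Out_G(S)$ does not suffice in any case, since automorphisms in $\autf(\Omega_1(Z(S)))$ may be induced by elements of $N_G(\Omega_1(Z(S)))$ far outside $N_G(S)$. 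Finally, a small repair to your normality step: $E_1\nsg S$ is obtained from the fact that $\5\calz(G)$ is a single $G$-class of odd order (so the conjugation action of $S$ on it has a fixed point), not from an odd-index claim about $N_G(E_1)$ in Lemma \ref{toral-piv}, which is not proved there; this is why the odd-order conclusion of Lemma \ref{odd-orbit} is actually needed and not just a convenience.
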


\begin{proof} This holds when $G\cong G_2(q)$ by Proposition \ref{lim1-G2}, 
so we can assume Hypotheses \ref{G-hypoth-exc}. 
Let $\X$ be the set of all elementary abelian $2$-subgroups 
$E\le{}G$ such that either $\gg\ne{}E_7$, $\rk(E)=2$, and $\qq_E=0$; or 
$\gg=E_7$, $\rk(E)=3$, and $E=Z(G)\times{}E_0$ where $\qq_{E_0}=0$.  By 
Lemma \ref{odd-orbit}, $|\X|$ is odd.  In all cases, by Lemma 
\ref{all-piv}, $\5\calz(G)\subseteq\X$.  By Proposition \ref{Ker(mu)}(a,b), 
to prove $\mu_G$ is injective, it remains to show that if 
$\5\calz(G)\ne\emptyset$, then $\5\calz(G)$ has odd order and is contained 
in a single $G$-conjugacy class, and $\Aut_G(Z(S))=1$.

Fix $E\in\X$ such that $E\le \TT$.  We first claim that if $\gg=F_4$, 
$E_6$, or $E_7$, then $C_{\4G}(E)$ is connected, and hence all elements in 
$\X$ are $G$-conjugate to $E$ by Proposition \ref{L<->Lhat}. If $\gg=E_7$, 
then $C_{\widebar{G}}(E)$ is connected by \cite[Proposition 
9.5(iii)(a)]{Griess}. If $\gg=F_4$ or $E_6$, then for $x\in{}E$, 
$C_{\widebar{G}}(x)\cong\Spin_9(\fqobar)$ or 
$\fqobar\times_{C_4}\Spin_{10}(\fqobar)$, respectively (see \cite[Table 
VI]{Griess}).  Since the centralizer of each element in the simply connected 
groups $\Spin_9(\fqobar)$ and $\Spin_{10}(\fqobar)$ is connected 
\cite[Theorem 8.1]{Steinberg-end}, $C_{\widebar{G}}(E)$ is connected in 
these cases.

Now assume $\gg=E_8$. We can assume $G=C_{\widebar{G}}(\psi_q)$, where 
$\psi_q$ is the field automorphism; in particular, $\psi_q(t)=t^q$ for 
$t\in\4T$. Fix $x,y\in{}E$ such that $E=\gen{x,y}$. By \cite[Lemma 
2.16(ii)]{Griess}, $(\TT,\qq)$ is of positive type (has a 4-dimensional 
totally isotropic subspace). Hence $E^\perp=E\times{}V_1\times V_2$, where 
$\dim(V_i)=2$ and $\qq(V_i{\sminus}1)=1$ for $i=1,2$, and $V_1\perp V_2$. 
Thus $(\qq_{E^\perp})^{-1}(1)=\bigcup_{i=1}^2\bigl((V_i{\sminus}1)\times 
E\bigr)$, and by Lemma \ref{l:q:V->F2}(b,c), these are the restrictions to 
$\TT$ of Weyl reflections $w_\alpha$ for $\alpha\in\Sigma$ such that 
$E\le\Ker(\theta_\alpha)$. Also, $C_W(E)\cong W(D_4)\wr C_2$. By 
Proposition \ref{p:CG(T)}, $C_{\4G}(E)^0$ has type $D_4\times D_4$ and 
$|\pi_0(C_{\4G}(E))|=2$. More precisely, 
$C_{\4G}(E)=(\4H_1\times_E\4H_2)\gen{\delta}$, where 
$\4H_i\cong\Spin_8(\fqobar)$ and $Z(\4H_i)=E$ for $i=1,2$, and conjugation 
by $\delta\in N_{\4G}(\4T)$ exchanges $V_1$ and $V_2$ and hence exchanges 
$\4H_1$ and $\4H_2$.


By Proposition \ref{L<->Lhat}, the two connected components in the 
centralizer give rise to two $G$-conjugacy classes of subgroups which are 
$\widebar{G}$-conjugate to $E$, represented by $E$ and $gEg^{-1}$ where 
$g^{-1}\sigma(g)$ lies in the nonidentity component of 
$C_{\widebar{G}}(E)$.  Then $C_G(E)$ contains a subgroup
$\Spin_8^+(q)\times_{C_2^2}\Spin_8^+(q)$ with index $8$ (the extension by 
certain pairs of diagonal automorphisms of the $\Spin_8^+(q)$-factors, as 
well as an automorphism which switches the factors). 
So $E=Z(T)$ for $T\in\syl2{C_G(E)}$, and $E\in\5\calz(G)$.  
Also, $gyg^{-1}\in{}C_G(gEg^{-1})$ if and only if 
$y\in{}C_{\widebar{G}}(E)$ and $\til\tau(y)=y$ where 
$\til\tau=c_{g^{-1}\sigma(g)}\circ\sigma$.  Then $\til\tau$ switches the 
central factors in $C_{\widebar{G}}(E)$, and the group 
$C_{C_{\4G}(E)}(\til\tau)$ splits as a product of $E$ times the group of 
elements which are invariant after lifting $\til\tau$ to the 4-fold cover 
$\Spin_8(\fqobar)\wr{}C_2$.  Since $gEg^{-1}$ intersects trivially with the 
commutator subgroup of $C_G(gEg^{-1})$, $\Omega_1(Z(T))>gEg^{-1}$ for any 
$T\in\syl2{C_G(gEg^{-1})}$ (since $Z(T)\cap[T,T]\ne1$); and thus 
$gEg^{-1}\notin\5\calz(G)$. Thus $\5\calz(G)$ is the $G$-conjugacy class of 
$E$, and has odd order by Lemma \ref{odd-orbit}.

Thus, in all cases, if $\5\calz(G)$ is nonempty, it has odd order and is 
contained in one $G$-conjugacy class. Also, $Z(S)\le C_E(\Aut_S(E))<E$ for 
$E\in\5\calz(G)$, so either $|Z(S)|=2$, or $\gg=E_7$, $Z(S)\cong C_2^2$, 
and the three involutions in $Z(S)$ belong to three different 
$\4G$-conjugacy classes. Hence $\Aut_G(Z(S))=1$. 
\end{proof}

It remains to prove Lemma \ref{all-piv}, which is split into the two Lemmas 
\ref{toral-piv} and \ref{nontoral-piv}. The next proposition will be used 
to show that certain elementary abelian subgroups are not in $\5\calz$.

\begin{Prop} \label{not_pivotal}
Assume Hypotheses \ref{G-hypoth-exc}. Let $E\le\TT$ and 
$x\in\TT{\sminus}E$ be such that the orbit of $x$ under the 
$C_W(E)$-action on $\TT$ has odd order. Then no subgroup of $S$ 
which is $\widebar{G}$-conjugate to $E$ is in $\5\calz$.  More generally, 
if $\widebar{E}\ge{}E$ is also elementary abelian, and is such that $x$ is 
not $C_{\widebar{G}}(E)$-conjugate to any element of $\widebar{E}$, then 
for any $L\nsg{}G$ which contains 
$\{gxg^{-1}\,|\,g\in{}\widebar{G}\}\cap{}G$, no subgroup of $S$ which is 
$\widebar{G}$-conjugate to $\widebar{E}$ is in $\5\calz$.
\end{Prop}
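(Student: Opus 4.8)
The plan is to argue by contradiction, using Proposition~\ref{L<->Lhat} to reduce everything to the given parity statement and then a Lang--Steinberg fixed-point argument. Suppose some $E_1\le S$ which is $\4G$-conjugate to $\4E$ lies in $\5\calz$. Members of $\5\calz$ are fully normalized in $\calf=\calf_S(G)$, so after conjugating in $G$ I may assume $N_S(E_1)\in\syl2{N_G(E_1)}$, and then $C_S(E_1)\in\syl2{C_G(E_1)}$ and $E_1=\Omega_1(Z(C_S(E_1)))$. Fix $g\in\4G$ with $\9g\4E=E_1$ and set $E_1'=\9gE\le E_1$ and $y_0=\9gx$. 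Since $q$ is odd, $\psi_q$ fixes every involution of $\4T$, so $\TT\le G$ and in particular $\langle E,x\rangle\le G$; by Proposition~\ref{L<->Lhat}, $c:=g^{-1}\psi_q(g)$ lies in $C_{\4G}(\4E)\le C_{\4G}(E)$, and $\4K:=C_{\4G}(E_1')^0=\9g(C_{\4G}(E)^0)$ is a connected reductive group with maximal torus $\9g\4T\ni y_0$ whose Weyl group is carried by conjugation by $g$ onto a subgroup of $C_W(E)$ (onto all of $C_W(E)$ when $C_{\4G}(E)$ is connected).

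The main step is to produce an involution $y\in C_S(E_1)$ that centralizes $C_S(E_1)$ and is $C_{\4G}(E_1')$-conjugate to $y_0$. By hypothesis the orbit of $x$ under $C_W(E)$ on $\TT$ has odd order, so transporting by $g$, the set of $\4K$-conjugates of $y_0$ lying in $\9g\4T$ is odd; feeding this into Proposition~\ref{L<->Lhat} applied inside the $\psi_q$-stable group $C_{\4G}(E_1)$ (whose fixed subgroup is $C_G(E_1)$) shows that the set $\Omega$ of $G$-elements that centralize $E_1$ and are $C_{\4G}(E_1')$-conjugate to $y_0$ is nonempty of odd cardinality. The $2$-group $C_S(E_1)$ acts on $\Omega$ by conjugation (it normalizes $E_1$, hence $C_{\4G}(E_1)$, and it preserves the relevant $C_{\4G}(E_1')$-class because the twist needed to adjust it lies in $C_{\4G}(E_1)$), so it has a fixed point $y\in\Omega$. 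As $y$ has order $2$ and commutes with the Sylow subgroup $C_S(E_1)$ of $C_G(E_1)$, we get $y\in C_S(E_1)$ and hence $y\in\Omega_1(Z(C_S(E_1)))=E_1$. (In the generality with $L$: the produced $y$ lies in $G$ and is $\4G$-conjugate to $x$, so $y\in L$, which is what keeps the non-conjugacy input below applicable when $C_{\4G}(E_1')$ is disconnected and the clean $C_{\4G}(E_1')$-conjugacy of $y$ to $y_0$ is unavailable.)

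To finish, I rule out $y\in E_1$, which is the contradiction. Since $y$ and $y_0=\9gx$ are $C_{\4G}(E_1')$-conjugate, write $y=\9{gc'}x$ with $c'\in C_{\4G}(E)$; then $y\in E_1=\9g\4E$ would force $\9{c'}x\in\4E$, i.e.\ $x$ would be $C_{\4G}(E)$-conjugate to an element of $\4E$, against hypothesis. (When $\4E=E$ this just says $\langle E,x\rangle$, of rank $\rk(E)+1$, is $\4G$-conjugate to $E$, which is impossible for finite groups; this also shows that the first assertion is the case $\4E=E$, $L=G$ of the second, the $L$-clause being automatically met by $L=G$.) Hence $E_1\ne\Omega_1(Z(C_S(E_1)))$, contradicting $E_1\in\5\calz$. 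I expect the main obstacle to be the bookkeeping in the middle paragraph: one must pass correctly between the $C_W(E)$-action on $\TT$, the Weyl group of the possibly disconnected centralizer $C_{\4G}(E_1)$, and the twisted-Frobenius data of Proposition~\ref{L<->Lhat}, in order to see that $\Omega$ really is of odd order and is $C_S(E_1)$-invariant, and to check that the fixed point $y$ can be taken both $G$-rational and $C_{\4G}(E_1')$-conjugate to $y_0$.
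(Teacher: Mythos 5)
Your overall architecture is the expected one, but the decisive step is unsupported, and this is a genuine gap.  Note first that the paper does not prove this proposition from scratch: it observes that every member of $\5\calz$ is ``pivotal'' in the sense of \cite{limz} and then quotes \cite[Proposition 8.9]{limz}, so you are in effect trying to reprove that external result.  Your first and last paragraphs are fine (the reduction to a fully normalized $E_1$ with $E_1=\Omega_1(Z(C_S(E_1)))$ and $C_S(E_1)\in\syl2{C_G(E_1)}$, the fact that a fixed involution $y$ of the $C_S(E_1)$-action would lie in $E_1$, and the derivation of a contradiction from $y=\9{gc'}x\in E_1$ with $c'\in C_{\4G}(E)$).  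The problem is the middle claim that your set $\Omega$, the rational elements of the $C_{\4G}(E_1')$-class of $y_0$ which centralize $E_1$, is nonempty of odd cardinality.

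This does not follow from Proposition \ref{L<->Lhat}: that proposition parametrizes $G$-conjugacy classes of \emph{subgroups} $\4G$-conjugate to a fixed $H\le G$ by twisted classes in $\pi_0(C_{\4G}(H))$; it counts no sets of elements and yields no parity information.  Your $\Omega$ is the set of $\psi_q$-fixed points of the intersection of a positive-dimensional class variety with $C_{\4G}(E_1)$; since $\Omega$ is a union of $C_G(E_1)$-conjugacy classes, $|\Omega|=\sum_y[C_G(E_1):C_{C_G(E_1)}(y)]$, which is odd exactly when an odd number of those classes consist of elements that are $2$-central in $C_G(E_1)$ --- so the parity assertion is essentially equivalent to the existence statement you are trying to prove, not a formal transport of the hypothesis (nonemptiness is likewise not automatic, since a $\sigma$-stable class of the disconnected group $C_{\4G}(E_1')$ need not contain rational points).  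Even your preliminary transported count is off: the $\4K$-conjugates of $y_0$ lying in $\9g\4T$ form a single orbit under the Weyl group of $\4K=C_{\4G}(E_1')^0$, which corresponds under $c_g$ to the reflection subgroup $\Gen{w_\alpha\mid E\le\Ker(\theta_\alpha)}$ of $C_W(E)$ (Proposition \ref{p:CG(T)}), in general a proper subgroup, and an odd $C_W(E)$-orbit can split into even suborbits under a subgroup.  A correct argument must carry the odd-orbit hypothesis through Lang--Steinberg with genuinely new input (for example, the oddness of the number $q^{2N}$ of $\sigma$-stable maximal tori of a connected reductive group, so that the $2$-group $C_S(E_1)$ normalizes one, together with a careful analysis of the component groups of $C_{\4G}(E_1)$ and $C_{\4G}(E_1')$ and of the twisting element $g^{-1}\psi_q(g)$); none of this appears in your sketch, and you yourself flag it as the open point.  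As written the proof does not go through: either supply that counting argument, or do as the paper does and invoke \cite[Proposition 8.9]{limz} after the translation into pivotal subgroups.
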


\begin{proof}  In \cite{limz}, an elementary abelian $p$-subgroup $E<G$ 
is called \emph{pivotal} if $O_p(\Aut_G(E))=1$, and $E=\Omega_1(Z(P))$ 
for some $P\in\sylp{C_G(E)}$. In particular, by Lemma 
\ref{l:str.emb.}(a), the subgroups in $\5\calz$ are all pivotal. 
Note that $\TT\le G$ by Hypotheses \ref{G-hypoth-exc}. By 
\cite[Proposition 8.9]{limz}, no subgroup satisfying the above 
conditions can be pivotal, and hence they cannot be in $\5\calz$.  
\end{proof}

In the next two lemmas, we show that in all cases, $E\in\5\calz$ implies 
$\rk(E)=2$ and $\qq_E=0$ if $\gg\ne{}E_7$, with a similar result when 
$\gg=E_7$.  We first handle 
those subgroups which are \emph{toral} (contained in a maximal torus in 
$\widebar{G}$), and then those which are not toral.  By a $\2a^k$-subgroup 
or subgroup of type $\2a^k$ ($\2b^k$-subgroup or subgroup of type $\2b^k$) 
is meant an elementary abelian 2-subgroup of rank $k$ all of whose 
nonidentity elements are in class \2a (class \2b).  

\begin{Lem} \label{toral-piv}
Assume Hypotheses \ref{G-hypoth-exc}. Fix some $E\in\5\calz$ which is 
contained in a maximal torus of $\widebar{G}$.  Then either $\gg\ne{}E_7$, 
$\rk(E)=2$, and $\qq_E=0$; or $\gg=E_7$, $Z=Z(\widebar{G})\cong{}C_2$, and 
$E=Z\times{}E_0$ where $\rk(E_0)=2$ and $\qq_{E_0}=0$.  In all cases, 
$\Aut_{\widebar{G}}(E)\cong\Sigma_3$. 
\end{Lem}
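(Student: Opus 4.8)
\textbf{Plan of proof for Lemma \ref{toral-piv}.}

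The idea is to translate the condition "$E\in\5\calz$" into a statement about the quadratic space $(\TT,\qq)$ via Proposition \ref{not_pivotal}, and then rule out all toral subgroups except the ones asserted. First I would fix a maximal torus $\widebar{T}$ containing $E$; by Hypotheses \ref{G-hypoth-exc}(I) we may arrange $E\le\TT$ and $\TT\le G$, so $\qq_E=\qq|_E$ is defined (and is quadratic, being the restriction of the quadratic form $\qq_{\TT}$ from \cite[Lemma 2.16]{Griess}). Since $E\in\5\calz$ forces $\autf(E)$, and hence $\Aut_G(E)$ and so $\Aut_{\widebar{G}}(E)$, to contain a strongly $2$-embedded subgroup, the group $\Aut_{\widebar{G}}(E)$ can have no proper nontrivial normal $2$-subgroup and cannot be a direct product of two groups of even order. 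The main tool for eliminating subgroups is: if there is $x\in\TT\sminus E$ whose $C_W(E)$-orbit in $\TT$ has odd order, then no $\widebar{G}$-conjugate of $E$ lies in $\5\calz$ (Proposition \ref{not_pivotal}, using $C_W(E)\cong N_{\Aut_{\widebar{G}}(\TT)}(E)/(\text{stab})$ and Lemma \ref{l:q:V->F2}(a) identifying $\Aut_{\widebar{G}}(\TT)=\Aut(\TT,\qq)$).

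The case analysis would go by rank and type of $\qq_E$. If $\rk(E)=1$ then $E$ central or $\Aut_{\widebar{G}}(E)=1$, so $E\notin\5\calz$. For $\rk(E)\ge2$: if $\qq_E\ne0$ on $E/\{1\}$, i.e. $E$ contains a nonisotropic vector $x$, then (using Lemma \ref{l:q:V->F2}(b,c,d) and Witt's theorem) the orthogonal complement $x^\perp$ has index $2$, one produces an element $x'\in\TT\sminus E$ with small $C_W(E)$-orbit — concretely, $\gamma_x=w_\alpha|_{\TT}$ is a Weyl reflection centralizing a codimension-$1$ subspace, so $C_W(E)$ acts with many odd orbits on $\TT$ — and Proposition \ref{not_pivotal} (or its "more general" clause, comparing with a larger $\widebar E$) kills $E$; here one must be careful to invoke the refined version when $\gg=E_7$ and $E$ contains the central involution $Z(\widebar G)$, which is always in class $\2a\cup(Z(\widebar G)\sminus1)$ and hence contributes to $\qq$. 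If $\qq_E=0$ and $\rk(E)\ge3$ (for $\gg\ne E_7$), then $C_{\widebar{G}}(E)$ is large — by the Borel–de Siebenthal type computation of Proposition \ref{p:CG(T)}, $C_{\widebar{G}}(E)^0$ has a root subsystem of rank $\ge$ something and $\Aut_{\widebar{G}}(E)$ is a proper subgroup of $\GL_k(2)$ containing reflections in all isotropic vectors — and one checks, again via Proposition \ref{not_pivotal}, that there is a coset representative with odd $C_W(E)$-orbit; alternatively one shows $\Aut_{\widebar{G}}(E)$ is too big (contains $\GL_3(2)$ or a product) to have a strongly $2$-embedded subgroup, using $\defect(\GL_k(2))$-type estimates as in Lemma \ref{l:d(G)}. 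This leaves exactly: $\rk(E)=2$, $\qq_E=0$, when $\gg\ne E_7$; and $\rk(E)=3$ with $E=Z\times E_0$, $\qq_{E_0}=0$, $\rk(E_0)=2$ when $\gg=E_7$ (the factor $Z$ being forced because $Z(\widebar G)\le\TT$ lies in $\widebar G$-class $\2a$ and so $Z\times E_0$ has $\qq$ supported only on $Z$, making $\Aut_{\widebar G}(E)$ act trivially on $Z$). In each surviving case I would compute $\Aut_{\widebar{G}}(E)$ directly: for a totally isotropic $2$-space $E_0\le\TT$, $\Aut_{\widebar{G}}(E_0)=N_{O(\TT,\qq)}(E_0)/C$ — restricted to $E_0$ this is all of $\GL(E_0)\cong\GL_2(2)\cong\Sigma_3$ by Witt's theorem, and one checks the restriction map $N_W(E_0)\to\GL(E_0)$ is onto (e.g. because the three isotropic lines in $E_0$ are permuted transitively by Weyl reflections $\gamma_x$, $x\in E_0^\perp\sminus E_0$ nonisotropic), giving $\Aut_{\widebar{G}}(E)\cong\Sigma_3$ as claimed.

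The main obstacle I expect is the bookkeeping in the $\gg=E_7$ case and, more generally, making the orbit-length count in Proposition \ref{not_pivotal} precise: one needs, for each candidate $E$ with $\rk(E)\ge3$ (or $\rk(E)=2$ with $\qq_E\ne0$), to exhibit a specific $x\in\TT\sminus E$ (or a larger $\widebar E$) with odd $C_W(E)$-orbit, and this requires knowing $C_W(E)$ reasonably explicitly — e.g. via the isomorphism $C_W(E)\cong$ (Weyl group of the root subsystem $\{\alpha : E\le\Ker\theta_\alpha\}$) $\rtimes$ (diagram symmetries), from Proposition \ref{p:CG(T)} and Lemma \ref{l:q:V->F2}(b). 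Griess's tables \cite[Tables I, VI and \S\S 7--9]{Griess} and the analysis in \cite[\S 8]{limz} should provide exactly the centralizer structure needed, so this is a matter of organizing the case-by-case verification rather than a genuine new difficulty; the "more general" clause of Proposition \ref{not_pivotal} is precisely designed to handle the subtlety that $\qq_E$ need not vanish when $E$ contains $Z(\widebar G)$.
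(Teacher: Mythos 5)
Your sketch has the right ingredients in outline (the odd-orbit criterion of Proposition \ref{not_pivotal}, plus $\defect$-estimates from Lemma \ref{l:d(G)}), and it correctly predicts the surviving configurations, but as an argument it has a genuine gap in exactly the cases that carry the weight of the proof. You propose to eliminate every toral $E$ containing a nonisotropic vector by exhibiting some $x\in\TT\sminus E$ with odd $C_W(E)$-orbit, justified only by ``$C_W(E)$ acts with many odd orbits''. That is not an argument, and it is precisely where the paper abandons the odd-orbit method: the odd-orbit trick is used only when the radical-isotropic part $E_0=\Ker(\qq_{E\cap E^\perp})$ is trivial (where one can point to explicit invariant sets of odd size, e.g.\ $\TT\cap\2b$ for $F_4$, or $E^\perp\sminus1$). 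Once $E_0\ne1$ -- which includes many subgroups with $\qq_E\ne0$, such as the configurations $(k,\ell)=(3,1),(3,2)$, as well as the $E_7$ case $E=Z\times E_0$ where $\qq_E\ne0$ but $E$ must \emph{survive} -- no candidate $x$ with odd orbit is produced, and there is no reason to expect one (here $C_W(E)$ contains a large normal $2$-group and its odd-order orbits need not avoid $E$). So your first prong does not cover the cases it is asked to cover, and your careful-with-$E_7$ caveat shows the criterion ``contains a nonisotropic vector'' cannot be the right dividing line.

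Your second prong is also not sound as stated: showing that $\Aut_{\4G}(E)$ itself ``is too big to have a strongly $2$-embedded subgroup'' proves nothing, because by Proposition \ref{L<->Lhat} the relevant group $\Aut_G(E^*)$ is only a point stabilizer of the $\Aut_{\4G}(E)$-action on the classes in $\pi_0(C_{\4G}(E))$ -- it can be as small as $\Sigma_3$ (and is, in the surviving case). What one actually needs is the inequality $\defect(\Aut_{\4G}(E))>|\pi_0(C_{\4G}(E))|$, and hence an effective upper bound on $|\pi_0(C_{\4G}(E))|$; this is the technical core of the paper's proof (the bound $|\pi_0(C_{\4G}(E))|\le 2^{\binom k2+\gee+\eta}$ obtained from Proposition \ref{p:CG(T)} and Lemma \ref{l:q:V->F2}, which yields $k\ell\le\binom k2+\eta$ and reduces to five pairs $(k,\ell)$), and your proposal never engages with $\pi_0(C_{\4G}(E))$ at all. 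The rank-$4$ totally isotropic case in $E_8$ makes the omission concrete: there $|\pi_0(C_{\4G}(E))|=2^7$ exceeds $\defect(\GL_4(2))=112$, so even the correct defect comparison fails at the level of orders, and one must use the Cohen--Griess description of $\pi_0$ as extraspecial of order $2^7$ (only $65$ conjugacy classes) to conclude; nothing in your sketch would detect or resolve this. To repair the proposal you would need to (i) restrict the odd-orbit argument to the case $E_0=1$ with explicit choices of $x$, and (ii) replace the ``$\Aut_{\4G}(E)$ too big'' step by the $\defect$ versus $|\pi_0(C_{\4G}(E))|$ comparison via Proposition \ref{L<->Lhat}, together with the centralizer computations that bound $\pi_0$ in each remaining $(k,\ell)$.
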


\begin{proof}  Set $Z=O_2(Z(\4G))\le\TT$. Thus $|Z|=2$ if $\gg=E_7$, and 
$|Z|=1$ otherwise. Recall that $\Aut_G(\TT)=\Aut_{\widebar{G}}(\TT)= 
\Aut(\TT,\qq)$ by Lemmas \ref{l:gT} and \ref{l:q:V->F2}(a). 

The following notation will be used to denote isomorphism types of 
quadratic forms over $\F_2$.  Let $\7[n,\pm,]$ denote the isomorphism class 
of a nondegenerate form of rank $n$.  When $n$ is even, $\7[n,+,]$ denotes 
the hyperbolic form (with maximal Witt index), and $\7[n,-,]$ the 
form with nonmaximal Witt index.  Finally, a subscript ``\6{$(k)$}'' 
denotes sum with a $k$-dimensional trivial form.  By 
\cite[Lemma 2.16]{Griess}, $\qq_{\TT}$ has type $\7[2,-,2]$, $\7[6,-,]$, 
$\7[7,,]$, or $\7[8,+,]$ when $\gg=F_4$, $E_6$, $E_7$, or $E_8$, 
respectively.  

Fix $E\le\TT$; we want to determine whether $E$ can be 
$\widebar{G}$-conjugate to an element of $\5\calz$.  Set 
$E_1=E\cap{}E^\perp$ (the orthogonal complement taken with respect to 
$\qq$), and set $E_0=\Ker(\qq_{E_1})$.  Note that $E_1>E_0$ if 
$\gg=E_7$ ($E\ge{}Z$).  

Assume first that $E_0=1$.  If $\gg=F_4$, then $\TT\cap\2b$ is a 
$C_W(E)$-orbit of odd order.  If $\gg=E_r$ and $E_1=1$, then 
$E\times{}E^\perp$, $E^\perp$ is $C_W(E)$-invariant, and hence there is 
$1\ne{}x\in{}E^{\perp}$ whose $C_W(E)$-orbit has odd order.  If $\gg=E_r$ 
and $\rk(E_1)=1$, then $E\cap{}E^\perp=E_1$, there is an odd number of 
involutions in $E^\perp{\sminus}E_1$ of each type (isotropic or not), and 
again there is $1\ne{}x\in{}E^{\perp}$ whose $C_W(E)$-orbit has odd order. 
In all cases, $x$ has the property that $C_W(\gen{E,x})$ has odd index 
in $C_W(E)$.  So by Proposition \ref{not_pivotal}, no subgroup of $G$ 
which is $\widebar{G}$-conjugate to $E$ can be in $\5\calz$.

Thus $E_0\ne1$.  Set $k=\rk(E_0)$. Then 
	\begin{align} 
	\bigl| \pi_0(C_{\4G}(E)) \bigr| 
	&= \bigl| C_W(E) \big/ \Gen{w_\alpha \,\big|\, \alpha\in\Sigma, ~ 
	E\le\Ker(\theta(\alpha)) } \bigr| \tag{Proposition \ref{p:CG(T)}} \\
	&\le \bigl| C_W(\TT) \bigr| \cdot \bigl| C_{\SO(\TT,\qq)}(E) \big/ 
	\Gen{\gamma_v \,\big|\, v\in\2a\cap E^\perp } \bigr| 
	\tag{Lemma \ref{l:q:V->F2}(a,b)} \\
	&\le \bigl| C_W(\TT) \bigr| \cdot 
	\bigl| C_{\SO(\TT,\qq)}(E_0^\perp) \bigr| \cdot 
	\bigl| C_{\SO(E_0^\perp,\qq)}(E) \big/ 
	\Gen{\gamma_v \,\big|\, v\in\2a\cap E^\perp } \bigr| \,.
	\label{e:|pi0|} \end{align}
The first factor is easily described: 
	\beqq \bigl| C_W(\TT) \bigr| = 2^\gee \qquad
	\textup{where} \quad 
	\gee=\begin{cases} 
	1 & \textup{if $-\Id\in W$ (if $\gg=F_4$, $E_7$, $E_8$)} \\
	0 & \textup{if $-\Id\notin W$ (if $\gg=E_6$).} 
	\end{cases} \label{e:|pi0|-1} \eeqq

We next claim that
	\beqq \bigl| C_{\SO(\TT,\qq)}(E_0^\perp) \bigr| \le 
	2^{\binom{k}2}\,,
	\label{e:|pi0|-2} \eeqq
with equality except possibly when $\gg=F_4$. To see this, let $F_1<\TT$ be a 
subspace complementary to $E_0^\perp$. Each $\alpha\in 
C_{\Aut(\TT)}(E_0^\perp)$ has the form $\alpha(x)=x\psi(x)$ for some 
$\psi\in\Hom(F_1,E_0)$, and $\alpha$ is orthogonal if and only if 
$x\perp\psi(x)$ for each $x$. The space of such homomorphisms has dimension 
at most $\binom{k}2$ (corresponding to symmetric $k\times k$ matrices with 
zeros on the diagonal); with dimension equal to $\binom{k}2$ if 
$\dim(F_1)=\dim(E_0)$ (which occurs if $\qq$ is nondegenerate).

Write $(E_0)^\perp=E\times F_2$, where $E^\perp=E_0\times F_2$ and the form 
$\qq_{F_2}$ is nondegenerate. By \cite[Theorem 11.41]{Taylor}, 
$\SO(F_2,\qq_{F_2})$ is generated by transvections unless $\qq_{F_2}$ is of 
type $\7[4,+,]$, in which case the reflections 
generate a subgroup of $\SO(F_2,\qq_{F_2})\cong\Sigma_3\wr C_2$ isomorphic 
to $\Sigma_3\times\Sigma_3$. Also, $F_2$ is generated by nonisotropic 
elements except when $\qq_{F_2}$ is of type $\7[2,+,]$, and when this is 
the case, all automorphisms of $(E_0)^\perp$ which induce the identity on 
$E$ and on $(E_0)^\perp/E_0$ are composites of transvections. (Look at the 
composites $\gamma_{vx}\circ\gamma_v$ for $v\in F_2$ and $x\in{}E_0$.) 
Hence 
	\beq \bigl| C_{\SO(E_0^\perp,\qq)}(E) \big/ 
	\Gen{\gamma_v \,\big|\, v\in\2a\cap E^\perp } \bigr| \le 2^\eta
	\eeq
where $\eta=1$ if $\qq_{E^\perp}$ has type $\7[4,+,k]$, $\eta=k$ if 
$\qq_{E^\perp}$ has type $\7[2,+,k]$, and $\eta=0$ otherwise. 
Together with \eqref{e:|pi0|}, \eqref{e:|pi0|-1}, and \eqref{e:|pi0|-2}, 
this proves that 
	\beqq |\pi_0(C_{\widebar{G}}(E))| \le 2^{\binom{k}2+\gee+\eta} 
	\qquad\qquad \textup{where $\gee\le1$.}
	\label{e:|pi0|-X} \eeqq

Now, $N_{\4G}(E)\le C_{\4G}(E)^0N_{\4G}(\4T)$ by the Frattini argument: 
each maximal torus which contains $E$ lies in $C_{\4G}(E)^0$ and hence is 
$C_{\4G}(E)^0$-conjugate to $\4T$. So each element of $\Aut_{\4G}(E)$ is 
represented by a coset of $\4T$ in $N_{\4G}(\4T)$, and can be chosen to lie 
in $G$ by Lemma \ref{l:gT}. Thus the action described in Proposition 
\ref{L<->Lhat} which determines the automizers $\Aut_G(E^*)$ for $E^*$ 
$\4G$-conjugate to $E$ is the conjugation action of $\Aut_{\4G}(E)$ on the 
set of conjugacy classes in $\pi_0(C_{\4G}(E))$. In particular, this action 
is not transitive, since the identity is fixed.

Set $\ell=\rk(E/E_0)-1$ if $\gg=E_7$ and $\ell=\rk(E/E_0)$ otherwise.  
Every automorphism of $E$ which induces the identity on $E_0Z$ and on 
$E/E_0$ is orthogonal, and hence the restriction of an element of 
$O_2(C_W(E))$.  Thus $|O_2(\Out_{\widebar{G}}(E))|\ge2^{k\ell}$.  If 
$E^*\in\calz$ is $\widebar{G}$-conjugate to $E$, then since $\Aut_G(E^*)$ 
has a strongly $2$-embedded subgroup, 
$2^{k\ell}\le\defect(\Aut_{\widebar{G}}(E))< 
\bigl|\pi_0(C_{\widebar{G}}(E))\bigr|$ by Proposition \ref{L<->Lhat} and 
Lemma \ref{l:d(G)}(a), with strict inequality since the action of 
$N_{\widebar{G}}(E)$ on $\pi_0(C_{\widebar{G}}(E))$ is not transitive.  
Together with \eqref{e:|pi0|-X}, and since $\gee\le1$, this implies that 
$k\ell\le\binom{k}2+\eta\le\binom{k}2+k$. Thus $\ell\le\frac{k+1}2$, and 
$\ell\le\frac{k-1}2$ if $\eta=0$. By definition, $\eta=0$ whenever 
$\rk(E_1/E_0)=1$, which is the case if $\gg=E_7$ or $\ell$ is odd.  Since 
$2k+\ell\le8$, we are thus left with the following possibilities.  
\begin{itemize} 

\item If $(k,\ell)=(3,2)$, then $\gg=E_8$, $E$ has form of type 
$\7[2,+,3]$, so $E^\perp=E_0$ has trivial form, and $\eta=0$. Thus 
$k\ell\nleq\binom{k}2+\eta$, so this cannot occur. 

\item If $(k,\ell)=(3,1)$, then $\gg=E_8$, $E$ has form of type $*+3$, and 
$\rk(E)=\rk(E_1)=4$.  Then 
$\Aut_{\widebar{G}}(E)\cong{}C_2^3\rtimes{}\GL_3(2)$, so 
$\defect(\Aut_{\4G}(E))\ge2^3\cdot28$ by Lemma \ref{l:d(G)}(a,d). Since 
$|\pi_0(C_{\widebar{G}}(E))|\le16$, this case is also impossible. 

\item If $(k,\ell)=(4,0)$, then $\gg=E_8$ and $E=E_0$ is isotropic of rank 
$4$. By Proposition \ref{p:CG(T)} and Lemma \ref{l:q:V->F2}(c), 
$C_{\4G}(E)^0=\4T$. By \cite[Proposition 3.8(ii)]{CG}, $\pi_0(C_{\4G}(E))$ 
is extraspecial of order $2^7$ and $\Aut_{\4G}(E)\cong\GL_4(2)$. (This is 
stated for subgroups of $E_8(\C)$, but the same argument applies in our 
situation.) In particular, $\pi_0(C_{\4G}(E))$ has just $65$ conjugacy 
classes. Since $\defect(\GL_4(2))=112$ by Lemma \ref{l:d(G)}(d), 
Proposition \ref{L<->Lhat} implies that $\Aut_G(E^*)$ cannot have a 
strongly 2-embedded subgroup.

\item If $(k,\ell)=(3,0)$, then $E=Z\times E_0$ where $\dim(E_0)=3$, and 
$\Aut_{\widebar{G}}(E)\cong\GL_3(2)$. If $\gg=E_6$ or $E_7$, then 
$E^\perp=E$, and $|\pi_0(C_{\4G}(E))|\le16$ by \eqref{e:|pi0|-X}. 

If $\gg=E_8$, then $(E^\perp,\qq_{E^\perp})$ has type $\7[2,+,3]$. 
By the arguments used to prove \eqref{e:|pi0|-X}, 
	\[ \qquad |C_W(E)| = | C_W(\TT) | \cdot 
	| C_{\SO(\TT,\qq)}(E_0^\perp) | \cdot 
	| C_{\SO(E_0^\perp,\qq)}(E) | =
	2\cdot2^3\cdot2^7=2^{11}. \]
Also, $E^\perp$ contains exactly $8$ nonisotropic elements, they are 
pairwise orthogonal, and hence determine $8$ pairwise commuting 
transvections on $\TT$. These extend to $8$ Weyl reflections which are 
pairwise commuting since no two can generate a dihedral subgroup of order 
$8$ (this would imply two roots of different lengths). Hence by Proposition 
\ref{p:CG(T)}, $C_{\4G}(E)^0$ has type $(A_1)^8$ and 
$|\pi_0(C_{\4G}(E))|=2^{11}/2^8=2^3$. Since $\defect(\GL_3(2))=28$ by Lemma 
\ref{l:d(G)}(c), this case cannot occur.

\item If $(k,\ell)=(2,0)$, then $E=Z\times E_0$ where $\dim(E_0)=2$. Then 
$E$ is as described in the statement of the lemma. 
\qedhere

\end{itemize}
\end{proof}

It remains to handle the nontoral elementary abelian subgroups.

\begin{Lem} \label{nontoral-piv}
Assume Hypotheses \ref{G-hypoth-exc}.  Let $E\le{}G$ 
be an elementary abelian 2-group which is not contained in a maximal torus 
of $\4G$.  Then $E\notin\5\calz$. 
\end{Lem}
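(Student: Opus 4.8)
\textbf{Proof plan for Lemma \ref{nontoral-piv}.}
The plan is to show that no nontoral elementary abelian $2$-subgroup $E\le G$ satisfies the two defining conditions for membership in $\5\calz(\calf_S(G))$: namely, $\Aut_G(E)$ having a strongly $2$-embedded subgroup, and $E=\Omega_1(Z(C_S(E)))$. I will argue that the first condition already fails, by combining the classification of nontoral elementary abelian subgroups of the exceptional algebraic groups (Griess \cite{Griess}, and \cite{CG} for $E_8$) with Proposition \ref{L<->Lhat} and the defect estimates of Lemma \ref{l:d(G)}. The case $\gg=G_2$ has already been disposed of inside the proof of Proposition \ref{lim1-G2} (there the nontoral $E$ has rank $3$, $C_{\4G}(E)=E$, $\Aut_{\4G}(E)\cong\GL_3(2)$, and $\defect(\GL_3(2))=28>|C_{\4G}(E)|=8$), so the real content is for $\gg=F_4,E_6,E_7,E_8$; I may treat $G_2$ again here in one line for uniformity or simply cite that computation.

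First I would fix a nontoral $E\le G$ and recall from \cite[Tables in \S\S7--9]{Griess} (and \cite[\S3]{CG} for $\gg=E_8$) the short list of possibilities for the $\4G$-conjugacy class of a nontoral elementary abelian $2$-subgroup, together with the structure of $C_{\4G}(E)$, the component group $\pi_0(C_{\4G}(E))$, and the automizer $\Aut_{\4G}(E)$. In each case $\Aut_{\4G}(E)$ involves a group like $\GL_3(2)$, $\GL_4(2)$, $\GL_5(2)$, $\SO_6^+(2)$, $\SO_7(2)$, $\SO_4^\pm(2)$, or a product of such with a torus-like factor; the key point is that $\defect\bigl(\Out_{\4G}(E)\bigr)$ — bounded below using Lemma \ref{l:d(G)}(a) (the $O_2$ reduction), (b) (products), (c) (the $2$-rank versus $2^{v_2}$ estimate via a faithful module), and the explicit values in (d) — exceeds $|\pi_0(C_{\4G}(E))|$, the number of conjugacy classes in $\pi_0(C_{\4G}(E))$ being at most $|\pi_0(C_{\4G}(E))|$. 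By Proposition \ref{L<->Lhat}, $\Aut_G(E^*)$ for any $E^*$ in the $G$-conjugacy class corresponding to a given component is isomorphic to a subgroup of $\Out_{\4G}(E)$ stabilizing a point of $\pi_0(C_{\4G}(E))/C_{\4G}(E)$, hence has order at least $|\Out_{\4G}(E)|/|\pi_0(C_{\4G}(E))|$, which is larger than $[\,\Out_{\4G}(E):H\,]$ for any strongly $2$-embedded $H$. Therefore $\Aut_G(E^*)$ has no strongly $2$-embedded subgroup, so $E^*\notin\5\calz$. This also covers the subgroups that are $G$-conjugate but not $\4G$-conjugate: every such subgroup arises, via Proposition \ref{L<->Lhat}, from some component of $C_{\4G}(E)$, and the inequality $\defect(\Out_{\4G}(E))>|\pi_0(C_{\4G}(E))|$ does not reference which component.

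For the few borderline cases where the crude bound $\defect>|\pi_0|$ is not immediate — I expect these to be certain rank-$3$ or rank-$4$ nontoral subgroups in $E_7$ and $E_8$, exactly the analogues of the cases $(k,\ell)=(4,0)$ and $(3,0)$ isolated in the proof of Lemma \ref{toral-piv} — I would use the sharper structural data: for instance, in $E_8$ the relevant extraspecial component group of order $2^7$ has only $65$ conjugacy classes while $\defect(\GL_4(2))=112$ (this is precisely the computation already carried out in the $(4,0)$ bullet of Lemma \ref{toral-piv}, and the same input from \cite[Proposition 3.8]{CG} applies verbatim to the nontoral subgroup), and for rank-$3$ nontoral subgroups the automizer $\GL_3(2)$ or $C_2^3\rtimes\GL_3(2)$ has defect $\ge28$ or $\ge 2^3\cdot 28$ by Lemma \ref{l:d(G)}(a,d), dwarfing the component count. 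The main obstacle is thus bookkeeping: assembling the correct table of nontoral classes with their centralizers and automizers across all four exceptional types and verifying the numerical inequality $\defect(\Out_{\4G}(E))>(\text{number of conjugacy classes of }\pi_0(C_{\4G}(E)))$ in each line, rather than any conceptual difficulty — the conceptual engine (Proposition \ref{L<->Lhat} plus Lemma \ref{l:d(G)}) is already in place and was exercised in Lemmas \ref{lim1-G2} and \ref{toral-piv}. Once every line checks out, $\5\calz$ contains no nontoral subgroup, which together with Lemma \ref{toral-piv} gives Lemma \ref{all-piv}.
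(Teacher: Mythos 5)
Your general strategy — reduce to the classified nontoral classes, then compare $\defect(\Aut_{\4G}(E))$ with $|\pi_0(C_{\4G}(E))|$ via Proposition \ref{L<->Lhat} and Lemma \ref{l:d(G)} — is indeed one of the two engines of the paper's proof, but as a complete plan it has a genuine gap: the inequality you assert ("$\defect>|\pi_0|$ in every case") is not available in all cases, and the paper needs a second tool that your plan omits. Concretely, for whole families of nontoral subgroups the paper does not argue by defect at all but by the odd-orbit criterion of Proposition \ref{not_pivotal}: if the maximal toral subgroup $E_t\le E$ satisfies $E_t\cap E_t^\perp\cap\2b=\emptyset$ (e.g.\ whenever $E\cap\TT$ has no $\2b$-radical), one produces $x\in\TT$ whose $C_W(E_t)$-orbit has odd order and which is not $C_{\4G}(E_t)$-conjugate into $E$, and concludes $E\notin\5\calz$ directly. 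This matters because in those cases the automizers are built from orthogonal groups over $\F_2$, for which Lemma \ref{l:d(G)}(d) gives only weak information ($\defect(\SO_4^\pm(2))=2$, and merely $2^4\le\defect(\SO_6^+(2))$, $2^6\le\defect(\SO_7(2))$), so a blanket defect-versus-component count cannot be pushed through. Moreover, even where the defect comparison is the right tool, it genuinely fails to be strict in the last case of the paper's Table \ref{tb:E8case2} ($\gg=E_8$, $E\le\TT\gen\theta$ with $\rk(E/\gen{E\cap\2a})=2$ and $\gen{E\cap\2a}$ of rank $2$ with one-dimensional radical): there both bounds equal $2^5$, and one must show in addition that $\Aut_{\4G}(E)$ does \emph{not} act transitively on $\pi_0(C_{\4G}(E))\cong C_2^5$, which the paper does using the structural fact that $C_{\4G}(F)^0$ has type $A_7T^1$ for $F$ of type \textbf{2ABB}, so that $\theta C_{\4G}(E)^0$ misses $\4T$. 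Your "borderline cases" guess (rank $3$–$4$ analogues of the toral $(k,\ell)=(4,0),(3,0)$ cases) does not identify these actual problem spots.

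A secondary but substantive point: you treat the determination of $|\pi_0(C_{\4G}(E))|$ and $\Aut_{\4G}(E)$ for nontoral $E$ as bookkeeping from Griess's tables, but those tables describe essentially only the maximal nontoral subgroups. The bulk of the paper's proof is devoted to producing this data for arbitrary nontoral $E$: dimension counts via characters of the adjoint module, the identification $C_{\4G}(U)=U\times H$ for $\2a^3$-subgroups $U$ (with $H=\SO_3$, $\SL_3$, $\Sp_6$, $F_4$ according to $\gg$), the reduction of the remaining $E_8$-subgroups into $W_9=\TT\gen\theta$, and a thirteen-case analysis drawing on \cite{limz} and \cite{CG}. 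So the conceptual skeleton of your plan matches half of the paper, but to close the argument you need (i) Proposition \ref{not_pivotal} for the families where the defect count is unavailable, (ii) the non-transitivity argument in the borderline $E_8$ case, and (iii) the centralizer computations that actually furnish the bounds on $\pi_0$.
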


\begin{proof} To simplify notation, we write $\K=\fqobar$. 
Set $Z=O_2(Z(\4G))\le\TT$. Thus $|Z|=2$ if $\gg=E_7$, and 
$|Z|=1$ otherwise. The maximal nontoral subgroups of $\4G$ are 
described in all cases by Griess \cite{Griess}.
\begin{enumA}[parsep=3pt] 

\item \label{ntor:F4E6}
If $\gg=F_4$ or $E_6$, then by \cite[Theorems 7.3 \& 
8.2]{Griess}, $\4G$ contains a unique conjugacy class of 
maximal nontoral elementary abelian 2-subgroups, represented by $W_5$ 
of rank five. There is a subgroup $W_2\le{}W_5$ of 
rank two such that $W_5\cap\2a=W_5{\sminus}W_2$. Also, 
$\Aut_{\4G}(E_5)=\Aut(E_5,\qq_{E_5})$: the group of all 
automorphisms of $W_5$ which normalize $W_2$. A subgroup 
$E\le{}W_5$ is nontoral if and only if it contains a $\2a^3$-subgroup.

When $\gg=F_4$, we can assume $W_5=\TT\gen{\theta}$, where $\theta\in 
N_{\4G}(\4T)$ inverts the torus.

\item \label{ntor:E7}
If $\gg=E_7$, then by \cite[Theorem 9.8(i)]{Griess}, $\4G$ 
contains a unique maximal nontoral elementary abelian 2-subgroup $W_6$, of 
rank six. For any choice of $E_6(\K)<G$, $W_5<E_6(\K)$ (as just 
described) has rank $5$, is nontoral since it contains a $\2a^3$-subgroup, 
and so we can take $W_6=Z\times W_5$. 

Each coset of $Z$ of involutions in $\4G{\sminus}Z$ contains one element of 
each class $\2a$ and $\2b$. Together with the above description of $E_5$, 
this shows that all $\2a^2$-subgroups of $W_6$ are contained in 
$W_5$. Hence for each nontoral subgroup $E\le W_6$ which contains $Z$, 
$E\cap W_5$ is the subgroup generated by $\2a^2$-subgroups of $E$, thus 
is normalized by $\Aut_{\4G}(E)$, and so 
	\begin{align*} 
	\qquad \Aut_{\4G}(E) &\cong \Aut_{\4G}(E\cap W_5) = 
	\Aut(E\cap{}W_5,\qq_{E\cap{}W_5}) \cong \Aut(E,\qq_E) \\
	\Aut_{\4G}(W_6) &\cong \Aut(W_6,\qq_{W_6})\cong 
	C_2^6\rtimes(\Sigma_3\times\GL_3(2)) 
	\end{align*}

For $Z\le E\le{}W_6$, the subgroup $E$ is nontoral exactly when it contains 
a $\2a^3$-subgroup. This is immediate from the analogous statement in 
\eqref{ntor:F4E6} for $E_6(\K)$.

\item \label{ntor:E8}
If $\gg=E_8$, then by \cite[Theorem 2.17]{Griess}, $\4G$ 
contains two maximal elementary abelian subgroups $W_8$ and $W_9$, neither 
of which is toral \cite[Theorem 9.2]{Griess}.  
An elementary abelian $2$-subgroup $E\le\4G$ is nontoral if and only if 
$\qq_E$ is not quadratic or $E$ has type $\2b^5$ \cite[Theorem 9.2]{Griess}.

We refer to \cite[Theorem 2.17]{Griess} for descriptions of $W_8$ and 
$W_9$. There are subgroups 
$F_0\le{}F_1,F_2\le{}W_8$ such that $\rk(F_0)=2$, $\rk(F_1)=\rk(F_2)=5$, 
$F_1\cap{}F_2=F_0$, and 
$W_8\cap\2a=(F_1{\sminus}F_0)\cup(F_2{\sminus}F_0)$.  Also, 
$\Aut_{\4G}(W_8)$ is the group of those automorphisms of $W_8$ 
which leave $F_0$ invariant, and either leave $F_1$ and $F_2$ invariant or 
exchange them.  

We can assume that $W_9=\TT\gen{\theta}$, where $\theta\in{}N_{\4G}(\4T)$ 
inverts $\4T$.  Also, $W_9{\sminus}\TT\subseteq\2b$. Hence 
$\TT=\gen{W_9\cap\2a}$ is $\Aut_{\4G}(W_9)$-invariant.  Each automorphism 
of $W_9$ which is the identity on $\TT$ is induced by conjugation by some 
element of order 4 in $\4T$, and thus $\Aut_{\4G}(W_9)$ is the group of all 
automorphisms whose restriction to $\TT$ lies in $\Aut_{\4G}(\TT)$.  

\end{enumA}

We next list other properties of elementary abelian subgroups of $\4G$, and 
of their centralizers and normalizers, which will be needed in the proof.
\begin{enumA}[resume,parsep=3pt]

\item \label{ntor:dim(C)} \label{e:dimC(E)} 
\emph{If $\gg=E_8$, $E\le\4G$, $E\cong C_2^r$, and $|E\cap\2a|=m$, then 
$\dim(C_{\4G}(E))=2^{8-r}+2^{5-r}m-8$.}

This follows from character computations: if $\ggg$ denotes the Lie algebra 
of $\4G=E_8(\K)$, then 
$\dim(C_{\4G}(E))=\dim(C_\ggg(E))=|E|^{-1}\sum_{x\in{}E}\chi_\ggg(x)$. By 
\cite[Table VI]{Griess}, $\chi_\ggg(1)=\dim(\4G)=248$, and 
$\chi_\ggg(x)=24$ or $-8$ when $x\in\2a$ or $\2b$, respectively.

\item \label{ntor:Et<E<W9}
\emph{If $\gg=E_8$, $E\le\4G$ is an elementary abelian $2$-group, and 
$E_t<E$ has index $2$ and is such that $E{\sminus}E_t\subseteq\2b$, then 
there is $g\in\4G$ such that $\9gE\le W_9=\TT\gen\theta$ and 
$\9gE_t\le\TT$.} 

It suffices to prove this when $E$ is maximal among such such pairs 
$E_t<E$. We can assume that $E$ is contained in $W_8$ or $W_9$. 

If $E\le W_8$, then in the notation of \eqref{ntor:E8}, $F_0\le E$ (since 
$E$ is maximal), and either $\rk(E\cap{}F_i)=3$ for $i=1,2$ and $\rk(E)=6$, 
or $\rk(E\cap{}F_i)=4$ for $i=1,2$ and $\rk(E)=7$. These imply that 
$|E\cap\2a|=8$ or $24$, respectively, and hence by \eqref{ntor:dim(C)} that 
$\dim(C_{\4G}(E_t))=8$ ($C_{\4G}(E_t)^0=\4T$) and $\dim(C_{\4G}(E))=0$. 
Hence in either case, if $g\in\4G$ is such that $\9gE_t\le\TT$, then 
$\9gE{\sminus}\9gE_t\subseteq\theta\4T$, and there is $t\in\4T$ such that 
$\9{tg}E\le\TT\gen\theta=W_9$. 

If $E\le W_9$, set $E_2=\gen{E\cap\2a}$. Then $E_2\le E\cap\TT$ and $E_2\le 
E_t$, so there is nothing to prove unless $\rk(E/E_2)\ge2$. In this case, 
from the maximality of $E$, we see that $E_t=E_a\times E_b$, where 
$E_a\cong C_2^2$ has type \textbf{2ABB}, $E_b$ is a $\2b^3$-group, and 
$E_a\perp E_b$ with respect to the form $\qq$. Thus $\rk(E)=6$, 
$|E\cap\2a|=8$, and the result follows by the same argument as in the last 
paragraph.

\item \label{ntor:max-ntor}
\emph{If $\gg=E_8$, and $E\le\4G$ is a nontoral elementary abelian 
$2$-group, then either $E$ contains a $\2a^3$-subgroup, or $E$ is 
$\4G$-conjugate to a subgroup of $W_9$.} 

Assume $E\le W_8$ is nontoral and contains no $\2a^3$-subgroup. We use the 
notation $F_0<F_1,F_2<W_8$ of \eqref{ntor:E8}. Set $E_i=E\cap F_i$ for 
$i=0,1,2$. Then $\qq_{E_1E_2}$ is quadratic: it is the orthogonal direct 
sum of $\qq_{E_0}$, $\qq_{E_1/E_0}$, and $\qq_{E_2/E_0}$, each of which is 
quadratic since $\rk(E_i/E_0)\le2$ for $i=1,2$ ($E$ has no 
$\2a^3$-subgroup). Hence $E>E_1E_2\ge\gen{E\cap\2a}$ since $E$ is nontoral, 
so $E$ is conjugate to a subgroup of $W_9$ by \eqref{ntor:Et<E<W9}.

\item \label{e:cc} 
\noindent \emph{Let $E\le\4G$ be an elementary abelian $2$-subgroup, and let 
$E_t\le E$ be maximal among toral subgroups of $E$. Assume that 
$E_t\cap E_t^\perp\cap\2b=\emptyset$, and that either 
$\rk(\4T)-\rk(E_t)\ge2$ or $E_t\cap E_t^\perp=1$. Then $E\notin\5\calz$.}

To see this, choose $F\ge F_t$ which is $\4G$-conjugate to $E\ge E_t$ and 
such that $F_t=F\cap\TT$. By maximality, no element of $F{\sminus}F_t$ is 
$C_{\4G}(F_t)$-conjugate to an element of $\4T$. If $F_t\cap F_t^\perp=1$, 
then some $C_W(F_t)$-orbit in $F_t^\perp{\sminus}1$ has odd order. 
Otherwise, since $\qq$ is linear on $F_t\cap F_t^\perp$, we have $F_t\cap 
F_t^\perp=\gen{y}$ for some $y\in\2a$, in which case 
$|\qq_{F_t^\perp}^{-1}(0)|=|F_t^\perp|/2$ is 
even since $\rk(F_t^\perp)\ge\rk(\4T)-\rk(F_t)\ge2$. So again, some 
$C_W(F_t)$-orbit in $F_t^\perp{\sminus}1$ has odd order in this case. Point 
\eqref{e:cc} now follows from Proposition \ref{not_pivotal}.

\item \label{e:dd} 
\emph{Assume $\gg=E_8$. Let $1\ne{}E_0\le E\le\4G$ be elementary abelian 
$2$-subgroups, where $\rk(E)=3$, and $E\cap\2a=E_0{\sminus}1$. Then}
	\[ C_{\4G}(E) \cong \begin{cases}  
	E\times F_4(\K) & \textup{if $\rk(E_0)=3$} \\
	E\times \PSp_8(\K) & \textup{if $\rk(E_0)=2$} \\
	E\times \PSO_8(\K) & \textup{if $\rk(E_0)=1$.} \\
	\end{cases} \]

To see this, fix $1\ne{}y\in{}E_0$, and identify $C_{\4G}(y)\cong 
\SL_2(\K)\times_{C_2}E_7(\K)$.  For each $x\in{}E{\sminus}\gen{y}$, since 
$x$ and $xy$ are $\4G$-conjugate, $x\ne(1,b)$ for $b\in E_7(\K)$. Thus 
$x=(a,b)$ for some $a\in\SL_2(\K)$ and $b\in E_7(\K)$ both of order 4, and 
(in the notation of \cite[Table VI]{Griess}) $b$ is in class \textbf{4A} or 
\textbf{4H} since $b^2\in Z(E_7(\K))$. By \eqref{e:dimC(E)} and \cite[Table 
VI]{Griess}, 
	\[ \dim(C_{\4G}(E))= \begin{cases} 
	80 = \dim(C_{E_7(\K)}(\textbf{4H}))+1 & \textup{if $E$ has type 
	\textbf{2AAA}} \\
	64 = \dim(C_{E_7(\K)}(\textbf{4A}))+1 & \textup{if $E$ has type 
	\textbf{2ABB},} 
	\end{cases} \]
and thus $x\in\2a$ if $b\in\textbf{4H}$ and $x\in\2b$ if $b\in\textbf{4A}$. 
Thus if $E=\gen{y,x_1,x_2}$, and $x_i=(a_i,b_i)$, then 
$\gen{a_1,a_2}\le{}\SL_2(\K)$ and $\gen{b_1,b_2}\le{}E_7(\K)$ are both 
quaternion of order 8.  Point \eqref{e:dd} now follows using the 
description in \cite[Proposition 9.5(i)]{Griess} of centralizers of certain 
quaternion subgroups of $E_7(\K)$. When combined with the 
description in \cite[Table VI]{Griess} of $C_{E_7(\K)}(\textbf{4A})$, this 
also shows that 
	\beqq \qquad F\cong C_2^2 \textup{ of type \textbf{2ABB} $\implies$ 
	$C_{\4G}(F)^0$ is of type $A_7T^1$} \label{e:ee} \eeqq
(i.e., $C_{\4G}(F)^0\cong(\SL_8(\K)\times\K^\times)/Z$,
for some finite subgroup $Z\le Z(\SL_8(\K))\times\K^\times$). 

\item \label{e:ff} 
\emph{If $U<\4G$ is a $\2a^3$-subgroup, then $C_{\4G}(U)=U\times H$, where 
$H$ is as follows:}
	\[ \renewcommand{\arraystretch}{1.5}
	\begin{array}{|c||c|c|c|c|} \hline
	\gg & F_4 & E_6 & E_7 & E_8 \\ \hline
	H & SO_3(\K) & \SL_3(\K) & \Sp_6(\K) 
	& F_4(\K) \\ \hline
	\end{array} \]

When $\gg=E_8$, this is a special case of \eqref{e:dd}.  For $x\in\2a\cap 
F_4(\K)$, 
$C_{E_8(\K)}(x)\cong{}\SL_2(\K)\times_{C_2}E_7(\K)$ by 
\cite[2.14]{Griess}.  Since 
$C_{F_4(\K)}(x)\cong{}\SL_2(\K)\times_{C_2}\Sp_6(\K)$, this 
shows that $C_{E_7(\K)}(U)\cong{}U\times{}\Sp_6(\K)$.  

Similarly, $C_{E_8(\K)}(y)\cong{}\SL_3(\K)\times_{C_3}E_6(\K)$ by 
\cite[2.14]{Griess} again (where $y$ is in class \textbf{3B} in his 
notation).  There is only one class of element of order three in $F_4(\K)$ 
whose centralizer contains a central factor $\SL_3(\K)$ --- 
$C_{F_4(\K)}(y)\cong\SL_3(\K)\times_{C_3}\SL_3(\K)$ for $y$ of type \3c 
in $F_4(\K)$ --- and thus $C_{E_6(\K)}(U)\cong{}U\times\SL_3(\K)$.  

If $\gg=F_4$, then by \cite[2.14]{Griess}, for $y\in\3c$, 
$C_{\4G}(y)\cong\SL_3(\K)\times_{C_3}\SL_3(\K)$. Also, the involutions in 
one factor must all lie in the class $\2a$ and those in the other in $\2b$. 
This, together with Proposition \ref{p:CG(T)}, shows that for $U_2<U$ of 
rank $2$, $C_{\4G}(U_2)\cong(T^2\times_{C_3}\SL_3(\K))\gen{\theta}$, where 
$\theta$ inverts a maximal torus. Thus $C_{\4G}(U)=U\times 
C_{\SL_3(\K)}(\theta)$, where by \cite[Proposition 2.18]{Griess}, 
$C_{\SL_3(\K)}(\theta)\cong\SO_3(\K)$. This finishes the proof of 
\eqref{e:ff}.

\end{enumA}

For the rest of the proof, we fix a nontoral elementary abelian 
$2$-subgroup $E<\4G$. We must show that $E\notin\5\calz$. In almost all 
cases, we do this either by showing that the hypotheses of \eqref{e:cc} 
hold, or by showing that $\defect(\Aut_{\4G}(E))>|\pi_0(C_{\4G}(E))|$ 
(where $\defect(-)$ is as in Lemma \ref{l:d(G)}), in which case $\Aut_G(E)$ 
has no strongly $2$-embedded subgroup by Proposition \ref{L<->Lhat}, and 
hence $E\notin\5\calz$.

By \eqref{ntor:F4E6}, \eqref{ntor:E7}, and \eqref{ntor:max-ntor}, either 
$E$ contains a $\2a^3$-subgroup of rank three, or $\gg=E_8$ and $E$ is 
$\4G$-conjugate to a subgroup of $W_9$.  These two cases will be handled 
separately.

\smallskip

\noindent\textbf{Case 1: }  Assume first that $E$ contains a 
$\2a^3$-subgroup $U\le{}E$. From the lists in 
(\ref{ntor:F4E6},\ref{ntor:E7},\ref{ntor:E8}) of maximal nontoral 
subgroups, there are the following possibilities.

\begin{description}[leftmargin=5mm,itemsep=6pt,parsep=3pt] 

\item[\underbar{$\gg=F_4$, $E_6$, or $E_7$}] By 
(\ref{ntor:F4E6},\ref{ntor:E7}), we can write $E=U\times E_0\times Z$, 
where $E_0$ is a $\2b^k$ subgroup (some $k\le2$) and 
$UE_0{\sminus}E_0\subseteq\2a$ (and where $Z=1$ unless $\gg=E_7$). If 
$k=0$, then $E\notin\5\calz$ by \eqref{e:cc}, so assume $k\ge1$. By 
\eqref{e:ff}, and since each elementary abelian 2-subgroup of $\SL_3(\K)$ 
and of $\Sp_6(\K)$ has connected centralizer, $\pi_0(C_{\4G}(E))\cong U$ if 
$\gg=E_6$ or $E_7$. If $\gg=F_4$, then by \eqref{e:ff} again, and since the 
centralizer in $\SO_3(\K)\cong\PSL_2(\K)$ of any $C_2^k$ has $2^k$ 
components, $|\pi_0(C_{\4G}(E))|=2^{3+k}$. 


By (\ref{ntor:F4E6},\ref{ntor:E7}) again, 
$\Aut_{\4G}(E)$ is the group of all automorphisms which 
normalize $E_0$ and $UE_0$ and fix $Z$. Hence 
	\[ |O_2(\Aut_{\4G}(E))|=2^{3k} \qquad\textup{and}\qquad
	\Aut_{\4G}(E)/O_2(\Aut_{\4G}(E))\cong\GL_3(2)\times\GL_k(2)\,. \]
So $\defect(\Aut_{\4G}(E))\ge2^{3k+3}>|\pi_0(C_{\4G}(E))|$ by Lemma 
\ref{l:d(G)}, and $E\notin\5\calz$. 


\item[\underbar{$\gg=E_8$}] By \eqref{e:ff}, $C_{\4G}(U)=U\times H$ where 
$H\cong F_4(\K)$. Set $E_2=E\cap H$, and let 
$E_0=\gen{E_2\cap\2b}$. Set $k=\rk(E_0)$ and $\ell=\rk(E_2/E_0)$.

If $k=0$, then $E_2$ has type $\2a^\ell$, and $E{\sminus}(U\cup 
E_2)\subseteq\2b$. So each maximal toral subgroup $E_t<E$ has the form 
$E_t=U_1\times U_2$, where $\rk(U_1)=2$, $\rk(U_2)\le2$, and 
$E_t\cap\2a=(U_1\cup U_2){\sminus}1$. The hypotheses of \eqref{e:cc} thus 
hold, and so $E^*\notin\5\calz$.

Thus $k=1,2$. If $\ell\le2$, then $E_2$ is toral, and 
$|\pi_0(C_{\4G}(E))|=8\cdot|\pi_0(C_{H}(E_2))|\le2^{3+k}$ by formula 
\eqref{e:|pi0|-X} in the proof of Lemma \ref{toral-piv}. (Note that 
$\gee=1$ and $\eta=0$ in the notation of that formula.) If $\ell=3$, then 
$|\pi_0(C_{\4G}(E))|=2^{6+k}$ by the argument just given for $F_4(\K)$. 
Also, $\Aut_{\4G}(E)$ contains all automorphisms of $E$ which normalize 
$E_0$, and either normalize $UE_0$ and $E_2$ or (if $\ell=3$) exchange 
them: since in the notation of \eqref{ntor:E8}, each such automorphism 
extends to an automorphism of $W_8$ which normalizes $F_1$ and $F_2$. So 
$|O_2(\Aut_{\4G}(E))|\ge2^{k(3+\ell)}$, and 
$\Aut_{\4G}(E)/O_2(\Aut_{\4G}(E))\cong 
\GL_3(2)\times\GL_k(2)\times\GL_\ell(2)$ or (if $\ell=3$) $(\GL_3(2)\wr 
C_2)\times\GL_k(2)$. In all cases, 
$\defect(\Aut_{\4G}(E))\ge2^{3k+\ell{}k+3}>|\pi_0(C_{\4G}(E))|$, so 
$E\notin\calz$.
 

\end{description}

\smallskip

\noindent\textbf{Case 2: }  Now assume that $\gg=E_8$, and that $E$ is 
$\4G$-conjugate to a subgroup of $W_9$. To simplify the argument, we assume 
that $E\le W_9$, and then prove that no subgroup $E^*\in\5\calz$ can be 
$\4G$-conjugate to $E$. Recall that $W_9=\TT\gen{\theta}$, where $\theta\in 
N_{\4G}(\4T)$ inverts the torus and $\theta\TT\subseteq\2b$. 

If $E\cap\2a=\emptyset$, then $\rk(E)=5$. In this case, 
$\Aut_{\4G}(E)\cong\GL_5(2)$ and $|C_{\4G}(E)|=2^{15}$ \cite[Proposition 
3.8]{CG}. (Cohen and Griess work in $E_8(\C)$, but their argument also 
holds in our situation.) Since $\delta(\GL_5(2))>2^{15}$ by Lemma 
\ref{l:d(G)}(d), no $E^*\in\5\calz$ can be $\4G$-conjugate to $E$. 

Now assume $E$ has $\2a$-elements, and set $E_2=\gen{E\cap\2a}$. Then  
$E_2\le\TT$ (hence $\qq_{E_2}$ is quadratic) by the above remarks.  Set 
$E_1=E_2^\perp\cap{}E_2$ and $E_0=\Ker(\qq_{E_1})$. If $E_0=1$ and 
$\rk(E_2)\ne7$, then by \eqref{e:cc}, no subgroup of $S$ which 
is $\4G$-conjugate to $E$ lies in $\5\calz$. 

It remains to consider the subgroups $E$ for which $E_0\ne1$ or 
$\rk(E_2)=7$. Information about $|O_2(\Aut_{\4G}(E))|$ and 
$|\pi_0(C_{\4G}(E))|$ for such $E$ is summarized in Table \ref{tb:E8case2}. 
By the ``type of $\qq_E$'' is meant the type of quadratic 
form, in the notation used in the proof of Lemma \ref{toral-piv}.

\begin{table}[ht]
\[ \renewcommand{\arraystretch}{1.5}
\renewcommand{\arraycolsep}{1mm}
\newcommand{\Sm}[1]{\textup{\begin{Small}$#1$\end{Small}}}
\newcommand{\sm}[1]{\textup{\begin{small}$#1$\end{small}}}
\begin{array}{|@{~}l||c|c|c|c|c|c|c|c|c|c|c|c|c|} \hline
\textup{Case nr.} &1&2&3&4&5&6&7&8&9&10&11&12&13\\\hline\hline
\rk(E/E_2) &1&1&1&1&1&1&1&1&1&1&2&2&2\\ \hline
\rk(E_2/E_0) &7&6^+&5&4^+&4^+&4^-&3&3&2^-&2^-&1&1&1\\ \hline
\rk(E_0) &0&1&1&2&1&1&2&1&2&1&3&2&1\\ \hline
\textup{type of } \qq_{E_2} & \,\8\7[7,,]\, & \8{\7[6,+,1]} & \8\7[5,,1] & 
\8\7[4,+,2] & \8\7[4,+,1] & \8\7[4,-,1] & \8\7[3,,2] & \8\7[3,,1] & 
\8\7[2,-,2] & \8\7[2,-,1] & \8\7[1,,3] & \8\7[1,,2] & \8\7[1,,1] \\\hline
\sm{|\pi_0(C_{\4G}(E^*))|\le} & 2^9 & 2^9 & 2^{10} & 2^{10} & 2^8 & 2^7 & 
2^6 & 2^5 & 2^5 & 2^4 & 2^{12} & 2^8 & 2^5 \\ \hline
\sm{|O_2(\Aut_{\4G}(E^*))|} & 2^7 & 2^{13} & 2^{11} & 2^{14} & 2^9 & 
2^9 & 2^{11} & 2^7 & 2^8 & 2^5 & 2^{11} & 2^8 & 2^5 \\ \hline
{\defect(\Aut_{\4G}(E^*)){\ge}} & 2^{13} & 2^{17} & 3{\cdot}2^{13} 
& 2^{16} & 2^{10} & 2^{10} & 2^{12} & 2^7 & 2^9 & 2^5 & 2^{14} 
& 2^9 & 2^5 \\ \hline
\end{array}
\]
\caption{} \label{tb:E8case2}
\end{table}

We first check that the table includes all cases. If $\rk(E/E_2)=1$, 
then $E_2=E\cap\TT$, and the table lists all types which the form 
$\qq_{E_2}$ can have. Note that since $E_2$ is generated by 
nonisotropic vectors, $\qq_{E_2}$ cannot have type $\7[2,+,k]$. If 
$\rk(E/E_2)=2$, then $\qq_{E_2}$ is linear, and must be one of the 
three types listed. Since $\qq_{E\cap\TT}$ is quadratic and $\qq_E$ is 
not, $E_2$ has index at most $2$ in $E\cap\TT$.

We claim that
	\beqq \parbox{\short}{$E,F<W_9$, $\alpha\in\Iso(E,F)$ such that 
	$\alpha(E\cap\TT)=F\cap\TT$ and $\alpha(E\cap\2a)=F\cap\2a$ 
	~$\implies$~ $\alpha=c_{tg}$ for some $t\in\4T$ and some 
	$g\in N_G(\4T)=G\cap N_{\4G}(\4T)$.}
	\label{e:c_tg} \eeqq
By \eqref{ntor:E8} and Witt's theorem (see \cite[Theorem 7.4]{Taylor}), 
there is $g\in N_{\4G}(\4T)$ such that $\alpha|_{E\cap\TT}=c_g$, and we can 
assume $g\in{}G$ by Lemma \ref{l:gT}. Then $\9gE{\sminus}\9g(E\cap\TT)\le 
\theta\4T$ since $\theta\4T\in Z(N_{\4G}(\4T))/\4T$, so $\alpha=c_{tg}$ for 
some $t\in\4T$. This proves \eqref{e:c_tg}. In particular, any two 
subgroups of $W_9$ which have the same data as listed in the first three 
rows of Table \ref{tb:E8case2} are $\4G$-conjugate.

By \eqref{e:c_tg}, together with \eqref{ntor:Et<E<W9} when $\rk(E/E_2)=2$, 
we have $\Aut_{\4G}(E)=\Aut(E,\qq_E)$ in all cases. Thus $\Aut_{\4G}(E)$ is 
the group of all automorphisms of $E$ which normalize $E_0$ and $E_2$ and 
preserve the induced quadratic form on $E_2/E_0$. This gives the value for 
$|O_2(\Aut_{\4G}(E))|$ in the table, and the lower bounds for 
$\defect(\Aut_{\4G}(E))$ then follow from Lemma \ref{l:d(G)}. 

In cases 1--6, the upper bounds for $|\pi_0(C_{\4G}(E))|$ given in the 
table are proven in \cite[p. 78--79]{limz}. In all cases, 
$|\pi_0(C_{\4G}(E_2))|$ is first computed, using Proposition \ref{p:CG(T)} 
or the upper bound given in formula \eqref{e:|pi0|-X} in the proof of the 
last lemma, and then \cite[Proposition 8.8]{limz} is used to compute an 
upper bound for $|\pi_0(C_{\4G}(E))|\big/|\pi_0(C_{\4G}(E_2))|$. There is in 
fact an error in the table on \cite[p. 79]{limz} (the group $C_G(E_0)^0_s$ 
in the third-to-last column should be $\SL_2\times\SL_2$ up to finite 
cover), but correcting this gives in fact a better estimate 
$|\pi_0(C_{\4G}(E))|\le2^9$. 

Case nr. 11 can be handled in a similar way. Set $E_t=E\cap\TT<E$, so that 
$|E/E_t|=2=|E_t/E_2|$. The form $\qq_{E_t}$ has type $\7[2,+,3]$, while 
$E_t^\perp$ has type $\2b^3$. Hence $|\pi_0(C_{\4G}(E_t))|\le2^4$ by 
\eqref{e:|pi0|-X}. By \cite[Proposition 8.8]{limz}, 
$|\pi_0(C_{\4G}(E))|\le2^{4+r}$, where $r=\dim(\4T)=8$.

To handle the remaining cases, fix rank $2$ subgroups $F_1,F_2\le\TT<\4G$ 
with involutions of type \textbf{AAA} and \textbf{ABB}, respectively, and 
consider the information in Table \ref{tb:E8case2b}.
	\begin{table}[ht]
	\[ \renewcommand{\arraystretch}{1.5}
	\begin{array}{|l||c|c|c|c|c|c|} \hline
	 & & 
	\multicolumn{5}{c|}{\dim(C_{\4G}(F_i)\gen{\theta,g}) \textup{ for 
	$g$ as follows:}} \\\cline{3-7}
	\halfup{i} & \halfup{C_{\4G}(F_i\gen{\theta})} & -I_4\oplus I_4
	& -I_2\oplus I_6 & \textup{order $4$} & \2a & \2b \\\hline\hline
	1 & F_1\gen{\theta}\times\PSp_8(\K) & 20 & 24 & 16 & 16 & 20 \\\hline
	2 & F_2\gen{\theta}\times\PSO_8(\K) & 12 & 16 & 16 & 16 & 12 \\\hline
	\end{array} \]
	\caption{} \label{tb:E8case2b}
	\end{table}
The description of $C_{\4G}(F_i\gen\theta)$ follows from \eqref{e:dd}. The 
third through fifth columns give dimensions of centralizers of 
$F_i\gen\theta\gen{g}$, for $g$ as described after lifting to $\Sp_8(\K)$ 
or $\SO_8(\K)$. (Here, $I_m$ denotes the $m\times m$ identity matrix.) 
The last two columns do this for $g\in\2a$ or $\2b$, 
respectively, when $g\in\TT$ is orthogonal to $F_i$ with respect to the 
form $\qq$, and the dimensions follow from 
\eqref{e:dimC(E)}. Thus elements of class \2b\ lift to involutions in 
$\Sp_8(\K)$ or $\SO_8(\K)$ with $4$-dimensional $(-1)$-eigenspace, while 
for $i=1$ at least, elements of class \2a\ lift to elements of order 
$4$ in $\Sp_8(\K)$. 

Thus in all of the cases nr. 7--13 in Table \ref{tb:E8case2}, we can 
identify $E=F_i\gen{\theta}\times F^*$, where $i=1$ in nr. 7--10 or $i=2$ 
in nr. 11--13, and where $F^*$ lifts to an abelian subgroup of $\Sp_8(\K)$ 
or $\SO_8(\K)$ (elementary abelian except for nr. 7--8). 
This information, together with the following:
	\begin{multline*} 
	\textup{$H$ a group, $Z\le Z(H)$, $|Z|=p$, $Z\le P\le H$ a 
	$p$-subgroup} \\ \textup{$\implies$ 
	$\bigl|C_{H/Z}(P)\big/C_H(P)/Z\bigr| \le |P/\Fr(P)|$} 
	\end{multline*}
(applied with $H=\Sp_8(\K)$ or $\SO_8(\K)$), imply the remaining
bounds in the last line of Table \ref{tb:E8case2}.

In all but the last case in Table \ref{tb:E8case2}, 
$\defect(\Aut_{\4G}(E))>|\pi_0(C_{\4G}(E))|$, so no $E^*\in\5\calz$ is 
$\4G$-conjugate to $E$ by Proposition \ref{L<->Lhat}. In the last case, by 
the same proposition, $E$ can be $\4G$-conjugate to some $E^*\in\5\calz$ 
only if $\Aut_{\4G}(E)$ acts transitively on $\pi_0(C_{\4G}(E))\cong C_2^5$ 
with point stabilizers isomorphic to $\Sigma_3$. By \eqref{e:c_tg}, each 
class in $O_2(\Aut_{\4G}(E))$ is represented by some element $tg\in 
N_{\4G}(E)$, where $g\in{}N_{G}(\4T)$ and $t\in\4T$. In particular, 
$(tg)\sigma(tg)^{-1}=t\sigma(t)^{-1}\in\4T$. So each class in the 
$O_2(\Aut_{\4G}(E))$-orbit of $1\in\pi_0(C_{\4G}(E))$ 
has nonempty intersection with $\4T$. But by 
\eqref{e:ee}, $C_{\4G}(F_2)^0\cap\theta\4T=\emptyset$, so 
$\theta{}C_{\4G}(E)^0\cap\4T=\emptyset$. Thus the action is not transitive 
on $\pi_0(C_{\4G}(E))$, and hence $E^*\notin\5\calz$. 
\end{proof}


\newpage

\end{document}